\newcommand{\R}{\mathbb{R}}
\newcommand{\C}{\mathbb{C}}
\newcommand{\N}{\mathbb{N}}
\newcommand{\Z}{\mathbb{Z}}
\newcommand{\F}{\mathcal{F}}
\renewcommand{\Re}{\mathop{\mathrm{Re}}}
\renewcommand{\Im}{\mathop{\mathrm{Im}}}
\renewcommand{\bar}{\overline}
\renewcommand{\hat}{\widehat}
\numberwithin{equation}{section}
\newtheorem{thm}{Theorem}[section]
\newtheorem{cor}[thm]{Corollary}
\newtheorem{lem}[thm]{Lemma}
\newtheorem{prop}[thm]{Proposition}
\newtheorem{rem}[thm]{Remark}%[section]
\newtheorem{defn}[thm]{Definition}%[section]
\newcommand{\Del}[1]{}
\newcommand{\supp}{\operatorname{supp}}
\newcommand{\cA}{ {\mathcal A} }
\newcommand{\cF}{ {\mathcal F} }
\newcommand{\cD}{ {\mathcal D} }
\newcommand{\tA}{ \tilde{A} }
\newcommand{\tp}{ \tilde{\phi} }
\begin{document}

\title[Concentration Compactness for the critical MKG equation]{Concentration Compactness for the Critical Maxwell-Klein-Gordon Equation}

\begin{abstract}
 We prove global regularity, scattering and a priori bounds for the energy critical Maxwell-Klein-Gordon equation relative to the Coulomb gauge on $(1+4)$-dimensional Minkowski space. The proof is based upon a modified Bahouri-G\'erard profile decomposition \cite{Bahouri-Gerard} and a concentration compactness/rigidity argument by Kenig-Merle \cite{KM}, following the method developed by the first author and Schlag \cite{KS} in the context of critical wave maps. 
\end{abstract}

\author[]{Joachim Krieger}
\address{B\^atiment des Math\'ematiques \\ EPFL \\ Station 8 \\ 1015 Lausanne \\ Switzerland}
\email{joachim.krieger@epfl.ch}

\author[]{Jonas L\"uhrmann}
\address{Departement Mathematik \\ ETH Z\"urich \\ 8092 Z\"urich \\ Switzerland}
\email{jonas.luehrmann@math.ethz.ch}

\thanks{The first author was supported in part by the Swiss National Science Foundation under Consolidator Grant BSCGI0\_157694. The second author was supported in part by the Swiss National Science Foundation under grant SNF 200020-159925.}

\maketitle

\setcounter{tocdepth}{1}
\tableofcontents

\section{Introduction}

The Maxwell-Klein-Gordon system on Minkowski space-time $\R^{1+n}$, $n\geq 1$, is a classical field theory for a complex scalar field $\phi: \R^{1+n} \to \C$ and a connection $1$-form $A_\alpha: \R^{1+n} \to \R$ for $\alpha = 0, 1, \ldots, n$. Defining the covariant derivative
\[
 D_\alpha = \partial_\alpha + i A_\alpha
\]
and the curvature $2$-form
\[
 F_{\alpha \beta} = \partial_\alpha A_\beta - \partial_\beta A_\alpha,
\]
the formal Lagrangian action functional of the Maxwell-Klein-Gordon system is given by
\[
 \int_{\R^{1+n}} \Big( \frac{1}{4} F_{\alpha\beta} F^{\alpha\beta} + \frac{1}{2} D_{\alpha} \phi \overline{D^{\alpha}\phi}  \Big) \, dx \, dt,
\]
where the Einstein summation convention is in force and Minkowski space $\R^{1+n}$ is endowed with the standard metric $\text{diag}(-1, +1, \ldots, +1)$. Then the Maxwell-Klein-Gordon equations are the associated Euler-Lagrange equations
\begin{equation} \label{equ:MKG}
 \left\{ \begin{aligned}
  \partial^{\beta} F_{\alpha\beta} &= \Im \bigl( \phi \overline{D_{\alpha} \phi} \bigr), \\ 
  \Box_A \phi &= 0,
 \end{aligned} \right.
\end{equation}
where $\Box_A = D_{\alpha} D^{\alpha}$ is the covariant d'Alembertian. The system has two important features. First, it enjoys the {\it gauge invariance} 
\[
 A_\alpha \mapsto A_\alpha - \partial_\alpha \gamma, \quad \phi \mapsto e^{i \gamma} \phi
\]
for any suitably regular function $\gamma: \R^{1+n} \to \R$. Second, it is {\it Lorentz invariant}. Moreover, the system admits a conserved energy 
\begin{equation} \label{equ:energy}
 E(A, \phi) := \int_{\R^n} \Big( \frac{1}{4}\sum_{\alpha, \beta} F_{\alpha\beta}^2 + \frac{1}{2} \sum_{\alpha} \big|D_{\alpha}\phi\big|^2 \Big) \, dx.
\end{equation}
Given that the system of equations \eqref{equ:MKG} is invariant under the scaling transformation
\[
 A_{\alpha}(t,x) \rightarrow \lambda A_{\alpha}(\lambda t, \lambda x), \quad \phi(t,x) \rightarrow \lambda \phi(\lambda t, \lambda x) \quad \text{for } \lambda > 0,
\]
one distinguishes between the {\it energy sub-critical case} corresponding to $n \leq 3$, the {\it energy critical case} for $n = 4$, and the {\it energy super-critical case} for $n \geq 5$. To the best of the authors' knowledge, at this point no methods are available to prove global regularity for large data for super-critical nonlinear dispersive equations. The most advanced results for large data can be achieved for critical equations. 

\medskip

Imposing the {\it Coulomb condition} $\sum_{j=1}^n \partial_j A_j = 0$ for the spatial components of the connection form $A$, the Maxwell-Klein-Gordon system decouples into a system of wave equations for the {\it dynamical variables} $(A_j, \phi)$, $j = 1, \ldots, n,$ coupled to an elliptic equation for the {\it temporal component}~$A_0$,
\begin{equation} \tag{MKG-CG}
 \left\{ \begin{aligned}
  \Box A_j &= - \mathcal{P}_j \Im \bigl( \phi \overline{D_x \phi} \bigr), \\
  \Box_A \phi &= 0, \\
  \Delta A_0 &= - \Im \bigl( \phi \overline{D_0 \phi} \bigr),
 \end{aligned} \right.
\end{equation}
where $\mathcal{P}$ is the standard projection onto divergence free vector fields. 

\medskip

We observe that in the formulation (MKG-CG), the components $(A_j, \phi)$, $j = 1, \ldots, n,$ implicitly completely describe $(A_{\alpha}, \phi)$, since the missing component $A_0$ is uniquely determined by the elliptic equation 
\begin{equation} \label{equ:elliptic}
 \Delta A_0 = - \Im \bigl( \phi \overline{\partial_t \phi} \bigr) + |\phi|^2 A_0.
\end{equation}
For this reason, we will mostly work in terms of the dynamical variables $(A_x, \phi)$, it being understood that required bounds on $A_0$ can be directly inferred from \eqref{equ:elliptic}. In particular, to describe initial data for (MKG-CG), we will use the notation $A_j[0] := (A_j, \partial_t A_j)(0, \cdot)$ and $\phi[0]: = (\phi, \partial_t \phi)(0, \cdot)$. Often, we will simply denote these by $(A_x, \phi)[0]$.

\medskip

The present work will give a complete analysis of the energy critical case $n = 4$. More precisely, we implement an analysis closely analogous to the one by the first author and Schlag \cite{KS} in the context of critical wave maps in order to prove existence, scattering and a priori bounds for large global solutions to (MKG-CG). Moreover, we establish a {\it concentration compactness} phenomenon, which describes a kind of ``atomic decomposition'' of sequences of solutions of bounded energy.

\medskip

To formulate our main result, we introduce the following notion of admissible data for the evolution problem (MKG-CG) on $\R^{1+4}$.
\begin{defn} \label{def:admissible_data} 
We call $C^\infty$-smooth data $(A_x, \phi)[0]$ admissible, provided $A_x[0]$ satisfy the Coulomb condition and $\phi[0]$ as well as all spatial curvature components $F_{jk}[0]$ are Schwartz class. Moreover, we require that for $j = 1, \ldots, 4$,
\[
 \big| A_j[0](x) \big| \lesssim \langle x \rangle^{-3} \text{ as } |x| \to \infty. 
\]
\end{defn}
In particular, admissible data are of class $H^s_x(\R^4) \times H^{s-1}_x(\R^4)$ for any $s \geq 1$ and thus, the local existence and uniqueness theory developed by Selberg \cite{Selberg} applies to them. One can easily verify that as long as the solution exists in the sense of \cite{Selberg}, and hence in the smooth sense, it will be admissible on fixed time slices. The above notion of admissible data therefore leads to a natural concept of solution to work with, and we call such solutions admissible. Our main theorem can then be stated as follows.

\begin{thm} \label{thm:TheMainTheorem} 
Consider the evolution problem (MKG-CG) on $\R^{1+4}$. There exists a function 
\[
 K\colon (0,\infty)\longrightarrow (0, \infty)
\]
with the following property. Let $(A_x, \phi)[0]$ be an admissible Coulomb class data set such that the corresponding full set of components $(A_{\alpha}, \phi)$ has energy $E$. Then there exists a unique global in time admissible solution $(A, \phi)$ to (MKG-CG) with initial data $(A_x, \phi)[0]$ that satisfies for any $\frac{1}{q} + \frac{3}{2r} \leq \frac{3}{4}$ with $2 \leq q \leq \infty$, $2 \leq r < \infty$, $\gamma = 2 - \frac{1}{q} - \frac{4}{r}$ the following a priori bound
\begin{equation} \label{equ:a_priori_bound}
 \bigl\| \bigl( (-\Delta)^{-\frac{\gamma}{2}} \nabla_{t,x} A_x, (-\Delta)^{-\frac{\gamma}{2}} \nabla_{t,x} \phi \bigr) \big\|_{L_t^q L_x^r(\R\times\R^4)} \leq C_r K(E).
\end{equation}
The solution scatters in the sense that there exist finite energy free waves $f_j$ and $g$, $\Box f_j = \Box g = 0$, such that for $j = 1,\ldots,4$,
\[
\lim_{t\rightarrow +\infty}\big\|\nabla_{t,x}A_j(t, \cdot) - \nabla_{t,x}f_j(t, \cdot)\big\|_{L^2_x(\R^4)} = 0, \quad \lim_{t\rightarrow +\infty}\big\|\nabla_{t,x}\phi(t, \cdot) - \nabla_{t,x}g(t, \cdot)\big\|_{L^2_x(\R^4)} = 0,
\]
and analogously with different free waves for $t\rightarrow -\infty$. 
\end{thm}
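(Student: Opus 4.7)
The plan is to carry out a Kenig-Merle style concentration compactness/rigidity argument in the covariant setting of (MKG-CG), exactly parallel to the treatment of critical wave maps in \cite{KS}. The starting point is the small energy global well-posedness, scattering, and Strichartz-type a priori bounds for (MKG-CG), together with a robust perturbation theory. Taken as a black box, these imply that the set of energies for which the a priori bound \eqref{equ:a_priori_bound} holds uniformly across all admissible data of that energy is an open interval $[0,E_\ast)$ containing a neighborhood of $0$. The goal of the entire apparatus is to show $E_\ast=\infty$, and one proceeds by contradiction: assuming $E_\ast<\infty$, one extracts a sequence of admissible solutions with energies tending to $E_\ast$ whose Strichartz norms diverge, and distills out of it a minimal "critical element" that is compact modulo the symmetries of the equation.

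The extraction step requires a modified Bahouri-G\'erard profile decomposition adapted to (MKG-CG). At the linear level one decomposes the free waves associated with the data $(A_x,\phi)[0]$ into orthogonal profiles concentrated at distinct frequency-space-time scales, built so as to respect the Coulomb constraint $\partial^j A_j=0$. Each profile is then evolved nonlinearly by (MKG-CG); the key technical step is a nonlinear Pythagoras/asymptotic decoupling statement saying that the sum of nonlinearly evolved profiles, suitably truncated, is a good approximate solution to the full system. Here one has to control interactions between profiles through the cubic-type nonlinearities $\mathcal P_j\Im(\phi\overline{D_x\phi})$ and through the magnetic covariant derivatives in $\Box_A\phi$, using bilinear/null-form estimates and the fact that the elliptic equation \eqref{equ:elliptic} for $A_0$ is effectively local in the profile index. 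The minimality of $E_\ast$ then forces all but one profile to have zero asymptotic energy, and the surviving profile produces a minimal, non-scattering admissible solution whose trajectory in energy space is precompact modulo spatial translation and scaling $\lambda(t)$.

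The rigidity step must rule out any such minimal compact blow-up solution. Following the by now standard dichotomy, one splits according to the behavior of the concentration scale $\lambda(t)$: either $\lambda(t)$ stays bounded along a global-in-time trajectory (a soliton-type scenario) or $\lambda(t)\to\infty$ on a finite interval (a self-similar/type-II blow-up scenario). Both are excluded by combining the compactness (which prevents any escape of energy to spatial infinity or to tails in frequency) with appropriate virial and Morawetz-type monotonicity formulas for (MKG-CG), together with the non-existence of nontrivial static finite-energy configurations in the Coulomb gauge on $\R^4$. In the self-similar regime one gets additional rigidity from the backward light cone energy flux estimates, while in the bounded-$\lambda$ regime an averaged Morawetz/virial identity applied to the conserved stress-energy tensor forces $\phi\equiv 0$ and $F_{\alpha\beta}\equiv 0$, contradicting the fact that the minimal element has energy $E_\ast>0$.

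The main obstacle I expect is the nonlinear profile analysis. The operator $\Box_A$ is not a small perturbation of the free d'Alembertian at the energy scale, and individual profiles concentrating at very different scales can still interact through the long-range magnetic potential. Making the nonlinear Bahouri-G\'erard decomposition rigorous therefore requires an (MKG-CG)-adapted perturbation lemma strong enough to absorb the magnetic interaction terms, which in turn presumably rests on a gauge renormalization/parametrix construction in the spirit of the Krieger-Sterbenz-Tataru small-energy theory. Ensuring that this parametrix behaves well under the Bahouri-G\'erard modulations (frequency localization, spatial translation, scaling, and Lorentz boosts) is the technical heart of the argument and, in my view, the most delicate step in the whole program.
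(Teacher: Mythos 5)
Your overall architecture (minimal energy $E_\ast$, profile decomposition, minimal compact element, rigidity) matches the paper's, but the concentration compactness step as you describe it contains a genuine gap that the paper is specifically built to avoid. You propose to extract the profiles for \emph{both} $A_x[0]$ and $\phi[0]$ at the linear level via the free wave evolution and then prove a nonlinear Pythagoras/decoupling statement, absorbing the magnetic terms by a sufficiently strong perturbation lemma. This would fail: the low--high frequency interactions in the magnetic term $-2iA_j\partial^j\phi$ are non-perturbative even for small data, so two pieces living at asymptotically separated frequency scales do \emph{not} decouple, and no MKG-adapted perturbation lemma "strong enough to absorb" these interactions exists — indeed the paper's perturbation theory (Proposition~\ref{prop:perturbation}) suffers from a low-frequency divergence and only allows \emph{high}-frequency perturbations, so it cannot be taken as a black box the way you suggest; even defining the evolution of energy-class data requires low-frequency truncations, physical-space localization, gauge transformations and Huygens' principle (Proposition~\ref{prop:joint_time_interval}). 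The paper's workaround is structural, not just technical: a two-stage "induction on frequency" in which the non-atomic low-frequency background is evolved first with uniform $S^1$ bounds (Proposition~\ref{prop:bootstrap}), and then the concentration profiles for the $\phi$-component of each frequency atom are extracted as weak limits of the evolution under the \emph{covariant} operator $\widetilde{\Box}_{A^{n1}} = \Box + 2i(A^{n1}_{\Lambda_0,\nu}+A^{n1,free}_\nu)\partial^\nu$, which builds in both the low-frequency connection and the free evolution of the atom's own $A$-data (this goes beyond even the wave-maps construction of \cite{KS}). A further consequence you would have to confront is that this covariant flow conserves the free energy only in a mild asymptotic sense (Lemma~\ref{lem:asymptotic_energy_conservation}), so the clean orthogonality of profile energies you invoke is not available and must be replaced by the weaker decoupling of Lemma~\ref{lem:MKG_energy_almost_orthogonality_of_profiles}.

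On the rigidity side your sketch is closer in spirit but still misses the actual mechanisms: the paper does not rule out the compact scenarios via "non-existence of static configurations" plus backward light-cone flux, but rather (i) proves the vanishing of the momentum of the minimal element through Lorentz transforms — a genuinely delicate step here because the $S^1$ norm is far from Lorentz invariant (Proposition~\ref{prop:S_norm_Lorentz}) — then excludes the global-in-time case by virial identities combined with a Vitali covering argument, and (ii) reduces the finite-lifespan case to exact self-similarity and kills it with a new Lyapunov functional in self-similar variables, derived in a Cronstr\"om-type gauge. Finally, note that scattering to \emph{free} waves (rather than to a magnetic linear flow) does not follow from the $S^1$ bound alone; it uses the strong spatial decay of admissible data to get $\ell^1$-summability of $A_x[0]$ across frequencies when estimating the low--high magnetic term, a point absent from your outline.
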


In fact, we will prove the significantly stronger a priori bound 
\[
 \bigl\| (A_x, \phi) \bigr\|_{S^1(\R\times\R^4)} \leq K(E),
\]
where the precise definition of the $S^1$ norm will be introduced in Section~\ref{sec:preliminaries}. The purpose of this norm is to control the regularity of the solutions.

\medskip

Recently, a proof of the global regularity and scattering affirmations in the preceding theorem was obtained by Oh-Tataru \cite{OhTat_2, OhTat_1, OhTat_3}, following the method developed by Sterbenz-Tataru \cite{Sterbenz-Tataru_1, Sterbenz-Tataru_2} in the context of critical wave maps. Our conclusions were reached before the appearance of their work and our methods are completely independent.

\subsection{A history of the problem}

 In this subsection we first consider this work in the broader context of the study of the local and global in time behavior of nonlinear wave equations and highlight some of the important developments over the last decades that crucially enter the proof of our main theorem. Afterwards we give an overview of previous results on the Maxwell-Klein-Gordon equation.

 \medskip

 \noindent {\it Null structure.} In many geometric wave equations like the wave map equation, the Maxwell-Klein-Gordon equation, and the Yang-Mills equation, the nonlinearities exhibit so-called null structures. Heuristically speaking, such null structures are amenable to better estimates, because they damp the interactions of parallel waves. The key role that these special nonlinear structures play in the global regularity theory of nonlinear waves was first highlighted by Klainerman \cite{Klainerman}. At that point the theory of nonlinear wave equations relied for the most part on vector field methods and parametrices in physical space. However, in more recent times the key role that null structures play also within the more technical harmonic analysis approach cannot be overstated. In fact, in \cite{Klainerman2} a whole program with precise conjectures pertaining to the sharp well-posedness of a number of nonlinear wave equations with null structure was outlined. The present work may be seen as a further vindication of the program outlined by Klainerman. Null structures also play a pivotal role in the much more complex system of Einstein equations, as evidenced for example in the recent deep sequence of works by Klainerman-Rodnianski-Szeftel~\cite{KlRodSz} and Szeftel~\cite{Sz1, Sz2, Sz3, Sz4, Sz5} on the bounded $L^2$ curvature conjecture.

 \medskip

 \noindent {\it Function spaces.} The development of \emph{$X^{s,b}$ spaces} by Klainerman-Machedon in the seminal works \cite{KlMa3, KlMa4, KlMa_MKG, KlMa_YM} in the low regularity study of nonlinear wave equations provided a powerful tool to take advantage of the null structures in geometric wave equations. The fact that the Maxwell-Klein-Gordon and Yang-Mills equations actually display a null structure in the Coulomb gauge was a key observation in these works. Moreover, the observation by Klainerman-Machedon that these null structures are beautifully compatible with the $X^{s,b}$ functional framework has been highly influential ever since. Different variants of the $X^{s,b}$ spaces were independently introduced by Bourgain \cite{Bourgain} in the context of the nonlinear Schr\"odinger equation and the Korteweg-de Vries equation.

 In the quest to prove global regularity for critical wave maps for small initial data it turned out that not even the strongest versions of the critical $X^{s,b}$ type spaces yielded good algebra type estimates. This problem was resolved through the development of the \emph{null frame spaces} in the breakthrough work of Tataru \cite{Tataru_1}. We will introduce these spaces later on, see also \cite{Tao_Wave_MapII}, \cite{Tataru_2}, and \cite{KS} for more discussion.

 \medskip
 
 \noindent {\it Renormalization.} The key difficulty for the (MKG-CG) equation is the equation $\Box_A \phi = 0$, which in expanded form is given by
 \[
  \Box \phi = - 2i A_\alpha \partial^\alpha \phi + i (\partial_t A_0) \phi + A_\alpha A^\alpha \phi.
 \]
 The contribution of the low-high frequency interactions in the \emph{magnetic interaction term}
 \begin{equation} \label{equ:intro_difficult_magnetic_interaction_term}
  - 2 i A_j \partial^j \phi
 \end{equation}
 turns out to be non-perturbative in the case when the spatial components of the connection form are just free waves. This problem already occurs for small data and is not only a large data issue. One encounters a similar situation in the wave map equation. In the breakthrough works \cite{Tao_Wave_MapI, Tao_Wave_MapII} Tao exploited the intrinsic gauge freedom for the wave maps problem to recast the nonlinearity into a perturbative form. However, for the (MKG-CG) equation the gauge invariance is already spent. Rodnianski and Tao \cite{RT} found a way out of this impasse by incorporating the non-perturbative term into the linear operator and by deriving Strichartz estimates for the resulting wave operator via a parametrix construction. This enabled them to prove global regularity for (MKG-CG) for small critical Sobolev data in $n \geq 6$ space dimensions. A key novelty in the small data energy critical global well-posedness result for (MKG-CG) in $n=4$ space dimensions by the first author, Sterbenz, and Tataru \cite{KST} was the realization that the parametrix from \cite{RT} is also compatible with the complicated $X^{s,b}$ type and null frame spaces. The functional calculus from \cite{KST} for the paradifferential magnetic wave operator
 \[
  \Box_A^p = \Box + 2i \sum_{k\in\Z} P_{<k-C} A^{free}_j P_k \partial^j,
 \]
 where $P_k$ denotes the standard Littlewood-Paley projections and $A_j^{free}$, $j = 1, \ldots, 4$, are free waves, plays an important role in this work and has to be adapted to the large data setting. 

 \medskip

 \noindent {\it Bahouri-G\'erard concentration compactness decomposition.} Bahouri and G\'erard \cite{Bahouri-Gerard} proved the following description of sequences of solutions to the free wave equation with uniformly bounded energy. 

 \medskip

 \noindent {\it Let $\{ (\varphi_n, \psi_n) \}_{n \geq 1} \subset \dot{H}^1_x(\R^3) \times L^2_x(\R^3)$ be a bounded sequence and let $v_n$ be the solution to the free wave equation $\Box v_n = 0$ on $\R \times \R^3$ with initial data $(v_n, \partial_t v_n)|_{t=0} = (\varphi_n, \psi_n)$. Then there exists a subsequence $\{ v_n' \}_{n \geq 1}$ of $\{ v_n \}_{n \geq 1}$ and finite energy free waves $V^{(j)}$ as well as sequences $\big\{ (\lambda_n^{(j)}, t_n^{(j)}, x_n^{(j)}) \big\}_{n \geq 1} \subset \R_+ \times \R \times \R^3$, $j \geq 1$, such that for every $l \geq 1$,
 \begin{equation} \label{equ:intro_linear_profile_decomposition}
   v_n'(t,x) = \sum_{j=1}^l \frac{1}{\sqrt{\lambda_n^{(j)}}} V^{(j)} \biggl( \frac{t - t_n^{(j)}}{\lambda_n^{(j)}}, \frac{x - x_n^{(j)}}{\lambda_n^{(j)}} \biggr) + w_n^{(l)}(t,x)
 \end{equation}
 and
 \[
  \lim_{l\to\infty} \limsup_{n\to\infty} \big\| w_n^{(l)} \big\|_{L^5_t L^{10}_x(\R\times\R^3)} = 0.
 \]
 Moreover, there is asymptotic decoupling of the free energy
 \[
  \big\| \nabla_{t,x} v_n' \big\|_{L^2_x}^2 = \sum_{j=1}^l \big\| \nabla_{t,x} V^{(j)} \big\|^2_{L^2_x} + \big\| \nabla_{t,x} w_n^{(l)} \big\|^2_{L^2_x} + o(1) \quad \text{ as } n \to \infty,
 \] 
 and for each $j \neq k$, we have the asymptotic orthogonality property 
 \begin{equation} \label{equ:intro_asymptotic_orthogonality_property}
  \lim_{n\to\infty} \frac{\lambda_n^{(j)}}{\lambda_n^{(k)}} + \frac{\lambda_n^{(k)}}{\lambda_n^{(j)}} + \frac{|t_n^{(j)} - t_n^{(k)}|}{\lambda_n^{(j)}} + \frac{|x_n^{(j)} - x_n^{(k)}|}{\lambda_n^{(j)}} = \infty.
 \end{equation}
 }The free waves $V^{(j)}$ are referred to as concentration profiles and the importance of the \emph{linear profile decomposition} \eqref{equ:intro_linear_profile_decomposition} is that it captures the failure of compactness of the sequence of bounded solutions $\{ v_n \}_{n \geq 1}$ to the free wave equation in terms of the non-compact symmetries of the equation and the superposition of profiles. Simultaneously to Bahouri and G\'erard, Merle and Vega \cite{Merle-Vega} obtained similar concentration compactness decompositions in the context of the mass-critical nonlinear Schr\"odinger equation. This is very analogous to the concentration compactness method originally developed in the context of elliptic equations, see e.g. \cite{Lions1,Lions2,Lions3,Lions4} and \cite{Struwe_CalcVar_Book} for a discussion of the original works.

 Bahouri and G\'erard also established an analogous \emph{nonlinear profile decomposition} for Shatah-Struwe solutions $\{ u_n \}_{n \geq 1}$ to the energy critical defocusing nonlinear wave equation $\Box u_n = u_n^5$ on $\R\times\R^3$, see \cite{Shatah-Struwe}, with the same initial data $(u_n, \partial_t u_n)|_{t=0} = (\varphi_n, \psi_n)$. Their main application of this nonlinear profile decomposition was to prove the existence of a function $A: (0,\infty) \to (0,\infty)$ with the property that for any Shatah-Struwe solution $u$ to $\Box u = u^5$ it holds that
 \[
  \|u\|_{L^5_t L^{10}_x(\R\times\R^3)} \leq A \big( E(u) \big),
 \]
 where $E(u)$ denotes the energy functional associated with the quintic nonlinear wave equation.

 The Bahouri-G\'erard profile decomposition is of fundamental importance for the Kenig-Merle method that we will describe in the next paragraph. In the proof of our main theorem we will have to study sequences of solutions to the (MKG-CG) equation with uniformly bounded energy. A key step will be to obtain an analogous Bahouri-G\'erard profile decomposition for such sequences. This poses significant problems, which can be heuristically understood as follows. Very roughly speaking, the reason why the Bahouri-G\'erard profile decomposition ``works'' for the energy critical nonlinear wave equation $\Box u = u^5$ is that in the quintic nonlinearity the interaction of two nonlinear concentration profiles living at asymptotically orthogonal frequency scales vanishes. This reduces to consider diagonal frequency interactions of the concentration profiles in the nonlinearity, but then these profiles must essentially be supported in different regions of space-time due to the asymptotic orthogonality property \eqref{equ:intro_asymptotic_orthogonality_property} and therefore do not interact strongly. In contrast, for the (MKG-CG) equation frequency diagonalization appears to fail in the difficult magnetic interaction term \eqref{equ:intro_difficult_magnetic_interaction_term} for low-high interactions. A similar situation occurs for critical wave maps. In the latter context the first author and Schlag \cite{KS} devised a novel profile decomposition to take into account the corresponding low-high frequency interactions. Our approach is strongly influenced by \cite{KS}, but we will have to use a slightly different ``covariant'' wave operator to extract the concentration profiles, see the discussion in the next subsection.

 \medskip

 \noindent {\it The Kenig-Merle method.} Kenig and Merle \cite{KM2, KM} introduced a very general method to prove global well-posedness and scattering for critical nonlinear dispersive and wave equations in both defocusing and focusing cases, in the latter case only for energies strictly less than the ground state energy. Their approach has found a vast amount of applications over the last years. We illustrate the method in the context of the energy critical defocusing nonlinear wave equation $\Box u = u^5$ on $\R\times\R^3$. One can use the $L^8_t L^8_x(\R\times\R^3)$ norm to control the regularity of solutions to this equation and easily prove local well-posedness and small data global well-posedness based on this norm. In the first step of the Kenig-Merle method one assumes that global well-posedness and scattering fails for some finite energy level. Then let $E_{crit}$ be the critical energy below which all solutions exist globally in time with finite $L^8_t L^8_x(\R\times\R^3)$ bounds, in particular it must hold that $E_{crit} > 0$. Thus, we find a sequence of solutions $\{ u_n \}_{n \geq 1}$ such that $E(u_n) \to E_{crit}$ and $\|u_n\|_{L^8_t L^8_x} \to \infty$ as $n \to \infty$. Applying the Bahouri-G\'erard profile decomposition to $\{ u_n(0) \}_{n \geq 1}$, we may conclude by the \emph{minimality of $E_{crit}$} that there exists exactly one profile in the decomposition. This enables us to extract a \emph{minimal blowup solution} $u_C$ of lifespan $I$ with $E(u_C) = E_{crit}$ and $\|u_C\|_{L^8_t L^8_x(I\times\R^3)} = \infty$. Moreover, we can infer a crucial \emph{compactness property} of $u_C$, namely that there exist continuous functions $\overline{x}: I \to \R^3$ and $\lambda: I \to \R_+$ such that the family of functions
 \[
  \Bigg\{ \bigg( \frac{1}{\lambda(t)^{\frac{1}{2}}} u_C \bigg( t, \frac{\cdot - \overline{x}(t)}{\lambda(t)} \bigg), \frac{1}{\lambda(t)^{\frac{3}{2}}} \partial_t u_C \bigg( t, \frac{\cdot - \overline{x}(t)}{\lambda(t)} \bigg) \bigg) : t \in I \Biggr\}
 \]
 is pre-compact in $\dot{H}^1_x(\R^3) \times L^2_x(\R^3)$. The second step of the Kenig-Merle method is a rigidity argument to rule out the existence of such a minimal blowup solution $u_C$ by combining the compactness property with conservation laws and other identities of virial or Morawetz type for the energy critical nonlinear wave equation. We will adapt the Kenig-Merle method to the Maxwell-Klein-Gordon equation.

 \medskip
 
 We now review previous results on the Maxwell-Klein-Gordon equation. The existence of global smooth solutions to the Maxwell-Klein-Gordon equation in $n = 3$ space dimensions follows from seminal work of Eardley-Moncrief~\cite{EM_1, EM_2} and the finite energy global well-posedness result by Klainerman-Machedon~\cite{KlMa_MKG}. The latter work introduced many new techniques into the study of (MKG-CG) such as exploitation of null structures to obtain improved local well-posedness. The regularity threshold for local well-posedness of (MKG-CG) was further lowered by Cuccagna \cite{Cuccagna} for small initial data. An essentially optimal local well-posedness result for (MKG-CG) in $n=3$ space dimensions was achieved by Machedon-Sterbenz~\cite{MaSte} by uncovering a deep trilinear null structure in the system that is also of crucial importance in this work. Global well-posedness for (MKG-CG) below the energy norm was proved by Keel-Roy-Tao \cite{KRT} using the I-method. See also Selberg-Tesfahun \cite{Selberg-Tesfahun} for a finite energy global well-posedness result for the Maxwell-Klein-Gordon equation in $n = 3$ space dimensions relative to the Lorenz gauge.

 In $n = 4$ space dimensions almost optimal local well-posedness for a model prolem related to the Maxwell-Klein-Gordon and Yang-Mills equations was established by Klainerman-Tataru~\cite{KlTa}. The corresponding almost optimal local well-posedness result for (MKG-CG) was proved by Selberg~\cite{Selberg}, which we rely on in this paper. A global regularity result in $n=4$ space dimensions for equations of Maxwell-Klein-Gordon type with small initial data in a weighted but scaling invariant Besov space was obtained by Sterbenz \cite{Sterbenz}.

 Global regularity of (MKG-CG) for small critical Sobolev norm data in space dimensions $n \geq 6$ was established by Rodnianski-Tao~\cite{RT}. As already described earlier, an important innovation in \cite{RT} was to incorporate the difficult magnetic interaction term in (MKG-CG) into the linear wave operator and to derive Strichartz estimates for the resulting wave operator via a parametrix construction. 

 Small energy global well-posedness of the energy critical Maxwell-Klein-Gordon equation in $n = 4$ space dimensions was established in joint work of the first author with Sterbenz and Tataru~\cite{KST}. This work also provides the functional analytic framework that we will draw on and constantly refer to in this paper. The key novelty in~\cite{KST} was the realization that the parametrix construction from \cite{RT} is in fact compatible with the complicated spaces introduced for critical wave maps in \cite{Tataru_1, Tataru_2}, \cite{Tao_Wave_MapI, Tao_Wave_MapII}, \cite{Krieger} and that these spaces play a pivotal role in the energy critical small data global existence theory for (MKG-CG) in light of the deep null structure revealed in~\cite{MaSte}. 

 \medskip

 We also mention that for the closely related Yang-Mills equation finite energy global well-posedness in energy sub-critical $n=3$ space dimensions was proved by Klainerman-Machedon~\cite{KlMa_YM}. A different proof was later obtained by Oh~\cite{Oh_2, Oh_1}, using the Yang-Mills heat flow. Global regularity for the Yang-Mills system for small critical Sobolev data for $n \geq 6$ was established by the first author and Sterbenz~\cite{Krieger-Sterbenz}.

 \medskip

 For energy critical problems, it is a standard strategy to attempt to prove global regularity for large energies by reducing to a small energy global existence result via finite speed of propagation and exclusion of an energy concentration scenario. Such a method worked well, for example, for the critical defocusing nonlinear wave equation $\Box u = u^5$ on $\R^{1+3}$ and for radial critical wave maps on $\R^{1+2}$. At this point, with the exception of some special problems, it appears that a general large data result cannot be inferred by using the small data result as a black box, but instead requires a more or less complete re-working of the small data theory. See, for instance, the works on critical large data wave maps \cite{Sterbenz-Tataru_1, Sterbenz-Tataru_2}, \cite{KS}, \cite{Tao_Wave_Map_large}. Our approach here is to implement a similar strategy as the one by the first author and Schlag~\cite{KS} for critical wave maps, which consists of essentially two steps. First, a novel ``covariant'' Bahouri-G\'erard procedure to take into account the non-negligible influence of low on high frequencies in the magnetic interaction term. Second, an implementation of a variant of a concentration compactness/rigidity argument by Kenig-Merle, following more or less the sequence of steps in~\cite{KM}. As the latter was introduced in the context of a scalar wave equation, and we are considering a complex nonlinearly linked system, we believe that the implementation of this step for the energy critical Maxwell-Klein-Gordon equation is also of interest in its own right. 

 \medskip

 We expect that our methods extend to prove global regularity, scattering and a priori bounds for the energy critical Yang-Mills equations in $n = 4$ space dimensions for initial data in Coulomb gauge and with energy below the ground state energy.

\subsection{Overview of the proof}

In this subsection we give a detailed overview of the proof of Theorem~\ref{thm:TheMainTheorem}. In fact, the purpose of this paper is to prove a significantly stronger version of Theorem~\ref{thm:TheMainTheorem}, namely the existence of a function $K\colon (0, \infty) \rightarrow (0, \infty)$ with
\[
 \bigl\| (A_x, \phi) \bigr\|_{S^1(\R\times\R^4)} \leq K(E), \quad E = E(A,\phi)
\]
for any admissible solution $(A, \phi)$ to (MKG-CG). Once this a priori bound is known, one also obtains the scattering assertion in Theorem~\ref{thm:TheMainTheorem}. The fact that the dynamical variables of a global admissible solution scatter to finite energy free waves, and not to solutions to a suitable linear magnetic wave equation, crucially relies on our strong spatial decay assumptions about the data, see the proof of scattering at the end of Section~\ref{sec:rigidity_argument}. The precise definition of the $S^1$ space and its time localized version will be given in Section~\ref{sec:preliminaries} and Definition~\ref{defn:S_norm_breakdown}. 

\medskip

Beginning the argument at this point, we assume that the existence of such a function $K$ fails for some finite energy level. Thus, the set of energies 
\[
 \mathcal{E} := \Bigl\{ E>0 : \sup_{\substack{(A, \phi) \, \text{admissible} \\ E(A, \phi) \leq E}} \bigl\| (A_x, \phi) \bigr\|_{S^1} = + \infty \Bigr\}
\]
is non-empty. In view of the small energy global well-posedness result \cite{KST}, it therefore has a positive infimum, which we denote by $E_{crit}$,
\[
 E_{crit} := \inf \mathcal{E}.
\]
By definition we can then find a sequence of admissible solutions $\{(A^n, \phi^n)\}_{n \geq 1}$ to (MKG-CG) such that
\[
 E(A^n, \phi^n) \rightarrow E_{crit}, \quad \lim_{n \to \infty} \big\|(A^n_x, \phi^n)\big\|_{S^1} = +\infty. 
\]
As in \cite{KS}, we  call such a sequence an {\it essentially singular sequence}. The goal of this paper is to rule out the existence of such an object. This will be accomplished in broad strokes by the following two steps.
\begin{itemize}
\item[(1)] Extracting an energy class minimal blowup solution $\big( \cA^\infty, \Phi^\infty \big)$ to (MKG-CG) with the compactness property via a modified Bahouri-G\'erard procedure, which consists of an inductive sequence of low-frequency approximations and a profile extraction process taking into account the effect of the magnetic potential interaction. Here we closely follow the procedure introduced by the first author and Schlag \cite{KS}, but we have to subtly diverge from the profile extraction process there to correctly capture the asymptotic evolution of the atomic components. We note that the heart of the modified Bahouri-G\'erard procedure resides in Section~\ref{sec:concentration_compactness_step}.
\item[(2)] Ruling out the existence of the minimal blowup solution $\big( \cA^\infty, \Phi^\infty \big)$ by essentially following the method of Kenig and Merle \cite{KM}. This step is carried out in Section~\ref{sec:rigidity_argument}.
\end{itemize}
We now describe these steps in more detail. 

\medskip

\noindent {\it A concept of weak evolution for energy class data.} In order to extract a minimal blowup solution at the end of the modified Bahouri-G\'erard procedure, we first need to introduce the notion of a solution to (MKG-CG) that is merely of energy class. A natural idea here is to approximate a given Coulomb energy class datum by a sequence of admissible data and to define the energy class solution to (MKG-CG) as a suitable limit of the admissible solutions. One then needs a good perturbation theory to show that this limit is well-defined and independent of the approximating sequence. Unfortunately, there is not such a strong perturbation theory for (MKG-CG) as for instance for critical wave maps in \cite{KS} due to a low frequency divergence. However, the problem with evolving irregular data is really a ``high frequency issue'' and in Proposition~\ref{prop:perturbation} we show that there is a good perturbation theory for perturbing frequency localized data by adding high frequency perturbations. We can then define the evolution of a Coulomb energy class datum as a suitable limit of the evolutions of low frequency approximations of the energy class datum, provided these low frequency approximations exist on some joint time interval and satisfy uniform $S^1$ norm bounds there. This is achieved in Proposition~\ref{prop:joint_time_interval} via a suitable method of localizing the data and exploiting a version of Huygens' principle together with the gauge invariance of the Maxwell-Klein-Gordon system. This step is additionally complicated by the fact that the (MKG-CG) equation does not have the finite speed of propagation property due to non-local terms in the equation for the spatial components of the connection form $A$. 

\medskip

\noindent {\it Bahouri-G\'erard I: Filtering out frequency atoms and evolving the ``non-atomic'' lowest frequency approximation.} The extraction of the energy class minimal blowup solution $\big( \cA^\infty, \Phi^\infty \big)$ consists of a two step Bahouri-G\'erard type procedure. This is carried out in Section~\ref{sec:concentration_compactness_step} and forms the core of our argument. In the first step we consider the initial data $(A^n_x, \phi^n)[0]$ at time $t=0$ of the essentially singular sequence $(A^n, \phi^n)$  and use a procedure due to M\'etivier-Schochet \cite{Metivier-Schochet} to extract frequency scales. In what follows we will slightly abuse notation and write $(A^n, \phi^n)[0]$ instead of $(A^n_x, \phi^n)[0]$.  This yields the decompositions
\begin{align*}
 A^n[0] &= \sum_{a=1}^{\Lambda} A^{na}[0] + A^n_{\Lambda}[0], \\
 \phi^n[0] &= \sum_{a=1}^{\Lambda} \phi^{na}[0] + \phi^n_{\Lambda}[0],
\end{align*}
where the ``frequency atoms'' $(A^{na}, \phi^{na})[0]$ are essentially frequency localized to scales $(\lambda^{na})^{-1}$ that tend apart as $n \to \infty$, more precisely
\[
 \lim_{n\to\infty} \frac{\lambda^{na}}{\lambda^{na'}} + \frac{\lambda^{na'}}{\lambda^{na}} = \infty
\]
for $a \neq a'$, while the error $(A^n_\Lambda, \phi^n_\Lambda)[0]$ satisfies
\[
 \limsup_{n\to\infty} \big\| A^n_{\Lambda} [0] \big\|_{\dot{B}^1_{2,\infty} \times \dot{B}^0_{2,\infty}} + \big\| \phi^n_{\Lambda}[0] \big\|_{\dot{B}^1_{2,\infty} \times \dot{B}^0_{2,\infty}} < \delta
\]
for given $\delta > 0$ and $\Lambda = \Lambda(\delta)$ sufficiently large. Moreover, we prepare these frequency atoms such that their frequency supports are sharply separated as $n \to \infty$ and so that the errors $\big\{ (A^n_{\Lambda}, \phi^n_\Lambda)[0] \big\}_{n \geq 1}$ are supported away from the frequency scales $(\lambda^{na})^{-1}$ in frequency space. Then we select a number of ``large'' frequency atoms $(A^{na}, \phi^{na})[0]$, $a = 1, \ldots, \Lambda_0$, whose energy $E(A^{na}, \phi^{na})$ is above a certain small threshold $\varepsilon_0$ depending only on $E_{crit}$. We order these frequency atoms by the scale around which they are essentially supported starting with the lowest one. 

Eventually, we want to conclude that the essentially singular sequence of data $\big\{ (A^n, \phi^n)[0] \big\}_{n \geq 1}$ consists of exactly one non-trivial frequency atom that is composed of exactly one non-trivial physical concentration profile of asymptotic energy $E_{crit}$. We argue by contradiction and assume that this is not the case. Then the idea is to approximate the essentially singular sequence of initial data $(A^n, \phi^n)[0]$ by low frequency truncations, obtained by removing all or some of the atoms $(A^{na}, \phi^{na})[0]$, $a = 1, \ldots, \Lambda_0$, and to inductively derive bounds on the $S^1$ norms of the (MKG-CG) evolutions of the truncated data. As this induction stops after $\Lambda_0$ many steps, we obtain an a priori bound on the evolutions
\begin{equation} \label{equ:intro_a_priori_bound}
 \liminf_{n\to\infty} \|(A^n, \phi^n)\|_{S^1} < \infty,
\end{equation}
contradicting the assumption that $\{ (A^n, \phi^n) \}_{n \geq 1}$ is an essentially singular sequence of solutions to (MKG-CG).

We observe that by construction the ``non-atomic'' errors $(A^n_{\Lambda_0}, \phi^n_{\Lambda_0})[0]$ are split into $\Lambda_0 + 1$ ``shells'' by the frequency supports of the atoms $(A^{na}, \phi^{na})[0]$, i.e. we can write
\[
 A_{\Lambda_0}^n[0] = \sum_{j=1}^{\Lambda_0+1} A^{nj}_{\Lambda_0}[0], \quad \phi^n_{\Lambda_0}[0] = \sum_{j=1}^{\Lambda_0 + 1} \phi^{nj}_{\Lambda_0}[0],
\]
where the first pieces $(A^{n1}_{\Lambda_0}, \phi^{n1}_{\Lambda_0})[0]$ have Fourier support in the region closest to the origin. In Subsection~\ref{subsec:evolving_non_atomic} we then derive a priori $S^1$ norm bounds on the evolutions of the lowest frequency approximations $(A^{n1}_{\Lambda_0}, \phi^{n1}_{\Lambda_0})[0]$. The problem here is that the pieces $(A^{n1}_{\Lambda_0}, \phi^{n1}_{\Lambda_0})[0]$ might still have large energy, which forces us to use a finite number of further delicately chosen low frequency approximations $\big( P_{J_L} A^{n1}_{\Lambda_0}, P_{J_L} \phi^{n1}_{\Lambda_0} \big)[0]$ of these pieces. Importantly, this number only depends on the size of $E_{crit}$. We then inductively obtain bounds on the $S^1$ norms of the (MKG-CG) evolutions of the low frequency approximations $\big( P_{J_L} A^{n1}_{\Lambda_0}, P_{J_L} \phi^{n1}_{\Lambda_0} \big)[0]$ by bootstrap. This step is tied together in Proposition~\ref{prop:bootstrap}. In particular, Step~3 of the proof of Proposition~\ref{prop:bootstrap} is the core perturbative result of this paper and is used in variations at other instances later on.

\medskip

\noindent {\it Bahouri-G\'erard II: Selecting concentration profiles and adding the first large frequency atom.} Having established control over the evolution of the lowest frequency ``non-atomic'' part $(A^{n1}_{\Lambda_0}, \phi^{n1}_{\Lambda_0})[0]$, we then add the first frequency atom $(A^{n1}, \phi^{n1})[0]$ and consider the evolution of the data
\[
 \big( A^{n1}_{\Lambda_0} + A^{n1}, \phi^{n1}_{\Lambda_0} + \phi^{n1} \big)[0].
\]
Here we first have to understand the lack of compactness of the functions $(A^{n1}, \phi^{n1})[0]$. It is at this point that we deviate most significantly from the standard Bahouri-G\'erard profile extraction procedure \cite{Bahouri-Gerard} and also the modified profile extraction procedure developed by the first author and Schlag \cite{KS} in the context of critical wave maps. We still extract the concentration profiles for the data $A^{n1}[0]$ using the standard Bahouri-G\'erard extraction procedure. However, we evolve the data $\phi^{n1}[0]$ with respect to the following ``covariant'' wave operator
\[
 \widetilde{\Box}_{A^{n1}} := \Box + 2i \bigl( A^{n1}_{\Lambda_0, \nu} + A^{n1, free}_\nu \bigr) \partial^\nu
\]
and extract the profiles as weak limits of these evolutions to take into account the strong low-high interactions for (MKG-CG). Here, $A^{n1}_{\Lambda_0}(t,x)$ is the (MKG-CG) evolution of the low frequency data $(A^{n1}_{\Lambda_0}, \phi^{n1}_{\Lambda_0})[0]$, while $A^{n1, free}_j$ is the free wave evolution of the data $A^{n1}_j[0]$ for $j = 1, \ldots, 4$, and we simply put $A^{n1,free}_0 = 0$. In comparison with \cite{Bahouri-Gerard} and \cite{KS}, a key difficulty in this step is that solutions to the covariant linear wave equation $\widetilde{\Box}_{A^{n1}} u = 0$ only conserve the free energy in a mild asymptotic sense, see Lemma~\ref{lem:asymptotic_energy_conservation}. 
Importantly, after passing to subsequences, we may use the same space-time shifts $(t_n^{ab}, x_n^{ab})$ for extracting the concentration profiles both for $A^{n1}[0]$ and for $\phi^{n1}[0]$. Once the profiles have been picked, we use them to construct approximate, but highly accurate, nonlinear profiles in Theorem~\ref{thm:nonlinear_profiles_S1_bound}. To this end we solve the (MKG-CG) system in very large but finite space-time boxes centered around $(t_n^{ab}, x_n^{ab})$, using the concentration profiles as data, while outside of these boxes, we use the free wave propagation for $A$ and the ``full'' covariant wave operator (involving the influence of all other profiles) for $\phi$. This is the same strategy as the one pursued for wave maps in \cite{KS}. Provided that all concentration profiles have energy strictly less than $E_{crit}$ with respect to the Maxwell-Klein-Gordon energy functional, we can then use our perturbation theory to construct the global (MKG-CG) evolution of the data $\big( A^{n1}_{\Lambda_0} + A^{n1}, \phi^{n1}_{\Lambda_0} + \phi^{n1} \big)[0]$ and to obtain a priori $S^1$ norm bounds.

\medskip

\noindent {\it Conclusion of the induction on frequency process.} We may then repeat the preceding steps and ``add in'' all remaining frequency atoms to conclude a priori global $S^1$ norm bounds on the evolution of the full data $(A^n, \phi^n)[0]$. The conclusion of this induction on frequency process is that we arrive at a contradiction, unless the essentially singular sequence of data $(A^n, \phi^n)[0]$ consists of exactly one frequency atom that is composed of precisely one concentration profile of asymptotic energy $E_{crit}$. Due to our relatively poor perturbation theory for (MKG-CG), it then still requires a fair amount of work to extract an energy class minimal blowup solution from this essentially singular sequence $(A^n, \phi^n)$, see Section~\ref{sec:how_to_arrive_at} and Subsection~\ref{subsec:conclusion}. Finally, in Theorem~\ref{thm:compact_orbit} we obtain an energy class minimal blowup solution $(\cA^\infty, \Phi^\infty)$ to (MKG-CG) with lifespan $I$ and with the crucial compactness property that there exist continuous functions $\overline{x}: I \to \R^4$ and $\lambda: I \to (0,\infty)$ so that each of the family of functions 
  \[
   \Bigg\{ \bigg( \frac{1}{\lambda(t)} {\mathcal A}^\infty_j \bigg(t, \frac{\cdot-\bar{x}(t)}{\lambda(t)} \bigg), \frac{1}{\lambda(t)^2} \partial_t {\mathcal A}^\infty_j \bigg(t, \frac{\cdot-\bar{x}(t)}{\lambda(t)} \bigg) \bigg) \, \colon \, t \in I \Bigg\}
  \]
  for $j = 1, \ldots, 4$ and 
  \[
   \Bigg\{ \bigg( \frac{1}{\lambda(t)} \Phi^\infty \bigg(t, \frac{\cdot-\bar{x}(t)}{\lambda(t)} \bigg), \frac{1}{\lambda(t)^2} \partial_t \Phi^\infty \bigg(t, \frac{\cdot-\bar{x}(t)}{\lambda(t)} \bigg) \bigg) \, \colon \, t \in I \Bigg\}
  \]
  is pre-compact in $\dot{H}^1_x(\R^4) \times L^2_x(\R^4)$.

\medskip

\noindent {\it The Kenig-Merle rigidity argument.} In the final Section~\ref{sec:rigidity_argument}, we rule out the existence of such a minimal blowup solution $(\cA^\infty, \Phi^\infty)$ with the compactness property by following the scheme of the Kenig-Merle rigidity argument \cite{KM}. The idea is to infer from the compactness property and the minimal energy property of $(\cA^\infty, \Phi^\infty)$ the existence of either a static solution to (MKG-CG) or else the existence of a self-similar blowup solution to (MKG-CG) and to then exclude the existence of both of these objects. 

A crucial step in the Kenig-Merle rigidity argument is to conclude that the momentum of $(\cA^\infty, \Phi^\infty)$ must vanish. The proof of this hinges on the relativistic invariance of the Maxwell-Klein-Gordon equation and the transformation behavior of the Maxwell-Klein-Gordon energy functional under Lorentz transformations. This step is technically difficult for the Maxwell-Klein-Gordon equation, because the $S^1$ norm is much more complicated than the Strichartz norms used in \cite{KM}. 

We then distinguish between the lifespan $I$ of $(\cA^\infty, \Phi^\infty)$ being finite in at least one time direction or not. If $I$ is infinite, we face the possibility of a static solution, which we rule out using virial type identities for the Maxwell-Klein-Gordon equation, the vanishing momentum condition for $(\cA^\infty, \Phi^\infty)$ and a Vitali covering argument from \cite{KS}. If instead $I$ is finite at one end, we reduce to a self-similar blowup scenario. We then uncover a Lyapunov functional for solutions to the Maxwell-Klein-Gordon equation in self-similar variables.  This is the key ingredient, which allows us to also rule out this scenario. The derivation of this Lyapunov functional appears significantly more complicated than in \cite{KM} or \cite{KS} and we use the trick of working in a Cronstrom-type gauge to simplify the computations.

\subsection{Overview of the paper}

We now give an overview of the structure of this paper. The two main steps of the proof of Theorem~\ref{thm:TheMainTheorem} are the modified Bahouri-G\'erard procedure in Section~\ref{sec:concentration_compactness_step} and the rigidity argument in Section~\ref{sec:rigidity_argument}. The necessary technical preparations are carried out in the sections leading up to Section~\ref{sec:concentration_compactness_step}. 
\begin{itemize}
\item In Section~\ref{sec:preliminaries} we lay out the functional framework following \cite{KST}.
\item In Section~\ref{sec:magnetic_wave_equation} we prove key estimates for the linear magnetic wave equation $\Box_A^p u = f$.
\item In Section~\ref{sec:breakdown} we state the property of the $S^1$ norm as a regularity controlling device. 
\item In Section~\ref{sec:concept_of_weak_evolution} we show how to unambiguously locally evolve Coulomb energy class data $(A_x, \phi)[0]$ via approximation by smoothed data and truncation in physical space to reduce to the admissible setup. Here one needs to pay close attention to the fact that solutions to (MKG-CG) do not obey as good a perturbation theory with respect to the $S^1$ spaces as, say, critical wave maps in a suitable gauge, due to a low frequency divergence. Hence, one needs to be very careful about the correct choice of smoothing, using low frequency truncations of the data. Moreover, to ensure the existence of an energy class local evolution of Coulomb energy class data $(A_x, \phi)[0]$ on a non-trivial time slice around $t = 0$, we need to prove uniform $S^1$ norm bounds for the approximations, which we accomplish similarly to the procedure in \cite{KS} via localization in physical space, see Proposition~\ref{prop:joint_time_interval}. We also introduce the concept of the ``lifespan'' of such an energy class solution and the definition of its $S^1$ norm.
\item In Section~\ref{sec:how_to_arrive_at} we then state that energy class data $(A_x, \phi)[0]$ obtained as the limit of the data of an essentially singular sequence, which will be the outcome of the modified Bahouri-G\'erard procedure, lead to a singular solution $(A, \phi)$ in the sense that 
\[
 \sup_{J \subset I} \big\| (A_x, \phi) \big\|_{S^1(J \times \R^4)} = +\infty,
\]
where $I$ denotes its lifespan. The proof of this as well as a number of further technical assertions will be relegated to Subsection~\ref{subsec:interlude}.
\item In Section~\ref{sec:concentration_compactness_step} we carry out the modified Bahouri-G\'erard procedure. In Subsection~\ref{subsec:general_considerations} and Subsection~\ref{subsec:setting_up_induction_on_freq_scales} we extract the ``frequency atoms'' mimicking closely the procedure in \cite{KS}. Then we show in Subsection~\ref{subsec:evolving_non_atomic} how the lowest frequency ``non-atomic'' part of the low frequency approximation induction can be globally evolved with good $S^1$ norm bounds. In Subsection~\ref{subsec:interlude} we prove several technical assertions that all use the core perturbative result from Step 3 of the proof of Proposition~\ref{prop:bootstrap}. In Subsection~\ref{subsec:adding_first_atom}, we add the first ``large'' frequency atom by extracting concentration profiles and invoking the induction on energy hypothesis that all profiles have energy strictly less than $E_{crit}$. The end result of the modified Bahouri-G\'erard procedure is obtained in Subsection~\ref{subsec:conclusion}, see in particular Theorem~\ref{thm:compact_orbit}. We then have a minimal blowup solution $(\mathcal{A}^\infty, \Phi^\infty)$ with the required compactness property. 
\item In Section~\ref{sec:rigidity_argument} we rule out the existence of a minimal blowup solution $(\cA^\infty, \Phi^\infty)$ with the compactness property. To this end we largely follow the scheme of the rigidity argument by Kenig-Merle \cite{KM}. In Subsection~\ref{subsec:rigidity0} we derive several energy and virial identities for energy class solutions to (MKG-CG). Then we prove some preliminary properties of the minimal blowup solution $(\cA^\infty, \Phi^\infty)$, in particular that its momentum must vanish. Denoting by $I$ the lifespan of $(\cA^\infty, \Phi^\infty)$, we distinguish between $I^+ := I \cap [0,\infty)$ being a finite or an infinite time interval. In the next Subsection~\ref{subsec:rigidity1}, we exclude the existence of a minimal blowup solution $(\cA^\infty, \Phi^\infty)$ with infinite time interval $I^+$ using the virial identities, the fact that the momentum of $(\cA^\infty, \Phi^\infty)$ must vanish and an additional Vitali covering argument introduced in \cite{KS}. Moreover, we reduce the case of finite lifespan $I^+$ to a self-similar blowup scenario. In the last Subsection~\ref{subsec:rigidity2}, we then derive a suitable Lyapunov functional for the Maxwell-Klein-Gordon system in self-similar variables, which will enable us to also rule out the self-similar case and thus finishes the rigidity argument. Finally, we address the proof of the scattering assertion in Theorem~\ref{thm:TheMainTheorem}.
\end{itemize}
\nopagebreak

We remark that we will often abuse notation and denote the spatial components $A_x$ of the connection form simply by $A$.

\nopagebreak

\medskip

\noindent {\it Acknowledgments}: The authors are grateful to the referee for valuable corrections and suggestions.

\section{Function spaces and technical preliminaries} \label{sec:preliminaries}

We will be working with the same function spaces that were used for the small data energy critical global well-posedness result for the MKG-CG system \cite{KST} together with their time-localized versions. In this section we briefly recall their definitions. For a more detailed discussion of these spaces we refer to Section 3 in \cite{KST} and \cite{Tao_Wave_MapII}, \cite{Sterbenz-Tataru_1}, \cite{KS}.

In this work we only rely on the precise fine structure of these spaces in that we frequently use the multilinear estimates from \cite{KST} to reduce to ``sufficiently generic'' situations where a divisibility argument works, i.e. when all inputs are approximately at the same frequency and have angular separation between their frequency supports. 

\medskip

In order to introduce various Littlewood-Paley projection operators, we pick a non-negative even bump function $\varphi_0 \in C^\infty(\R)$ satisfying $\varphi_0(y) = 1$ for $|y| \leq 1$ and $\varphi_0(y) = 0$ for $|y| > 2$ and set $\varphi(y) = \varphi_0(y) - \varphi_0(2 y)$. Then we define the standard Littlewood-Paley projection operators for $k \in \Z$ by
\[
 \widehat{P_k f}(\xi) = \varphi \big( 2^{-k} |\xi| \big) \hat{f}(\xi).
\]
We use the concept of modulation to measure proximity of the space-time Fourier support to the light cone and define for $j \in \Z$ the projection operators
\begin{align*}
 \cF \big(Q_j f\big)(\tau, \xi) &= \varphi \big( 2^{-j} | |\tau| - |\xi| | \big) \, \cF(f)(\tau, \xi), \\
 \cF \big(Q_j^\pm f\big)(\tau, \xi) &= \varphi \big( 2^{-j} | |\tau| - |\xi| | \big) \, \chi_{\{ \pm \tau > 0 \}} \, \cF(f)(\tau, \xi), 
\end{align*}
where $\cF$ denotes the space-time Fourier transform. Occasionally, we also need multipliers $S_l$ to restrict the space-time frequency and correspondingly set for $l \in \Z$,
\[
 \cF \big(S_l f\big)(\tau, \xi) = \varphi \big( 2^{-l} |(\tau, \xi)| \big) \, \cF(f)(\tau, \xi).
\]
We also use projection operators $P_l^\omega$ to localize the homogeneous variable $\frac{\xi}{|\xi|}$ to caps $\omega \subset \mathbb{S}^3$ of diameter $\sim 2^l$ for integers $l < 0$ via smooth cutoffs. We assume that for each such $l < 0$ these cutoffs form a smooth partition of unity subordinate to a uniformly finitely overlapping covering of $\mathbb{S}^3$ by caps $\omega$ of diameter $\sim 2^l$.

\medskip

With these projection operators in hand we introduce the convention that for any norm $\|\cdot\|_S$ and any $p \in [1,\infty)$, 
\[
 \|F\|_{\ell^p S} = \biggl( \sum_{k \in \Z} \|P_k F\|_S^p \biggr)^{\frac{1}{p}}.
\]
Next we define the $X^{s,b}$ type norms applied to functions at spatial frequency $\sim 2^k$,
\[
 \|F\|_{X^{s,b}_p} = 2^{s k} \biggl( \sum_{j\in\Z} \Bigl( 2^{b j} \|Q_j P_k F\|_{L^2_t L^2_x} \Bigr)^p \biggr)^{\frac{1}{p}}
\]
for $s, b \in \R$ and $p \in [1,\infty)$ with the obvious analogue for $p = \infty$,
\[
 \|F\|_{X^{s,b}_\infty} = 2^{s k} \sup_{j \in \Z} 2^{b j} \|Q_j P_k F\|_{L^2_t L^2_x}.
\]

\medskip

We will mainly use three function spaces $N, N^\ast, \text{ and } S$. Their dyadic subspaces $N_k, N_k^\ast$ and $S_k$ satisfy 
\[
 N_k = L^1_t L^2_x + X_1^{0, -\frac{1}{2}}, \quad N_k^\ast = L^\infty_t L^2_x \cap X_\infty^{0, \frac{1}{2}}, \quad X_1^{0, \frac{1}{2}} \subseteq S_k \subseteq N_k^\ast.
\]
Then we have
\[
 \|F\|_N^2 = \sum_{k \in \Z} \|P_k F\|_{N_k}^2, \quad \|F\|_{N^\ast}^2 = \sum_{k \in \Z} \|P_k F\|_{N_k^\ast}^2.
\]
The space $S_k$ is defined by
\[
 \|\phi\|_{S_k}^2 = \|\phi\|_{S_k^{Str}}^2 + \|\phi\|_{S_k^{ang}}^2 + \|\phi\|_{X^{0,\frac{1}{2}}_\infty}^2,
\]
where
\begin{equation*}
 \begin{split}
  S_k^{Str} &= \bigcap_{\frac{1}{q} + \frac{3/2}{r} \leq \frac{3}{4}} 2^{(\frac{1}{q} + \frac{4}{r} - 2)k} L^q_t L^r_x, \\
  \|\phi\|_{S_k^{ang}}^2 &= \sup_{l < 0} \sum_\omega \|P_l^\omega Q_{< k + 2l} \phi \|_{S_k^\omega(l)}^2
 \end{split}
\end{equation*}
and the angular sector norms $S_k^\omega(l)$ are defined below. 

\medskip

To introduce the angular sector norms $S_k^\omega(l)$ we first define the plane wave space
\[
 \|\phi\|_{PW^\pm_\omega(l)} = \inf_{\phi = \int \phi^{\omega'}} \int_{|\omega - \omega'| \leq 2^l} \|\phi^{\omega'}\|_{L^2_{\pm \omega'} L^\infty_{(\pm \omega')^\perp}} \, d\omega'
\]
and the null energy space
\[
 \|\phi\|_{NE} = \sup_\omega \|\slashed{\nabla}_\omega \phi\|_{L^\infty_\omega L^2_{\omega^\perp}},
\]
where the norms are with respect to $\ell_\omega^\pm = t \pm \omega \cdot x$ and the transverse variable, while $\slashed{\nabla}_\omega$ denotes spatial differentiation in the $(\ell_\omega^+)^\perp$ plane. We now set 
\begin{equation*}
 \begin{split}
  \|\phi\|_{S_k^\omega(l)}^2 &= \|\phi\|_{S_k^{Str}}^2 + 2^{-2k} \|\phi\|_{NE}^2 + 2^{-3k} \sum_{\pm} \|Q^\pm \phi\|_{PW_\omega^{\mp}(l)}^2 \\
  &\quad \quad + \sup_{\substack{ k' \leq k, l' \leq 0, \\ k+2l \leq k' + l' \leq k+l} } \sum_{{\mathcal C}_{k'}(l')} \bigg( \|P_{{\mathcal C}_{k'}(l')} \phi\|_{S_k^{Str}}^2 + 2^{-2k} \|P_{{\mathcal C}_{k'}(l')} \phi\|_{NE}^2 \\
  &\quad \quad \quad \quad + 2^{-2k'-k} \|P_{{\mathcal C}_{k'}(l')} \phi\|_{L^2_t L^\infty_x}^2 + 2^{-3(k'+l')} \sum_{\pm} \|Q^{\pm} P_{{\mathcal C}_{k'}(l')} \phi \|_{PW_\omega^\mp(l)}^2 \bigg),
 \end{split}
\end{equation*}
where $P_{{\mathcal C}_{k'}(l')}$ is a projection operator to a radially directed block ${\mathcal C}_{k'}(l')$ of dimensions $2^{k'} \times (2^{k'+l'})^3$.

\medskip

Then we define
\[
 \|\phi\|_{S^1}^2 = \sum_{k \in \Z} \|\nabla_{t,x} P_k \phi\|_{S_k}^2 + \| \Box \phi \|^2_{\ell^1 L^2_t \dot{H}^{-\frac{1}{2}}_x}
\]
and the higher derivative norms
\[
 \|\phi\|_{S^N} := \|\nabla_{t,x}^{N-1} \phi\|_{S^1}, \quad N \geq 2.
\]
Moreover, we introduce
\[
 \|u\|_{S_k^\sharp} = \|\nabla_{t,x} u\|_{L^\infty_t L^2_x} + \|\Box u\|_{N_k}.
\]
On occasion we need to separate the two characteristic cones $\{ \tau = \pm |\xi| \}$. To this end we define
\begin{eqnarray*}
 N_{k,\pm},&  &N_k = N_{k,+} \cap N_{k, -} \\
 S_{k,\pm}^\sharp,&  &S_k^\sharp = S_{k,+}^\sharp + S_{k,-}^\sharp \\
 N_{k, \pm}^\ast,&  &N_k^\ast = N_{k,+}^\ast + N_{k,-}^\ast.
\end{eqnarray*}
We will also use an auxiliary space of $L^1_t L^\infty_x$ type,
\begin{equation*}
 \|\phi\|_{Z} = \sum_{k \in \Z} \|P_k \phi\|_{Z_k}, \quad \|\phi\|_{Z_k}^2 = \sup_{l < C} \sum_\omega 2^l \|P_l^\omega Q_{k+2l} \phi \|_{L^1_t L^\infty_x}^2.
\end{equation*}
Finally, to control the component $A_0$, we define
\[
 \|A_0\|_{Y^1}^2 = \|\nabla_{t,x} A_0\|_{L^\infty_t L^2_x}^2 + \|A_0\|_{L^2_t \dot{H}^{3/2}_x}^2 + \|\partial_t A_0\|_{L^2_t \dot{H}^{1/2}_x}^2
\]
and the higher derivative norms
\[
 \|A_0\|_{Y^N} = \|\nabla_{t,x}^{N-1} A_0\|_{Y^1}, \quad N \geq 2.
\]

\medskip

\noindent The link between the $S$ and $N$ spaces is given by the following energy estimate from \cite{KST},
\[
 \|\nabla_{t,x} \phi \|_{S} \lesssim \|\nabla_{t,x} \phi(0)\|_{L^2_x} + \| \Box \phi \|_{N}.
\]

\medskip

We will need to work with time-localized versions of the $S_k$ and $N_k$ spaces. For any compact interval $I \subset \R$ and $k \in \Z$, we define
\[
 \|\psi\|_{S_k(I\times\R^4)} := \inf_{\tilde{\psi}|_I = \psi|_I}  \|P_k \tilde{\psi}\|_{S_k(\R\times\R^4)}
\]
with $\psi$ and $\tilde{\psi}$ Schwartz functions. Analogously, we define $N_k(I\times\R^4)$.

\medskip

The following lemma shows that the $S_k$ and $Z_k$ spaces are compatible with time cutoffs. We will frequently use this fact without further mentioning.
\begin{lem} \label{lem:time_cutoff_compatible_with_S_norm}
 Let $\chi_I$ be a smooth cutoff to a time interval $I \subset \R$. Then it holds for all $k \in \Z$ that
 \[
  \bigl\|P_k (\chi_I \phi)\bigr\|_{S_k(\R\times\R^4)} \lesssim \bigl\|P_k \phi\bigr\|_{S_k(\R\times\R^4)}
 \]
 and 
 \[
  \bigl\|P_k (\chi_I\phi)\bigr\|_{Z_k(\R\times\R^4)} \lesssim \bigl\| P_k\phi \bigr\|_{Z_k(\R\times\R^4)}.
 \]
\end{lem}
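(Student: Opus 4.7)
Since $\chi_I(t)$ depends only on time while $P_k$ is a spatial Fourier multiplier, the two operators commute, and thus $P_k(\chi_I\phi)=\chi_I\,P_k\phi$. The task therefore reduces to showing that multiplication by a smooth time cutoff is bounded on the norms $\|\cdot\|_{S_k}$ and $\|\cdot\|_{Z_k}$, applied to a function already spatially localized to frequencies $|\xi|\sim 2^k$. The plan is to go through the atomic pieces comprising $S_k$ (and $Z_k$) in turn, separating the ``pointwise'' ingredients from the modulation-sensitive ones.

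For the Strichartz component $S_k^{Str}$, the norm is a combination of $L^q_t L^r_x$ norms, all of which are preserved under multiplication by $\chi_I\in L^\infty_t$. The pointwise-type ingredients of the null-frame norms $S_k^\omega(l)$ — the null energy norm $NE$, the plane wave norms $PW^\pm_\omega(l)$, and the $L^2_t L^\infty_x$ piece at block scale — only involve $L^p$ integrability along various null coordinate frames. Since $\chi_I$ is a bounded pointwise multiplier and has no spatial dependence, it commutes with all the spatial projections $P_k$, $P_l^\omega$, $P_{{\mathcal C}_{k'}(l')}$ occurring in these norms and, for the same reason, can be pulled out in any $L^p$ estimate after optionally adjusting the atomic decomposition defining $PW_\omega^\pm(l)$ (if $\phi=\int\phi^{\omega'}\,d\omega'$, then $\chi_I\phi=\int(\chi_I\phi^{\omega'})\,d\omega'$). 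Hence each of these contributions is trivially controlled with an implicit constant depending only on $\|\chi_I\|_{L^\infty}$.

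The delicate step is the behavior under the modulation projectors $Q_j$ (and their variants $Q_{<k+2l}$, $Q_{k+2l}$) entering the $X^{0,1/2}_\infty$ component of $S_k$, the modulation-localized parts of $S_k^\omega(l)$, and the definition of $Z_k$. Since $\chi_I$ is smooth, its time Fourier transform $\widehat{\chi_I}$ belongs to $\calS(\R)$. Writing
\[
\chi_I\,\phi \;=\; \frac{1}{2\pi}\int_{\R}\widehat{\chi_I}(\tau)\,e^{i\tau t}\,\phi\,d\tau,
\]
each factor $e^{i\tau t}$ merely translates the temporal frequency, and hence the modulation support, by $\tau$. Performing a dyadic decomposition $\widehat{\chi_I}=\sum_{m\geq 0}\widehat{\chi_I}^{(m)}$ with $\widehat{\chi_I}^{(m)}$ localized at $|\tau|\sim 2^m$, the Schwartz decay of $\widehat{\chi_I}$ gives $\|\widehat{\chi_I}^{(m)}\|_{L^1}\lesssim 2^{-Nm}$ for any $N$. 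Multiplication by the $m$-th piece shifts modulation by at most $O(2^m)$, which smears the modulation profile of $\phi$ over an $O(2^m)$-neighborhood. After splitting $Q_{j_0}\chi_I\phi=Q_{j_0}(\chi_I Q_{<j_0-C}\phi)+Q_{j_0}(\chi_I Q_{\geq j_0-C}\phi)$ and the analogous split for the other modulation projectors, the large-$m$ contributions are summed using the rapid decay $2^{-Nm}$ while the small-$m$ contributions shift modulations only by $O(1)$ and are handled by $L^\infty$-boundedness of $\chi_I$ together with the elementary $X^{s,b}$ boundedness of modulation translations. This yields uniform (in $k$) boundedness of $\chi_I\cdot$ on $X^{0,1/2}_\infty$, on the modulation-restricted angular components of $S_k^\omega(l)$, and on the $L^1_t L^\infty_x$-based norm $Z_k$ (for which $L^1_t L^\infty_x$ itself is again trivially preserved, and only the $Q_{k+2l}$ projector requires the above Fourier argument).

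Combining the three observations yields $\|P_k(\chi_I\phi)\|_{S_k}=\|\chi_I P_k\phi\|_{S_k}\lesssim\|P_k\phi\|_{S_k}$ and the analogous estimate for $Z_k$. The main obstacle is the modulation bookkeeping in the last paragraph — everything else is immediate from the fact that $\chi_I$ has no spatial dependence and is uniformly bounded — but the Schwartz decay of $\widehat{\chi_I}$ comfortably absorbs the logarithmic modulation spreading, so no genuine loss occurs.
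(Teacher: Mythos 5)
Your reductions — $P_k$ commutes with $\chi_I$, the Strichartz and pointwise null-frame pieces are trivial, and the whole content sits in the modulation projections — are the same as in the paper, and your split $Q_{j}(\chi_I\phi)=Q_{j}(\chi_I Q_{<j-C}\phi)+Q_{j}(\chi_I Q_{\geq j-C}\phi)$ is the right starting point (the second term is handled exactly as you say). But the mechanism you propose for the first term has a genuine gap. First, the input $\widehat{\chi_I}\in\mathcal S(\R)$ with $\|\widehat{\chi_I}^{(m)}\|_{L^1}\lesssim 2^{-Nm}$ is not available uniformly in $I$: for a long interval $\|\widehat{\chi_I}\|_{L^1}\sim\log|I|$ because of the low temporal frequencies, and for a half-infinite interval (which does occur in the paper's applications, e.g.\ the partition of unity over a cover of $[0,\infty)$ and the cutoffs $\chi_{I_1},\chi_{I_2}$ in the proof of Proposition \ref{prop:minenblowup}) $\widehat{\chi_I}$ is not even an $L^1$ function near $\tau=0$. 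The bounds that are uniform in $I$ are $\|Q_j\chi_I\|_{L^\infty_t}\lesssim 1$ and $\|Q_j\chi_I\|_{L^2_t}\lesssim 2^{-j/2}$, and the lemma must be proved from these.

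Second, and more seriously, your treatment of the dangerous interaction (input at modulation $<2^{j-C}$, output at modulation $\sim 2^j$, the cutoff supplying temporal frequency $\gtrsim 2^j$) does not close. Pairing the $L^\infty$ (i.e.\ $L^1$-Fourier) bound on the relevant piece of $\chi_I$ with "$X^{s,b}$ boundedness of modulation translations'' fails: multiplication by $e^{it\tau}$ is \emph{not} bounded on $X^{0,\frac12}_\infty$, since it can move every dyadic low-modulation block of $P_k\phi$ into a single modulation band of size $\sim|\tau|$, and the $\ell^\infty_j$ structure of $X^{0,\frac12}_\infty$ gives no control on that aggregation (a superposition of $M$ blocks of unit $X^{0,\frac12}_\infty$ size produces output of size $\sim 2^{M/2}$). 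The alternative of estimating $\|Q_{<j-C}\phi\|_{L^2_tL^2_x}$ is also unavailable, since this norm is not controlled by $S_k$ on an infinite time interval. The only way to beat the weight $2^{j/2}$ here is to spend the cutoff in $L^2_t$: since the output modulation forces the temporal frequency of $\chi_I$ to be $\sim 2^j$, one writes $Q_j(\chi_I Q_{\leq j-C}\phi)=Q_j\bigl(Q_{j+O(1)}(\chi_I)\,Q_{\leq j-C}\phi\bigr)$ and uses $\|Q_{j+O(1)}\chi_I\|_{L^2_t}\lesssim 2^{-j/2}$ against $\|Q_{\leq j-C}\phi\|_{L^\infty_tL^2_x}\lesssim\|P_k\phi\|_{L^\infty_tL^2_x}$; this is exactly the paper's argument, is uniform in $|I|$, and the same device (together with disposability of $P^\omega_lQ_{<k+2l}$ and an $X^{0,\frac12}_1$ bound for the high-modulation input) handles the $S_k^{ang}$ and $Z_k$ components. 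So the architecture of your proof is right, but the step you flag as "comfortably absorbed by Schwartz decay'' is precisely where a different estimate is needed.
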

\begin{proof} This is obvious for the Strichartz type norms. It remains to show it for the $X_\infty^{0,\frac{1}{2}}$ and $S_k^{ang}$ components. We start with the former. For fixed $j \in \Z$, we have 
\[
Q_j\big(\chi_I\phi\big) = Q_j\big(Q_{j+O(1)}(\chi_I) Q_{\leq j-C}\phi\big) + Q_j\big(\chi_IQ_{> j-C}(\phi)\big).
\]
Using the bound
\[
 \big\|Q_{j+O(1)}(\chi_I)\big\|_{L_t^2}\lesssim 2^{-\frac{1}{2} j}, 
\]
we obtain
\begin{align*}
2^{\frac{1}{2}j} \big\|Q_j\big(Q_{j+O(1)}(\chi_I) P_k Q_{\leq j-C}\phi\big)\big\|_{L^2_t L^2_x} \lesssim 2^{\frac{1}{2}j} \big\|Q_{j+O(1)}(\chi_I)\big\|_{L_t^2}\big\| P_k Q_{\leq j-C}\phi\big\|_{L_t^\infty L_x^2} \lesssim \big\|P_k\phi\big\|_{L_t^\infty L_x^2}.
\end{align*}
Moreover, we find
\begin{align*}
2^{\frac{1}{2} j} \big\|Q_j\big(\chi_IP_k Q_{> j-C}(\phi)\big)\big\|_{L^2_t L^2_x}&\lesssim 2^{\frac{1}{2} j}\big\|\chi_I\big\|_{L^\infty_t L^\infty_x}\big\|P_kQ_{> j-C}(\phi)\big\|_{L_{t,x}^2} \lesssim \big\|P_k\phi\big\|_{X^{0,\frac{1}{2}}_{\infty}}.
\end{align*}
Thus, we have
\[
 \bigl\| P_k ( \chi_I \phi ) \bigr\|_{X_\infty^{0,\frac{1}{2}}} \lesssim \bigl\|P_k \phi \bigr\|_{S_k}.
\]

Next, we consider the $S_k^{ang}$ component, which is given by
\[
 \|\phi\|_{S^{ang}_k} = \sup_{l<0}\sum_{\omega}\big\|P_l^{\omega}Q_{<k+2l}\phi\big\|^2_{S_k^{\omega}(l)}.
\]
We write 
\begin{align*}
P_l^{\omega}Q_{<k+2l}(\chi_I\phi) = P_l^{\omega}Q_{<k+2l}(\chi_I Q_{<k+2l+C}\phi) + P_l^{\omega}Q_{<k+2l}(\chi_I Q_{\geq k+2l+C}\phi)
\end{align*}
Then the first term on the right hand side is bounded by 
\begin{align*}
\big\|P_l^{\omega}Q_{<k+2l}(\chi_I Q_{<k+2l+C}\phi)\big\|_{S_k^{\omega}(l)}\lesssim \big\|P_l^{\omega}Q_{<k+2l+C}\phi\big\|_{S_k^{\omega}(l)},
\end{align*}
where we have used the fact that the operator $P_l^{\omega}Q_{<k+2l}$ is disposable. For the second term above, we use that 
\begin{align*}
\sum_{\omega}\big\|P_l^{\omega}Q_{<k+2l}(\chi_I Q_{\geq k+2l+C}\phi)\big\|^2_{S_k^{\omega}(l)} \lesssim \big\|P_kQ_{<k+2l}(\chi_I Q_{\geq k+2l+C}\phi)\big\|_{X^{0,\frac{1}{2}}_{1}}^2 \lesssim \big\|\phi\big\|_{X^{0,\frac{1}{2}}_{\infty}}^2.
\end{align*}

For the $Z_k$ space, fix a scale $l<0$ and consider the expression 
\[
\sum_{\omega}2^l\big\|P_l^{\omega}Q_{k+2l}\big(\chi_I\phi\big)\big\|_{L_t^1 L_x^\infty}^2.
\]
Write 
\begin{align*}
P_l^{\omega}Q_{k+2l}\big(\chi_I\phi\big) &= P_l^{\omega}Q_{k+2l}\big(Q_{<k+2l-C}(\chi_I)\phi\big) + P_l^{\omega}Q_{k+2l}\big(Q_{\geq k+2l-C}(\chi_I)\phi\big).
\end{align*}
For the first term on the right hand side, we have 
\[
\big\| P_l^{\omega}Q_{k+2l}\big(Q_{<k+2l-C}(\chi_I)\phi\big)\big\|_{L_t^1 L_x^\infty}\lesssim \big\|P_l^{\omega}Q_{k+2l+O(1)}\phi\big\|_{L_t^1L_x^\infty},
\]
which leads to an acceptable contribution. For the second term on the right hand side, we use
\begin{align*}
 \big\|P_l^{\omega}Q_{k+2l}\big(Q_{\geq k+2l-C}(\chi_I)\phi\big)\big\|_{L_t^1L_x^\infty}\lesssim \big\|Q_{\geq k+2l-C}(\chi_I)\big\|_{L_t^2} 2^{\frac{1}{2} l + \frac{1}{2} k} \big( 2^{-\frac{1}{2} l - \frac{1}{2} k} \big\|P_l^{\omega}\phi_k\big\|_{L_t^2L_x^\infty} \big)
\end{align*}
It follows that 
\begin{align*}
 2^{\frac{1}{2} l} \big\|P_l^{\omega}Q_{k+2l}\big(Q_{\geq k+2l-C}(\chi_I)\phi\big)\big\|_{L_t^1L_x^\infty} \lesssim \big( 2^{ -\frac{1}{2} l - \frac{1}{2} k} \big\|P_l^{\omega}\phi_k\big\|_{L_t^2L_x^\infty} \big), 
\end{align*}
which can be square-summed over $\omega$, see (9) in \cite{KST}. 
\end{proof}

\section{Microlocalized magnetic wave equation} \label{sec:magnetic_wave_equation}

In this section we assume that the spatial components of the connection form $A$ are solutions to the linear wave equation $\Box A^j = 0$ on $\R_t \times \R^4_x$ for $j= 1, \ldots, 4$ and that $A$ is in Coulomb gauge. We define the magnetic wave operator
\begin{equation} \label{equ:magnetic_wave_operator}
 \Box_{A}^p = \Box + 2i \sum_{k \in \Z} P_{\leq k-C} A^j P_k \partial_j.
\end{equation}
The goal of this section is to derive the following linear estimate for the magnetic wave operator~$\Box_A^p$.

\begin{thm} \label{thm:linear_estimates_magnetic_wave_equation}
 Suppose that $\Box A^j = 0$ on $\R_t \times \R^4_x$ for $j=1,\ldots, 4$ and that $A$ is in Coulomb gauge. For all $f \in N(\R \times \R^4)$ and $(g,h) \in \dot{H}^1_x(\R^4) \times L^2_x(\R^4)$, the solution to the magnetic wave equation
 \begin{equation} \label{equ:linear_magnetic_wave_equation}
  \left\{ \begin{aligned}
   \Box_A^p \phi &= f \text{ on } \R \times \R^4, \\
   (\phi, \phi_t)|_{t=0} &= (g,h) 
  \end{aligned} \right.
 \end{equation}
 exists globally and satisfies
 \begin{equation} \label{equ:linear_estimates_magnetic_wave_equation}
  \|\phi\|_{S^1(\R\times\R^4)} \leq C \Bigl( \|g\|_{\dot{H}^1_x} + \|h\|_{L^2_x} + \|f\|_{(N \cap \ell^1 L^2_t \dot{H}^{-\frac{1}{2}}_x)(\R\times\R^4)} \Bigr),
 \end{equation}
 where the constant $C > 0$ depends only on $\|\nabla_{t,x} A\|_{L^2_x}$ and grows at most polynomially in $\|\nabla_{t,x} A\|_{L^2_x}$.
\end{thm}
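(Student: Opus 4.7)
My plan is to reduce Theorem~\ref{thm:linear_estimates_magnetic_wave_equation} to its small-energy analogue proved by the first author, Sterbenz, and Tataru in \cite{KST} via a divisibility argument in time on the free magnetic potential $A$. Recall that in \cite{KST} one constructs, for sufficiently small $\|\nabla_{t,x} A\|_{L^2_x}$, a parametrix for $\Box_A^p$ compatible with the $S_k$ and $N_k$ spaces, based on a frequency-localised renormalisation
\[
 \Box_A^p P_k \phi \;=\; e^{-i\psi_{<k-C}} \Box \bigl( e^{i\psi_{<k-C}} P_k \phi \bigr) + \mathrm{err}_k,
\]
with a low-frequency phase $\psi_{<k-C}$ built from $A_{<k-C}$, and Coulomb gauge ensures the principal symbol of the conjugation matches. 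The associated Strichartz, $X^{s,b}$, null-frame and $Z$-norm bounds, and the disposal of $\mathrm{err}_k$ into $N_k$, ultimately require only that certain \emph{divisible} norms of $A$ be small on the time interval of interest.

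For general $A$ with $E_A := \|\nabla_{t,x} A\|_{L^2_x}$, I would isolate these divisible norms from the arguments in \cite{KST} and exploit that $A$ is a finite-energy free wave on $\R\times\R^4$ to partition $\R$ into $N = N(E_A)$ consecutive intervals $I_1,\dots,I_N$ on each of which every relevant divisible norm of $A$ falls below the smallness threshold of the small-data theorem. On each $I_i$ the small-data result then furnishes
\[
 \|\phi\|_{S^1(I_i)} \leq C_0 \Bigl( \|\phi[t_{i-1}]\|_{\dot H^1_x \times L^2_x} + \|f\|_{(N\cap\ell^1 L^2_t \dot H^{-1/2}_x)(I_i)} \Bigr),
\]
with $t_{i-1}$ the left endpoint of $I_i$ and $C_0$ absolute. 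Iterating this bound and using the embedding $S^1(I_i) \hookrightarrow L^\infty_t(\dot H^1_x \times L^2_x)$, together with $\ell^1$ summation of the inhomogeneity over the partition, yields the global $S^1$ bound with a constant controlled by $N(E_A)$. A careful choice of the divisible norm, taking only Strichartz-type and null-frame quantities that scale polynomially in the energy of a free wave, then gives $N(E_A)$ and hence $C(E_A)$ polynomial in $E_A$.

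The main obstacle will be identifying a divisible norm $\|A\|_{D}$ which is simultaneously strong enough to close every step of the parametrix construction of \cite{KST} and weak enough to admit polynomial-in-$E_A$ divisibility for an arbitrary finite-energy free wave; the delicate points here are the multilinear bounds involving the $PW^\pm_\omega(l)$ and $Z_k$ components of $S_k$, where smallness of $A$ is used in a non-trivial way. A secondary, more technical issue is to make the iteration across the partition compatible with the $X^{0,-1/2}_1$ piece of $N_k$, which does not localise sharply in time; this I would handle by running the iteration with the smooth-cutoff estimates provided by Lemma~\ref{lem:time_cutoff_compatible_with_S_norm} and by absorbing the small overlap losses into $C_0$. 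Once these ingredients are assembled, the remainder of the argument is the book-keeping of a finite geometric iteration.
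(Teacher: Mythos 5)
Your top-level skeleton (partition the time axis into $N(\|\nabla_{t,x}A\|_{L^2_x})$ intervals on which suitable space-time norms of $A$ are small, solve locally, then patch and iterate) is indeed the skeleton of the paper's proof of Theorem~\ref{thm:linear_estimates_magnetic_wave_equation}. But the central claim on which your reduction rests --- that the small-data theorem of \cite{KST} can be invoked as a black box on each slab because its smallness requirements can be recast as \emph{divisible} norms of $A$ --- is where the argument breaks. In \cite{KST} the smallness that closes the parametrix construction is smallness of the energy $\|\nabla_{t,x}A\|_{L^2_x}$ itself, which for a free wave is conserved and therefore not divisible: restricting to a short time interval does not make it small. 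Concretely, the renormalization error bounds that must carry a small factor $\varepsilon$ include \emph{fixed-time} $L^2_x\to L^2_x$ estimates, such as the bound for $e_{<0}^{-i\psi_\pm}(t,x,D)e_{<0}^{+i\psi_\pm}(D,y,t)-1$ and for $\partial_t e_{<0}^{\pm i\psi_\pm}$ (the analogues of \eqref{equ:renormalization_error_estimate_1} and \eqref{equ:Z_to_Z_bound_for_time_derivative_e}); these see only the kernel bounds of the phase, which scale with the full energy of $A$, and no time localization can improve them. So there is no choice of divisible norm $\|A\|_D$, however cleverly taken from Strichartz/null-frame components, that makes the \cite{KST} theorem applicable verbatim on each $I_i$; the obstacle you flag as "secondary" is in fact the heart of the matter and is not resolved by your plan.

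What the paper does instead is re-open the parametrix construction in the large-energy, time-localized setting. The phase correction \eqref{equ:definition_phase} is \emph{modified} relative to \cite{KST}: for intermediate frequencies $-C_2\le k<0$ the high angular sectors are cut off, and the constants $C_1,C_2$ (and later $C_3$) are chosen large depending on $\|\nabla_{t,x}A\|_{L^2_x}$ and $\varepsilon^{-1}$, which produces smallness factors of the form $2^{-C_1/2}+2^{-C_2}$ in the fixed-time kernel estimates (Lemmas~\ref{lem:renormalization_operator_fixed_time_2} and \ref{lem:renormalization_operator_fixed_time_3}); only the \emph{remaining} errors --- the modulation-localized estimates with the sharp cutoffs $\chi_{I_j}$ in Theorem~\ref{thm:mapping_properties_renormalization_operator}, and terms like $\chi_{I_j}A_{<k}\,\partial\Box^{-1}f_k^{ell}$ --- are made small by divisibility of space-time norms such as $\|A\|_{L^2_tL^8_x(I_j\times\R^4)}$ and certain square-summed angular-sector Strichartz quantities. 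This also requires proving that the renormalization operators interact acceptably with sharp time cutoffs (the commutations of $\chi_{I_j}$ with modulation projections occupy a large part of Section~\ref{sec:magnetic_wave_equation}), after which the local solutions of Theorem~\ref{thm:local_solutions_magnetic_wave_equation} are glued with a partition of unity chosen so that $\sum_j\chi_j'\phi^{(j)}=0$ on overlaps. In short: your iteration and patching step is fine, but the proof cannot avoid redoing the \cite{KST} construction with a modified phase and time-localized error estimates; citing the small-energy theorem slab-by-slab does not close.
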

\begin{proof}
By time reversibility it suffices to prove the existence of the solution $\phi$ on the time interval $[0,\infty)$. Let $\varepsilon > 0$ be a sufficiently small constant to be fixed later. We may cover the time interval $[0,\infty)$ by finitely many consecutive closed intervals $I_1, \ldots, I_J$ with the following properties. The number of intervals $J$ depends only on $\|\nabla_{t,x} A\|_{L^2_x}$ and $\varepsilon$, the intervals $I_j$ overlap at most two at a time, consecutive intervals have intersection with non-empty interior and $[0, \infty) = \cup_{j=1}^\infty I_j$. Most importantly, the intervals $I_j$ are chosen such that a finite number of suitable space-time norms of the magnetic potential $A$ that will be specified later are less than $\varepsilon$ uniformly on all intervals $I_j$.

\medskip

We first construct suitable local solutions to the magnetic wave equation \eqref{equ:linear_magnetic_wave_equation} on the intervals $I_j$. The precise statement is summarized in the following theorem. Its proof will be given further below and is based on a parametrix construction. The accuracy of the parametrix crucially relies on the above mentioned smallness of suitable space-time norms of the magnetic potential $A$ on the intervals $I_j$. We use the notation $I_j = [T_j^{(l)}, T_j^{(r)}]$ for the left and right endpoints of $I_j$.

\begin{thm} \label{thm:local_solutions_magnetic_wave_equation}
 Let $f \in N(\R \times \R^4)$ and $(\tilde{g},\tilde{h}) \in \dot{H}^1_x(\R^4) \times L^2_x(\R^4)$. For $j = 1, \ldots, J$ there exists a solution $\phi^{(j)} \in S^1(\R \times \R^4)$ to
 \begin{equation} \label{equ:magnetic_wave_equation_on_I_j}
  \left\{ \begin{aligned}
   \Box_A^p \phi^{(j)} &= f \text{ on } I_j \times \R^4, \\
   (\phi^{(j)}, \phi^{(j)}_t)|_{t= T_j^{(l)}} &= (\tilde{g}, \tilde{h})
  \end{aligned} \right.
 \end{equation}
 in the sense that $\| \chi_{I_j} (\Box_A^p \phi^{(j)} - f) \|_{N(\R\times\R^4)} = 0$ for a sharp cutoff $\chi_{I_j}$ to the time interval $I_j$. Moreover, it holds that
 \begin{equation} \label{equ:estimates_on_phi_on_I_j}
  \|\phi^{(j)}\|_{S^1(\R \times \R^4)} \leq C \Bigl( \|\tilde{g}\|_{\dot{H}^1_x} + \|\tilde{h}\|_{L^2_x} + \|f\|_{(N \cap \ell^1 L^2_t \dot{H}^{-\frac{1}{2}}_x)(\R \times \R^4)} \Bigr),
 \end{equation}
 where the constant $C > 0$ depends only on $\|\nabla_{t,x} A\|_{L^2_x}$.
\end{thm}

\medskip

Finally, we obtain the solution $\phi$ to the magnetic wave equation \eqref{equ:linear_magnetic_wave_equation} on $[0,\infty)\times\R^4$ by patching together suitable local solutions on the intervals $I_j$. Given $(g, h) \in \dot{H}^1_x(\R^4) \times L^2_x(\R^4)$ and $f \in N(\R \times \R^4)$, Theorem \ref{thm:local_solutions_magnetic_wave_equation} yields a solution $\phi^{(1)} \in S^1(\R \times \R^4)$ on $I_1 = [0, T_1^{(r)}]$ to
\begin{equation*}
 \left\{ \begin{aligned}
  \Box_A^p \phi^{(1)} &= f \text{ on } I_1 \times \R^4, \\
  (\phi^{(1)}, \phi^{(1)}_t)|_{t=0} &= (g,h).  
 \end{aligned} \right.
\end{equation*}
Next, we obtain a solution $\phi^{(2)} \in S^1(\R \times \R^4)$ on $I_2 = [T_2^{(l)}, T_2^{(r)}]$ to
\begin{equation*}
 \left\{ \begin{aligned}
  \Box_A^p \phi^{(2)} &= f \text{ on } I_2 \times \R^4, \\
  (\phi^{(2)}, \phi^{(2)}_t)|_{t=T_2^{(l)}} &= (\phi^{(1)}(T_2^{(l)}), \phi_t^{(1)}(T_2^{(l)})),
 \end{aligned} \right.
\end{equation*}
where we recall that $I_1 \cap I_2 \neq \emptyset$ with $T_2^{(l)} < T_1^{(r)}$. We proceed analogously for the remaining intervals $I_3, \ldots, I_J$. By uniqueness, we must have $\phi^{(j)}|_{I_j \cap I_{j+1}} = \phi^{(j+1)}|_{I_j \cap I_{j+1}}$ for $j = 1, \ldots, J-1$. We choose a smooth partition of unity $\{\chi_j\}$ subordinate to the cover $\{I_j\}$ such that $\text{supp}(\chi_j) \subset I_j$ and $\text{supp}(\chi_j') \subset \subset  (I_{j-1} \cap I_j) \cup (I_j \cap I_{j+1})$. We then define
\[
 \phi = \sum_{j=1}^J \chi_j \phi^{(j)}.
\]
Since we have $\chi_j' + \chi_{j+1}' = 0$ on $I_j \cap I_{j+1}$ for $j = 1, \ldots, J-1$, it follows that
\[
 \sum_{j=1}^J \chi_j' \phi^{(j)} = 0 \text{ on } \R_t \times \R^4_x
\]
and hence,
\[
 \nabla_{t,x} \sum_{j=1}^J \chi_j \phi^{(j)} = \sum_{j=1}^J \chi_j \nabla_{t,x} \phi^{(j)} \text{ on } \R_t \times \R^4_x.
\]
Similarly, we find that
\[
 \Box \sum_{j=1}^J \chi_j \phi^{(j)} = \sum_{j=1}^J \chi_j \Box \phi^{(j)}.
\]
Using Lemma \ref{lem:time_cutoff_compatible_with_S_norm} and estimate \eqref{equ:estimates_on_phi_on_I_j}, we thus conclude that
\begin{equation*}
 \begin{split}
  \|\phi\|_{S^1(\R\times\R^4)} &= \bigl\| \sum_{j=1}^J \chi_j \phi^{(j)} \bigr\|_{S^1(\R\times\R^4)} \\
  &\lesssim \sum_{j=1}^J \|\phi^{(j)}\|_{S^1(\R\times\R^4)} \\
  &\lesssim C(\|\nabla_{t,x} A\|_{L^2_x}) \Bigl( \sum_{j=1}^J \|\phi^{(j)}(T_j^{(l)}) \|_{\dot{H}^1_x} + \|\partial_t \phi^{(j)}(T_j^{(l)})\|_{L^2_x} + \|f\|_{N(\R\times\R^4)} \Bigr) \\
  &\lesssim C(J) C(\|\nabla_{t,x} A\|_{L^2_x}) \bigl( \|g\|_{\dot{H}^1_x} + \|h\|_{L^2_x} + \|f\|_{N(\R\times\R^4)} \bigr).
 \end{split}
\end{equation*}
Since $J$ depends only on the size of $\|\nabla_{t,x} A\|_{L^2_x}$ and $\varepsilon$, we obtain the desired estimate \eqref{equ:linear_estimates_magnetic_wave_equation}.
\end{proof}

We proceed with the proof of Theorem \ref{thm:local_solutions_magnetic_wave_equation}.

\begin{proof}[Proof of Theorem \ref{thm:local_solutions_magnetic_wave_equation}]
We begin by considering for every $k \in \Z$ the frequency localized problem
\begin{equation} \label{equ:frequency_localized_magnetic_wave_equation_on_I_j}
 \left\{ \begin{aligned}
  \Box_{A_{<k}}^p \phi^{(j)}_k &= f_k \text{ on } I_j \times \R^4, \\
  (\phi^{(j)}_k, \partial_t \phi^{(j)}_k)|_{t=T_j^{(l)}} &= (\tilde{g}_k, \tilde{h}_k),
 \end{aligned} \right.
\end{equation}
where $\Box_{A_{<k}}^p = \Box + 2 i P_{\leq k-C} A^j P_k \partial_j$. Let $\chi_{I_j}$ denote a sharp cutoff to the time interval $I_j$. We first want to construct an approximate solution $\phi^{(j)}_{app, k}$ to \eqref{equ:frequency_localized_magnetic_wave_equation_on_I_j} that satisfies
\begin{equation} \label{equ:S_estimate_on_phi_app_k_j}
 \|\phi^{(j)}_{app,k}\|_{S_k^1(\R \times \R^4)} \leq C \Bigl( \|\tilde{g}_k\|_{\dot{H}^1_x} + \|\tilde{h}_k\|_{L^2_x} + \|f_k\|_{N_k(\R \times\R^4)} \Bigr)
\end{equation}
and
\begin{equation} \label{equ:error_estimate_on_phi_app_k_j}
 \begin{split}
  &\|\phi^{(j)}_{app,k}(T_j^{(l)}) - \tilde{g}_k\|_{\dot{H}^1_x} + \|\partial_t \phi^{(j)}_{app,k}(T_j^{(l)}) - \tilde{h}_k\|_{L^2_x} + \|\chi_{I_j} (\Box_{A_{<k}}^p \phi_{app,k} - f_k)\|_{(N_k \cap L^2_t \dot{H}^{-\frac{1}{2}}_x)(\R \times \R^4)} \\
  &\quad \quad \quad \lesssim \varepsilon \Bigl( \|\tilde{g}_k\|_{\dot{H}^1_x} + \|\tilde{h}_k\|_{L^2_x} + \|f_k\|_{(N_k \cap L^2_t \dot{H}^{-\frac{1}{2}}_x)(\R \times \R^4)} \Bigr).
 \end{split}
\end{equation}
To this end we split
\[
 f_k = f_k^{hyp} + f_k^{ell},
\]
where $f_k^{hyp}$ is supported in the region $| |\tau| - |\xi| | \lesssim 2^k$. We note that it holds that
\begin{equation} \label{equ:S_estimate_Box_inverse_f_k_ell}
 \| \Box^{-1} f_k^{ell} \|_{S_k^1(\R\times\R^4)} \lesssim \|f_k^{ell}\|_{N_k(\R\times\R^4)}.
\end{equation}
Theorem \ref{thm:approx_local_solution_freq_localized_magnetic_wave_equation} below then yields an approximate solution $\tilde{\phi}_{app,k}^{(j)}$ to 
\begin{equation}
 \left\{ \begin{aligned}
  \Box \tilde{\phi}_{app,k}^{(j)} &= f_k^{hyp} \text{ on } I_j \times \R^4, \\
  (\tilde{\phi}_{app,k}^{(j)}, \partial_t \tilde{\phi}_{app,k}^{(j)})|_{t = T_j^{(l)}} &= (\tilde{g}_k, \tilde{h}_k) - ( (\Box^{-1} f_k^{ell})(T_j^{(l)}), (\partial_t \Box^{-1} f_k^{ell})(T_j^{(l)}) ) 
 \end{aligned} \right.
\end{equation}
that satisfies
\begin{equation} \label{equ:S_estimate_tilde_phi_app_k_j}
 \bigl\| \tilde{\phi}_{app,k}^{(j)} \bigr\|_{S^1_k(\R\times\R^4)} \lesssim \|\tilde{g}_k\|_{\dot{H}^1_x} + \|\tilde{h}_k\|_{L^2_x} + \|f_k\|_{N_k(\R\times\R^4)}
\end{equation}
and
\begin{equation} \label{equ:error_estimate_tilde_phi_app_k_j}
 \begin{split}
  &\bigl\| \tilde{\phi}_{app,k}^{(j)}(T_j^{(l)}) - \bigl( \tilde{g}_k - (\Box^{-1} f_k^{ell})(T_j^{(l)}) \bigr) \bigr\|_{\dot{H}^1_x} + \bigl\| \partial_t \tilde{\phi}_{app,k}^{(j)}(T_j^{(l)}) - \bigl( \tilde{h}_k - (\partial_t \Box^{-1} f_k^{ell})(T_j^{(l)}) \bigr) \bigr\|_{L^2_x} \\
  &\quad + \bigl\| \chi_{I_j} \bigl( \Box_{A_{<k}^p} \tilde{\phi}_{app,k}^{(j)} - f_k^{hyp} \bigr) \bigr\|_{N_k(\R\times\R^4)} \\
  &\lesssim \varepsilon \bigl( \|\tilde{g}_k\|_{\dot{H}^1_x} + \|\tilde{h}_k\|_{L^2_x} + \|f_k\|_{N_k(\R\times\R^4)}  \bigr).
 \end{split}
\end{equation}
We remark that because of scaling invariance Theorem \ref{thm:approx_local_solution_freq_localized_magnetic_wave_equation} below is only formulated for the case $k=0$. Next we set
\[
 \phi_{app, k}^{(j)} = \tilde{\phi}_{app, k}^{(j)} + (\Box^{-1} f_k^{ell}) 
\]
and find that
\begin{equation} \label{equ:error_estimate_phi_app_k_j_derivation}
 \begin{split}
  &\bigl\| \chi_{I_j} \bigl( \Box_{A_{<k}}^p \phi_{app,k}^{(j)} - f_k \bigr) \bigr\|_{(N_k \cap L^2_t \dot{H}^{-\frac{1}{2}}_x)(\R\times\R^4)} \\
  &\lesssim \bigl\| \chi_{I_j} \bigl( \Box_{A_{<k}}^p \tilde{\phi}_{app,k}^{(j)} - f_k^{hyp} \bigr) \bigr\|_{(N_k \cap L^2_t \dot{H}^{-\frac{1}{2}}_x)(\R\times\R^4)} + \bigl\| \chi_{I_j} A_{<k}^j P_k \partial_j (\Box^{-1} f_k^{ell}) \bigr\|_{(N_k \cap L^2_t \dot{H}^{-\frac{1}{2}}_x)(\R\times\R^4)} \\
  &\lesssim \varepsilon \|f_k\|_{(N_k \cap L^2_t \dot{H}^{-\frac{1}{2}})(\R\times\R^4)}.
 \end{split}
\end{equation}
Here we used that the intervals $I_j$ can be chosen such that uniformly for all $j = 1, \ldots, J$,
\[
 \|A\|_{L^2_t L^8_x(I_j \times \R^4)} \leq \varepsilon
\]
and thus,
\begin{align*}
 \bigl\| \chi_{I_j} A_{<k}^j P_k \partial_j (\Box^{-1} f_k^{ell}) \bigr\|_{(N_k \cap L^2_t \dot{H}^{-\frac{1}{2}}_x)(\R\times\R^4)} &\leq \bigl\| \chi_{I_j} A_{<k}^j P_k \partial_j (\Box^{-1} f_k^{ell}) \bigr\|_{(L^1_t L^2_x \cap L^2_t \dot{H}^{-\frac{1}{2}}_x)(\R\times\R^4)} \\
 &\lesssim \bigl\| \chi_{I_j} A_{<k} \bigr\|_{(L^2_t L^\infty_x)(\R\times\R^4)} \bigl\| P_k \nabla_x (\Box^{-1} f_k^{ell}) \bigr\|_{(L^2_t L^2_x \cap L^\infty_t \dot{H}^{-\frac{1}{2}}_x)(\R\times\R^4)} \\
 &\lesssim \|A\|_{L^2_t L^8_x(I_j \times \R^4)} 2^{\frac{1}{2} k} \bigl\| P_k \nabla_x (\Box^{-1} f_k^{ell}) \bigr\|_{(L^2_t L^2_x \cap L^\infty_t \dot{H}^{-\frac{1}{2}}_x)(\R\times\R^4)} \\
 &\lesssim \varepsilon \bigl\| \Box^{-1} f_k^{ell} \bigr\|_{S^1_k(\R\times\R^4)} \\
 &\lesssim \varepsilon \| f_k \|_{N_k(\R\times\R^4)}.
\end{align*}
From \eqref{equ:S_estimate_Box_inverse_f_k_ell}, \eqref{equ:S_estimate_tilde_phi_app_k_j}, \eqref{equ:error_estimate_tilde_phi_app_k_j}, and \eqref{equ:error_estimate_phi_app_k_j_derivation} it now follows immediately that $\phi_{app, k}^{(j)}$ is an approximate solution to \eqref{equ:frequency_localized_magnetic_wave_equation_on_I_j} that satisfies the estimates \eqref{equ:S_estimate_on_phi_app_k_j} and \eqref{equ:error_estimate_on_phi_app_k_j}. 

\medskip

Finally, we reassemble the approximate solutions $\phi_{app, k}^{(j)}$ to the frequency localized problems \eqref{equ:frequency_localized_magnetic_wave_equation_on_I_j} to a full approximate solution $\phi^{(j)}_{app} = \sum_{k \in \Z} \phi^{(j)}_{app,k}$ to \eqref{equ:magnetic_wave_equation_on_I_j} satisfying
\begin{equation*}
 \begin{split}
  &\|\phi^{(j)}_{app}(T_j^{(l)}) - \tilde{g}\|_{\dot{H}^1_x} + \|\partial_t \phi^{(j)}_{app}(T_j^{(l)}) - \tilde{h}\|_{L^2_x} + \|\chi_{I_j} (\Box_{A}^p \phi^{(j)}_{app} - f) \|_{(N \cap \ell^1 L^2_t \dot{H}^{-\frac{1}{2}}_x)(\R \times \R^4)} \\
  &\quad \quad \quad \lesssim \varepsilon \bigl( \|\tilde{g}\|_{\dot{H}^1_x} + \|\tilde{h}\|_{L^2_x} + \|f\|_{(N \cap \ell^1 L^2_t \dot{H}^{-\frac{1}{2}}_x)(\R \times \R^4)} \bigr)
 \end{split}
\end{equation*}
and 
\[
 \bigl\| \phi_{app}^{(j)} \bigr\|_{S^1(\R\times\R^4)} \lesssim \|\tilde{g}\|_{\dot{H}^1_x} + \|\tilde{h}\|_{L^2_x} + \|f\|_{(N \cap \ell^1 L^2_t \dot{H}^{-\frac{1}{2}}_x)(\R \times \R^4)}.
\]
Applying this procedure iteratively to the successive errors, we obtain an exact solution $\phi^{(j)}$ to \eqref{equ:magnetic_wave_equation_on_I_j} satisfying \eqref{equ:estimates_on_phi_on_I_j}. 
\end{proof}

We now turn to the heart of the matter, namely the construction of the approximate solutions to the frequency localized magnetic wave equations.

\begin{thm} \label{thm:approx_local_solution_freq_localized_magnetic_wave_equation}
 Let $(\tilde{g}, \tilde{h}) \in \dot{H}^1_x(\R^4) \times L^2_x(\R^4)$ and $\tilde{f} \in N(\R\times\R^4)$. Assume that $\tilde{f}, \tilde{g}, \tilde{h}$ are frequency localized at $|\xi| \sim 1$ and that $\tilde{f}$ is localized at modulation $||\tau| - |\xi|| \lesssim 1$. For $j = 1, \ldots, J$ there exists an approximate solution $\tilde{\phi}_{app}^{(j)}$ to
 \begin{equation} \label{equ:zero_frequency_localized_magnetic_wave_equation}
  \left\{ \begin{aligned}
   \Box_{A_{<0}}^p \phi &= \tilde{f} \text{ on } I_j \times \R^4, \\
   (\phi, \phi_t)|_{t = T_j^{(l)}} &= (\tilde{g}, \tilde{h})
  \end{aligned} \right.
 \end{equation}
 in the sense that
 \begin{equation} \label{equ:S_estimate_tilde_phi_app}
  \bigl\| \tilde{\phi}_{app}^{(j)} \bigr\|_{S_0(\R\times\R^4)} \lesssim \|\tilde{g}\|_{L^2_x} + \|\tilde{h}\|_{L^2_x} + \|\tilde{f}\|_{N_0(\R\times\R^4)}
 \end{equation}
 and
 \begin{equation} \label{equ:error_estimate_tilde_phi}
  \begin{split}
  &\bigl\| \tilde{\phi}_{app}^{(j)}(T_j^{(l)}) - \tilde{g} \bigr\|_{L^2_x} + \bigl\| \partial_t \tilde{\phi}_{app}^{(j)}(T_j^{(l)}) - \tilde{h} \bigr\|_{L^2_x} + \bigl\| \chi_{I_j} \bigl( \Box_{A_{<0}}^p \tilde{\phi}_{app}^{(j)} - \tilde{f} \bigr) \bigr\|_{N_0(\R\times\R^4)} \\
  &\quad \lesssim \varepsilon \bigl( \|\tilde{g}\|_{L^2_x} + \|\tilde{h}\|_{L^2_x} + \|\tilde{f}\|_{N_0(\R\times\R^4)} \bigr),
  \end{split}
 \end{equation}
 where $\chi_{I_j}$ denotes a sharp cutoff to the time interval $I_j$.
\end{thm}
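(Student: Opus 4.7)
I will adapt the half-wave renormalization parametrix of Rodnianski-Tao \cite{RT}, as sharpened for the critical $S_0/N_0$ calculus in Krieger-Sterbenz-Tataru \cite{KST}, to the finite time interval $I_j$. The only genuinely new feature is the time-interval restriction: the smallness parameter $\varepsilon$ in \eqref{equ:error_estimate_tilde_phi} is extracted from the fact that the cover $\{I_j\}$ may be chosen so fine that Strichartz-type norms of the magnetic potential such as $\|A\|_{L^2_t L^8_x(I_j\times\R^4)}$ and $\|A_{<0}\|_{L^2_t L^\infty_x(I_j\times\R^4)}$ are each $\lesssim \varepsilon$ uniformly in $j$.

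\emph{First step (parametrix construction).} Split everything into incoming/outgoing half-waves, writing $\tilde g = \tilde g^+ + \tilde g^-$ with $\tilde h^\pm = \mp i |D| \tilde g^\pm$, and similarly for $\tilde f$. For each sign $\pm$ and each cap $\omega \subset \mathbb{S}^3$ of diameter $\sim 2^l$ in a uniformly finite cover, with $l \ll 0$ to be chosen, define the scalar phase
\[
 \psi^\omega_\pm(t,x) := -\int_{T_j^{(l)}}^{t} \ell_\omega^\mu\, A_\mu^{<0}\bigl(s,\, x \mp (t-s)\omega\bigr)\,ds,\qquad \ell_\omega = (1,\pm\omega),
\]
which is the unique solution of the transport equation $(\partial_t\pm\omega\cdot\nabla_x)\psi^\omega_\pm = -\ell^\mu_\omega A^{<0}_\mu$ vanishing at $t = T_j^{(l)}$; the Coulomb gauge is used to drop the $A_0$ contribution modulo a controllable remainder. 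Then set
\[
 \tilde{\phi}^{(j)}_{app,\pm}(t,x) := \sum_{\omega} P_0^\omega\Bigl[\,e^{-i\psi^\omega_\pm(t,x)}\,U^\pm(t - T_j^{(l)})\,(\tilde g^\pm)^\omega\,\Bigr] + \mathrm{Duhamel}(\tilde f^\pm),
\]
with $U^\pm(t) = e^{\mp it|D|}$ and an analogous renormalized Duhamel piece handling the forcing at modulation $\lesssim 1$.

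\emph{Second step (bounds).} The $S_0$-bound \eqref{equ:S_estimate_tilde_phi_app} follows from the renormalization estimates in Section~4 of \cite{KST}: the multiplication operators $e^{\pm i\psi^\omega_\pm}$ are disposable with respect to the sector norms $S_0^\omega(l)$, $N_0$, $PW^\mp_\omega(l)$ and $NE$, with operator norm polynomially bounded in $\|\nabla_{t,x}A\|_{L^\infty_t L^2_x}$, and reassembling the angular pieces via the square-function definition of $S_0$, together with the standard energy estimate for the free wave equation, recovers the desired inequality. For the error estimate \eqref{equ:error_estimate_tilde_phi}, direct differentiation gives
\begin{equation*}
\begin{split}
 \Box^p_{A_{<0}}\bigl(e^{-i\psi^\omega_\pm} U^\pm f^\pm_\omega\bigr) &= 2i\, e^{-i\psi^\omega_\pm}\bigl(\ell^\mu_\omega A^{<0}_\mu + \partial^\mu\psi^\omega_\pm\bigr)\partial_\mu (U^\pm f^\pm_\omega) \\
 &\quad - e^{-i\psi^\omega_\pm}\bigl(\Box\psi^\omega_\pm + i\partial_\mu\psi^\omega_\pm\,\partial^\mu \psi^\omega_\pm\bigr) U^\pm f^\pm_\omega + R^\pm_\omega,
\end{split}
\end{equation*}
where $R^\pm_\omega$ collects paradifferential mismatches between $A^{<0}_j P_0 \partial^j$ and $A^{<0}_\mu\partial^\mu$ and between the angular cutoff $P_0^\omega$ and the phase. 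The transport equation for $\psi^\omega_\pm$ cancels the first bracket on sector $\omega$ up to null-form errors, while every remaining term is either quadratic in $A^{<0}$ or carries a factor of $A^{<0}$ in one of the $\varepsilon$-small norms on $I_j$ cited above. Summed across $\omega$ and combined with the disposability bounds of Step 2, this produces \eqref{equ:error_estimate_tilde_phi}; the initial-time mismatch vanishes exactly because $\psi^\omega_\pm(T_j^{(l)},\cdot)\equiv 0$.

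\emph{Main obstacle.} The hard part is the microlocal bookkeeping around the angular decomposition. The scale $l$ must be chosen small enough that $\psi^\omega_\pm$ is essentially constant across each angular wave packet in direction $\pm\omega$ -- which is what makes the transport equation genuinely cancel $2i A^{<0,j}\partial_j$ on the sector $\omega$ -- yet large enough that the errors produced when $P_0^\omega$ differentiates the phase factor $e^{-i\psi^\omega_\pm}$ remain square-summable over $\omega$ in the $N_0$ norm. This balance, together with the proof of disposability for $e^{\pm i\psi^\omega_\pm}$ and the verification that the quadratic-in-$A^{<0}$ remainders can be absorbed by $L^2_t$-type Strichartz bounds, is the technical heart of Section~4 in \cite{KST} and must be retranscribed with attention to the fact that here the small factors arise not from a small-data assumption on $A$ but from the shortness of $I_j$ in suitable space-time norms.
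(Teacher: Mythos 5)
There is a genuine gap, and it lies exactly at the point you defer to the literature. Your phase $\psi^\omega_\pm(t,x)=-\int_{T_j^{(l)}}^t \ell_\omega^\mu A^{<0}_\mu(s,x\mp(t-s)\omega)\,ds$ integrates the \emph{full} low-frequency potential along the null ray, with no angular truncation of $A$ relative to $\omega$. The portion of $A_{<0}$ whose spatial frequency is nearly (anti-)parallel to $\omega$ is resonant under this ray integration (a free wave travelling along the ray does not oscillate in $s$), so $\psi^\omega_\pm$ and, worse, $\nabla_{t,x}\psi^\omega_\pm$, $\Box\psi^\omega_\pm$, $\partial^\mu\psi^\omega_\pm\partial_\mu\psi^\omega_\pm$ and the commutators of $P_0^\omega$ with $e^{-i\psi^\omega_\pm}$ are not controlled by the energy of $A$; they grow with $|I_j|$, and note that since finitely many $I_j$ cover $[0,\infty)$ at least one interval is unbounded. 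Crucially, the smallness you invoke, $\|A\|_{L^2_tL^8_x(I_j)}\lesssim\varepsilon$ or $\|A_{<0}\|_{L^2_tL^\infty_x(I_j)}\lesssim\varepsilon$, does not see these parallel interactions at all -- they are precisely the non-perturbative part of $2iA^j_{<0}\partial_j\phi$ -- so the error terms in your second step cannot be absorbed. This is why the construction in the paper (following \cite{KST}) uses a pseudodifferential phase $\psi_\pm(t,x,\xi)=\sum_k L^\omega_\pm\Delta^{-1}_{\omega^\perp}\Pi^\omega_{>2^{\sigma k}}A_k\cdot\omega$ (here further modified by the large-angle cutoff $\Pi^\omega_{2^{-C_1}>\cdot}$ for intermediate frequencies): the small-angle part of $A$ is \emph{excluded} from the phase, and the corresponding piece of the magnetic term is estimated directly using the Coulomb/null structure. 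The $\varepsilon$ gain then comes from two sources absent in your argument: choosing $C_1,C_2$ (and $C_3$) large in the angular/frequency cutoffs of the phase, and divisibility of square-function norms of \emph{angularly localized} pieces of $A$ (as in Proposition~\ref{prop:modulation_localized_estimate} and Lemma~\ref{lem:fixed_freq_in_symbol_space_time_estimates}), not of plain Strichartz norms.

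The second step also rests on a mis-citation. The claim that multiplication by $e^{\pm i\psi^\omega_\pm}$ is ``disposable'' on $S_0^\omega(l)$, $N_0$, $PW^\mp_\omega(l)$, $NE$ is not something \cite{KST} provides: what is proved there (and what must be re-proved here with the time cutoffs $\chi_{I_j}$, cf.\ Theorem~\ref{thm:mapping_properties_renormalization_operator} and Propositions~\ref{prop:N_0_to_N_0_for_e}--\ref{prop:N_0_ast_to_N_0_ast_for_e}) are mapping properties of the specific left- and right-quantized operators $e^{\pm i\psi_\pm}(t,x,D)$, $e^{\pm i\psi_\pm}(D,y,s)$, obtained through oscillatory-integral kernel bounds and decomposable-norm estimates that depend on the angular truncation built into the symbol. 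None of this transfers to multiplication by your rough, sector-dependent scalar phase; in particular boundedness on $X^{0,\frac12}_\infty$-type and null-frame components, and the $\varepsilon$-smallness of the renormalization errors, are asserted rather than proved. (Minor points: $\Box^p_{A_{<0}}$ contains no $A_0$, so there is nothing to drop via the Coulomb gauge; and the exact initial-data match from $\psi^\omega_\pm(T_j^{(l)},\cdot)=0$ is unnecessary, since \eqref{equ:error_estimate_tilde_phi} tolerates an $O(\varepsilon)$ mismatch -- the paper's parametrix indeed only matches the data approximately.) To repair the argument you would need to reinstate the frequency-dependent angular truncation in the phase, treat the discarded small-angle interactions as errors via the null structure, and establish the time-localized mapping properties of the resulting (pseudodifferential, not scalar) renormalization operators.
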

\begin{proof}
In order to prove estimates and construct a parametrix for the frequency localized magnetic wave equation \eqref{equ:zero_frequency_localized_magnetic_wave_equation} we adapt the scheme in Section 6 of \cite{KST} to our time-localized setting. We will use frequency localized renormalization operators $e_{<0}^{-i \psi_\pm}(t,x,D)$ and $e_{<0}^{+i\psi_\pm}(D,y,s)$, where $P(x,D)$ denotes the left quantization and $P(D,y)$ the right quantization of a pseudodifferential operator $P$ and where the subscript $<0$ denotes the space-time frequency localization of the symbol at frequencies $\ll 1$. For the definition of the phase correction $\psi_{\pm}$ in the renormalization operator $e_{<0}^{+i \psi_\pm}(D,y,s)$ we need to introduce some notation. 

\medskip

For any $\xi \in \R^4 \backslash \{0\}$ we set
\[
 \omega = \frac{\xi}{|\xi|}, \quad L_{\pm}^\omega := \pm \partial_t + \omega \cdot \nabla_x, \quad \Delta_{\omega^\perp} := \Delta - (\omega \cdot \nabla_x)^2.
\]
Moreover, for any $\omega \in \mathbb{S}^3$ and any angle $0 < \theta \lesssim 1$, we define the sector projection $\Pi_{>\theta}^\omega$ in frequency space by the formula
\[
 \widehat{\Pi_{> \theta}^\omega f}(\zeta) := \Bigl( 1 - \eta\Bigl(\frac{\angle (\zeta, \omega)}{\theta}\Bigr) \Bigr) \Bigl( 1 - \eta\Bigl(\frac{\angle (-\zeta, \omega)}{\theta}\Bigr) \Bigr) \hat{f}(\zeta),
\]
where $\eta(y)$ is a bump function on $\R$ which equals $1$ when $|y| < \frac{1}{2}$ and vanishes for $|y| > 1$, and $\angle(\zeta, \omega)$ is the angle between $\zeta$ and $\omega$. Thus, $\Pi_{>\theta}^\omega$ restricts $f$ smoothly (except at the frequency origin) to the sector of frequencies $\zeta$ whose angle with both $\omega$ and $-\omega$ is $\gtrsim \theta$. Similarly, we define the Fourier multipliers $\Pi_\theta^\omega$, $\Pi_{\leq \theta}^\omega$ and $\Pi^\omega_{\theta_1 > \cdot > \theta_2}$. 

\medskip

Let $C_1, C_2 > 0$ be constants to be chosen sufficiently large later on depending on the size of $\|\nabla_{t,x} A\|_{L^2_x}$ and let $\sigma > 0$ be a constant to be chosen sufficiently small. We then define the phase correction $\psi_\pm$ by
\begin{equation} \label{equ:definition_phase}
 \psi_{\pm}(t,x,\xi) = \sum_{-C_2 \leq k < 0} L^\omega_\pm \Delta_{\omega^\perp}^{-1} \Pi^\omega_{2^{-C_1} > \cdot > 2^{\sigma k}} A_k \cdot \omega + \sum_{k < -C_2} L^\omega_{\pm} \Delta_{\omega^\perp}^{-1} \Pi^\omega_{> 2^{\sigma k}} A_k \cdot \omega.
\end{equation}
Note that the first sum effectively only starts at $k \lesssim - \frac{C_1}{\sigma}$. See Section 6 in \cite{KST} for a motivation for such a choice of phase correction. We emphasize that this phase slightly differs from the one used in \cite{KST}, because for intermediate frequencies $-C_2 \leq k < 0$ the high angles are cut off. 

\medskip

We define the approximate solution $\tilde{\phi}_{app}^{(j)}$ to \eqref{equ:zero_frequency_localized_magnetic_wave_equation} by
\begin{equation*}
 \begin{split}
  \tilde{\phi}^{(j)}_{app} &= \chi_{I_j}(t) \frac{1}{2} \sum_\pm \biggl\{ e_{<0}^{-i\psi_\pm}(t,x,D) \frac{1}{|D|} e^{\pm i (t - T_j^{(l)}) |D|} e_{<0}^{ i \psi_\pm} (D,y,T_j^{(l)}) (|D| \tilde{g} \pm (-i) \tilde{h}) \\
  &\quad \quad \quad \quad \quad \quad \quad \pm e_{<0}^{-i\psi_\pm}(t,x,D) \frac{1}{|D|} K_j^{\pm} e_{<0}^{ i \psi_\pm}(D,y,s) (-i) \tilde{f} \biggr\},
 \end{split}
\end{equation*}
where
\[
 K_j^{\pm} \tilde{f}(t) = \int_{T_j^{(l)}}^t e^{\pm i (t-s) |D|} \tilde{f}(s) \, ds.
\]

In order to prove the estimates \eqref{equ:S_estimate_tilde_phi_app} and \eqref{equ:error_estimate_tilde_phi} we establish the following crucial time-localized mapping properties of the renormalization operator $e_{<0}^{\pm i \psi_\pm}(t,x,D)$.
\begin{thm} \label{thm:mapping_properties_renormalization_operator}
 For $j = 1, \ldots, J$, the frequency localized renormalization operators have the following mapping properties with $Z \in \{ N_0(\R \times \R^4), L^2_x(\R^4), N_0^\ast(\R \times\R^4) \}$,
 \begin{align}
  \chi_{I_j} e_{<0}^{\pm i \psi_\pm}: \quad & Z \to Z, \label{equ:Z_to_Z_bound_for_e}\\
  \chi_{I_j} \partial_t e_{<0}^{\pm i \psi_\pm}: \quad & Z \to \varepsilon Z, \label{equ:Z_to_Z_bound_for_time_derivative_e}\\
  \chi_{I_j} (e_{<0}^{-i\psi_\pm}(t,x,D) e_{<0}^{+i \psi_\pm}(D,y,t) - 1): \quad & Z \to \varepsilon Z, \label{equ:renormalization_error_estimate_1} \\
  \chi_{I_j} (e_{<0}^{-i\psi_\pm}(t,x,D) \Box - \Box_{A_{<0}}^p e_{<0}^{-i\psi_\pm}(t,x,D)): \quad & N_{0,\pm}^\ast(\R \times \R^4) \to \varepsilon N_{0,\pm}(\R \times \R^4), \label{equ:renormalization_error_estimate_2} \\
  \chi_{I_j} e_{<0}^{-i \psi_\pm}(t,x,D): \quad & S_0^\sharp(\R \times \R^4) \to S_0(\R \times \R^4), \label{equ:dispersive_estimate_for_e}
 \end{align}
 where $\chi_{I_j}$ denotes a sharp cutoff to the time interval $I_j$. In the estimates \eqref{equ:Z_to_Z_bound_for_e} and \eqref{equ:Z_to_Z_bound_for_time_derivative_e}, the operator $e_{<0}^{\pm i \psi_\pm}$, respectively $\partial_t e_{<0}^{\pm i \psi_\pm}$, stands for both left and right quantization.
\end{thm}

The estimates \eqref{equ:S_estimate_tilde_phi_app} and \eqref{equ:error_estimate_tilde_phi} then follow by adapting the manipulations in the proof of Theorem 4 in \cite{KST} to our time-localized setting. 
\end{proof}

The remainder of this section is devoted to the proof of Theorem \ref{thm:mapping_properties_renormalization_operator}. To this end we will adapt the general scheme of Sections 7 -- 11 in \cite{KST} to our large data setting. The accuracy of the approximate solution $\tilde{\phi}_{app}^{(j)}$ relies on the error estimates \eqref{equ:Z_to_Z_bound_for_time_derivative_e}, \eqref{equ:renormalization_error_estimate_1} and \eqref{equ:renormalization_error_estimate_2}. While in \cite{KST} the small energy assumption can be used to achieve smallness in the corresponding error estimates, we have to argue more carefully here, using the high angle cut-off in the definition of the phase correction and smallness of suitable space-time norms of $A$ on sufficiently small time intervals, namely the intervals $I_j$.

\subsection{Decomposable function spaces}

We begin by reviewing the notion of decomposable function spaces and estimates from \cite{RT}, \cite{Krieger-Sterbenz}, and \cite{KST}.

\medskip

Let $c(t,x,D)$ be a pseudodifferential operator whose symbol $c(t,x,\xi)$ is homogeneous of degree $0$ in $\xi$. Assume that $c$ has a representation 
\[
 c = \sum_{\theta \in 2^{-\N}} c^{(\theta)}.
\]
Let $1 \leq q,r \leq \infty$. For every $\theta \in 2^{-\N}$, we define 
\begin{equation*}
 \|c^{(\theta)}\|_{D_\theta (L^q_t L^r_x)(\R\times\R^4)} = \Biggl\| \biggl( \sum_{l=0}^{100} \sum_{\Gamma_\theta^\nu} \sup_{\omega \in \Gamma_\theta^\nu} \, \bigl\| b_\theta^\nu(\omega) (\theta \nabla_\xi)^l c^{(\theta)} \bigr\|_{L^r_x}^2 \biggr)^{1/2} \Biggr\|_{L^q_t(\R)},
\end{equation*}
where $\{\Gamma_\theta^\nu\}_{\nu \in \mathbb{S}^3}$ is a uniformly finitely overlapping covering of $\mathbb{S}^3$ by caps of diameter $\sim \theta$ and $\{ b_\theta^\nu \}_{\nu \in \mathbb{S}^3}$ is a smooth partition of unity subordinate to the covering $\{\Gamma_\theta^\nu\}_{\nu \in \mathbb{S}^3}$. Then we define the decomposable norm
\begin{equation*}
 \|c\|_{D(L^q_t L^r_x)(\R \times \R^4)} = \inf_{c = \sum_{\theta } c^{(\theta)}} \sum_{\theta \in 2^{-\mathbb{N}}} \|c^{(\theta)}\|_{D_\theta(L^q_t L^r_x)(\R \times \R^4)}.
\end{equation*}

We will repeatedly use the following decomposable estimates.
\begin{lem}[\protect{\cite[Lemma 7.1]{KST}}] \label{lem:decomposability_lemma_KST}
 Let $P(t,x,D)$ be a pseudodifferential operator with symbol $p(t,x,\xi)$. Suppose that $P$ satisfies the fixed-time estimate
 \[
  \sup_{t \in \R} \|P(t,x,D)\|_{L^2_x \to L^2_x} \lesssim 1.
 \]
 Let $1 \leq q, q_1, q_2, r, r_1 \leq \infty$ such that $\frac{1}{q} = \frac{1}{q_1} + \frac{1}{q_2}$ and $\frac{1}{r} = \frac{1}{r_1} + \frac{1}{2}$. For any symbol $c(t,x,\xi) \in D(L^{q_1}_t L^{r_1}_x)(\R \times \R^4)$ that is zero homogeneous in $\xi$, we have
 \begin{equation*}
  \|(c p)(t,x,D) \phi\|_{L^{q}_t L^{r}_x(\R\times\R^4)} \lesssim \|c\|_{D(L^{q_1}_t L^{r_1}_x)(\R\times\R^4)} \|\phi\|_{L^{q_2}_t L^2_x(\R\times\R^4)}.
 \end{equation*}
\end{lem}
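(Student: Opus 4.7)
\medskip

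\noindent \emph{Proof Plan.} The plan is to reduce everything to a fixed-scale estimate. Choosing a decomposition $c = \sum_{\theta \in 2^{-\N}} c^{(\theta)}$ nearly saturating the infimum in the $D(L^{q_1}_t L^{r_1}_x)$ norm and invoking the triangle inequality in $\theta$, it suffices to prove for each $\theta$ that
\[
 \|(c^{(\theta)} p)(t,x,D)\phi\|_{L^q_t L^r_x} \lesssim \|c^{(\theta)}\|_{D_\theta(L^{q_1}_t L^{r_1}_x)} \|\phi\|_{L^{q_2}_t L^2_x}.
\]
Split $c^{(\theta)} = \sum_\nu c^{(\theta),\nu}$ with $c^{(\theta),\nu} := b_\theta^\nu(\xi/|\xi|)\, c^{(\theta)}$ angularly supported in $\Gamma_\theta^\nu$.

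The key step is a spectral expansion adapted to each cap: after rescaling $\Gamma_\theta^\nu$ to a ball of unit radius, fix a smooth orthonormal basis $\{\varphi_m^\nu\}_{m \in \N}$ on $\mathbb{S}^3$ supported in a slight enlargement of $\Gamma_\theta^\nu$ and write
\[
 c^{(\theta),\nu}(t,x,\xi) = \sum_{m \in \N} \alpha_m^\nu(t,x)\, \varphi_m^\nu(\xi/|\xi|), \qquad |\alpha_m^\nu(t,x)| \lesssim \langle m\rangle^{-10} \sum_{l=0}^{100} \sup_{\omega \in \Gamma_\theta^\nu} \bigl|b_\theta^\nu(\omega)(\theta\nabla_\xi)^l c^{(\theta)}(t,x,\omega)\bigr|,
\]
the coefficient bound coming from integration by parts in $\omega$ together with Sobolev embedding on the unit ball (which comfortably consumes $l \leq 100 > \dim\mathbb{S}^3 / 2$ tangential derivatives). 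Since $\varphi_m^\nu$ depends only on $\xi/|\xi|$, the Kohn--Nirenberg composition factors cleanly,
\[
 \bigl((\alpha_m^\nu \varphi_m^\nu)\, p\bigr)(t,x,D)\phi(x) = \alpha_m^\nu(t,x) \cdot P(t,x,D)\bigl[\varphi_m^\nu(D/|D|)\phi\bigr](x).
\]

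H\"older in $x$ at $1/r = 1/r_1 + 1/2$, the assumed uniform bound $\|P(t,\cdot,D)\|_{L^2_x \to L^2_x} \lesssim 1$, absorption of the $\langle m\rangle^{-10}$ weight by Cauchy--Schwarz in $m$, and a second Cauchy--Schwarz in $\nu$ using the Bessel-type almost-orthogonality
\[
 \sum_\nu \bigl\|\varphi_m^\nu(D/|D|)\phi(t,\cdot)\bigr\|_{L^2_x}^2 \lesssim \|\phi(t,\cdot)\|_{L^2_x}^2
\]
of the cap multipliers, together yield the pointwise-in-time bound $\|(c^{(\theta)} p)(t,\cdot,D)\phi\|_{L^r_x} \lesssim F_\theta(t) \|\phi(t,\cdot)\|_{L^2_x}$, where $F_\theta(t)$ is precisely the integrand in the definition of $\|c^{(\theta)}\|_{D_\theta(L^{q_1}_t L^{r_1}_x)}$. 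A final H\"older in $t$ at $1/q = 1/q_1 + 1/q_2$ and summation over $\theta$ then close the estimate. The main obstacle is the spectral expansion on the cap: the basis $\{\varphi_m^\nu\}$ must be chosen so that only a fixed finite number of tangential $(\theta\nabla_\xi)$-derivatives is needed to control every Fourier coefficient with a summable weight. This is precisely what the seemingly arbitrary choice of $l \le 100$ derivatives in the definition of the $D_\theta$ norm is engineered to supply; once it is in place, the remainder of the argument is H\"older combined with angular orthogonality on $\mathbb{S}^3$.
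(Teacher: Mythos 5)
The paper does not actually prove this lemma: it is imported verbatim as \cite[Lemma 7.1]{KST}, and your argument reconstructs essentially the proof given there (and in its antecedents in \cite{RT} and \cite{Krieger-Sterbenz}): separate variables on each cap by an angular expansion, use the $\leq 100$ rescaled derivatives to get summably decaying coefficients, exploit that an $x$-independent angular multiplier composes exactly with the left quantization, then H\"older in $x$, the fixed-time $L^2_x$ bound for $P$, Cauchy--Schwarz in $\nu$ together with the finite-overlap/Bessel bound for the cap multipliers, and finally H\"older in $t$ and summation over $\theta$. So the approach is the right one and the skeleton is sound.

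Two points should be tightened. First, there is no orthonormal basis of $L^2(\mathbb{S}^3)$ all of whose elements are supported in (an enlargement of) a single cap; the standard device is a Fourier series on a cube containing the rescaled cap, and orthonormality is never needed --- one only uses $|\varphi_m^\nu| \lesssim 1$, support in a fixed dilate of $\Gamma_\theta^\nu$ (which gives the uniform-in-$m$ Bessel bound by finite overlap), and rapid decay of the coefficients. Second, your coefficient bound takes a pointwise $\sup_{\omega}$ \emph{before} the $L^{r_1}_x$ norm, so it produces $\bigl\| \sup_\omega |b_\theta^\nu (\theta\nabla_\xi)^l c^{(\theta)}| \bigr\|_{L^{r_1}_x}$, which is not the quantity $\sup_{\omega \in \Gamma_\theta^\nu} \bigl\| b_\theta^\nu (\theta\nabla_\xi)^l c^{(\theta)} \bigr\|_{L^{r_1}_x}$ appearing in the $D_\theta$ norm, so as written $F_\theta(t)$ is not ``precisely the integrand.'' The repair is one line: write $\alpha_m^\nu(t,x)$ as an $\omega$-integral of $(\theta\nabla_\xi)$-derivatives of $c^{(\theta),\nu}$ against the oscillating factor (integration by parts) and apply Minkowski's inequality before estimating, which yields $\|\alpha_m^\nu(t,\cdot)\|_{L^{r_1}_x} \lesssim \langle m \rangle^{-N} \sup_{\omega\in\Gamma_\theta^\nu} \sum_{l \leq 2N} \| b_\theta^\nu(\omega) (\theta\nabla_\xi)^l c^{(\theta)}(t,\cdot,\omega)\|_{L^{r_1}_x}$, exactly matching the $D_\theta$ integrand. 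With these adjustments the argument closes as you describe.
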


By duality we obtain decomposable estimates for right quantizations.
\begin{lem} \label{lem:decomposability_lemma_right_quantizations}
 Let $P$ be a pseudodifferential operator with symbol $p(t,x,\xi)$. Suppose that $P$ satisfies the fixed-time estimate
 \[
  \sup_{t \in \R} \|P(t,x,D)\|_{L^2_x \to L^2_x} \lesssim 1.
 \]
 Let $1 \leq q < \infty$ and $1 \leq q_1, q_2 \leq \infty$ such that $\frac{1}{q} = \frac{1}{q_1} + \frac{1}{q_2}$. For any symbol $c(t,x,\xi) \in D(L^{q_1}_t L^\infty_x)(\R \times \R^4)$ that is zero homogeneous in $\xi$, the right-quantized operator $(\overline{c} \, \overline{p})(D,y,t)$ has the following mapping property
 \begin{equation*}
  \bigl\| (\overline{c} \, \overline{p})(D, y, t) \phi \bigr\|_{L^q_t L^2_x(\R\times\R^4)} \lesssim \|c\|_{D(L^{q_1}_t L^\infty_x)(\R\times\R^4)} \|\phi\|_{L^{q_2}_t L^2_x(\R\times\R^4)}.
 \end{equation*}
\end{lem}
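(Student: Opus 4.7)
\medskip

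\noindent\textbf{Proof proposal for Lemma \ref{lem:decomposability_lemma_right_quantizations}.} The plan is to deduce this estimate from Lemma \ref{lem:decomposability_lemma_KST} by a straightforward duality argument, exploiting the fact that, at each fixed time $t$, a right quantization with symbol $\bar{c}(t,y,\xi)\bar{p}(t,y,\xi)$ is nothing but the $L^2_x$-adjoint of the left quantization with symbol $c(t,x,\xi)p(t,x,\xi)$.

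\medskip

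First I would observe the adjoint identity. A direct computation with Schwartz kernels shows that for each fixed $t\in\R$,
\[
\bigl\langle (\bar{c}\,\bar{p})(D,y,t)\phi(t,\cdot),\,\psi(t,\cdot)\bigr\rangle_{L^2_x}=\bigl\langle \phi(t,\cdot),\,(cp)(t,x,D)\psi(t,\cdot)\bigr\rangle_{L^2_x},
\]
since the Schwartz kernel of $(cp)(t,x,D)$ is $\int e^{i(x-y)\cdot\xi}(cp)(t,x,\xi)\,d\xi$, whose complex conjugate transposed kernel equals $\int e^{i(x-y)\cdot\xi}(\bar{c}\bar{p})(t,y,\xi)\,d\xi$, i.e.\ the kernel of $(\bar{c}\bar{p})(D,y,t)$. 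Note that the fixed-time $L^2_x\to L^2_x$ hypothesis on $P$ passes automatically to its adjoint, so Lemma \ref{lem:decomposability_lemma_KST} will apply to $(cp)(t,x,D)$.

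\medskip

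Next, since $1\leq q<\infty$, I would use the $L^q_tL^2_x$--$L^{q'}_tL^2_x$ duality to reduce the claim to proving
\[
\biggl|\int_{\R}\bigl\langle (\bar{c}\,\bar{p})(D,y,t)\phi(t,\cdot),\,\psi(t,\cdot)\bigr\rangle_{L^2_x}\,dt\biggr|\lesssim \|c\|_{D(L^{q_1}_tL^\infty_x)}\|\phi\|_{L^{q_2}_tL^2_x}\|\psi\|_{L^{q'}_tL^2_x}
\]
for all Schwartz $\psi$. Using the adjoint identity above and Hölder's inequality in time with the conjugate pair $(q_2,q_2')$, the left-hand side is bounded by
\[
\|\phi\|_{L^{q_2}_tL^2_x}\bigl\|(cp)(t,x,D)\psi\bigr\|_{L^{q_2'}_tL^2_x}.
\]

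\medskip

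Finally, I would apply Lemma \ref{lem:decomposability_lemma_KST} to the operator $(cp)(t,x,D)$ acting on $\psi$, with $r_1=\infty$ (hence $r=2$), with the lemma's $q_1$ taken to be our $q_1$, and with the lemma's input exponent in time taken to be $q'$. The relation $\frac{1}{q_2'}=\frac{1}{q_1}+\frac{1}{q'}$ required by the lemma follows from the hypothesis $\frac{1}{q}=\frac{1}{q_1}+\frac{1}{q_2}$ via
\[
\frac{1}{q_1}+\frac{1}{q'}=\frac{1}{q_1}+1-\frac{1}{q}=1-\frac{1}{q_2}=\frac{1}{q_2'},
\]
so Lemma \ref{lem:decomposability_lemma_KST} yields
\[
\bigl\|(cp)(t,x,D)\psi\bigr\|_{L^{q_2'}_tL^2_x}\lesssim \|c\|_{D(L^{q_1}_tL^\infty_x)}\|\psi\|_{L^{q'}_tL^2_x},
\]
which combined with the previous bound gives the claim. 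There is no real obstacle here: the only point requiring care is the verification of the Hölder bookkeeping above, and the fact that $q<\infty$ is exactly what is needed to legitimize the duality.
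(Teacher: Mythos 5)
Your proposal is correct and follows essentially the same route as the paper: duality in $L^q_tL^2_x$ (using $q<\infty$), the fixed-time adjoint identity identifying the right quantization $(\bar c\,\bar p)(D,y,t)$ with the adjoint of the left quantization $(cp)(t,x,D)$, Hölder in time, and then Lemma \ref{lem:decomposability_lemma_KST} with $r_1=\infty$, $r=2$; the exponent bookkeeping $\frac{1}{q_1}+\frac{1}{q'}=\frac{1}{q_2'}$ is exactly the relation used in the paper's proof.
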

\begin{proof}
 Let $1 < q' \leq \infty$ be the the conjugate exponent to $q$ and define $\frac{1}{\tilde{q}} = \frac{1}{q_1} + \frac{1}{q'}$. By duality, H\"older's inequality and Lemma \ref{lem:decomposability_lemma_KST}, we have
 \begin{align*}
  \bigl\| (\overline{c} \, \overline{p})(D,y,t) \phi \bigr\|_{L^q_t L^2_x} &= \sup_{\|\psi\|_{L^{q'}_t L^2_x} \leq 1} \bigl\langle \psi, (\overline{c} \, \overline{p})(D,y,t) \phi \bigr\rangle \\
  &= \sup_{\|\psi\|_{L^{q'}_t L^2_x} \leq 1} \bigl\langle (c p)(t,x,D) \psi, \phi \bigr\rangle \\
  &\leq \sup_{\|\psi\|_{L^{q'}_t L^2_x} \leq 1} \bigl\| (c p)(t,x,D) \psi \bigr\|_{L^{\tilde{q}}_t L^2_x} \|\phi\|_{L^{q_2}_t L^2_x} \\
  &\lesssim \sup_{\|\psi\|_{L^{q'}_t L^2_x} \leq 1} \|c\|_{D(L^{q_1}_t L^\infty_x)} \|\psi\|_{L^{q'}_t L^2_x} \|\phi\|_{L^{q_2}_t L^2_x} \\
  &\lesssim \|c\|_{D(L^{q_1}_t L^\infty_x)} \|\phi\|_{L^{q_2}_t L^2_x}.
 \end{align*}
\end{proof}

From \cite[Lemma 10.2]{Krieger-Sterbenz} we have the following H\"older-type estimate for decomposable norms
\begin{equation} \label{equ:hoelder_type_estimate_decomposable_norms}
 \Bigl\| \prod_{i=1}^m c_i \Bigr\|_{D(L^q_t L^r_x)} \lesssim \prod_{i=1}^m \|c_i\|_{D(L^{q_i}_t L^{r_i}_x)},
\end{equation}
where $m \in \N$, $1 \leq q,r, q_i, r_i \leq \infty$ for $i = 1, \ldots, m$ and $(\frac{1}{q}, \frac{1}{r}) = \sum_{i=1}^m (\frac{1}{q_i}, \frac{1}{r_i})$.

\subsection{Some symbol bounds for phases}

Recall that the magnetic potential $A$ is assumed to be supported at frequencies $\lesssim 1$. For any integer $k < 0$ and any dyadic angle $0 < \theta \lesssim 1$, we use the notation
\begin{equation*}
 \psi_k^{(\theta)}(t,x,\xi) = L^\omega_\pm \Delta_{\omega^\perp}^{-1} \Pi_\theta^\omega A_k \cdot \omega
\end{equation*}
and
\begin{equation*}
 \psi_{<k} = \sum_{l < k} P_l \psi_\pm.
\end{equation*}

\begin{lem}
 For any $t,s \in \R$, $x,y \in \R^4$, $\xi \in \R^4$ and any integer $k<0$, it holds that
 \begin{equation} \label{equ:symbol_bound_difference_psi}
  |\psi_\pm(t,x,\xi) - \psi_\pm(s,y,\xi)| \lesssim ( 2^{-C_1/2} + 2^{-C_2} ) \|\nabla_{t,x} A(0)\|_{L^2_x} ( |t-s| + |x-y| ). 
 \end{equation}
 Moreover, we have for any multi-index $\alpha \in \N_0^4$ with $1 \leq |\alpha| \leq \sigma^{-1}$ that
 \begin{equation} \label{equ:symbol_bound_difference_xi_derivative_of_psi}
  |\nabla_\xi^{\alpha} ( \psi(t,x,\xi) - \psi(s,y,\xi) )| \lesssim \langle |t-s| + |x-y| \rangle^{\sigma (|\alpha| - \frac{1}{2})} \|\nabla_{t,x} A(0)\|_{L^2_x}.
 \end{equation}
\end{lem}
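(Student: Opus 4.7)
The plan is to estimate $\psi_\pm$ through its dyadic decomposition into pieces
\[
\psi_{\pm,k}^{(\theta)}(t,x,\xi) := L_\pm^\omega\Delta_{\omega^\perp}^{-1}\Pi_\theta^\omega A_k\cdot\omega
\]
indexed by frequency scale $2^k$ of $A_k = P_k A$ and dyadic angular scale $\theta\ge 2^{\sigma k}$ (and additionally $\theta\le 2^{-C_1}$ in the intermediate range $-C_2\le k<0$). Each piece is controlled in $L^\infty_{t,x}$ by combining four ingredients: (i) Bernstein on an angular sector of $\R^4$ of frequency-volume $\sim 2^{4k}\theta^{3}$ giving $\|\cdot\|_{L^\infty_x}\lesssim 2^{2k}\theta^{3/2}\|\cdot\|_{L^2_x}$; (ii) the Coulomb cancellation $|\omega\cdot\hat A_k(\xi')|\lesssim\theta|\hat A_k(\xi')|$ on the sector $\angle(\xi',\omega)\sim\theta$, a consequence of $\xi'\cdot\hat A_k(\xi')=0$; (iii) the free-wave energy conservation $\|A_k(t)\|_{L^2_x}\lesssim 2^{-k}\|\nabla_{t,x}A(0)\|_{L^2_x}$; and (iv) the symbol bound $\|L_\pm^\omega\Delta_{\omega^\perp}^{-1}\Pi_\theta^\omega f\|_{L^2_x}\lesssim 2^{-k}\theta^{-2}\|f\|_{L^2_x}$. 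Composed, these yield the core per-piece estimate $\|\psi_{\pm,k}^{(\theta)}\|_{L^\infty_{t,x}}\lesssim\theta^{1/2}\|\nabla_{t,x}A(0)\|_{L^2_x}$, with one extra factor $2^k$ upon applying $\nabla_{t,x}$.

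\textbf{Proof of the first bound.}
By the mean value inequality, $|\psi_\pm(t,x,\xi)-\psi_\pm(s,y,\xi)|\le R\,\|\nabla_{t,x}\psi_\pm(\cdot,\cdot,\xi)\|_{L^\infty_{t,x}}$ with $R:=|t-s|+|x-y|$, so it suffices to sum $\sum_{k,\theta} 2^k\theta^{1/2}$. For the intermediate range $-C_2\le k<0$ with $\theta\in[2^{\sigma k},2^{-C_1}]$, the $\theta$-sum is controlled by its upper endpoint $2^{-C_1/2}$ and $\sum_{k<0}2^k\lesssim 1$, contributing $2^{-C_1/2}$. For the low-frequency range $k<-C_2$ with $\theta\ge 2^{\sigma k}$, the $\theta$-sum is bounded by $\theta^{1/2}\lesssim 1$ and $\sum_{k<-C_2}2^k\lesssim 2^{-C_2}$. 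Adding the two contributions gives the claim.

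\textbf{Proof of the second bound.}
In the intended application $|\xi|\sim 1$, so $\nabla_\xi$ essentially coincides with $\partial_\omega$ on $\mathbb{S}^3$. Since $\partial_\omega$ is tangent to the sphere, on the sector support we have $\partial_\omega\cos\angle\sim\theta$, whence $\partial_\omega\sin\angle$ and $\partial_\omega\tan\angle$ are merely $O(1)$; consequently each $\partial_\omega$ applied to any of the factors $\Pi_\theta^\omega$, $\Delta_{\omega^\perp}^{-1}$, or the Coulomb multiplier $\tan\angle$ loses only $\theta^{-1}$ relative to the original size. This gives the per-piece bounds
\[
|\nabla_\xi^\alpha\psi_{\pm,k}^{(\theta)}|\lesssim\theta^{1/2-|\alpha|}\|\nabla_{t,x}A(0)\|_{L^2_x},\qquad |\nabla_{t,x}\nabla_\xi^\alpha\psi_{\pm,k}^{(\theta)}|\lesssim 2^k\theta^{1/2-|\alpha|}\|\nabla_{t,x}A(0)\|_{L^2_x}.
\]
Combining with the mean value inequality yields $|\nabla_\xi^\alpha(\psi_{\pm,k}^{(\theta)}(t,x,\xi)-\psi_{\pm,k}^{(\theta)}(s,y,\xi))|\lesssim \min(1,R\cdot 2^k)\theta^{1/2-|\alpha|}$. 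For $|\alpha|>1/2$ the $\theta$-sum is dominated by $\theta=2^{\sigma k}$, contributing $2^{-\sigma k(|\alpha|-1/2)}$, and the remaining $k$-sum
\[
\sum_{k<0}\min(1,R\cdot 2^k)\cdot 2^{-\sigma k(|\alpha|-1/2)}
\]
is evaluated by splitting at $k_*\sim -\log_2\langle R\rangle$: both the portion $k\in[k_*,0)$ and the portion $k<k_*$ produce a geometric series of size $\langle R\rangle^{\sigma(|\alpha|-1/2)}$, provided $\sigma(|\alpha|-1/2)<1$. The latter is guaranteed by the hypothesis $|\alpha|\le\sigma^{-1}$, and the asserted bound follows.

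\textbf{Main obstacle.}
The most delicate step is the sharp accounting of the $\theta$-loss per $\partial_\omega$ as $\theta^{-1}$ rather than $\theta^{-2}$. This rests on the tangentiality of $\partial_\omega$ to $\mathbb{S}^3$, which gives $\partial_\omega\cos\angle\sim\theta$ on the sector, and on tracking the Coulomb factor via $\partial_\omega\tan\angle\sim 1$ (relative loss $\theta^{-1}$). With this per-piece estimate in place, the remaining difficulty is the double dyadic summation in $(k,\theta)$, whose convergence through the $k_*$-splitting hinges precisely on the restriction $|\alpha|\le\sigma^{-1}$; this is what converts the fast blowup $\theta^{-|\alpha|}$ into the mild polynomial growth $\langle R\rangle^{\sigma(|\alpha|-1/2)}$ asserted in the lemma.
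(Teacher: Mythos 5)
Your proof is correct and follows essentially the same route as the paper: the same per-piece bound $|\psi^{(\theta)}_{\pm,k}|\lesssim\theta^{1/2}\|\nabla_{t,x}A\|_{L^2_x}$ (Bernstein on the sector, Coulomb gain of $\theta$, the $2^{-2k}\theta^{-2}$ symbol of $\Delta_{\omega^\perp}^{-1}$, free-wave energy), the same dyadic summation in $(k,\theta)$ yielding $2^{-C_1/2}+2^{-C_2}$, and for the $\xi$-derivatives the same $\theta^{-1}$-per-derivative loss combined with the split of the $k$-sum at $2^{k_*}\sim\langle R\rangle^{-1}$, which is exactly the paper's optimization over the cutoff $l$. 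The only differences are cosmetic bookkeeping (where the factor $2^k$ from $L^\omega_\pm$ is absorbed), so there is nothing to add.
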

\begin{proof}
 For any $t \in \R, x \in \R^4$, $\xi \in \R^4$ and any integer $k < 0$, we obtain that
\begin{equation*}
 \begin{split}
  |\psi_k^{(\theta)}(t,x,\xi)| &\leq \| L_\pm^\omega \Delta_\omega^{-1} \Pi_\theta^\omega P_k A \cdot \omega \|_{L^\infty_x} \\
  &\lesssim (\theta^3 2^{4k})^{\frac{1}{2} - \frac{1}{\infty}} \| L_\pm^\omega \Delta_\omega^{-1} \Pi_\theta^\omega P_k A \cdot \omega \|_{L^2_x} \\
  &\lesssim \theta^{3/2} 2^{2k} \theta 2^{-2k} \theta^{-2} \|\Pi_\theta^\omega P_j L^\omega_\pm A\|_{L^2_x} \\
  &\lesssim \theta^{1/2} \|\nabla_{t,x} A_k\|_{L^2_x},
 \end{split}
\end{equation*}
where we used Bernstein's inequality, the Coulomb gauge of $A$ and that $|\widehat{\Delta^{-1}_{\omega^\perp}}(\xi)| \sim 2^{-2k} \theta^{-2}$ on the frequency support of $\Pi_\theta^\omega P_k$. Similarly, we find
\[
 |\nabla_{t,x} \psi_k^{(\theta)}(t,x,\xi)| \lesssim 2^{k} \theta^{1/2} \|\nabla_{t,x} A_k\|_{L^2_x}.
\]
Thus, we have
\begin{equation*}
 \begin{split}
  &|\psi_\pm(t,x,\xi) - \psi_\pm(s,y,\xi)| \\
  &\leq \sum_{-C_2 \leq k < 0} \sum_{2^{\sigma k} < \theta < 2^{-C_1}} |\psi_k^{(\theta)}(t,x,\xi) - \psi_k^{(\theta)}(s,y,\xi)| + \sum_{k < -C_2} \sum_{2^{\sigma k} < \theta} |\psi_k^{(\theta)}(t,x,\xi)-\psi_k^{(\theta)}(s,y,\xi)| \\
  &\leq \biggl( \sum_{-C_2 \leq k < 0} \sum_{2^{\sigma k} < \theta < 2^{-C_1}} 2^k \theta^{1/2} + \sum_{k < -C_2} \sum_{2^{\sigma k} < \theta} 2^k \theta^{1/2} \biggr)^{1/2} \|\nabla_{t,x} A\|_{L^2_x} (|x-y| + |t-s|) \\
  &\leq (2^{-C_1/2} + 2^{-C_2}) \|\nabla_{t,x} A\|_{L^2_x} (|x-y| + |t-s|).
 \end{split}
\end{equation*}

We now turn to the proof of \eqref{equ:symbol_bound_difference_xi_derivative_of_psi}. To this end we note that differentiating with respect to $\xi$ yields $\theta^{-1}$ factors, i.e. for any $\alpha \in \N^4_0$ it holds that 
\[
 |\nabla_\xi^\alpha \psi_k^{(\theta)}(t,x,\xi)| \lesssim \theta^{\frac{1}{2}-|\alpha|} \|\nabla_{t,x} A_k\|_{L^2_x}
\]
and
\[
 |\nabla_{t,x} \nabla_\xi^\alpha \psi_k^{(\theta)}(t,x,\xi)| \lesssim 2^k \theta^{\frac{1}{2} - |\alpha|} \|\nabla_{t,x} A_k\|_{L^2_x}.
\]
For any $1 \leq |\alpha| \leq \sigma^{-1}$ and $l < 0$ we then obtain
\begin{equation*}
 \begin{split}
  &|\nabla_\xi^\alpha( \psi_\pm(t,x,\xi) - \psi_\pm(s,y,\xi) )| \\
  &\lesssim \sum_{k < l} \sum_{2^{\sigma k} < \theta} 2^k \theta^{\frac{1}{2} - |\alpha|} \|\nabla_{t,x} A\|_{L^2_x} (|x-y| + |t-s|)  + \sum_{k \geq l} \sum_{2^{\sigma k} < \theta} \theta^{\frac{1}{2} - |\alpha|} \|\nabla_{t,x} A\|_{L^2_x} \\
  &\lesssim 2^{l(1-\sigma(|\alpha|-\frac{1}{2}))} \|\nabla_{t,x} A\|_{L^2_x} (|x-y| + |t-s|) + 2^{-\sigma l (|\alpha| - \frac{1}{2})} \|\nabla_{t,x} A\|_{L^2_x}.
 \end{split}
\end{equation*}
Optimizing the choice of $l < 0$ we find that
\begin{equation*}
 |\nabla_\xi^\alpha( \psi_\pm(t,x,\xi) - \psi_\pm(s,y,\xi) )| \lesssim \langle |t-s| + |x-y| \rangle^{\sigma (|\alpha|-\frac{1}{2})} \|\nabla_{t,x} A\|_{L^2_x}.
\end{equation*}
\end{proof}

We will frequently use the following bounds on decomposable norms of the phase $\psi_\pm$.
\begin{lem}[\protect{\cite[Lemma 7.3]{KST}}] \label{lem:symbol_bounds_psi_k_theta}
 Let $2 \leq q,r \leq \infty$ with $\frac{2}{q} + \frac{3}{r} \leq \frac{3}{2}$. For any integer $k < 0$ and any dyadic angle $\theta \in 2^{-\N}$ the component $\psi_k^{(\theta)} = L_\pm^\omega \Delta_{\omega^\perp}^{-1} \Pi_\theta^\omega A_k \cdot \omega$ satisfies
 \begin{equation} \label{equ:psi_k_theta_decomposable_estimate}
  \bigl\|(\psi_k^{(\theta)}, 2^{-k} \nabla_{t,x} \psi_k^{(\theta)})\bigr\|_{D_\theta(L^q_t L^r_x)(\R\times\R^4)} \lesssim 2^{-(\frac{1}{q} + \frac{4}{r})k} \theta^{\frac{1}{2}-\frac{2}{q}-\frac{3}{r}} \|\nabla_{t,x} A\|_{L^2_x}.
 \end{equation}
\end{lem}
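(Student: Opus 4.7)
My plan is to reduce the decomposable-norm estimate to a one-cap, fixed-$\omega$ bound, then combine a Bernstein inequality adapted to the frequency tube with the Coulomb gauge condition on $A$ to extract the correct power of $\theta$ at fixed time, and finally recover the $L^q_t$ integrability from a cap-Strichartz estimate applied to the free wave $\psi_k^{(\theta)}$.

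Unpacking the definition of $D_\theta(L^q_t L^r_x)$, I would cover $\mathbb{S}^3$ with caps $\{\Gamma_\theta^\nu\}$ of diameter $\sim \theta$ with a smooth partition of unity $\{b_\theta^\nu\}$. Within each cap the $\omega$-dependence of $b_\theta^\nu(\omega) L_\pm^\omega \Delta_{\omega^\perp}^{-1} \Pi_\theta^\omega$ is smooth on scale $\theta$ in the angular variables, so each $(\theta \nabla_\xi)^l$ derivative for $l \le 100$ produces only a comparable operator and costs a constant. It therefore suffices to prove, uniformly in $\nu$ and $\omega \in \Gamma_\theta^\nu$, the single-cap bound
\[
 \bigl\| b_\theta^\nu(\omega) L_\pm^\omega \Delta_{\omega^\perp}^{-1} \Pi_\theta^\omega A_k \cdot \omega \bigr\|_{L^q_t L^r_x} \lesssim 2^{-(\frac{1}{q} + \frac{4}{r})k} \theta^{\frac{1}{2} - \frac{2}{q} - \frac{3}{r}} \|\nabla_{t,x} A\|_{L^2_x},
\]
after which the square-summation in $\nu$ is handled by Plancherel and the fact that the cap supports in frequency are almost-disjoint.

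The fixed-time $L^r_x$ bound proceeds essentially as in the $\|\psi_k^{(\theta)}\|_{L^\infty_x}$ computation carried out in the excerpt. The function $L_\pm^\omega\Delta_{\omega^\perp}^{-1}\Pi_\theta^\omega A_k(t,\cdot)\cdot\omega$ has Fourier support in a tube of dimensions $2^k \times (2^k\theta)^3$, so Bernstein from $L^2_x$ to $L^r_x$ supplies $2^{2k(1 - 2/r)} \theta^{3/2(1 - 2/r)}$; the multiplier $\Delta_{\omega^\perp}^{-1}$ on that cap is bounded with symbol size $(2^k\theta)^{-2}$; and $L_\pm^\omega$ applied to the free wave $A_k$ contributes at most $2^k$ through $\|L_\pm^\omega A\|_{L^2_x} \lesssim \|\nabla_{t,x} A\|_{L^2_x}$. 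The crucial additional factor of $\theta$ comes from the Coulomb gauge $\xi \cdot \hat A(\xi) = 0$: on the bi-cap $\{|\omega \mp \xi/|\xi|| \lesssim \theta\}$ we have
\[
 |\omega \cdot \hat A(\xi)| = \bigl| (\omega \mp \xi/|\xi|) \cdot \hat A(\xi) \bigr| \lesssim \theta \, |\hat A(\xi)|.
\]
Combining these factors with $\|A_k(t,\cdot)\|_{L^2_x} \lesssim 2^{-k} \|\nabla_{t,x} A_k\|_{L^2_x}$ yields the fixed-time bound $\|\psi_k^{(\theta)}(t,\cdot)\|_{L^r_x} \lesssim 2^{-4k/r} \theta^{1/2 - 3/r} \|\nabla_{t,x} A\|_{L^2_x}$, which is precisely the claim at $q = \infty$.

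For finite $q$, I would exploit that $\psi_k^{(\theta)}$ is itself a free wave (the operators $L_\pm^\omega$, $\Delta_{\omega^\perp}^{-1}$, $\Pi_\theta^\omega$ commute with $\Box$ and $\Box A = 0$) whose Fourier support lies in the tube above, and invoke a cap-Strichartz estimate for such frequency-localized free waves. The hypothesis $\frac{2}{q} + \frac{3}{r} \leq \frac{3}{2}$ is precisely the wave-Strichartz admissibility condition in $(4{+}1)$ dimensions, and the anisotropic rescaling that turns the three short directions of the tube into isotropic unit cubes reduces the claim to the standard Strichartz inequality, contributing the missing anisotropic factor $2^{-k/q} \theta^{-2/q}$ needed to match the target. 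The companion estimate for $2^{-k}\nabla_{t,x} \psi_k^{(\theta)}$ follows identically, since $\nabla_{t,x}$ applied at spatial frequency $\sim 2^k$ produces an extra $2^k$ exactly absorbed by the prefactor $2^{-k}$. The main arithmetic point, and the obstacle one must watch for, is that without the Coulomb-gauge cancellation one obtains $\theta^{-1/2}$ at $(q,r)=(\infty,\infty)$ instead of the required $\theta^{1/2}$; the factor $\theta$ gained from $|A \cdot \omega| \lesssim \theta |A|$ on the cap is therefore the essential observation underlying the lemma.
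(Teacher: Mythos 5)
Your proposal is essentially the standard argument for this lemma, which the paper does not prove but imports from \cite[Lemma 7.3]{KST}: the fixed-time part (Bernstein on the $2^k\times(2^k\theta)^3$ sector, the symbol size $2^{-2k}\theta^{-2}$ of $\Delta_{\omega^\perp}^{-1}$, and the Coulomb-gauge gain $|\omega\cdot\hat A|\lesssim\theta|\hat A|$) is exactly the computation the paper itself performs for the pointwise bound \eqref{equ:symbol_bound_difference_psi}, and the extra factor $(2^k\theta^2)^{-1/q}$ from sector-localized Strichartz under $\frac{2}{q}+\frac{3}{r}\le\frac32$ is the same mechanism used in \cite{KST}. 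The only point to state more carefully is that the single-cap bound must be phrased with the sector-localized energy $\|\Pi^\nu_\theta\nabla_{t,x}A_k\|_{L^2_x}$ on the right (then Minkowski for $q\ge 2$ plus almost-orthogonality of the sectors gives the $\ell^2_\nu$-summation), which is precisely what your Plancherel remark amounts to.
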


\subsection{Oscillatory integral estimates}

In order to prove the mapping properties in Theorem \ref{thm:mapping_properties_renormalization_operator}, we need pointwise kernel bounds for operators of the form
\[
 T_a = e^{-i \psi_\pm}(t,x,D) a(D) e^{\pm i(t-s)|D|} e^{i \psi_\pm}(D,y,s),
\]
where $a$ is localized at frequency $|\xi| \sim 1$. The kernel of $T_a$ is given by the oscillatory integral
\begin{equation*}
 K_a(t,x;s,y) = \int_{\R^4} e^{-i(\psi_\pm(t,x,\xi) - \psi_\pm(s,y,\xi))} e^{i(t-s)|\xi|} e^{i(x-y)\cdot \xi} a(\xi) \, d\xi,
\end{equation*}
where $a$ is a smooth bump function with support on the annulus $|\xi| \sim 1$.

\begin{lem}
 For any $t,s \in \R$, $x,y \in \R^4$ and any integer $1 \leq N \leq \sigma^{-1}$, we have
 \begin{equation} \label{equ:fixed_time_decay_kernel_K}
  |K_a(t,x;t,y)| \lesssim \frac{\|\nabla_{t,x} A\|_{L^2_x}}{\langle |x-y| \rangle^{N(1-\sigma)}}
 \end{equation}
 and
 \begin{equation} \label{equ:fixed_time_decay_K_minus_a}
  |K_a(t,x;t,y) - \check{a}(x-y)| \lesssim \min \Bigl\{ ( 2^{-C_1/2} + 2^{-C_2} ), \frac{1}{|x-y|^{N(1-\sigma)}} \Bigr\} \|\nabla_{t,x} A\|_{L^2_x}.
 \end{equation}
 Moreover, it holds that
 \begin{equation} \label{equ:dispersive_estimate_kernel_K}
  |K_a(t,x;s,y)| \lesssim \langle t-s \rangle^{-\frac{3}{2}} \langle |t-s| - |x-y| \rangle^{-N} \|\nabla_{t,x} A\|_{L^2_x}^2.
 \end{equation} 
 \end{lem}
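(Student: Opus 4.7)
The plan is to treat the three bounds by oscillatory-integral techniques, relying on the symbol estimates \eqref{equ:symbol_bound_difference_psi} and \eqref{equ:symbol_bound_difference_xi_derivative_of_psi} for $\Psi(\xi) := \psi_\pm(t,x,\xi) - \psi_\pm(s,y,\xi)$. Throughout, $a(\xi)$ is a fixed smooth bump localized at $|\xi|\sim 1$, so all its $\xi$-derivatives are uniformly bounded and supported in the unit annulus.

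For \eqref{equ:fixed_time_decay_kernel_K}, set $s = t$ and write the kernel as $\int e^{-i\Psi(\xi)} e^{i(x-y)\cdot\xi} a(\xi)\, d\xi$. When $|x-y|\lesssim 1$, one uses the trivial bound obtained by putting absolute values inside. When $|x-y|\gtrsim 1$, one performs $N$ integration-by-parts using the first-order operator $L = \bigl(i|x-y|^2\bigr)^{-1} (x-y)\cdot \nabla_\xi$, which reproduces $e^{i(x-y)\cdot\xi}$ and picks up a factor $|x-y|^{-1}$ each time. The derivatives that fall on $e^{-i\Psi}$ and on $a$ are controlled by the multiplicative bound \eqref{equ:symbol_bound_difference_xi_derivative_of_psi}; specifically, at most $N$ derivatives land on $e^{-i\Psi}$, and the worst resulting factor is $\langle|x-y|\rangle^{\sigma(N-\frac{1}{2})}\|\nabla_{t,x}A\|_{L^2_x}$ (using the H\"older-type combination rule for products of symbol bounds). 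The overall net decay is $|x-y|^{-N}\langle|x-y|\rangle^{N\sigma}\|\nabla_{t,x}A\|_{L^2_x}$, which matches $\langle|x-y|\rangle^{-N(1-\sigma)}\|\nabla_{t,x}A\|_{L^2_x}$ provided one absorbs the trivial $|x-y|\lesssim 1$ region into the constant.

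For \eqref{equ:fixed_time_decay_K_minus_a}, subtract $\check{a}(x-y) = \int e^{i(x-y)\cdot\xi}a(\xi)\,d\xi$ from $K_a(t,x;t,y)$ to obtain the kernel of the oscillatory integral with symbol $\bigl(e^{-i\Psi(\xi)}-1\bigr)a(\xi)$. Two estimates are then combined in a minimum: the pointwise bound $|e^{-i\Psi}-1|\le |\Psi|$ together with the Lipschitz estimate \eqref{equ:symbol_bound_difference_psi} (used here with $s=t$) yields the factor $(2^{-C_1/2}+2^{-C_2})\|\nabla_{t,x}A\|_{L^2_x}|x-y|$ on the symbol; a crude estimate of the integral then gives $(2^{-C_1/2}+2^{-C_2})\|\nabla_{t,x}A\|_{L^2_x}$ after absorbing the $|x-y|$ factor against a fixed-support $\xi$-integration. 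Meanwhile, the IBP argument from the previous paragraph applies verbatim to the symbol $\bigl(e^{-i\Psi}-1\bigr)a$ (since differentiating the difference still falls into the bound \eqref{equ:symbol_bound_difference_xi_derivative_of_psi}), yielding the $|x-y|^{-N(1-\sigma)}\|\nabla_{t,x}A\|_{L^2_x}$ alternative. Taking the minimum produces \eqref{equ:fixed_time_decay_K_minus_a}.

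For the dispersive bound \eqref{equ:dispersive_estimate_kernel_K}, the full phase is $\Phi_\pm(\xi) = \pm(t-s)|\xi| + (x-y)\cdot\xi - \Psi(\xi)$, and the leading phase $\pm(t-s)|\xi| + (x-y)\cdot\xi$ has a stationary point on $|\xi|\sim 1$ iff $|x-y| = |t-s|$ and the directions align. Away from the cone, i.e. when $\bigl||t-s|-|x-y|\bigr|$ dominates, one uses the first-order operator $L = -i|\nabla_\xi\Phi_\pm|^{-2}\nabla_\xi\Phi_\pm\cdot\nabla_\xi$, noting $|\nabla_\xi\Phi_\pm|\gtrsim \bigl||t-s|-|x-y|\bigr|$ on $|\xi|\sim 1$ once the $\Psi$-correction is absorbed via \eqref{equ:symbol_bound_difference_xi_derivative_of_psi}; iterating $N$ times produces the $\langle|t-s|-|x-y|\rangle^{-N}$ factor. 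Near the cone one performs stationary phase in the $3$ transverse directions to $\xi/|\xi|$, yielding the dispersive factor $\langle t-s\rangle^{-3/2}$ from the standard half-wave decay; here the Hessian of the full phase is a perturbation of the leading Hessian by $\nabla_\xi^2\Psi$, whose symbol bound \eqref{equ:symbol_bound_difference_xi_derivative_of_psi} grows at most like $\langle|t-s|+|x-y|\rangle^{3\sigma/2}\|\nabla_{t,x}A\|_{L^2_x}$, which is absorbed by the $\sigma$ room. The factor $\|\nabla_{t,x}A\|_{L^2_x}^2$ reflects the worst case arising from either two factors of $\nabla_\xi\Psi$ in the stationary phase remainder or one factor of $\Psi$ paired with a derivative falling on $a$. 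The main technical obstacle is precisely this verification that the $\Psi$-correction remains a subordinate perturbation both in the non-stationary IBP and in the Hessian inversion; this is where the care taken in the phase definition \eqref{equ:definition_phase} (small-angle cutoff plus the high-angle cutoff for $-C_2\le k<0$) pays off, since the contributing $\psi_k^{(\theta)}$ components satisfy the decomposable bounds of Lemma~\ref{lem:symbol_bounds_psi_k_theta} with tame $\theta$-behavior.
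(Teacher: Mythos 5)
Your proposal is correct and follows essentially the same route as the paper: the trivial bound plus non-stationary phase integration by parts via \eqref{equ:symbol_bound_difference_xi_derivative_of_psi} for \eqref{equ:fixed_time_decay_kernel_K}, the pointwise bound $|e^{-i\Psi}-1|\le|\Psi|$ with \eqref{equ:symbol_bound_difference_psi} combined with the same IBP to get the minimum in \eqref{equ:fixed_time_decay_K_minus_a}, and for \eqref{equ:dispersive_estimate_kernel_K} the stationary/non-stationary phase analysis in which the key point — that $\nabla_\xi\Psi$ grows only like $\langle |t-s|+|x-y|\rangle^{\sigma/2}$ and is dominated by the linear growth of the leading phase for $\sigma$ small — is exactly the observation the paper adds to its citation of Proposition 6(a) in \cite{KST}. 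The only difference is that you spell out the standard stationary-phase details that the paper delegates to that reference.
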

\begin{proof}
 If $|x-y| \lesssim 1$ we obtain by taking absolute values that
 \[
  |K_a(t,x;t,y)| \lesssim 1.
 \]
 If instead $|x-y| \gg 1$, we use \eqref{equ:symbol_bound_difference_xi_derivative_of_psi} and integrate by parts repeatedly to obtain for any $1 \leq N \leq \sigma^{-1}$ that
 \[
  |K_a(t,x;t,y)| \lesssim \frac{\|\nabla_{t,x} A\|_{L^2_x}}{|x-y|^{N (1-\sigma)}}.
 \]
 This proves \eqref{equ:fixed_time_decay_kernel_K}. In order to show \eqref{equ:fixed_time_decay_K_minus_a}, we integrate by parts repeatedly for $|x-y| \gg 1$, while for $|x-y| \lesssim 1$, we use \eqref{equ:symbol_bound_difference_psi} to estimate
 \begin{equation*}
  \begin{split}
   |K_a(t,x;t,y) - \check{a}(x-y)| &\leq \int_{\R^4} |e^{i(\psi_\pm(t,x,\xi)-\psi_\pm(t,y,\xi))} - 1| \, a(\xi) \, d\xi \\
   &\leq \int_{\R^4} |\psi_\pm(t,x,\xi) - \psi_\pm(t,y,\xi)| \, a(\xi) \, d\xi \\
   &\lesssim ( 2^{-C_1/2} + 2^{-C_2} ) \|\nabla_{t,x} A\|_{L^2_x} |x-y|.
  \end{split}
 \end{equation*}
 Finally, the proof of \eqref{equ:dispersive_estimate_kernel_K} proceeds along the lines of Proposition 6(a) in \cite{KST}. We only have to argue a bit more that away from the cone for sufficiently large $|t-s|$ the phase is still non-degenerate. But this is because away from the cone
 \begin{equation*}
  \begin{split}
   &\bigl|-i \nabla_\xi(\psi_\pm(t,x,\xi) - \psi_\pm(s,y,\xi)) + i(t-s) \frac{\xi}{|\xi|} + i (x-y) \bigr| \\
   &\geq  c \langle |t-s| + |x-y| \rangle - C \|\nabla_{t,x} A\|_{L^2_x} \langle |t-s| + |x-y| \rangle^{ \frac{1}{2} \sigma}
  \end{split}
 \end{equation*}
 and we choose $0 < \sigma \ll 1$ sufficiently small.
\end{proof}
To deal with the frequency localized operators $e_{<0}^{\pm i \psi_\pm}(t,x,D)$ and $e_{<0}^{i\psi_\pm}(D,y,s)$, we need to produce similar estimates for the kernel $K_{a, <0}$ of the operator
\[
 T_{a, <0} = e_{<0}^{- i \psi_\pm}(t,x,D) a(D) e^{\pm i (t-s) |D|} e_{<0}^{i \psi_\pm}(D,y,s).
\]
Noting that the frequency localized symbol $e_{<0}^{\pm i \psi_\pm}$ can be represented as
\[
 e_{<0}^{\pm i \psi_\pm} = \int_{\R^{1+4}_z} m(z) e^{\pm i T_z \psi_\pm} \, dz,
\]
where $m(z)$ is an integrable bump function on the unit scale and $T_z$ denotes space-time translation in the direction $z \in \R^{1+4}$, the transition to these frequency localized operators can be made just as in Proposition 7 in \cite{KST}. We obtain the following estimates for $K_{a,<0}$.
\begin{lem}
 For any $t,s \in \R$, $x,y \in \R^4$ and any integer $1 \leq N \leq \sigma^{-1}$, we have
 \begin{equation} \label{equ:fixed_time_decay_kernel_K_0}
  |K_{a, <0}(t,x;t,y)| \lesssim \frac{\|\nabla_{t,x} A\|_{L^2_x}}{\langle |x-y| \rangle^{N(1-\sigma)}}
 \end{equation}
 and
 \begin{equation} \label{equ:fixed_time_decay_K_0_minus_a}
  |K_{a,<0}(t,x;t,y) - \check{a}(x-y)| \lesssim \min \Bigl\{ ( 2^{-C_1/2} + 2^{-C_2} ), \frac{1}{|x-y|^{N(1-\sigma)}} \Bigr\} \|\nabla_{t,x} A\|_{L^2_x}.
 \end{equation}
 Moreover, it holds that
 \begin{equation} \label{equ:dispersive_estimate_kernel_K_0}
  |K_{a, <0}(t,x;s,y)| \lesssim \langle t-s \rangle^{-\frac{3}{2}} \langle |t-s| - |x-y| \rangle^{-N} \|\nabla_{t,x} A\|_{L^2_x}^2.
 \end{equation} 
\end{lem}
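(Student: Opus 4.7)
The plan is to reduce the three claimed bounds to their non-frequency-localized counterparts \eqref{equ:fixed_time_decay_kernel_K}, \eqref{equ:fixed_time_decay_K_minus_a}, and \eqref{equ:dispersive_estimate_kernel_K} already proved for $K_a$, by exploiting the integral representation
\[
 e_{<0}^{\pm i \psi_\pm}(t,x,\xi) = \int_{\R^{1+4}} m(z) \, e^{\pm i \psi_\pm(t-z_0,\, x-\vec{z},\, \xi)} \, dz,
\]
where $m$ is a Schwartz bump concentrated on the unit scale with $\int m = 1$. Plugging this into both the left and right quantizations appearing in $T_{a,<0}$ expresses the kernel as
\[
 K_{a,<0}(t,x;s,y) \;=\; \iint m(z)\,m(w)\, K_a^{z,w}(t,x;s,y) \, dz\, dw,
\]
where $K_a^{z,w}$ is the oscillatory integral of exactly the same form as $K_a$, but with the phase evaluated at the translated space-time points $(t-z_0, x-\vec{z})$ and $(s-w_0, y-\vec{w})$. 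The essential observation is that the symbol estimates \eqref{equ:symbol_bound_difference_psi}--\eqref{equ:symbol_bound_difference_xi_derivative_of_psi} and Lemma~\ref{lem:symbol_bounds_psi_k_theta} depend only on \emph{differences} of spatial-temporal arguments, so they are invariant under the translation by $(z,w)$; consequently the pointwise bounds on $K_a^{z,w}$ obtained by repeating the proof of the previous lemma hold uniformly in $z$ and $w$, with $(t,x;s,y)$ replaced by their translates.

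For \eqref{equ:fixed_time_decay_kernel_K_0} I would split the $(z,w)$-integral into the region where $|z| + |w| \leq \tfrac{1}{2}\langle |x-y|\rangle$ and its complement. On the former, the shifted distance $|(x-\vec z)-(y-\vec w)|$ is comparable to $|x-y|$, so the translated analogue of \eqref{equ:fixed_time_decay_kernel_K} gives the required decay after integration against $m(z)m(w)$. On the latter, the rapid decay of $m$ alone dominates any polynomial bound in $|x-y|$. For \eqref{equ:fixed_time_decay_K_0_minus_a} the normalization $\iint m(z)m(w)\,dz\,dw = 1$ lets one write
\[
 K_{a,<0}(t,x;t,y) - \check{a}(x-y) \;=\; \iint m(z)\,m(w)\bigl(K_a^{z,w}(t,x;t,y) - \check{a}(x-y)\bigr)\, dz\, dw,
\]
and then apply the translated analogue of \eqref{equ:fixed_time_decay_K_minus_a}, which inherits the smallness factor $(2^{-C_1/2} + 2^{-C_2})$ directly from the uniform symbol bound \eqref{equ:symbol_bound_difference_psi}; the second alternative in the minimum comes from repeated integration by parts in $\xi$ using \eqref{equ:symbol_bound_difference_xi_derivative_of_psi} as before, with the argument already in place in the non-localized proof.

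The dispersive estimate \eqref{equ:dispersive_estimate_kernel_K_0} is obtained analogously by splitting the $(z,w)$-integral according to whether $|z|+|w|$ is small or large compared to $\langle |t-s| - |x-y|\rangle$, so that on the comparable region the translated form of \eqref{equ:dispersive_estimate_kernel_K} applies, and on the remaining region the Schwartz decay of $m$ absorbs the loss. I expect the main technical obstacle to be bookkeeping in this last splitting: one must verify that the non-degeneracy of the phase in the stationary phase argument of the previous lemma --- which hinged on $\|\nabla_{t,x}A\|_{L^2_x}\langle |t-s| + |x-y|\rangle^{\sigma/2}$ being a strict subperturbation of $\langle |t-s|+|x-y|\rangle$ --- persists after shifting $(t,x;s,y)$ by $(z,w)$ with $|z|+|w|$ possibly as large as a fraction of $|t-s|+|x-y|$. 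This is handled by the same choice of $0 < \sigma \ll 1$ already made for $K_a$, combined with the fact that the translation changes the effective separation only up to an $O(|z|+|w|)$ error, which is a small multiplicative perturbation precisely in the regime where the stationary phase analysis is needed.
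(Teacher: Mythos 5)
Your proposal is correct and follows essentially the same route as the paper, which likewise represents the localized symbol as $e_{<0}^{\pm i \psi_\pm} = \int_{\R^{1+4}_z} m(z)\, e^{\pm i T_z \psi_\pm}\, dz$ and transfers the unlocalized kernel bounds, deferring the details of the transition to Proposition 7 of \cite{KST}. The only cosmetic point is that for the dispersive bound the splitting of the $(z,w)$-integral is more naturally made at a fraction of $\langle |t-s|+|x-y| \rangle$, the scale that governs the non-degeneracy of the phase, with the rapid decay of $m$ absorbing both the complementary region and the polynomially degraded kernel bounds there --- exactly the mechanism your closing paragraph identifies.
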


\subsection{Fixed-time $L^2_x$ estimates in Theorem \ref{thm:mapping_properties_renormalization_operator}}

In this subsection we prove the fixed-time $L^2_x$ estimates in Theorem \ref{thm:mapping_properties_renormalization_operator} using the above oscillatory integral estimates. To obtain a small factor $\varepsilon$ in the estimates \eqref{equ:Z_to_Z_bound_for_e} and \eqref{equ:Z_to_Z_bound_for_time_derivative_e}, we additionally have to fix the constants $C_1, C_2 > 0$ in the definition of the phase correction $\psi_\pm$ sufficiently large. 

\begin{lem} \label{lem:renormalization_operator_fixed_time_1}
 For any $t \in \R$, we have
 \begin{equation} \label{equ:fixed_time_L_2_bound}
  \bigl\|e_{<0}^{\pm i \psi_\pm}(t,x,D) P_0 \phi \bigr\|_{L^2_x} \lesssim \|\nabla_{t,x}A\|_{L^2_x} \|P_0 \phi\|_{L^2_x}
 \end{equation}
 and
 \begin{equation} 
  \bigl\|e_{<0}^{\pm i \psi_\pm}(D,y,t) P_0 \phi \bigr\|_{L^2_x} \lesssim \|\nabla_{t,x}A\|_{L^2_x} \|P_0 \phi\|_{L^2_x}
 \end{equation}
\end{lem}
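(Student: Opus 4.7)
\emph{Strategy via $TT^*$.} The plan is to run a textbook $TT^\ast$ argument that reduces the fixed-time $L^2_x$ bound for $e_{<0}^{\pm i \psi_\pm}(t,x,D) P_0$ to the pointwise kernel estimate \eqref{equ:fixed_time_decay_kernel_K_0} already proved in the preceding subsection. Writing $P_0 = \varphi(D)$ for a smooth bump $\varphi$ adapted to $|\xi|\sim 1$, set
\[
 T := e_{<0}^{-i\psi_\pm}(t,x,D) \, P_0.
\]
Since $\psi_\pm$ is real-valued, the adjoint is the right quantization $T^* = e_{<0}^{+i\psi_\pm}(D,y,t)\,\varphi(D)$. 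A direct computation of the composition (the $z$-integral producing a $\delta$-function in the dual variables) shows that the kernel of
\[
 TT^* = e_{<0}^{-i\psi_\pm}(t,x,D)\, a(D)\, e_{<0}^{+i\psi_\pm}(D,y,t), \qquad a := \varphi^2,
\]
is precisely $K_{a,<0}(t,x;t,y)$ as defined just before \eqref{equ:fixed_time_decay_kernel_K_0}.

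\emph{Schur's test.} Fixing $\sigma>0$ small and choosing $N$ large enough that $N(1-\sigma)>4$ (say $N=5$) in \eqref{equ:fixed_time_decay_kernel_K_0} yields
\[
 |K_{a,<0}(t,x;t,y)| \lesssim \|\nabla_{t,x} A\|_{L^2_x} \,\langle |x-y|\rangle^{-N(1-\sigma)}.
\]
The right-hand side is integrable in $\R^4_y$ uniformly in $x$ (and vice versa) with $L^1$ norm bounded by a constant times $\|\nabla_{t,x}A\|_{L^2_x}$. Schur's test then gives $\|TT^*\|_{L^2_x\to L^2_x} \lesssim \|\nabla_{t,x}A\|_{L^2_x}$, whence
\[
 \|T\|_{L^2_x\to L^2_x} = \|TT^*\|_{L^2_x\to L^2_x}^{1/2} \lesssim \|\nabla_{t,x}A\|_{L^2_x}^{1/2} \,\leq\, 1 + \|\nabla_{t,x}A\|_{L^2_x},
\]
which gives \eqref{equ:fixed_time_L_2_bound} (as the statement only asserts polynomial dependence, the square-root loss is harmless). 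The $+i\psi_\pm$ case is handled identically after complex conjugation of symbols.

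\emph{Right quantization by duality.} For the right-quantized operator $e_{<0}^{\pm i \psi_\pm}(D,y,t) P_0$, duality reduces the estimate to the case just treated: its adjoint equals $P_0\,e_{<0}^{\mp i\psi_\pm}(t,x,D)$, so that the analogous $S S^*$ presentation produces the same kernel $K_{a,<0}(t,x;t,y)$ and hence the same Schur bound.

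\emph{Main obstacle.} All the genuine analytic content has already been packaged into the pointwise kernel decay \eqref{equ:fixed_time_decay_kernel_K_0}, whose proof rested on the symbol bounds \eqref{equ:symbol_bound_difference_psi}--\eqref{equ:symbol_bound_difference_xi_derivative_of_psi} together with the high-angle cutoff built into the phase \eqref{equ:definition_phase}. Granted that estimate, the present lemma is a clean $TT^*$ plus Schur computation that does \emph{not} exploit any smallness of $C_1$, $C_2$, or of the time-interval norms of $A$; the $\varepsilon$-gain, which will be essential for the error estimates \eqref{equ:Z_to_Z_bound_for_time_derivative_e}, \eqref{equ:renormalization_error_estimate_1}, \eqref{equ:renormalization_error_estimate_2} treated subsequently, is not needed here.
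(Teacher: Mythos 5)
Your argument is exactly the paper's: the paper disposes of this lemma in one line, invoking the kernel bound \eqref{equ:fixed_time_decay_kernel_K_0} together with a $TT^*$-argument, which is precisely your Schur-test computation. Only note that the adjoint of $e_{<0}^{-i\psi_\pm}(t,x,D)P_0$ is $P_0\,e_{<0}^{+i\psi_\pm}(D,y,t)$ (factors in that order), which is what makes the kernel of $TT^*$ literally equal to $K_{a,<0}(t,x;t,y)$ with $a=\varphi^2$; with your ordering one would pick up an extra harmless convolution with $\check\varphi$.
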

\begin{proof}
 The claim follows immediately from the kernel bound \eqref{equ:fixed_time_decay_kernel_K_0} and a  $T T^\ast$-argument.
\end{proof}

\begin{lem} \label{lem:renormalization_operator_fixed_time_2}
 For any $\varepsilon > 0$ the constants $C_1, C_2 > 0$ in the definition of the phase correction $\psi_\pm$ can be chosen sufficiently large (depending on the size of $\varepsilon^{-1}$ and $\|\nabla_{t,x} A\|_{L^2_x}$) such that we have 
 \begin{equation} \label{equ:fixed_time_L_2_bound_on_derivative}
  \bigl\| (\nabla_{t,x} e_{<0}^{-i \psi_\pm})(t,x,D) P_0 \phi \bigr\|_{L^2_x} \lesssim \varepsilon \|P_0 \phi\|_{L^2_x}.
 \end{equation}
\end{lem}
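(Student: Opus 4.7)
The plan is to exploit the explicit formula for the symbol $\nabla_{t,x} e_{<0}^{-i\psi_\pm}$ together with the angular-dyadic decomposition of the phase prescribed by \eqref{equ:definition_phase}, and to extract a small factor of $2^{-C_1/2}+2^{-C_2}$ from the range restrictions in that formula. The argument parallels Lemma \ref{lem:renormalization_operator_fixed_time_1}, but differentiating now produces an extra factor $\nabla_{t,x}\psi_\pm$ which must be estimated decomposably. Since the space-time frequency cutoff at $\ll 1$ commutes with $\nabla_{t,x}$, one has
\begin{equation*}
 \nabla_{t,x} e_{<0}^{-i\psi_\pm}(t,x,\xi) = P^{st}_{<0}\bigl(-i(\nabla_{t,x}\psi_\pm)(t,x,\xi)\, e^{-i\psi_\pm(t,x,\xi)}\bigr),
\end{equation*}
so it suffices to bound, at fixed time, the operator with symbol $(\nabla_{t,x}\psi_\pm)\, e^{-i\psi_\pm}$ acting on $P_0 L^2_x$; the additional outer truncation $P^{st}_{<0}$ is harmless.

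First, I decompose $\nabla_{t,x}\psi_\pm$ according to the angular-frequency pieces $\nabla_{t,x}\psi_k^{(\theta)}$ appearing in \eqref{equ:definition_phase}, so that only components with $k<0$ and $\theta$ in the prescribed ranges contribute. Lemma \ref{lem:symbol_bounds_psi_k_theta} applied with $(q,r)=(\infty,\infty)$ (noting $\tfrac{2}{q}+\tfrac{3}{r}=0\leq\tfrac{3}{2}$) gives
\begin{equation*}
 \bigl\|\nabla_{t,x}\psi_k^{(\theta)}\bigr\|_{D_\theta(L^\infty_t L^\infty_x)} \lesssim 2^k\, \theta^{1/2}\, \|\nabla_{t,x}A\|_{L^2_x}.
\end{equation*}
Combining this with Lemma \ref{lem:decomposability_lemma_KST} (in the form $q=q_2=\infty$, $r=2$, $r_1=\infty$) and the $L^2_x$--boundedness of $e^{-i\psi_\pm}(t,x,D)$ established exactly as in Lemma \ref{lem:renormalization_operator_fixed_time_1}, the corresponding piece of the operator maps $P_0 L^2_x$ to $L^2_x$ with norm $\lesssim 2^k\,\theta^{1/2}\,\|\nabla_{t,x}A\|_{L^2_x}$. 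The factorization of $(\nabla_{t,x}\psi_k^{(\theta)})\cdot e^{-i\psi_\pm}$ as a decomposable symbol composed with a bounded pseudodifferential operator is justified via the integral representation $e^{-i\psi_\pm}=\int m(z) e^{-iT_z\psi_\pm}\,dz$, following Proposition 7 of \cite{KST}, with the $\xi$-derivative bounds \eqref{equ:symbol_bound_difference_xi_derivative_of_psi} controlling the resulting commutators.

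Second, summing over the admissible pairs $(k,\theta)$ in \eqref{equ:definition_phase} yields the geometric sums
\begin{equation*}
 \sum_{-C_2\leq k<0}\;\sum_{2^{\sigma k}<\theta<2^{-C_1}} 2^k\,\theta^{1/2} \;+\; \sum_{k<-C_2}\;\sum_{\theta>2^{\sigma k}} 2^k\,\theta^{1/2} \;\lesssim\; 2^{-C_1/2}+2^{-C_2},
\end{equation*}
so the total operator norm is bounded by $(2^{-C_1/2}+2^{-C_2})\,\|\nabla_{t,x}A\|_{L^2_x}$. Choosing $C_1,C_2$ sufficiently large in terms of $\varepsilon^{-1}$ and $\|\nabla_{t,x}A\|_{L^2_x}$ makes this at most $\varepsilon$, proving \eqref{equ:fixed_time_L_2_bound_on_derivative}.

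The main obstacle is the clean implementation of the factorization of the symbol $(\nabla_{t,x}\psi_k^{(\theta)})\, e^{-i\psi_\pm}$ into a decomposable piece times an $L^2$-bounded operator, together with the handling of $\xi$-derivatives: each derivative falling on $e^{-i\psi_\pm}$ costs a factor $\theta^{-1}$, as recorded in \eqref{equ:symbol_bound_difference_xi_derivative_of_psi}, but only a bounded number ($\leq\sigma^{-1}$) of such derivatives enters the decomposability norms, and the resulting loss is absorbed by a sufficiently small choice of $\sigma$ together with the $\theta^{1/2}$ gain, so that the small factor $2^{-C_1/2}+2^{-C_2}$ is preserved. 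This is the only place where the sharp $L^\infty_t L^\infty_x$ endpoint of Lemma \ref{lem:symbol_bounds_psi_k_theta} is used, and it is precisely here that the high-angle cutoff introduced in \eqref{equ:definition_phase}—absent in \cite{KST}—is decisive in the large-data regime.
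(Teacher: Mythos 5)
Your proposal is correct and follows essentially the same route as the paper's proof: write $\nabla_{t,x}e_{<0}^{-i\psi_\pm}$ schematically as $(\nabla_{t,x}\psi_\pm)e_{<0}^{-i\psi_\pm}$, decompose into the pieces $\nabla_{t,x}\psi_k^{(\theta)}$ from \eqref{equ:definition_phase}, combine the $D_\theta(L^\infty_t L^\infty_x)$ bound $2^k\theta^{1/2}\|\nabla_{t,x}A\|_{L^2_x}$ of Lemma~\ref{lem:symbol_bounds_psi_k_theta} with Lemma~\ref{lem:decomposability_lemma_KST} and the fixed-time $L^2_x$ bound for the exponential, and sum the geometric series to produce the factor $2^{-C_1/2}+2^{-C_2}$. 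The additional care you take with the outer space-time frequency truncation and the factorization of the symbol is consistent with (and slightly more explicit than) what the paper does.
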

\begin{proof}
 Using Lemma \ref{lem:decomposability_lemma_KST}, Lemma \ref{lem:symbol_bounds_psi_k_theta}, and \eqref{equ:fixed_time_L_2_bound} we obtain that
 \begin{equation*}
  \begin{split}
   \bigl\| (\nabla_{t,x} e_{<0}^{-i\psi_\pm})(t,x,D) P_0 \phi \bigr\|_{L^2_x} &= \bigl\|(\nabla_{t,x} \psi_\pm) e_{<0}^{-i\psi_\pm}(t,x,D) P_0 \phi \bigr\|_{L^2_x} \\
   &\leq \sum_{-C_2 \leq k < 0} \sum_{2^{\sigma k} < \theta < 2^{-C_1}} \bigl\|(\nabla_{t,x} \psi_k^{(\theta)}) e_{<0}^{-i \psi_\pm}(t,x,D) P_0 \phi \bigr\|_{L^2_x} \\
   &\quad + \sum_{k < -C_2} \sum_{2^{\sigma k} < \theta} \bigl\|(\nabla_{t,x} \psi_k^{(\theta)}) e_{<0}^{-i \psi_\pm}(t,x,D) P_0 \phi \bigr\|_{L^2_x} \\
   &\lesssim \sum_{-C_2 \leq k < 0} \sum_{2^{\sigma k} < \theta < 2^{-C_1}} \|\nabla_{t,x} \psi_k^{(\theta)}\|_{D_\theta(L^\infty_t L^\infty_x)} \|\nabla_{t,x} A\|_{L^2_x} \|P_0 \phi\|_{L^2_x} \\
   &\quad + \sum_{k < -C_2} \sum_{2^{\sigma k} < \theta} \|\nabla_{t,x} \psi_k^{(\theta)}\|_{D_\theta(L^\infty_t L^\infty_x)} \|\nabla_{t,x} A\|_{L^2_x} \|P_0 \phi\|_{L^2_x} \\
   &\lesssim \biggl( \sum_{-C_2 \leq k < 0} \sum_{2^{\sigma k} < \theta < 2^{-C_1}} 2^k \theta^{1/2} + \sum_{k < -C_2} \sum_{2^{\sigma k} < \theta} 2^k \theta^{1/2} \biggr) \|\nabla_{t,x} A\|_{L^2_x}^2 \|P_0 \phi\|_{L^2_x} \\
   &\lesssim (2^{-C_1/2} + 2^{-C_2}) \|\nabla_{t,x} A\|_{L^2_x}^2 \|P_0 \phi\|_{L^2_x},
  \end{split}
 \end{equation*}
 from which the assertion follows.
\end{proof}

\begin{lem} \label{lem:renormalization_operator_fixed_time_3}
 For any $\varepsilon > 0$ the constants $C_1, C_2 > 0$ in the definition of the phase correction $\psi_\pm$ can be chosen sufficiently large (depending on the size of $\varepsilon^{-1}$ and $\|\nabla_{t,x} A\|_{L^2_x}$) such that we have 
 \begin{equation} \label{equ:fixed_time_L_2_bound_renormalization_error_1}
  \bigl\| (e_{<0}^{-i \psi_\pm}(t,x,D) e_{<0}^{i\psi_\pm}(D,y,t) - 1) P_0 \phi \bigr\|_{L^2_x} \lesssim \varepsilon \|P_0 \phi\|_{L^2_x}.
 \end{equation}
\end{lem}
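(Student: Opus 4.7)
\textbf{Proof plan for Lemma \ref{lem:renormalization_operator_fixed_time_3}.} The strategy is to reduce the $L^2_x \to L^2_x$ bound on the operator
\[
 R := e_{<0}^{-i\psi_\pm}(t,x,D) e_{<0}^{i\psi_\pm}(D,y,t) - 1
\]
restricted to functions with Fourier support at $|\xi|\sim 1$ to a Schur test for the kernel built in the previous subsection, and then to use the small factor appearing in \eqref{equ:fixed_time_decay_K_0_minus_a} to gain the required $\varepsilon$.

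Since $P_0\phi$ has Fourier support in the annulus $|\xi|\sim 1$, I pick a smooth bump $a$ with $a\equiv 1$ on the support of the symbol of $P_0$ and write $R P_0\phi = (R\,a(D)) P_0\phi$. The kernel of $R\,a(D)$ at $s=t$ is
\[
 K_{R\,a(D)}(t,x,y) \;=\; K_{a,<0}(t,x;t,y) - \check a(x-y),
\]
with $K_{a,<0}$ the oscillatory integral kernel introduced just before \eqref{equ:fixed_time_decay_kernel_K_0}. Thus the pointwise estimate \eqref{equ:fixed_time_decay_K_0_minus_a} applies directly.

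To bound $\|R P_0\phi\|_{L^2_x}$ I apply Schur's test and estimate
\[
 \sup_{x}\int_{\R^4} \bigl| K_{a,<0}(t,x;t,y) - \check a(x-y)\bigr|\, dy,
\]
together with the symmetric quantity with the roles of $x$ and $y$ exchanged. Setting $\delta := 2^{-C_1/2} + 2^{-C_2}$ and splitting the integration region into $\{|x-y|\leq R\}$ and $\{|x-y|> R\}$, \eqref{equ:fixed_time_decay_K_0_minus_a} gives
\[
 \int_{\R^4} \bigl| K_{a,<0}(t,x;t,y) - \check a(x-y)\bigr|\, dy \;\lesssim\; \bigl(\delta R^4 + R^{4-N(1-\sigma)}\bigr) \|\nabla_{t,x} A\|_{L^2_x},
\]
valid whenever $N(1-\sigma)>4$. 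I first fix $\sigma>0$ small and then choose an integer $N$ with $1\leq N\leq \sigma^{-1}$ and $N(1-\sigma)>4$, which is clearly possible. Optimizing in $R$ by balancing the two terms ($R^{N(1-\sigma)} \sim \delta^{-1}$) yields the bound
\[
 \sup_x\int \bigl|K_{a,<0}(t,x;t,y) - \check a(x-y)\bigr| \, dy \;\lesssim\; \delta^{\,1-\frac{4}{N(1-\sigma)}} \|\nabla_{t,x} A\|_{L^2_x},
\]
and the exponent $1-\tfrac{4}{N(1-\sigma)}$ is strictly positive. The symmetric integral in $x$ is estimated identically using the obvious symmetry of the bound \eqref{equ:fixed_time_decay_K_0_minus_a} in $(x,y)$.

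Finally, given $\varepsilon>0$, it suffices to choose the constants $C_1, C_2>0$ so large (in terms of $\varepsilon^{-1}$ and $\|\nabla_{t,x} A\|_{L^2_x}$) that $\delta^{\,1-4/(N(1-\sigma))}\,\|\nabla_{t,x} A\|_{L^2_x} \leq \varepsilon$. Schur's test then gives $\|R a(D)\|_{L^2_x\to L^2_x}\lesssim \varepsilon$, whence $\|R P_0\phi\|_{L^2_x}\lesssim \varepsilon \|P_0\phi\|_{L^2_x}$, as desired. The only potential obstacle is ensuring that the Schur integrals converge at spatial infinity, which is exactly the reason for requiring $N(1-\sigma)>4$; this is harmless because the bound \eqref{equ:symbol_bound_difference_xi_derivative_of_psi} on $\xi$-derivatives of $\psi_\pm$ is available for any $1\leq |\alpha|\leq \sigma^{-1}$, giving us plenty of room to choose $N$ large.
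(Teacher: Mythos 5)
Your proposal is correct and follows essentially the same route as the paper: both identify the kernel of $(e_{<0}^{-i\psi_\pm}(t,x,D)e_{<0}^{i\psi_\pm}(D,y,t)-1)a(D)$ as $K_{a,<0}(t,x;t,y)-\check a(x-y)$, bound its Schur integrals by splitting at a radius $R$ using \eqref{equ:fixed_time_decay_K_0_minus_a}, optimize in $R$, and then choose $C_1,C_2$ large in terms of $\varepsilon^{-1}$ and $\|\nabla_{t,x}A\|_{L^2_x}$. Your explicit optimization $R^{N(1-\sigma)}\sim\delta^{-1}$ and the remark that $N(1-\sigma)>4$ is needed for convergence simply spell out the infimum over $R$ appearing in the paper's argument.
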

\begin{proof}
 The integral kernel of
  \[   
   (e_{<0}^{-i \psi_\pm}(t,x,D) e_{<0}^{i\psi_\pm}(D,y,t) - 1) a(D)
  \]
 is given by $K_{a, <0}(t,x;t,y) - \check{a}(x-y)$. Using \eqref{equ:fixed_time_decay_K_0_minus_a} we find that
 \begin{equation*}
  \begin{split}
   &\sup_y \int_{\R^4} |K_{a, <0}(t,x;t,y) - \check{a}(x-y)| \, dx \\
   &\lesssim \int_{\R^4} \min \Bigl\{ (2^{-C_1/2} + 2^{-C_2}), \frac{1}{|x|^{N(1-\sigma)}} \Bigr\} \|\nabla_{t,x} A\|_{L^2_x} \, dx \\
   &\lesssim \inf_{R>0} \Bigl\{ (2^{-C_1/2} + 2^{-C_2}) R^4 + \frac{1}{R^{N(1-\sigma)-4}} \Bigr\} \|\nabla_{t,x} A\|_{L^2_x}.
  \end{split}
 \end{equation*}
 Choosing $C_1, C_2 > 0$ sufficiently large depending on the size of $\varepsilon^{-1}$ and $\|\nabla_{t,x} A\|_{L^2_x}$, we obtain that
 \[
  \sup_y \int_{\R^4} |K_{a, <0}(t,x;t,y) - \check{a}(x-y)| \, dx \leq \varepsilon
 \]
 and similarly for $\sup_x \int_{\R^4} |K_a(t,x;t,y) - \check{a}(x-y)| \, dy$. The assertion then follows from Schur's lemma.
\end{proof}

\begin{rem}
The fixed-time $L^2_x$ bounds from Lemma \ref{lem:renormalization_operator_fixed_time_1}, Lemma \ref{lem:renormalization_operator_fixed_time_2} and Lemma \ref{lem:renormalization_operator_fixed_time_3} in fact hold for the operators $e_{<k}^{\pm i \psi_{<l}}(t,x,D)$, $e_{k}^{\pm i \psi_{<l}}(t,x,D)$, and $e^{\pm i \psi_{<l}}(t,x,D)$ for any $k,l < 0$. The proofs in this and the previous subsection can be easily adapted to obtain this assertion.
\end{rem}

\subsection{Modulation localized estimates}

All implicit constants in this subsection may depend on the size of $\|\nabla_{t,x} A\|_{L^2_x}$.

\begin{prop} \label{prop:modulation_localized_estimate}
 For any $\varepsilon > 0$ the intervals $I_j$ can be chosen such that uniformly for all $j = 1, \ldots, J$ and all integers $k \leq k' \pm O(1) < 0$, it holds that
 \begin{equation}
  \bigl\| Q_k \bigl( \chi_{I_j} e_{k'}^{\pm i \psi_\pm}(t,x,D) P_0 \phi \bigr) \bigr\|_{L^2_t L^2_x(\R\times\R^4)} \lesssim \varepsilon 2^{-\frac{1}{2} k} 2^{\delta (k-k')} \|P_0 \phi\|_{N_0^\ast(\R\times\R^4)}. 
 \end{equation}
\end{prop}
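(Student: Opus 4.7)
The plan is to split $P_0\phi$ by modulation relative to the output scale $2^k$ and to analyze how the modulation of the output is distributed between the symbol $e_{k'}^{\pm i\psi_\pm}$, the sharp cutoff $\chi_{I_j}$, and the input $\phi$. Since the output is modulation-localized at $\sim 2^k$ while the symbol is space-time frequency localized at scale $\sim 2^{k'}$ with $k\leq k'+O(1)$, the admissible interactions are tightly constrained. A crucial observation is that because $k'$ is finite, the Taylor expansion $e_{k'}^{\pm i\psi_\pm}=\sum_{n\geq 1} S_{k'}\bigl((\pm i\psi_\pm)^n/n!\bigr)$ does not contain the identity, so the operator norm of $e_{k'}^{\pm i\psi_\pm}(t,x,D)$ reduces entirely to decomposable norms of $\psi_\pm$; by Lemma~\ref{lem:symbol_bounds_psi_k_theta}, smallness of $\|A\|_{L^2_tL^8_x(I_j\times\R^4)}$ and analogous norms of $A$ on $I_j$ translates directly into smallness of the decomposable norms of $\chi_{I_j}\psi_\pm$, which will supply the factor $\varepsilon$.

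I would decompose $P_0\phi=Q_{<k-C}P_0\phi+Q_{\geq k-C}P_0\phi$. For the high-modulation piece, I would drop the outer $Q_k$ in $L^2_tL^2_x$ and bound
\[
\bigl\|\chi_{I_j}e_{k'}^{\pm i\psi_\pm}(t,x,D) Q_{\geq k-C}P_0\phi\bigr\|_{L^2_tL^2_x}
\]
by inserting the Taylor expansion of the symbol and applying Lemma~\ref{lem:decomposability_lemma_KST}, which reduces the estimate to a product of small decomposable norms of $\chi_{I_j}\psi_\pm$ times $\|Q_{\geq k-C}P_0\phi\|_{L^2_tL^2_x}$. The former delivers the $\varepsilon$ factor, while the latter is controlled by $2^{-k/2}\|P_0\phi\|_{X^{0,\frac{1}{2}}_\infty} \lesssim 2^{-k/2}\|P_0\phi\|_{N_0^\ast}$ via the embedding $X^{0,\frac{1}{2}}_\infty\hookrightarrow N_0^\ast$.

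For the low-modulation input $Q_{<k-C}P_0\phi$, the output modulation $\sim 2^k$ can only come from the symbol or from the time cutoff. I would correspondingly split $\chi_{I_j}=\chi_{I_j}^{lo}+\chi_{I_j}^{hi}$ into time-frequency components below and above the threshold $2^{k-C}$. The high piece satisfies $\|\chi_{I_j}^{hi}\|_{L^2_t}\lesssim 2^{-k/2}$, which combined with an $L^\infty_tL^2_x$ bound on $e_{k'}^{\pm i\psi_\pm}(t,x,D)P_0\phi$ (obtained from Lemma~\ref{lem:renormalization_operator_fixed_time_1} together with the Taylor smallness) produces a contribution of the form $\varepsilon\cdot 2^{-k/2}\|P_0\phi\|_{N_0^\ast}$. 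For the low piece $\chi_{I_j}^{lo}$, which is bounded in $L^\infty_t$, the outer $Q_k$ effectively isolates the modulation-$\sim 2^k$ component of the symbol itself; using the dyadic angular decomposition of $\psi_\pm$, Lemma~\ref{lem:symbol_bounds_psi_k_theta} gives a decomposable bound per sector with gain $\theta^{1/2}$, and summing over angles $\theta\gtrsim 2^{\sigma k'}$ under the modulation constraint $\sim 2^k$ yields the additional gain $2^{\delta(k-k')}$ for some $\delta>0$.

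The principal technical difficulty will be reconciling the sharp cutoff $\chi_{I_j}$ with the modulation projections $Q_k$, which is handled by the $\chi_{I_j}^{lo}/\chi_{I_j}^{hi}$ splitting and by carefully tracking how the modulation $\sim 2^k$ is distributed. A secondary challenge is extracting both small factors $\varepsilon$ and $2^{\delta(k-k')}$ simultaneously while preserving the $2^{-k/2}$ weight dictated by the output modulation scale; this will require a careful accounting of the angular and modulation structure of $\psi_\pm$ against the smallness of $\|\chi_{I_j}A\|$ in the relevant $L^q_tL^r_x$ norms, a bookkeeping exercise akin to the one in Sections 7--11 of \cite{KST} but adapted to the time-localized, large-energy setting.
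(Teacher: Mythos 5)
The central gap is the claimed source of the factor $\varepsilon$. You assert that time localization makes the decomposable norms of $\chi_{I_j}\psi_\pm$ small, so that Taylor-expanding $e_{k'}^{\pm i \psi_\pm}=\sum_{n\geq 1}S_{k'}\bigl((\pm i\psi_\pm)^n/n!\bigr)$ and applying Lemma~\ref{lem:decomposability_lemma_KST} termwise yields an operator bound of size $\varepsilon$. In the large-energy setting this does not work. The decomposable bounds of Lemma~\ref{lem:symbol_bounds_psi_k_theta} are Strichartz-type bounds controlled by the full energy $\|\nabla_{t,x}A\|_{L^2_x}$; the ones actually needed to control exponential factors on $L^2_x$ are fixed-time, i.e. $L^\infty_t$-based (as in Lemma~\ref{lem:renormalization_operator_fixed_time_1}), and such norms are not divisible, so restricting to $I_j$ gains nothing for them; moreover the undifferentiated pieces $\psi_l^{(\theta)}$ are not even summable over $l<0$ in $D(L^\infty_t L^\infty_x)$, so the full Taylor series cannot be estimated this way at all. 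The paper never expands the exponential away: it keeps $e^{\pm i\psi_{<\cdot}}$ as a bounded (not small) factor via the fixed-time $L^2$ bounds, organizes the expansion as $\mathcal{Z}+\mathcal{L}+\mathcal{Q}+\mathcal{C}$ with at most three explicit $\psi_l$ factors, and splits each explicit factor into a small-angle part, whose smallness comes from choosing the angular threshold $C_3$ large (independently of the intervals), and a large-angle part, which is the only place the choice of $I_j$ enters, through divisibility of an $L^2_t$ (resp. $L^{5\tilde{q}}_t$) square function of sector-localized Strichartz norms of $\nabla_{t,x}A$ (see Lemma~\ref{lem:fixed_freq_in_symbol_space_time_estimates} and the treatment of $\mathcal{L}$). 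Your proposal gestures at ``analogous norms of $A$ on $I_j$'' but never identifies a genuinely divisible quantity, and the mechanism you do state fails.

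There is also a structural problem with the off-diagonal gain. Your high-modulation branch discards the outer $Q_k$, so it can at best produce $\varepsilon\,2^{-\frac{1}{2}k}\|P_0\phi\|_{N_0^\ast}$ without the factor $2^{\delta(k-k')}$, which cannot be recovered once $Q_k$ is gone; that factor is essential when the proposition is summed over $k'\in[k-C,0]$ in Propositions~\ref{prop:N_0_to_N_0_for_e} and \ref{prop:N_0_ast_to_N_0_ast_for_e}. The paper instead first splits the cutoff $\chi_{I_j}$ (not $\phi$) by modulation and invokes the large-energy versions of Proposition~9 and Lemma~10 of \cite{KST}, which retain the $2^{\delta(k-k')}$ decay without any smallness; since that decay alone yields $\varepsilon$ when $k\ll k'$, the interval-dependent smallness only has to be produced in the diagonal case $k=k'+O(1)$, which is exactly where the $\mathcal{Z}+\mathcal{L}+\mathcal{Q}+\mathcal{C}$ expansion and the angle-splitting described above are deployed. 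Your outline addresses neither the reduction to the diagonal case nor how $\varepsilon$ and $2^{\delta(k-k')}$ are to be obtained simultaneously in a summable way.
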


 In the proof of Proposition \ref{prop:modulation_localized_estimate} we will use the following result whose proof will be given later.
\begin{lem} \label{lem:fixed_freq_in_symbol_space_time_estimates}
 Let $1 \leq q \leq p \leq \infty$. For any $\varepsilon > 0$ the intervals $I_j$ can be chosen such that uniformly for all $j = 1, \ldots, J$ and all integers $k + C \leq l \leq 0$, the following operator bound holds
 \begin{equation*}
  \bigl\| \chi_{I_j} (e_l^{\pm i \psi_{<k}})(t,x,D) \bigr\|_{L^p_t L^2_x(\R\times\R^4) \to L^q_t L^2_x(\R\times\R^4)} \lesssim \varepsilon 2^{4(k-l)} 2^{(\frac{1}{p}-\frac{1}{q})k}.
 \end{equation*}
\end{lem}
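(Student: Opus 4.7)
The plan is to combine spatial integration by parts (to extract the factor $2^{4(k-l)}$) with smallness of a Strichartz-type norm of the free wave $A$ on $I_j$ (producing the factor $\varepsilon$), and conclude via the decomposable pseudo-differential calculus of Lemma~\ref{lem:decomposability_lemma_KST}.

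First, I would write the symbol as $c_l(t, x, \xi) = P_l^{(x)} \bigl( e^{\pm i \psi_{<k}(t, \cdot, \xi)} \bigr)(x)$ and factor $P_l = (P_l |D_x|^{-4}) \cdot (-\Delta_x)^2$, where $P_l |D_x|^{-4}$ has $L^\infty_x \to L^\infty_x$ operator norm $\lesssim 2^{-4l}$. Expanding $\Delta_x^2 e^{\pm i \psi_{<k}}$ via the Leibniz rule produces a sum of terms of the form $\prod_{j=1}^m (\nabla_x^{\alpha_j} \psi_{<k}) \cdot e^{\pm i \psi_{<k}}$ with $1 \leq m \leq 4$, $|\alpha_j| \geq 1$, and $\sum_j |\alpha_j| = 4$. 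Since $\psi_{<k}(t, \cdot, \xi)$ has spatial Fourier support in $\{|\eta| \lesssim 2^k\}$, Bernstein's inequality gives $\|\nabla_x^\alpha \psi_{<k}\|_{L^\infty_x} \lesssim 2^{|\alpha| k} \|\psi_{<k}\|_{L^\infty_x}$ for $|\alpha| \geq 1$. Together with the a priori bound $\|\psi_{<k}\|_{L^\infty} \lesssim \|\nabla_{t,x} A\|_{L^2_x} \lesssim 1$, this yields the pointwise estimate
\[
 \|c_l(t, \cdot, \xi)\|_{L^\infty_x} \lesssim 2^{4(k-l)} \, \|\psi_{<k}(t, \cdot, \xi)\|_{L^\infty_x}.
\]
The analogous bound for the $\xi$-derivatives $(\theta \nabla_\xi)^{\ell} c_l$, $0 \leq \ell \leq 100$, entering the $D_\theta$-norm follows by the same argument, since each $\nabla_\xi$ acting on $\psi_\pm$ produces at most a factor of $\theta^{-1}$ at angular scale $\theta$, balanced by the prefactor $\theta$.

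Next, I would choose the intervals $I_j$ so that an appropriate Strichartz-type norm $\|A\|_{\mathrm{Str}(I_j \times \R^4)}$ of the free wave $A$ satisfies $\|A\|_{\mathrm{Str}(I_j \times \R^4)} \leq \varepsilon$; this is possible by a standard divisibility argument on the globally finite Strichartz norm controlled by $\|\nabla_{t,x} A\|_{L^2_x}$, which determines $J$ in terms of $\varepsilon$ and the energy. The localized analog of Lemma~\ref{lem:symbol_bounds_psi_k_theta} (with the global energy on the right-hand side replaced by $\|A\|_{\mathrm{Str}(I_j \times \R^4)}$), summed over frequencies $k' < k$ and over angular scales $\theta$ appearing in $\psi_{<k}$, then yields
\[
 \|\chi_{I_j} \psi_{<k}\|_{D(L^{q_1}_t L^\infty_x)} \lesssim \varepsilon \cdot 2^{-k/q_1}, \qquad \tfrac{1}{q_1} := \tfrac{1}{q} - \tfrac{1}{p},
\]
the sum being dominated by the top dyadic scale $k' \sim k$. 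Combining this with the integration-by-parts bound yields
\[
 \|\chi_{I_j} c_l\|_{D(L^{q_1}_t L^\infty_x)} \lesssim \varepsilon \cdot 2^{4(k-l)} \cdot 2^{(1/p - 1/q) k}.
\]
Finally, Lemma~\ref{lem:decomposability_lemma_KST} applied with the H\"older splits $1/q = 1/q_1 + 1/p$ and $1/2 = 1/\infty + 1/2$ converts this symbol bound into the claimed $L^p_t L^2_x \to L^q_t L^2_x$ operator-norm estimate.

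The main obstacle will be to pin down a Strichartz norm that simultaneously is controlled by the conserved energy $\|\nabla_{t,x} A\|_{L^2_x}$, can be made small on short intervals $I_j$ by divisibility, and feeds correctly into the scaling analysis of Lemma~\ref{lem:symbol_bounds_psi_k_theta} to produce exactly the $2^{-k/q_1}$ factor. The endpoint $q = p$ (where $q_1 = \infty$ and no time-integrated smallness is available from divisibility) will require exploiting the high-angle cutoffs in the definition of $\psi_\pm$ (the large constants $C_1, C_2$) to extract smallness in the $L^\infty_t L^\infty_x$ piece, in the same spirit as Lemma~\ref{lem:renormalization_operator_fixed_time_2}.
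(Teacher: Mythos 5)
Your plan runs into a genuine obstruction at its core analytic step: you only ever estimate the phase $\psi_{<k}$ itself, and neither of the two bounds you invoke for it is available. The pointwise bound $\|\psi_{<k}\|_{L^\infty_x} \lesssim \|\nabla_{t,x}A\|_{L^2_x}$ fails: the frequency-localized pieces obey $|\psi_{k'}^{(\theta)}| \lesssim \theta^{1/2}\|\nabla_{t,x}A_{k'}\|_{L^2_x}$, and their sum over $k' < k$ is an $\ell^1$-in-frequency sum that is not controlled by the energy — this is exactly why the paper only ever estimates differences of the phase, as in \eqref{equ:symbol_bound_difference_psi}, or its derivatives, never $\psi$ itself in $L^\infty$. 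Likewise, your key claim $\|\chi_{I_j}\psi_{<k}\|_{D(L^{q_1}_t L^\infty_x)} \lesssim \varepsilon \, 2^{-k/q_1}$ has the frequency summation backwards: by Lemma~\ref{lem:symbol_bounds_psi_k_theta} the piece $\psi_{k'}$ carries the weight $2^{-k'/q_1}$, which \emph{grows} as $k' \to -\infty$, so the sum over $k' < k$ diverges at the low-frequency end and is certainly not ``dominated by the top dyadic scale''. The object that is summable is a derivative of the phase, since $\partial_t \psi_{k'}$ (or $\nabla_x \psi_{k'}$) gains an extra factor $2^{k'}$ in the decomposable norm. This is precisely why the paper's proof, following Lemma~10 of \cite{KST}, writes $S_l e^{\pm i\psi_{<k}} = (\pm i)^5 2^{-5l} \prod_{r=1}^5 [S_l^{(r)}\partial_t\psi_{<k}]\, e^{\pm i\psi_{<k}}$ with nested space-time cutoffs, so that only $\partial_t\psi$ is ever estimated; one of the five factors is placed in an $L^{5\tilde{q}}_t$-based decomposable norm (with $\frac{1}{\tilde q} = \frac1q - \frac1p$) to generate the $2^{(\frac1p - \frac1q)k}$ gain, and the frequency sums converge because of the $2^{k_r}$ from the time derivative.

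Two further points. First, $e_l$ denotes the \emph{space-time} frequency localization $S_l$ of the symbol, not a spatial Littlewood--Paley projection, so the factorization $P_l = (P_l |D_x|^{-4})(-\Delta_x)^2$ does not exhaust the localization: on the part of the support where the temporal frequency is $\sim 2^l$ but the spatial frequency is much smaller (which occurs, since many low-frequency factors of $\psi$ can add constructively in $\tau$ while cancelling in $\xi$), spatial integration by parts gains nothing; the $2^{-5l}$ in the paper comes from time derivatives. Second, the source of the $\varepsilon$ is more delicate than replacing the energy in Lemma~\ref{lem:symbol_bounds_psi_k_theta} by a small Strichartz norm of $A$ on $I_j$: the decomposable bounds in the small-angle regime genuinely use fixed-time (energy) information and are not improved by time localization. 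In the paper, smallness there comes from the angular weight $\theta^{\frac12 - \frac{2}{5\tilde q}}$ summed over $\theta < 2^{-C_3}$ with $C_3$ large (which also requires $5\tilde q > 4$), while only the large-angle part is handled by divisibility, through a specific $L^{5\tilde q}_t(I_j)$ square-function norm of $\Pi^\omega_{\theta_1}\nabla_{t,x}T_{z_1}A_{k_1}$ that is globally controlled by Strichartz estimates. You anticipated a difficulty of this kind only at the endpoint $q = p$, but it is present for all exponents, and resolving it is the actual content of the proof.
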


\begin{proof}[Proof of Proposition \ref{prop:modulation_localized_estimate}]
 In the following we denote an interval $I_j$ just by $I$ and $e_{k'}^{\pm i \psi_\pm}$ stands for the left quantization $e_{k'}^{\pm i \psi_\pm}(t,x,D)$.

 \medskip

 We first reduce to the case $k = k' \pm O(1)$. To this end we will use that Proposition 9 and Lemma 10 in \cite{KST} hold without the $\varepsilon$ smallness gain also for large energies. We split
 \begin{equation} \label{equ:modulation_localized_estimate_reduce_to_k_equal_k_prime}
  \begin{split}
   Q_k \bigl( \chi_I e_{k'}^{\pm i \psi_\pm}  P_0 \phi \bigr) &= Q_k \bigl( Q_{<k-C}(\chi_I) e_{k'}^{\pm i \psi_\pm}  P_0 \phi \bigr) + Q_k \bigl( Q_{[k-C,k+C]}(\chi_I) e_{k'}^{\pm i \psi_\pm}  P_0 \phi \bigr) \\
   &\quad + Q_k \bigl( Q_{>k+C}(\chi_I) e_{k'}^{\pm i \psi_\pm}  P_0 \phi \bigr).
  \end{split}
 \end{equation}
 For the first term we obtain
 \begin{align*}
  2^{\frac{1}{2} k} \bigl\| Q_k \bigl( Q_{<k-C}(\chi_I) e_{k'}^{\pm i \psi_\pm}  P_0 \phi \bigr) \bigr\|_{L^2_t L^2_x} &= 2^{\frac{1}{2} k} \bigl\| Q_k \bigl( Q_{<k-C}(\chi_I) Q_{k+O(1)} e_{k'}^{\pm i \psi_\pm}  P_0 \phi \bigr) \bigr\|_{L^2_t L^2_x} \\
  &\lesssim 2^{\frac{1}{2} k} \bigl\| Q_{k+O(1)} e_{k'}^{\pm i \psi_\pm}  P_0 \phi \bigr\|_{L^2_t L^2_x} \\
  &\lesssim 2^{\delta (k-k')} \|P_0 \phi\|_{N_0^\ast},
 \end{align*}
 where we used Proposition 9 from \cite{KST}. We estimate the second term from \eqref{equ:modulation_localized_estimate_reduce_to_k_equal_k_prime} by
 \begin{align*}
  2^{\frac{1}{2} k} \bigl\| Q_k \bigl( Q_{[k-C,k+C]}(\chi_I) e_{k'}^{\pm i \psi_\pm}  P_0 \phi \bigr) \bigr\|_{L^2_t L^2_x} &\lesssim 2^{\frac{1}{2} k} \bigl\| Q_{[k-C,k+C]}(\chi_I) \bigr\|_{L^{3}_t(\R)} \bigl\|e_{k'}^{\pm i \psi_\pm}  P_0 \phi \bigr\|_{L^6_t L^2_x} \\
  &\lesssim 2^{\frac{1}{2} k} 2^{-\frac{1}{3} k} \bigl\|e_{k'}^{\pm i \psi_\pm}  P_0 \phi \bigr\|_{L^6_t L^2_x}.
 \end{align*}
 Using continuous Littlewood-Paley resolutions to decompose the group element we have
 \[
  e_{k'}^{\pm i \psi_\pm} = e_{k'}^{\pm i \psi_{<k'-C}} \pm i \int_{l > k'-C} S_{k'} \bigl( \psi_{l} e^{\pm i \psi_{<l}} \bigr) \, dl.
 \]
 By Lemma 10 in \cite{KST} and the decomposable estimates \eqref{equ:psi_k_theta_decomposable_estimate} we find
 \begin{align*}
  &\bigl\|e_{k'}^{\pm i \psi_\pm}  P_0 \phi \bigr\|_{L^6_t L^2_x} \\
  &\lesssim \bigl\| e_{k'}^{\pm i \psi_{<k'-C}}  P_0 \phi \bigr\|_{L^6_t L^2_x} + \int_{l > k'-C} \bigl\| S_{k'} \bigl( \psi_l e^{\pm i \psi_{<l}} \bigr)  P_0 \phi \bigr\|_{L^6_t L^2_x} \, dl\\
  &\lesssim 2^{-\frac{1}{6} k'} \|P_0 \phi\|_{L^\infty_t L^2_x} + \int_{l > k'-C} \|\psi_l\|_{D(L^6_t L^\infty_x)} \|e^{\pm i \psi_{<l}}  P_0 \phi \|_{L^\infty_t L^2_x} \, dl \\
  &\lesssim 2^{-\frac{1}{6} k'} \|P_0 \phi\|_{L^\infty_t L^2_x} + \int_{l > k'-C} 2^{-\frac{1}{6} l} \|\nabla_{t,x} A\|_{L^2_x} \|P_0 \phi\|_{L^\infty_t L^2_x} \, dl \\
  &\lesssim 2^{-\frac{1}{6} k'} \|P_0 \phi\|_{N_0^\ast}.
 \end{align*}
 Thus, the second term from \eqref{equ:modulation_localized_estimate_reduce_to_k_equal_k_prime} is bounded by
 \[
  2^{\frac{1}{2} k} \bigl\| Q_k \bigl( Q_{[k-C,k+C]}(\chi_I) e_{k'}^{\pm i \psi_\pm}  P_0 \phi \bigr) \bigr\|_{L^2_t L^2_x} \lesssim 2^{\frac{1}{6} (k-k')} \|P_0 \phi\|_{N_0^\ast}.
 \]
 To estimate the third term from \eqref{equ:modulation_localized_estimate_reduce_to_k_equal_k_prime} we first use Bernstein in time to obtain
 \begin{align*}
  &2^{\frac{1}{2} k} \bigl\| Q_k \bigl( Q_{>k+C}(\chi_I) e_{k'}^{\pm i \psi_\pm}  P_0 \phi \bigr) \bigr\|_{L^2_t L^2_x} \\
  &\leq 2^{\frac{1}{2} k} \sum_{j \geq k+C} \bigl\| Q_k \bigl( Q_j(\chi_I) Q_{j + O(1)} (e_{k'}^{\pm i \psi_\pm}  P_0 \phi) \bigr) \bigr\|_{L^2_t L^2_x} \\
  &\lesssim 2^k \sum_{j \geq k+C} \|Q_j(\chi_I)\|_{L^2_t} \bigl\| Q_{j+O(1)} e_{k'}^{\pm i \psi_\pm}  P_0 \phi \bigr\|_{L^2_t L^2_x} \\
  &\lesssim 2^k \sum_{j = k+C}^{k'+C} 2^{-\frac{1}{2} j} \bigl\| Q_{j+O(1)} e_{k'}^{\pm i \psi_\pm}  P_0 \phi \bigr\|_{L^2_t L^2_x} + 2^k \sum_{j > k'+C} 2^{-\frac{1}{2} j} \bigl\| Q_{j+O(1)} e_{k'}^{\pm i \psi_\pm}  P_0 \phi \bigr\|_{L^2_t L^2_x}.
 \end{align*}
 For the first sum we use Proposition 9 from \cite{KST}, for the second sum we first note that there is no modulation interference since $k' < j-C$ and then use the fixed-time $L^2_x \to L^2_x$ estimate for $e_{k'}^{\pm i \psi_\pm}$. Hence,
 \begin{align*}
  &\lesssim 2^k \sum_{j = k+C}^{k'+C} 2^{-\frac{1}{2} j} 2^{-\frac{1}{2} j} 2^{\delta(j-k')} \|\nabla_{t,x} A\|_{L^2_x} \|P_0 \phi\|_{N_0^\ast} + 2^k \sum_{j > k'+C} 2^{-j} \|P_0 \phi\|_{X_\infty^{0, \frac{1}{2}}} \\
  &\lesssim ( 2^{\delta(j-k')} + 2^{k-k'} ) \|P_0 \phi\|_{N_0^\ast}.
 \end{align*}
 Putting things together we find that
 \[
  2^{\frac{1}{2} k} \bigl\| Q_k \bigl( \chi_I e_{k'}^{\pm i \psi_\pm}  P_0 \phi \bigr) \bigr\|_{L^2_t L^2_x} \lesssim 2^{\delta (k-k')} \|P_0 \phi\|_{N_0^\ast}
 \]
 and for sufficiently large $|k| \gg |k'|$ we therefore trivially gain a smallness factor $\varepsilon$ from $2^{\frac{1}{2} \delta (k-k')}$.

 \medskip

 We are thus reduced to the case $k = k' \pm O(1)$ and it remains to show that
 \begin{equation*}
  2^{\frac{1}{2} k} \bigl\| Q_k \bigl( \chi_I e_k^{\pm i \psi_\pm}  P_0 \phi \bigr) \bigr\|_{L^2_t L^2_x} \lesssim \varepsilon \|P_0 \phi\|_{N_0^\ast}.
 \end{equation*}
 As in the proof of Proposition 9 in \cite{KST} we expand the untruncated group element 
 \begin{equation*}
  \begin{split}
   e^{\pm i \psi} &= e^{\pm i \psi_{<k-C}} \pm i \int_{l > k-C} \psi_l e^{\pm i \psi_{<k-C}} \, dl - \iint_{l,l' > k-C} \psi_l \psi_{l'} e^{\pm i \psi_{<k-C}} \, dl' \, dl \\
   &\quad \quad \mp i \iiint_{l,l',l'' > k-C} \psi_l \psi_{l'} \psi_{l''} e^{\pm i \psi_{<l''}} \, dl'' \, dl' \, dl \\
   &= {\mathcal Z} + {\mathcal L} + {\mathcal Q} + {\mathcal C}
  \end{split}
 \end{equation*} 
 and estimate each of these terms seperately.

 \medskip

 \noindent {\it Zero order term $\mathcal{Z}$:} From Lemma \ref{lem:fixed_freq_in_symbol_space_time_estimates} we immediately obtain that
 \[
  \bigl\| Q_k \bigl( \chi_I e_k^{\pm i \psi_{<k-C}}  P_0 \phi \bigr) \bigr\|_{L^2_t L^2_x} \lesssim \varepsilon 2^{- \frac{1}{2} k} \|P_0 \phi\|_{N_0^\ast}.
 \]

 \medskip

 \noindent {\it Linear term $\mathcal{L}$:} We have to show that
 \[
  \biggl\| Q_k \Bigl( \chi_I \int_{l > k-C} S_k \bigl( \psi_l e^{\pm i \psi_{<k-C}} \bigr)  P_0 \phi \, dl \Bigr) \biggr\|_{L^2_t L^2_x} \lesssim \varepsilon 2^{-\frac{1}{2} k} \|P_0 \phi\|_{N_0^\ast}.
 \]
 To this end we decompose $\psi_l$ into a small and a large angular part
 \begin{equation} \label{equ:split_into_small_and_large_angular_part}
  \psi_l = \sum_{2^{\sigma l} < \theta < 2^{-C_3}} \psi_l^{(\theta)} + \sum_{2^{-C_3} \leq \theta \lesssim 1} \psi_l^{(\theta)}.
 \end{equation}
 In order to bound the small angular part we split
 \[
  \chi_I = Q_{\geq k-C}(\chi_I) + Q_{<k-C}(\chi_I).
 \]
 Using Lemma \ref{lem:decomposability_lemma_KST}, we estimate the first term by
 \begin{align*}
  &\Biggl\| Q_k \biggl( Q_{\geq k-C}(\chi_I) \int_{l > k-C} S_k \Bigl( \sum_{2^{\sigma l} < \theta < 2^{-C_3}} (\psi_l^{(\theta)}) e^{\pm i\psi_{<k-C}} \Bigr)  P_0 \phi \, dl \biggr) \Biggr\|_{L^2_t L^2_x} \\
  &\lesssim \bigl\|Q_{\geq k-C}(\chi_I)\bigr\|_{L^3_t} \int_{l > k-C} \sum_{2^{\sigma l} < \theta < 2^{-C_3}} \| \psi_l^{(\theta)} \|_{D_\theta(L^6_t L^\infty_x)} \bigl\| e^{\pm i \psi_{<k-C}}  P_0 \phi \bigr\|_{L^\infty_t L^2_x} \,dl \\
  &\lesssim 2^{-\frac{1}{3} k} \int_{l > k-C} \sum_{2^{\sigma l} < \theta < 2^{-C_3}} \theta^{\frac{1}{6}} 2^{-\frac{1}{6} l} \|\nabla_{t,x} A\|_{L^2_x} \|P_0 \phi\|_{L^\infty_t L^2_x} \, dl \\
  &\lesssim 2^{-\frac{1}{6} C_3} 2^{-\frac{1}{2} k} \|P_0 \phi\|_{N_0^\ast}.
 \end{align*}
 For the second term we have
 \begin{align*}
  &Q_k \biggl( Q_{<k-C}(\chi_I) \int_{l > k-C} S_k \Bigl( \sum_{2^{\sigma l} < \theta < 2^{-C_3}} \psi_l^{(\theta)} e^{\pm i \psi_{<k-C}} \Bigr)  P_0 \phi \, dl \biggr) \\
  &= Q_k \biggl( Q_{<k-C}(\chi_I) \, Q_{k+O(1)} \int_{l > k-C} S_k \Bigl( \sum_{2^{\sigma l} < \theta < 2^{-C_3}} \psi_l^{(\theta)} e^{\pm i \psi_{<k-C}} \Bigr)  P_0 \phi \, dl \biggr).
 \end{align*}
 Then since $\psi_{l}^{(\theta)}$ is a free wave, we can write this as 
 \[
  Q_k \biggl( Q_{<k-C}(\chi_I) \, Q_{k+O(1)} \int_{l > k-C} S_k \Bigl( \sum_{2^{\sigma l} < \theta < 2^{-C_3}} \psi_l^{(\theta)} e^{\pm i \psi_{<k-C}} \Bigr) Q_{k + O(1)} P_0 \phi \, dl \biggr)
 \]
 and estimate by
 \begin{align*}
  &\Biggl\| Q_k \biggl( Q_{<k-C}(\chi_I) \, Q_{k+O(1)} \int_{l > k-C} S_k \Bigl( \sum_{2^{\sigma l} < \theta < 2^{-C_3}} \psi_l^{(\theta)} e^{\pm i \psi_{<k-C}} \Bigr) Q_{k + O(1)} P_0 \phi \, dl \biggr) \Biggr\|_{L^2_t L^2_x} \\
  &\lesssim \int_{l > k-C} \sum_{2^{\sigma l} < \theta < 2^{-C_3}} \bigl\| \psi_l^{(\theta)} \bigr\|_{D_\theta(L^6_t L^\infty_x)} \bigl\| e^{\pm i \psi_{<k-C}} Q_{k+O(1)} P_0 \phi \bigr\|_{L^3_t L^2_x} \, dl \\
  &\lesssim \int_{l > k-C} \sum_{2^{\sigma l} < \theta < 2^{-C_3}} 2^{-\frac{1}{6} l} 2^{-\frac{1}{6} \theta} \|\nabla_{t,x} A\|_{L^2_x} \bigl\| Q_{k+O(1)} P_0 \phi \bigr\|_{L^3_t L^2_x} \, dl \\
  &\lesssim 2^{-\frac{1}{6} C_3} 2^{-\frac{1}{6} k} 2^{\frac{1}{6} k} \bigl\| Q_{k+O(1)} P_0 \phi \bigr\|_{L^2_t L^2_x} \\
  &\lesssim 2^{-\frac{1}{6} C_3} 2^{-\frac{1}{2} k} \|P_0 \phi \|_{N_0^\ast}.
 \end{align*}
 Here we used Lemma \ref{lem:decomposability_lemma_KST}, the fixed-time $L^2_x \to L^2_x$ estimate for $e^{\pm i T_z \psi_{<k-C}}$ and then Bernstein in time.

 The large angular part in \eqref{equ:split_into_small_and_large_angular_part} has to be estimated more carefully. Noting that the symbol localization $S_k \bigl( \psi_l e^{\pm i \psi_{<k-C}} \bigr)$ can be represented as
 \begin{equation}
  S_k \bigl( \psi_l e^{\pm i \psi_{<k-C}} \bigr) = \int_{\R^{1+4}_z} m_k(z) (T_z \psi_l) e^{\pm i T_z \psi_{<k-C}} \, dz,
 \end{equation}
 where $m_k$ is an integrable bump function at scale $2^{-k}$ and $T_z$ denotes translation in space-time direction $z \in \R^{1+4}$, we derive the following key estimate for the large angular part
 \begin{align*}
  &\Biggl\| Q_k \biggl( \chi_I \int_{l > k-C} \int_{\R^{1+4}_z} m_k(z) \Bigl( \sum_{2^{-C_3} \leq \theta \lesssim 1} (T_z \psi_l^{(\theta)}) \Bigr)  e^{\pm i T_z \psi_{<k-C}}  P_0 \phi \, dz \, dl \biggr) \Biggr\|_{L^2_t L^2_x} \\
  &\lesssim C_3^{1/2} 2^{-\frac{1}{2} k} \Biggl\| \biggl( \int_{l > k-C} 2^{-\frac{1}{2} (l-k)} \int_{\R^{1+4}_z} |m_k(z)| \sum_{2^{-C_3} \leq \theta \lesssim 1} \sum_{\Gamma_\theta^\nu} \sup_{\omega \in \Gamma_\theta^\nu} \Bigl( 2^{-\frac{5}{6} l} \bigl\|b_\theta^\nu(\omega) \Pi_\theta^\omega \nabla_{t,x} T_z A_l \bigr\|_{L^6_x}  \Bigr)^2 \, dz \, dl \biggl)^{\frac{1}{2}} \Biggr\|_{L^2_t(I)} \times \\
  &\qquad \qquad \qquad \times \biggl( \int_{l>k-C} 2^{-\frac{1}{2} (l-k)} \int_{\R^{1+4}_z} |m_k(z)| \, \bigl\| e^{\pm i T_z \psi_{<k-C}}  P_0 \phi \bigr\|_{L^\infty_t L^2_x}^2 \, dz \, dl \biggr)^{\frac{1}{2}} \\
  &\lesssim C_3^{1/2} 2^{-\frac{1}{2} k} \times  \\
  &\quad \times \Biggl\| \biggl( \int_{l > k-C} 2^{-\frac{1}{2} (l-k)} \int_{\R^{1+4}_z} |m_k(z)| \sum_{2^{-C_3} \leq \theta \lesssim 1} \sum_{\Gamma_\theta^\nu} \sup_{\omega \in \Gamma_\theta^\nu} \Bigl( 2^{-\frac{5}{6} l} \bigl\|b_\theta^\nu(\omega) \Pi_\theta^\omega \nabla_{t,x} T_z A_l \bigr\|_{L^6_x}  \Bigr)^2 \, dz \, dl \biggl)^{\frac{1}{2}} \Biggr\|_{L^2_t(I)} \|P_0 \phi\|_{N_0^\ast}.
 \end{align*}
 This estimate can be proven by carefully opening up the proof of the decomposable estimates in Lemma \ref{lem:decomposability_lemma_KST}. We emphasize that \emph{uniformly} for all integers $k < 0$, the quantity
 \begin{align*}
  &\Biggl\| \biggl( \int_{l > k-C} 2^{-\frac{1}{2} (l-k)} \int_{\R^{1+4}_z} |m_k(z)| \sum_{2^{-C_3} \leq \theta \lesssim 1} \sum_{\Gamma_\theta^\nu} \sup_{\omega \in \Gamma_\theta^\nu} \Bigl( 2^{-\frac{5}{6} l} \bigl\|b_\theta^\nu(\omega) \Pi_\theta^\omega \nabla_{t,x} T_z A_l \bigr\|_{L^6_x}  \Bigr)^2 \, dz \, dl \biggl)^{\frac{1}{2}} \Biggr\|_{L^2_t(\R)} \\
  &\leq \Biggl\| \biggl( \int_{\R} \sum_{k < l+C} 2^{-\frac{1}{2} (l-k)} \int_{\R^{1+4}_z} |m_k(z)| \sum_{2^{-C_3} \leq \theta \lesssim 1} \sum_{\Gamma_\theta^\nu} \sup_{\omega \in \Gamma_\theta^\nu} \Bigl( 2^{-\frac{5}{6} l} \bigl\|b_\theta^\nu(\omega) \Pi_\theta^\omega \nabla_{t,x} T_z A_l \bigr\|_{L^6_x}  \Bigr)^2 \, dz \, dl \biggl)^{\frac{1}{2}} \Biggr\|_{L^2_t(\R)} 
 \end{align*}
 is bounded by $\|\nabla_{t,x} A\|_{L^2_x}$ by Strichartz estimates.

 By first fixing $C_3 > 0$ sufficiently large and then suitably choosing the intervals $I_j$, the estimate of the linear term $\mathcal{L}$ follows.

 \medskip

 \noindent {\it Quadratic and cubic terms $\mathcal{Q}$ and $\mathcal{C}$:} Using the above ideas these can be estimated similarly. We omit the details.
\end{proof}

\begin{proof}[Proof of Lemma \ref{lem:fixed_freq_in_symbol_space_time_estimates}]
 As in \cite[Lemma 10]{KST} we write the symbol as
 \[
  S_l e^{\pm i \psi_{<k}} = (\pm i)^5 2^{-5l} \prod_{r=1}^5 [S_l^{(r)} \partial_t \psi_{<k}] e^{\pm i \psi_{<k}},
 \]
 where the product denotes a nested multiplication by $S_l \partial_t \psi_{<k}$ for a series of frequency cutoffs $S_l^{(r+1)} S_l^{(r)} = S_l^{(r)} \approx S_l$ with expanding widths. Then we have
 \[
  S_l^{(r)} \partial_t \psi_{<k} = \int_{\R^{1+4}_{z_r}} m_l^{(r)}(z_r) (T_{z_r} \partial_t \psi_{<k}) \, dz_r,
 \]
 where $m_l^{(r)}$ is an integrable bump function at scale $2^{-l}$ and $T_{z_r}$ denotes translation in space-time direction $z_r \in \R^{1+4}$. The claim now reduces to proving that the intervals $I_j$ can be chosen such that uniformly for $j = 1, \ldots, J$ and all integers $k \leq l - C$, it holds that
 \begin{equation} \label{equ:fixed_freq_in_symbol_space_time_estimate_main}
  \Biggl\| \chi_{I_j} \biggl( \prod_{r=1}^5 \int_{\R^{1+4}_{z_r}} m_l^{(r)}(z_r) (T_{z_r} \partial_t \psi_{<k}) \biggr) e^{\pm i \psi_{<k}}(t,x,D) \Biggr\|_{L^p_t L^2_x \to L^q_t L^2_x} \lesssim \varepsilon 2^{4k} 2^l 2^{(\frac{1}{p}-\frac{1}{q})k}.
 \end{equation}
 To this end we show that the intervals $I_j$ can be chosen such that uniformly for $j = 1, \ldots, J$, all integers $k \leq l - C$ and all integers $k_1, \ldots, k_5 < k$, we have the operator bound
 \begin{equation} \label{equ:fixed_freq_in_symbol_space_time_estimate_reduced}
  \begin{split}
   &\Biggl\| \chi_{I_j} \biggl( \prod_{r=1}^5 \int_{\R^{1+4}_{z_r}} m_l^{(r)}(z_r) (T_{z_r} \partial_t \psi_{k_r}) \biggr) e^{\pm i \psi_{<k}}(t,x,D) \Biggr\|_{L^p_t L^2_x \to L^q_t L^2_x} \lesssim \varepsilon 2^l 2^{-\frac{1}{5\tilde{q}} k_1} 2^{(1-\frac{1}{5 \tilde{q}}) k_2} \cdots 2^{(1-\frac{1}{5\tilde{q}}) k_5},
  \end{split}
 \end{equation}
 where $\frac{1}{\tilde{q}} = \frac{1}{q}-\frac{1}{p}$. By summing over dyadic frequencies, the estimate \eqref{equ:fixed_freq_in_symbol_space_time_estimate_main} then follows.

 \medskip

 In order to prove \eqref{equ:fixed_freq_in_symbol_space_time_estimate_reduced}, we split $\partial_t \psi_{k_1}$ into a small and a large angular part
 \[
  \partial_t \psi_{k_1} = \sum_{2^{\sigma k_1} < \theta_1 < 2^{-C_3}} \partial_t \psi_{k_1}^{(\theta_1)} + \sum_{2^{-C_3} \leq \theta_1 \lesssim 1} \partial_t \psi_{k_1}^{(\theta_1)}
 \]
 for some constant $C_3 > 0$ to be chosen sufficiently large later in the proof. We estimate the small angular part using H\"older-type estimates for decomposable function spaces \eqref{equ:hoelder_type_estimate_decomposable_norms} and the bounds \eqref{equ:psi_k_theta_decomposable_estimate} for the phase,
 \begin{align*}
  & \Biggl\| \chi_{I_j} \biggl( \sum_{2^{\sigma k_1} < \theta_1 < 2^{-C_3}} \int_{\R^{1+4}_{z_1}} m_l^{(1)}(z_1) (T_{z_1} \partial_t \psi_{k_1}^{(\theta_1)}) \, dz_1 \biggr) \biggl( \prod_{r=2}^5 \int_{\R^{1+4}_{z_r}} m_l^{(r)}(z_r) (T_{z_r} \partial_t \psi_{k_r}) \, dz_r \biggr) e^{\pm i \psi_{<k}}(t,x,D) \phi \Biggr\|_{L^q_t L^2_x} \\
  &\lesssim \biggl( \sum_{2^{\sigma k_1} < \theta_1 < 2^{-C_3}} \int_{\R^{1+4}_{z_1}} |m_l^{(1)}(z_1)| \bigl\|T_{z_1} \partial_t \psi_{k_1}^{(\theta_1)} \bigr\|_{D_{\theta_1}(L^{5 \tilde{q}}_t L^\infty_x)} \, dz_1 \biggr) \biggl( \prod_{r=2}^5 \int_{\R_{z_r}^{1+4}} |m_l^{(r)}(z_r)| \bigl\| T_{z_r} \partial_t \psi_{k_r} \bigr\|_{D(L^{5 \tilde{q}}_t L^\infty_x)} \, dz_r \biggr) \|\phi\|_{L^p_t L^2_x} \\
  &\lesssim \biggl( \sum_{2^{\sigma k_1} < \theta_1 < 2^{-C_3}} 2^{(1-\frac{1}{5 \tilde{q}}) k_1} \theta_1^{\frac{1}{2} - \frac{2}{5\tilde{q}}} \|\nabla_{t,x} A\|_{L^2_x} \biggr) \biggl( \prod_{r=2}^5 2^{(1-\frac{1}{5\tilde{q}}) k_r} \|\nabla_{t,x} A\|_{L^2_x} \biggr) \|\phi\|_{L^p_t L^2_x} \\
  &\lesssim 2^{-C_3 (\frac{1}{2} - \frac{2}{5 \tilde{q}})} 2^l 2^{-\frac{1}{5\tilde{q}} k_1} 2^{(1-\frac{1}{5 \tilde{q}}) k_2} \cdots 2^{(1-\frac{1}{5 \tilde{q}}) k_5} \|\nabla_{t,x} A\|_{L^2_x}^5 \|\phi\|_{L^p_t L^2_x}.
 \end{align*}
 Here we dropped the time cutoff $\chi_{I_j}$ and used the space-time translation invariance of the decomposable function spaces. For the large angular part we establish the crucial estimate
 \begin{align*}
  &\Biggl\| \chi_{I_j} \biggl( \sum_{2^{-C_3} \leq \theta_1 \lesssim 1} \int_{\R_{z_1}^{1+4}} m_l^{(1)}(z_1) (T_{z_1} \partial_t \psi_{k_1}^{(\theta_1)}) \, dz_1 \biggr) \biggl( \prod_{r=2}^5 \int_{\R^{1+4}_{z_r}} m_l^{(r)}(z_r) (T_{z_r} \partial_t \psi_{k_r}) \, dz_r \biggr) e^{\pm i \psi_{<k}}(t,x,D) \phi \Biggr\|_{L^q_t L^2_x} \\
  &\lesssim 2^l 2^{-\frac{1}{5 \tilde{q}} k_1} 2^{(1-\frac{1}{5 \tilde{q}}) k_2} \cdots 2^{(1-\frac{1}{5 \tilde{q}}) k_5} \|\nabla_{t,x} A\|_{L^2_x}^4 C_3^{1/2} \times \\
  &\quad \times \Biggl\| \biggl( 2^{k_1 - l} \int_{\R^{1+4}_{z_1}} |m_l^{(1)}(z_1)| \sum_{2^{-C_3} \leq \theta_1 \lesssim 1} \sum_{\Gamma_{\theta_1}^{\nu_1}} \sup_{\omega \in \Gamma_{\theta_1}^{\nu_1}} \Bigl( 2^{(\frac{1}{5\tilde{q}} + \frac{4}{r_0}-2) k_1} \|b_{\theta_1}^{\nu_1}(\omega) \Pi_{\theta_1}^\omega \nabla_{t,x} T_{z_1} A_{k_1} \|_{L^{r_0}_x} \Bigr)^2 \, dz_1 \biggr)^{1/2} \Biggr\|_{L^{5 \tilde{q}}_t(I_j)} \|\phi\|_{L^p_t L^2_x},
 \end{align*}
 where $r_0 \geq 2$ is such that the exponent pair $(5 \tilde{q}, r_0)$ is sharp wave admissible. This estimate can be proven by carefully opening up the proof of Lemma \ref{lem:decomposability_lemma_KST} and of H\"older-type estimates for decomposable function spaces \eqref{equ:hoelder_type_estimate_decomposable_norms}.

 \medskip

 Noting that \emph{uniformly} for all integers $k_1 \leq l - C$, the quantity
 \begin{align*}
  &\Biggl\| \biggl( 2^{k_1 - l} \int_{\R^{1+4}_{z_1}} |m_l^{(1)}(z_1)| \sum_{2^{-C_3} \leq \theta_1 \lesssim 1} \sum_{\Gamma_{\theta_1}^{\nu_1}} \sup_{\omega \in \Gamma_{\theta_1}^{\nu_1}} \Bigl( 2^{(\frac{1}{5\tilde{q}} + \frac{4}{r_0}-2) k_1} \|b_{\theta_1}^{\nu_1}(\omega) \Pi_{\theta_1}^\omega \nabla_{t,x} T_{z_1} A_{k_1} \|_{L^{r_0}_x} \Bigr)^2 \, dz_1 \biggr)^{1/2} \Biggr\|_{L^{5 \tilde{q}}_t(\R)} \\
  &\leq \Biggl\| \biggl( \sum_{l \in \Z} \sum_{k_1 \leq l-C} 2^{k_1 - l} \int_{\R^{1+4}_{z_1}} |m_l^{(1)}(z_1)| \sum_{2^{-C_3} \leq \theta_1 \lesssim 1} \sum_{\Gamma_{\theta_1}^{\nu_1}} \sup_{\omega \in \Gamma_{\theta_1}^{\nu_1}} \Bigl( 2^{(\frac{1}{5\tilde{q}} + \frac{4}{r_0}-2) k_1} \|b_{\theta_1}^{\nu_1}(\omega) \Pi_{\theta_1}^\omega \nabla_{t,x} T_{z_1} A_{k_1} \|_{L^{r_0}_x} \Bigr)^2 \, dz_1 \biggr)^{1/2} \Biggr\|_{L^{5 \tilde{q}}_t(\R)}
 \end{align*}
 is bounded by $\|\nabla_{t,x} A\|_{L^2_x}$ by Strichartz estimates, the assertion follows by first choosing $C_3 > 0$ sufficiently large and then suitably choosing the intervals $I_j$.
\end{proof}

\begin{prop} \label{prop:modulation_localized_estimate_right_quantization}
 For any $\varepsilon > 0$ the intervals $I_j$ can be chosen such that uniformly for all $j = 1, \ldots, J$ and all integers $k \leq k' \pm O(1) < 0$, it holds that
 \begin{equation}
  \bigl\| Q_k \bigl( \chi_{I_j} e_{k'}^{-i \psi_\pm}(D,y,t)  P_0 \phi \bigr) \bigr\|_{L^2_t L^2_x(\R\times\R^4)} \lesssim \varepsilon 2^{-\frac{1}{2} k} 2^{\delta (k-k')} \|P_0 \phi\|_{N_0^\ast(\R\times\R^4)}. 
 \end{equation}
\end{prop}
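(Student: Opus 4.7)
The plan is to follow the proof of Proposition \ref{prop:modulation_localized_estimate} essentially line by line, with the systematic replacement of the decomposable estimate Lemma \ref{lem:decomposability_lemma_KST} by its right-quantization counterpart Lemma \ref{lem:decomposability_lemma_right_quantizations}. Since the phase $\psi_\pm$ is real, the right quantization $e_{k'}^{-i\psi_\pm}(D,y,t)$ is the formal $L^2_{t,x}$-adjoint of $e_{k'}^{+i\psi_\pm}(t,x,D)$; however, a direct duality argument does not close, because the right-hand side of the target inequality is measured in $N_0^\ast$ rather than in $N_0$, and the two norms are not in duality. So I would redo the argument, mirroring every structural step.

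First, as in the left-quantization case, I would reduce to $k = k' \pm O(1)$ by splitting $\chi_{I_j} = Q_{<k-C}(\chi_{I_j}) + Q_{[k-C,k+C]}(\chi_{I_j}) + Q_{>k+C}(\chi_{I_j})$ according to the modulation of the time cutoff. The low-modulation piece forces $Q_{k+O(1)}$ onto the group element and is handled by the right-quantization analog of Proposition 9 in \cite{KST}; the middle piece uses $\|Q_{[k-C,k+C]}(\chi_{I_j})\|_{L^3_t} \lesssim 2^{-k/3}$ together with a continuous Littlewood--Paley expansion of $e^{\pm i \psi_\pm}$ and the right-quantization analog of Lemma 10 in \cite{KST}; the high-modulation piece uses Bernstein in time exactly as before. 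Each contribution produces the factor $2^{\delta(k-k')}$ from which, for $|k-k'|$ large, smallness is automatic.

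For the main case $k = k' \pm O(1)$, I would expand the group element as
\[
 e^{\pm i \psi_\pm} = \mathcal{Z} + \mathcal{L} + \mathcal{Q} + \mathcal{C}
\]
and estimate each piece separately. For the linear, quadratic and cubic terms, each phase factor $\psi_l$ is split into a small angular part ($2^{\sigma l} < \theta < 2^{-C_3}$) and a large angular part ($2^{-C_3} \leq \theta \lesssim 1$). The small angular part is bounded using Lemma \ref{lem:decomposability_lemma_right_quantizations} together with the decomposable symbol estimates \eqref{equ:psi_k_theta_decomposable_estimate}; fixing $C_3$ sufficiently large yields an overall factor $2^{-C_3(\frac{1}{2}-\sigma)}$ which we can absorb into $\varepsilon$. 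The large angular part is treated by opening up the proof of Lemma \ref{lem:decomposability_lemma_right_quantizations} to extract a Strichartz-type square function of $\nabla_{t,x} A_l$ localized to the interval $I_j$; by Strichartz estimates this square function is globally bounded by $\|\nabla_{t,x} A\|_{L^2_x}$, and hence becomes $\leq \varepsilon$ on $I_j$ by choosing the intervals $I_j$ suitably short.

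The main technical point is to establish the right-quantization analog of Lemma \ref{lem:fixed_freq_in_symbol_space_time_estimates}, which controls the zero-order term $\mathcal{Z}$. One writes $S_l e^{\pm i \psi_{<k}}$ (now with the symbol evaluated at $y$) as $(\pm i)^5 2^{-5l} \prod_{r=1}^5 S_l^{(r)} \partial_t \psi_{<k}$ acting as a right-quantized operator, then replaces each factor $S_l^{(r)} \partial_t \psi_{<k}$ by $\int m_l^{(r)}(z_r) T_{z_r} \partial_t \psi_{<k}\, dz_r$ and applies Lemma \ref{lem:decomposability_lemma_right_quantizations} iteratively. The small/large angular dichotomy and the Strichartz bound on $\nabla_{t,x} A_{k_1}$ proceed exactly as in the left-quantization setting, since these steps depend only on the symbol and not on whether it appears to the left or the right. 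This is the only place where the two quantization conventions could in principle differ, but the $TT^\ast$-type structure of the argument is symmetric. Combining the four contributions yields the stated bound.
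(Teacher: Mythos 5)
Your proposal is exactly the paper's argument: the paper disposes of this proposition in one line, stating that the proof proceeds analogously to Proposition~\ref{prop:modulation_localized_estimate} with Lemma~\ref{lem:decomposability_lemma_right_quantizations} replacing Lemma~\ref{lem:decomposability_lemma_KST}, which is precisely the systematic substitution you carry out (including the reduction to $k = k' \pm O(1)$, the $\mathcal{Z}+\mathcal{L}+\mathcal{Q}+\mathcal{C}$ expansion, the angular dichotomy, and the right-quantized analogue of Lemma~\ref{lem:fixed_freq_in_symbol_space_time_estimates}). Your observation that naive duality does not close because the bound is stated against $N_0^\ast$ is also correct and consistent with how the paper uses this result.
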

\begin{proof}
 The proof proceeds analogously to the one of Proposition \ref{prop:modulation_localized_estimate} using Lemma \ref{lem:decomposability_lemma_right_quantizations} in place of Lemma \ref{lem:decomposability_lemma_KST}.
\end{proof}

\subsection{Proof of the $N_0 \to N_0$ and $N_0^\ast \to N_0^\ast$ bounds \eqref{equ:Z_to_Z_bound_for_e} for $\chi_{I_j} e_{<0}^{\pm i \psi_\pm}$}

\begin{prop} \label{prop:N_0_to_N_0_for_e}
 For $j = 1, \ldots, J$ it holds that
 \begin{equation} \label{equ:N_0_to_N_0_for_left_quantized_e}
  \bigl\| \chi_{I_j} e_{<0}^{\pm i \psi_\pm}(t,x,D) P_0 \phi \bigr\|_{N_0(\R\times\R^4)} \lesssim \bigl\| P_0 \phi \bigr\|_{N_0(\R\times\R^4)}
 \end{equation}
 and
 \begin{equation} \label{equ:N_0_to_N_0_for_right_quantized_e}
  \bigl\| \chi_{I_j} e_{<0}^{\pm i \psi_\pm}(D,y,t) P_0 \phi \bigr\|_{N_0(\R\times\R^4)} \lesssim \bigl\| P_0 \phi \bigr\|_{N_0(\R\times\R^4)}.
 \end{equation}
\end{prop}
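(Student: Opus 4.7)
The plan is to prove the proposition by duality from the corresponding $N_0^* \to N_0^*$ bound in \eqref{equ:Z_to_Z_bound_for_e}, which we establish as an intermediate step. Concretely, we first derive
\begin{equation*}
 \bigl\| \chi_{I_j} e_{<0}^{\pm i \psi_\pm}(t,x,D) P_0 \phi \bigr\|_{N_0^*(\R\times\R^4)} \lesssim \bigl\| P_0 \phi \bigr\|_{N_0^*(\R\times\R^4)},
\end{equation*}
together with the analogous bound for the right quantization $\chi_{I_j} e_{<0}^{\pm i \psi_\pm}(D,y,t)$. Since $(N_0)^{*} = N_0^{*}$ and the formal adjoint of the left-quantized operator $e_{<0}^{\pm i \psi_\pm}(t,x,D)$ is the right-quantized operator $e_{<0}^{\mp i \psi_\pm}(D,y,t)$ (using that $\psi_\pm$ is real), the bounds \eqref{equ:N_0_to_N_0_for_left_quantized_e} and \eqref{equ:N_0_to_N_0_for_right_quantized_e} follow immediately by passing to adjoints, with the time cutoff $\chi_{I_j}$ behaving correctly because it is real-valued and multiplication by it is self-adjoint.

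For the $N_0^* \to N_0^*$ bound, recall $N_0^* = L^\infty_t L^2_x \cap X_\infty^{0, 1/2}$. The $L^\infty_t L^2_x$ component is controlled pointwise in time via the fixed-time $L^2_x$ estimate of Lemma~\ref{lem:renormalization_operator_fixed_time_1}. For the $X_\infty^{0,1/2}$ component it suffices to bound
\begin{equation*}
 \sup_{k \in \Z} 2^{k/2} \bigl\| Q_k \bigl( \chi_{I_j} e_{<0}^{\pm i \psi_\pm}(t,x,D) P_0 \phi \bigr) \bigr\|_{L^2_t L^2_x} \lesssim \|P_0 \phi\|_{N_0^*}.
\end{equation*}
We split the symbol dyadically as $e_{<0}^{\pm i \psi_\pm} = \sum_{k' < 0} e_{k'}^{\pm i \psi_\pm}$, fix the output modulation $k$, and divide the sum over $k'$ into two regimes. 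In the regime $k \le k' + O(1)$, Proposition~\ref{prop:modulation_localized_estimate} produces a bound by $\varepsilon \, 2^{\delta(k-k')} \|P_0 \phi\|_{N_0^*}$, which is geometrically summable in $k' \ge k - O(1)$. In the complementary regime $k' < k - O(1)$, the space-time Fourier support of the symbol $e_{k'}^{\pm i \psi_\pm}$ lives at scale $2^{k'} \ll 2^k$, so a Fourier-support calculation yields $Q_k(e_{k'}^{\pm i \psi_\pm} P_0 \phi) = Q_k(e_{k'}^{\pm i \psi_\pm} Q_{k + O(1)} P_0 \phi)$. Then the $L^2_x$-boundedness of the aggregated low-frequency symbol $\sum_{k' < k - O(1)} e_{k'}^{\pm i\psi_\pm} = e_{< k - O(1)}^{\pm i\psi_\pm}$, which again follows from Lemma~\ref{lem:renormalization_operator_fixed_time_1}, controls this piece by $2^{k/2} \|Q_{k + O(1)} P_0 \phi\|_{L^2_t L^2_x} \lesssim \|P_0 \phi\|_{X_\infty^{0,1/2}}$, as required.

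The analogous $N_0^* \to N_0^*$ bound for the right quantization $\chi_{I_j} e_{<0}^{\pm i\psi_\pm}(D,y,t)$ is obtained by the same scheme, using Proposition~\ref{prop:modulation_localized_estimate_right_quantization} in place of Proposition~\ref{prop:modulation_localized_estimate} and the right-quantization analogue of the fixed-time $L^2_x$ bound from Lemma~\ref{lem:renormalization_operator_fixed_time_1}. The main technical subtlety is the Fourier-support verification in the regime $k' < k - O(1)$: one must keep track of uniform $O(1)$ factors when combining the output modulation localization $Q_k$ with the space-time frequency localization of $e_{k'}^{\pm i\psi_\pm}$ to conclude that the input must lie at modulation $2^{k + O(1)}$. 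This is a routine paradifferential calculation, entirely analogous to the standard frequency-modulation trichotomies used throughout \cite{KST}, but is where essentially all of the bookkeeping sits; once this is in hand, the proposition reduces cleanly to the previously established ingredients.
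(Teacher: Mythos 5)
Your overall strategy (reduce to an $N_0^\ast$-type bound via duality, then prove the modulation-localized $L^2_t L^2_x$ estimates by splitting the symbol into pieces $e_{k'}^{\pm i \psi_\pm}$ and invoking the fixed-time $L^2_x$ bound and Propositions~\ref{prop:modulation_localized_estimate}, \ref{prop:modulation_localized_estimate_right_quantization}) is the same circle of ideas as the paper, which tests on $N_0$ atoms and dualizes the $X_1^{0,-\frac{1}{2}}$ atom case. However, there is a genuine gap in your treatment of the regime $k' < k - O(1)$: the identity $Q_k\bigl( e_{k'}^{\pm i \psi_\pm} P_0 \phi \bigr) = Q_k\bigl( e_{k'}^{\pm i \psi_\pm} Q_{k+O(1)} P_0 \phi \bigr)$ is applied to an expression that actually contains the sharp cutoff $\chi_{I_j}$, and $\chi_{I_j}$ is not frequency localized in time (its temporal Fourier transform decays only like $|\tau|^{-1}$), so it can move the modulation of the input arbitrarily. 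The "Fourier-support calculation" is therefore false as stated once the cutoff is present, and this is precisely where the real work sits, not in tracking $O(1)$ constants between $Q_k$ and $e_{k'}^{\pm i\psi_\pm}$. The repair is the paper's splitting $\chi_{I_j} = Q_{<k-C}(\chi_{I_j}) + Q_{\geq k-C}(\chi_{I_j})$: for the low temporal frequencies of the cutoff your support argument goes through and the output modulation transfers to $\phi$, giving the $X_\infty^{0,\frac{1}{2}}$ contribution, while the high-frequency piece must be handled separately via $\bigl\| Q_{\geq k-C}(\chi_{I_j}) \bigr\|_{L^2_t} \lesssim 2^{-\frac{1}{2}k}$ together with the fixed-time $L^2_x$ bound for $e_{<k-C}^{\pm i \psi_\pm}$, landing on $\|P_0\phi\|_{L^\infty_t L^2_x}$.

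A second, smaller bookkeeping issue is in the duality step itself: the adjoint of $\chi_{I_j} e_{<0}^{\pm i \psi_\pm}(t,x,D)$ is $e_{<0}^{\mp i \psi_\pm}(D,y,t)\,\chi_{I_j}$, with the cutoff acting on the \emph{input}, whereas the $N_0^\ast \to N_0^\ast$ bounds you propose to prove have the cutoff on the outside. To close the duality argument you therefore also need either boundedness of multiplication by the sharp cutoff $\chi_{I_j}$ on $N_0^\ast = L^\infty_t L^2_x \cap X_\infty^{0,\frac{1}{2}}$ (which again requires exactly the low/high temporal frequency splitting of $\chi_{I_j}$ described above; note Lemma~\ref{lem:time_cutoff_compatible_with_S_norm} covers smooth cutoffs and $S_k$, $Z_k$, not this situation), or to organize the argument as the paper does, atom by atom in $N_0$, where the $L^1_t L^2_x$ atoms are dispatched by the fixed-time bound and only the $X_1^{0,-\frac{1}{2}}$ atoms are dualized into the output-modulation-localized estimate. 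Once these two points are supplied, your argument matches the paper's proof.
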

\begin{proof}
 We begin with the proof of \eqref{equ:N_0_to_N_0_for_left_quantized_e}. To simplify the notation we denote an interval $I_j$ just by $I$ in what follows. If $\phi$ is an $L^1_t L^2_x$ atom, the claim follows immediately from the fixed-time $L^2_x \to L^2_x$ estimate for $e_{<0}^{\pm i \psi_\pm}(t,x,D)$. The key point is therefore to show that if $\phi$ is an $X_1^{0, -\frac{1}{2}}$ atom at modulation $k$, then we have
 \begin{equation*}
  \bigl\| \chi_I e_{<0}^{\pm i \psi_\pm}(t,x,D) Q_k P_0 \phi \bigr\|_{N_0} \lesssim 2^{-\frac{1}{2} k} \|P_0 \phi\|_{L^2_t L^2_x}.
 \end{equation*}
 By duality, this is equivalent to proving
 \begin{equation*}
  \bigl\| Q_k \bigl( \chi_I e_{<0}^{\pm i \psi_\pm}(D,y,t) P_0 \phi \bigr) \bigr\|_{L^2_t L^2_x} \lesssim 2^{-\frac{1}{2} k} \|P_0 \phi\|_{N_0^\ast}.
 \end{equation*}
 As in \cite[Proposition 9.1]{KST} we now write
 \begin{align*}
  Q_k \bigl( \chi_I e_{<0}^{\pm i \psi_\pm}(D,y,t) P_0 \phi \bigr) &= Q_k \bigl( \chi_I e_{<k-C}^{\pm i \psi_\pm}(D,y,t) P_0 \phi \bigr) + Q_k \bigl( \chi_I (e_{<0}^{\pm i \psi_\pm} - e_{<k-C}^{\pm i \psi_\pm})(D,y,t) P_0 \phi \bigr) \\
  &= Q_k \bigl( Q_{<k-C}(\chi_I) e_{<k-C}^{\pm i \psi_\pm}(D,y,t) P_0 \phi \bigr) + Q_k \bigl( Q_{\geq k-C}(\chi_I) e_{<k-C}^{\pm i \psi_\pm}(D,y,t) P_0 \phi \bigr) \\
  &\quad \quad + Q_k \bigl( \chi_I (e_{<0}^{\pm i \psi_\pm} - e_{<k-C}^{\pm i \psi_\pm})(D,y,t) P_0 \phi \bigr). 
 \end{align*}
 For the first term we obtain 
 \[
  2^{\frac{1}{2} k} \bigl\| Q_k \bigl( Q_{<k-C}(\chi_I) e_{<k-C}^{\pm i \psi_\pm}(D,y,t) P_0 \phi \bigr) \bigr\|_{L^2_t L^2_x} \lesssim \| P_0 \phi \|_{X_\infty^{0, \frac{1}{2}}} \lesssim \|P_0 \phi \|_{N_0^\ast},
 \]
 because the output modulation directly transfers to $\phi$. We estimate the second term by
 \begin{align*}
  2^{\frac{1}{2} k} \bigl\| Q_k \bigl( Q_{\geq k-C}(\chi_I) e_{<k-C}^{\pm i \psi_\pm}(D,y,t) P_0 \phi \bigr) \bigr\|_{L^2_t L^2_x} &\lesssim 2^{\frac{1}{2} k} \bigl\| Q_{\geq k - C}(\chi_I) \bigr\|_{L^2_t} \bigl\| e_{<k-C}^{\pm i \psi_\pm}(D,y,t) P_0 \phi \bigr\|_{L^\infty_t L^2_x} \\
  &\lesssim \|P_0 \phi\|_{N_0^\ast},
 \end{align*}
 where we used that $\bigl\| Q_{\geq k - C}(\chi_I) \bigr\|_{L^2_t} \lesssim 2^{-\frac{1}{2} k}$. To deal with the last term we use Proposition \ref{prop:modulation_localized_estimate_right_quantization}. 
 
 The proof of \eqref{equ:N_0_to_N_0_for_right_quantized_e} works similarly using Proposition \ref{prop:modulation_localized_estimate}.
\end{proof}

In a similar vein we obtain the following $N_0^\ast \to N_0^\ast$ bounds.
\begin{prop} \label{prop:N_0_ast_to_N_0_ast_for_e}
 For $j = 1, \ldots, J$ it holds that
 \begin{equation} \label{equ:N_0_ast_to_N_0_ast_for_left_quantized_e}
  \bigl\| \chi_{I_j} e_{<0}^{\pm i \psi_\pm}(t,x,D) P_0 \phi \bigr\|_{N_0^\ast(\R\times\R^4)} \lesssim \bigl\| P_0 \phi \bigr\|_{N_0^\ast(\R\times\R^4)}
 \end{equation}
 and
 \begin{equation} \label{equ:N_0_ast_to_N_0_ast_for_right_quantized_e}
  \bigl\| \chi_{I_j} e_{<0}^{\pm i \psi_\pm}(D,y,t) P_0 \phi \bigr\|_{N_0^\ast(\R\times\R^4)} \lesssim \bigl\| P_0 \phi \bigr\|_{N_0^\ast(\R\times\R^4)}.
 \end{equation}
\end{prop}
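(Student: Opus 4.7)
\medskip

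\noindent\textbf{Proof plan.} The argument will parallel that of Proposition~\ref{prop:N_0_to_N_0_for_e}, but now it works directly with the two constituents of $N_0^\ast = L^\infty_t L^2_x \cap X_\infty^{0,1/2}$ and bypasses the duality step used there. Since $e_{<0}^{\pm i\psi_\pm}$ preserves the spatial frequency localization of $P_0\phi$ at $|\xi|\sim 1$, the output remains a $P_0$-function, so I may control its $N_0^\ast$-norm by estimating the two constituent norms separately. The $L^\infty_t L^2_x$ bound is immediate from the fixed-time $L^2_x\to L^2_x$ mapping property in Lemma~\ref{lem:renormalization_operator_fixed_time_1}. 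The main task is to establish, for every $k\in\Z$,
\[
 2^{k/2}\bigl\|Q_k\bigl(\chi_{I_j}\,e_{<0}^{\pm i\psi_\pm}(t,x,D)\,P_0\phi\bigr)\bigr\|_{L^2_tL^2_x} \lesssim \|P_0\phi\|_{N_0^\ast},
\]
together with the analogous inequality for the right-quantized operator.

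Following the scheme of Proposition~\ref{prop:N_0_to_N_0_for_e}, I will split the symbol as
\[
 e_{<0}^{\pm i\psi_\pm} = e_{<k-C}^{\pm i\psi_\pm} + \bigl(e_{<0}^{\pm i\psi_\pm} - e_{<k-C}^{\pm i\psi_\pm}\bigr),
\]
and simultaneously decompose the temporal cutoff as $\chi_{I_j}=Q_{<k-C}(\chi_{I_j})+Q_{\geq k-C}(\chi_{I_j})$. For the piece $Q_{<k-C}(\chi_{I_j})\,e_{<k-C}^{\pm i\psi_\pm}$, the total modulation carried by the prefactor is $\ll 2^k$, hence the outer $Q_k$ localizes $\phi$ to modulation $\sim 2^k$, and the fixed-time $L^2_x$ bound converts this into $2^{k/2}\|Q_{k+O(1)}P_0\phi\|_{L^2_tL^2_x}\lesssim\|P_0\phi\|_{X_\infty^{0,1/2}}$. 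For the piece $Q_{\geq k-C}(\chi_{I_j})\,e_{<k-C}^{\pm i\psi_\pm}$, I will use the Bernstein-in-time bound $\|Q_{\geq k-C}(\chi_{I_j})\|_{L^2_t}\lesssim 2^{-k/2}$ together with the same fixed-time estimate to obtain a contribution $\lesssim\|P_0\phi\|_{L^\infty_tL^2_x}$.

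The high-symbol-modulation contribution $e_{<0}^{\pm i\psi_\pm}-e_{<k-C}^{\pm i\psi_\pm}=\sum_{k-C\leq k'<0}e_{k'}^{\pm i\psi_\pm}$ falls exactly within the scope of the modulation-localized estimate in Proposition~\ref{prop:modulation_localized_estimate}, whose hypothesis $k\leq k'+O(1)$ is secured by $k'\geq k-C$. Summing the resulting bound $\varepsilon\,2^{-k/2}\,2^{\delta(k-k')}\|P_0\phi\|_{N_0^\ast}$ over $k'\geq k-C$ yields a convergent geometric series and in fact produces an additional factor of $\varepsilon$ for free. The right-quantized claim \eqref{equ:N_0_ast_to_N_0_ast_for_right_quantized_e} will proceed via an identical decomposition, invoking Proposition~\ref{prop:modulation_localized_estimate_right_quantization} in place of Proposition~\ref{prop:modulation_localized_estimate}. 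Since all the technical ingredients have already been assembled, I anticipate no substantial obstacle; the only delicate point is to arrange the modulation splittings so that the bounded range $k\geq 0$, in which some pieces of the above decomposition degenerate (with only finitely many $k'\in[k-C,0)$ present, or none at all when $k\geq C$), is absorbed into the same framework without a separate case distinction.
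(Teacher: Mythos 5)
Your proposal is correct and is essentially the intended argument: the paper omits an explicit proof, stating only that it follows "in a similar vein" from Proposition~\ref{prop:N_0_to_N_0_for_e}, and your direct estimate of the two components of $N_0^\ast$ — the fixed-time $L^2_x$ bound for the $L^\infty_t L^2_x$ part, and for the $X_\infty^{0,\frac{1}{2}}$ part the same three-fold splitting $Q_{<k-C}(\chi_{I_j})e_{<k-C}^{\pm i\psi_\pm}$, $Q_{\geq k-C}(\chi_{I_j})e_{<k-C}^{\pm i\psi_\pm}$, and $\chi_{I_j}(e_{<0}^{\pm i\psi_\pm}-e_{<k-C}^{\pm i\psi_\pm})$ handled respectively by modulation transfer to $\phi$, the $\|Q_{\geq k-C}\chi_{I_j}\|_{L^2_t}\lesssim 2^{-k/2}$ bound, and the modulation-localized Propositions~\ref{prop:modulation_localized_estimate}/\ref{prop:modulation_localized_estimate_right_quantization} — is exactly that scheme, with the quantizations correctly paired since no duality step is used. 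The only point to keep an eye on is the one you already flag (the degenerate range of large output modulation $k\gtrsim 0$, where the symbol splitting is vacuous), which is harmless.
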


\subsection{Proof of the $N_0 \to \varepsilon N_0$ and $N_0^\ast \to \varepsilon N_0^\ast$ bounds \eqref{equ:Z_to_Z_bound_for_time_derivative_e} for $\chi_{I_j} \partial_t e_{<0}^{\pm i \psi_\pm}$}

\begin{prop}
 For any $\varepsilon > 0$ the intervals $I_j$ can be chosen such that uniformly for all $j = 1, \ldots, J$ it holds that
 \begin{equation}
  \bigl\| \chi_{I_j} \partial_t e_{<0}^{\pm i \psi_\pm}(t,x,D) P_0 \phi \bigr\|_{N_0(\R\times\R^4)} \lesssim \varepsilon \|P_0 \phi\|_{N_0(\R\times\R^4)}
 \end{equation}
 and 
 \begin{equation}
  \bigl\| \chi_{I_j} \partial_t e_{<0}^{\pm i \psi_\pm}(D, y,t) P_0 \phi \bigr\|_{N_0(\R\times\R^4)} \lesssim \varepsilon \|P_0 \phi\|_{N_0(\R\times\R^4)}.
 \end{equation}
\end{prop}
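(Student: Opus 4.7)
The plan is to combine the fixed-time $L^2_x \to \varepsilon L^2_x$ bound for $\partial_t e_{<0}^{\pm i \psi_\pm}$ from Lemma~\ref{lem:renormalization_operator_fixed_time_2} with the modulation-localized estimates of Proposition~\ref{prop:modulation_localized_estimate} and Proposition~\ref{prop:modulation_localized_estimate_right_quantization}, following the scheme already used in the proof of Proposition~\ref{prop:N_0_to_N_0_for_e}. As usual it suffices to treat inputs $P_0 \phi$ frequency localized at $|\xi|\sim 1$ and to test against $N_0$ atoms.

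\textbf{Step 1 (atomic decomposition).} Decompose a general element of $N_0$ into $L^1_t L^2_x$ atoms and $X_1^{0,-\frac{1}{2}}$ atoms at modulation $2^k$. For an $L^1_t L^2_x$ atom $\phi$ one simply writes
\[
\bigl\|\chi_{I_j}\partial_t e_{<0}^{\pm i\psi_\pm}(t,x,D)\phi\bigr\|_{L^1_t L^2_x}
\le \sup_{t\in I_j}\bigl\|\partial_t e_{<0}^{\pm i\psi_\pm}(t,x,D)\bigr\|_{L^2_x\to L^2_x}\,\|\phi\|_{L^1_t L^2_x}
\lesssim \varepsilon\|\phi\|_{L^1_t L^2_x},
\]
using Lemma~\ref{lem:renormalization_operator_fixed_time_2} together with a sufficiently large choice of the constants $C_1, C_2$ in the definition of $\psi_\pm$.

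\textbf{Step 2 ($X_1^{0,-\frac{1}{2}}$ atoms via duality).} By duality the remaining estimate reduces to showing
\[
2^{\frac{1}{2}k}\,\bigl\|Q_k\bigl(\chi_{I_j}\,\partial_t e_{<0}^{\mp i\psi_\pm}(D,y,t)\,P_0\phi\bigr)\bigr\|_{L^2_t L^2_x}
\lesssim \varepsilon\,\|P_0\phi\|_{N_0^\ast}
\]
for every $k$, with the analogous statement for the left quantization. I would mimic the decomposition used in Proposition~\ref{prop:N_0_to_N_0_for_e}, splitting the outer modulation cutoff on $\chi_{I_j}$ into pieces $Q_{<k-C}(\chi_{I_j})$ and $Q_{\ge k-C}(\chi_{I_j})$, and splitting the symbol $\partial_t e_{<0}^{\mp i\psi_\pm}$ into a low-frequency piece $\partial_t e_{<k-C}^{\mp i\psi_\pm}$ and the residual $\partial_t(e_{<0}^{\mp i\psi_\pm}-e_{<k-C}^{\mp i\psi_\pm})$. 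In the first piece the output modulation is forced onto $\phi$ and the smallness comes directly from the fixed-time bound of Lemma~\ref{lem:renormalization_operator_fixed_time_2}; in the second, $\bigl\|Q_{\ge k-C}(\chi_{I_j})\bigr\|_{L^2_t}\lesssim 2^{-k/2}$ is paired with the $L^\infty_t L^2_x$ bound, again small by Lemma~\ref{lem:renormalization_operator_fixed_time_2}. The third piece is handled by Proposition~\ref{prop:modulation_localized_estimate_right_quantization}, which already provides the $\varepsilon$ factor once the intervals $I_j$ are chosen appropriately; the extra time derivative produces a harmless factor of $2^{k'}\lesssim 1$ on each frequency shell of the phase, absorbed in the decomposable estimates of Lemma~\ref{lem:symbol_bounds_psi_k_theta}.

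\textbf{Step 3 (right quantization).} The dual bound
\[
\bigl\|\chi_{I_j}\partial_t e_{<0}^{\pm i\psi_\pm}(D,y,t)P_0\phi\bigr\|_{N_0}\lesssim\varepsilon\|P_0\phi\|_{N_0}
\]
follows by the symmetric argument, interchanging the roles of Proposition~\ref{prop:modulation_localized_estimate} and Proposition~\ref{prop:modulation_localized_estimate_right_quantization}.

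The main technical point I expect to have to handle carefully is the commutation of $\partial_t$ with the symbol frequency truncation $(\cdot)_{<0}$: writing $\partial_t e_{<0}^{\pm i\psi_\pm}=\pm i\,(\partial_t\psi_\pm)_{<0}\,e_{<0}^{\pm i\psi_\pm}$ modulo commutator errors, one must verify that the commutator terms also gain an $\varepsilon$, which boils down to the same decomposable estimates together with the high-angle cutoff in \eqref{equ:definition_phase} and the smallness of $\|A\|_{L^2_t L^8_x(I_j\times\R^4)}$ on the chosen intervals. This is the essential use of the interval choice beyond the $C_1, C_2$ smallness already exploited in the fixed-time bound.
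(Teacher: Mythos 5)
Your proposal is correct and follows essentially the same route as the paper: the paper likewise proceeds as in the proof of Proposition~\ref{prop:N_0_to_N_0_for_e}, combining the fixed-time $L^2_x \to \varepsilon L^2_x$ bound for $\partial_t e_{<0}^{\pm i \psi_\pm}$ with a modulation-localized estimate $\bigl\| Q_k \bigl( \chi_{I_j} \partial_t e_{k'}^{-i \psi_\pm} P_0 \phi \bigr) \bigr\|_{L^2_t L^2_x} \lesssim \varepsilon 2^{-\frac{1}{2}k} 2^{\delta(k-k')} \|P_0\phi\|_{N_0^\ast}$ for both quantizations, which it asserts is proved analogously to Proposition~\ref{prop:modulation_localized_estimate}. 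Your shortcut of obtaining that estimate from Propositions~\ref{prop:modulation_localized_estimate} and \ref{prop:modulation_localized_estimate_right_quantization} by exploiting that $\partial_t$ acting on the symbol piece localized at space-time frequency $2^{k'}$ costs only a factor $2^{k'} \lesssim 1$ (modulo the commutator/Leibniz bookkeeping you flag) is consistent with, and at the same level of detail as, the paper's argument.
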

\begin{proof}
 We proceed as in the proof of Proposition \ref{prop:N_0_to_N_0_for_e} using the $L^2_x \to \varepsilon L^2_x$ bound for $\partial_t e_{<0}^{\pm i \psi_\pm}$ and that we have for $k \leq k' \pm O(1)$,
 \[
  \bigl\| Q_k \bigl( \chi_{I_j} \partial_t e_{k'}^{-i \psi_\pm} P_0 \phi \bigr) \bigr\|_{L^2_t L^2_x(\R\times\R^4)} \lesssim \varepsilon 2^{-\frac{1}{2} k} 2^{\delta (k-k')} \|P_0 \phi\|_{N_0^\ast(\R\times\R^4)}
 \]
 for both left and right quantization. The latter estimate can be proven similarly to the proof of Proposition \ref{prop:modulation_localized_estimate}.
\end{proof}

\begin{prop}
 For any $\varepsilon > 0$ the intervals $I_j$ can be chosen such that uniformly for all $j = 1, \ldots, J$ it holds that
 \begin{equation}
  \bigl\| \chi_{I_j} \partial_t e_{<0}^{\pm i \psi_\pm}(t,x,D) P_0 \phi \bigr\|_{N_0^\ast(\R\times\R^4)} \lesssim \varepsilon \|P_0 \phi\|_{N_0^\ast(\R\times\R^4)}
 \end{equation}
 and 
 \begin{equation}
  \bigl\| \chi_{I_j} \partial_t e_{<0}^{\pm i \psi_\pm}(D, y,t) P_0 \phi \bigr\|_{N_0^\ast(\R\times\R^4)} \lesssim \varepsilon \|P_0 \phi\|_{N_0^\ast(\R\times\R^4)}.
 \end{equation}
\end{prop}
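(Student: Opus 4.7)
The approach is the $N_0^\ast \to \varepsilon N_0^\ast$ analog of the preceding proposition, paralleling the proof of Proposition \ref{prop:N_0_ast_to_N_0_ast_for_e}. We decompose $N_0^\ast = L^\infty_t L^2_x \cap X_\infty^{0, \frac{1}{2}}$ and treat the two components separately. For the $L^\infty_t L^2_x$ bound we directly invoke the fixed-time $L^2_x \to \varepsilon L^2_x$ estimate for $\partial_t e_{<0}^{\pm i \psi_\pm}(t,x,D)$ from Lemma \ref{lem:renormalization_operator_fixed_time_2}, together with its right quantization counterpart, which is obtained by a straightforward duality argument (as noted in the remark following Lemma \ref{lem:renormalization_operator_fixed_time_3}).

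For the $X_\infty^{0, \frac{1}{2}}$ component, the task is to prove that for every $k \in \Z$ and for both left and right quantizations,
\begin{equation*}
 2^{\frac{1}{2} k} \bigl\| Q_k \bigl( \chi_{I_j} \partial_t e_{<0}^{\pm i \psi_\pm} P_0 \phi \bigr) \bigr\|_{L^2_t L^2_x} \lesssim \varepsilon \|P_0 \phi\|_{N_0^\ast}.
\end{equation*}
Following the scheme of Proposition \ref{prop:N_0_to_N_0_for_e}, we write $\partial_t e_{<0}^{\pm i\psi_\pm} = \partial_t e_{<k-C}^{\pm i \psi_\pm} + \partial_t \bigl(e_{<0}^{\pm i\psi_\pm} - e_{<k-C}^{\pm i\psi_\pm}\bigr)$ and split the time cutoff on the low-symbol-frequency piece as $\chi_{I_j} = Q_{<k-C}(\chi_{I_j}) + Q_{\geq k-C}(\chi_{I_j})$. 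The low-modulation cutoff forces the output modulation onto $P_0 \phi$, which combined with the fixed-time $L^2_x \to \varepsilon L^2_x$ bound yields a contribution of size $\varepsilon \|P_0 \phi\|_{X_\infty^{0, \frac{1}{2}}}$. The high-modulation cutoff uses $\|Q_{\geq k-C}(\chi_{I_j})\|_{L^2_t} \lesssim 2^{-\frac{1}{2} k}$ together with the fixed-time bound applied to $\|P_0 \phi\|_{L^\infty_t L^2_x}$. The high-symbol-frequency piece is handled through a modulation-localized estimate and summed over $k' \geq k-C$.

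The main obstacle, and the new technical ingredient beyond what is explicit in the paper, is the modulation-localized estimate
\begin{equation*}
 \bigl\| Q_k \bigl( \chi_{I_j} \partial_t e_{k'}^{\pm i \psi_\pm} P_0 \phi \bigr) \bigr\|_{L^2_t L^2_x} \lesssim \varepsilon 2^{-\frac{1}{2} k} 2^{\delta(k-k')} \|P_0 \phi\|_{N_0^\ast}
\end{equation*}
for $k \leq k' + O(1) < 0$, valid for both left and right quantizations. Its proof follows the same scheme as that of Proposition \ref{prop:modulation_localized_estimate} (and, for the right quantization, Proposition \ref{prop:modulation_localized_estimate_right_quantization}): reduce to $k = k' \pm O(1)$ using standard operator bounds, then Taylor-expand the group element $e^{\pm i \psi_\pm}$ and treat each term by splitting the factors of $\psi_\pm$ into a small-angle and a large-angle piece via a parameter $C_3$. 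The extra $\partial_t$ hitting the exponential produces an additional $\partial_t \psi_\pm$ factor, which on the small-angle sector is controlled by the decomposable bounds from Lemma \ref{lem:symbol_bounds_psi_k_theta} with sufficient smallness obtained by choosing $C_1, C_2, C_3$ large in the definition of the phase correction. The large-angle contributions are controlled, in analogy with Lemma \ref{lem:fixed_freq_in_symbol_space_time_estimates}, by requiring the intervals $I_j$ to have suitable space-time norms of $\nabla_{t,x} A$ (arising through Strichartz estimates applied to the free wave $A$) less than $\varepsilon$ uniformly on each $I_j$. Together with Lemma \ref{lem:decomposability_lemma_right_quantizations} for the right quantization, this yields the desired estimate and hence the proposition.
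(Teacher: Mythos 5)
Your proposal is correct and follows essentially the same route the paper intends: the paper leaves this $N_0^\ast$ version implicit, but its proof of the companion $N_0 \to \varepsilon N_0$ bound prescribes exactly your ingredients — the fixed-time $L^2_x \to \varepsilon L^2_x$ estimate for $\partial_t e_{<0}^{\pm i\psi_\pm}$ (for both quantizations) combined with the scheme of Proposition~\ref{prop:N_0_to_N_0_for_e}, plus the modulation-localized estimate with $\varepsilon$ gain for $\partial_t e_{k'}^{\pm i\psi_\pm}$ proven along the lines of Propositions~\ref{prop:modulation_localized_estimate} and \ref{prop:modulation_localized_estimate_right_quantization}. No gaps to flag.
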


\subsection{Proof of the renormalization error estimate \eqref{equ:renormalization_error_estimate_1}}

\begin{prop}
 For any $\varepsilon > 0$ the intervals $I_j$ can be chosen such that uniformly for all $j = 1, \ldots, J$ we have
 \begin{equation} \label{equ:N_0_to_N_0_renormalization_error_estimate_1}
  \bigl\| \chi_{I_j} \bigl( e_{<0}^{-i \psi_\pm}(t,x,D) e_{<0}^{+i \psi_\pm}(D,y,t) - 1 \bigr) P_0 \phi \bigr\|_{N_0(\R\times\R^4)} \lesssim \varepsilon \|P_0 \phi\|_{N_0(\R\times\R^4)}
 \end{equation}
 and 
 \begin{equation} \label{equ:N_0_ast_to_N_0_ast_renormalization_error_estimate_1}
  \bigl\| \chi_{I_j} \bigl( e_{<0}^{-i \psi_\pm}(t,x,D) e_{<0}^{+i \psi_\pm}(D,y,t) - 1 \bigr) P_0 \phi \bigr\|_{N_0^\ast(\R\times\R^4)} \lesssim \varepsilon \|P_0 \phi\|_{N_0^\ast(\R\times\R^4)}.
 \end{equation}
\end{prop}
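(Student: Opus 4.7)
The plan is to mimic the proof of Proposition~\ref{prop:N_0_to_N_0_for_e}, feeding in the small constant $\varepsilon$ at each step from either the fixed-time $L^2_x \to \varepsilon L^2_x$ estimate of Lemma~\ref{lem:renormalization_operator_fixed_time_3} or the modulation-localized estimates of Propositions~\ref{prop:modulation_localized_estimate} and \ref{prop:modulation_localized_estimate_right_quantization}, in place of their $O(1)$ analogues. Writing
\[
T := e_{<0}^{-i\psi_\pm}(t,x,D) \, e_{<0}^{+i\psi_\pm}(D,y,t) - 1,
\]
I will bound $\chi_{I_j} T P_0 \phi$ in $N_0$ on each type of atom. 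If $\phi$ is an $L^1_t L^2_x$ atom, then Lemma~\ref{lem:renormalization_operator_fixed_time_3} integrated in $t$ yields directly $\|\chi_{I_j} T P_0 \phi\|_{L^1_t L^2_x} \lesssim \varepsilon$.

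For an $X_1^{0,-\frac{1}{2}}$ atom $\phi = Q_k P_0 \phi$ with $\|\phi\|_{L^2_t L^2_x} \leq 2^{k/2}$, noting that the left and right quantizations act fiberwise in $t$ (so $\chi_{I_j}$ commutes with the symbols), I will dualize to the estimate
\[
\bigl\| Q_k \bigl( \chi_{I_j} T^\ast P_0 \eta \bigr) \bigr\|_{L^2_t L^2_x} \lesssim \varepsilon \, 2^{-\frac{1}{2} k} \|P_0 \eta\|_{N_0^\ast},
\]
where $T^\ast = e_{<0}^{-i\psi_\pm}(t,y,D) \, e_{<0}^{+i\psi_\pm}(D,x,t) - 1$ has the same structural form as $T$ since $\psi_\pm$ is real. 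I will then split $\chi_{I_j} = Q_{< k-C}(\chi_{I_j}) + Q_{\geq k-C}(\chi_{I_j})$. The contribution of $Q_{\geq k-C}(\chi_{I_j})$ will be controlled via $\|Q_{\geq k-C}(\chi_{I_j})\|_{L^2_t} \lesssim 2^{-k/2}$ combined with the fixed-time $\varepsilon$-bound, placing $T^\ast P_0 \eta$ in $L^\infty_t L^2_x$.

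For the low-modulation piece $Q_{<k-C}(\chi_{I_j}) T^\ast$, I will further decompose each phase in the exponent at level $k-C$, writing $T^\ast = T^\ast_{\mathrm{trunc}} + (T^\ast - T^\ast_{\mathrm{trunc}})$, with $T^\ast_{\mathrm{trunc}}$ obtained by replacing $\psi_\pm$ with $\psi_{<k-C}$. For $T^\ast_{\mathrm{trunc}}$, all symbols have space-time frequency below $k-C$, so the outer $Q_k$ forces $P_0 \eta$ into modulation $k + O(1)$, and an application of the fixed-time $\varepsilon$-bound (which extends to the truncated phases by the same symbol estimates as in Lemma~\ref{lem:renormalization_operator_fixed_time_3}) combined with $\|Q_{k+O(1)} P_0 \eta\|_{L^2_t L^2_x} \lesssim 2^{-k/2} \|P_0 \eta\|_{X_\infty^{0,1/2}}$ will yield the desired estimate. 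For $T^\ast - T^\ast_{\mathrm{trunc}}$, the continuous Littlewood--Paley expansion
\[
e^{\pm i \psi_\pm} = e^{\pm i \psi_{<k-C}} \pm i \int_{l \geq k-C} \psi_l \, e^{\pm i \psi_{<l}} \, dl
\]
isolates a factor $\psi_l$ at scale $l \geq k-C$ in one of the two phases; Propositions~\ref{prop:modulation_localized_estimate} and \ref{prop:modulation_localized_estimate_right_quantization}, applied to the extracted factor after absorbing the remaining factor into an $N_0^\ast$-bounded operator via \eqref{equ:N_0_ast_to_N_0_ast_for_left_quantized_e}--\eqref{equ:N_0_ast_to_N_0_ast_for_right_quantized_e}, will supply the required factor $\varepsilon\, 2^{-k/2} 2^{\delta(k-l)}$, summable over $l \geq k-C$. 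The $N_0^\ast \to N_0^\ast$ bound \eqref{equ:N_0_ast_to_N_0_ast_renormalization_error_estimate_1} will follow by the analogous argument, now using Proposition~\ref{prop:N_0_ast_to_N_0_ast_for_e} as the $O(1)$ backbone.

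The main obstacle I anticipate is tracking which of the three sources of smallness---the smallness of the space-time norms of $A$ on the interval $I_j$ (encoded in the modulation-localized estimates), the largeness of the truncation constants $C_1$, $C_2$, $C_3$ in the definition of $\psi_\pm$, or the cancellation between $e_{<0}^{-i\psi_\pm}(t,x,D)$ and $e_{<0}^{+i\psi_\pm}(D,y,t)$ at matched space-time points captured in Lemma~\ref{lem:renormalization_operator_fixed_time_3}---is being exploited in each modulation regime, and ensuring that the $\varepsilon$-gain is not lost to the $O(1)$ mapping norms of the individual unrenormalized factors when one of the two phases is opened up via the continuous Littlewood--Paley expansion.
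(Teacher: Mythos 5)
Your proposal is correct and follows essentially the same route as the paper: the paper proves the $N_0^\ast\to\varepsilon N_0^\ast$ bound directly (obtaining the $N_0$ bound by duality), using the fixed-time $L^2_x\to\varepsilon L^2_x$ cancellation estimate in the regimes where the output modulation can be dumped on the cutoff $\chi_{I_j}$ or on the input/truncated operator, and Propositions~\ref{prop:modulation_localized_estimate} and \ref{prop:modulation_localized_estimate_right_quantization} together with the $N_0^\ast\to N_0^\ast$ bounds of Proposition~\ref{prop:N_0_ast_to_N_0_ast_for_e} for the piece $R_{<0}-R_{<k-C}$ where the symbol carries space-time frequency $\geq k-C$. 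Your organization (splitting on the modulation of $\chi_{I_j}$ and arguing on atoms with the duality inside the $X_1^{0,-\frac12}$ case) is only a cosmetic rearrangement of the same trichotomy and the same three sources of smallness.
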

\begin{proof}
 We prove the $N_0^\ast \to \varepsilon N_0^\ast$ estimate \eqref{equ:N_0_ast_to_N_0_ast_renormalization_error_estimate_1}. The bound $\eqref{equ:N_0_to_N_0_renormalization_error_estimate_1}$ then follows by duality. The $L^\infty_t L^2_x$ part of \eqref{equ:N_0_ast_to_N_0_ast_renormalization_error_estimate_1} follows immediately from the fixed-time $L^2_x \to \varepsilon L^2_x$ estimate \eqref{equ:fixed_time_L_2_bound_renormalization_error_1}. The $X_\infty^{0, \frac{1}{2}}$ part reduces to showing that we can choose the intervals $I_j$ such that uniformly for $j = 1, \ldots, J$ and all $k \in \Z$,
 \[
  2^{\frac{1}{2} k} \bigr\| Q_k \bigl( \chi_{I_j} \bigl( e_{<0}^{-i\psi_\pm}(t,x,D) e_{<0}^{i\psi_\pm}(D,y,t) - 1 \bigr) P_0 \phi \bigr) \bigr\|_{L^2_t L^2_x(\R \times \R^4)} \lesssim \varepsilon \|P_0 \phi\|_{N_0^\ast(\R\times\R^4)}.
 \]
 We use the notation
 \[
  R_{<k} = e_{<k}^{-i \psi_\pm}(t,x,D) e_{<k}^{i\psi_\pm}(D,y,t)
 \]
 to write
 \begin{equation} \label{equ:expansion_in_proof_of_renormalization_error_estimate_1}
  \begin{split}
   Q_k \bigl( \chi_{I_j} (R_{<0} - 1) P_0 \phi \bigr) &= Q_k \bigl( \chi_{I_j} (R_{<0} - 1) Q_{>k-C} P_0 \phi \bigr) + Q_k \bigl( \chi_{I_j} (R_{<k-C} - 1) Q_{\leq k-C} P_0 \phi \bigr) \\
   &\quad + Q_k \bigl( \chi_{I_j} (R_{<0} - R_{<k-C}) Q_{\leq k-C} P_0 \phi \bigr).
  \end{split}
 \end{equation}
 Using the fixed-time $L^2_x \to \varepsilon L^2_x$ estimate \eqref{equ:fixed_time_L_2_bound_renormalization_error_1} for $(R_{<0} - 1)$, we bound the first term in \eqref{equ:expansion_in_proof_of_renormalization_error_estimate_1} by
 \begin{equation*}
  \begin{split}
   2^{\frac{1}{2} k} \bigl\| Q_k \bigl( \chi_{I_j} (R_{<0} - 1) Q_{> k-C} P_0 \phi \bigr) \bigr\|_{L^2_t L^2_x} &\lesssim 2^{\frac{1}{2} k} \bigl\| (R_{<0} - 1) Q_{> k-C} P_0 \phi \bigr\|_{L^2_t L^2_x} \\
   &\lesssim 2^{\frac{1}{2} k} \varepsilon \| Q_{> k-C} P_0 \phi \|_{L^2_t L^2_x} \\
   &\lesssim \varepsilon \| P_0 \phi \|_{X^{0, \frac{1}{2}}_\infty}.
  \end{split}
 \end{equation*}
 To estimate the second term in \eqref{equ:expansion_in_proof_of_renormalization_error_estimate_1} we observe that we have
 \[
  Q_k \bigl( \chi_{I_j} (R_{<k-C} - 1) Q_{\leq k-C} P_0 \phi \bigr) = Q_k \bigl( (Q_{[k-C, k+C]} \chi_{I_j}) (R_{<k-C} - 1) Q_{\leq k-C} P_0 \phi \bigr)
 \]
 and hence by the fixed-time $L^2_x \to \varepsilon L^2_x$ estimate for $(R_{<k-C} -1)$,
 \begin{equation*}
  \begin{split}
   2^{\frac{1}{2} k} \bigl\| Q_k \bigl( \chi_{I_j} (R_{<k-C} - 1) Q_{\leq k-C} P_0 \phi \bigr) \bigr\|_{L^2_t L^2_x} &\lesssim 2^{\frac{1}{2} k} \bigl\| Q_{[k-C,k+C]}(\chi_{I_j}) \bigr\|_{L^2_t} \bigl\| (R_{<k-C}-1) Q_{\leq k-C} P_0\phi \bigr\|_{L^\infty_t L^2_x} \\
   &\lesssim \varepsilon \|P_0 \phi\|_{L^\infty_t L^2_x}.  
  \end{split}
 \end{equation*}
 Finally, we expand the third term in \eqref{equ:expansion_in_proof_of_renormalization_error_estimate_1} as follows
 \begin{equation*}
  \begin{split}
   Q_k \bigl( \chi_{I_j} (R_{<0} - R_{<k-C}) Q_{\leq k-C} P_0 \bigr) &= Q_k \Bigl( \chi_{I_j} \bigl(e_{<0}^{-i\psi_\pm}(t,x,D) - e_{<k-C}^{-i\psi_\pm}(t,x,D) \bigr) e_{<0}^{i \psi_\pm}(D,y,t) Q_{\leq k-C} P_0 \phi \Bigr) \\
   &\quad + Q_k \Bigl( \chi_{I_j} e_{<k-C}^{-i\psi_\pm}(t,x,D) \bigl( e_{<0}^{i\psi_\pm}(D,y,t) - e_{<k-C}^{i\psi_\pm}(D,y,t) \bigr) Q_{\leq k-C} P_0 \phi \Bigr).
  \end{split}
 \end{equation*}
 To handle the first term in the above expansion we use Proposition \ref{prop:modulation_localized_estimate} and the $N_0^\ast \to N_0^\ast$ estimate \eqref{equ:N_0_ast_to_N_0_ast_for_right_quantized_e} for $e_{<0}^{i \psi_\pm}(D,y,t)$ to find that
 \begin{align*}
  &2^{\frac{1}{2} k} \bigl\| Q_k \bigl( \chi_{I_j} \bigl( e_{<0}^{-i\psi_\pm}(t,x,D) - e_{<k-C}^{-i\psi_\pm}(t,x,D) \bigr) e_{<0}^{i \psi_\pm}(D,y,t) Q_{\leq k-C} P_0 \phi \bigr\|_{L^2_t L^2_x} \\
  &\lesssim \sum_{k' = k-C}^{k' = 0} 2^{\frac{1}{2} k} \bigl\| Q_k \bigl( \chi_{I_j} e_{k'}^{-i \psi_\pm}(t,x,D) e_{<0}^{i \psi_\pm}(D,y,t) Q_{\leq k-C} P_0 \phi \bigr\|_{L^2_t L^2_x} \\
  &\lesssim \sum_{k' = k-C}^{k' = 0} \varepsilon 2^{\delta (k-k')} \bigl\| e_{<0}^{i \psi_\pm}(D,y,t) Q_{\leq k-C} P_0 \phi \bigr\|_{N_0^\ast} \\
  &\lesssim \varepsilon \| P_0 \phi \|_{N_0^\ast}.
 \end{align*}
 Observing that
 \begin{align*}
  & Q_k \Bigl( \chi_{I_j} e_{<k-C}^{-i\psi_\pm}(t,x,D) \bigl( e_{<0}^{i \psi_\pm}(D,y,t) - e_{<k-C}^{i \psi_\pm}(D,y,t) \bigr) Q_{\leq k-C} P_0 \phi \Bigr) \\
  &= Q_k \Bigl( e_{<k-C}^{-i\psi_\pm}(t,x,D) Q_{k+O(1)} \Bigl( \chi_{I_j} \bigl( e_{<0}^{i\psi_\pm}(D,y,t) - e_{<k-C}^{i\psi_\pm}(D,y,t) \bigr) Q_{\leq k-C} P_0 \phi \Bigr) \Bigr), 
 \end{align*}
 we estimate the second term analogously using the fixed-time $L^2_x \to L^2_x$ estimate for $e_{<k-C}^{-i\psi_\pm}(t,x,D)$ and Proposition \ref{prop:modulation_localized_estimate_right_quantization}. 
\end{proof}

\subsection{Proof of the renormalization error estimate \eqref{equ:renormalization_error_estimate_2}}

This estimate can be proven by adapting the proof in \cite[Section 10.2]{KST} to our large data setting using similar ideas as above. The additional errors generated by the high-angle cutoff for intermediate frequencies in the definition of the phase correction $\psi_\pm$ can be controlled by divisibility of suitable space-time norms of $A$.

\subsection{Proof of the dispersive estimate \eqref{equ:dispersive_estimate_for_e}}

Since the $S$ space is compatible with time localizations by Lemma \ref{lem:time_cutoff_compatible_with_S_norm}, the dispersive estimate \eqref{equ:dispersive_estimate_for_e} follows immediately from the estimate (83) in~\cite{KST}.

\section{Breakdown criterion} \label{sec:breakdown}

\begin{defn} \label{defn:S_norm_breakdown}
 Let $T_0, T_1 > 0$. For any admissible solution $(A,\phi)$ to the MKG-CG system on $(-T_0, T_1) \times \R^4$, we define 
 \begin{equation*}
   \| (A,\phi) \|_{S^1((-T_0, T_1)\times\R^4)} := \sup_{\substack{0 < T < T_0, \\ 0 < T' < T_1}} \biggl( \sum_{j=1}^4 \|A_j\|_{S^1([-T,T']\times\R^4)}^2 + \|\phi\|_{S^1([-T,T']\times\R^4)}^2 \biggr)^{\frac{1}{2}}. 
 \end{equation*}
\end{defn}

We establish the following blowup criterion for admissible solutions to the MKG-CG system.
\begin{prop} \label{prop:breakdown_criterion}
 Let $(-T_0, T_1)$ be the maximal interval of existence of an admissible solution $(A, \phi)$ to the MKG-CG system. If $\| (A, \phi) \|_{S^1((-T_0, T_1)\times\R^4)} < \infty$, then it must hold that $T_0 = T_1 = \infty$.
\end{prop}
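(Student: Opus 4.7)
The plan is to argue by contradiction, and by time reversal symmetry it suffices to show that $T_1 < \infty$ is impossible. Assuming $T_1 < \infty$ and $\|(A_x,\phi)\|_{S^1((-T_0,T_1)\times \R^4)} < \infty$, I will construct admissible final data $(A_x,\phi)[T_1^-]$ and invoke Selberg's local existence theorem \cite{Selberg} at that time to extend the solution past $T_1$, contradicting maximality of the interval $(-T_0,T_1)$.

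The first step is a persistence-of-regularity argument on $[0,T_1)$. Fix $N \geq 2$. Using that $\|(A_x,\phi)\|_{S^1([0,T_1)\times \R^4)} < \infty$, I partition $[0,T_1)$ into finitely many consecutive subintervals on which the relevant divisible pieces of the $S^1$ norm, together with the Strichartz norms of $A$ that appear in the proof of Theorem~\ref{thm:linear_estimates_magnetic_wave_equation}, are arbitrarily small. On each such subinterval, writing $\Box_A^p \phi = (\Box_A^p - \Box_A)\phi + \Box_A\phi$ and $\Box A_j = -\mathcal{P}_j \Im(\phi \overline{D_x\phi})$, and using the linear estimate for $\Box_A^p$ from Theorem~\ref{thm:linear_estimates_magnetic_wave_equation} together with the trilinear null-form bounds from \cite{KST} and \cite{MaSte}, one closes a bootstrap for
\[
\sup_{t \in [0,T_1)} \Big( \big\|(A_x,\phi)(t)\big\|_{\dot H^N_x \times \dot H^N_x} + \big\|\partial_t(A_x,\phi)(t)\big\|_{\dot H^{N-1}_x \times \dot H^{N-1}_x} \Big) < \infty,
\]
uniformly in $[0,T_1)$. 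Iterating and summing in $N$ produces uniform bounds in every Sobolev norm.

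From these uniform higher-order bounds, $(A_x,\phi)(t)$ is Cauchy in every $H^N_x \times H^{N-1}_x$ as $t \nearrow T_1$ (simply by integrating $\partial_t(A_x,\phi)$ on short intervals), and the limit $(A_x,\phi)[T_1^-]$ lies in $C^\infty$ and satisfies the Coulomb condition, which is preserved pointwise in time. To upgrade this to admissibility I need the Schwartz-class decay of $\phi[T_1^-]$ and of the spatial curvature components $F_{jk}[T_1^-]$, as well as $|A_j[T_1^-](x)| \lesssim \langle x\rangle^{-3}$. Since $F_{jk}$ and $\phi$ satisfy wave equations with \emph{local} (non-projected) right-hand sides, the Schwartz decay of the data $\phi[0]$, $F_{jk}[0]$ propagates up to time $T_1$ via Klainerman-type weighted energy estimates, using the uniform Sobolev bounds above to close the quadratic and cubic nonlinear terms. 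The bound on $A_j$ is then recovered from the Coulomb identity $\Delta A_j = -\partial^k F_{jk}$, which represents $A_j$ as a Newtonian potential of a Schwartz function and yields the $\langle x\rangle^{-3}$ decay.

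With admissible final data $(A_x,\phi)[T_1^-]$ in hand, Selberg's theorem produces an admissible solution on some $[T_1, T_1+\varepsilon)$ which, by uniqueness, glues with the given solution to yield an admissible solution on $(-T_0, T_1+\varepsilon)$, contradicting the maximality of $T_1$. Running the same argument at the left endpoint rules out $T_0 < \infty$ as well. The main obstacle is the last step of the admissibility verification, namely the pointwise decay $|A_j[T_1^-](x)| \lesssim \langle x\rangle^{-3}$: since the equation $\Box A_j = -\mathcal{P}_j \Im(\phi \overline{D_x \phi})$ is non-local, $A_j$ itself does not enjoy finite speed of propagation, and the decay must be routed through the curvature components $F_{jk}$ followed by inversion of the Laplacian. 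This requires propagating sufficiently many weighted norms of $F_{jk}$ and $\phi$ up to $T_1$, and is the technical heart of the argument.
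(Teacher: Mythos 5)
Your overall mechanism --- persistence of regularity on the maximal interval via divisibility of the $S^1$ norm together with the magnetic linear estimate, followed by continuation through Selberg's subcritical local well-posedness --- is the same as the paper's, which routes the regularity step through the frequency-envelope bound of Proposition~\ref{prop:breakdown_criterion_frequency_envelopes} (proved in Subsection~\ref{subsec:interlude}). Where you diverge is the continuation step, and the divergence costs you a lot. The paper only needs a uniform bound on a single subcritical norm $H^s_x \times H^{s-1}_x$ for some $s>1$ (which the frequency envelope of the admissible data supplies automatically), and then applies \cite{Selberg} at times $t_n \nearrow T_1$: since the local existence time there depends only on the uniformly bounded subcritical norm, the solution extends past $T_1$, and admissibility of the extension is exactly the fact recorded after Definition~\ref{def:admissible_data}, namely that admissibility propagates as long as the solution exists in the sense of \cite{Selberg}. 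You instead construct limit data at $T_1$ and insist on verifying its admissibility there (Schwartz decay of $\phi$ and $F_{jk}$, the $\langle x\rangle^{-3}$ decay of $A_j$) via weighted Klainerman-type estimates; this is the heaviest and only sketched part of your argument, and it is avoidable, since you never need admissible data at the endpoint time itself, only subcritical Sobolev data at some $t_n < T_1$ together with the propagation remark. For the same reason you do not need persistence of every $H^N$; one $s>1$ suffices.

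Two technical caveats on your persistence step. First, Theorem~\ref{thm:linear_estimates_magnetic_wave_equation} assumes the potential in $\Box_A^p$ is a \emph{free} wave in Coulomb gauge, so on each subinterval you must first decompose $A = A^{free,(I)} + A^{nonlin,(I)}$ as in Proposition~\ref{prop:decomposition} and build the paradifferential operator from $A^{free,(I)}$; your splitting $\Box_A^p\phi = (\Box_A^p - \Box_A)\phi$ with the solution's own $A$ is not directly covered by that theorem. Second, to sum the frequency-localized estimates into a bound on a (sub)critical or higher Sobolev norm you need the off-diagonal frequency gains of the multilinear estimates organized by a frequency envelope, as in the paper's proof of Proposition~\ref{prop:breakdown_criterion_frequency_envelopes}; plain divisibility of the $S^1$ norm applied to the whole nonlinearity does not by itself close an $H^N$ bootstrap.
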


The idea of the proof of Proposition \ref{prop:breakdown_criterion} is to establish an a priori bound on a subcritical norm
\[
 \sup_{t \in (-T_0, T_1)} \sum_{j=1}^4 \, \bigl\| A_j[t] \bigr\|_{H^s_x \times H^{s-1}_x} + \bigl\| \phi[t] \bigr\|_{H^s_x \times H^{s-1}_x} < \infty
\]
for some $s > 1$. By the local well-posedness result \cite{Selberg} for the MKG-CG system it then follows that the solution can be smoothly extended beyond the time interval $(-T_0, T_1)$. To this end, we will use Tao's device of frequency envelopes. For sufficiently small $\sigma > 0$ we define for all $k \in \Z$,
\[
 c_k := \biggl( \sum_{l \in \Z} 2^{-\sigma |k-l|} \Bigl( \sum_{j=1}^4  \bigl\| P_l A_j[0] \bigr\|_{\dot{H}^1_x \times L^2_x}^2 + \bigl\| P_l \phi[0] \bigr\|_{\dot{H}^1_x \times L^2_x}^2 \Bigr) \biggr)^{\frac{1}{2}}.
\]
Proposition \ref{prop:breakdown_criterion} is then a consequence of the following result.

\begin{prop} \label{prop:breakdown_criterion_frequency_envelopes}
 Let $(-T_0, T_1)$ be the maximal interval of existence of an admissible solution $(A, \phi)$ to the MKG-CG system. If $\| (A, \phi) \|_{S^1((-T_0, T_1)\times\R^4)} < \infty$, there exists $C = C(\| (A, \phi) \|_{S^1((-T_0, T_1)\times\R^4)})~<~\infty$ such that for all $k \in \Z$,
 \begin{equation}
  \bigl\|  P_k A \bigr\|_{S^1_k((-T_0, T_1)\times\R^4)} + \bigl\| P_k \phi \bigr\|_{S^1_k((-T_0, T_1)\times\R^4)} \leq C c_k.
 \end{equation}
\end{prop}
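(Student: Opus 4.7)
Setting $M := \|(A, \phi)\|_{S^1((-T_0, T_1)\times\R^4)} < \infty$, the plan is a bootstrap for frequency envelopes on a partition of $(-T_0, T_1)$ into finitely many sub-intervals where suitable divisible space-time norms of $(A, \phi)$ are small. Concretely, cover $(-T_0, T_1)$ by consecutive closed intervals $I_1, \ldots, I_J$ with $J = J(M, \varepsilon)$ chosen so that on each $I_j$ all the divisible Strichartz, null-frame and $L^2_t L^8_x$-type components of the $S^1$ norm of $(A, \phi)$ are smaller than a constant $\varepsilon > 0$ to be fixed later; then run an induction on $j$ controlling the quantities $c_k^{(j)} := \|P_k A\|_{S^1_k(I_j\times\R^4)} + \|P_k \phi\|_{S^1_k(I_j\times\R^4)}$.

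On a fixed sub-interval $I_j$, let $A^{j,\mathrm{free}}$ denote the free wave extension of the Coulomb-class Cauchy data $A[T_j^{(l)}]$. Rewriting $\Box_A \phi = 0$ and extracting the paradifferential part built from $A^{j,\mathrm{free}}$ yields
\[
 \Box_{A^{j,\mathrm{free}}}^p \phi \;=\; \bigl(\Box_{A^{j,\mathrm{free}}}^p - \Box_A^p\bigr)\phi \;+\; \mathcal{N}(\phi, A, A_0),
\]
where the first term is a paradifferential operator built from the perturbation $A - A^{j,\mathrm{free}}$ (which has small divisible norms on $I_j$ by energy conservation and the divisibility choice of $I_j$), and $\mathcal{N}$ collects the high-low magnetic interactions $-2i\sum_k P_{>k-C} A^j P_k \partial_j \phi$, the $-2iA^0 \partial_0 \phi$ term, the electric term $i(\partial_t A_0)\phi$, and the quadratic term $A^\alpha A_\alpha \phi$. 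The spatial components $A_j$ are treated directly via the flat energy estimate applied to $\Box A_j = -\mathcal{P}_j \Im(\phi \overline{D_x\phi})$.

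Apply $P_k$, invoke Theorem \ref{thm:linear_estimates_magnetic_wave_equation} on a slight enlargement of $I_j$ for the free-wave potential $A^{j,\mathrm{free}}$ (whose $L^\infty_t \dot H^1_x$ size is polynomially controlled by $M$), and bound the nonlinear right-hand side via the dyadic multilinear estimates of \cite{KST}, tracking that each contribution carries at least one factor with a small divisible norm. After careful frequency accounting, one obtains an inequality of the form
\[
 c_k^{(j)} \;\leq\; C(M)\, c_k^{(j-1)} \;+\; \varepsilon\, C(M) \sum_{l\in\Z} 2^{-\sigma|k-l|} c_l^{(j)},
\]
with $c_k^{(0)} := c_k$. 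The convolution envelope arises because every multilinear expression in $(A, \phi, A_0)$ produces exponential off-diagonal decay in Littlewood-Paley indices, compatible with the $2^{-\sigma|k-l|}$ structure of $c_k$. Choosing $\varepsilon$ small enough that $\varepsilon C(M) \ll 1$, Young's inequality on the $\ell^2_\sigma$-weighted space absorbs the second term into the left-hand side, yielding $c_k^{(j)} \leq C(M)\, c_k^{(j-1)}$. Iterating over $j = 1, \ldots, J$ gives the desired uniform bound $c_k^{(J)} \leq C(M)^J c_k$.

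The main obstacle is bridging the gap between the exact free-wave magnetic potential assumed in Theorem \ref{thm:linear_estimates_magnetic_wave_equation} and the genuine connection $A$: both the paradifferential substitution error $(\Box_{A^{j,\mathrm{free}}}^p - \Box_A^p)\phi$ and the high-low corrections in $\mathcal{N}$ must be estimated in $N_k \cap \ell^1 L^2_t \dot H^{-1/2}_x$ with smallness drawn purely from divisible norms on $I_j$, without losing derivatives beyond the $\sigma$-envelope. The deep trilinear null structure of \cite{MaSte}, together with the decomposability framework of \cite{KST}, is what makes these perturbative estimates possible, and the $\sigma$-off-diagonal exponential decay is preserved because $\Box_A^p$ puts only the low-frequency part of $A$ adjacent to $P_k \partial_j \phi$, forcing output frequency $\sim 2^k$.
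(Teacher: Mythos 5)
Your overall strategy is the same as the paper's own proof: partition $(-T_0,T_1)$ into finitely many intervals whose number depends only on $\|(A,\phi)\|_{S^1}$, split $A|_I$ into a free Coulomb wave (so that Theorem~\ref{thm:linear_estimates_magnetic_wave_equation} applies with constants depending only on the energy) plus a nonlinear remainder with small norms, frequency-localize the $\phi$-equation relative to the paradifferential magnetic wave operator, and close a frequency-envelope bootstrap using the off-diagonal decay of the multilinear estimates of \cite{KST} together with $\varepsilon$-smallness from divisibility on each interval, iterating over the intervals. Your choice of $A^{j,\mathrm{free}}$ as the free evolution of the actual data $A[T_j^{(l)}]$, rather than of the adjusted data $A[t_0]-A^{nonlin,(I)}[t_0]$ as in the paper, is harmless: the discrepancy is a free Coulomb wave with $\ell^1$-small data, and \eqref{equ:low_high_N_estimate_for_free_wave} handles its low-high interaction with $\phi$.

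There is, however, a genuine gap in the mechanism you give for the paradifferential substitution error $(\Box^p_{A^{j,\mathrm{free}}}-\Box^p_A)\phi$, i.e. the low-high term with $A-A^{j,\mathrm{free}}$ in the low-frequency slot. First, smallness of divisible norms of $A-A^{j,\mathrm{free}}$ on $I_j$ is not a consequence of ``energy conservation and the divisibility choice of $I_j$'': it is exactly the content of Lemma~\ref{lem:nonlinear_structure_small_short_interval}, which already requires the null structure $\mathcal{P}_i(\phi\cdot\nabla_x\overline{\phi})=\Delta^{-1}\nabla^r\mathcal{N}_{ir}(\phi,\overline{\phi})$ and a modulation analysis of the singular operator $\Box^{-1}$. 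Second, and more seriously, even granting $\|A^{nonlin,(I)}\|_{\ell^1 S^1(I_j)}\lesssim\varepsilon$, the dangerous piece $\mathcal{H}^\ast\mathcal{N}^{lowhi}\bigl(\mathcal{H}A^{nonlin,(I)},\phi\bigr)$ (low output modulation, low-frequency potential against $\partial^j\phi$ at high frequency) is not bounded by any product of $S^1$-type norms, small or not; only its complements are, via estimates (53)--(55) of \cite{KST}, the last of which needs the $Z$ norm. So your claim that ``each contribution carries at least one factor with a small divisible norm'' fails precisely where it matters. The paper resolves this by defining $A^{nonlin,(I)}$ through the explicit Fourier-multiplier structure $\Box^{-1}P_kQ_j$ applied to the time-localized quadratic null form in $\phi$ (formula \eqref{equ:definition_A_nonlin_I}), performing the $\mathcal{H},\mathcal{H}^\ast$ reductions and the $Z$-norm bound, combining the remaining hyperbolic piece with the elliptic $A_0$ contribution, and invoking the quadrilinear null-form estimates (61)--(63) of \cite{KST}; the $\varepsilon$-smallness there is extracted from divisible norms of $\phi$ (angular-sector $L^2_tL^6_x$ and $L^2_tL^\infty_x$ square functions), not from any norm of $A^{nonlin}$. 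Your closing sentence names the trilinear null structure as the enabler, but as written your bootstrap inequality is unjustified for this term; you need to carry out this substitution-and-null-form step (equivalently, the reductions in the proof of Proposition~\ref{prop:decomposition}, which is what the paper's proof translates into frequency-envelope form).
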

\begin{proof}
 A sketch of the proof is given in Subsection \ref{subsec:interlude}.
\end{proof}

\section{A concept of weak evolution} \label{sec:concept_of_weak_evolution}

In order to implement the contradiction argument after the concentration compactness step, we have to define the notion of a solution to the MKG-CG system that is merely of energy class. In the context of critical wave maps in \cite{KS} this is achieved by first approximating an energy class datum by Schwartz class data in the energy topology. One then defines the energy class solution as a suitable limit of the associated Schwartz class solutions. Using perturbation theory, one shows that this limit is well-defined and independent of the approximating sequence. 

For the MKG-CG system we have to argue more carefully, because it appears that the strong perturbative step in the context of the critical wave maps in \cite{KS} is not available due to a low frequency divergence. However, the problem with evolving irregular data is really a ``high frequency issue'' and it appears that truncating high frequencies away does not lead to the same problems as a general perturbative step. More concretely, consider Coulomb energy class data at time $t=0$. By truncating in frequency, we can assume that the frequency support of either input is at $|\xi| \leq K$ for some $K > 0$. Then the problem becomes to show that we can add high-frequency perturbations to the data, i.e. supported in frequency space at $|\xi| > K$ at time $t = 0$, and to obtain a perturbed global evolution. 

\begin{prop} \label{prop:perturbation}
 Let $(A, \phi)$ be an admissible solution to the MKG-CG system on $[-T_0, T_1] \times \R^4$ for some $T_0, T_1 > 0$. Assume that $(A, \phi)[0]$ have frequency support at $|\xi| \leq K$ for some $K > 0$ and that $\|(A, \phi)\|_{S^1([-T_0, T_1]\times\R^4)} = L < \infty$. Then there exists $\delta_1(L) > 0$ with the following property: Let $(A + \delta A, \phi + \delta \phi)$ be any other admissible solution to the MKG-CG system defined locally around $t=0$ such that
 \[
  E(\delta A, \delta \phi)(0) = \delta_0 < \delta_1(L)
 \]
 and such that $(\delta A, \delta \phi)[0]$ have frequency support at $|\xi| > K$. Then $(A + \delta A, \phi + \delta \phi)$ extends to an admissible solution to the MKG-CG system on the whole time interval $[-T_0, T_1]$ and satisfies
 \[
  \|(A + \delta A, \phi + \delta \phi)\|_{S^1([-T_0, T_1]\times\R^4)} \leq \tilde{L}(L, \delta_0).
 \]
 Moreover, we have 
 \[
  \|(\delta A, \delta \phi)\|_{S^1([-T_0, T_1]\times\R^4)} \rightarrow 0
 \]
 as $\delta_0 \rightarrow 0$.
\end{prop}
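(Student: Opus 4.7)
The plan is to derive the difference equation satisfied by $(\delta A, \delta\phi)$, isolate the only non-perturbative low-high magnetic interaction via the paradifferential operator $\Box_A^p$ of Section~\ref{sec:magnetic_wave_equation}, and then close a bootstrap for $\|(\delta A,\delta\phi)\|_{S^1}$ using Theorem~\ref{thm:linear_estimates_magnetic_wave_equation} together with the multilinear estimates from \cite{KST}. Subtracting the MKG-CG systems for $(A,\phi)$ and $(A+\delta A,\phi+\delta\phi)$ yields
\[
\Box\,\delta A_j = -\mathcal{P}_j\,\Im\bigl((\phi+\delta\phi)\,\overline{D_x(\phi+\delta\phi)} - \phi\,\overline{D_x\phi}\bigr),
\]
\[
\Box\,\delta\phi = -2iA^\alpha\partial_\alpha\delta\phi - 2i(\delta A)^\alpha\partial_\alpha(\phi+\delta\phi) + i(\partial_t A_0)\delta\phi + i(\partial_t\delta A_0)(\phi+\delta\phi) + \text{quadratic},
\]
together with an elliptic equation for $\delta A_0$ derived from \eqref{equ:elliptic}. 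I would move the single non-perturbative piece $-2i\sum_k P_{<k-C}A^j P_k\partial_j\delta\phi$ to the left hand side, so that $\delta\phi$ satisfies $\Box_A^p\,\delta\phi = F$ with $F$ collecting all remaining interactions.

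Since Theorem~\ref{thm:linear_estimates_magnetic_wave_equation} is stated for $A$ solving the free wave equation, I would first replace $A_j$ by its free wave extension $A_j^{free}$ from $t=0$, absorbing the Duhamel remainder $A_j - A_j^{free}$ — whose $S^1$ norm is controlled by a polynomial in $L$ via the multilinear bounds of \cite{KST} and becomes small on sufficiently short subintervals by divisibility — into the forcing $F$. Theorem~\ref{thm:linear_estimates_magnetic_wave_equation} applied to $\Box_{A^{free}}^p$, the standard energy estimate for $\Box\,\delta A_j$, and the $Y^1$ bound for $\delta A_0$ together give, on any subinterval $[-T,T']\subset[-T_0,T_1]$,
\[
\|(\delta A,\delta\phi)\|_{S^1([-T,T']\times\R^4)} \le C(L)\Bigl(\sqrt{\delta_0} + \|F\|_{(N\cap\ell^1 L^2_t\dot H^{-1/2}_x)([-T,T']\times\R^4)}\Bigr).
\]
The terms of $F$ split into those linear in $(\delta A,\delta\phi)$ — for instance $\delta A\cdot\partial\phi$, $(\partial_t\delta A_0)\phi$, the high-high remainder of $A\cdot\partial\delta\phi$ not absorbed into $\Box_A^p$, and the contribution of $A - A^{free}$ — each of which involves only off-diagonal frequency interactions and therefore gains a factor $\varepsilon$ from divisibility of suitable space-time norms of $(A,\phi)$ on short subintervals; and those of order $\ge 2$ in $(\delta A,\delta\phi)$, bounded by $C(L)\|(\delta A,\delta\phi)\|_{S^1}^2\bigl(1+\|(\delta A,\delta\phi)\|_{S^1}\bigr)$.

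Partitioning $[-T_0,T_1]$ into $J = J(L)$ subintervals $I_1,\dots,I_J$ on which the divisible space-time norms of $(A,\phi)$ and of $A-A^{free}$ are $\le\varepsilon$, the bootstrap assumption $\|(\delta A,\delta\phi)\|_{S^1(I_j\times\R^4)}\le M$ yields
\[
\|(\delta A,\delta\phi)\|_{S^1(I_j\times\R^4)} \le C(L)\bigl(\sqrt{\delta_0} + \varepsilon M + M^2 + M^3\bigr),
\]
which closes for $M\sim \sqrt{\delta_0}$ once $\delta_1(L)$ is chosen small enough, and by iterating across the $J(L)$ subintervals produces the global bound $\tilde L(L,\delta_0)$, with $\|(\delta A,\delta\phi)\|_{S^1}\to 0$ as $\delta_0\to 0$. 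The main obstacle is the careful interaction between the linear magnetic wave estimate of Theorem~\ref{thm:linear_estimates_magnetic_wave_equation} and the fact that $A$ here solves MKG-CG rather than the free wave equation; tracking the divisibility of $A-A^{free}$ and its effect on the approximate inversion of $\Box_A^p$, in parallel with the proof of Theorem~\ref{thm:local_solutions_magnetic_wave_equation}, is where the real work lies. The high-frequency support of $(\delta A,\delta\phi)[0]$ at $|\xi|>K$ plays no role in the bootstrap itself, but is used to guarantee that the data decomposition is sharp and that the initial energies are orthogonal, legitimizing the treatment of $(\delta A,\delta\phi)$ as a genuine perturbation of $(A,\phi)$ at $t=0$.
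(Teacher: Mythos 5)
Your overall architecture (difference system, paradifferential renormalization of the low-high background term via $\Box_A^p$, replacement of $A$ by a free wave plus an $\ell^1 S^1$-controlled remainder, bootstrap on $J(L)$ divisibility slices using Theorem~\ref{thm:linear_estimates_magnetic_wave_equation}) matches the paper's, which runs the argument as in Step 3 of Proposition~\ref{prop:bootstrap}. But there is a genuine gap in your treatment of the source term $\delta A\cdot\partial\phi$, and it is located exactly where the proposition lives or dies. The low-high configuration $P_{<l}(\delta A)_j\,P_l\partial^j\phi$, with the \emph{perturbation} at low frequency and the \emph{background} at high frequency, is not "off-diagonal'' in any exploitable sense and gains nothing from divisibility of space-time norms of $(A,\phi)$: it is of the same non-perturbative type as the term that forced the renormalization in the first place. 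The only available bound, estimate \eqref{equ:low_high_N_estimate_for_free_wave}, has no off-diagonal decay and forces an $\ell^1$ summation over the low frequencies of $\delta A^{free}$, which is not controlled by the energy $\delta_0$ (one only has an $\ell^2$/energy bound on the dyadic pieces). This is precisely the "low frequency divergence'' the paper cites as the reason there is \emph{no} general perturbation theory for MKG-CG, so your claim that this term "gains a factor $\varepsilon$ from divisibility'' is a step that fails.

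Relatedly, your closing remark that the frequency support hypotheses "play no role in the bootstrap itself'' is exactly backwards; they are the mechanism that rescues this term. In the paper's proof, since $\delta A[0]$ is supported at $|\xi|>K$, its free evolution $\delta A^{free_1,(I)}$ only produces this interaction at output frequencies $l\gtrsim K$, while the background data lie at $|\xi|\le K$, so Proposition~\ref{prop:breakdown_criterion_frequency_envelopes} gives the exponential decay $\|P_l\phi\|_{S^1}\lesssim_L 2^{-\sigma(l-K)}$ for $l\ge K$. Then \eqref{equ:low_high_N_estimate_for_free_wave} yields $\|P_{<l}\delta A_j^{free_1,(I)} P_l\partial^j\phi\|_{N_l}\lesssim_L \delta_0\,|l-K|\,2^{-\sigma(l-K)}$, where $|l-K|$ is the price of the $\ell^1$ summation over the low frequencies; the exponential factor makes this square-summable in $l$, giving total size $\lesssim_L\delta_0$, after which the argument proceeds as in Proposition~\ref{prop:bootstrap}. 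Without invoking the frequency supports in this way (or some substitute, e.g. also renormalizing by $\delta A^{free_1}$, which you do not do), your bootstrap inequality cannot be closed.
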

\begin{proof}
 A sketch of the proof is given in Subsection \ref{subsec:interlude}.
\end{proof}

The above high-frequency perturbation result suggests that we could define the MKG-CG evolution of energy class Coulomb data as a suitable limit of the evolutions of low frequency approximations of the energy class data. More precisely, for Coulomb data $(A, \phi)[0] \in \dot{H}^1_x \times L^2_x$, we pick a sequence of smoothings $(A_n, \phi_n)[0]$ by truncating the frequency support of $(A, \phi)[0]$ so that
\[
 \lim_{n \rightarrow \infty} (A_n, \phi_n)[0] = (A,\phi)[0]
\]
in the sense of $\dot{H}^1_x \times L^2_x$. Here the rather technical issue appears whether there exists a smooth (local) solution $(A_n, \phi_n)$ to the MKG-CG system with initial data $(A_n, \phi_n)[0]$. The hypothesis $(A, \phi)[0] \in \dot{H}^1_x \times L^2_x$ does not guarantee that $A(0)$ and $\phi(0)$ are $L^2$ integrable in the low frequencies. For this reason we cannot directly invoke the local well-posedness result \cite{Selberg} to obtain a smooth local solution. The natural way around this is to localize in physical space. This will be explained in more detail in Subsection \ref{subsec:localizing_in_physical_space} below. 

For each smooth local solution $(A_n, \phi_n)$ to the MKG-CG system with initial data $(A_n, \phi_n)[0]$ we then define
\[
 I_n := \cup_{\tilde{I} \in {\mathcal A}_n} \tilde{I},
\]
where
\[
 {\mathcal A}_n := \bigl\{ \tilde{I} \subset \R \text{ open interval with } 0 \in \tilde{I} : \sup_{J \subset \tilde{I}, J closed} \|(A_n, \phi_n)\|_{S^1(J\times\R^4)} < \infty \bigr\}.
\]
We call $I_n$ the maximal lifespan of the solution $(A_n, \phi_n)$. 

In order to define a canonical evolution of Coulomb energy class data, we have to show that the low frequency approximations $(A_n, \phi_n)$ exist on some joint time interval and satisfy uniform $S^1$ norm bounds there.

\begin{prop} \label{prop:joint_time_interval}
 Let $(A, \phi)[0]$ be Coulomb energy class data and let $\bigl\{ (A_n, \phi_n)[0] \bigr\}_n$ be a sequence of smooth low frequency truncations of $(A, \phi)[0]$ such that
 \begin{equation*}
  \lim_{n \rightarrow \infty} (A_n, \phi_n)[0] = (A, \phi)[0]
 \end{equation*}
 in the sense of $\dot{H}^1_x \times L^2_x$. Denote by $(A_n, \phi_n)$ the smooth solutions to the MKG-CG system with initial data $(A_n, \phi_n)[0]$ and with maximal intervals of existence $I_n$. Then there exists a time $T_0 \equiv T_0(A,\phi)~>~0$ such that $[-T_0, T_0] \subset I_n$ for all sufficiently large $n$ and 
 \begin{equation*}
  \limsup_{n \rightarrow \infty} \|(A_n, \phi_n)\|_{S^1([-T_0, T_0]\times\R^4)} \leq C(A, \phi),
 \end{equation*}
 where $C(A,\phi) > 0$ is a constant that depends only on the energy class data $(A,\phi)[0]$.
\end{prop}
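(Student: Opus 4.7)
The plan is to follow the physical-space localization strategy of \cite{KS}, adapted to the nonlocal structure of (MKG-CG). The idea is to split the data $(A_n, \phi_n)[0]$ into \emph{finitely} many spatial pieces of small energy, evolve each piece globally via the small-data theory of \cite{KST}, patch the results on a short time interval using gauge invariance, and finally identify the patched object with the true solution $(A_n, \phi_n)$ via a perturbative argument.

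First I would fix $\varepsilon_0$ below the small-data threshold of \cite{KST}. Using the convergence $(A_n, \phi_n)[0] \to (A, \phi)[0]$ in $\dot{H}^1_x \times L^2_x$, I choose $R = R(A, \phi)$ so large that the energy of $(A_n, \phi_n)[0]$ outside $B_R(0)$ is at most $\varepsilon_0$ for all $n$ large. By equicontinuity of the energy densities on the bounded set $B_{2R}(0)$, I then fix a radius $r = r(A, \phi)$ and a finite cover $\{B_r(x_\alpha)\}_{\alpha=1}^N$ of $B_{2R}(0)$, with $N = N(A, \phi)$ independent of $n$, such that the energy of $(A_n, \phi_n)[0]$ localized to any single $B_r(x_\alpha)$ is at most $\varepsilon_0$. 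For each of the $N+1$ resulting pieces, multiply the data by the corresponding smooth cutoff and restore the Coulomb condition via a gauge transformation $e^{i\gamma}$ with $\Delta \gamma$ equal to the commutator of the cutoff with the divergence; by Hardy's inequality this gauge correction has size $O(\varepsilon_0^{1/2})$, so the resulting Coulomb data has energy $\lesssim \varepsilon_0$, and \cite{KST} yields global (MKG-CG) solutions $(A_n^{(\beta)}, \phi_n^{(\beta)})$ with $\|(A_n^{(\beta)}, \phi_n^{(\beta)})\|_{S^1(\R \times \R^4)} \leq C(\varepsilon_0)$ for each piece $\beta$.

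Next, on a short interval $[-T_0, T_0]$ with $T_0 = T_0(r) \ll r$, finite speed of propagation for the wave equations satisfied by the dynamical components $(A_j^{(\beta)}, \phi^{(\beta)})$ confines each local solution essentially to a slightly fattened ball, so the $N+1$ pieces have bounded overlap at every space-time point in $[-T_0, T_0] \times \R^4$. Summing them and applying a global gauge transformation to reconcile the local Coulomb gauges with the Coulomb gauge of $(A_n, \phi_n)$ should produce an approximate (MKG-CG) solution $(\tilde A_n, \tilde \phi_n)$ on $[-T_0, T_0]$ with $\|(\tilde A_n, \tilde \phi_n)\|_{S^1([-T_0, T_0] \times \R^4)} \leq C(A, \phi)$. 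A continuity/perturbation argument, exploiting divisibility in $T_0$ of the norms from Section~\ref{sec:preliminaries} and invoking Proposition~\ref{prop:perturbation} as needed, then identifies the true smooth evolution $(A_n, \phi_n)$ with $(\tilde A_n, \tilde \phi_n)$ on $[-T_0, T_0]$ up to a small $S^1$ error, establishing both $[-T_0, T_0] \subset I_n$ and the uniform bound.

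The hard part will be the nonlocal character of the elliptic equation \eqref{equ:elliptic} for $A_0$: (MKG-CG) has no exact finite speed of propagation, so the local pieces $A_n^{(\beta)}$ carry long-range tails in their temporal components, and combining them while preserving a single global Coulomb gauge demands careful control both of these tails and of the reconciling gauge transformations. As in \cite{KS}, the saving grace is that these tails are quantitatively controlled by the smallness $\varepsilon_0$ of the local energies together with the divisibility in $T_0$ of the relevant space-time norms, uniformly in $n$; a similar divisibility consideration will be what closes the comparison between the patched approximate solution and the genuine $(A_n, \phi_n)$.
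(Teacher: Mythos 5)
Your overall setup (spatial localization into small-energy pieces, restoring the Coulomb gauge by a correction $\gamma$ with $\Delta\gamma = \partial^j(\chi A_j)$, global evolution of each piece by \cite{KST}, and a short time interval $T_0 \ll r$) is the same as the paper's, but the step where you glue the pieces back to the actual solution contains a genuine gap. You propose to sum the local evolutions into an ``approximate (MKG-CG) solution'' and then identify it with $(A_n,\phi_n)$ by ``a continuity/perturbation argument \ldots invoking Proposition~\ref{prop:perturbation} as needed.'' This is exactly what is not available: the paper stresses that (MKG-CG) has no general $S^1$ perturbation theory because of a low-frequency divergence, and Proposition~\ref{prop:perturbation} only applies to perturbations supported at frequencies $|\xi|>K$ above frequency-localized background data, which is not the situation of a patched-versus-true comparison. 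The paper avoids any approximate-solution comparison altogether: it proves an \emph{exact} gauge equivalence on the light cones $K_{x_l,\frac54 r_l}$ over the localization balls, namely $\tilde A^{(l)}_{n,j}=A_{n,j}-\partial_j\gamma^{(l)}_n$ and $\tilde\phi^{(l)}_n=e^{i\gamma^{(l)}_n}\phi_n$, established by an energy/uniqueness argument for the difference quantities (which have vanishing Cauchy data on the ball), and then reconstructs the true solution exactly on $[0,T_0]$ via the gauge-invariant curvature, $A_{n,j}=\sum_l\Delta^{-1}\partial^i(\chi_l\tilde F^{(l)}_{n,ij})$, and $\phi_n=\sum_l\chi_l e^{-i\gamma^{(l)}_n}\tilde\phi^{(l)}_n$. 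Relatedly, your claim that finite speed of propagation ``confines each local solution essentially to a slightly fattened ball'' is not correct: the global small-energy solutions are not spatially confined (and the system is nonlocal); what propagation gives is agreement with the true solution \emph{inside the cone over each ball}, which is how the paper uses it, with the bounded overlap coming from the partition of unity $\{\chi_l\}$, not from support properties of the pieces.

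Once the identification is done this way, the real work — which your sketch does not engage with — is transferring the $S^1$ bounds of the global pieces through the cutoffs and gauge factors: one needs $L^\infty$ control of all derivatives of the gauge functions $\gamma^{(l)}_n$ (obtained in the paper from the fact that $\partial_t\gamma^{(l)}_n$ is harmonic on time slices of the cone, via interior estimates, in a scaling-invariant way — this is also why the exterior region is covered by dyadic, scaling-invariant balls rather than treated as your single exterior piece), multiplier lemmas for $\chi_l e^{-i\gamma^{(l)}_n}$ and for $\Delta^{-1}\nabla_x(\chi_l\nabla_x\cdot)$ acting on $\tilde S^1$, almost-orthogonality of the sums over $l$, a Hardy-inequality square-summation of the localized initial energies, and finally a bootstrap (as in Proposition~\ref{prop:S_norm_Lorentz}) to upgrade the Strichartz and $X^{0,\frac12}_\infty$ bounds to the full $S^1$ norm. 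Without replacing your perturbative gluing step by something of this kind, the argument does not close.
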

\begin{proof}
 The proof is given in Subsection \ref{subsec:proof_of_joint_time_interval_prop} below.
\end{proof}

Using Proposition \ref{prop:perturbation} and Proposition \ref{prop:joint_time_interval}, we may introduce the following notion of energy class solutions to the MKG-CG system that we outlined above.
\begin{defn} \label{defn:energy_class_solution}
 Let $(A, \phi)[0]$ be Coulomb energy class data and let $\{ (A_n, \phi_n)[0] \}_n$ be a sequence of smooth low frequency truncations of $(A, \phi)[0]$ such that
 \begin{equation*}
  \lim_{n \rightarrow \infty} (A_n, \phi_n)[0] = (A, \phi)[0]
 \end{equation*}
 in the sense of $\dot{H}^1_x \times L^2_x$. We denote by $(A_n, \phi_n)$ the smooth solutions to the MKG-CG system with initial data $(A_n, \phi_n)[0]$ and define $I = (-T_0, T_1) = \cup \tilde{I}$ to be the union of all open time intervals $\tilde{I}$ containing $0$ with the property that
 \[
  \sup_{J \subset \tilde{I}, J closed} \liminf_{n \rightarrow \infty} \|(A_n, \phi_n)\|_{S^1(J \times \R^4)} < \infty.
 \]
 Then we define the MKG-CG evolution of $(A,\phi)[0]$ on $I \times \R^4$ to be
 \[
  (A,\phi)[t] := \lim_{n \rightarrow \infty} (A_n, \phi_n)[t], \quad t \in I,
 \]
 where the limit is taken in the energy topology. We call $I$ the maximal lifespan of $(A, \phi)$. For any closed interval $J \subset I$, we set
 \begin{equation*}
  \|(A, \phi)\|_{S^1(J\times\R^4)} := \lim_{n \rightarrow \infty} \|(A_n, \phi_n)\|_{S^1(J \times \R^4)}.
 \end{equation*}
\end{defn}

We obtain the following characterization of the maximal lifespan $I$ of an energy class solution.
\begin{lem} \label{lem:characerization_maximal_lifespan}
 Let $(A, \phi)$, $(A_n, \phi_n)$ and $I$ be as in Definition \ref{defn:energy_class_solution}. Assume in addition that $I \neq (-\infty, \infty)$. Then
 \begin{equation*}
  \sup_{J \subset I, J closed} \liminf_{n \rightarrow \infty} \|(A_n, \phi_n)\|_{S^1(J\times\R^4)} = \infty.
 \end{equation*}
\end{lem}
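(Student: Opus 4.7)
The plan is to argue by contradiction. Assume $L := \sup_{J \subset I,\, J \text{ closed}} \liminf_{n \to \infty} \|(A_n, \phi_n)\|_{S^1(J\times\R^4)} < \infty$, and without loss of generality that $T_1 < \infty$ (the case $T_0 < \infty$ is treated identically). A diagonal extraction against an exhaustion of $I$ by closed subintervals produces a subsequence, still denoted by $(A_n, \phi_n)$, such that on every fixed closed $J \subset I$ one has $\|(A_n, \phi_n)\|_{S^1(J\times\R^4)} \leq L + 1$ for all $n$ large.

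The first step is to show that each $(A_N, \phi_N)$ with $N$ large extends strictly past $T_1$ as a smooth admissible solution. Let $\tilde I_N = (-\tilde T_0^N, \tilde T_1^N)$ denote its maximal admissibility interval. The uniform $S^1$ bound on closed subsets of $I$ immediately gives $I \subset \tilde I_N$. If it were the case that $\tilde T_1^N = T_1$, then by Selberg's local well-posedness applied to the smooth admissible data $(A_N, \phi_N)[T_1 - \eta]$, the length of the forward lifespan starting at $T_1 - \eta$ would have to tend to zero as $\eta \to 0^+$, and hence $\|(A_N, \phi_N)[T_1 - \eta]\|_{H^s_x \times H^{s-1}_x}$ would diverge for some $s > 1$. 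However, a frequency envelope propagation of the type of Proposition~\ref{prop:breakdown_criterion_frequency_envelopes}, applied to $(A_N, \phi_N)$ on closed subintervals of $I$ where the $S^1$ norm is bounded by $L + 1$, controls the higher Sobolev norms of $(A_N, \phi_N)[t]$ for $t \in I$ uniformly in terms of the smooth initial data $(A_N, \phi_N)[0]$. This contradiction forces $\tilde T_1^N > T_1$. Fixing $-T \in I \cap (-\tilde T_0^N, 0)$ and using right continuity of the map $\delta \mapsto \|(A_N, \phi_N)\|_{S^1([-T, T_1 + \delta]\times\R^4)}$ at $\delta = 0$, one may then choose $\delta_N \in (0, \tilde T_1^N - T_1)$ small enough that
\[
 \|(A_N, \phi_N)\|_{S^1([-T, T_1 + \delta_N]\times\R^4)} \leq L + 2.
\]

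Next, I apply Proposition~\ref{prop:perturbation} with base solution $(A_N, \phi_N)$ on $[-T, T_1 + \delta_N]$, arranging the low frequency truncations so that $(A_n - A_N, \phi_n - \phi_N)[0]$ has Fourier support in $\{|\xi| > K_N\}$ for $n > N$. Since $(A_n, \phi_n)[0] \to (A, \phi)[0]$ in $\dot H^1_x \times L^2_x$, one has
\[
 \limsup_{n \to \infty} E(A_n - A_N, \phi_n - \phi_N)[0] = E(A - A_N, \phi - \phi_N)[0] \longrightarrow 0 \quad \text{as } N \to \infty.
\]
Choosing $N$ so large that this limsup is strictly less than the smallness threshold $\delta_1(L + 2)$ of Proposition~\ref{prop:perturbation}, that proposition extends every $(A_n, \phi_n)$ with $n \geq M(N)$ to an admissible solution on $[-T, T_1 + \delta_N]$ with $\|(A_n, \phi_n)\|_{S^1([-T, T_1 + \delta_N]\times\R^4)} \leq \tilde L(L + 2, \delta_1(L + 2)) < \infty$.

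Taking the liminf in $n$, the open interval $\tilde I := (-T, T_1 + \delta_N)$ contains $0$ and every closed subinterval $J \subset \tilde I$ satisfies $\liminf_n \|(A_n, \phi_n)\|_{S^1(J\times\R^4)} < \infty$. By Definition~\ref{defn:energy_class_solution}, $\tilde I \subset I$, contradicting $T_1 + \delta_N > T_1$. The main obstacle is the strict right-extension statement $\tilde T_1^N > T_1$ for the smooth approximations: this relies on propagating the $\dot H^1_x \times L^2_x$-level $S^1$ control of $(A_N, \phi_N)$ on $I$ to higher Sobolev regularity, which then allows Selberg's local well-posedness to produce a nontrivial forward extension at times arbitrarily close to $T_1$ and rules out a blowup of the smooth approximation at $T_1$.
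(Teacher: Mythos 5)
There is a genuine gap, and it sits at the very first substantive step. After your diagonal extraction you know: \emph{for each fixed closed} $J \subset I$ there is $n_0(J)$ with $\|(A_n,\phi_n)\|_{S^1(J\times\R^4)} \leq L+1$ for all $n \geq n_0(J)$. From this you immediately assert that a \emph{fixed} $(A_N,\phi_N)$ (any large $N$) satisfies the bound on \emph{all} closed $J \subset I$, hence $I \subset \tilde I_N$, hence (via the frequency-envelope control and Selberg) it extends smoothly past $T_1$. That is a quantifier slip: along the diagonal subsequence the $N$-th solution is only controlled on the first $\sim N$ members of the exhaustion, i.e.\ on some $[-T, T_1-\eta_N]$ with $\eta_N>0$, and for a fixed $N$ nothing in the hypothesis prevents $(A_N,\phi_N)$ from blowing up (in $S^1$, or even smoothly) at some time $T_1-\epsilon_N$ with $\epsilon_N \to 0$. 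The scenario ``each approximation breaks down strictly before $T_1$, at times creeping up to $T_1$'' is perfectly compatible with $\liminf_n\|(A_n,\phi_n)\|_{S^1(J)} \leq L$ for every closed $J\subset I$, and in that scenario there is no admissible base solution on any $[-T, T_1+\delta_N]$ at all, so Proposition~\ref{prop:perturbation} cannot be invoked and the whole contradiction collapses. In other words, the existence of a single approximation that already lives (with controlled $S^1$ norm) beyond $T_1$ is not a preliminary step one can read off from the liminf hypothesis; it is essentially the content of the lemma, and your argument assumes it.

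A workable strategy has to produce uniform control up to and slightly beyond $T_1$ for infinitely many $n$ by a genuinely uniform mechanism, not index by index. For instance, one can first use Proposition~\ref{prop:perturbation} to upgrade the hypothesis: for each $\eta>0$ pick one large good index on $[-T,T_1-\eta]$ and treat all larger indices as small high-frequency perturbations of it, obtaining a single constant $\tilde L$ such that for every $\eta$ \emph{all} sufficiently large $n$ obey $\|(A_n,\phi_n)\|_{S^1([-T,T_1-\eta])}\leq \tilde L$; but even then one still needs a uniform continuation step past $T_1$ (e.g.\ convergence of $(A_n,\phi_n)[t^*]$ in the energy topology at some $t^*$ close to $T_1$ together with the spatial-localization/small-energy method behind Proposition~\ref{prop:joint_time_interval}, or a frequency-envelope/non-concentration argument), since the bound $\tilde L$ alone does not give a lower bound on the extension time of any fixed approximation. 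Note also two smaller points you treat as obvious: Proposition~\ref{prop:breakdown_criterion_frequency_envelopes} is stated on the maximal interval of existence and needs a (routine) restatement on subintervals, and the claimed right-continuity of $\delta\mapsto\|(A_N,\phi_N)\|_{S^1([-T,T_1+\delta])}$ at $\delta=0$ requires an argument; these are fixable, but the missing uniform-extension step is not, as presented.
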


We call an energy class solution $(A, \phi)$ with maximal lifespan $I$ \emph{singular}, if either $I \neq \R$, or if $I = \R$ and 
\[
 \sup_{J \subset I, J closed} \|(A, \phi)\|_{S^1(J\times\R^4)} = \infty.
\]

\subsection{Proof of Proposition \ref{prop:joint_time_interval}} \label{subsec:proof_of_joint_time_interval_prop}

A natural idea is to localize the data $(A_n, \phi_n)[0]$ in physical space to ensure smallness of the energy and to then try to ``patch together'' the local solutions obtained from the small energy global well-posedness result \cite{KST}. The problem is that the MKG-CG system does not have the finite speed of propagation property due to non-local terms in the equation for the magnetic potential $A$. To overcome this difficulty, we exploit that the Maxwell-Klein-Gordon system enjoys gauge invariance.

\medskip

We first describe how we suitably localize the data $(A_n, \phi_n)[0]$ in physical space to obtain admissible Coulomb data with small energy that can be globally evolved by \cite{KST}. Let $\chi \in C_c^\infty(\R^4)$ be a smooth cutoff function with support in the ball $B(0, \frac{3}{2})$ and such that $\chi \equiv 1$ on $B(0, \frac{5}{4})$. For $x_0 \in \R^4$ and $r_0 > 0$, we set $\chi_{\{|x - x_0| \lesssim r_0\}}(x) := \chi(\frac{x-x_0}{r_0})$. Then we define
\begin{equation} \label{equ:definition_of_small_energy_data_beginning}
 \gamma_n(0,\cdot) := \Delta^{-1} \partial_j \bigl( \chi_{\{|x - x_0| \lesssim r_0\}}(\cdot) A^j_n(0,\cdot) \bigr)
\end{equation}
and for $j = 1, \ldots, 4$,
\begin{equation} 
 \tilde{A}_{n,j}(0, \cdot) := \chi_{\{|x - x_0| \lesssim r_0\}}(\cdot) A_{n,j}(0, \cdot) - \partial_j \gamma_n(0,\cdot).
\end{equation}
We determine $\tilde{A}_{n,0}(0, \cdot)$ as the solution to the elliptic equation
\begin{equation} \label{equ:definition_of_small_energy_data_3}
 \Delta \tilde{A}_{n,0} = - \Im \bigl( \chi_{\{|x - x_0| \lesssim r_0\}} \phi_n \overline{\chi_{\{|x - x_0| \lesssim r_0\}} \partial_t \phi_n} \bigr) + |\chi_{\{|x - x_0| \lesssim r_0\}} \phi_n|^2 A_{n,0} \quad \text{ on } \R^4,
\end{equation}
where $\phi_n$ and $A_{n,0}$ are evaluated at time $t=0$. We note that $\tilde{A}_n$ is in Coulomb gauge. Then we set
\begin{equation}
  \partial_t \gamma_n(0,\cdot) := A_{n,0}(0,\cdot) - \tilde{A}_{n,0}(0,\cdot)
\end{equation}
and define $\partial_t \tilde{A}_{n,j}(0, \cdot)$ for $j = 1, \ldots, 4$, first just on $B(x_0, \frac{5}{4} r_0)$, by setting
\begin{equation}
 \partial_t \tilde{A}_{n,j}|_{B(x_0, \frac{5}{4} r_0)}(0, \cdot) := \bigl( \partial_t A_{n,j}(0,\cdot) - \partial_j \partial_t \gamma_n(0,\cdot) \bigr)\big|_{B(x_0, \frac{5}{4} r_0)}.
\end{equation}
We observe that $\Delta( A_{n,0}(0, \cdot) - \tilde{A}_{n,0}(0,\cdot)) = 0$ on $B(x_0, \frac{5}{4} r_0)$ by the definition of $\tilde{A}_{n,0}(0,\cdot)$. Thus, the data $\partial_t \tilde{A}_{n}|_{B(x_0, \frac{5}{4} r_0)}(0,\cdot)$ satisfy the Coulomb compatibility condition $\partial_j (\partial_t \tilde{A}_n^j)(0,\cdot) = 0$ on $B(x_0, \frac{5}{4} r_0)$. Using \cite[Proposition 2.1]{KMPT}, we extend $(\partial_t \tilde{A}_{n,j})(0,\cdot)|_{B(x_0, \frac{5}{4} r_0)}$ to the whole of $\R^4$ while maintaining the Coulomb compatibility condition and such that $\|\partial_t \tilde{A}_{n,j}\|_{L^2_x(\R^4)} \lesssim \|\partial_t \tilde{A}_{n,j}\|_{L^2_x(B(x_0, \frac{5}{4} r_0))}$. Finally, we define
\begin{equation}
 \tilde{\phi}_n(0,\cdot) := e^{i \gamma_n(0,\cdot)} \chi_{\{|x - x_0| \lesssim r_0\}}(\cdot) \phi_n(0, \cdot)
\end{equation}
and
\begin{equation} \label{equ:definition_of_small_energy_data_end}
 \partial_t \tilde{\phi}_n(0,\cdot) := i \partial_t \gamma_n(0,\cdot) e^{i \gamma_n(0,\cdot)} \chi_{\{|x - x_0| \lesssim r_0\}}(\cdot) \phi_n(0, \cdot) + e^{i\gamma_n(0,\cdot)} \chi_{\{|x - x_0| \lesssim r_0\}}(\cdot) \partial_t \phi_n(0, \cdot).
\end{equation}
In the next lemma we prove that by choosing $r_0 > 0$ sufficiently small, we can ensure that the Coulomb data $(\tilde{A}_n, \tilde{\phi}_n)[0]$ have small energy for all sufficiently large $n$. Here we exploit that the convergence $(A_n, \phi_n)[0] \rightarrow (A,\phi)[0]$ in the energy topology as $n \rightarrow \infty$ implies a uniform non-concentration property of the energy of the data $(A_n, \phi_n)[0]$. We denote by $\varepsilon_0 > 0$ the small energy threshold of the small energy global well-posedness result \cite{KST} for the MKG-CG system.
\begin{lem} \label{lem:small_energy_on_small_ball}
 Let $(\tilde{A}_n, \tilde{\phi}_n)$ be defined as in \eqref{equ:definition_of_small_energy_data_beginning} -- \eqref{equ:definition_of_small_energy_data_end}. Given $\varepsilon_0 > 0$ there exists $r_0 > 0$ such that uniformly for all $x_0 \in \R^4$ and for all sufficiently large $n$, it holds that
 \[
  E(\tilde{A}_n, \tilde{\phi}_n) < \varepsilon_0.
 \]
\end{lem}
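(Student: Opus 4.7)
The plan is to prove the lemma in two main steps: first establish uniform non-concentration (in the base point $x_0 \in \R^4$) of the natural energy densities associated to $(A_n, \phi_n)[0]$ on small balls, then estimate each term of $E(\tilde A_n, \tilde \phi_n)$ by these localized quantities. The main obstacle is guaranteeing uniformity in $x_0$, which fails for general $L^p(\R^4)$ functions.

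For the first step, I would show that for every $\eta > 0$ there exists $r_0 > 0$ such that for all sufficiently large $n$ and uniformly in $x_0 \in \R^4$,
\begin{equation*}
\begin{split}
 & \|\nabla_{t,x} A_n(0)\|_{L^2_x(B(x_0, 2r_0))} + \|\nabla_{t,x}\phi_n(0)\|_{L^2_x(B(x_0, 2r_0))} \\
 & \qquad + \|A_n(0)\|_{L^4_x(B(x_0, 2r_0))} + \|\phi_n(0)\|_{L^4_x(B(x_0, 2r_0))} + \|A_{n,0}(0)\|_{L^4_x(B(x_0, 2r_0))} < \eta.
\end{split}
\end{equation*}
The convergence $(A_n, \phi_n)[0] \to (A, \phi)[0]$ in $\dot H^1_x \times L^2_x$ combined with the Sobolev embedding $\dot H^1_x(\R^4) \hookrightarrow L^4_x$, and the elliptic equation \eqref{equ:elliptic} together with Hardy--Littlewood--Sobolev to handle $A_{n,0}(0)$, reduce the claim to a uniform non-concentration statement for the fixed limit data. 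This is then obtained from density of $C_c^\infty(\R^4)$ in $L^p_x(\R^4)$: for any $\xi \in C_c^\infty(\R^4)$, one has $\|\xi\|_{L^p_x(B(x_0, r_0))} \leq \|\xi\|_{L^\infty_x} r_0^{4/p}$ uniformly in $x_0$, which circumvents the main obstacle.

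For the second step, a direct computation yields
\begin{equation*}
 \tilde F_{n,jk}(0) = \chi \, F_{n,jk}(0) + (\partial_j \chi) A_{n,k}(0) - (\partial_k \chi) A_{n,j}(0),
\end{equation*}
so that $\|\tilde F_{n,jk}(0)\|_{L^2_x} \lesssim \|\nabla A_n(0)\|_{L^2_x(B)} + r_0^{-1}\|A_n(0)\|_{L^2_x(B)} \lesssim \|\nabla A_n(0)\|_{L^2_x(B)} + \|A_n(0)\|_{L^4_x(B)}$ by H\"older, with $B = B(x_0, \frac{3}{2} r_0)$. The crucial algebraic observation is the identity
\begin{equation*}
 \tilde D_0 \tilde \phi_n(0) = e^{i \gamma_n(0)} \chi \, D_0 \phi_n(0),
\end{equation*}
which follows directly from $\partial_t \gamma_n(0) = A_{n,0}(0) - \tilde A_{n,0}(0)$ and yields the pointwise bound $|\tilde D_0 \tilde \phi_n(0)| \leq \chi \, |D_0 \phi_n(0)|$. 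An analogous expansion of $\tilde D_j \tilde \phi_n(0)$ produces only a commutator $e^{i\gamma_n(0)}(\partial_j \chi) \phi_n(0)$ and a bilinear term $i e^{i\gamma_n(0)}\chi^2 A_{n,j}(0) \phi_n(0)$, both controlled via H\"older in $L^4_x \times L^4_x$. Applying the Hardy--Littlewood--Sobolev estimate $\|\nabla f\|_{L^2_x(\R^4)} \lesssim \|\Delta f\|_{L^{4/3}_x(\R^4)}$ to \eqref{equ:definition_of_small_energy_data_3} gives
\begin{equation*}
 \|\nabla \tilde A_{n,0}(0)\|_{L^2_x} \lesssim \|\phi_n(0)\|_{L^4_x(B)} \|\partial_t \phi_n(0)\|_{L^2_x(B)} + \|\phi_n(0)\|_{L^4_x(B)}^2 \|A_{n,0}(0)\|_{L^4_x(B)},
\end{equation*}
while $\tilde F_{n,0j}(0)$ is controlled by combining this bound with the Coulomb-compatible extension estimate from \cite{KMPT} applied to $\partial_t \tilde A_{n,j}(0)$ on $B(x_0, \frac{5}{4} r_0)$. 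Collecting all bounds and choosing $\eta$ sufficiently small relative to $\varepsilon_0$, and then $r_0$ and $n$ accordingly, yields $E(\tilde A_n, \tilde \phi_n) < \varepsilon_0$.
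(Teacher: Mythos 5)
Your proposal is correct and follows essentially the same route as the paper: uniform non-concentration of the localized data norms for the fixed limit datum $(A,\phi)[0]$ (via density of smooth compactly supported functions), transferred to $(A_n,\phi_n)[0]$ for large $n$ by the strong $\dot H^1_x\times L^2_x$ convergence (with the elliptic equation handling $A_{n,0}$), followed by term-by-term control of $E(\tilde A_n,\tilde\phi_n)$ by these localized quantities. The only difference is bookkeeping: the paper estimates the plain gradients $\|\nabla_{t,x}\tilde A_{n}\|_{L^2_x}$, $\|\nabla_{t,x}\tilde\phi_n\|_{L^2_x}$, controlling $\nabla_x\partial_j\gamma_n$ through the $L^2_x$-boundedness of $\nabla_x^2\Delta^{-1}$ applied to $\chi A_n$, whereas you work with the gauge-covariant densities directly via the exact identities $\tilde F_{n,jk}=\chi F_{n,jk}+(\partial_j\chi)A_{n,k}-(\partial_k\chi)A_{n,j}$ and $\tilde D_0\tilde\phi_n(0)=e^{i\gamma_n(0)}\chi D_0\phi_n(0)$ -- an equivalent and slightly cleaner computation.
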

\begin{proof}
 We start with the components $\tilde{A}_{n,j}$. Suppressing that $A_n$ is evaluated at time $t = 0$, we have for $j = 1, \ldots, 4$ that
 \begin{equation} \label{equ:energy_estimate_A_tilde_n}
  \begin{split}
   \|\nabla_x \tilde{A}_{n,j}\|_{L^2_x(\R^4)}^2 &\lesssim \| \nabla_x ( \chi_{\{|x-x_0| \lesssim r_0\}} A_{n,j} ) \|_{L^2_x(\R^4)}^2 + \|\nabla_x \partial_j \gamma_n\|_{L^2_x(\R^4)}^2 \\
   &\lesssim \sum_{i = 1}^4 \|\nabla_x (\chi_{\{|x-x_0| \lesssim r_0\}} A_{n,i})\|_{L^2_x(\R^4)}^2 \\
   &\lesssim \sum_{i = 1}^4 \frac{1}{r_0^2} \int_{B(x_0, \frac{3}{2} r_0)} |A_{n,i}(x)|^2 \, dx + \int_{B(x_0,\frac{3}{2} r_0)} |\nabla_x A_{n,i}(x)|^2 \, dx \\
   &\lesssim \sum_{i=1}^4 \biggl( \int_{B(x_0,\frac{3}{2} r_0)} |A_{n,i}(x)|^4 \, dx \biggr)^{1/2} + \int_{B(x_0,\frac{3}{2} r_0)} |\nabla_x A_{n,i}(x)|^2 \, dx.
  \end{split}
 \end{equation}
 Next we note that we can pick $r_0 > 0$ such that we have for the energy class data $A$ that
 \[
  \sup_{x_0 \in \R^4} \sum_{i=1}^4 \int_{B(x_0,\frac{3}{2} r_0)} |\nabla_x A_{i}(x)|^2 \, dx +  \int_{B(x_0,\frac{3}{2} r_0)} |A_{i}(x)|^4 \, dx \ll \varepsilon_0.
 \]
 Since $A_n \rightarrow A$ in $\dot{H}^1_x(\R^4)$ as $n \rightarrow \infty$, we also obtain for sufficiently large $n$ that
 \[
  \sup_{x_0 \in \R^4} \sum_{i=1}^4 \int_{B(x_0, \frac{3}{2} r_0)} |\nabla_x A_{n,i}(x)|^2 \, dx +  \int_{B(x_0,\frac{3}{2} r_0)} |A_{n,i}(x)|^4 \, dx \ll \varepsilon_0.
 \]
 From \eqref{equ:energy_estimate_A_tilde_n} we conclude that $\|\nabla_x \tilde{A}_{n,j}\|^2_{L^2_x(\R^4)} \lesssim \varepsilon_0$. In a similar manner we argue that $r_0 > 0$ can be picked such that for all sufficiently large $n$ we also have
 \[
  \sum_{i=1}^4 \|\partial_t \tilde{A}_{n,i}\|_{L^2_x(\R^4)}^2 + \|\nabla_x \tilde{A}_{n,0}\|_{L^2_x(\R^4)}^2 + \|\nabla_{t,x} \tilde{\phi}_{n}\|_{L^2_x(\R^4)}^2 \lesssim \varepsilon_0
 \]
 and hence,
 \[
  E(\tilde{A}_n, \tilde{\phi}_n) \lesssim \varepsilon_0.
 \]
\end{proof}

By Lemma \ref{lem:small_energy_on_small_ball} we can pick $r_0 > 0$ such that the data $(\tilde{A}_n, \tilde{\phi}_n)[0]$ can be globally evolved for sufficiently large $n$ by the small energy global well-posedness result \cite{KST} and we obtain global $S^1$ norm bounds on their evolutions $(\tilde{A}_n, \tilde{\phi}_n)$. For $t > 0$ we then define
\begin{equation} \label{equ:definition_partial_t_gamma_n_t_greater_0}
 \partial_t \gamma_n(t, \cdot) := A_{n,0}(t, \cdot) - \tilde{A}_{n,0}(t, \cdot),
\end{equation}
which implies that
\begin{equation} \label{equ:definition_gamma_n_t_greater_0}
 \gamma_n(t,\cdot) = \gamma_n(0,\cdot) + \int_0^t \bigl( A_{n,0}(s,\cdot) - \tilde{A}_{n,0}(s,\cdot) \bigr) \, ds.
\end{equation}

Our next goal is to relate the evolutions $(\tilde{A}_n, \tilde{\phi}_n)$ and $(A_n, \phi_n)$ on the light cone 
\[
 K_{x_0, \frac{5}{4} r_0} = \bigl\{ (t,x) : 0 \leq t < {\textstyle \frac{5}{4}} r_0, |x-x_0| < {\textstyle{\frac{5}{4}}} r_0 - t \bigr\} 
\]
over the ball $B(x_0, \frac{5}{4} r_0)$. These identities will be the key ingredient to recover $S^1$ norm bounds for $(A_n, \phi_n)$ from those of $(\tilde{A}_n, \tilde{\phi}_n)$. 
\begin{lem} \label{lem:A_tilde_equals_A_minus_gamma}
 Let $(\tilde{A}_n, \tilde{\phi}_n)$ and $\gamma_n$ be defined as in \eqref{equ:definition_of_small_energy_data_beginning} -- \eqref{equ:definition_of_small_energy_data_end} and \eqref{equ:definition_partial_t_gamma_n_t_greater_0} -- \eqref{equ:definition_gamma_n_t_greater_0} such that $E(\tilde{A}_n, \tilde{\phi}_n)~<~\varepsilon_0$. For all sufficiently large $n$ it holds that
 \[
  \tilde{A}_{n,j} = A_{n,j} - \partial_j \gamma_n \text{ on } K_{x_0,\frac{5}{4} r_0} 
 \]
 for $j = 1, \ldots, 4$ and that
 \[
  \tilde{\phi}_n = e^{i \gamma_n} \phi_n \text{ on }  K_{x_0, \frac{5}{4} r_0}.
 \]
\end{lem}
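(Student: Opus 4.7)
The plan is to introduce the globally defined pair
\[
 \hat{A}_{n,\alpha}(t,x) := A_{n,\alpha}(t,x) - \partial_\alpha \gamma_n(t,x), \quad \hat{\phi}_n(t,x) := e^{i\gamma_n(t,x)} \phi_n(t,x),
\]
and to show that $(\hat{A}_{n,j}, \hat{\phi}_n) = (\tilde{A}_{n,j}, \tilde{\phi}_n)$ on $K_{x_0, \frac{5}{4} r_0}$; the two identities of the lemma then follow tautologically by construction of $\hat{A}_{n,j}$ and $\hat{\phi}_n$. By the gauge invariance of the Maxwell-Klein-Gordon system, $(\hat{A}_n, \hat{\phi}_n)$ is a smooth solution of MKG (in the gauge determined by $\gamma_n$) and shares the curvature tensor of $(A_n, \phi_n)$.

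The first step is to verify the matching of initial data. The identity $\hat{A}_{n,0} \equiv \tilde{A}_{n,0}$ holds on all of $\R \times \R^4$ immediately from the prescription $\partial_t \gamma_n = A_{n,0} - \tilde{A}_{n,0}$ in \eqref{equ:definition_partial_t_gamma_n_t_greater_0}. On $B(x_0, \frac{5}{4} r_0) \times \{0\}$, where $\chi \equiv 1$, unwinding the formulas \eqref{equ:definition_of_small_energy_data_beginning}--\eqref{equ:definition_of_small_energy_data_end} yields $\hat{A}_{n,j}(0, \cdot) = \tilde{A}_{n,j}(0, \cdot)$ and $\partial_t \hat{A}_{n,j}(0, \cdot) = \partial_t \tilde{A}_{n,j}(0, \cdot)$ for $j = 1, \ldots, 4$, together with $\hat{\phi}_n(0, \cdot) = \tilde{\phi}_n(0, \cdot)$ and $\partial_t \hat{\phi}_n(0, \cdot) = \partial_t \tilde{\phi}_n(0, \cdot)$ on the ball.

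To propagate this agreement from the ball at $t = 0$ to the full light cone, I would pass to a common gauge possessing a well-posed hyperbolic formulation. Concretely, I would apply a further joint gauge transformation by the scalar $\eta(t,x) := \int_0^t \tilde{A}_{n,0}(s,x)\, ds$; since $\hat{A}_{n,0} \equiv \tilde{A}_{n,0}$, this simultaneously brings both $(\hat{A}_n, \hat{\phi}_n)$ and $(\tilde{A}_n, \tilde{\phi}_n)$ into the temporal gauge $A_0 = 0$. The Maxwell-Klein-Gordon system in temporal gauge is a semilinear system for $(A_j, \phi)$ whose principal part in each component is $\Box$ and which enjoys the classical finite speed of propagation. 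Both transformed solutions remain smooth and agree at $t = 0$ on $B(x_0, \frac{5}{4} r_0)$, so a standard energy-type uniqueness argument forces them to coincide on $K_{x_0, \frac{5}{4} r_0}$. Undoing the common gauge transformation $\eta$ yields $(\hat{A}_n, \hat{\phi}_n) = (\tilde{A}_n, \tilde{\phi}_n)$ on the light cone, as required.

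The main technical point to watch is the regularity of the gauge parameters $\gamma_n$ and $\eta$, so that uniqueness at the smooth solution level genuinely applies. This is routine under the admissibility hypothesis: elliptic regularity for $\gamma_n(0, \cdot) = \Delta^{-1} \partial_j(\chi A_n^j(0, \cdot))$ and for the defining equation \eqref{equ:definition_of_small_energy_data_3} of $\tilde{A}_{n,0}$, combined with the smoothness of the admissible solutions $(A_n, \phi_n)$ and $(\tilde{A}_n, \tilde{\phi}_n)$, yields $\gamma_n, \eta \in C^\infty$, and the concluding step reduces to classical uniqueness for smooth solutions of a semilinear wave system on a light cone.
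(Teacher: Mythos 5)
Your overall strategy is sound and genuinely different from the paper's: you transfer both solutions into a common temporal gauge via $\eta=\int_0^t\tilde{A}_{n,0}\,ds$ (legitimate, since $\hat{A}_{n,0}=A_{n,0}-\partial_t\gamma_n\equiv\tilde{A}_{n,0}$ by construction) and then invoke finite speed of propagation there, whereas the paper never changes gauge a second time: it writes down the equations satisfied by the differences $B_j=\tilde{A}_{n,j}-(A_{n,j}-\partial_j\gamma_n)$ and $\psi=\tilde{\phi}_n-e^{i\gamma_n}\phi_n$ in the given gauges, observes that the nonlocal Riesz-projection terms cancel and that what remains is local in $(B,\psi)$ up to a term of the form $\partial_j\int_0^t\{\cdots\}\,ds$ whose integrand again involves only $(B,\psi)$ at earlier times inside the (shrinking) cone, and closes with a Gronwall/energy argument on $K_{x_0,\frac54 r_0}$ using the vanishing of $(B,\psi)[0]$ on the ball. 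Your verification that the Cauchy data of $(\hat{A}_n,\hat{\phi}_n)$ and $(\tilde{A}_n,\tilde{\phi}_n)$ agree on $B(x_0,\frac54 r_0)$, and the smoothness of $\gamma_n$ and $\eta$, are fine.

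The gap is in the one sentence carrying the whole propagation step: it is not true that MKG in temporal gauge is ``a semilinear system for $(A_j,\phi)$ whose principal part in each component is $\Box$.'' With $A_0=0$ the spatial Maxwell equations read $\partial_t^2A_j-\Delta A_j+\partial_j(\partial^kA_k)=-\Im(\phi\overline{D_j\phi})$, so the grad--div term sits at principal order; the operator $\partial_t^2-\Delta+\nabla\,\mathrm{div}$ is only weakly hyperbolic (the longitudinal mode propagates by an ODE in $t$), and the naive componentwise wave-equation energy/uniqueness argument you appeal to does not apply as stated. The conclusion you want (finite speed $\le 1$ and uniqueness on the truncated cone for smooth temporal-gauge solutions) is true, but it must be justified either by an energy estimate adapted to the Maxwell structure (first-order formulation in $E_j=\partial_tA_j$ and $F_{jk}$, with the cone flux controlled as for the energy-momentum tensor, plus $|\delta A|^2,|\delta\phi|^2$ added to the energy to absorb undifferentiated terms), or by using the Gauss constraint $\partial_t(\mathrm{div}\,A)=-\Im(\phi\overline{D_0\phi})$ to trade $\partial_j(\mathrm{div}\,A)$ for a time integral of lower-order local terms --- which, notably, reproduces exactly the $\partial_j\int_0^t\{\cdots\}\,ds$ structure that the paper's proof handles directly without the second gauge change. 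With that repair your argument goes through; without it, the central uniqueness step is unsupported.
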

\begin{proof}
To simplify the notation we omit the subscript $n$. Using the equations that $(A, \phi)$, $(\tilde{A}, \tilde{\phi})$, and $\gamma$ satisfy, we obtain that  
\begin{equation} \label{equ:A_tilde_equation}
 \Box \tilde{A}_j = - \Im \bigl( \tilde{\phi} \overline{ \tilde{D}_j \tilde{\phi} } \bigr) + \partial_j \Delta^{-1} \partial^i \Im \bigl( \tilde{\phi} \overline{ \tilde{D}_i \tilde{\phi} } \bigr) \, \text{ on } \R_t \times \R^4_x
\end{equation}
and
\begin{equation} \label{equ:A_minus_partial_gamma_equation}
\Box (A_j - \partial_j \gamma) = - \Im \bigl(\phi \overline{D_j \phi} \bigr) + \partial_j \Delta^{-1} \partial^i \Im \bigl( \tilde{\phi} \overline{\tilde{D}_i \tilde{\phi}} \bigr) - \partial_j \int_0^t \Bigl\{ \Im \bigl( \phi \overline{D_t \phi} \bigr) - \Im \bigl( \tilde{\phi} \overline{D_t \tilde{\phi}} \bigr) \Bigr\} \, ds \, \text{ on } K_{x_0,\frac{5}{4} r_0},
\end{equation}
where we use the notation $\tilde{D}_\alpha = \partial_\alpha + i \tilde{A}_\alpha$. Next we introduce the quantities
\[
 B_j = \tilde{A}_j - (A_j - \partial_j \gamma) \\
\]
and
\[
\psi = \tilde{\phi} - e^{i \gamma} \phi.
\]
From \eqref{equ:A_tilde_equation} and \eqref{equ:A_minus_partial_gamma_equation} we infer that 
\[
 \Box B_j = \Im \bigl( \phi \overline{ D_j \phi} \bigr) - \Im \bigl( \tilde{\phi} \overline{ \tilde{D}_j \tilde{\phi} } \bigr)  - \partial_j \int_0^t \Bigl\{ \Im \bigl( \phi \overline{D_t \phi} \bigr) - \Im \bigl( \tilde{\phi} \overline{ \tilde{D}_t \tilde{\phi} } \bigr) \Bigr\} \, ds \, \text{ on } K_{x_0, \frac{5}{4} r_0}.
\]
The first two terms in the above equation can be rewritten as
\[
 \Im \bigl( \phi \overline{ D_j \phi} \bigr) - \Im \bigl( \tilde{\phi} \overline{ \tilde{D}_j \tilde{\phi} } \bigr) = B_j |\phi|^2 - \Im \bigl( \psi \overline{ (\partial_j + i \tilde{A}_j) (\psi + e^{i\gamma} \phi) } \bigr) - \Im \bigl( e^{i\gamma} \phi \overline{ (\partial_j + i \tilde{A}_j) \psi } \bigr)
\]
and similarly we obtain for the last term that
\[
 \Im \bigl( \phi \overline{D_t \phi} \bigr) - \Im \bigl( \tilde{\phi} \overline{\tilde{D}_t \tilde{\phi}} \bigr) = - \Im \bigl( \psi \overline{ (\partial_t + i \tilde{A_0}) (\psi + e^{i \gamma} \phi) } \bigr) - \Im \bigl( e^{i\gamma} \overline{ (\partial_t + i \tilde{A_0}) \psi} \bigr).
\]
We conclude that the wave equation for $B_j$ on the light cone $K_{x_0,\frac{5}{4} r_0}$ is of the schematic form
\begin{equation} \label{equ:B_wave_equation}
 \begin{split}
  \Box B_j &= f_1 B_j + f_2 |\psi|^2 + f_3 \psi + f_4 \overline{\psi} + f_5 (\partial_j \psi) + f_6 \overline{(\partial_j \psi)} \\
  &\quad \quad + \partial_j \int_0^t \Bigl\{ f_7 |\psi|^2 + f_8 \psi + f_9 \overline{\psi} + f_{10} (\partial_t \psi) + f_{11} \overline{(\partial_t \psi)} \Bigr\} \, ds,
 \end{split}
\end{equation}
where $f_1, \ldots, f_{11}$ are smooth functions on $K_{x_0,\frac{5}{4} r_0}$. To obtain a wave equation for $\psi$, we note that $B_0 = \tilde{A}_0 - (A_0 - \partial_t \gamma) = 0$ by construction and write
\begin{equation*} 
 \begin{split}
  0 &= \Box_{\tilde{A}} \tilde{\phi} - e^{i \gamma} \Box_A \phi \\
    &= \Box_{\tilde{A}} ( \psi + e^{i \gamma} \phi ) - e^{i \gamma} \Box_A \phi \\
    &= \Box_{\tilde{A}} \psi + \Box_{B + A -\partial \gamma} ( e^{i\gamma} \phi) - e^{i \gamma} \Box_A \phi \\
    &= \Box_{\tilde{A}} \psi + \Box_B (e^{i \gamma} \phi) - \Box (e^{i\gamma} \phi) - 2 B^j (A_j - \partial_j \gamma) e^{i \gamma} \phi + \Box_{A - \partial \gamma} ( e^{i \gamma} \phi ) - e^{i \gamma} \Box_A \phi \\
    &= \Box_{\tilde{A}} \psi +  i (\partial^j B_j) (e^{i\gamma} \phi) + 2i B^j \partial_j (e^{i\gamma} \phi) - B^j B_j (e^{i\gamma} \phi) - 2 B^j (A_j - \partial_j \gamma) e^{i \gamma} \phi.
 \end{split}
\end{equation*}
Thus, $\psi$ satisfies a wave equation on the light cone $K_{x_0,\frac{5}{4} r_0}$ of the schematic form 
\begin{equation} \label{equ:psi_wave_equation}
 \Box \psi = f \psi + f_\alpha \partial^\alpha \psi + g_j B^j + g B_j B^j + h (\partial_j B^j),
\end{equation}
where $f, f_\alpha, g, g_j, h$ are smooth functions on $K_{x_0,\frac{5}{4} r_0}$. Since $B[0]$ and $\psi[0]$ vanish on $B(x_0,\frac{5}{4} r_0)$ by our choice of the initial data $(\tilde{A}, \tilde{\phi})[0]$, we conclude from \eqref{equ:B_wave_equation} and \eqref{equ:psi_wave_equation} by a standard energy argument that indeed
\[
 \tilde{A}_j = A_j - \partial_j \gamma \text{ on } K_{x_0, \frac{5}{4} r_0} 
\]
and
\[
 \tilde{\phi} = e^{i \gamma} \phi \text{ on } K_{x_0,\frac{5}{4} r_0}.
\]
\end{proof}

It is clear that given $\varepsilon_0 > 0$, there exists $R > 0$ such that for all sufficiently large $n$, it holds that
\[
 E \bigl( \chi_{\{|x| > R\}}(\cdot) A_n(0,\cdot), \chi_{\{|x| > R\}}(\cdot) \phi_n(0,\cdot) \bigr) < \varepsilon_0.
\]
For our later purposes we have to localize the initial data $(A_n, \phi_n)$ outside the large ball $B(0,R)$ in a scaling invariant way. For any $x_l \in \R^4$ with $|x_l| \sim 2R \, 2^m$ for some $m \in \N$, we set $r_l := 2R \, 2^{m-1}$. Then we define
\[
 \gamma_n^{(l)}(0, \cdot) := \Delta^{-1} \partial_j \bigl( \chi_{\{|x-x_l| \lesssim r_l\}}(\cdot) A_n^j(0,\cdot) \bigr)
\]
and for $j = 1, \ldots, 4$,
\[
 \tilde{A}_{n,j}^{(l)}(0,\cdot) := \chi_{\{|x - x_l| \lesssim r_l\}}(\cdot) A_{n,j}(0,\cdot) - \partial_j \gamma_n^{(l)}(0,\cdot).
\]
We define $\tilde{A}_{n,0}^{(l)}(0,\cdot), \partial_t \tilde{A}_{n,j}^{(l)}(0,\cdot), \tilde{\phi}_n^{(l)}(0,\cdot), \partial_t \tilde{\phi}_n^{(l)}(0,\cdot)$ analogously to \eqref{equ:definition_of_small_energy_data_3} -- \eqref{equ:definition_of_small_energy_data_end} and $\gamma_n^{(l)}(t,\cdot)$, $\partial_t \gamma_n^{(l)}(t,\cdot)$ for $t > 0$ analogously to \eqref{equ:definition_partial_t_gamma_n_t_greater_0} -- \eqref{equ:definition_gamma_n_t_greater_0}. Similarly to Lemma \ref{lem:small_energy_on_small_ball} and Lemma \ref{lem:A_tilde_equals_A_minus_gamma} we obtain
\begin{lem} \label{lem:small_energy_on_complement_of_large_ball}
 Given $\varepsilon_0 > 0$ there exists $R > 0$ such that the initial data $(\tilde{A}_n^{(l)}, \tilde{\phi}_n^{(l)})$ defined as above satisfy for all sufficiently large $n$ that
 \[
  E(\tilde{A}_n^{(l)}, \tilde{\phi}_n^{(l)}) < \varepsilon_0.
 \]
\end{lem}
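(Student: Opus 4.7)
The plan is to mimic the argument for Lemma \ref{lem:small_energy_on_small_ball}, but exploiting the scale-invariant nature of the cut-offs $\chi_{\{|x-x_l|\lesssim r_l\}}(\cdot) = \chi(\frac{\cdot -x_l}{r_l})$ together with the fact that the balls $B(x_l,\frac{3}{2}r_l)$ all lie in the far region $\{|x|>R/2\}$, where the energy of $(A,\phi)[0]$ is small when $R$ is large. First I would observe that, because $(A,\phi)[0]\in \dot{H}^1_x\times L^2_x$ and by Sobolev embedding $\dot H^1_x(\R^4)\hookrightarrow L^4_x(\R^4)$, the quantity
\[
 \mathcal{E}_R(A,\phi):=\sum_{j=0}^{4}\int_{|x|>R/2}\bigl(|\nabla_{t,x}A_j(0,x)|^2+|A_j(0,x)|^4\bigr)\,dx+\int_{|x|>R/2}\bigl(|\nabla_{t,x}\phi(0,x)|^2+|\phi(0,x)|^4\bigr)\,dx
\]
tends to $0$ as $R\to\infty$. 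Combined with the convergence $(A_n,\phi_n)[0]\to (A,\phi)[0]$ in the energy topology and the corresponding $L^4$ convergence, we may fix $R>0$ large enough so that $\mathcal{E}_R(A_n,\phi_n)\ll \varepsilon_0$ for all sufficiently large $n$.

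Second, I would replay the computation of Lemma \ref{lem:small_energy_on_small_ball} with $r_0$ replaced by $r_l$. For $j=1,\dots,4$,
\[
 \|\nabla_x \tilde A_{n,j}^{(l)}\|_{L^2_x(\R^4)}^2 \lesssim \sum_{i=1}^4 \frac{1}{r_l^2}\|A_{n,i}\|_{L^2_x(B(x_l,\frac{3}{2}r_l))}^2+\|\nabla_x A_{n,i}\|_{L^2_x(B(x_l,\frac{3}{2}r_l))}^2,
\]
and H\"older's inequality, using $|B(x_l,\frac{3}{2}r_l)|\sim r_l^4$, turns the first term into
\[
 \frac{1}{r_l^2}\|A_{n,i}\|_{L^2_x(B(x_l,\frac{3}{2}r_l))}^2 \lesssim \|A_{n,i}\|_{L^4_x(B(x_l,\frac{3}{2}r_l))}^2,
\]
which is scale-invariant and controlled by $\mathcal{E}_R(A_n,\phi_n)$. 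The same computation applied to the time derivatives $\partial_t\tilde A_{n,j}^{(l)}$, to $\tilde\phi_n^{(l)}$ and to $\partial_t\tilde\phi_n^{(l)}$ proceeds identically, using $|\partial_t\gamma_n^{(l)}| = |A_{n,0}-\tilde A_{n,0}^{(l)}|$ to reduce the time derivatives to control of $\tilde A_{n,0}^{(l)}$ together with $\partial_t A_{n,j}$ and $\partial_t \phi_n$ on the same ball.

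Third, I would handle $\tilde A_{n,0}^{(l)}$ via the elliptic equation \eqref{equ:definition_of_small_energy_data_3} (adapted to index $l$). A standard energy estimate, combined with H\"older and Sobolev in the scale-invariant form above, yields
\[
 \|\nabla_x \tilde A_{n,0}^{(l)}\|_{L^2_x(\R^4)}^2 \lesssim \|\phi_n\|_{L^4_x(B(x_l,\frac{3}{2}r_l))}^2\,\|\partial_t\phi_n\|_{L^2_x(B(x_l,\frac{3}{2}r_l))}^2 + \|\phi_n\|_{L^4_x(B(x_l,\frac{3}{2}r_l))}^4\,\|\nabla_x A_{n,0}\|_{L^2_x(B(x_l,\frac{3}{2}r_l))}^2,
\]
whose right-hand side is bounded by $\mathcal{E}_R(A_n,\phi_n)^{3/2}\ll \varepsilon_0$. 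Summing the contributions gives $E(\tilde A_n^{(l)},\tilde\phi_n^{(l)})\lesssim\mathcal{E}_R(A_n,\phi_n)<\varepsilon_0$ uniformly in $l$ and $n$ large, as desired.

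The main technical point to watch is the uniformity in $l$: the $L^4$ bound above is crucial because it converts the $r_l^{-2}$ weight coming from the derivatives of the cut-off into a genuinely scale-invariant quantity, and the extension step for $\partial_t\tilde A_{n,j}^{(l)}$ from \cite{KMPT} has to be applied with a constant independent of $r_l$, which again uses scale invariance of the Coulomb projection.
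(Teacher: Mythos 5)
Your proposal is correct and is essentially the argument the paper intends: the paper gives no separate proof of Lemma~\ref{lem:small_energy_on_complement_of_large_ball}, asserting only that it follows "similarly to Lemma~\ref{lem:small_energy_on_small_ball}", and your write-up is precisely that computation carried out in scale-invariant form, with the $r_l^{-2}$-weighted $L^2$ term converted to the scale-invariant $L^4$ quantity and the smallness supplied by the exterior energy of $(A,\phi)[0]$ for $R$ large together with the convergence $(A_n,\phi_n)[0]\to(A,\phi)[0]$. One cosmetic point: in the elliptic estimate for $\tilde A_{n,0}^{(l)}$ the potential $A_{n,0}$ is not cut off in \eqref{equ:definition_of_small_energy_data_3}, so the second term should carry the global norm $\|A_{n,0}\|_{L^4_x(\R^4)}\lesssim E(A_n,\phi_n)^{1/2}$ rather than a localized one, but the smallness still comes from the factor $\|\chi\phi_n\|_{L^4_x(B(x_l,\frac{3}{2}r_l))}^2$, so the conclusion is unaffected.
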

\noindent and
\begin{lem} \label{lem:A_tilde_equals_A_minus_gamma_outside_big_ball}
 For all sufficiently large $n$ it holds that
 \[
  \tilde{A}_{n,j}^{(l)} = A_{n,j} - \partial_j \gamma_n^{(l)} \text{ on } K_{x_l, \frac{5}{4} r_l}
 \]
 for $j = 1, \ldots, 4$, and that
 \[
  \tilde{\phi}_n^{(l)} = e^{i \gamma_n^{(l)}} \phi_n \text{ on } K_{x_l, \frac{5}{4} r_l},
 \]
 where $K_{x_l, \frac{5}{4} r_l } := \bigl\{ (t,x) : 0 \leq t < \frac{5}{4} r_l, |x-x_l| < \frac{5}{4} r_l - t \bigr\}$.
\end{lem}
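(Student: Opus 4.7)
\medskip

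\noindent \emph{Proof proposal.} My plan is to follow the strategy of the proof of Lemma~\ref{lem:A_tilde_equals_A_minus_gamma} almost verbatim, exploiting the fact that the construction of $(\tilde{A}_n^{(l)}, \tilde{\phi}_n^{(l)})[0]$ and of $\gamma_n^{(l)}(t,\cdot)$ is structurally identical to that in the case of the small ball around $x_0$, only with $x_0$ replaced by $x_l$ and $r_0$ replaced by $r_l$. By Lemma~\ref{lem:small_energy_on_complement_of_large_ball}, the data $(\tilde{A}_n^{(l)}, \tilde{\phi}_n^{(l)})[0]$ has energy below the small energy threshold from \cite{KST} for all sufficiently large $n$, so the global evolution $(\tilde{A}_n^{(l)}, \tilde{\phi}_n^{(l)})$ exists and is admissible. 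This is what allows one to set up the comparison on the truncated cone $K_{x_l, \frac{5}{4} r_l}$ in the first place, and is the analogue of the input that was tacitly used in Lemma~\ref{lem:A_tilde_equals_A_minus_gamma}.

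\medskip

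Having secured existence of the comparison solution, I would introduce the two defect quantities
\[
 B_j := \tilde{A}_{n,j}^{(l)} - \bigl( A_{n,j} - \partial_j \gamma_n^{(l)} \bigr), \quad \psi := \tilde{\phi}_n^{(l)} - e^{i \gamma_n^{(l)}} \phi_n
\]
on the cone $K_{x_l, \frac{5}{4} r_l}$. By the very definition of $\gamma_n^{(l)}$ in \eqref{equ:definition_of_small_energy_data_beginning}--\eqref{equ:definition_of_small_energy_data_end} and its extension in \eqref{equ:definition_partial_t_gamma_n_t_greater_0}--\eqref{equ:definition_gamma_n_t_greater_0}, the temporal component $B_0 = \tilde{A}^{(l)}_{n,0} - A_{n,0} + \partial_t \gamma_n^{(l)}$ vanishes identically, and both $B_j[0]$ and $\psi[0]$ vanish on the base $B(x_l, \frac{5}{4} r_l)$: this is a direct consequence of the fact that the cutoff $\chi_{\{|x - x_l| \lesssim r_l\}}$ equals $1$ on $B(x_l, \frac{5}{4} r_l)$ combined with the fact that $\tilde{A}_{n,0}^{(l)}(0,\cdot)$ solves the same elliptic equation as the one induced by the MKG-CG constraint on $A_{n,0}(0,\cdot)$ at points where the cutoff is identically $1$.

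\medskip

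The heart of the argument is then to repeat the schematic derivation carried out in \eqref{equ:A_tilde_equation}--\eqref{equ:psi_wave_equation}. Subtracting the $\Box$ equation for $A_{n,j} - \partial_j \gamma_n^{(l)}$ from that for $\tilde{A}_{n,j}^{(l)}$ and using $B_0 = 0$, the non-local terms $\partial_j \Delta^{-1} \partial^i \Im(\tilde{\phi}_n^{(l)} \overline{\tilde{D}_i \tilde{\phi}_n^{(l)}})$ cancel identically, leaving a wave equation for $B_j$ on $K_{x_l, \frac{5}{4} r_l}$ of the schematic form \eqref{equ:B_wave_equation} (with smooth coefficients that are controlled by the small-energy $S^1$ norms of both solutions, hence smooth on the cone). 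An analogous computation comparing $\Box_{\tilde{A}^{(l)}_n} \tilde{\phi}_n^{(l)} = 0$ with $e^{i \gamma_n^{(l)}} \Box_{A_n} \phi_n = 0$ yields a wave equation for $\psi$ of the schematic form \eqref{equ:psi_wave_equation}, whose right-hand side is linear in $(\psi, \partial \psi, B, \partial B)$ with smooth coefficients.

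\medskip

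The closing step is a standard cone energy estimate on $K_{x_l, \frac{5}{4} r_l}$ applied to the coupled system for $(B, \psi)$, using that the forcing from $\psi, \partial \psi, B$ is linear and coefficients are bounded: Gronwall in time on truncated spatial balls gives $B \equiv 0$ and $\psi \equiv 0$ on the cone, which is exactly the desired identities $\tilde{A}_{n,j}^{(l)} = A_{n,j} - \partial_j \gamma_n^{(l)}$ and $\tilde{\phi}_n^{(l)} = e^{i \gamma_n^{(l)}} \phi_n$. The only place where this argument is more delicate than in Lemma~\ref{lem:A_tilde_equals_A_minus_gamma} is that the quadratic term $B^j \tilde{A}_j^{(l)}$ appearing in the wave equation for $\psi$ must be absorbed by the energy estimate uniformly in the scale $r_l$; this is not a real obstacle because the energy inequality on the truncated cone $K_{x_l, \frac{5}{4} r_l}$ is scale invariant, but it is the one point in the proof where one must verify that the Gronwall constants do not deteriorate as $|x_l| \to \infty$, and it is the step where I would spend the most care.
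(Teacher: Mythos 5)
Your proposal is correct and takes essentially the same route as the paper, which obtains this lemma precisely by repeating the proof of Lemma~\ref{lem:A_tilde_equals_A_minus_gamma} with $(x_0, r_0)$ replaced by $(x_l, r_l)$: one sets $B_j = \tilde{A}_{n,j}^{(l)} - (A_{n,j} - \partial_j \gamma_n^{(l)})$ and $\psi = \tilde{\phi}_n^{(l)} - e^{i\gamma_n^{(l)}}\phi_n$, uses $B_0 = 0$ and the vanishing of $B_j[0]$, $\psi[0]$ on $B(x_l, \frac{5}{4} r_l)$, derives the schematic wave equations \eqref{equ:B_wave_equation} and \eqref{equ:psi_wave_equation} on the cone, and concludes by a standard energy argument. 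Your worry about uniformity of the Gronwall constants as $|x_l|\to\infty$ is not actually needed, since the conclusion is a qualitative vanishing statement carried out for each fixed $l$ (and $n$) separately.
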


We now begin with the proof of Proposition \ref{prop:joint_time_interval} where we suitably ``patch together'' the small energy global evolutions constructed above.
\begin{proof}[Proof of Proposition \ref{prop:joint_time_interval}]
 By time reversibility, it suffices to  only prove the statement in forward time. We pick $r_0 > 0$ sufficiently small and $R > 0$ sufficiently large according to Lemma \ref{lem:small_energy_on_small_ball} and Lemma \ref{lem:small_energy_on_complement_of_large_ball}. Then we cover the ball $B(0,2R) \subset \R^4$ by the supports of finitely many cutoffs $\chi_{\{|x-x_l| \lesssim r_l\}}$ with $r_l = r_0$ for $l = 1, \ldots, L$ for some $L \in \N$. We divide the complement $B(0,2R)^c$ of the ball $B(0,2R)$ into dyadic annulli $A_m := \bigl\{ x \in \R^4 : 2R 2^{m-1} < |x| \leq 2R 2^m \bigr\}$, $m \in \N$, and cover each $A_m$ by the supports of finitely many suitable cutoffs $\chi_{\{|x-x_l| \lesssim r_l\}}(\cdot)$ with $|x_l| \sim 2R 2^m$ and $r_l \sim 2R 2^{m-1}$. This can be carried out in such a way that $\bigl\{ \supp(\chi_{\{|x-x_l|\lesssim r_l\}}) \bigr\}_{l=1}^\infty$ is a uniformly finitely overlapping covering of $\R^4$. We denote by $(\tilde{A}_n^{(l)}, \tilde{\phi}_n^{(l)})$ the associated global solutions to MKG-CG with small energy data given by Lemma \ref{lem:small_energy_on_small_ball}, respectively Lemma \ref{lem:small_energy_on_complement_of_large_ball}. Fix $0 < T_0 \ll r_0$ such that 
 \[ 
  [0,T_0] \times \R^4 \subset \bigcup_{l=1}^\infty K_{x_l, \frac{5}{4} r_l}.
 \]
 Then Lemma \ref{lem:A_tilde_equals_A_minus_gamma} and Lemma \ref{lem:A_tilde_equals_A_minus_gamma_outside_big_ball} imply that the evolutions $(A_n, \phi_n)$ exist on the time interval $[0,T_0]$ uniformly for all sufficiently large $n$. The covering of $\R^4$ by the supports of the cutoffs $\chi_{\{|x-x_l| \lesssim r_l\}}(\cdot)$ can be done in such a way that there exists a uniformly finitely overlapping, smooth partition of unity $\{\chi_l\}_{l\in\N} \subset C_c^\infty(\R \times \R^4)$,
 \begin{equation} \label{equ:partition_of_unity_in_joint_interval_prop}
 1 = \sum_{l = 1}^\infty \chi_l \text{ on } [0,T_0]\times\R^4,
 \end{equation}
 so that each cutoff function $\chi_l(t, x)$ is non-zero only for $t \in [-2 T_0, 2 T_0]$ and satisfies $K_{x_l, r_l} \cap \{t\} \times \R^4 \subset \supp(\chi_l(t, \cdot)) \subset K_{x_l, \frac{9}{8} r_l} \cap \{t\} \times \R^4$ for $t \in [0, 2 T_0]$. 

 \medskip

 In order to obtain uniform $S^1$ norm bounds on the evolutions of $(A_n, \phi_n)$ on $[0,T_0]\times\R^4$, it suffices to establish uniform bounds on the Strichartz and $X_\infty^{0, \frac{1}{2}}$ components of the $S^1$ norms of $(A_n, \phi_n)$ on $[0,T_0] \times \R^4$. These bounds then imply uniform bounds on the full $S^1$ norms of $(A_n, \phi_n)$ on $[0,T_0]\times\R^4$ by a bootstrap argument as in the proof of Proposition~\ref{prop:S_norm_Lorentz}, see the key Observation~1 and Observation~2 there. Since the argument in Proposition~\ref{prop:S_norm_Lorentz} is self-contained, we omit the details here. To facilitate the notation in the following, we introduce the $\tilde{S}^1$ norm
 \[
  \|u\|_{\tilde{S}^1}^2 := \sum_{k \in \Z} \| P_k \nabla_{t,x} u \|_{\tilde{S}_k}^2.
 \]
 Its dyadic subspaces $\tilde{S}_k$ are given by
 \[
  \|u\|_{\tilde{S}_k}^2 := \|u\|_{S_k^{Str}}^2 + \|u\|_{X_\infty^{0, \frac{1}{2}}}^2,
 \]
 where we recall that
 \[
  S_k^{Str} = \bigcap_{\frac{1}{q} + \frac{3/2}{r} \leq \frac{3}{4}} 2^{(\frac{1}{q} + \frac{4}{r} - 2)k} L^q_t L^r_x.
 \]

 \medskip

 We begin by deriving uniform $\tilde{S}^1$ norm bounds on the evolutions $A_n$ on $[0,T_0] \times \R^4$. To this end we define for $i, j = 1, \ldots, 4$ and $l \in \N$, the curvature tensors
 \[
  F_{n, ij} = \partial_i A_{n, j} - \partial_j A_{n, i}
 \]
 and 
 \[
  \tilde{F}^{(l)}_{n, ij} = \partial_i \tilde{A}^{(l)}_{n, j} - \partial_j \tilde{A}^{(l)}_{n, i}.
 \]
 From Lemma \ref{lem:A_tilde_equals_A_minus_gamma} and Lemma \ref{lem:A_tilde_equals_A_minus_gamma_outside_big_ball} we conclude that
 \[
  F_{n, ij} = \sum_{l=1}^\infty \chi_l F_{n, ij} = \sum_{l=1}^\infty \chi_l \tilde{F}^{(l)}_{n, ij} \, \, \text{ on } [0,T_0] \times \R^4.
 \]
 Using the Coulomb gauge, we find for $j = 1, \ldots, 4$ that
 \[
  A_{n,j} = \Delta^{-1} \partial^i F_{n, ij} = \sum_{l=1}^\infty \Delta^{-1} \partial^i \bigl( \chi_l \tilde{F}_{n, ij}^{(l)} \bigr) \, \, \text{ on } [0,T_0] \times \R^4.
 \]
 In order to infer $\tilde{S}^1$ norm bounds on $A_{n,j}$ from the finite $S^1$ norm bounds of the globally defined evolutions $\tilde{A}_n^{(l)}$, we invoke the following almost orthogonality estimate. We defer its proof to the end of this subsection.
 \begin{lem} \label{lem:almost_orthogonality_A}
  There exists a constant $C(A, \phi) > 0$ so that uniformly for all $n$, we have for $j = 1, \ldots, 4$ that
  \begin{equation} \label{equ:almost_orthogonality_A_lemma_estimate}
   \Bigl\| \sum_{l=1}^\infty \Delta^{-1} \partial^i \bigl( \chi_l \tilde{F}_{n,ij}^{(l)} \bigr) \Bigr\|_{\tilde{S}^1([0,T_0]\times\R^4)} \leq C(A,\phi) \Biggl( \sum_{l=1}^\infty \, \bigl\| \Delta^{-1} \partial^i \bigl( \chi_l \tilde{F}_{n,ij}^{(l)} \bigr) \bigr\|_{\tilde{S}^1(\R\times\R^4)}^2 \Biggr)^{1/2}.
  \end{equation}
  The constant $C(A, \phi) > 0$ depends only on the size of $T_0 > 0$, which is determined by the energy class data $(A, \phi)[0]$.
 \end{lem}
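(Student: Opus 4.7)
\medskip

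\textbf{Proof plan for Lemma \ref{lem:almost_orthogonality_A}.} The $\tilde{S}^1$ norm decomposes dyadically as $\|u\|_{\tilde{S}^1}^2 = \sum_k \|\nabla_{t,x} P_k u\|_{\tilde{S}_k}^2$ with $\tilde{S}_k \hookrightarrow S_k^{Str} \cap X_\infty^{0, \frac{1}{2}}$, so writing $u^{(l)} := \Delta^{-1}\partial^i(\chi_l \tilde{F}^{(l)}_{n,ij})$ and using the $\ell^2$ sum over dyadic $k$ on both sides, it suffices to establish the single-frequency bound
\[
 \Big\|\nabla_{t,x} P_k \sum_l u^{(l)}\Big\|_{\tilde{S}_k(\R\times\R^4)} \lesssim C(A,\phi) \Big( \sum_l \|\nabla_{t,x} P_k u^{(l)}\|_{\tilde{S}_k(\R\times\R^4)}^2 \Big)^{1/2}
\]
for each $k \in \Z$. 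The relevant geometric features are (i) the supports $\{\operatorname{supp}(\chi_l)\}$ form a uniformly finitely overlapping covering in space-time, each living in a ball of radius $\sim r_l$ around $x_l$; and (ii) only finitely many logarithmically separated scales $r_l \lesssim T_0$ contribute to those $l$ whose support meets $[-2T_0,2T_0]\times\R^4$. The latter introduces a loss at worst logarithmic in $T_0$ that can be absorbed into $C(A,\phi)$.

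The analytic input is the kernel bound $|P_k \Delta^{-1}\partial^i|(x-y) \lesssim 2^{3k}(1+2^k|x-y|)^{-N}$ for any large $N$, which says that $P_k u^{(l)}$ is essentially concentrated on the enlarged ball $B(x_l, r_l + 2^{-k})$ with rapidly decaying tails. First I would handle the $X_\infty^{0,\frac{1}{2}}$ component of $\tilde{S}_k$: since $Q_j$ commutes with $P_k$ and with spatial cutoffs, an almost orthogonality argument based on Plancherel and Schur's test for the off-diagonal tails gives
\[
 \Big\| Q_j P_k \sum_l v^{(l)} \Big\|_{L^2_{t,x}}^2 \lesssim \sum_l \|Q_j P_k v^{(l)}\|_{L^2_{t,x}}^2
\]
for $v^{(l)} := \chi_l \tilde{F}^{(l)}_{n,ij}$, and the $2^{-k}$ weights from $\Delta^{-1}\partial^i$ pass through. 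The same argument handles the Strichartz norms with $L^2_x$-in-space.

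For Strichartz components $L^q_t L^r_x$ with $r > 2$, I would split the sum over $l$ according to the regime $2^{-k} \lesssim r_l$ versus $2^{-k} \gtrsim r_l$. In the first regime the enlarged supports $B(x_l, r_l + 2^{-k})$ have uniformly bounded overlap, so using a Hölder step together with an interpolation between the $r = 2$ case and the crude pointwise Bernstein/kernel bound for $r = \infty$ (which is controlled via $L^2_x$-based norms by Bernstein) gives an $\ell^2$-almost-orthogonal bound at the cost of a constant depending only on the overlap count. In the second regime only finitely many scales $r_l$ contribute per fixed $k$ and per logarithmic window, so we absorb these into $C(A,\phi)$ after applying Bernstein to trade $L^r_x$ for $L^2_x$ using the frequency gain $2^{-k}$ from $\Delta^{-1}$.

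The main obstacle is coordinating the three non-commuting operators --- spatial cutoffs $\chi_l$, the frequency projection $P_k$, and (for the modulation part) the time Fourier cutoff $Q_j$ --- while preserving the $\ell^2$ summability and avoiding losses that depend on the total number of cutoffs. Since the number of $\chi_l$ meeting the time slab $[-2T_0, 2T_0] \times \R^4$ is infinite, one cannot afford an $\ell^1$ bound; the strategy is to note that only the \emph{scale} $r_l$ ranges over boundedly many values within the fixed slab (by construction of the dyadic covering of $\R^4$ outside $B(0,2R)$), while for fixed scale the $\chi_l$ are uniformly finitely overlapping as a consequence of the standard Littlewood-Paley/Schur framework. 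The final constant $C(A,\phi)$ depends on $T_0$ and hence on the energy class datum $(A,\phi)[0]$ through the number of scales appearing and through the overlap count of the covering, both determined at the outset of Subsection~\ref{subsec:proof_of_joint_time_interval_prop}.
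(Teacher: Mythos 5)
There is a genuine gap, and it sits exactly in the regime where the real work of this lemma lies. Your reduction to a fixed-frequency estimate with $P_k$ on \emph{both} sides, $\bigl\|\sum_l P_k u^{(l)}\bigr\|_{\tilde{S}_k} \lesssim \bigl(\sum_l \|P_k u^{(l)}\|_{\tilde{S}_k}^2\bigr)^{1/2}$, is not available when the cutoff scale is much smaller than the wavelength, $r_l \ll 2^{-k}$: the pieces $P_k u^{(l)}$ are then smeared over a common ball of radius $\sim 2^{-k}$ and can add coherently, so the inequality loses the square root of the number of cutoffs sitting inside one wavelength, and that number grows without bound as $k \to -\infty$ (it is not a data-dependent constant). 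For the same reason the claimed fixed-$(j,k)$ almost orthogonality $\|Q_jP_k\sum_l v^{(l)}\|_{L^2}^2 \lesssim \sum_l\|Q_jP_kv^{(l)}\|_{L^2}^2$ ``by Plancherel and Schur's test'' is false in this regime; note also that $Q_j$ is a space-time Fourier multiplier and does \emph{not} commute with the cutoffs $\chi_l$. Your scale bookkeeping compounds this: all scales satisfy $r_l \ge r_0 \gg T_0$ and infinitely many scales occur, so the claim that only finitely many scales $r_l \lesssim T_0$ meet the slab is vacuous; and in the regime $r_l \lesssim 2^{-k}$ the number of contributing scales is $\sim \log(2^{-k}/r_0)$, which depends on $k$ and cannot be absorbed into $C(A,\phi)$, since uniformity in $k$ is needed before square-summing. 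Finally, the ``frequency gain $2^{-k}$ from $\Delta^{-1}$'' is not a relative gain: $\Delta^{-1}\partial^i$ sits inside $u^{(l)}$ on both sides of \eqref{equ:almost_orthogonality_A_lemma_estimate}.

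What is actually needed in the hard regime $r_l \lesssim 2^{-k}$ — and what the paper does in the proof of the companion Lemma~\ref{lem:almost_orthogonality_phi}, to which it refers for the present lemma — is to give up frequency matching on the right-hand side: one bounds $\|P_k\nabla_{t,x}(\chi_l \tilde{F}^{(l)}_{n,ij})\|$ in terms of the \emph{full} norms of the pieces, extracting a quantitative gain from the smallness of $\operatorname{supp}\chi_l$ relative to the wavelength. Concretely, Bernstein from $L^1_x$ at frequency $2^k$ plus H\"older on the support (volume $\sim r_l^4$, i.e. $\|\tilde\chi_l\|_{L^\infty_t L^2_x}\sim r_l^2$) produces the factor $2^{2k}r_l^2 = 2^{2(k+l)}$, and H\"older in time over $[-2T_0,2T_0]$ produces the $T_0$-dependence of the constant; these weights are summable over $k+l\le 0$, and Cauchy--Schwarz/Young in the $(k,l)$-sum then yields the $\ell^2_l$ right-hand side. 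For the $X^{0,\frac12}_\infty$ component one additionally splits modulations: small modulations are disposed of via $L^\infty_t L^2_x$ and H\"older in time, while for large modulation and small frequency one uses that $\chi_l$ lives at space-time frequency $\sim 2^{-l}$, so the interaction is effectively high-high and again produces the $2^{2(k+l)}$ gain. Your treatment of the complementary regime $2^{-k}\lesssim r_l$ (approximate preservation of supports by $P_k$ up to rapidly decaying tails, plus finite overlap) is in line with the paper, but without the mechanism above the proof does not close.
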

 Hence, by \eqref{equ:almost_orthogonality_A_lemma_estimate} we obtain for $j = 1, \ldots, 4$ that
 \begin{equation} \label{equ:almost_orthogonality_A_estimate}
  \begin{aligned}
   \| A_{n,j} \|_{\tilde{S}^1([0,T_0]\times\R^4)} &= \Bigl\| \sum_{l=1}^\infty \Delta^{-1} \partial^i \bigl( \chi_l \tilde{F}_{n,ij}^{(l)} \bigr) \Bigr\|_{\tilde{S}^1([0,T_0]\times\R^4)} \\
   &\leq C(A,\phi) \Biggl( \sum_{l=1}^\infty \, \bigl\| \Delta^{-1} \partial^i \bigl( \chi_l \tilde{F}_{n,ij}^{(l)} \bigr) \bigr\|_{\tilde{S}^1(\R\times\R^4)}^2 \Biggr)^{1/2} \\
   &\lesssim C(A,\phi) \Biggl( \sum_{l=1}^\infty \, \bigl\| \Delta^{-1} \nabla_x \bigl( \chi_l \nabla_x \tilde{A}_n^{(l)} \bigr) \bigr\|_{\tilde{S}^1(\R\times\R^4)}^2 \Biggr)^{1/2}. 
  \end{aligned}
 \end{equation}
 Next we will invoke the following multiplier bound for the $\tilde{S}^1$ norm that will be proven at the end of this subsection.
 \begin{lem} \label{lem:S_1_norm_bounds_for_chi_multiplier_on_A} 
  Let $\chi \in C^{\infty}(\R\times\R^4)$ satisfy 
  \[
   \max_{k = 0, 1, \ldots, 4} \|\nabla_{t,x}^k \chi\|_{L^q_t L^r_x(\R\times\R^4)} \leq D \, \text{ for all } 1 \leq q, r \leq \infty
  \]
  for some $D > 0$. Then there exists a constant $C > 0$ independent of $\chi$ such that for all $\psi \in \tilde{S}^1(\R \times \R^4)$, it holds that
  \begin{equation}
   \bigl\| \Delta^{-1} \nabla_x \bigl( \chi \nabla_x \psi \bigr) \bigr\|_{\tilde{S}^1(\R\times\R^4)} \leq C D \|\psi\|_{\tilde{S}^1(\R\times\R^4)}.
  \end{equation}
 \end{lem}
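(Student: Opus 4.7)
The plan is to exploit an algebraic identity to re-express the operator as a paraproduct of order zero plus lower-order remainders, and then apply standard multiplier techniques on Littlewood--Paley blocks. Using $\nabla_x\cdot(\chi\nabla_x\psi)=\nabla_x\chi\cdot\nabla_x\psi+\chi\Delta\psi$ together with $\Delta(\chi\psi)=(\Delta\chi)\psi+2\nabla_x\chi\cdot\nabla_x\psi+\chi\Delta\psi$, we obtain
\[
\Delta^{-1}\nabla_x\cdot(\chi\nabla_x\psi)=\chi\psi-\Delta^{-1}\bigl((\Delta\chi)\psi\bigr)-\Delta^{-1}\bigl(\nabla_x\chi\cdot\nabla_x\psi\bigr).
\]
In this form the principal contribution is the multiplication $\chi\psi$, while the remainders enjoy additional smoothing from $\Delta^{-1}$.

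First, I would establish the $\tilde{S}^1$-boundedness of the multiplication operator $\psi\mapsto\chi\psi$. Writing $\nabla_{t,x}(\chi\psi)=(\nabla_{t,x}\chi)\psi+\chi(\nabla_{t,x}\psi)$, I would Littlewood--Paley decompose both factors and split into the standard low-high, high-high, and high-low frequency interactions. For the Strichartz component of $\tilde{S}_k$, apply H\"older in space-time together with Bernstein on the dyadic pieces of $\chi$; the assumption $\max_{j\leq 4}\|\nabla^j_{t,x}\chi\|_{L^q_tL^r_x}\leq D$ (for arbitrary $q,r$) supplies more than enough derivatives on $\chi$ to absorb any loss coming from the high-frequency tails. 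For the $X^{0,\frac{1}{2}}_\infty$ component, further localize in modulation $2^j$: modulation inherited from $\psi$ is controlled by $\|\psi\|_{X^{0,\frac{1}{2}}_\infty}$, while modulation coming from $\chi$ is handled via $L^2_t$ estimates on $Q_j\chi$, which follow from Bernstein in time combined with the smoothness hypothesis.

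Second, I would handle the remainder terms $\Delta^{-1}((\Delta\chi)\psi)$ and $\Delta^{-1}(\nabla_x\chi\cdot\nabla_x\psi)$ by an analogous paraproduct decomposition. Since $\Delta\chi$ and $\nabla_x\chi$ retain control of their own derivatives in $L^q_tL^r_x$, the same machinery applies. The extra $|\xi|^{-2}$ factor of $\Delta^{-1}$ is harmless at high output frequencies (where it yields smallness), while at low output frequencies it is absorbed by Bernstein-type bounds such as $\|P_{k'}\Delta\chi\|_{L^\infty_x}\lesssim 2^{2k'}D$ and $\|P_{k'}\nabla_x\chi\|_{L^\infty_x}\lesssim 2^{k'}D$, which tame the low-frequency singularity of $\Delta^{-1}$.

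The main obstacle I expect is the careful treatment of the $X^{0,\frac{1}{2}}_\infty$ component in the low-high paraproduct regime, where the output modulation may exceed the spatial output frequency. There, modulation must be traded between $\chi$ and $\psi$ via a commutator analysis, and the full smoothness assumption on derivatives of $\chi$ up to order four is exactly what is needed to sum over dyadic modulation levels. A secondary technical point is that the time derivative in $\nabla_{t,x}(\chi\psi)$ generates terms such as $(\partial_t\chi)\psi$ carrying no spatial derivative on $\psi$; these are controlled by dyadically splitting $\psi$ and invoking Bernstein on the low frequencies, with the summation closing precisely thanks to the multiple-derivative control of $\chi$.
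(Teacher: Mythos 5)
Your proposal is essentially correct, but it takes a different route from the paper. The paper never writes out this proof: it proves in detail the companion multiplication bound (Lemma~\ref{lem:S_1_norm_bounds_for_chi_multiplier_on_phi}, i.e. $\|\chi\psi\|_{\tilde{S}^1}\lesssim D\|\psi\|_{\tilde{S}^1}$) by a direct Littlewood--Paley/modulation analysis of each component of $\tilde{S}_k$, and declares the present lemma ``similar'', meaning the same dyadic machinery run directly on $\Delta^{-1}\nabla_x(\chi\nabla_x\psi)$. You instead reduce algebraically, via $\partial^i(\chi\partial_i\psi)=\Delta(\chi\psi)-(\Delta\chi)\psi-\nabla_x\chi\cdot\nabla_x\psi$, to exactly that multiplication bound plus two $\Delta^{-1}$-smoothed remainders. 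This buys you a clean reuse of the proven multiplication lemma (your hypothesis, derivatives of $\chi$ up to order four in all mixed $L^q_tL^r_x$, implies its hypothesis), at the cost of having to track the low-frequency singularity of $\Delta^{-1}$ on the remainders; this is the thinnest part of your sketch, since for the $\partial_t$ component of the $\tilde{S}^1$ norm the operator $\partial_t\Delta^{-1}P_k$ costs $2^{-2k}$, and at low output frequency the needed gain comes not only from the reverse-Bernstein bounds you quote but also from output Bernstein combined with the low-exponent $L^r_x$ control of $\chi$ --- all available from the hypothesis, so the argument closes.

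One caveat of scope: the lemma is applied schematically to $\Delta^{-1}\partial^i(\chi_l\tilde{F}^{(l)}_{n,ij})$ with $\tilde{F}_{ij}=\partial_i\tilde{A}_j-\partial_j\tilde{A}_i$, so besides the contracted divergence form you must also treat $\Delta^{-1}\partial^i(\chi\,\partial_j\psi_i)$ with uncontracted indices. The same Leibniz identity then yields the principal term $\Delta^{-1}\partial_i\partial_j(\chi\psi)$, a double Riesz transform of $\chi\psi$, which is harmless because $\Delta^{-1}\partial_i\partial_jP_k$ has a kernel of uniformly bounded $L^1_x$ mass and hence acts boundedly on every block of $\tilde{S}^1$ (alternatively, in the application the Coulomb gauge $\partial^i\tilde{A}_i=0$ turns this term directly into a remainder). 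State this explicitly; as written your identity only covers the contracted case.
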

 By scaling invariance of the $\tilde{S}^1$ norm and the scaling invariant setup of the partition of unity $\{ \chi_l \}_{l\in\N}$, we are in a position to apply Lemma \ref{lem:S_1_norm_bounds_for_chi_multiplier_on_A} uniformly for all multipliers $\chi_l$ to estimate the right-hand side of \eqref{equ:almost_orthogonality_A_estimate} by
 \[
  C(A,\phi) \Biggl( \sum_{l=1}^\infty \, \bigl\| \tilde{A}_n^{(l)} \bigr\|_{\tilde{S}^1(\R\times\R^4)}^2 \Biggr)^{1/2}.
 \]
 By the small energy global well-posedness result \cite{KST}, this is in turn bounded by
 \begin{equation} \label{equ:sum_over_initial_data_A_joint_interval_prop}
  C(A,\phi) \Biggl( \sum_{l=1}^\infty \|\nabla_{t,x} \tilde{\phi}_n^{(l)}(0)\|_{L^2_x(\R^4)}^2 + \|\nabla_{t,x} \tilde{A}^{(l)}_n(0)\|_{L^2_x(\R^4)}^2 \Biggr)^{1/2}.
 \end{equation}
 It remains to square sum in $l$ over the $\dot{H}^1_x \times L^2_x$ norms of the initial data $(\tilde{\phi}_n^{(l)}, \tilde{A}_n^{(l)})[0]$, which we defer to the end of the proof of Proposition~\ref{prop:joint_time_interval}.
 
 \medskip

 To deduce uniform $\tilde{S}^1$ norm bounds on the evolutions $\phi_n$ on $[0,T_0]\times\R^4$, we use Lemma \ref{lem:A_tilde_equals_A_minus_gamma} and Lemma \ref{lem:A_tilde_equals_A_minus_gamma_outside_big_ball} to write
 \begin{equation} \label{equ:rewrite_phi_n_for_almost_orthogonality}
  \phi_n = \sum_{l=1}^\infty \chi_l \phi_n = \sum_{l=1}^\infty \chi_l e^{-i \gamma_n^{(l)}} \tilde{\phi}_n^{(l)} \, \, \text{ on } [0,T_0]\times\R^4.
 \end{equation}
 Next, we apply the following almost orthogonality estimate whose proof we defer to the end of this subsection.
 \begin{lem} \label{lem:almost_orthogonality_phi}
  There exists a constant $C(A, \phi) > 0$ so that uniformly for all $n$,
  \begin{equation} \label{equ:almost_orthogonality_phi_lemma_estimate}
   \Biggl\| \sum_{l=1}^\infty \chi_l e^{-i\gamma_n^{(l)}} \tilde{\phi}_n^{(l)} \Biggr\|_{\tilde{S}^1([0,T_0]\times\R^4)} \leq C(A,\phi) \Biggl( \sum_{l=1}^\infty \, \bigl\| \chi_l e^{-i\gamma_n^{(l)}} \tilde{\phi}_n^{(l)} \bigr\|_{\tilde{S}^1(\R\times\R^4)}^2 \Biggr)^{1/2}.
  \end{equation}
  The constant $C(A, \phi) > 0$ depends only on the size of $T_0 > 0$, which is determined by the energy class data $(A, \phi)[0]$.
 \end{lem}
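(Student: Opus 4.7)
The plan is to reduce to the frequency-localized version
\begin{equation*}
\Bigl\| P_k \nabla_{t,x} \sum_l f_l \Bigr\|_{\tilde{S}_k}^2 \lesssim \sum_l \bigl\| P_k \nabla_{t,x} f_l \bigr\|_{\tilde{S}_k}^2, \qquad f_l := \chi_l e^{-i\gamma_n^{(l)}} \tilde{\phi}_n^{(l)},
\end{equation*}
and then to conclude by square-summing in $k$ as in the definition of $\tilde{S}^1$. The two key ingredients are the uniformly finite overlap of $\{\operatorname{supp}(\chi_l(t,\cdot))\}_l$, which provides spatial almost disjointness of the summands, and the fact that the Strichartz exponents $(q,r)$ in the definition of $S_k^{Str}$ all satisfy $q,r \geq 2$, so that Minkowski's inequality upgrades spatial finite overlap to an $\ell^2_l$ almost orthogonality.

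First I would Littlewood-Paley decompose each of the three factors in $f_l$. In the dominant low-high regime, where $\tilde{\phi}_n^{(l)}$ carries the output frequency $2^k$ and the phase piece $\chi_l e^{-i\gamma_n^{(l)}}$ sits at much lower frequencies, one has
\begin{equation*}
P_k \nabla_{t,x}\bigl(\chi_l e^{-i\gamma_n^{(l)}} \tilde{\phi}_n^{(l)}\bigr) \;\approx\; \chi_l e^{-i\gamma_n^{(l)}}\, P_{k+O(1)} \nabla_{t,x} \tilde{\phi}_n^{(l)}.
\end{equation*}
Since $|\chi_l e^{-i\gamma_n^{(l)}}|\leq |\chi_l|$, the finite-overlap property and $q,r \geq 2$ yield directly the desired $\ell^2_l$ bound for the Strichartz component. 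The high-low and high-high frequency regimes are handled by Bernstein's inequality together with rapid-decay estimates for the Fourier tails of $\chi_l e^{-i\gamma_n^{(l)}}$, obtained from Moser-type bounds on the derivatives of $\gamma_n^{(l)}$. The $X^{0,1/2}_\infty$ component is treated along the lines of Lemma~\ref{lem:time_cutoff_compatible_with_S_norm}, by splitting the modulation projection $Q_j$ according to whether the modulation of the phase factor or of $\tilde{\phi}_n^{(l)}$ is larger. Here one uses that $\chi_l$ is supported in a time interval of length $\lesssim T_0$ and that, by construction, $\partial_t \gamma_n^{(l)}=A_{n,0}-\tilde{A}^{(l)}_{n,0}$ satisfies energy-controlled bounds obtained from the elliptic equations defining $A_{n,0}$ and $\tilde{A}^{(l)}_{n,0}$.

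The main obstacle will be to quantify, uniformly in $n$ and $l$, the multiplier bound for $\chi_l e^{-i\gamma_n^{(l)}}$ on the dyadic blocks $\tilde{S}_k$. Pointwise the phase is bounded, but for the Fourier tails one needs sufficient integrability of $\nabla_{t,x}\gamma_n^{(l)}$, which is expressed through Riesz transforms of $\chi_{\{|x-x_l|\lesssim r_l\}}(\cdot)A_n$ together with the time integral of $A_{n,0} - \tilde{A}^{(l)}_{n,0}$; these in turn are controlled in the relevant $L^q_t L^r_x$ norms by the energy class datum $(A,\phi)[0]$, which is the source of the constant $C(A,\phi)$. This is entirely analogous to the multiplier estimate of Lemma~\ref{lem:S_1_norm_bounds_for_chi_multiplier_on_A}, with the additional phase factor $e^{-i\gamma_n^{(l)}}$ handled by the Moser argument above. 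Combining this with the frequency-localized almost orthogonality and then square-summing in $k$ closes the argument. The same strategy, upon dropping the phase and replacing $\chi_l e^{-i\gamma_n^{(l)}} \tilde{\phi}_n^{(l)}$ by $\Delta^{-1} \partial^i(\chi_l \tilde{F}^{(l)}_{n,ij})$, handles Lemma~\ref{lem:almost_orthogonality_A}.
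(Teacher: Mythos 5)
Your plan rests on the frequency-localized almost orthogonality
\begin{equation*}
\Bigl\| P_k \nabla_{t,x} \sum_l f_l \Bigr\|_{\tilde{S}_k}^2 \lesssim \sum_l \bigl\| P_k \nabla_{t,x} f_l \bigr\|_{\tilde{S}_k}^2, \qquad f_l = \chi_l e^{-i\gamma_n^{(l)}} \tilde{\phi}_n^{(l)},
\end{equation*}
justified by the finite spatial overlap of the $\chi_l$ together with Minkowski for $q,r\geq 2$. That mechanism only works when the projection $P_k$ essentially preserves the spatial localization of the cutoffs, i.e.\ when $2^{-k}\lesssim r_l$. In the complementary regime $k\leq 0$, $1\leq l\leq -k$ (recall the cutoffs have spatial scale $r_l\sim 2^l$, so many of them fit inside one wavelength $2^{-k}$), applying $P_k$ smears each $f_l$ over a region of size $2^{-k}\gg 2^l$ and destroys the disjointness; there is then no orthogonality left, and the triangle inequality alone loses a factor of the number of overlapping terms. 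Your proposal has no mechanism for this regime, and it is exactly here that the actual proof has to do work: the paper splits the $l$-sum at $l=-k$, and for $l\leq -k$ it does not attempt any $P_k$-localized orthogonality at all. Instead it uses H\"older in time on the interval of length $\sim T_0$ (this is the sole source of the $T_0$-dependence of $C(A,\phi)$, which your argument never produces), Bernstein's inequality $P_k\colon L^1_x\to 2^{2k}L^2_x$, the fact that the fattened cutoff $\tilde\chi_l$ has $\|\tilde\chi_l\|_{L^\infty_t L^2_x}\lesssim 2^{2l}$ because the annuli radii grow dyadically, and then Young's inequality in $l$ against the summable kernel $2^{2(k+l)}$, $l\leq -k$. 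The same issue recurs in the $X^{0,\frac12}_\infty$ component for $j>0$, $k\leq 0$, $1\leq l<-k$, where the paper exploits that $\chi_l$ lives at spatial frequency $\sim 2^{-l}$ (a high-high interaction) plus Bernstein and Young; your modulation splitting \`a la Lemma~\ref{lem:time_cutoff_compatible_with_S_norm} does not address the $l$-summation there.

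A secondary point: importing the Littlewood--Paley trichotomy on the factor $\chi_l e^{-i\gamma_n^{(l)}}$ and Moser-type bounds on $\gamma_n^{(l)}$ into this lemma is unnecessary, since both sides of \eqref{equ:almost_orthogonality_phi_lemma_estimate} carry the full product $f_l$; the multiplier estimates (with the harmonic-function interior bounds for $\gamma_n^{(l)}$) are applied afterwards, in the proof of Proposition~\ref{prop:joint_time_interval}, and in any case they do not repair the missing low-frequency regime described above. To fix your argument you must abandon the uniform $P_k$-localized orthogonality claim, split according to $l\lessgtr -k$, and in the range $l\leq -k$ replace orthogonality by the time-H\"older/Bernstein/Young argument that uses both the bounded time support and the dyadic growth of the spatial scales $r_l$.
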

 Thus, by \eqref{equ:rewrite_phi_n_for_almost_orthogonality} and \eqref{equ:almost_orthogonality_phi_lemma_estimate} we find that
 \begin{equation} \label{equ:almost_orthogonality_phi_estimate}
  \|\phi_n\|_{\tilde{S}^1([0,T_0]\times\R^4)} \leq C(A,\phi) \Biggl( \sum_{l=1}^\infty \, \bigl\| \chi_l e^{-i\gamma_n^{(l)}} \tilde{\phi}_n^{(l)} \bigr\|_{\tilde{S}^1(\R\times\R^4)}^2 \Biggr)^{1/2}.
 \end{equation}
 Here it is not immediate how to obtain $\tilde{S}^1$ norm bounds for $\chi_l e^{-i \gamma_n^{(l)}} \tilde{\phi}_n^{(l)}$ from the finite $S^1$ norm bounds of the globally defined $\tilde{\phi}_n^{(l)}$, because $\gamma_n^{(l)}$ implicitly depends on the unknown quantity $\phi_n$. Indeed, we defined in \eqref{equ:definition_gamma_n_t_greater_0} for $t > 0$ that
 \[
  \gamma_n^{(l)}(t,\cdot) = \gamma_n^{(l)}(0,\cdot) + \int_0^t \bigl( A_{n,0}(s,\cdot) - \tilde{A}_{n,0}^{(l)}(s,\cdot) \bigr) \, ds
 \]
 and we have $\Delta A_{n,0} = - \Im \bigl( \phi_n \overline{D_t \phi_n} \bigr)$. We will overcome this difficulty by exploiting that $\partial_t \gamma_n^{(l)}$ is a harmonic function on every fixed-time slice of $K(x_l, \frac{5}{4} r_l)$ in view of Lemma~\ref{lem:A_tilde_equals_A_minus_gamma}, respectively Lemma~\ref{lem:A_tilde_equals_A_minus_gamma_outside_big_ball}, and its definition
 \[
  \partial_t \gamma_n^{(l)}(t,\cdot) = A_{n, 0}(t,\cdot) - \tilde{A}_{n,0}(t,\cdot) \text{ for } t \geq 0.
 \]
 It therefore enjoys the interior derivative estimates for harmonic functions on every fixed-time slice of $K(x_l, \frac{5}{4} r_l)$. The partition of unity \eqref{equ:partition_of_unity_in_joint_interval_prop} was chosen in such a way that the cutoff functions $\chi_l$ satisfy for all $0 \leq t \leq 2 T_0$ and $x \in \supp(\chi_l(t, \cdot))$ that $B(x, \frac{r_l}{8}) \subset K_{x_l, \frac{5}{4} r_l} \cap \{t\} \times \R^4$. Thus, for all integers $k \geq 0$, we obtain from the interior derivative estimates for harmonic functions that
 \begin{align*}
  \bigl| \chi_l \nabla_x^k \partial_t \gamma_n^{(l)}(t,x) \bigr| &\lesssim \frac{C(k)}{r_l^{4+k}} \bigl\| \bigl(A_{n,0} - \tilde{A}_{n,0}^{(l)} \bigr)(t,\cdot) \bigr\|_{L^1_x(B(x,\frac{r_l}{8}))} \\
  &\lesssim \frac{C(k)}{r_l^{1+k}} \bigl\| \bigl(A_{n,0}-\tilde{A}_{n,0}^{(l)}\bigr)(t, \cdot) \bigr\|_{L^4_x(\R^4)} \\
  &\lesssim \frac{C(k)}{r_l^{1+k}} E(A, \phi)^{1/2}.
 \end{align*}
 We may therefore conclude that
 \begin{equation} \label{equ:gradient_estimate_partial_t_gamma_n}
  r_l^{1+k} \bigl\| \chi_l \nabla_x^k \partial_t \gamma_n^{(l)} \bigr\|_{L^\infty_t L^\infty_x(\R\times\R^4)} \leq C(k,A,\phi).
 \end{equation}
 Similarly, we observe that by Lemma 5.6, 
 \[
  \partial_t^2 \gamma_n^{(l)}(t,\cdot) = \partial_t A_{n,0}(t,\cdot) - \partial_t \tilde{A}_{n,0}(t,\cdot)
 \]
 is harmonic on every fixed-time slice of $K(x_l, \frac{5}{4} r_0)$. The interior derivative estimates for harmonic functions then yield for all integers $k \geq 0$ that
 \[
  \bigl| \chi_l \nabla_x^k \partial_t^2 \gamma_n^{(l)}(t,x) \bigr| \lesssim \frac{C(k)}{r_l^{4+k}} \bigl\| \bigl(\partial_t A_{n,0} - \partial_t \tilde{A}_{n,0} \bigr)(t,\cdot) \bigr\|_{L^1_x(B(x, \frac{r_l}{8}))} \lesssim \frac{C(k)}{r_l^{2+k}} \bigl\| \bigl( \partial_t A_{n,0} - \partial_t \tilde{A}_{n,0})(t,\cdot) \Bigr\|_{L^2_x(\R^4)}.
 \]
 Since we have
 \begin{align*}
  \bigl\| \partial_t A_n(t,\cdot) \bigr\|_{L^2_x(\R^4)} \lesssim \bigl\| \nabla_x \partial_t A_n(t,\cdot) \bigr\|_{L^{\frac{4}{3}}_x} \lesssim \sum_{i=1}^4 \bigl\| \Im \bigl( \phi_n \overline{D_i \phi_n} \bigr) \bigr\|_{L^{\frac{4}{3}}_x} &\lesssim \sum_{i=1}^4 \|\phi_n\|_{L^4_x} \| D_i \phi_n \|_{L^2_x} \lesssim E(A, \phi)
 \end{align*}
 and analogously for $\|\partial_t \tilde{A}_n(t,\cdot)\|_{L^2_x(\R^4)}$, it follows that
 \begin{equation} \label{equ:gradient_estimate_partial_t_squared_gamma_n}
  r_l^{2+k} \bigl\| \chi_l \nabla_x^k \partial_t^2 \gamma_n^{(l)} \bigr\|_{L^\infty_t L^\infty_x(\R \times \R^4)} \lesssim C(k,A,\phi).
 \end{equation}
 Next, we note that $\gamma_n^{(l)}(0,\cdot)$ as defined in \eqref{equ:definition_of_small_energy_data_beginning} is harmonic on the ball $B(x_l, \frac{5}{4} r_l)$. As before, the interior derivative estimates for harmonic functions give for all integers $k \geq 0$ that
 \begin{equation} \label{equ:gradient_estimate_gamma_n_time_zero}
  r_l^k \bigl\| \chi_l \nabla_x^k \gamma_n^{(l)}(0,\cdot) \bigr\|_{L^\infty_x(\R^4)} \leq C(k, A, \phi).
 \end{equation}
 We then obtain from
 \[
  \gamma_n^{(l)}(t,x) = \gamma_n^{(l)}(0,x) + \int_0^t \partial_t \gamma_n^{(l)}(s,x) \, ds
 \]
 that 
 \begin{equation} \label{equ:gradient_estimate_gamma_n}
  r_l^k \bigl\| \chi_l \nabla_x^k \gamma_n^{(l)} \bigr\|_{L^\infty_t L^\infty_x(\R\times\R^4)} \lesssim C(k,A,\phi).
 \end{equation}
 From \eqref{equ:gradient_estimate_partial_t_gamma_n} -- \eqref{equ:gradient_estimate_gamma_n} we conclude that for all integers $k \geq 0$ there exists a constant $C(k,A,\phi) > 0$, depending only on $k$ and the energy class data $(A,\phi)[0]$, so that for all sufficiently large $n$ and all $l \in \N$,
 \begin{equation} \label{equ:L_infty_bounds_on_chi_exp_multiplier}
  \max_{m = 0,1,2} r_l^{k+m} \bigl\| \nabla_x^k \partial_t^m \bigl( \chi_l e^{-i \gamma_n^{(l)}} \bigr) \bigr\|_{L^\infty_t L^\infty_x(\R\times\R^4)} \leq C(k,A,\phi).
 \end{equation}
 Similarly to Lemma \ref{lem:S_1_norm_bounds_for_chi_multiplier_on_A}, we also have the following multiplier bound for the $\tilde{S}^1$ norm.
 \begin{lem} \label{lem:S_1_norm_bounds_for_chi_multiplier_on_phi}
  Let $\chi \in C^\infty(\R\times\R^4)$ satisfy 
  \begin{equation} \label{equ:S_1_norm_bounds_for_chi_multiplier_on_phi_assumed_bounds}
   \max_{k=0, \ldots, 3} \max_{m=0,1,2} \bigl\| \nabla_x^k \partial_t^m \chi \bigr\|_{L^q_t L^r_x(\R\times\R^4)} \leq D \, \text{ for all } 1 \leq q,r \leq \infty
  \end{equation}
  for some $D > 0$. Then there exists a constant $C > 0$ independent of $\chi$ such that for all $\psi \in \tilde{S}^1(\R\times\R^4)$, 
  \begin{equation}
   \|\chi \psi \|_{\tilde{S}^1(\R\times\R^4)} \leq C D \|\psi\|_{\tilde{S}^1(\R\times\R^4)}.
  \end{equation}
 \end{lem}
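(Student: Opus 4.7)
The plan is to verify the estimate directly on each Littlewood-Paley block by combining a spatial paraproduct decomposition with a modulation decomposition, bounding the Strichartz component $S_k^{Str}$ and the $X_\infty^{0,\frac{1}{2}}$ component of $\|P_k \nabla_{t,x}(\chi\psi)\|_{\tilde{S}_k}$ separately. Using the product rule $\nabla_{t,x}(\chi\psi) = (\nabla_{t,x}\chi)\psi + \chi\nabla_{t,x}\psi$, the assumptions on $\chi$ are symmetric under replacement of $\chi$ by $\nabla_{t,x}\chi$ (up to losing one derivative, which is allowed since we assume control up to three spatial and two time derivatives), so it suffices to estimate $P_k(\chi \nabla_{t,x}\psi)$. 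Writing $\varphi = \nabla_{t,x}\psi$ and decomposing
\[
 P_k(\chi \varphi) = P_k(\chi_{<k-C} \varphi_{k+O(1)}) + P_k(\chi_{k+O(1)} \varphi_{<k-C}) + P_k\Bigl(\sum_{k' \geq k-C} \chi_{k'} \varphi_{k'+O(1)}\Bigr),
\]
we estimate each piece.

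\textbf{Strichartz component.} For any sharp wave-admissible pair $(q,r)$ with the weight in the definition of $S_k^{Str}$, the low-high piece is bounded by $\|\chi_{<k-C}\|_{L^\infty_t L^\infty_x} \|\varphi_{k+O(1)}\|_{L^q_t L^r_x} \lesssim D \|P_{k+O(1)}\nabla_{t,x}\psi\|_{S_{k+O(1)}^{Str}}$, which square-sums in $k$ to $D\|\psi\|_{\tilde S^1}$. For the high-low piece we use Bernstein and the assumption that $\nabla_x \chi \in L^q_t L^r_x$ with norm $\lesssim D$ to estimate $\|\chi_{k+O(1)}\|_{L^q_t L^r_x} \lesssim 2^{-k} D$, combined with $\|\varphi_{<k-C}\|_{L^\infty_t L^\infty_x} \lesssim \sum_{k''<k-C} 2^{2k''}\|P_{k''}\varphi\|_{L^\infty_t L^2_x}$, giving a rapidly decaying sum that square-sums in $k$. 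The high-high piece uses Cauchy-Schwarz in $k'$ together with the same spatial derivative bounds to square-sum in $k$ to $D\|\psi\|_{\tilde{S}^1}$.

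\textbf{$X_\infty^{0,\frac{1}{2}}$ component.} Here we further decompose in modulation:
\[
 Q_j P_k(\chi \varphi) = Q_j P_k(Q_{<j-C}(\chi)\, Q_{j+O(1)}\varphi) + Q_j P_k(Q_{j+O(1)}(\chi)\, Q_{<j-C}\varphi) + Q_j P_k \sum_{j' \geq j-C} Q_{j'}(\chi)\, Q_{j'+O(1)}\varphi.
\]
The first piece transfers modulation to $\varphi$, so $2^{j/2}\|Q_{j+O(1)}P_k \varphi\|_{L^2_t L^2_x} \lesssim \|P_k \varphi\|_{X_\infty^{0,\frac{1}{2}}}$. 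For the second piece the hypothesis $\|\partial_t^2 \chi\|_{L^\infty_t L^\infty_x} \lesssim D$ yields $\|Q_{j+O(1)}(\chi)\|_{L^2_t L^\infty_x} \lesssim 2^{-2j}D\langle j\rangle$ on any bounded time interval and, combined with $\|Q_{<j-C}P_k \varphi\|_{L^\infty_t L^2_x}$, produces a factor $2^{-j/2}$ times $D$, which is acceptable upon multiplying by $2^{j/2}$ and taking $\sup_j$. The high-high modulation piece is handled by Cauchy-Schwarz in $j'$ exactly as in the proof of Lemma \ref{lem:time_cutoff_compatible_with_S_norm}.

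\textbf{Main obstacle.} The delicate point is the second piece of the $X_\infty^{0,\frac{1}{2}}$ decomposition, where the modulation of $\chi$ is high: one must extract enough decay in $j$ from the two time derivatives of $\chi$ (the reason the hypothesis demands $m \leq 2$). A symmetric argument, using instead $\|\partial_t \chi\|_{L^2_t L^\infty_x} \lesssim D$, also closes the estimate and shows that the exponent $m=2$ is only needed up to endpoints; the spatial derivative budget $k \leq 3$ enters only in the Strichartz/Bernstein step for the high-low piece. Summing all contributions in $k$ using the $\ell^2$-type structure of the $\tilde{S}^1$ norm yields $\|\chi\psi\|_{\tilde{S}^1} \lesssim D\|\psi\|_{\tilde{S}^1}$, which is the desired conclusion.
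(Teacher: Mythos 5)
There are two genuine gaps. First, in the Strichartz component your high-low paraproduct piece is estimated with a lossy H\"older split: you place $\chi_{k+O(1)}$ in $L^q_t L^r_x$, gaining only one factor $2^{-k}$ from $\nabla_x\chi$, while putting $\varphi_{<k-C}$ in $L^\infty_t L^\infty_x$ via Bernstein, which costs $2^{2k''}$ on each low-frequency block of $\psi$; the net effect is a loss of one derivative on $\psi$. Concretely, for the energy pair $(q,r)=(\infty,2)$ and $\psi$ with $\nabla_{t,x}\psi$ concentrated at a single large frequency $2^{k_0}$, your bound at output frequency $k=k_0+O(C)$ is $\sim 2^{k_0} D\,\|P_{k_0}\nabla_{t,x}\psi\|_{L^\infty_t L^2_x}$, which does not square-sum to $D\|\psi\|_{\tilde S^1}$; the claimed "rapidly decaying sum" is only decaying toward $k''\to-\infty$, not in the relevant regime. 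The paper's proof avoids this by putting the high-frequency part of $\chi$ (the piece $P_{>k-C}\chi$, together with \emph{all} of $\nabla_{t,x}\psi$, not just its low frequencies) into $L^q_t L^\infty_x$ (or $L^q_t L^4_x$, using up to two spatial derivatives of $\chi$) via reverse Bernstein, keeping $\nabla_{t,x}\psi$ (or $\psi$ via Sobolev) in $L^\infty_t L^2_x$ and applying Bernstein only to the output; this yields the factor $2^{(\frac{1}{q}-1)k}\le 2^{-k/2}$ for $k>0$ and a separate $2^{2k}$-gain for $k\le 0$, both of which square-sum.

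Second, in the $X_\infty^{0,\frac12}$ component your modulation trichotomy is not exhaustive and your key gain is unjustified. Since $\chi$ depends on space and time, two factors both at modulation $<2^{j-C}$ can still produce output at modulation $\sim 2^j$ and spatial frequency $2^k\ll 2^j$ through a spatial high-high interaction (take both inputs near opposite halves of the cone at spatial frequencies $\gtrsim 2^j$ with $|\xi_1|-|\xi_2|\sim 2^j$, $|\xi_1+\xi_2|\sim 2^k$); this configuration is missing from your decomposition, and your first piece "$Q_{<j-C}(\chi)\,Q_{j+O(1)}\varphi$" tacitly assumes the modulation must transfer to $\varphi$, which only holds if the spatial (hence temporal) frequency of $\chi$ is also $\lesssim 2^j$. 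For the same reason the estimate $\|Q_{j+O(1)}\chi\|_{L^2_t L^\infty_x}\lesssim 2^{-2j}D$ does not follow from bounds on $\partial_t^2\chi$: modulation $\sim 2^j$ forces temporal frequency $\gtrsim 2^j$ only when the spatial frequency is $\lesssim 2^j$, so one cannot trade $Q_j$ for time derivatives without a simultaneous spatial Littlewood--Paley decomposition of $\chi$. This is exactly why the paper's proof splits $\chi$ into $P_{>j-C}\chi$, $P_{\le j-C}Q_{>j-C}\chi$ (where reverse Bernstein in time is legitimate) and $P_{\le j-C}Q_{\le j-C}\chi$ (where the modulation does transfer to $\psi$), followed by a Littlewood--Paley trichotomy in the last case; moreover the "bounded time interval" caveat has no place here since the hypotheses already give global $L^2_t L^\infty_x$ control, and your symmetry remark about replacing $\chi$ by $\partial_t\chi$ would, under your scheme, require a third time derivative of $\chi$ that the hypotheses do not provide. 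Both defects are repairable with the stated assumptions, but the proof as written does not close.
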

 In view of \eqref{equ:L_infty_bounds_on_chi_exp_multiplier}, the scaling invariance of the $\tilde{S}^1$ norm and the scaling invariant setup of the partition of unity $\{\chi_l\}_{l\in\N}$, we can apply Lemma \ref{lem:S_1_norm_bounds_for_chi_multiplier_on_phi} uniformly for all multipliers $\chi_l$ to estimate the right hand side of \eqref{equ:almost_orthogonality_phi_estimate} by
 \[
  C(A,\phi) \Biggl( \sum_{l=1}^\infty \, \bigl\| \tilde{\phi}_n^{(l)} \bigr\|_{\tilde{S}^1(\R\times\R^4)}^2 \Biggr)^{1/2}.
 \]
 By the small energy global well-posedness result \cite{KST}, this is in turn bounded by
 \begin{equation} \label{equ:sum_over_initial_data_phi_joint_interval_prop}
  C(A,\phi) \biggl( \sum_{l=1}^\infty \|\nabla_{t,x} \tilde{\phi}_n^{(l)}(0)\|_{L^2_x}^2 + \|\nabla_{t,x} \tilde{A}^{(l)}_n(0)\|_{L^2_x}^2 \biggr)^{1/2}.
 \end{equation}
 It remains to square sum in $l$ over the $\dot{H}^1_x \times L^2_x$ norms of the initial data $(\tilde{\phi}_n^{(l)}, \tilde{A}_n^{(l)})[0]$ in \eqref{equ:sum_over_initial_data_A_joint_interval_prop} and in \eqref{equ:sum_over_initial_data_phi_joint_interval_prop}. Here we have, for example, from the definition
 \[
  \tilde{\phi}_n^{(l)}(0,\cdot) := e^{i \gamma_n^{(l)}(0,\cdot)} \chi_{\{|x-x_l|\lesssim r_l\}}(\cdot) \phi_n(0,\cdot) 
 \]
 that
 \begin{equation} \label{equ:summing_over_initial_data__joint_interval_prop}
  \begin{split} 
   \sum_{l=1}^\infty \int_{\R^4} |\nabla_x \tilde{\phi}_n^{(l)}(0,x)|^2 \, dx &\lesssim \sum_{l=1}^\infty \int_{\R^4} \bigl( |\nabla_x \gamma_n^{(l)}(0,x)|^2 |\chi_{\{|x-x_l|\lesssim r_l\}}(x)|^2 + |\nabla_x \chi_{\{|x-x_l|\lesssim r_l\}}(x)|^2 \bigr) |\phi_n(0,x)|^2 \, dx \\
   &\quad \quad \quad + \sum_{l=1}^\infty \int_{\R^4} |\chi_{\{|x-x_l| \lesssim r_l\}}(x)|^2 |\nabla_x \phi_n(0,x)|^2 \, dx.
  \end{split}
 \end{equation}
 By the construction of the partition of unity, we have uniformly for all $l \in \N$ and $x \in \R^4$ that
 \[
  |\nabla_x \chi_{\{|x-x_l|\lesssim r_l\}}(x)|^2 \lesssim \frac{C(A,\phi)}{|x|^2}
 \]
 and, using also \eqref{equ:gradient_estimate_gamma_n_time_zero}, that
 \[
  |\nabla_x \gamma_n^{(l)}(0,x)|^2 |\chi_{\{|x-x_l|\lesssim r_l\}}(x)|^2 \lesssim \frac{C(A,\phi)}{|x|^2}.
 \]
 By Hardy's inequality and the uniformly finite overlap of the supports of the cutoffs $\chi_{\{|x-x_l|\lesssim r_l\}}(\cdot)$, we conclude that \eqref{equ:summing_over_initial_data__joint_interval_prop} is bounded by 
 \[
   C(A,\phi) \|\nabla_x \phi_n(0,\cdot)\|_{L^2_x}^2 \lesssim C(A,\phi) E(A,\phi)
 \]
 uniformly for all sufficiently large $n$. Proceeding similarly with the other terms in \eqref{equ:sum_over_initial_data_phi_joint_interval_prop}, we finally obtain that \eqref{equ:sum_over_initial_data_phi_joint_interval_prop} is bounded by $C(A,\phi) E(A,\phi)$ uniformly for all sufficiently large $n$. This finishes the proof of Proposition \ref{prop:joint_time_interval}.
\end{proof}

Next, we turn to the proofs of Lemma~\ref{lem:almost_orthogonality_A} and of Lemma~\ref{lem:almost_orthogonality_phi}. We only give the proof of Lemma~\ref{lem:almost_orthogonality_phi}, the other one being similar.

\begin{proof}[Proof of Lemma \ref{lem:almost_orthogonality_phi}]
 In view of the setup of the partition of unity $\{ \chi_l \}_{l \in \N}$, we may assume in this proof that the spatial support of $\chi_l$ is at scale $\sim 2^l$ for $l \in \N$. Moreover, we recall that $\chi_l(t, \cdot)$ is non-zero only for $t \in [-2 T_0, 2 T_0]$.

 \medskip

 We first consider the $S_k^{Str}$ component of the $\tilde{S}^1$ norm. Here we want to show that
 \[
  \sum_{k \in \Z} \, \biggl\| P_k \sum_{l=1}^\infty \nabla_{t,x} \bigl( \chi_l e^{-i\gamma_n^{(l)}} \tilde{\phi}_n^{(l)} \bigr) \biggr\|_{S_k^{Str}([0,T_0]\times\R^4)}^2 \lesssim \sum_{l=1}^\infty \sum_{k \in \Z} \, \bigl\| P_k \nabla_{t,x} \bigl( \chi_l e^{-i\gamma_n^{(l)}} \tilde{\phi}_n^{(l)} \bigr) \bigr\|_{S_k^{Str}(\R\times\R^4)}^2.
 \]
 Let $(q,r)$ be a wave-admissible pair. Then we have 
 \begin{equation} \label{equ:almost_orthogonality_Strichartz_splitting}
  \begin{aligned}
   &\sum_{k \in \Z} 2^{2(\frac{1}{q} + \frac{4}{r} - 2) k} \biggl\| P_k \sum_{l=1}^\infty \nabla_{t,x} \bigl( \chi_l e^{-i\gamma_n^{(l)}} \tilde{\phi}_n^{(l)} \bigr) \biggr\|_{L^q_t L^r_x([0,T_0]\times\R^4)}^2 \\
   &\lesssim \sum_{k \leq 0} 2^{2(\frac{1}{q} + \frac{4}{r} - 2) k} \biggl\| P_k \sum_{l=1}^{-k} \nabla_{t,x} \bigl( \chi_l e^{-i\gamma_n^{(l)}} \tilde{\phi}_n^{(l)} \bigr) \biggr\|_{L^q_t L^r_x([0,T_0]\times\R^4)}^2 \\
   &\quad + \sum_{k \in \Z} 2^{2(\frac{1}{q} + \frac{4}{r} - 2) k} \biggl\| P_k \sum_{l > -k} \nabla_{t,x} \bigl( \chi_l e^{-i\gamma_n^{(l)}} \tilde{\phi}_n^{(l)} \bigr) \biggr\|_{L^q_t L^r_x([0,T_0]\times\R^4)}^2.
  \end{aligned}
 \end{equation}
 In order to bound the first term on the right-hand side of \eqref{equ:almost_orthogonality_Strichartz_splitting}, we introduce slightly fattened cutoff functions $\tilde{\chi}_l \in C_c^\infty(\R\times\R^4)$ such that $\supp(\chi_l) \subset \supp(\tilde{\chi}_l)$. Then we obtain from H\"older's inequality and Bernstein's estimate that
 \begin{align*}
  &\sum_{k \leq 0} 2^{2(\frac{1}{q} + \frac{4}{r} - 2) k} \biggl\| P_k \sum_{l=1}^{-k} \nabla_{t,x} \bigl( \chi_l e^{-i\gamma_n^{(l)}} \tilde{\phi}_n^{(l)} \bigr) \biggr\|_{L^q_t L^r_x([0,T_0]\times\R^4)}^2 \\
  &\lesssim \sum_{k \leq 0} \, \Biggl( T_0^{\frac{1}{q}} 2^{(\frac{1}{q} + 2) k} \biggl\| \sum_{l=1}^{-k} \tilde{\chi}_l \nabla_{t,x} \bigl( \chi_l e^{-i\gamma_n^{(l)}} \tilde{\phi}_n^{(l)} \bigr) \biggr\|_{L^\infty_t L^1_x([0,T_0]\times\R^4)} \Biggr)^2 \\
  &\lesssim \sum_{k \leq 0} \, \Biggl( \sum_{l=1}^{-k} T_0^{\frac{1}{q}} 2^{ 2k}  \| \tilde{\chi}_l \|_{L^\infty_t L^2_x} \bigl\| \nabla_{t,x} \bigl( \chi_l e^{-i\gamma_n^{(l)}} \tilde{\phi}_n^{(l)} \bigr) \bigr\|_{L^\infty_t L^2_x([0,T_0]\times\R^4)} \Biggr)^2.
 \end{align*}
 Since the spatial support of $\tilde{\chi}_l$ is at scale $2^l$, this is bounded by
 \begin{align*}
  T_0^{\frac{2}{q}} \sum_{k \leq 0} \, \Biggl( \sum_{l=1}^{-k} 2^{2 (k+l)} \bigl\| \nabla_{t,x} \bigl( \chi_l e^{-i\gamma_n^{(l)}} \tilde{\phi}_n^{(l)} \bigr) \bigr\|_{L^\infty_t L^2_x([0,T_0]\times\R^4)} \Biggr)^2.
 \end{align*}
 Finally, using Young's inequality, we arrive at the desired bound
 \begin{align*}
  T_0^{\frac{2}{q}} \sum_{l=1}^\infty \, \bigl\| \nabla_{t,x} \bigl( \chi_l e^{-i\gamma_n^{(l)}} \tilde{\phi}_n^{(l)} \bigr) \bigr\|_{L^\infty_t L^2_x([0,T_0]\times\R^4)}^2 \lesssim T_0^{\frac{2}{q}} \sum_{l=1}^\infty \, \bigl\| \chi_l e^{-i\gamma_n^{(l)}} \tilde{\phi}_n^{(l)} \bigr\|_{\tilde{S}^1(\R\times\R^4)}^2. 
 \end{align*}
 Regarding the second term on the right-hand side of \eqref{equ:almost_orthogonality_Strichartz_splitting}, 
 \begin{equation} \label{equ:almost_orthogonality_Strichartz_large_l}
  \sum_{k \in \Z} 2^{2(\frac{1}{q} + \frac{4}{r} - 2) k} \biggl\| \sum_{l > -k} P_k \nabla_{t,x} \bigl( \chi_l e^{-i\gamma_n^{(l)}} \tilde{\phi}_n^{(l)} \bigr) \biggr\|_{L^q_t L^r_x([0,T_0]\times\R^4)}^2,
 \end{equation}
 we note that the spatial support of the cutoff $\chi_l$ is at scale $2^l$, while the projection $P_k$ lives at spatial scale $2^{-k}$. Thus, for $l > -k$ the projection $P_k$ approximately preserves the spatial localizations of the cutoffs~$\chi_l$, up to exponential tails that can be treated easily. Since the family of cutoffs $\{ \chi_l \}_{l \in \N}$ is uniformly finitely overlapping, we may therefore bound \eqref{equ:almost_orthogonality_Strichartz_large_l} schematically by
 \begin{equation*}
  \sum_{k \in \Z} \sum_{l > -k} 2^{2(\frac{1}{q} + \frac{4}{r} - 2) k} \biggl\| P_k \nabla_{t,x} \bigl( \chi_l e^{-i\gamma_n^{(l)}} \tilde{\phi}_n^{(l)} \bigr) \biggr\|_{L^q_t L^r_x([0,T_0]\times\R^4)}^2 \lesssim \sum_{l=1}^\infty \, \bigl\| \chi_l e^{-i\gamma_n^{(l)}} \tilde{\phi}_n^{(l)} \bigr\|_{\tilde{S}^1(\R\times\R^4)}^2,
 \end{equation*}
 which is of the desired form.
 
 \medskip

 It remains to consider the $X_\infty^{0, \frac{1}{2}}$ component of the $\tilde{S}^1$ norm. Here our goal is to prove that
 \[
  \sum_{k \in \Z} \, \biggl\| P_k \sum_{l=1}^\infty \nabla_{t,x} \bigl( \chi_l e^{-i \gamma_n^{(l)}} \tilde{\phi}_n^{(l)} \bigr) \biggr\|_{X_\infty^{0,\frac{1}{2}}([0,T_0]\times\R^4)}^2 \lesssim \sum_{l=1}^\infty \, \bigl\| \chi_l e^{-i\gamma_n^{(l)}} \tilde{\phi}_n^{(l)} \bigr\|_{\tilde{S}^1(\R\times\R^4)}^2.
 \]
 To this end we distinguish between small and large modulations. For small modulations $j \leq 0$, we may just dispose of the projection $Q_j$ and trivially estimate
 \begin{align*}
  \sum_{k \in \Z} \sup_{j \leq 0} 2^j \biggl\| P_k Q_j \sum_{l=1}^\infty \nabla_{t,x} \bigl( \chi_l e^{-i\gamma_n^{(l)}} \tilde{\phi}_n^{(l)} \bigr) \biggr\|_{L^2_t L^2_x(\R\times\R^4)}^2 \lesssim \sum_{k \in \Z} \, \biggl\| \sum_{l=1}^\infty P_k \nabla_{t,x} \bigl( \chi_l e^{-i\gamma_n^{(l)}} \tilde{\phi}_n^{(l)} \bigr) \biggr\|_{L^2_t L^2_x(\R\times\R^4)}^2.
 \end{align*}
 By the space-time support properties of the cutoffs $\chi_l$ and H\"older's inequality in time, this is bounded by
 \begin{align*}
  \lesssim \sum_{k \in \Z} T_0 \biggl\| P_k \sum_{l=1}^\infty \nabla_{t,x} \bigl( \chi_l e^{-i\gamma_n^{(l)}} \tilde{\phi}_n^{(l)} \bigr) \biggr\|_{L^\infty_t L^2_x(\R\times\R^4)}^2
 \end{align*}
 and then we obtain as in the previous considerations on the $S_k^{Str}$ component of the $\tilde{S}^1$ norm the desired bound
 \begin{align*}
  \lesssim T_0 \sum_{l=1}^\infty \sum_{k \in \Z} \, \biggl\| P_k \nabla_{t,x} \bigl( \chi_l e^{-i\gamma_n^{(l)}} \tilde{\phi}_n^{(l)} \bigr) \biggr\|_{L^\infty_t L^2_x(\R\times\R^4)}^2.
 \end{align*}
 For large modulations $j > 0$ and large frequencies $k > 0$, the space-time supports of the cutoffs~$\chi_l$ are approximately preserved, up to exponential tails that can be treated easily. Denoting by $\tilde{\chi}_l$ slightly fattended versions of the cutoffs~$\chi_l$, we may therefore estimate schematically 
 \begin{align*}
  &\sum_{k > 0} \sup_{j>0} 2^j \biggl\| \sum_{l=1}^\infty P_k Q_j \nabla_{t,x} \bigl( \chi_l e^{-i\gamma_n^{(l)}} \tilde{\phi}_n^{(l)} \bigr) \biggr\|_{L^2_t L^2_x(\R\times\R^4)}^2 \\
  &\simeq \sum_{k > 0} \sup_{j > 0} 2^j \biggl\| \sum_{l=1}^\infty \tilde{\chi}_l P_k Q_j \nabla_{t,x} \bigl( \chi_l e^{-i\gamma_n^{(l)}} \tilde{\phi}_n^{(l)} \bigr) \biggr\|_{L^2_t L^2_x(\R\times\R^4)}^2 \\
  &\lesssim \sum_{l=1}^\infty \sum_{k > 0} \sup_{j > 0} 2^j \bigl\| P_k Q_j \nabla_{t,x} \bigl( \chi_l e^{-i\gamma_n^{(l)}} \tilde{\phi}_n^{(l)} \bigr) \bigr\|_{L^2_t L^2_x(\R\times\R^4)}^2 \\
  &\lesssim \sum_{l=1}^\infty \sum_{k > 0} \bigl\| P_k \nabla_{t,x} \bigl( \chi_l e^{-i\gamma_n^{(l)}} \tilde{\phi}_n^{(l)} \bigr) \bigr\|_{X^{0, \frac{1}{2}}_\infty}^2,
 \end{align*}
 which is of the desired form. It therefore remains to consider the case of large modulations $j > 0$ and small frequencies $k \leq 0$. Here we distinguish the cases $l > -k$ and $1 \leq l < -k$. For $l > -k$, the projection $P_k Q_j$ approximately preserves the space-time localization of $\chi_l$ and we immediately obtain the schematic estimate
 \begin{align*}
  \sum_{k \leq 0} \sup_{j > 0} 2^j \, \biggl\| \sum_{l > -k} P_k Q_j \nabla_{t,x} \bigl( \chi_l e^{-i\gamma_n^{(l)}} \tilde{\phi}_n^{(l)} \bigr) \biggr\|_{L^2_t L^2_x(\R\times\R^4)}^2 \lesssim \sum_{k \leq 0} \sum_{l > -k} \, \bigl\| P_k \nabla_{t,x} \bigl( \chi_l e^{-i\gamma_n^{(l)}} \tilde{\phi}_n^{(l)} \bigr) \bigr\|_{X_\infty^{0, \frac{1}{2}}(\R\times\R^4)}^2,
 \end{align*}
 which is of the desired form. Finally, for $1 \leq l < -k$ and large modulations $j > 0$, $Q_j$ approximately preserves the time localization of $\chi_l$. Thus, we obtain for slightly fattened cutoffs $\tilde{\chi}_l$ that
 \begin{align*}
  &\sum_{k \leq 0} \sup_{j>0} 2^j \biggl\| \sum_{l=1}^{-k} P_k Q_j \nabla_{t,x} \bigl( \chi_l e^{-i\gamma_n^{(l)}} \tilde{\phi}_n^{(l)} \bigr) \biggr\|_{L^2_t L^2_x(\R\times\R^4)}^2 \\
  &\simeq \sum_{k \leq 0} \sup_{j > 0} 2^j \biggl\| \sum_{l=1}^{-k} P_k \Bigl( \tilde{\chi}_l Q_j \nabla_{t,x} \bigl( \chi_l e^{-i\gamma_n^{(l)}} \tilde{\phi}_n^{(l)} \bigr) \Bigr) \biggr\|_{L^2_t L^2_x(\R\times\R^4)}^2.
 \end{align*}
 Since $\chi_l$ and $\tilde{\chi}_l$ approximately live at frequency $2^{-l}$, this is basically a high-high interaction term and we may write schematically 
 \begin{align*}
  &\simeq \sum_{k \leq 0} \sup_{j > 0} 2^j \biggl\| \sum_{l=1}^{-k} P_k \Bigl( P_{-l} \bigl( \tilde{\chi}_l \bigr) P_{-l} \bigl( Q_j \nabla_{t,x} \bigl( \chi_l e^{-i\gamma_n^{(l)}} \tilde{\phi}_n^{(l)} \bigr) \bigr) \Bigr) \biggr\|_{L^2_t L^2_x(\R\times\R^4)}^2 \\
  &\lesssim \sum_{k \leq 0} \sup_{j > 0} 2^j \Biggl( \sum_{l=1}^{-k} \, \Bigl\| P_k \Bigl( P_{-l} \bigl( \tilde{\chi}_l \bigr) P_{-l} \bigl( Q_j \nabla_{t,x} \bigl( \chi_l e^{-i\gamma_n^{(l)}} \tilde{\phi}_n^{(l)} \bigr) \bigr) \Bigr) \Bigr\|_{L^2_t L^2_x(\R\times\R^4)} \Biggr)^2.
 \end{align*}
 By Bernstein's estimate and H\"older's inequality we then find
 \begin{align*}
  &\lesssim \sum_{k \leq 0} \sup_{j > 0} 2^j \Biggl( \sum_{l=1}^{-k} \, 2^{2k} \bigl\| \tilde{\chi}_l \bigr\|_{L^\infty_t L^2_x(\R\times\R^4)}  \bigl\| P_{-l}  Q_j \nabla_{t,x} \bigl( \chi_l e^{-i\gamma_n^{(l)}} \tilde{\phi}_n^{(l)} \bigr) \bigr\|_{L^2_t L^2_x(\R\times\R^4)} \Biggr)^2 \\
  &\lesssim \sum_{k \leq 0} \, \Biggl( \sum_{l=1}^{-k} 2^{2k+2l} \bigl\| P_{-l} \nabla_{t,x} \bigl( \chi_l e^{-i\gamma_n^{(l)}} \tilde{\phi}_n^{(l)} \bigr) \bigr\|_{X_\infty^{0, \frac{1}{2}}(\R\times\R^4)} \Biggr)^2, 
 \end{align*}
 where in the last line we used that the spatial support of $\tilde{\chi}_l$ is at scale $2^l$. Using Young's inequality, we arrive at the desired bound
 \begin{align*}
  \lesssim \sum_{l=1}^\infty \, \bigl\| P_{-l} \nabla_{t,x} \bigl( \chi_l e^{-i\gamma_n^{(l)}} \tilde{\phi}_n^{(l)} \bigr) \bigr\|_{X_\infty^{0, \frac{1}{2}}(\R\times\R^4)}^2 \lesssim \sum_{l=1}^\infty \, \bigl\| \chi_l e^{-i\gamma_n^{(l)}} \tilde{\phi}_n^{(l)} \bigr\|_{\tilde{S}^1(\R\times\R^4)}^2.
 \end{align*}
 This finishes the proof of Lemma~\ref{lem:almost_orthogonality_phi}.
\end{proof}

It remains to prove Lemma \ref{lem:S_1_norm_bounds_for_chi_multiplier_on_A} and Lemma \ref{lem:S_1_norm_bounds_for_chi_multiplier_on_phi}. We only give the proof of Lemma \ref{lem:S_1_norm_bounds_for_chi_multiplier_on_phi}, the other one being similar.
\begin{proof}[Proof of Lemma \ref{lem:S_1_norm_bounds_for_chi_multiplier_on_phi}]
 We have to prove that for any $\psi \in \tilde{S}^1(\R\times\R^4)$,
 \[
  \| \chi \psi \|_{\tilde{S}^1(\R\times\R^4)} = \Biggl( \sum_{k \in \Z} \, \bigl\| P_k \nabla_{t,x} \bigl( \chi \psi \bigr) \bigr\|_{S_k^{Str}(\R\times\R^4)}^2 + \bigl\| P_k \nabla_{t,x} \bigl( \chi \psi \bigr) \bigr\|_{X_\infty^{0, \frac{1}{2}}(\R\times\R^4}^2 \Biggr)^{1/2} \leq C D \|\psi\|_{\tilde{S}^1(\R\times\R^4)}.
 \]
 To this end we will constantly invoke the assumed space-time bounds \eqref{equ:S_1_norm_bounds_for_chi_multiplier_on_phi_assumed_bounds} for the multiplier $\chi$.

 \medskip

 We first consider the $S_k^{Str}$ component of the $\tilde{S}_k$ norm. Here we denote by $(q,r)$ any wave-admissible exponent pair, i.e. satisfying $2 \leq q,r \leq \infty$ and $\frac{2}{q} + \frac{3}{r} \leq \frac{3}{2}$. For any $k \in \Z$ we have
 \begin{equation} \label{equ:multiplier_bounds_splitting_S_k_Str}
  \bigl\| P_k \nabla_{t,x} \bigl( \chi \psi \bigr) \bigr\|_{S_k^{Str}} \leq \bigl\| P_k \bigl( (\nabla_{t,x} \chi ) \psi \bigr) \bigr\|_{S_k^{Str}} + \bigl\| P_k \bigl( \chi \nabla_{t,x} \psi \bigr) \bigr\|_{S_k^{Str}}
 \end{equation}
 and begin with estimating the first term on the right hand side of \eqref{equ:multiplier_bounds_splitting_S_k_Str}. If $k \leq 0$, we obtain by the Bernstein and Sobolev inequalities uniformly for all $(q,r)$ that
 \begin{align*}
  2^{(\frac{1}{q} + \frac{4}{r} - 2) k} \bigl\| P_k \bigl( (\nabla_{t,x} \chi ) \psi \bigr) \bigr\|_{L^q_t L^r_x} &\lesssim 2^{\frac{1}{q} k} 2^{2k} \bigl\| \nabla_{t,x} \chi \bigr\|_{L^q_t L^{\frac{4}{3}}_x} \| \psi \|_{L^\infty_t L^4_x} \\
  &\lesssim 2^{2k} \bigl\| \nabla_{t,x} \chi \bigr\|_{L^q_t L^{\frac{4}{3}}_x} \| \nabla_x \psi \|_{L^\infty_t L^2_x} \\
  &\lesssim 2^{2k} \bigl\| \nabla_{t,x} \chi \bigr\|_{L^q_t L^{\frac{4}{3}}_x} \| \psi \|_{\tilde{S}^1}. 
 \end{align*}
 Here we used that
 \[
  \|\nabla_x \psi\|_{L^\infty_t L^2_x} \lesssim \Biggl( \sum_{k \in \Z} \| P_k \nabla_x \psi \|_{L^\infty_t L^2_x}^2 \Biggr)^{1/2} \lesssim \Biggl( \sum_{k \in \Z} \| P_k \nabla_{t,x} \psi \|_{S_k^{Str}}^2 \Biggr)^{1/2} \lesssim \|\psi\|_{\tilde{S}^1}.
 \]
 If $k > 0$, we have
 \begin{align*}
  &2^{(\frac{1}{q} + \frac{4}{r} - 2) k} \bigl\| P_k \bigl( (\nabla_{t,x} \chi) \psi \bigr) \bigr\|_{L^q_t L^r_x} \\
  &\lesssim 2^{(\frac{1}{q} + \frac{4}{r} - 2) k} \bigl\| P_k \bigl( \bigl( P_{>k-C} (\nabla_{t,x} \chi ) \bigr) \psi \bigr) \bigr\|_{L^q_t L^r_x} + 2^{(\frac{1}{q} + \frac{4}{r} - 2) k} \bigl\| P_k \bigl( \bigl(P_{\leq k-C} (\nabla_{t,x} \chi ) \bigr) P_{k + O(1)} \psi \bigr) \bigr\|_{L^q_t L^r_x} \\
  &\lesssim 2^{\frac{1}{q} k} \bigl\| P_{>k-C} \nabla_{t,x} \chi \bigr\|_{L^q_t L^4_x} \|\psi\|_{L^\infty_t L^4_x} + 2^{(\frac{1}{q} + \frac{4}{r} - 2) k} \bigl\| P_{\leq k-C} \nabla_{t,x} \chi  \bigr\|_{L^\infty_t L^\infty_x} \bigl\| P_{k+O(1)} \psi \bigr\|_{L^q_t L^r_x} \\
  &\lesssim 2^{\frac{1}{q} k} 2^{-k} \bigl\| \nabla_x \nabla_{t,x} \chi \bigr\|_{L^q_t L^4_x} \|\nabla_x \psi\|_{L^\infty_t L^2_x} + 2^{(\frac{1}{q} + \frac{4}{r} - 2) k} \bigl\| P_{\leq k-C} \nabla_{t,x} \chi \bigr\|_{L^\infty_t L^\infty_x} 2^{-k} \bigl\| P_{k+O(1)} \nabla_x \psi \bigr\|_{L^q_t L^r_x} \\
  &\lesssim 2^{-\frac{1}{2} k} \bigl\| \nabla_x^2 \partial_t \chi \bigr\|_{L^q_t L^4_x} \|\psi\|_{\tilde{S}^1} + \bigl\| \nabla_{t,x} \chi \bigr\|_{L^\infty_t L^\infty_x} 2^{(\frac{1}{q} + \frac{4}{r} - 2) k} \bigl\| P_{k + O(1)} \nabla_x \psi \bigr\|_{L^q_t L^r_x},
 \end{align*}
 where we used the reverse Bernstein inequality
 \[
  \bigl\| P_{>k-C} \nabla_{t,x} \chi \bigr\|_{L^q_t L^4_x} \lesssim 2^{-k} \bigl\| \nabla_x \nabla_{t,x} \chi \bigr\|_{L^q_t L^4_x}.
 \]
 Square-summing over $k \in \Z$ yields the desired bound. We continue with the second term on the right hand side of \eqref{equ:multiplier_bounds_splitting_S_k_Str}. If $k \leq 0$, we use Bernstein's inequality to bound
 \[
  2^{(\frac{1}{q} + \frac{4}{r} - 2) k} \bigl\| P_k \bigl( \chi \nabla_{t,x} \psi \bigr) \bigr\|_{L^q_t L^r_x} \lesssim 2^{\frac{1}{q} k} 2^{2k} \bigl\| \chi \bigr\|_{L^q_t L^2_x} \|\nabla_{t,x} \psi \|_{L^\infty_t L^2_x} \lesssim 2^{2k} \bigl\| \chi \bigr\|_{L^q_t L^2_x} \|\psi\|_{\tilde{S}^1}.
 \]
 For $k > 0$ we find
 \begin{align*}
  &2^{(\frac{1}{q} + \frac{4}{r} - 2) k} \bigl\| P_k \bigl( \chi \nabla_{t,x} \psi \bigr) \bigr\|_{L^q_t L^r_x} \\
  &\lesssim 2^{(\frac{1}{q} + \frac{4}{r} - 2) k} \bigl\| P_k \bigl( \bigl( P_{>k-C} \chi \bigr) \nabla_{t,x} \psi \bigr\|_{L^q_t L^r_x} + 2^{(\frac{1}{q} + \frac{4}{r} - 2) k} \bigl\| P_k \bigl( \bigl( P_{\leq k-C} \chi \bigr) P_{k + O(1)} \nabla_{t,x} \psi \bigr\|_{L^q_t L^r_x}  \\
  &\lesssim 2^{\frac{1}{q} k} \bigl\| P_{>k-C} \chi \bigr\|_{L^q_t L^\infty_x} \|\nabla_{t,x} \psi \|_{L^\infty_t L^2_x} + 2^{(\frac{1}{q} + \frac{4}{r} - 2) k} \bigl\| P_{\leq k-C} \chi \bigr\|_{L^\infty_t L^\infty_x} \bigl\| P_{k + O(1)} \nabla_{t,x} \psi \bigr\|_{L^q_t L^r_x}  \\
  &\lesssim 2^{-\frac{1}{2} k} \bigl\| \nabla_x \chi \bigr\|_{L^q_t L^\infty_x} \|\psi\|_{\tilde{S}^1} + \bigl\| \chi \bigr\|_{L^\infty_t L^\infty_x} 2^{(\frac{1}{q} + \frac{4}{r} - 2) k} \bigl\| P_{k+O(1)} \nabla_{t,x} \psi \bigr\|_{L^q_t L^r_x}.
 \end{align*}
 The desired bound again follows after square-summing over $k \in \Z$.

 \medskip

 Next we consider the $X_\infty^{0, \frac{1}{2}}$ component of the $\tilde{S}$ norm. For any $k \in \Z$ we have
 \begin{equation} \label{equ:multiplier_bounds_splitting_X_infty}
  \bigl\| P_k \nabla_{t,x} \bigl( \chi \psi \bigr) \bigr\|_{X_\infty^{0,\frac{1}{2}}} \leq \bigl\| P_k \bigl( ( \nabla_{t,x} \chi ) \psi \bigr) \bigr\|_{X_\infty^{0, \frac{1}{2}}} + \bigl\| P_k \bigl( \chi \nabla_{t,x} \psi \bigr) \bigr\|_{X_\infty^{0,\frac{1}{2}}}.
 \end{equation}
 We start with the first term on the right hand side of \eqref{equ:multiplier_bounds_splitting_X_infty}. If $k \leq 0$, we split into a small and a large modulation term
 \begin{equation} \label{equ:multiplier_bounds_X_infty_small_freq_split_modulation}
  P_k \bigl( ( \nabla_{t,x} \chi ) \psi \bigr) = P_k Q_{\leq 0} \bigl( (\nabla_{t,x} \chi ) \psi \bigr) + P_k Q_{> 0} \bigl( (\nabla_{t,x} \chi) \psi \bigr).
 \end{equation}
 We easily estimate the small modulation term using Bernstein's inequality,
 \begin{align*}
  \bigl\| P_k Q_{\leq 0} \bigl( ( \nabla_{t,x} \chi ) \psi \bigr) \bigr\|_{X^{0, \frac{1}{2}}_\infty} &\lesssim \bigl\| P_k \bigl( ( \nabla_{t,x} \chi ) \psi \bigr) \bigr\|_{L^2_t L^2_x} \lesssim 2^{2k} \bigl\| \nabla_{t,x} \chi \bigr\|_{L^2_t L^\frac{4}{3}_x} \|\psi\|_{L^\infty_t L^4_x} \\
  &\lesssim 2^{2k} \bigl\| \nabla_{t,x} \chi \bigr\|_{L^2_t L^\frac{4}{3}_x} \|\psi\|_{\tilde{S}^1}.
 \end{align*}
 To estimate the large modulation term we consider for any $j > 0$,
 \begin{equation} \label{equ:multiplier_bounds_X_infty_small_freq_large_modulation}
  \begin{split}
   2^{\frac{1}{2} j} \bigl\| P_k Q_j \bigl( ( \nabla_{t,x} \chi ) \psi \bigr) \bigr\|_{L^2_t L^2_x} &\lesssim 2^{\frac{1}{2} j} \bigl\| P_k Q_j \bigl( \bigl( P_{> j-C} (\nabla_{t,x} \chi ) \psi \bigr) \bigr\|_{L^2_t L^2_x} \\
   &\quad + 2^{\frac{1}{2} j} \bigl\| P_k Q_j \bigl( \bigl( P_{\leq j-C} Q_{> j-C} (\nabla_{t,x} \chi) \psi \bigr) \bigr\|_{L^2_t L^2_x} \\
   &\quad + 2^{\frac{1}{2} j} \bigl\| P_k Q_j \bigl( \bigl( P_{\leq j-C} Q_{\leq j-C} ( \nabla_{t,x} \chi ) Q_{j+O(1)} \psi \bigr) \bigr\|_{L^2_t L^2_x}.
  \end{split}
 \end{equation}
 We bound the first term using the reverse Bernstein inequality,
 \begin{align*}
  2^{\frac{1}{2} j} \bigl\| P_k Q_j \bigl( \bigl( P_{> j-C} (\nabla_{t,x} \chi) \bigr) \psi \bigr\|_{L^2_t L^2_x} &\lesssim 2^{2k} 2^{\frac{1}{2} j} \bigl\| \bigl( P_{> j-C} (\nabla_{t,x} \chi) \bigr) \psi \bigr\|_{L^2_t L^1_x} \\
  &\lesssim 2^{2k} 2^{- \frac{1}{2} j} \bigl\| \nabla_x \nabla_{t,x} \chi \bigr\|_{L^2_t L^{\frac{4}{3}}_x} \|\psi\|_{\tilde{S}^1}.
 \end{align*}
 For the second term on the right hand side of \eqref{equ:multiplier_bounds_X_infty_small_freq_large_modulation} we obtain from a reverse Bernstein estimate in time that
 \begin{align*}
  2^{\frac{1}{2} j} \bigl\| P_k Q_j \bigl( \bigl( P_{\leq j-C} Q_{> j-C} (\nabla_{t,x} \chi ) \bigr) \psi \bigr\|_{L^2_t L^2_x} &\lesssim 2^{2k} 2^{-\frac{1}{2} j} \bigl\| \nabla_{t,x} \partial_t \chi \bigr\|_{L^2_t L^{\frac{4}{3}}_x} \|\psi\|_{\tilde{S}^1}.
 \end{align*}
 The third term on the right hand side of \eqref{equ:multiplier_bounds_X_infty_small_freq_large_modulation} can be estimated via a Littlewood-Paley trichotomy
 \begin{equation}
  \begin{split}
   &2^{\frac{1}{2} j} \bigl\| P_k Q_j \bigl( \bigl( P_{\leq j-C} Q_{\leq j-C} ( \nabla_{t,x} \chi ) \bigr) Q_{j+O(1)} \psi \bigr\|_{L^2_t L^2_x} \\
   &\quad \quad \lesssim \sum_{l = k+C}^{j-C} 2^{\frac{1}{2} j}  \bigl\| \bigl( P_l Q_{\leq j-C} (\nabla_{t,x} \chi) ) \bigr) P_{l+O(1)} Q_{j+O(1)} \psi \bigr\|_{L^2_t L^2_x} \\
   &\quad \quad \quad + 2^{\frac{1}{2} j} \bigl\| \bigl( P_{k+O(1)} Q_{\leq j-C} (\nabla_{t,x} \chi) \bigr) P_{\leq k + O(1)} Q_{j+O(1)} \psi \bigr\|_{L^2_t L^2_x} \\
   &\quad \quad \quad + 2^{\frac{1}{2} j} \bigl\| \bigl( P_{\leq k-C} Q_{\leq j-C} (\nabla_{t,x} \chi) \bigr) P_{k+O(1)} Q_{j+O(1)} \psi \bigr\|_{L^2_t L^2_x}.
  \end{split}
 \end{equation}
 We bound the high-high case by
 \begin{align*}
  &\sum_{l=k+C}^{j-C} 2^{2k} 2^{\frac{1}{2} j} \bigl\| P_l Q_{\leq j-C} (\nabla_{t,x} \chi) \bigr\|_{L^\infty_t L^2_x} \bigl\| P_{l+O(1)} Q_{j+O(1)} \psi \bigr\|_{L^2_t L^2_x} \\
  &\lesssim 2^{2k} \sum_{l=k+C}^{j-C} \bigl\| \nabla_{t,x} \chi \bigr\|_{L^\infty_t L^2_x} 2^{-l} \bigl\|P_{l+O(1)} \nabla_x \psi \bigr\|_{X_\infty^{0,\frac{1}{2}}} \\
  &\lesssim 2^k \bigl\| \nabla_{t,x} \chi \bigr\|_{L^\infty_t L^2_x} \|\psi\|_{\tilde{S}^1}.
 \end{align*}
 The high-low case is estimated by
 \begin{align*}
  &\sum_{l \leq k+O(1)} 2^{\frac{1}{2} j} \bigl\| P_{k+O(1)} Q_{\leq j-C} (\nabla_{t,x} \chi) \bigr\|_{L^\infty_t L^2_x} \bigl\| P_l Q_{j+O(1)} \psi \bigr\|_{L^2_t L^\infty_x} \\
  &\lesssim \sum_{l \leq k+O(1)} \bigl\| \nabla_{t,x} \chi \bigr\|_{L^\infty_t L^2_x} 2^l \bigl\| P_l \nabla_x \psi \bigr\|_{X_\infty^{0,\frac{1}{2}}} \\
  &\lesssim 2^k \bigl\| \nabla_{t,x} \chi \bigr\|_{L^\infty_t L^2_x} \|\psi\|_{\tilde{S}^1} 
 \end{align*}
 and the low-high case by
 \begin{align*}
  2^{2k} 2^{\frac{1}{2} j} \bigl\| P_{\leq k-C} Q_{\leq j-C} (\nabla_{t,x} \chi) \bigr\|_{L^\infty_t L^2_x} \bigl\| P_{k+O(1)} Q_{j+O(1)} \psi \bigr\|_{L^2_t L^2_x} \lesssim 2^k \bigl\| \nabla_{t,x} \chi \bigr\|_{L^\infty_t L^2_x} \bigl\| P_{k+O(1)} \nabla_x \psi \bigr\|_{X_\infty^{0,\frac{1}{2}}}.
 \end{align*}
 Thus, we obtain the following estimate for the large modulation term in \eqref{equ:multiplier_bounds_X_infty_small_freq_split_modulation},
 \begin{align*}
  \bigl\|P_k Q_{>0} \bigl( ( \nabla_{t,x} \chi ) \psi \bigr) \bigr\|_{X_\infty^{0,\frac{1}{2}}} &\lesssim 2^{2k} \bigl\| \nabla_{t,x}^2 \chi \bigr\|_{L^2_t L^{\frac{4}{3}}_x} \|\psi\|_{\tilde{S}^1} + 2^k \bigl\| \nabla_{t,x} \chi \bigr\|_{L^\infty_t L^2_x} \|\psi\|_{\tilde{S}^1} \\
  &\quad + \bigl\| \nabla_{t,x} \chi \bigr\|_{L^\infty_t L^2_x} \bigl\| P_{k+O(1)} \nabla_x \psi \bigr\|_{X_\infty^{0,\frac{1}{2}}}.
 \end{align*}
 If $k > 0$ in the first term on the right hand side of \eqref{equ:multiplier_bounds_splitting_X_infty}, we again split into a small and a large modulation term,
 \begin{equation}
  P_k \bigl( (\nabla_{t,x} \chi ) \psi \bigr) = P_k Q_{\leq k} \bigl( (\nabla_{t,x} \chi) \psi \bigr) + P_k Q_{> k} \bigl( (\nabla_{t,x} \chi) \psi \bigr).
 \end{equation}
 We can immediately dispose of the small modulation term as follows
 \begin{align*}
  \bigl\| P_k Q_{\leq k} \bigl( (\nabla_{t,x} \chi) \psi \bigr) \bigr\|_{X_\infty^{0, \frac{1}{2}}} &\lesssim 2^{\frac{1}{2} k} \bigl\| P_k \bigl( (\nabla_{t,x} \chi ) \psi \bigr) \bigr\|_{L^2_t L^2_x} \\
  &\lesssim 2^{-\frac{1}{2} k} \Bigl( \bigl\|\nabla_x \nabla_{t,x} \chi \bigr\|_{L^2_t L^4_x} \|\psi\|_{L^\infty_t L^4_x} + \bigl\| \nabla_{t,x} \chi \bigr\|_{L^2_t L^\infty_x} \|\nabla_x \psi\|_{L^\infty_t L^2_x} \Bigr) \\
  &\lesssim 2^{-\frac{1}{2} k} \Bigl( \bigl\|\nabla_x \nabla_{t,x} \chi \bigr\|_{L^2_t L^4_x} + \bigl\| \nabla_{t,x} \chi \bigr\|_{L^2_t L^\infty_x} \Bigr) \|\psi\|_{\tilde{S}^1}.
 \end{align*}
 To treat the large modulation term, we find that for any $j > k$,
 \begin{equation} \label{equ:multiplier_bounds_X_infty_large_freq_large_modulation}
  \begin{split}
   2^{\frac{1}{2} j} \bigl\| P_k Q_j \bigl( (\nabla_{t,x} \chi) \psi \bigr) \bigr\|_{L^2_t L^2_x} &\lesssim 2^{\frac{1}{2} j} \bigl\| P_k Q_j \bigl( \bigl( P_{>j-C} (\nabla_{t,x} \chi ) \bigr) \psi \bigr) \bigr\|_{L^2_t L^2_x} \\
   &\quad + 2^{\frac{1}{2} j} \bigl\| P_k Q_j \bigl( \bigl( P_{\leq j-C} Q_{>j-C} (\nabla_{t,x} \chi) \bigr) \psi \bigr) \|_{L^2_t L^2_x} \\
   &\quad + 2^{\frac{1}{2} j} \bigl\| P_k Q_j \bigl( \bigl( P_{\leq j-C} Q_{\leq j-C} (\nabla_{t,x} \chi) \bigr) Q_{j+O(1)} \psi \bigr) \bigr\|_{L^2_t L^2_x}.
  \end{split}
 \end{equation}
 We estimate the first term by
 \begin{align*}
  2^{\frac{1}{2} j} \bigl\| P_{> j-C} \nabla_{t,x} \chi \bigr\|_{L^2_t L^4_x} \|\psi\|_{L^\infty_t L^4_x} &\lesssim 2^{-\frac{1}{2} j} \bigl\| \nabla_x \nabla_{t,x} \chi \bigr\|_{L^2_t L^4_x} \| \nabla_x \psi\|_{L^\infty_t L^2_x} \\
  &\lesssim 2^{-\frac{1}{2}k} \bigl\| \nabla_x \nabla_{t,x} \chi \bigr\|_{L^2_t L^4_x} \|\psi\|_{\tilde{S}^1}.
 \end{align*}
 The second term on the right hand side of \eqref{equ:multiplier_bounds_X_infty_large_freq_large_modulation} is bounded by
 \begin{align*}
  2^{\frac{1}{2} j} \bigl\| P_{\leq j-C} Q_{> j-C} \nabla_{t,x} \chi \bigr\|_{L^2_t L^4_x} \|\psi\|_{L^\infty_t L^4_x} &\lesssim 2^{-\frac{1}{2}j} \bigl\|\nabla_{t,x} \partial_t \chi \bigr\|_{L^2_t L^4_x} \|\nabla_x \psi\|_{L^\infty_t L^2_x} \\
  &\lesssim 2^{-\frac{1}{2}k} \bigl\| \nabla_{t,x} \partial_t \chi \bigr\|_{L^2_t L^4_x} \|\psi\|_{\tilde{S}^1}.
 \end{align*}
 Using a Littlewood-Paley trichotomy we obtain the following estimate of the third term in \eqref{equ:multiplier_bounds_X_infty_large_freq_large_modulation}
 \[
  2^{-k} \Bigl( \bigl\| \nabla_{t,x} \chi \bigr\|_{L^\infty_t L^\infty_x} + \bigl\| \nabla_x^2 \nabla_{t,x} \chi \bigr\|_{L^\infty_t L^2_x} \Bigr) \|\psi\|_{\tilde{S}^1} + \bigl\| \nabla_{t,x} \chi \bigr\|_{L^\infty_t L^\infty_x} \bigl\| P_{k+O(1)} \nabla_x \psi \bigr\|_{X_\infty^{0,\frac{1}{2}}}.
 \]
 Finally, square-summing over $k \in \Z$ gives the desired bound for the first term on the right hand side of \eqref{equ:multiplier_bounds_splitting_X_infty}. The second term on the right hand side of \eqref{equ:multiplier_bounds_splitting_X_infty} can be handled similarly.
\end{proof}

\subsection{Localizing in physical space} \label{subsec:localizing_in_physical_space}

 In this subsection we consider Coulomb data $(A, \phi)[0] \in \dot{H}^s_x~\times~\dot{H}^{s-1}_x$ for all $s \geq 1$. We show that there exists $T>0$ and a $C^\infty$ solution of the same regularity class on each time slice of the space-time slab $[-T, T]\times \R^4$ satisfying  the required $S^1$ norm bound 
 \[
  \|(A, \phi)\|_{S^1([-T,T]\times \R^4)} < \infty. 
 \]
 To this end we fix a large $R_0>1$. For each $R \geq R_0$, we consider a cutoff $\chi_R \in C_c^\infty(\R^4)$ that equals $1$ on the ball $B_R(0)$ and has support in a dilate of $B_R(0)$. In the previous Subsection \ref{subsec:proof_of_joint_time_interval_prop} we demonstrated that upon writing 
 \[
  \tilde{A}_R := \chi_R A - \nabla_x\gamma_R
 \]
 for the spatial components of a new connection form $\tilde{A}_R$, where 
 \[
 \gamma_R = \Delta^{-1} \partial_j \big( \chi_R A^j \big),
 \]
 there is a way to pick the remaining data $\partial_t \tilde{A}_R(0)$ and $\tilde{\phi}_R[0]$, so that the corresponding data are all of class $H^{1+}_x \times L^{0+}_x$ and of Coulomb class. Thus, we obtain local solutions with these data from the local well-posedness result \cite{Selberg}. We can also arrange that $\tilde{\phi}_R[0]$ is supported within the ball of radius $\frac{R}{10}$ centered at the origin. It is then also easy to verify that 
 \[
 (\tilde{A}_R, \tilde{\phi}_R)[0] \rightarrow (A, \phi)[0] \text{ as } R \rightarrow \infty
 \]
 with respect to the $\dot{H}^s_x \times \dot{H}^{s-1}_x$ topology for any $s\geq 1$. Moreover, the argument in the previous subsection together with Proposition~\ref{prop:breakdown_criterion_frequency_envelopes} implies that these solutions extend of class $H^{s}_x \times H^{s-1}_x$ to a space-time slab $[-T, T]\times \R^4$, where $T>0$ is independent of $R \geq R_0$. It then remains to check that the corresponding local solutions on $[-T, T]\times \R^4$, call them again $ (\tilde{A}_R, \tilde{\phi}_R)$, converge with respect to the $S^1$ norm. This will essentially follow from the perturbation theory developed later on in the key Step 3 of the proof of Proposition \ref{prop:bootstrap}. The following proposition can be proved.
 \begin{prop} \label{prop:morelocaldatastuff} 
  The sequence $\bigl\{ (\tilde{A}_R, \tilde{\phi}_R) \bigr\}_{R \geq R_0}$ converges in $S^1([-T,T]\times\R^4)$ as $R \rightarrow \infty$. The limit is also of class $\dot{H}^s_x \times \dot{H}^{s-1}_x$ for all $s \geq 1$ on each time slice of $[-T, T]\times\R^4$, hence of class $C^\infty$, and a smooth solution to the MKG-CG system on $[-T, T]\times\R^4$ with initial data $(A,\phi)[0]$.
 \end{prop}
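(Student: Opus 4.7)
The plan is to prove three things in sequence: Cauchy convergence of the sequence $\{(\tilde{A}_R, \tilde{\phi}_R)\}_{R \geq R_0}$ in $S^1([-T,T] \times \R^4)$; uniform bounds on the approximants at every regularity level $\dot{H}^s_x \times \dot{H}^{s-1}_x$ with $s \geq 1$; and identification of the limit as a classical $C^\infty$ solution to (MKG-CG) with initial data $(A, \phi)[0]$.

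For Cauchy convergence, I would fix $R_2 \geq R_1 \geq R_0$ sufficiently large and note that by the construction of Subsection~\ref{subsec:proof_of_joint_time_interval_prop} together with Proposition~\ref{prop:breakdown_criterion_frequency_envelopes}, both admissible solutions satisfy the uniform bound $\|(\tilde{A}_{R_i}, \tilde{\phi}_{R_i})\|_{S^1([-T,T]\times\R^4)} \leq L(A,\phi)$ for $i = 1, 2$. Moreover, from the construction and the fact that $(A,\phi)[0] \in \dot{H}^s_x \times \dot{H}^{s-1}_x$ for every $s \geq 1$, the differences of the data $(\tilde{A}_{R_1} - \tilde{A}_{R_2}, \tilde{\phi}_{R_1} - \tilde{\phi}_{R_2})[0]$ tend to zero in every $\dot{H}^s_x \times \dot{H}^{s-1}_x$ as $R_1, R_2 \to \infty$. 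To promote this to $S^1$ closeness, I invoke the core perturbative result singled out as Step~3 of the proof of Proposition~\ref{prop:bootstrap}: taking $(\tilde{A}_{R_1}, \tilde{\phi}_{R_1})$ as the reference solution with controlled $S^1$ norm, and splitting the data difference at a sufficiently high frequency $K$, one bounds the low-frequency piece by a subcritical Sobolev norm (which vanishes in the limit) while the high-frequency piece is handled by the genuine high-frequency perturbation statement Proposition~\ref{prop:perturbation}. A bootstrap in $K$ then yields $\|(\tilde{A}_{R_1} - \tilde{A}_{R_2}, \tilde{\phi}_{R_1} - \tilde{\phi}_{R_2})\|_{S^1([-T,T]\times\R^4)} \to 0$, producing a limit $(A, \phi) \in S^1([-T,T] \times \R^4)$.

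For higher regularity I would use Tao's frequency envelope machinery as formulated in Proposition~\ref{prop:breakdown_criterion_frequency_envelopes}. For each fixed $s \geq 1$, the physical-space localization construction preserves $\dot{H}^s_x \times \dot{H}^{s-1}_x$ bounds uniformly in $R$ by standard Sobolev multiplier estimates for the cutoffs $\chi_R$ and the harmonic gauge corrections $\gamma_R$ (in the spirit of Lemma~\ref{lem:S_1_norm_bounds_for_chi_multiplier_on_phi}). Thus the corresponding frequency envelopes at regularity $s$ are uniformly square-summable, and Proposition~\ref{prop:breakdown_criterion_frequency_envelopes} yields uniform-in-$R$ frequency envelope bounds on the $S^1_k$ pieces of $(\tilde{A}_R, \tilde{\phi}_R)$. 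Combining the $S^1$-convergence from step one with weak-$*$ compactness at these higher regularity levels, the limit $(A,\phi)$ inherits the same envelopes and hence lies in $\dot{H}^s_x \times \dot{H}^{s-1}_x$ on each time slice for every $s \geq 1$, so is $C^\infty$ by Sobolev embedding. Finally, the $S^1$ and higher-regularity convergence together with the multilinear estimates of~\cite{KST} let one pass to the limit in every term of the (MKG-CG) system, yielding a classical solution with the correct initial data, since $(\tilde{A}_R, \tilde{\phi}_R)[0] \to (A, \phi)[0]$ in every Sobolev norm by construction.

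The main obstacle is the Cauchy step, because (MKG-CG) lacks the strong energy-level perturbation theory available for, say, critical wave maps: a purely energy-small perturbation of the initial data cannot be absorbed by a black-box stability argument due to the low-frequency divergences in the magnetic interaction term $-2i A_j \partial^j \phi$. This forces the delicate hybrid scheme from Step~3 of Proposition~\ref{prop:bootstrap}, in which one must carefully track the separate contributions of the low-frequency tail of the perturbation (controlled by the subcritical smoothness of the approximants) and the high-frequency tail (handled by Proposition~\ref{prop:perturbation}), closing the estimates via a bootstrap on the chosen cutoff frequency.
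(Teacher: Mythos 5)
There is a genuine gap in your Cauchy step, and it sits exactly at the term your scheme is supposed to handle. Writing $(\tilde{A}_{R_2}, \tilde{\phi}_{R_2}) = (\tilde{A}_{R_1} + \delta A, \tilde{\phi}_{R_1} + \delta\phi)$, the non-perturbative contribution in the $\delta\phi$ equation is the low-high magnetic interaction $\sum_{k} P_{<k}(\delta A^{free})_j \, P_k \partial^j \tilde{\phi}_{R_1}$. Your plan is to split the data difference at a high frequency $K$, control the low-frequency piece by ``a subcritical Sobolev norm which vanishes in the limit'', and feed the high-frequency piece into Proposition~\ref{prop:perturbation}. Neither half works as stated. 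Estimating the low-frequency interactions requires an $\ell^1$-in-frequency summation over the blocks $P_{k_1}\delta A^{free}$, $k_1 \to -\infty$, i.e.\ control of $\sum_{k_1} \|P_{k_1}\delta A[0]\|_{\dot H^1_x \times L^2_x}$; smallness of $\delta A[0]$ in $\dot H^s_x \times \dot H^{s-1}_x$ for every $s \geq 1$ gives no control whatsoever of this sum, since higher Sobolev regularity only gains at high frequencies. This is precisely the ``low-frequency divergence'' that the paper repeatedly flags as the reason why no black-box energy/Sobolev perturbation theory exists for (MKG-CG), so the vanishing of the data difference in all Sobolev norms cannot be promoted to $S^1$ closeness this way. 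The appeal to Proposition~\ref{prop:perturbation} for the high-frequency piece is also not legitimate: its hypotheses require the background data to have frequency support in $\{|\xi| \leq K\}$ and the perturbation in $\{|\xi| > K\}$, and neither holds for $(\tilde{A}_{R_1}, \tilde{\phi}_{R_1})[0]$ and the cutoff difference $\delta A[0]$, whose frequency content is spread over all scales. (In the proof of Proposition~\ref{prop:perturbation} the $\ell^1$ loss is absorbed only because the frequency localization forces exponential decay of $\|P_l\phi\|_{S^1}$ for $l > K$; without that localization the argument does not close.)

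The missing idea is that the smallness here comes from \emph{physical space}, not from frequency space. By construction $\tilde{\phi}_{R_1}[0]$ is supported in $\{|x| \leq R_1/10\}$ while $\delta A[0]$ lives in $\{|x| \geq R_1\}$ up to rapidly decaying tails, and the time interval satisfies $T < 1 \ll R_1$. The paper splits the bad term at the frequency $-\tfrac12\log R_1$: the very low frequencies of $\delta A^{free}$ are killed by Bernstein, $\|P_{< -\frac12\log R_1}\delta A^{free}\|_{L^\infty_t L^\infty_x} \lesssim R_1^{-1/2}\|\delta A[0]\|_{\dot H^1_x \times L^2_x}$, while for the remaining frequencies one splits in physical space at $|x| = R_1/2$ and uses Huygens' principle: on $\{|x| < R_1/2\}$ the free wave $\delta A^{free}$ is $O(R_1^{-M})$, and on $\{|x| \geq R_1/2\}$ the evolved $\tilde{\phi}_{R_1}$ is $O(R_1^{-M})$ (up to frequency tails), so the whole term is $o(1)$ in $N$ as $R_1 \to \infty$. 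Only after this term has been disposed of does the argument reduce to the divisibility/bootstrap machinery of Step~3 of Proposition~\ref{prop:bootstrap}, which you correctly identify as the perturbative engine; your higher-regularity and limit-identification steps via Proposition~\ref{prop:breakdown_criterion_frequency_envelopes} are fine, but without the support-separation/Huygens input the Cauchy property in $S^1([-T,T]\times\R^4)$ is not established.
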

 \begin{proof}
  A sketch of the proof is given in Subsection \ref{subsec:interlude}.
 \end{proof}

\section{How to arrive at the minimal energy blowup solution} \label{sec:how_to_arrive_at}

In this section we address another delicate issue arising due to the difficulties with the perturbation theory for the MKG-CG system. Assume that $(A_n, \phi_n)$ is an ``essentially singular sequence'' of admissible solutions to the MKG-CG system that converges at time $t=0$ in the energy topology to a Coulomb energy class data pair $(A, \phi)[0]$ with $E(A,\phi) = E_{crit}$, 
\[
 \lim_{n \rightarrow \infty} (A_n, \phi_n)[0] = (A, \phi)[0].
\]
Using the concept of MKG-CG evolution for energy class data from Definition \ref{defn:energy_class_solution}, we obtain an energy class solution $(A, \phi)$ with maximal lifespan $I$. We then want to infer that
\begin{equation} \label{eq:minblow1}
 \sup_{J \subset I, J closed} \|(A, \phi)\|_{S^1(J \times \R^4)} = \infty,
\end{equation}
while by construction it holds that $E(A, \phi) = E_{crit}$. In view of Lemma \ref{lem:characerization_maximal_lifespan}, it suffices to consider the case $I = \R$. The problem here is that while we have 
\[
\lim_{n\rightarrow\infty} \|(A_n, \phi_n)\|_{S^1(I_n\times \R^4)} = \infty,
 \]
where $I_n$ are suitably chosen time intervals, we cannot use an immediate perturbative argument to obtain \eqref{eq:minblow1} as is possible for wave maps in \cite{KS}. The reason comes from the fact that the $(A_n, \phi_n)$ may have non-negligible low-frequency components. Nevertheless, we obtain the following result.

\begin{prop} \label{prop:minenblowup} 
Let $(A_n, \phi_n)$ be an essentially singular sequence of admissible solutions to the MKG-CG system. Assume that 
\[
 \lim_{n \rightarrow \infty} (A_n, \phi_n)[0] = (A,\phi)[0]
\]
in the energy topology for some Coulomb energy class data pair $(A, \phi)[0]$. Let $I$ be the maximal lifespan of the MKG-CG evolution $(A, \phi)$ of this data pair given by Definition \ref{defn:energy_class_solution}. Then it holds that
\[
 \sup_{J \subset I, J closed} \|(A, \phi)\|_{S^1(J\times\R^4)} = \infty.
\]
\end{prop}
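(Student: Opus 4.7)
The proof proceeds by contradiction. Assume that $\sup_{J \subset I, J \text{ closed}} \|(A, \phi)\|_{S^1(J \times \R^4)} = M < \infty$. By Lemma~\ref{lem:characerization_maximal_lifespan} this forces $I = \R$, so it suffices to fix an arbitrary closed interval $J = [-T,T]$ and show $\liminf_{n \to \infty} \|(A_n, \phi_n)\|_{S^1(J \times \R^4)} < \infty$, in direct contradiction with the essentially singular hypothesis. By Definition~\ref{defn:energy_class_solution}, select smooth Coulomb low-frequency truncations $(B_m, \psi_m)[0]$ of $(A, \phi)[0]$ supported at $|\xi| \leq K_m$ with $K_m \to \infty$, satisfying $(B_m, \psi_m)[0] \to (A, \phi)[0]$ in $\dot H^1_x \times L^2_x$, whose MKG-CG evolutions $(B_m, \psi_m)$ satisfy $\|(B_m, \psi_m)\|_{S^1(J\times\R^4)} \leq M+1$ for all $m \geq m_0(J)$.

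The natural idea is to view $(A_n, \phi_n)$ as a high-frequency perturbation of some low-frequency reference solution, then invoke Proposition~\ref{prop:perturbation}. Set $L := 2(M+2)$ and let $\delta_1 := \delta_1(L)$ be as in Proposition~\ref{prop:perturbation}. Fix $m$ large enough so that $\|P_{>K_m}(A, \phi)[0]\|_{\dot H^1_x \times L^2_x} < \delta_1/100$ and $\|(B_m, \psi_m)[0] - P_{\leq K_m}(A, \phi)[0]\|_{\dot H^1_x \times L^2_x} < \delta_1/100$. Using $(B_m, \psi_m)$ itself as reference fails, because $(A_n,\phi_n)[0] - (B_m,\psi_m)[0]$ is not purely supported at $|\xi|>K_m$. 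We therefore define $(\check A_n, \check \phi_n)[0]$ to be the Coulomb admissible data obtained from $P_{\leq K_m}(A_n, \phi_n)[0]$ (with the gauge adjustments from Subsection~\ref{subsec:proof_of_joint_time_interval_prop}). Since $(A_n,\phi_n)[0] \to (A,\phi)[0]$ in energy and the adjustments are continuous in the energy topology, we have $\|(\check A_n, \check \phi_n)[0] - (B_m, \psi_m)[0]\|_{\dot H^1_x \times L^2_x} \to 0$ as $n\to\infty$, while $(A_n, \phi_n)[0] - (\check A_n, \check \phi_n)[0]$ is then purely supported at $|\xi|>K_m$ with energy $<\delta_1$ for all large $n$.

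The central technical step, and the main obstacle, is to establish the a priori bound
\[
 \|(\check A_n, \check \phi_n)\|_{S^1(J\times\R^4)} \leq L \quad \text{for all } n \text{ sufficiently large.}
\]
This is \emph{not} a direct consequence of Proposition~\ref{prop:perturbation}, since the difference between $(\check A_n, \check \phi_n)[0]$ and $(B_m, \psi_m)[0]$ has small energy but is supported at \emph{low} frequencies rather than high ones. The resolution mirrors Step~3 in the proof of Proposition~\ref{prop:bootstrap}, which is the core perturbative device of the paper: one decomposes the frequency range $[2^{-J_0}, K_m]$ housing the essential supports into finitely many dyadic sub-bands, whose number depends only on $M$ and $E_{crit}$, and interpolates between $(B_m,\psi_m)[0]$ and $(\check A_n,\check \phi_n)[0]$ by successively replacing the data on each sub-band. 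At every stage the newly introduced piece is of small energy and, relative to a suitably shrunk reference, is a high-frequency perturbation to which Proposition~\ref{prop:perturbation} applies. Since only finitely many scales are involved and the constant $\widetilde L$ in Proposition~\ref{prop:perturbation} degrades only modestly at each step, the final $S^1$ bound remains controlled by $L$.

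Once this low-frequency bootstrap is in place, a single final application of Proposition~\ref{prop:perturbation}, now with $(\check A_n, \check \phi_n)$ as reference solution and the genuinely high-frequency piece $(A_n, \phi_n)[0] - (\check A_n, \check \phi_n)[0]$ as perturbation, yields $\|(A_n, \phi_n)\|_{S^1(J\times\R^4)} \leq \widetilde L(L, \delta_1)$ for all large $n$, contradicting the essentially singular property on the interval $J$. Exhausting $\R$ by an increasing sequence of closed intervals and a diagonal subsequence then completes the proof. The hard part is thus the scale-by-scale bootstrap described above; it is the unavoidable price of the absence of a full perturbation theory for (MKG-CG) and is made tractable precisely because the low-frequency gap between $(\check A_n,\check \phi_n)[0]$ and $(B_m,\psi_m)[0]$ spans a finite range of dyadic scales determined by $M$ and $E_{crit}$ alone.
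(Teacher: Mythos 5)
Your setup (argue by contradiction, reduce to $I=\R$ via Lemma~\ref{lem:characerization_maximal_lifespan}, view $(A_n,\phi_n)$ as a perturbation of a low-frequency reference) is in the right spirit, but the central step is not justified and, as formulated, fails. The discrepancy $(\check A_n,\check\phi_n)[0]-(B_m,\psi_m)[0]$ is small only in the energy topology, i.e.\ in an $\ell^2$-Besov sense, and there is no reason it is confined to a frequency window $[2^{-J_0},K_m]$ with $J_0$ determined by $M$ and $E_{crit}$: energy convergence of $(A_n,\phi_n)[0]$ imposes no lower frequency cutoff and gives no $\ell^1$-Besov control on the difference. This is fatal for your scheme, because the obstruction you are trying to circumvent is precisely the low-high magnetic interaction $\sum_k P_{<k-C}\,\delta A^{free}_j\,P_k\partial^j\phi$, whose estimate \eqref{equ:low_high_N_estimate_for_free_wave} forces an $\ell^1$ summation of the $\dot H^1_x\times L^2_x$ norms of the low-frequency blocks of the $A$-perturbation -- the ``low frequency divergence'' the paper highlights -- and this cannot be repaired by slicing into finitely many sub-bands, since there are not finitely many. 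Moreover, even within a fixed band the scale-by-scale use of Proposition~\ref{prop:perturbation} is not legitimate: that proposition requires the reference data to be supported at $|\xi|\le K$ and the perturbation at $|\xi|>K$, and its proof relies on the exponential frequency-envelope decay of $\|P_l\phi\|_{S^1}$ for $l\ge K$ (Proposition~\ref{prop:breakdown_criterion_frequency_envelopes}); when you modify a band lying \emph{inside} the frequency support of the reference, the reference's scalar field is large above that band, the decay is absent, and the mechanism of Proposition~\ref{prop:perturbation} breaks down. Finally, note that proving $\liminf_n\|(A_n,\phi_n)\|_{S^1(J\times\R^4)}<\infty$ for a fixed compact $J$ does not by itself contradict essential singularity, which concerns the norms on the full maximal intervals $(-T_0^n,T_1^n)$; your diagonal argument does not upgrade bounds on $J_k$ to bounds on the full lifespans.

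The paper's actual proof is structurally different exactly to avoid these issues: it writes $A_n=A+\delta A_n$ and $\phi_n=\chi_{I_1}\phi+\chi_{I_2}\tilde\phi_{A,\delta A_n^{free}}+\delta\phi_n$, where $\tilde\phi_{A,\delta A_n^{free}}$ solves the covariant equation $\widetilde{\Box}_{A+\delta A_n^{free}}u=0$ with data $\phi[0]$, so that the non-perturbative low-high interaction of $\delta A_n^{free}$ with the scalar field is absorbed into the linear operator (whose parametrix from Section~\ref{sec:magnetic_wave_equation} needs only energy bounds on the Coulomb free wave, not $\ell^1$-Besov control). The remaining errors are handled globally in time by the uniform dispersive bounds of Lemma~\ref{lem:uniform_dispersive_bounds} on $I_2$, by Huygens' principle together with a physical-space truncation of $\phi[0]$ to control terms like $\partial_t^2(\chi_{I_1})(\phi-\tilde\phi_{A,\delta A_n^{free}})$, and by divisibility arguments as in Step~3 of Proposition~\ref{prop:bootstrap} (with $(A,\phi)$ itself replaced by a nearby low-frequency approximation so that Proposition~\ref{prop:perturbation} applies with unchanged $A$-data). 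If you wish to salvage your approach, the band-by-band bootstrap must be replaced by an ansatz of this type that builds $\delta A_n^{free}$ into the linear flow.
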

\begin{proof}
 A sketch of the proof can be found in Subsection \ref{subsec:interlude}.
\end{proof}
 
We shall later on need certain variations of the preceding proposition. 

\begin{cor} \label{cor:lifespancont} 
 Let $\{ (A_n, \phi_n)[0] \}_{n \in \N}$ and $(A, \phi)[0]$ be Coulomb energy class data such that
 \[
  \lim_{n \rightarrow \infty} (A_n, \phi_n)[0] = (A, \phi)[0]
 \]
 in the energy topology and let $I$ be the maximal lifespan of the MKG-CG evolution of $(A,\phi)[0]$. If $J \subset I$ is a compact time interval, then it holds that
 \[
  \limsup_{n \rightarrow \infty} \|(A_n, \phi_n)\|_{S^1(J \times \R^4)} < \infty.
 \]
\end{cor}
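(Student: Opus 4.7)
\medskip

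\noindent\textbf{Plan of proof.} The argument is by contradiction and reduces Corollary~\ref{cor:lifespancont} to Proposition~\ref{prop:minenblowup} via an approximation procedure. Suppose the conclusion fails. Passing to a subsequence, we may assume that $\|(A_n, \phi_n)\|_{S^1(J\times\R^4)} \to \infty$ as $n \to \infty$, while on the limiting side, the fact that $J \subset I$ is compact together with Definition~\ref{defn:energy_class_solution} guarantees the existence of smooth low-frequency truncations $(A^\ast_m, \phi^\ast_m)[0]$ of $(A,\phi)[0]$ whose admissible evolutions exist on $J$ and satisfy $\limsup_{m\to\infty} \|(A^\ast_m, \phi^\ast_m)\|_{S^1(J\times\R^4)} < \infty$.

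\medskip

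The first step is to replace the energy-class sequence $\{(A_n,\phi_n)\}$ by a \emph{genuinely admissible} sequence which still blows up on $J$. By Definition~\ref{defn:energy_class_solution}, the energy-class norm $\|(A_n,\phi_n)\|_{S^1(J\times\R^4)}$ is obtained as the limit of the $S^1$-norms of admissible low-frequency smoothings of $(A_n,\phi_n)[0]$; for each $n$ we can therefore select such a smoothing $(\widetilde{A}_n, \widetilde{\phi}_n)[0]$, with Fourier support at $|\xi| \leq K_n$ for some $K_n > 0$, so that
\[
 \bigl\| (\widetilde{A}_n,\widetilde{\phi}_n)[0] - (A_n,\phi_n)[0] \bigr\|_{\dot{H}^1_x \times L^2_x} < \tfrac{1}{n}, \qquad \bigl\| (\widetilde{A}_n,\widetilde{\phi}_n) \bigr\|_{S^1(J\times\R^4)} \geq n.
\]
By the triangle inequality $(\widetilde{A}_n,\widetilde{\phi}_n)[0] \to (A,\phi)[0]$ in the energy topology, so $\{(\widetilde{A}_n,\widetilde{\phi}_n)\}$ is an essentially singular sequence of admissible solutions in the sense needed for the hypothesis of Proposition~\ref{prop:minenblowup} (localized to $J$).

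\medskip

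The second step is to derive the contradiction using the \emph{high-frequency} perturbation theory of Proposition~\ref{prop:perturbation}, which is in fact the engine driving Proposition~\ref{prop:minenblowup}. Fix $m$ large enough that $\|(A^\ast_m,\phi^\ast_m)\|_{S^1(J\times\R^4)} \leq L$ for some $L$ independent of $m$, and that the frequency cutoff $K^\ast_m$ of $(A^\ast_m,\phi^\ast_m)[0]$ is such that the high-frequency tail
\[
 E\bigl( P_{>K^\ast_m} A,\, P_{>K^\ast_m} \phi \bigr) < \tfrac{1}{2}\delta_1(L),
\]
where $\delta_1(L)$ is the threshold provided by Proposition~\ref{prop:perturbation}. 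Then for all sufficiently large $n$ with $K_n \gg K^\ast_m$, the convergence $(\widetilde{A}_n,\widetilde{\phi}_n)[0] \to (A,\phi)[0]$ in energy yields
\[
 E\bigl( (\widetilde{A}_n,\widetilde{\phi}_n)[0] - (A^\ast_m, \phi^\ast_m)[0] \bigr) < \delta_1(L),
\]
and the difference can be split into a low-frequency piece (at frequencies $\leq K^\ast_m$), whose smallness in energy allows us to absorb it into a modification of the reference data $(A^\ast_m, \phi^\ast_m)[0]$ while keeping its $S^1$-norm bounded, and a purely high-frequency piece (at frequencies $> K^\ast_m$), to which Proposition~\ref{prop:perturbation} applies directly. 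The conclusion is that $(\widetilde{A}_n,\widetilde{\phi}_n)$ extends admissibly over $J$ with $\|(\widetilde{A}_n,\widetilde{\phi}_n)\|_{S^1(J\times\R^4)} \leq \widetilde{L}(L,\delta_1(L))$ uniformly in $n$, directly contradicting the lower bound $\|(\widetilde{A}_n,\widetilde{\phi}_n)\|_{S^1(J\times\R^4)} \geq n$ obtained above.

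\medskip

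\noindent The principal technical obstacle is the splitting of $(\widetilde{A}_n,\widetilde{\phi}_n)[0] - (A^\ast_m,\phi^\ast_m)[0]$ into a ``low-frequency absorbable'' part and a ``high-frequency perturbative'' part, since MKG-CG does not admit general perturbation theory in $S^1$; this is why the two-scale frequency arrangement $K^\ast_m \ll K_n$ is essential, exactly as in the heart of the modified Bahouri-G\'erard procedure of Section~\ref{sec:concentration_compactness_step}. The low-frequency absorption step is a variant of the core perturbative argument in Step~3 of the proof of Proposition~\ref{prop:bootstrap}, and can essentially be read off from the proof of Proposition~\ref{prop:minenblowup} itself.
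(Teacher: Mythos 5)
Your reduction to an admissible sequence with diverging $S^1(J\times\R^4)$ norms and data converging to $(A,\phi)[0]$ is fine, as is the treatment of the purely high-frequency part of the difference via Proposition~\ref{prop:perturbation}. The genuine gap is the ``low-frequency absorption'' step: you claim that the part of $(\widetilde{A}_n,\widetilde{\phi}_n)[0]-(A^\ast_m,\phi^\ast_m)[0]$ living at frequencies $\leq K^\ast_m$ can, because it is small in energy, be absorbed into the reference data ``while keeping its $S^1$-norm bounded.'' That is exactly the low-frequency perturbation theory which the paper does not possess and explicitly flags as unavailable: small energy of $\delta A$ does not control the low-high magnetic interaction $\sum_k P_{<k}\delta A_j^{free}P_k\partial^j\phi$, since the estimate \eqref{equ:low_high_N_estimate_for_free_wave} must be summed $\ell^1$ over the low frequencies of $\delta A[0]$, and a generic energy-small perturbation spread over all dyadic blocks below $K^\ast_m$ gives no such Besov-type control. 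Proposition~\ref{prop:perturbation} only covers perturbations supported strictly above the reference's frequency cutoff, so it cannot be invoked for this piece; and Step~3 of Proposition~\ref{prop:bootstrap} is not a usable ``variant'' here, because that argument leans on structure manufactured by the frequency-atom decomposition --- sharply localized frequency envelopes for the perturbation, exponential decay of the reference's envelope above its cutoff (Step~1), and the frequency evacuation property of Lemma~\ref{lem:asymptoevacuate} --- none of which is available for the difference you are trying to absorb. In effect the absorption step presupposes a statement of essentially the same strength as the corollary itself. (A smaller point: the sequence you produce need not be essentially singular in the paper's sense, since its energies converge to $E(A,\phi)$ rather than $E_{crit}$, so Proposition~\ref{prop:minenblowup} cannot be cited as a black box anyway; what matters is its proof mechanism.)

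The paper treats this corollary as a variation of Proposition~\ref{prop:minenblowup}, whose proof (sketched in Subsection~\ref{subsec:interlude}) handles the low-frequency component by a different mechanism than absorption: one writes $A_n=A+\delta A_n$ and makes the ansatz $\phi_n=\chi_{I_1}\phi+\chi_{I_2}\tilde{\phi}_{A,\delta A_n^{free}}+\delta\phi_n$, where the non-perturbative low-frequency magnetic potential is built into the \emph{linear} operator $\tilde{\Box}_{A+\delta A_n^{free}}$; the reference $(A,\phi)$ is replaced by a nearby low-frequency approximation with controlled bounds, and the remaining interactions are closed using the dispersive decay of Lemma~\ref{lem:uniform_dispersive_bounds}, divisibility arguments, and physical-space truncation together with Huygens' principle to tame the cutoff errors such as $\partial_t^2(\chi_{I_1})(\phi-\tilde{\phi}_{A,\delta A_n^{free}})$. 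Unless you supply an argument of this type for the low-frequency part of the difference, your proposed proof does not close.
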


This entails the following important corollary.

\begin{cor} \label{cor:lifespancompact}
 Let $\{ (A_n, \phi_n)[0] \}_{n \in \N} \subset \dot{H}^1_x \times L^2_x$ be a compact subset of Coulomb energy class data. Then there exists an open interval $I_\ast$ centered at $t=0$ with the property that 
 \[
  I_\ast \subset I_n \text{ for all } n \in \N,
 \]
 where $I_n$ denotes the maximal lifespan of the MKG-CG evolutions of $(A_n, \phi_n)[0]$ given by Definition~\ref{defn:energy_class_solution}.
\end{cor}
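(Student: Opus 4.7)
The plan is to argue by contradiction, using the compactness of the data set to extract a limit and then appealing to Corollary~\ref{cor:lifespancont} to propagate a uniform lifespan.

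Suppose the conclusion fails. By time reversibility, we may assume there exist a subsequence (still denoted $(A_n, \phi_n)[0]$) and a sequence of positive times $t_n \downarrow 0$ such that $t_n \notin I_n^+$, where $I_n^+ := I_n \cap [0, \infty)$. By the compactness assumption, after passing to a further subsequence, we obtain
\[
 \lim_{n \to \infty} (A_n, \phi_n)[0] = (A_\infty, \phi_\infty)[0]
\]
in the energy topology, for some Coulomb energy class datum $(A_\infty, \phi_\infty)[0]$. Let $I_\infty$ denote the maximal lifespan of the MKG-CG evolution of this limiting datum in the sense of Definition~\ref{defn:energy_class_solution}.

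By Proposition~\ref{prop:joint_time_interval}, the lifespan $I_\infty$ contains an open interval centered at $t=0$; hence we may pick a closed interval $J = [0, T_\ast] \subset I_\infty$ with $T_\ast > 0$. Since $(A_n, \phi_n)[0] \to (A_\infty, \phi_\infty)[0]$ in the energy topology, Corollary~\ref{cor:lifespancont} applied to the compact subinterval $J \subset I_\infty$ yields
\[
 \limsup_{n \to \infty} \bigl\| (A_n, \phi_n) \bigr\|_{S^1(J \times \R^4)} < \infty.
\]
In particular, for all sufficiently large $n$ the quantity $\|(A_n, \phi_n)\|_{S^1(J\times\R^4)}$ is finite, which by the definition of the maximal lifespan of an energy class solution (cf.\ Definition~\ref{defn:energy_class_solution} and Lemma~\ref{lem:characerization_maximal_lifespan}) forces $J \subset I_n$ for all sufficiently large $n$. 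This contradicts the assumption that $t_n \downarrow 0$ with $t_n \notin I_n^+$, since eventually $t_n < T_\ast$.

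The main technical ingredient here is Corollary~\ref{cor:lifespancont}, whose proof in Subsection~\ref{subsec:interlude} relies on the delicate perturbation theory for (MKG-CG); given that result, the argument above is a routine compactness/contradiction step, and no further issue arises because the convergence is in the strong energy topology, which is exactly the hypothesis under which the previous corollary is formulated.
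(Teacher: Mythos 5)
Your argument is correct and is essentially the paper's own proof: argue by contradiction, use compactness to extract a subsequence of data converging in the energy topology, and then invoke Corollary~\ref{cor:lifespancont} on a compact subinterval of the limiting lifespan to contradict the shrinking lifespans. The extra details you supply (picking $J=[0,T_\ast]$ via Proposition~\ref{prop:joint_time_interval} and reading off $J\subset I_n$ from Definition~\ref{defn:energy_class_solution}) are exactly what the paper leaves implicit in its one-line appeal to Corollary~\ref{cor:lifespancont}.
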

\begin{proof} 
 We argue by contradiction. Assume that there exists a subsequence $\{ (A_{n_k}, \phi_{n_k})[0] \}_{k \in \N}$ for which at least one of the lifespan endpoints of the associated MKG-CG evolutions converges to $t = 0$. Passing to a further subsequence, which we again denote by $\{ (A_{n_k}, \phi_{n_k})[0]\}_{k \in \N}$, we may assume that
 \[
  \lim_{k \rightarrow \infty} (A_{n_k}, \phi_{n_k})[0] = (A, \phi)[0]
 \]
 in the energy topology for some Coulomb energy class data $(A, \phi)[0]$. The contradiction now follows from Corollary \ref{cor:lifespancont}.
\end{proof}

\section{Concentration compactness step} \label{sec:concentration_compactness_step}

\subsection{General considerations} \label{subsec:general_considerations}

We begin by sorting out the relationship between the conserved energy and the $\dot{H}^1_x \times L^2_x$-norm of solutions $(A, \phi)$ to the MKG-CG system. Recall that the conserved energy is given by the expression
\[
E(A, \phi) = \frac{1}{4} \sum_{\alpha, \beta} \int_{\R^4} ( \partial_{\alpha} A_{\beta} - \partial_{\beta} A_{\alpha} )^2 \,dx + \frac{1}{2} \sum_{\alpha} \int_{\R^4} | \partial_{\alpha} \phi + i A_{\alpha} \phi |^2 \, dx.
\]
Using the Coulomb gauge condition, this can be written as 
\begin{align*}
&E(A, \phi) = \sum_{i < j} \int_{\R^4}(\partial_i A_j)^2 \, dx + \frac{1}{2} \sum_{i} \int_{\R^4} (\partial_t A_i)^2 + (\partial_i A_{0})^2 \, dx + \frac{1}{2} \sum_{\alpha} \int_{\R^4} |\partial_{\alpha} \phi + i A_{\alpha} \phi |^2 \, dx,
\end{align*}
which immediately implies
\[
E(A, \phi) \lesssim \sum_{i} \|\nabla_{t,x}A_i\|_{L_x^2}^2 + \|\nabla_x A_0\|_{L_x^2}^2 + \|\nabla_{t,x}\phi\|_{L_x^2}^2 + \sum_\alpha \|\nabla_x A_\alpha\|_{L^2_x}^2 \|\nabla_x \phi\|_{L^2_x}^2.
\]
Conversely, in order to exploit the conserved energy, we also need to show that the expression 
\[
\sum_{\alpha}\|\nabla_{t,x}A_{\alpha}\|_{L_x^2}^2 +  \|\nabla_{t,x}\phi\|_{L_x^2}^2
\]
is bounded in terms of $E(A, \phi)$. Here the only issue comes from bounding the terms $\|\nabla_{t,x}\phi\|_{L_x^2}$ and $\|\partial_t A_0\|_{L^2_x}$. However, the diamagnetic inequality gives the pointwise estimate 
\[
 |\partial_{\alpha} |\phi| | \leq |(\partial_\alpha + i A_{\alpha}) \phi |
\]
and Sobolev's inequality then yields
\[
 \|\phi\|_{L_x^4} \lesssim \|\nabla_x |\phi| \|_{L^2_x} \lesssim \sum_j \|(\partial_j + i A_j) \phi \|_{L_x^2}.
\]
Thus, we find
\begin{align*}
\|\partial_{\alpha} \phi \|_{L_x^2} &\leq  \|(\partial_\alpha + i A_{\alpha})\phi\|_{L_x^2} + \|A_{\alpha} \phi\|_{L_x^2} \\
&\lesssim  \|(\partial_\alpha + i A_{\alpha})\phi\|_{L_x^2} + \|A_\alpha\|_{L^4_x}^2 + \|\phi\|_{L^4_x}^2 \\
&\lesssim E(A,\phi)^{\frac{1}{2}} + E(A,\phi).
\end{align*}
In order to bound the time derivative $\| \partial_t A_0\|_{L^2_x}$, we use the compatibility relation
\[
 \Delta \partial_t A_0 = - \sum_j \partial_j \Im \bigl( \phi \overline{D_j \phi})
\]
to obtain
\[
 \|\partial_t A_0\|_{L_x^2} \lesssim \|\nabla_x \partial_t A_0 \|_{L^{\frac{4}{3}}_x} \lesssim \sum_{j} \| \phi \overline{D_j\phi} \|_{L_x^{\frac{4}{3}}} \lesssim \sum_j \|\phi\|_{L^4_x} \|D_j\phi\|_{L_x^2} \lesssim E(A, \phi).
\]

We also recall that the notation $(A,\phi)[0]$ for initial data for the MKG-CG system only refers to the prescribed data $A_j[0]$, $j = 1, \ldots, 4$, for the evolution of the spatial components of the connection form $A$. The component $A_0$ is determined via the compatibility relations.

\subsection{Setting up the induction on frequency scales} \label{subsec:setting_up_induction_on_freq_scales}

Our final goal will be to show the following. 

\medskip

{\it Let $(A,\phi)[0]$ be admissible Coulomb data. Then the corresponding MKG-CG evolution exists globally in time and denoting its energy by
\[
 E = \frac{1}{4} \sum_{\alpha,\beta} \int_{\R^4} (\partial_\alpha A_\beta - \partial_\beta A_\alpha)^2 \, dx + \frac{1}{2} \sum_\alpha \int_{\R^4} |\partial_\alpha + i A_\alpha \phi |^2 \, dx,
\]
there exists an increasing function $K\colon \R^+ \to \R^+$ such that 
\[
 \|(A,\phi)\|_{S^1(\R\times\R^4)} \leq K(E).
\]
}
To prove this result we proceed by contradiction. By the small data global well-posedness result \cite{KST} we know that the assertion holds for sufficiently small energies. So assume that it does not hold for all energies $E > 0$. Then the set of exceptional energies has a positive infimum, which we denote by $E_{crit}$, and we can find a sequence of admissible data $\{(A^n, \phi^n)[0]\}_{n\in\N}$ with evolutions $\{(A^n, \phi^n)\}_{n\in\N}$ defined on $(-T_0^n, T_1^n) \times \R^4$ such that
\begin{align*}
 &\lim_{n\rightarrow\infty} E(A^n, \phi^n) = E_{crit},  \\
 &\lim_{n\rightarrow\infty} \|(A^n, \phi^n)\|_{S^1((-T_0^n, T_1^n)\times \R^4)} = +\infty.
\end{align*}
We call such a sequence of initial data {\it essentially singular}.

\medskip

We now implement a two step Bahouri-G\'erard type procedure. The first step consists in selecting frequency atoms. Here we largely follow the setup of Subsection 9.1 and Subsection 9.2 in \cite{KS}, which in turn is partially based on Section III.1 of \cite{Bahouri-Gerard}. We recall the following terminology from \cite{Bahouri-Gerard}. A \emph{scale} is a sequence of positive numbers $\{\lambda^n\}_{n\in\N}$. We say that two scales $\{\lambda^{na}\}_{n\in\N}$ and $\{\lambda^{nb}\}_{n\in\N}$ are \emph{orthogonal} if
\[
 \lim_{n\to\infty} \frac{\lambda^{na}}{\lambda^{nb}} + \frac{\lambda^{nb}}{\lambda^{na}} = +\infty.
\]
Let $\{ \lambda^n \}_{n \in \N}$ be a scale and let $\{ (f^n, g^n) \}_{n \in \N}$ be a bounded sequence of functions in $\dot{H}^1_x(\R^4) \times L^2_x(\R^4)$. Then we say that $\{ (f^n, g^n) \}_{n \in \N}$ is \emph{$\lambda^n$-oscillatory} if
\[
 \lim_{R \to + \infty} \limsup_{n\to\infty} \, \Biggl( \int_{ \{\lambda^n |\xi| \leq \frac{1}{R} \} } |\hat{\nabla_x f^n}(\xi)|^2 + |\hat{g^n}(\xi)|^2 \, d\xi + \int_{ \{\lambda^n |\xi| \geq R\} } |\hat{\nabla_x f^n}(\xi)|^2 + |\hat{g^n}(\xi)|^2 \, d\xi \Biggr) = 0
\]
and we say that $\{ (f^n, g^n) \}_{n \in \N}$ is \emph{$\lambda^n$-singular} if for all $b > a > 0$,
\[
 \lim_{n \to \infty} \int_{ \{ a \leq \lambda^n |\xi| \leq b \} } |\hat{\nabla_x f^n}(\xi)|^2 + |\hat{g^n}(\xi)|^2 \, d\xi = 0.
\]

\medskip

We obtain the following decomposition of the essentially singular sequence of data $\{ (A^n, \phi^n)[0] \}_{n \in \N}$ into frequency atoms.
\begin{prop} \label{prop:selecting_frequency_atoms}
 Let $\{ (A^n, \phi^n)[0] \}_{n\in\N}$ be a sequence of admissible data with energy bounded by~$E$. Up to passing to a subsequence the following holds. Given $\delta > 0$, there exists an integer $\Lambda = \Lambda(\delta, E) > 0$ and for every $n \in \N$ a decomposition
 \begin{align*}
  A^n[0] &= \sum_{a=1}^\Lambda A^{na}[0] + A^n_\Lambda[0], \\
  \phi^n[0] &= \sum_{a=1}^\Lambda \phi^{na}[0] + \phi_{\Lambda}^n[0].
 \end{align*}
 For $a = 1, \ldots, \Lambda$, the frequency atoms $(A^{na}, \phi^{na})[0]$ are $\lambda^{na}$-oscillatory for a family of pairwise orthogonal frequency scales $\{ \lambda^{na} \}_n$. The error $(A_\Lambda^n, \phi_\Lambda^n)[0]$ is $\lambda^{na}$-singular for every $1 \leq a \leq \Lambda$ and satisfies the smallness condition
 \begin{align*}
  \limsup_{n \rightarrow \infty} \, \bigl\| A_\Lambda^n[0] \bigr\|_{\dot{B}_{2,\infty}^1 \times \dot{B}_{2,\infty}^0} < \delta, \quad \limsup_{n \rightarrow \infty} \, \bigl\| \phi_\Lambda^n[0] \bigr\|_{\dot{B}_{2,\infty}^1 \times \dot{B}_{2,\infty}^0} < \delta.
 \end{align*}
 Moreover, for $a = 1, \ldots, \Lambda$, the frequency atoms $(A^{na}, \phi^{na})[0]$ have sharp frequency support in the frequency intervals $|\xi| \in [ (\lambda^{na})^{-1} R_n^{-1}, (\lambda^{na})^{-1} R_n]$ for a suitable sequence $R_n \to +\infty$. For different values of $a$, these frequency intervals $[ (\lambda^{na})^{-1} R_n^{-1}, (\lambda^{na})^{-1} R_n]$ are mutually disjoint for sufficiently large~$n$. Finally, we have asymptotic decoupling of the energy
 \begin{equation*}
  E(A^n, \phi^n) = \sum_{a = 1}^\Lambda E(A^{na}, \phi^{na}) + E(A_\Lambda^n, \phi_\Lambda^n) + o(1) \quad \text{ as } n \rightarrow \infty,
 \end{equation*}
 where the temporal components $A_0^{na}$ are determined via the compatibility relation
 \[
  \bigl( \Delta - |\phi^{na}|^2 \bigr) A_0^{na} = - \Im \bigl( \phi^{na} \overline{\partial_t \phi^{na}} \bigr),
 \]
 and similarly for $A^n_{\Lambda, 0}$.
\end{prop}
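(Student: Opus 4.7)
\medskip

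\noindent \textbf{Proof proposal.} The plan is to apply a M\'etivier--Schochet type extraction procedure to the sequence of pairs $(\nabla_{x,t} A^n, \nabla_{x,t} \phi^n)(0,\cdot) \in L^2_x \times L^2_x$, following almost verbatim the scheme laid out in Section~III.1 of \cite{Bahouri-Gerard} and Subsection~9.1--9.2 of \cite{KS}. Since the assumed energy bound controls $\|A^n[0]\|_{\dot H^1 \times L^2}$ and $\|\phi^n[0]\|_{\dot H^1 \times L^2}$ (by the diamagnetic inequality discussion at the start of Subsection~\ref{subsec:general_considerations}), we may treat the two data pairs as a single bounded sequence in $\dot H^1_x \times L^2_x$ and extract \emph{common} frequency scales $\lambda^{na}$ for both $A^{na}[0]$ and $\phi^{na}[0]$.

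The extraction proceeds inductively. Given $\delta > 0$, if the initial data already satisfy $\limsup_n \|(A^n,\phi^n)[0]\|_{\dot B^1_{2,\infty}\times \dot B^0_{2,\infty}} < \delta$ we are done with $\Lambda = 0$. Otherwise, passing to a subsequence, there exist $k_n \in \Z$ with
\[
\bigl\| P_{k_n}(A^n,\phi^n)[0] \bigr\|_{\dot H^1_x \times L^2_x} \gtrsim \delta,
\]
and we set $\lambda^{n1} := 2^{-k_n}$ and define $(A^{n1},\phi^{n1})[0]$ to be the $\lambda^{n1}$-oscillatory component obtained by a standard concentration-lemma argument (weak limit after rescaling, translated back). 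The residual $(A^n,\phi^n)[0] - (A^{n1},\phi^{n1})[0]$ is then $\lambda^{n1}$-singular. One iterates, at each stage either stopping when the Besov norm of the residual falls below $\delta$ or extracting a further atom at a scale $\lambda^{na}$ asymptotically orthogonal to all previous ones. Because each extracted atom carries $\gtrsim \delta^2$ of $\dot H^1_x \times L^2_x$ norm and the atoms asymptotically decouple in this Hilbert norm (by the orthogonality of the scales), the process terminates after $\Lambda = \Lambda(\delta,E)$ steps. This is identical to the argument in \cite{KS}.

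Having extracted the atoms, we sharpen their frequency supports: choose a sufficiently slowly growing sequence $R_n \to \infty$ so that the modified atoms obtained by replacing $(A^{na},\phi^{na})[0]$ with $\widetilde P_{[(\lambda^{na})^{-1}R_n^{-1},(\lambda^{na})^{-1}R_n]}(A^{na},\phi^{na})[0]$ differ from the original atoms by a quantity $o_n(1) \to 0$ in $\dot H^1_x \times L^2_x$; this is possible because each atom is $\lambda^{na}$-oscillatory and there are only $\Lambda$ of them. The pairwise orthogonality of the scales $\lambda^{na}$ then forces the frequency intervals $[(\lambda^{na})^{-1}R_n^{-1}, (\lambda^{na})^{-1}R_n]$ to be mutually disjoint for all $n$ large enough, after we slightly shrink $R_n$ if necessary. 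The $\lambda^{na}$-singularity of the error $(A^n_\Lambda,\phi^n_\Lambda)[0]$ together with disjointness of supports then guarantees that the error has Fourier support disjoint from all the atomic windows.

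The main obstacle — and the step that deviates from the purely linear Bahouri--G\'erard argument — is the asymptotic decoupling of the full nonlinear Maxwell--Klein--Gordon energy, not just the quadratic $\dot H^1_x \times L^2_x$ part. The quadratic pieces $\sum_\alpha \|\nabla_{t,x} A^n_\alpha\|_{L^2_x}^2$ and $\sum_\alpha \|\nabla_{t,x}\phi^n\|_{L^2_x}^2$ decouple directly from disjointness of Fourier supports (they become mutually orthogonal for large $n$). The covariant terms $\|D_\alpha \phi\|_{L^2_x}^2$ expand into $\|\partial_\alpha \phi\|^2 + 2\Im\langle A_\alpha \phi, \partial_\alpha \phi\rangle + \|A_\alpha \phi\|^2$, and the magnetic cross-terms must be shown to vanish asymptotically; this reduces to the estimate
\[
\bigl| \langle A^{na}\phi^{nb}, \partial_\alpha \phi^{nc}\rangle \bigr| \lesssim \|A^{na}\|_{L^4_x} \|\phi^{nb}\|_{L^4_x}\|\partial_\alpha \phi^{nc}\|_{L^2_x},
\]
which by Bernstein on dyadic blocks and the orthogonality of the scales produces a gain of a positive power of $\min(\lambda^{na}/\lambda^{nb}, \lambda^{nb}/\lambda^{na})$ or analogous. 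A similar argument handles the quartic contribution $\|A_\alpha\phi\|_{L^2_x}^2$ expanding across atoms, and the implicit dependence of $A^{na}_0$ through the elliptic relation $(\Delta - |\phi^{na}|^2)A^{na}_0 = -\Im(\phi^{na}\overline{\partial_t\phi^{na}})$ is controlled by standard elliptic estimates on the right-hand side, whose cross-terms between distinct atoms again decay via the scale orthogonality. Once these cross-terms are shown to be $o_n(1)$, adding back the residual $(A^n_\Lambda,\phi^n_\Lambda)[0]$ gives the claimed identity
\[
E(A^n,\phi^n) = \sum_{a=1}^\Lambda E(A^{na},\phi^{na}) + E(A^n_\Lambda,\phi^n_\Lambda) + o(1).
\]
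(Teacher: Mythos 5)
Your overall architecture coincides with the paper's: invoke the Bahouri--G\'erard/M\'etivier--Schochet extraction for the scales and atoms, sharpen the frequency supports with a slowly growing $R_n$, and then verify the energy decoupling by cross-term estimates. Two points need attention, one minor and one substantive. The minor one is the preparation step: you truncate each atom to its window and claim that $\lambda^{na}$-singularity of the error ``guarantees that the error has Fourier support disjoint from all the atomic windows.'' That does not follow: singularity controls the mass in annuli $a\le\lambda^{na}|\xi|\le b$ for \emph{fixed} $a,b$, whereas the windows grow with $R_n\to\infty$, so at best (choosing $R_n$ slowly) the error's mass inside the windows is $o(1)$, not zero. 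The paper instead redefines both the atoms and the error by applying the sharp projections $P_{[\mu^{na}-\log R_n,\,\mu^{na}+\log R_n]}$, and the complementary projection, to the \emph{whole} sum of old atoms plus old error; this keeps the decomposition exact, puts the new atoms sharply inside disjoint windows, and puts the new error sharply outside all windows, which is what is used later when the error is split into frequency shells. Your version can be patched for the literal statement by absorbing the $o(1)$ truncation errors into $(A^n_\Lambda,\phi^n_\Lambda)[0]$, but the quoted claim should be corrected.

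The substantive gap is the temporal component. Essentially all of the paper's work in this proposition goes into the fact that $A_0^{na}$, defined through the nonlinear elliptic equation $(\Delta-|\phi^{na}|^2)A_0^{na}=-\Im\bigl(\phi^{na}\overline{\partial_t\phi^{na}}\bigr)$, is itself $\lambda^{na}$-oscillatory, that $A^n_{\Lambda,0}$ is $\lambda^{na}$-singular, and that consequently $A_0^n=\sum_a A_0^{na}+A^n_{\Lambda,0}+o_{\dot H^1_x}(1)$. This is needed because the MKG energy contains $|\nabla_x A_0|^2$ (through $F_{0j}$) and $A_0$-dependent pieces of $|D_0\phi|^2$, and your mechanism ``cross-terms between distinct atoms decay via scale orthogonality'' only applies to these terms once one knows that $A_0^{na}$ lives at the scale $\lambda^{na}$. ``Standard elliptic estimates'' give $\dot H^1_x$ bounds for $A_0^{na}$ but not this frequency localization; the paper obtains it by projecting the elliptic equation to frequencies $\le R$ and $>R$, splitting $|\phi^{na}|^2$ into low and high frequency parts, and using Bernstein to show the corresponding tails of $A_0^{na}$ vanish in $\dot H^1_x$ in the limits $R\to-\infty$ and $R\to+\infty$ (together with the decomposition of the source $-\Im(\phi^n\overline{\partial_t\phi^n})$ up to $o_{L^{4/3}_x}(1)$). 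Your treatment of the purely spatial cross terms is fine in spirit --- the gain comes from placing the lowest-frequency factor in $L^\infty_x$ via Bernstein, at the price of a favorable power of the scale ratio, and one must check this survives the slowly growing windows --- but the frequency localization of $A_0^{na}$ is the missing idea without which the decoupling of the $A_0$-dependent part of the energy is not justified.
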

\begin{proof}
 We suppress the notation [0] in the proof. As in Section III.1 of \cite{Bahouri-Gerard}, we obtain a decomposition of the data $\{ (A^n, \phi^n) \}_{n\in\N}$ into frequency atoms
 \begin{align*}
  A^n = \sum_{a=1}^\Lambda \tilde{A}^{na} + \tilde{A}_\Lambda^{n}, \quad \phi^n = \sum_{a=1}^\Lambda \tilde{\phi}^{na} + \tilde{\phi}^n_\Lambda,
 \end{align*}
 where $(\tilde{A}^{na}, \tilde{\phi}^{na})$ are $\lambda^{na}$-oscillatory for a family of pairwise orthogonal scales $\{\lambda^{na}\}_{n\in\N}$ for $a = 1, \ldots, \Lambda$. The error $(\tilde{A}_\Lambda^n, \tilde{\phi}_\Lambda^n)$ is $\lambda^{na}$-singular for $a = 1, \ldots, \Lambda$ and satisfies the smallness condition
 \begin{align*}
  \limsup_{n \rightarrow \infty} \, \bigl\| \tilde{A}_\Lambda^n \bigr\|_{\dot{B}_{2,\infty}^1 \times \dot{B}_{2,\infty}^0} < \delta, \quad \limsup_{n \rightarrow \infty} \, \bigl\| \tilde{\phi}_\Lambda^n \bigr\|_{\dot{B}_{2,\infty}^1 \times \dot{B}_{2,\infty}^0} < \delta.
 \end{align*} 
 In order to get a clean separation of the frequency atoms in frequency space, we have to prepare them a bit more, because their decay from the scale $(\lambda^{na})^{-1}$ might be arbitrarily slow. To this end let $R_n \rightarrow \infty$ be a sequence growing sufficiently slowly such that the intervals $[ (\lambda^{na})^{-1} R_n^{-1}, (\lambda^{na})^{-1} R_n ]$ are mutually disjoint for $n$ large enough and for different values of $a$. Then we replace the error $\tilde{A}_\Lambda^{n}$ by
 \[
  A_\Lambda^n = P_{\cap_{a'=1}^\Lambda [\mu^{na'} - \log R_n, \mu^{na'} + \log R_n]^c} \tilde{A}_\Lambda^{n} + \sum_{a=1}^\Lambda P_{\cap_{a' = 1}^\Lambda [\mu^{na'} - \log R_n, \mu^{na'} + \log R_n]^c} \tilde{A}^{na},
 \]
 where $\mu^{na} = -\log \lambda^{na}$, and the frequency atoms $\tilde{A}^{na}$ by
 \[
  A^{na} =  P_{[\mu^{na} - \log R_n, \mu^{na} + \log R_n]} \tilde{A}_\Lambda^n + P_{[\mu^{na}-\log R_n, \mu^{na} + \log R_n]} \sum_{a'=1}^\Lambda \tilde{A}^{na'}
 \]
 for $a = 1, \ldots, \Lambda$. In order to remove the dependence on $\Lambda$ in the new profiles, we may replace $\Lambda$ by $\Lambda_n$ with $\Lambda_n \to \infty$ sufficiently slowly as $n \to \infty$. Analogously, we define $\phi^{na}$ and $\phi^n_\Lambda$. This new decomposition
 \begin{equation} \label{equ:frequency_atom_decomposition}
  A^n = \sum_{a=1}^\Lambda A^{na} + A^n_\Lambda, \quad \phi^n = \sum_{a=1}^\Lambda \phi^{na} + \phi^n_\Lambda,
 \end{equation}
 has the same properties as the original one, but that we have now arranged for a sharp separation of the frequency supports of the frequency atoms. 

 Finally, we turn to the asymptotic decoupling of the energy. Here we recall that the ``elliptic components'' $A_0^{na}$ associated with a frequency atom $(A^{na}, \phi^{na})$ are determined via the elliptic compatibility equations. It therefore suffices to show that the decomposition \eqref{equ:frequency_atom_decomposition} (which only refers to the spatial components of the connection form $A^n$) implies a similar frequency atom decomposition
 \begin{equation} \label{equ:frequency_atom_decomposition_temporal}
  A_0^n = \sum_{a=1}^\Lambda A_0^{na} + A^n_{\Lambda, 0} + o_{\dot{H}^1_x}(1) \quad \text{ as } n \rightarrow \infty,
 \end{equation}
 where $A_0^{na}$ is $\lambda^{na}$-oscillatory and $A_{\Lambda,0}^n$ is $\lambda^{na}$-singular for each $a = 1, \ldots, \Lambda$. Then the decoupling of the energy is an immediate consequence of the construction of the frequency atoms. For example, we have the limiting relations
 \[
  \lim_{n\rightarrow\infty} \int_{\R^4} \partial_{\alpha}\phi^{na} \overline{A_{\alpha}^{na'}\phi^{na''}} \, dx = 0,
 \]
 if not all of $a, a', a''$ are equal, as well as
 \[
  \lim_{n\rightarrow\infty}\int_{\R^4}A_{\alpha}^{na}\phi^{na'}\overline{A_{\alpha}^{na''}\phi^{na'''}}\,dx = 0,
 \]
 if not all of $a, a', a'', a'''$ are equal. It remains to prove the decomposition \eqref{equ:frequency_atom_decomposition_temporal}. To show this, we first observe that (at fixed time $t = 0$)
 \[
  -\Im\big(\phi^n\overline{\partial_t\phi^n}\big) = \sum_{a=1}^{\Lambda}-\Im\big(\phi^{na}\overline{\partial_t\phi^{na}}\big) - \Im\big(\phi^{n}_\Lambda \overline{\partial_t \phi^n_\Lambda}\big) + o_{L^{\frac{4}{3}}_x}(1) \quad \text{ as } n \rightarrow \infty.
 \]
 It is then enough to show that 
 \begin{align*}
  &\big( \Delta  - |\phi^{n}|^2 \big) A^{na}_0 = -\Im\big(\phi^{na}\overline{\partial_t\phi^{na}}\big) + o_{L^{\frac{4}{3}}_x}(1) \quad \text{ as } n \rightarrow \infty, \\
  &\big( \Delta - |\phi^{n}|^2 \big) A^n_{\Lambda,0} = -\Im\big(\phi^{n}_{\Lambda,0} \overline{\partial_t \phi^n_{\Lambda, 0}}\big) + o_{L_x^{\frac{4}{3}}}(1) \quad \text{ as } n \rightarrow \infty.
 \end{align*}
 This in turn will easily follow once we have shown that each $A^{na}_0$ is $\lambda^{na}$-oscillatory, while $A^n_{\Lambda,0}$ is $\lambda^{na}$-singular for $a = 1, \ldots, \Lambda$. We demonstrate this for $a = 1$, where we may assume by scaling invariance of these assertions that $\lambda^{n1} = 1$ throughout. We start from the compatibility relation
 \begin{align*}
  \Delta A^{n1}_0 - |\phi^{n1}|^2A^{n1}_0 = -\Im\big(\phi^{n1}\overline{\partial_t\phi^{n1}}\big) 
 \end{align*}
 and distinguish between small and large frequencies.

We begin with the small frequencies. For $R \ll -1$ we write
\[
 \Delta P_{\leq R} A^{n1}_0 - P_{\leq R}\big( |\phi^{n1}|^2 A^{n1}_0 \big) = - P_{\leq R} \big(\Im\big(\phi^{n1} \overline{\partial_t \phi^{n1}} \big) \big),
\]
where we have
\[
 \lim_{R\rightarrow -\infty} \limsup_{n\to\infty} \big\|P_{\leq R} \big( \Im \big(\phi^{n1} \overline{\partial_t\phi^{n1}} \big) \big) \big\|_{L^{\frac{4}{3}}_x} = 0.
\]
Next, we split
\[
 P_{\leq R} \big( |\phi^{n1}|^2 A^{n1}_0 \big) = P_{\leq R} \big( P_{\leq \frac{R}{2}} \big( |\phi^{n1}|^2 \big) A^{n1}_0 \big) + P_{\leq R} \big( P_{> \frac{R}{2}} (|\phi^{n1}|^2) A^{n1}_0 \big).
\]
Then we have 
\[
\lim_{R \rightarrow -\infty} \limsup_{n\to\infty} \big\|P_{\leq \frac{R}{2}} |\phi^{n1}|^2 \big\|_{L^2_x} = 0,
\]
whence 
\[
 \lim_{R\rightarrow -\infty} \limsup_{n\to\infty} \big\| P_{\leq R} \big( P_{\leq \frac{R}{2}} \big(|\phi^{n1}|^2) A^{n1}_0 \big) \big\|_{L^{\frac{4}{3}}_x} = 0,
\]
while for the second term above, we obtain from Bernstein's inequality that
\begin{align*}
\big\| P_{\leq R} \big( P_{> \frac{R}{2}} \big( |\phi^{n1}|^2 \bigr) A^{n1}_0 \big) \big\|_{L^{\frac{4}{3}}_x} &\leq \sum_{ \substack{ k_1 = k_2 + O(1), \\ k_1 > \frac{R}{2}} } \big\|P_{\leq R} \big( P_{k_1} (|\phi^{n1}|^2) P_{k_2} A^{n1}_0 \big) \big\|_{L^{\frac{4}{3}}_x} \\
&\lesssim 2^R \sum_{ \substack{ k_1 = k_2 + O(1), \\ k_1 > \frac{R}{2}} } \big\| P_{k_1} |\phi^{n1}|^2 \big\|_{L^2_x} \big\|P_{k_2} A^{n1}_0\big\|_{L^2_x} \\
&\lesssim 2^{\frac{R}{2}} \big\| \phi^{n1} \big\|_{L^4_x}^2 \big\|A^{n1}_0\big\|_{\dot{H}^1_x}.
\end{align*}
We immediately conclude that 
\[
 \lim_{R\rightarrow - \infty} \limsup_{n\to\infty} \big\| P_{\leq R} \big( P_{> \frac{R}{2}} (|\phi^{n1}|^2) A^{n1}_0 \big) \big\|_{L^{\frac{4}{3}}_x} = 0. 
\]
Using Sobolev's inequality, it then follows that 
\begin{align*}
 \limsup_{n\to\infty} \big\|P_{\leq R} A^{n1}_0 \big\|_{\dot{H}^1_x} \lesssim \limsup_{n\to\infty} \big\|P_{\leq R} \big( \Im\big(\phi^{n1} \overline{\partial_t\phi^{n1}} \big)\big)\big\|_{L^{\frac{4}{3}}_x} + \limsup_{n\to\infty} \big\|P_{\leq R} \big( |\phi^{n1}|^2A^{n1}_0 \big) \big\|_{L^{\frac{4}{3}}_x} \to 0
\end{align*}
as $R \to - \infty$. Next, we consider the large frequencies. For $R \gg 1$, we write 
\[
 \Delta P_{> R} A^{n1}_0 - P_{>R} \big( |\phi^{n1}|^2 A^{n1}_0 \big) = - P_{>R} \big( \Im \big(\phi^{n1} \overline{\partial_t\phi^{n1}} \big)\big),
\]
where we have 
\[
\lim_{R\rightarrow\infty} \limsup_{n\to\infty} \big\|P_{>R}\big( \Im\big(\phi^{n1}\overline{\partial_t\phi^{n1}}\big)\big) \big\|_{L^{\frac{4}{3}}} = 0.
\]
Then we split
\begin{align*}
 P_{>R} \big( |\phi^{n1}|^2 A^{n1}_0 \big) = P_{>R} \big( P_{> \frac{R}{2}} ( |\phi^{n1}|^2 ) A^{n1}_0 \big) + P_{>R} \big( P_{\leq \frac{R}{2}} (|\phi^{n1}|^2) A^{n1}_0 \big).
\end{align*}
By frequency localization we have 
\[
 \lim_{R\rightarrow\infty} \limsup_{n\to\infty} \big\|P_{>\frac{R}{2}} (|\phi^{n1}|^2) \big\|_{L^2_x} = 0,
\]
and thus,
\[
 \lim_{R\rightarrow\infty} \limsup_{n\to\infty} \big\|P_{>R} \big( P_{> \frac{R}{2}} (|\phi^{n1}|^2) A^{n1}_0 \big) \big\|_{L^{\frac{4}{3}}_x} = 0.
\]
On the other hand, we have 
\begin{align*}
 \big\|P_{>R} \big( P_{\leq \frac{R}{2}} (|\phi^{n1}|^2) A^{n1}_0 \big) \big\|_{L^{\frac{4}{3}}_x} &\lesssim \sum_{ \substack{ k_1 = k_2 + O(1), \\ k_1 > R }} \big\| P_{k_1} \big( P_{\leq \frac{R}{2}} (|\phi^{n1}|^2) P_{k_2}A^{n1}_0 \big) \big\|_{L^{\frac{4}{3}}_x} \\
 &\leq \sum_{ \substack{ k_1 = k_2 + O(1), \\ k_1 > R }} \big\| P_{\leq \frac{R}{2}} (|\phi^{n1}|^2) \big\|_{L^4_x} \big\| P_{k_2} A^{n1}_0\big\|_{L^2_x} \\
 &\lesssim \sum_{k_2>R} 2^{\frac{R}{2} - k_2} \bigl\| \phi^{n1} \bigr\|^2_{L^4_x} \big\|P_{k_2} A^{n1}_0 \big\|_{\dot{H}^1_x} \\
 &\lesssim 2^{-\frac{R}{2}} \bigl\| \phi^{n1} \bigr\|^2_{L^4_x} \big\|A^{n1}_0\big\|_{\dot{H}^1_x}
\end{align*}
and hence, 
\begin{align*}
 \lim_{R \to \infty} \limsup_{n\to\infty} \big\|P_{>R} \big( P_{\leq \frac{R}{2}} (|\phi^{n1}|^2) A^{n1}_0 \big) \big\|_{L^{\frac{4}{3}}_x} = 0.
\end{align*}
We then conclude from Sobolev's inequality that 
\[
\lim_{R\rightarrow \infty} \limsup_{n\to\infty} \big\|P_{>R} A^{n1}_0\big\|_{\dot{H}^1_x} = 0. 
\]
\end{proof}

Given an essentially singular sequence of initial data, by Proposition \ref{prop:selecting_frequency_atoms} for any $\delta > 0$ we obtain another essentially singular sequence $\{(A^n, \phi^n)[0]\}_{n \in \N}$ of the form
\begin{equation} \label{equ:decomposition_ess_singular_data}
 \begin{split}
  A^n[0] &= \sum_{a=1}^\Lambda A^{na}[0] + A^n_\Lambda[0], \\
  \phi^n[0] &= \sum_{a=1}^\Lambda \phi^{na}[0] + \phi^n_\Lambda[0]
 \end{split}
\end{equation}
with
\[
 \limsup_{n \rightarrow \infty} \, \bigl\|A_\Lambda^n[0]\bigr\|_{\dot{B}^1_{2,\infty} \times \dot{B}^0_{2,\infty}} < \delta, \quad \limsup_{n \rightarrow \infty} \, \bigl\|\phi_\Lambda^n[0]\bigr\|_{\dot{B}^1_{2,\infty} \times \dot{B}^0_{2,\infty}} < \delta.
\]
{\it Eventually, we will prove that necessarily only one frequency atom $(A^{na}, \phi^{na})[0]$ in the decomposition \eqref{equ:decomposition_ess_singular_data} is non-trivial and has to be asymptotically of energy $E_{crit}$. In fact, the subsequent considerations will show that if there are at least two frequency atoms $(A^{n1}, \phi^{n1})[0], (A^{n2}, \phi^{n2})[0]$ that both do not vanish asymptotically, or if there is only one frequency atom $(A^{n1}, \phi^{n1})[0]$ with the error satisfying
\[
 \limsup_{n\rightarrow\infty} \, \|(A^n_1, \phi^n_1)[0]\|_{\dot{H}^1_x \times L^2_x} > 0,
\]
then we get an a priori bound on the $S^1$ norm of the evolutions
\[
 \liminf_{n\rightarrow\infty} \, \|(A^n, \phi^n)\|_{S^1((-T_0^n, T_1^n)\times\R^4)} < \infty,
\]
contradicting the assumption that $\{(A^n, \phi^n)[0]\}_{n\in\N}$ is essentially singular. }

\medskip

We now introduce a smallness parameter $\varepsilon_0 > 0$ that will eventually be chosen sufficiently small depending only on $E_{crit}$. In particular, we assume that $\varepsilon_0$ is less than the small energy threshold of the small energy global well-posedness result \cite{KST}. 

By passing to a suitable subsequence and by renumbering the frequency atoms, if necessary, we may assume that for some integer $\Lambda_0 > 0$,
\[
 \sum_{a \geq \Lambda_0 + 1} \limsup_{n \rightarrow \infty} E(A^{na}, \phi^{na}) < \varepsilon_0.
\]
Moreover, we may assume that the frequency atoms $\{(A^{na}, \phi^{na})[0]\}_{n\in\N}$, $a = 1, \ldots, \Lambda_0$, have ``increasing frequency supports'' in the sense that $(\lambda^{na})^{-1}$ is growing in terms of $a$ (for each fixed $n$). The key idea now is as follows.
 
\medskip

{\it We approximate the initial data $(A^n, \phi^n)[0]$ by low frequency truncations, obtained by removing all or some of the atoms $(A^{na}, \phi^{na})[0]$, $a = 1, \ldots, \Lambda_0$, and inductively obtain bounds on the $S^1$ norm of the MKG-CG evolutions of the truncated data. As this induction stops after $\Lambda_0$ many steps, we will have obtained the desired contradiction, forcing eventually that there has to be exactly one frequency atom $(A^{n1}, \phi^{n1})[0]$ that is asymptotically of energy $E_{crit}$.}

\subsection{Evolving the ``non-atomic'' lowest frequency approximation} \label{subsec:evolving_non_atomic}

From now on we suppress the notation $[0]$ for the initial data. The errors $(A^n_{\Lambda_0}, \phi^n_{\Lambda_0})$ in the decomposition \eqref{equ:decomposition_ess_singular_data} are by construction supported away in frequency space from the frequency scales $(\lambda^{na})^{-1}$, $a = 1,2,\ldots, \Lambda_0$. It is then clear that the errors $\{(A^{n}_{\Lambda_0}, \phi^{n}_{\Lambda_0})\}_{n\in\N}$ can be written as the sum of $\Lambda_0 + 1$ pieces, which correspond to the $\Lambda_0+1$ shells that the frequency space gets split into by the frequency supports of the atoms $(A^{na}, \phi^{na})$. Thus, we can write 
\begin{equation} \label{equ:decomposition_error_term}
A^{n}_{\Lambda_0} = \sum_{j=1}^{\Lambda_0+1}A^{nj}_{\Lambda_0}, \quad \phi^{n}_{\Lambda_0} = \sum_{j=1}^{\Lambda_0+1}\phi^{nj}_{\Lambda_0},
\end{equation}
where the first pieces $(A^{n1}_{\Lambda_0}, \phi^{n1}_{\Lambda_0})$ have Fourier support in the region closest to the origin, i.e. in 
\[
|\xi| \leq (\lambda^{n1})^{-1} (R_n)^{-1}.
\]
In other words, one essentially obtains the ``lowest frequency approximations'' $(A_{\Lambda_0}^{n1}, \phi_{\Lambda_0}^{n1})$ by removing all the atoms $(A^{na}, \phi^{na})$, $a = 1, \ldots, \Lambda_0$, from the data.

\medskip
 
We then start our grand inductive procedure by showing the following proposition.
\begin{prop} \label{prop:lowfreq}
 The parameter $\varepsilon_0 > 0$ can be chosen sufficiently small depending only on the size of $E_{crit}$ such that the following holds. Constructing the lowest frequency approximations $\{ (A^{n1}_{\Lambda_0}, \phi^{n1}_{\Lambda_0}) \}_{n\in\N}$ as described in \eqref{equ:decomposition_error_term}, then there exists a constant $C(E_{crit}) > 0$ such that for all sufficiently large $n$, the data given by $(A^{n1}_{\Lambda_0}, \phi^{n1}_{\Lambda_0})$ can be evolved globally in time and the corresponding solution satisfies 
 \[
  \|(A^{n1}_{\Lambda_0}, \phi^{n1}_{\Lambda_0})\|_{S^1(\R\times\R^4)} \leq C(E_{crit}). 
 \]
\end{prop}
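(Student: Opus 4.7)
The strategy, following the authors' outline, is to partition $(A^{n1}_{\Lambda_0}, \phi^{n1}_{\Lambda_0})$ into a finite number of nested low-frequency truncations $\bigl(P_{\leq J_i} A^{n1}_{\Lambda_0},\, P_{\leq J_i}\phi^{n1}_{\Lambda_0}\bigr)$ for $i=1,\dots,L$, where $L$ depends only on $E_{crit}$, and then to bootstrap an $S^1$ bound through this finite induction using Theorem~\ref{thm:linear_estimates_magnetic_wave_equation} as the main linear tool. First I would fix $\delta > 0$ and $\varepsilon_0 > 0$ with $\delta^2 \ll \varepsilon_0 \ll \varepsilon_\ast$, where $\varepsilon_\ast$ is the small-energy threshold of \cite{KST}. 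The Besov smallness furnished by Proposition~\ref{prop:selecting_frequency_atoms} gives $\|P_k A^{n1}_{\Lambda_0}\|_{\dot H^1_x} + \|P_k\phi^{n1}_{\Lambda_0}\|_{\dot H^1_x}\lesssim \delta$ for each $k$, so starting from the lowest frequency and grouping consecutive dyadic shells until the accumulated energy first exceeds $\varepsilon_0$ produces frequency cutoffs $J_1<J_2<\cdots<J_L$ such that every shell $(J_i,J_{i+1}]$ carries energy at most $2\varepsilon_0$; since the overshoot per shell is at most $\delta^2\ll\varepsilon_0$ and the total energy is bounded by $E_{crit}$, this forces $L\lesssim E_{crit}/\varepsilon_0$, a quantity depending only on $E_{crit}$.

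The base case $i=1$ of the induction is handled directly by \cite{KST}: the piece $(P_{\leq J_1}A^{n1}_{\Lambda_0},P_{\leq J_1}\phi^{n1}_{\Lambda_0})$ has energy at most $\varepsilon_0\leq\varepsilon_\ast$ and therefore admits a global admissible evolution with $S^1$ norm bounded by an absolute constant. For the inductive step, assuming we have a global admissible evolution $({\cA}^{(i)},\Phi^{(i)})$ of the $i$-th truncation with $\|({\cA}^{(i)},\Phi^{(i)})\|_{S^1(\R\times\R^4)}\leq C_i$, I would construct the evolution of the $(i+1)$-th truncation as ${\cA}^{(i)}+\delta\!A$, $\Phi^{(i)}+\delta\Phi$, where $(\delta\!A,\delta\Phi)$ solves the wave equation linearized around $({\cA}^{(i)},\Phi^{(i)})$ with initial data $P_{(J_i,J_{i+1}]}(A^{n1}_{\Lambda_0},\phi^{n1}_{\Lambda_0})$. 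The key point is that the non-perturbative low-high magnetic interaction $-2i {\cA}^{(i),j}\partial_j(\delta\Phi)$ is absorbed into the paradifferential magnetic wave operator $\Box_{{\cA}^{(i)}}^p$, so the equation for $\delta\Phi$ reads
\[
\Box_{{\cA}^{(i)}}^p(\delta\Phi) \;=\; \mathcal{N}(\delta\!A,\delta\Phi;{\cA}^{(i)},\Phi^{(i)}),
\]
where $\mathcal{N}$ collects only trilinear and higher-order terms along with quadratic terms of a genuinely perturbative null-form type (in particular, high-high$\to$low or balanced frequency interactions in the magnetic term). I would then apply Theorem~\ref{thm:linear_estimates_magnetic_wave_equation}, whose constant depends polynomially on $\|\nabla_{t,x}{\cA}^{(i)}\|_{L^\infty_tL^2_x}\leq C(E_{crit})^{1/2}$, together with the multilinear $N$-space estimates from \cite{KST}, to close a continuity argument and conclude $\|(\delta\!A,\delta\Phi)\|_{S^1}\lesssim\varepsilon_0^{1/2}C_i^{O(1)}$.

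The principal obstacle is closing the bootstrap at each inductive step: one must show that the quadratic and cubic source terms produced by interactions between the high-frequency perturbation $(\delta\!A,\delta\Phi)$ and the background $({\cA}^{(i)},\Phi^{(i)})$ are genuinely small in $N$. Here the $\varepsilon_0$-smallness of the new shell energy has to beat the $C_i$-largeness of the background; this is made possible by frequency separation (the perturbation lives at scales $\sim 2^{J_i}$ while the background is essentially at lower frequencies, producing decoupling factors in the multilinear estimates) together with the divisibility of appropriate space-time norms of ${\cA}^{(i)}$, which one uses to partition $\R$ into finitely many subintervals on which the nonlinear feedback is small. This is precisely the core perturbative step (Step~3 of the subsequent Proposition~\ref{prop:bootstrap}). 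Since $L=L(E_{crit})$, iterating the inductive bound at most $L$ times yields $C_L\leq C(E_{crit})$, and the global $S^1$ bound for $(A^{n1}_{\Lambda_0},\phi^{n1}_{\Lambda_0})={\cA}^{(L)},\Phi^{(L)})$ follows.
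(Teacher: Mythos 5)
Your overall architecture (finitely many frequency truncations, with $L\lesssim E_{crit}/\varepsilon_0$, plus an inductive bootstrap using the paradifferential magnetic wave operator) matches the paper's, but the inductive step as you describe it does not close, and the missing point is precisely the one the paper's construction is designed to handle. After absorbing the paradifferential part, the source terms for $(\delta A,\delta\Phi)$ still contain the low-high interaction $\sum_k P_{<k}(\delta A)^{free}_j\,P_k\partial^j\Phi^{(i)}$, i.e.\ the free evolution of the \emph{newly added shell} at frequencies near its bottom edge $\sim 2^{J_i}$ hitting the background at comparable frequencies. For this term there is \emph{no} decoupling factor from frequency separation: the available estimate \eqref{equ:low_high_N_estimate_for_free_wave} gives only $\|P_{k_1}A^{free}_jP_{k_2}\partial^j\phi\|_N\lesssim\|P_{k_1}A^{free}[0]\|_{\dot H^1_x\times L^2_x}\|P_{k_2}\phi\|_{S^1}$ with no gain in $k_2-k_1$, and divisibility does not help because the low-frequency factor is the free wave of the new data, not a divisible norm of the background. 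This produces a contribution of size (energy of the shell near the cut, summed $\ell^1$ over dyadic blocks) times the background bound $C_i$, and it must be beaten by smallness \emph{relative to the growing $C_i$}, not relative to $\varepsilon_0$; your hierarchy $\delta^2\ll\varepsilon_0$ is therefore not the relevant one. The paper resolves this by a three-region frequency analysis (exponential decay of the shell's envelope below $a_L$; the frequency-evacuation property of Lemma~\ref{lem:asymptoevacuate} in a window of width $R$ around the cut; exponential decay of the background's envelope above $a_L$ from Step~1 of Proposition~\ref{prop:bootstrap}), and the evacuation property is exactly why the cut frequencies cannot be chosen naively by accumulating energy: they must be placed at the edges of a \emph{second} frequency-atom decomposition of $(A^{n1}_{\Lambda_0},\phi^{n1}_{\Lambda_0})$ with a new parameter $\delta_1=\delta_1(E_{crit})$ satisfying $\delta_1^{-1}\gg C_i$ at each of the $O(E_{crit}/\varepsilon_0)$ steps (possible since the number of steps depends only on $E_{crit}$). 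Your proposal neither identifies this term as the obstruction (you assert it is ``genuinely perturbative null-form type'') nor supplies the evacuation/envelope-decay mechanism that controls it, so the bound $\|(\delta A,\delta\Phi)\|_{S^1}\lesssim\varepsilon_0^{1/2}C_i^{O(1)}$ is unsubstantiated at the decisive point.

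Two further gaps: Theorem~\ref{thm:linear_estimates_magnetic_wave_equation} applies to a Coulomb \emph{free-wave} potential, so you cannot invoke it for $\Box^p_{\mathcal{A}^{(i)}}$ with the full nonlinear background; the paper first decomposes $A|_{I}=A^{free,(I)}+A^{nonlin,(I)}$ on finitely many time slices (Proposition~\ref{prop:decomposition}), absorbs only the free part, and treats $A^{nonlin,(I)}$ via its $\ell^1 S^1$- and $Z$-smallness together with the trilinear null-form estimates. Moreover, to iterate the bootstrap over these $N(C_i)$ slices one must know that the perturbation's energy stays below $\varepsilon_0$ at the start of each slice; this is the approximate energy-conservation step (Lemma~\ref{lem:approximate_energy_conservation_for_perturbation}), which again uses the evacuation property and is absent from your continuity argument.
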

\begin{proof}
The idea is to use a finite number of further low frequency approximations of $\{ (A^{n1}_{\Lambda_0}, \phi^{n1}_{\Lambda_0}) \}_{n\in\N}$ and to inductively obtain bounds on the $S^1$ norms of their evolutions. Here it is essential that the number of these further approximations is bounded by $C_1(E_{crit}) > 0$ and thus independent of the choices already made. Picking these low frequency approximations requires a somewhat delicate construction involving a further frequency atom decomposition of $\{ (A^{n1}_{\Lambda_0}, \phi^{n1}_{\Lambda_0}) \}_{n\in\N}$. To begin with, for some sufficiently small $\delta_1 = \delta_1(E_{crit}) > 0$, in particular $\delta_1 \ll \varepsilon_0$, we use decompositions 
\begin{align*}
 A^{n1}_{\Lambda_0} &= \sum_{j=1}^{\Lambda_1(\delta_1)}A^{n1(j)}_{\Lambda_0} + A^{n1}_{\Lambda_0(\Lambda_1)}, \\ 
 \phi^{n1}_{\Lambda_0} &= \sum_{j=1}^{\Lambda_1(\delta_1)}\phi^{n1(j)}_{\Lambda_0} + \phi^{n1}_{\Lambda_0(\Lambda_1)}, 
\end{align*}
where the frequency atoms $(A^{n1(j)}_{\Lambda_0}, \phi^{n1(j)}_{\Lambda_0})$ have frequency support in mutually disjoint intervals 
 \[
 [ (\lambda^{n1(j)})^{-1} (R_{n1}^{(j)})^{-1}, (\lambda^{n1(j)})^{-1} R_{n1}^{(j)} ]
\]
with $R_{n1}^{(j)} \to \infty$ as $n \to \infty$, and furthermore, we have the bound 
\[
\limsup_{n\rightarrow\infty} \, \bigl\{ \|A^{n1}_{\Lambda_0(\Lambda_1)}\|_{\dot{B}^1_{2,\infty} \times \dot{B}^0_{2,\infty}} + \|\phi^{n1}_{\Lambda_0(\Lambda_1)} \|_{\dot{B}^1_{2,\infty} \times \dot{B}^0_{2,\infty}} \bigr\} < \delta_1.
\]
We may again assume that the atoms $(A^{n1(j)}_{\Lambda_0}, \phi^{n1(j)}_{\Lambda_0})$ have increasing frequency support as $j$ increases. The number of frequency atoms $\Lambda_1(\delta_1)$ here is potentially extremely large. It is crucial that the number of steps, i.e. the number of low frequency approximations of $\{ (A_{\Lambda_0}^{n1}, \phi_{\Lambda_0}^{n1}) \}_{n\in\N}$, required in the inductive procedure is in fact much smaller, of size $C_1 = C_1(E_{crit}) \ll \Lambda_1(\delta_1)$. As we shall see, $C_1$ can be chosen independently of $\delta_1$ and $\Lambda_1(\delta_1)$. We now pick the low frequency approximations of the data $\{ (A_{\Lambda_0}^{n1}, \phi_{\Lambda_0}^{n1}) \}_{n\in\N}$. For $\varepsilon_0$ fixed as before, we inductively construct $O( {\textstyle \frac{E_{crit}}{\varepsilon_0} } )$ closed frequency intervals $\tilde{J}_l$ for the variable $|\xi|$, disjoint up the the endpoints and increasing. The chosen intervals will also depend on $n$, but for notational ease we do not indicate this. So consider $n$ and $\Lambda_1$ fixed now. Having picked the intervals $\tilde{J}_1 = (-\infty, b_1]$, $\tilde{J}_l = [a_l, b_l]$ with $b_{l-1} = a_l$ for $l = 2, \ldots, L-1$, we pick an interval $[a_L, \tilde{b}_L]$ with $a_L = b_{L-1}$ as follows.  First, pick $\tilde{b}_{L}$ in such a fashion that 
\[
 E(P_{[a_L, \tilde{b}_{L}]} A^{n1}_{\Lambda_0}, P_{[a_L, \tilde{b}_L]} \phi^{n1}_{\Lambda_0}) = \varepsilon_0
\]
or else, if this is impossible, then pick $\tilde{b}_{L} = \log \, (\lambda^{n1})^{-1} - \log R_n$, i.e. pick the upper endpoint of the frequency interval containing the lowest frequency ``large atom'' $(A^{n1}, \phi^{n1})$. Now, in the former case assume that 
\[
  \tilde{b}_L \in [ \log \, ( \lambda^{n1(j)} )^{-1} - \log R_{n1}^{(j)}, \log \, ( \lambda^{n1(j)} )^{-1} + \log R_{n1}^{(j)} ]
\]
for some $1 \leq j \leq \Lambda_1(\delta_1)$, i.e. $\tilde{b}_L$ falls within the frequency support of one of the (finite number of) ``small frequency atoms'' $( A^{n1(j)}_{\Lambda_0}, \phi^{n1(j)}_{\Lambda_0} )$ constituting $(A^{n1}_{\Lambda_0}, \phi^{n1}_{\Lambda_0})$. Then we shift $\tilde{b}_{L}$ upwards to coincide with the upper limit, that is, we set
\[
 b_L = \log \, (\lambda^{n1(j)})^{-1} + \log R_{n1}^{(j)}. 
\]
Otherwise, we set
\[
 b_L = \tilde{b}_L.
\]
Then we define the interval $\tilde{J}_L = [a_L, b_L]$. Observe that for sufficiently large $n$, we have 
\[
 E(P_{\tilde{J}_L} A^{n1}_{\Lambda_0}, P_{\tilde{J}_L} \phi^{n1}_{\Lambda_0}) \lesssim \varepsilon_0.
\]
In particular, this implies that for sufficiently large $n$ the total number of intervals $\tilde{J}_l$ is $C_1 = O( {\textstyle \frac{E_{crit}}{\varepsilon_0} } )$. We now define the low frequency approximations of the data $(A_{\Lambda_0}^{n1}, \phi_{\Lambda_0}^{n1})$ by truncating the frequency support of $(A^{n1}_{\Lambda_0}, \phi^{n1}_{\Lambda_0})$ to the intervals
\[
 J_L := \cup_{l=1}^L \tilde{J}_l.
\]
More precisely, for $1 \leq L \leq C_1$ we define the $L$-th low frequency approximation of the data $(A^{n1}_{\Lambda_0}, \phi^{n1}_{\Lambda_0})$ by the expression 
\[
 (P_{J_L} A^{n1}_{\Lambda_0}, P_{J_L} \phi^{n1}_{\Lambda_0}), 
\]
where by construction $C_1 = C_1(E_{crit}) \lesssim \frac{E_{crit}}{\varepsilon_0}$. In particular, we have
\[
 (P_{J_{C_1}} A^{n1}_{\Lambda_0}, P_{J_{C_1}} \phi^{n1}_{\Lambda_0}) = (A^{n1}_{\Lambda_0}, \phi^{n1}_{\Lambda_0}).
\]
We also state the following key lemma, whose proof is a consequence of the preceding construction.
\begin{lem} \label{lem:asymptoevacuate}
 For $L = 1, \ldots, C_1$ and for any $R>0$, we have for all sufficiently large $n$ that
 \[
  \big\|P_{[a_L - R, a_L + R]} \nabla_{t,x} A^{n1}_{\Lambda_0} \big\|_{L^2_x} + \big\|P_{[a_L - R, a_L + R]} \nabla_{t,x} \phi^{n1}_{\Lambda_0} \big\|_{L^2_x} \lesssim R \delta_1.
 \]
\end{lem}

In order to prove Proposition \ref{prop:lowfreq}, we inductively show that for $L = 1, \ldots, C_1$ and for all sufficiently large $n$, the evolutions of the data 
\[
 (P_{J_{L}} A^{n1}_{\Lambda_0}, P_{J_{L}} \phi^{n1}_{\Lambda_0})
\]
exist globally and satisfy the desired global $S^1$ norm bounds, which of course get larger as $L$ grows. For $L = 1$ this is a direct consequence of the small energy theory. The main work now goes into proving the following proposition.

\begin{prop} \label{prop:bootstrap}
Let us assume that the evolution of the data 
\[
 \bigl( P_{J_{L-1}} A^{n1}_{\Lambda_0}, P_{J_{L-1}} \phi^{n1}_{\Lambda_0} \bigr)
\]
is globally defined for some $1 \leq L < C_1$. We denote this evolution by $(A^{n1, (L-1)}_{\Lambda_0}, \phi^{n1, (L-1)}_{\Lambda_0})$. Furthermore, assume that for all sufficiently large $n$, it holds that
\[
 \big\|\bigl( A^{n1, (L-1)}_{\Lambda_0}, \phi^{n1, (L-1)}_{\Lambda_0} \bigr) \big\|_{S^1(\R\times\R^4)} \leq C_2 < \infty.
\]
Provided $\delta_1^{-1} \gg C_2$ with $\delta_1 > 0$ as above, there exists $C_3 = C_3(C_2) < \infty$ such that for all sufficiently large $n$, the data 
\[
 \bigl(P_{J_L} A^{n1}_{\Lambda_0}, P_{J_L} \phi^{n1}_{\Lambda_0}\bigr)
\]
can be evolved globally and for the corresponding evolutions $\bigl( A^{n1, (L)}_{\Lambda_0}, \phi^{n1, (L)}_{\Lambda_0} \bigr)$, it holds that
\[
 \big\| \bigl( A^{n1, (L)}_{\Lambda_0}, \phi^{n1, (L)}_{\Lambda_0} \bigr) \big\|_{S^1(\R\times\R^4)} \leq C_3.
\]
\end{prop}

Proposition \ref{prop:lowfreq} is then an immediate consequence of applying Proposition \ref{prop:bootstrap} $C_1$ many times. We note that there exists $\delta_{11} > 0$ depending only on $E_{crit}$ such that choosing $\delta_1 < \delta_{11}$ in each step, Proposition \ref{prop:bootstrap} can be applied. Since $C_1 = C_1(E_{crit})$ this results in a bound
\[
 \big\| (A^{n1}_{\Lambda_0}, \phi^{n1}_{\Lambda_0}) \big\|_{S^1(\R\times\R^4)} \leq C(E_{crit}). 
\]
\end{proof}

\begin{proof}[Proof of Proposition \ref{prop:bootstrap}] We proceed in several steps. 
 
\medskip

\noindent {\bf Step 1.} {\it The assumed bound on $\big\|\bigl( A^{n1, (L-1)}_{\Lambda_0}, \phi^{n1, (L-1)}_{\Lambda_0} \bigr)\big\|_{S^1(\R\times\R^4)}$ implies an exponential decay for large frequencies,
\[
\big\|\bigl( P_k A^{n1, (L-1)}_{\Lambda_0}, P_k\phi^{n1, (L-1)}_{\Lambda_0} \bigr)\big\|_{S^1(\R\times\R^4)} \lesssim 2^{-\sigma(k - b_{L-1} )} \text{ for } k \geq b_{L-1}.
\]
}
This will follow once we can show that in fact 
\[
\big\| \bigl( P_k A^{n1, (L-1)}_{\Lambda_0}, P_k \phi^{n1, (L-1)}_{\Lambda_0} \bigr)\big\|_{S^1(\R\times\R^4)} \lesssim c_k^{(L-1)},
\]
where $\bigl\{ c_k^{(L-1)} \bigr\}_{k\in\Z}$ is a sufficiently flat frequency envelope covering the initial data $\bigl( A^{n1, (L-1)}_{\Lambda_0}, \phi^{n1, (L-1)}_{\Lambda_0} \bigr)[0]$ at time $t = 0$. This in turn is a consequence of Proposition \ref{prop:breakdown_criterion} whose proof will be given in Subsection~\ref{subsec:interlude}. 

\medskip

\noindent {\bf{Step 2.}} {\it Localizing $\bigl( A^{n1, (L-1)}_{\Lambda_0}, \phi^{n1, (L-1)}_{\Lambda_0} \bigr)$ to suitable space-time slices}. In order to ensure that we can induct on perturbations of size $\sim \varepsilon_0$ that are not ``too small'' (such as the $\delta_1$), we have to make sure that the $S^1$ norms of $\bigl( A^{n1, (L-1)}_{\Lambda_0}, \phi^{n1, (L-1)}_{\Lambda_0} \bigr)$ are not too large. To simplify the notation, we label these components by $(A, \phi)$ for the rest of this step. The idea is to localize to suitable space-time slices $I \times \R^4$, whose number may be very large (depending on $\|(A, \phi)\|_{S^1(\R\times\R^4)}$ and $E_{crit}$), but such that we have on each slice
\[
 \|(A, \phi)\|_{S^1(I\times\R^4)} \leq C(E_{crit}),
\] 
where the function $C(\cdot)$ grows at most polynomially. 

\begin{prop} \label{prop:decomposition}
 There exist $N = N\bigl(\|(A,\phi)\|_{S^1(\R\times\R^4)}, E_{crit} \bigr)$ many time intervals $I_1, \ldots, I_N$ partitioning the time axis $\R$ such that we have for $n = 1, \ldots, N$ a decomposition (referring to the spatial components of the connection form simply by $A$)
 \begin{equation}
  A|_{I_n} = A^{free, (I_n)} + A^{nonlin, (I_n)}, \quad \Box A^{free, (I_n)} = 0,
 \end{equation}
 where $A^{free, (I_n)}$ and $A^{nonlin, (I_n)}$ are in Coulomb gauge and satisfy
 \begin{align}
  \bigl\| \nabla_{t,x} A^{free, (I_n)} \bigr\|_{L^\infty_t L^2_x(\R \times\R^4)} &\lesssim E_{crit}^{1/2}, \label{equ:A_free_energy_bound}\\
  \bigl\| A^{nonlin, (I_n)} \bigr\|_{\ell^1 S^1(I_n \times \R^4)} &\ll 1. \label{equ:A_nonlin_S_1_norm_bound_short_interval}
 \end{align}
 Moreover, we have for $n = 1, \ldots, N$ that 
 \begin{equation} \label{equ:phi_S_1_norm_bound_short_interval}
  \|\phi\|_{S^1(I_n\times\R^4)} \lesssim C(E_{crit}),
 \end{equation}
 where $C(\cdot)$ grows at most polynomially.
\end{prop}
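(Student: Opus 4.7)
The overall strategy is a divisibility argument: exploit the global bound $\|(A,\phi)\|_{S^1(\R\times\R^4)}\leq C_2$ to partition $\R$ into finitely many intervals $I_1,\ldots,I_N$ on which the inhomogeneity $\Box A_j=-\mathcal{P}_j\Im(\phi\overline{D_x\phi})$ has small $\ell^1 N$ norm, then on each $I_n$ peel off from $A$ the free wave extension of its Cauchy data at the left endpoint, and finally recover $\phi$ on $I_n$ by applying the magnetic wave estimate of Theorem~\ref{thm:linear_estimates_magnetic_wave_equation} relative to this free wave.

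For the divisibility step, I would use the trilinear null-form estimates from \cite{KST}, together with the $\ell^1$-versions of the multilinear bounds on $\mathcal{P}_j\Im(\phi\overline{D_x\phi})$, to reduce to ``generic'' configurations in which all inputs are at comparable frequencies with angular separation between their Fourier supports. In such configurations, each factor can be placed into a Strichartz-type component of the $S^1$ norm and the resulting space-time norm is manifestly divisible on $\R\times\R^4$. Since $\|(A,\phi)\|_{S^1(\R\times\R^4)}\leq C_2$, one obtains a partition $\R=\bigcup_{n=1}^N I_n$ with $N=N(C_2,E_{crit})$ (in particular depending on the desired smallness threshold $\eta=\eta(E_{crit})\ll 1$ to be fixed below) such that
\[
 \bigl\|\mathcal{P}_j\Im(\phi\overline{D_x\phi})\bigr\|_{\ell^1 N(I_n\times\R^4)}<\eta \quad\text{for } n=1,\ldots,N.
\]
On each $I_n$, with left endpoint $t_n$, define $A^{free,(I_n)}$ to be the global free-wave solution of $\Box A^{free,(I_n)}=0$ with Cauchy data $A[t_n]$, and set $A^{nonlin,(I_n)}=A-A^{free,(I_n)}$ on $I_n$. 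Both components inherit the Coulomb condition: for $A^{free,(I_n)}$ this follows because $\Box(\partial^jA^{free,(I_n)}_j)=0$ with zero data, and for $A^{nonlin,(I_n)}$ it follows because the source $-\mathcal{P}_j\Im(\phi\overline{D_x\phi})$ is divergence-free and the data at $t_n$ vanish. The bound \eqref{equ:A_free_energy_bound} is then immediate from conservation of energy for the free wave together with $E(A,\phi)\leq E_{crit}$, while \eqref{equ:A_nonlin_S_1_norm_bound_short_interval} follows from the $\ell^1$-version of the energy inequality
\[
 \|A^{nonlin,(I_n)}\|_{\ell^1 S^1(I_n\times\R^4)}\lesssim \bigl\|\mathcal{P}_j\Im(\phi\overline{D_x\phi})\bigr\|_{\ell^1 N(I_n\times\R^4)}<\eta\ll 1
\]
(with zero data at $t_n$), after choosing $\eta$ sufficiently small relative to the absolute constant in the energy inequality.

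For \eqref{equ:phi_S_1_norm_bound_short_interval} I would rewrite the equation $\Box_A\phi=0$ on $I_n$ as
\[
 \Box_{A^{free,(I_n)}}^p\phi=\tilde f,
\]
where the paradifferential wave operator on the left is precisely the one appearing in Theorem~\ref{thm:linear_estimates_magnetic_wave_equation} with magnetic potential $A^{free,(I_n)}$ (a global free wave in Coulomb gauge), and $\tilde f$ collects the high–high and high–low frequency interactions of $A^{free,(I_n)}\cdot\nabla\phi$ outside of the paradifferential regime, the contributions coming from $A^{nonlin,(I_n)}\cdot\nabla\phi$, the cubic terms $|A|^2\phi$, and the $A_0\partial^t\phi$ and $(\partial_tA_0)\phi$ contributions (with $A_0$ recovered from the elliptic equation \eqref{equ:elliptic}). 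Applying Theorem~\ref{thm:linear_estimates_magnetic_wave_equation} on $I_n$ gives
\[
 \|\phi\|_{S^1(I_n\times\R^4)}\leq C\bigl(\|\nabla_{t,x}A^{free,(I_n)}\|_{L^2_x}\bigr)\Bigl(\|\phi[t_n]\|_{\dot H^1_x\times L^2_x}+\|\tilde f\|_{(N\cap\ell^1L^2_t\dot H^{-\frac12}_x)(I_n\times\R^4)}\Bigr),
\]
where $C(\cdot)$ is polynomial, $\|\nabla_{t,x}A^{free,(I_n)}\|_{L^2_x}\lesssim E_{crit}^{1/2}$ and $\|\phi[t_n]\|_{\dot H^1_x\times L^2_x}\lesssim E_{crit}^{1/2}$ by energy conservation. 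The terms in $\tilde f$ coming from $A^{nonlin,(I_n)}$ are estimated by $\|A^{nonlin,(I_n)}\|_{\ell^1 S^1(I_n)}\cdot\|\phi\|_{S^1(I_n)}\lesssim\eta\|\phi\|_{S^1(I_n)}$ via the null-form estimates, while the remaining terms involving $A^{free,(I_n)}$ at off-paradifferential frequencies and the cubic pieces are bounded by $P(E_{crit}^{1/2})\cdot\|\phi\|_{S^1(I_n)}^\alpha$ (with $\alpha\geq 1$) using again the multilinear estimates from \cite{KST}, but with the outputs living in the small, divisible ``off-paradifferential'' regime. Choosing $\eta$ small enough to absorb the linear-in-$\|\phi\|_{S^1(I_n)}$ error, and closing the resulting bootstrap for $\|\phi\|_{S^1(I_n)}$ (which is finite a priori by $\|\phi\|_{S^1(\R\times\R^4)}\leq C_2$), yields $\|\phi\|_{S^1(I_n)}\lesssim C(E_{crit})$ with $C$ polynomial in $E_{crit}^{1/2}$.

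The main obstacle is the divisibility assertion for $\|\mathcal{P}_j\Im(\phi\overline{D_x\phi})\|_{\ell^1 N}$, since the $\ell^1$-type summation over dyadic frequencies is not directly amenable to a crude Strichartz divisibility. The essential point is that, using the trilinear null structure of \cite{MaSte,KST}, one can trade the $\ell^1$ summation for a product of an $\ell^2 S^1$ piece (controlled by $\|\phi\|_{S^1(\R)}\leq C_2$) times a manifestly divisible space-time norm, which is finite globally and therefore becomes small on short intervals. Once this is carried out, the remaining bookkeeping to close the $\phi$-bootstrap via Theorem~\ref{thm:linear_estimates_magnetic_wave_equation} is relatively routine.
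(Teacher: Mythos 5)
Your overall architecture coincides with the paper's: a divisibility partition of the time axis based on the global $S^1$ bound, a splitting $A|_{I_n}=A^{free,(I_n)}+A^{nonlin,(I_n)}$ with the free part carrying the energy and the nonlinear part small in $\ell^1 S^1$, and then the magnetic-wave estimate of Theorem~\ref{thm:linear_estimates_magnetic_wave_equation} relative to $A^{free,(I_n)}$ to recover \eqref{equ:phi_S_1_norm_bound_short_interval}. The divisibility step for the $\ell^1 N$ norm of $\mathcal{P}_j\Im(\phi\overline{D_x\phi})$, which you flag as the main obstacle, is indeed carried out in the paper exactly along the lines you sketch (reduction via the off-diagonal gains in the bilinear estimates of \cite{KST} to comparable frequencies with angular separation, then divisible Strichartz-type norms), so that part is sound in spirit.

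The genuine gap is in your treatment of the $\phi$-equation. You estimate the entire contribution of $A^{nonlin,(I_n)}$ (and, implicitly, of $A_0$) by $\|A^{nonlin,(I_n)}\|_{\ell^1 S^1}\,\|\phi\|_{S^1}$ ``via the null-form estimates''. This only works for the non-resonant portions (estimates (53)--(54) of \cite{KST}). The resonant low-high piece $\mathcal{H}^\ast\mathcal{N}^{lowhi}\bigl(\mathcal{H}A^{nonlin,(I_n)},\phi\bigr)$, in which the connection input carries the dominant (still small) modulation, is \emph{not} bounded by any product of $S^1$-type norms of $A^{nonlin}$ and $\phi$ --- this is precisely the non-perturbative feature of MKG. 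The paper controls $A^{nonlin,(I)}-\mathcal{H}A^{nonlin,(I)}$ through the $Z$ norm (estimate (55) of \cite{KST}), and for the remaining core term it must substitute the explicit bilinear structure of $\mathcal{H}A^{nonlin,(I)}$ in terms of $\phi$, combine it with the corresponding resonant elliptic term $\mathcal{H}^\ast\mathcal{N}_0^{lowhi}\bigl(\mathcal{H}A_0^{(I)},\phi\bigr)$ to expose the Machedon--Sterbenz trilinear null forms ((61)--(63) in \cite{KST}), and then prove smallness of three quadrilinear expressions by delicate divisibility arguments; this constitutes the bulk of the proof of \eqref{equ:phi_S_1_norm_bound_short_interval} and is absent from your argument. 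Relatedly, your definition of $A^{nonlin,(I_n)}$ as the zero-data Duhamel piece is not innocuous: the paper instead defines it by the Fourier multiplier $\Box^{-1}P_kQ_j$ applied to the time-truncated nonlinearity as in \eqref{equ:definition_A_nonlin_I}, precisely so that $\mathcal{H}A^{nonlin,(I)}$ has the explicit paradifferential bilinear form needed for the trilinear null structure, and it then places the resulting (non-small, energy-bounded) discrepancy into the data of $A^{free,(I)}$. With your definition you would still have to extract this structure, modulo a free wave that must be tracked and absorbed into $A^{free,(I_n)}$; as written, your bootstrap would not close on the resonant interactions.
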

\begin{proof}
We first define precisely the decompositions $A = A^{free} + A^{nonlin}$ that we are using. The nonlinear structure inherent in $A^{nonlin}$ will be pivotal for controlling the equation for $\phi$. For a time interval $I \subset \R$, say of the form $I = [t_0, t_1]$ for some $t_0 < t_1$, we define for $i = 1, \ldots, 4$,
\begin{equation} \label{equ:definition_A_nonlin_I}
 A^{nonlin, (I)}_i := - \chi_I \sum_{k,j} \Box^{-1} P_k Q_j \Im {\mathcal P}_i \bigl( (\chi_I \phi) \cdot \nabla_x (\chi_I \overline{\phi}) -  \chi_I i A |\phi|^2 \bigr),
\end{equation}
where $\chi_I$ is a smooth cutoff to the interval $I$ and $\Box^{-1}$ denotes multiplication by the Fourier symbol. Then we define $A^{free, (I)}$ to be the free wave with initial data at time $t_0$ given by $A[t_0] - A^{nonlin, (I)}[t_0]$. By construction, we then have
\[
 A = A^{free, (I)} + A^{nonlin, (I)} \text{ on } I \times \R^4.
\]
We now describe how to partition the time axis into $N = N \bigl(\|(A, \phi)\|_{S^1}, E_{crit} \bigr)$ many suitable time intervals so that the bounds \eqref{equ:A_free_energy_bound} -- \eqref{equ:phi_S_1_norm_bound_short_interval} hold on each such interval. For this, we first need the following technical lemma. 
\begin{lem} \label{lem:nonlinear_structure_small_short_interval} 
 Given $\varepsilon > 0$, there exist $M = M \bigl(\|(A, \phi)\|_{S^1(\R\times\R^4)}, \varepsilon \bigr)$ many time intervals $I_1, \ldots, I_M$ partitioning the time axis $\R$ such that for $m = 1, \ldots, M$ and $i = 1, \ldots, 4$,
 \begin{equation} \label{equ:nonlinear_structure_ell_1_L_2_small_short_interval}
  \sum_k \, \Bigl\| \nabla_{t,x} \sum_j \Box^{-1} P_k Q_j {\mathcal P}_i \bigl( (\chi_{I_m} \phi) \cdot \nabla_x (\chi_{I_m} \overline{\phi}) - \chi_{I_m} i A |\phi|^2 \bigr) \Bigr\|_{L^\infty_t L^2_x(\R\times\R^4)} \lesssim \varepsilon
 \end{equation}
 and 
 \begin{equation} \label{equ:nonlinear_structure_ell_1_N_small_short_interval}
  \bigl\| {\mathcal P}_i \bigl( (\chi_{I_m} \phi) \cdot \nabla_x (\chi_{I_m} \overline{\phi}) - \chi_{I_m} i A |\phi|^2 \bigr) \bigr\|_{(\ell^1 N \cap \ell^1 L^2_t \dot{H}^{-\frac{1}{2}}_x)(\R\times\R^4)} \lesssim \varepsilon.
 \end{equation}
 In particular, it then holds that
 \begin{align}
  \bigl\| \nabla_{t,x} A^{free, (I_m)}_i \bigr\|_{L^\infty_t L^2_x(\R\times\R^4)} &\lesssim E_{crit}^{1/2} + \varepsilon, \label{equ:A_free_L_infty_t_L_2_x} \\
  \bigl\| \nabla_{t,x} A_i^{nonlin, (I_m)} \bigr\|_{L^\infty_t L^2_x(\R\times\R^4)} &\lesssim \varepsilon, \label{equ:A_nonlin_L_infty_t_L_2_x} \\
  \bigl\| A_i^{nonlin, (I_m)} \bigr\|_{\ell^1 S^1(I_m\times\R^4)} &\lesssim \varepsilon. \label{equ:A_nonlin_ell_1_S_1}
 \end{align}
\end{lem}
\begin{proof}
 We begin with the quadratic interaction term in \eqref{equ:nonlinear_structure_ell_1_L_2_small_short_interval} and show that the time axis $\R$ can be partitioned into $M_1 = M_1 \bigl(\|(A, \phi)\|_{S^1}, \varepsilon \bigr)$ many intervals so that on each such interval $I$, it holds that
 \begin{equation} \label{equ:nonlinear_structure_small_short_interval_quadratic_term}
  \sum_k \, \Bigl\| \nabla_{t,x} \sum_j \Box^{-1} P_k Q_j \mathcal{P}_i \bigl( (\chi_{I} \phi) \cdot \nabla_x (\chi_{I} \overline{\phi}) \bigr)  \Bigr\|_{L_t^\infty L_x^2(\R\times\R^4)} \lesssim \varepsilon.
 \end{equation}
 To this end we exploit that there is an inherent null form in the above expression
 \[
  {\mathcal P}_i \bigl( \phi \cdot \nabla_x \overline{\phi} \bigr) = \Delta^{-1} \nabla^r {\mathcal N}_{ir}( \phi, \overline{\phi} ),
 \]
 where 
 \[
  {\mathcal N}_{ir}(\phi, \psi) = (\partial_i \phi) (\partial_r \psi) - (\partial_r \phi) (\partial_i \psi).
 \] 
 We first prove that on suitable intervals $I$,
 \begin{equation} \label{equ:nonlinear_structure_small_short_interval_quadratic_term_H_k}
  \sum_k \sum_{j \leq k+C} \bigl\| \nabla_{t,x} \Box^{-1} P_k Q_j \Delta^{-1} \nabla^r {\mathcal N}_{ir} \bigl( Q_{\leq j-C} (\chi_{I} \phi), Q_{\leq j-C}(\chi_{I} \overline{\phi}) \bigr) \bigr\|_{L_t^\infty L_x^2(\R\times\R^4)} \lesssim \varepsilon.
 \end{equation}
 By a Littlewood-Paley trichotomy we may reduce to the case where both inputs are at frequency $\sim 2^k$. The singular operator $\Box^{-1}$ costs $2^{-j-k}$, so we need to recover the factor $2^{-j}$. From the null form we gain $2^{\frac{j-k}{2}}$, while the inclusion $ Q_j L^2_t L^2_x \hookrightarrow L_t^\infty L_x^2$ gains another $2^{\frac{j}{2}}$. Finally, we obtain a small power in $j-k$ from the improved Bernstein estimate $P_k Q_j L_t^2 L_x^{\frac{3}{2}} \hookrightarrow 2^{\frac{2}{3} k} 2^{\frac{1}{6}(j-k)} L^2_t L^2_x$ (by interpolating with the $X^{s,b}$ version of the Strichartz estimate $P_k Q_j L^2_t L^2_x \hookrightarrow 2^{\frac{4}{3} k} 2^{\frac{1}{2}(j-k)} L^2_t L^6_x$) and that $L^2_t L^6_x \cdot L^\infty_t L^2_x \hookrightarrow L^2_t L^{\frac{3}{2}}_x$. Thus, we find
 \begin{align*}
  &\sum_k \sum_{j \leq k+C} \bigl\| \nabla_{t,x} \Box^{-1} P_k Q_j \Delta^{-1} \nabla^r {\mathcal N}_{ir} \bigl( Q_{\leq j-C} (\chi_{I} \phi_k), Q_{\leq j-C}(\chi_{I} \overline{\phi}_k) \bigr) \bigr\|_{L_t^\infty L_x^2} \\
  &\quad \quad \lesssim \biggl( \sum_k \Bigl( 2^{-\frac{5}{6} k} \| \chi_I \nabla_x \phi_k \|_{L^2_t L^6_x} \Bigr)^2 \biggr)^{1/2} \|\phi\|_{S^1}
 \end{align*}
 and smallness follows from divisibility of the $L^2_t L^6_x(\R\times\R^4)$ norm. Next, we show that on suitable intervals $I$, it holds that
 \begin{equation} \label{equ:nonlinear_structure_small_short_interval_quadratic_term_large_modulation_output}
  \sum_k \sum_{j > k+C} \bigl\| \nabla_{t,x} \Box^{-1} P_k Q_j \Delta^{-1} \nabla^r {\mathcal N}_{ir} \bigl( \chi_I \phi, \chi_I \overline{\phi} \bigr) \bigr\|_{L^\infty_t L^2_x(\R\times\R^4)} \lesssim \varepsilon.
 \end{equation}
 By a Littlewood-Paley trichotomy we may again reduce to the case where both inputs are at frequency $\sim 2^k$. Then we obtain, using the Bernstein inequality both in time and space, that
 \begin{align*}
  &\sum_k \sum_{j > k+C} \bigl\| \nabla_{t,x} \Box^{-1} P_k Q_j \Delta^{-1} \nabla^r {\mathcal N}_{ir} \bigl( \chi_I \phi_k, \chi_I \overline{\phi}_k \bigr) \bigr\|_{L^\infty_t L^2_x} \lesssim \biggl( \sum_k \Bigl( 2^{-\frac{5}{6} k} \bigl\| \chi_I \nabla_x \phi_k \bigr\|_{L^2_t L^6_x} \Bigr)^2 \biggr)^{1/2} \|\phi\|_{S^1}
 \end{align*}
 and smallness follows from the divisibility of the $L^2_t L^6_x(\R\times\R^4)$ norm. In view of \eqref{equ:nonlinear_structure_small_short_interval_quadratic_term_H_k} and \eqref{equ:nonlinear_structure_small_short_interval_quadratic_term_large_modulation_output}, in order to finish the proof of \eqref{equ:nonlinear_structure_small_short_interval_quadratic_term} we may assume that one of the two inputs has the leading modulation. It therefore suffices to show that on suitable intervals $I$ we have bounds of the form
 \begin{equation}
  \sum_{k, j} \, \Bigl\| \nabla_{t,x} \Box^{-1} P_k Q_{\leq j-C} \Delta^{-1} \nabla^r {\mathcal N}_{ir} \bigl( Q_j (\chi_I \phi), Q_{\leq j-C} (\chi_I \overline{\phi}) \bigr) \Bigr\|_{L^\infty_t L^2_x(\R\times\R^4)} \lesssim \varepsilon,
 \end{equation}
 where we use the convention $\Box^{-1} P_k Q_{\leq j-C} = \sum_{l \leq j-C} \Box^{-1} P_k Q_{l}$. Using that
 \[
  \bigl( \Box^{-1}P_kQ_{<j-C} F \bigr)(t, \cdot) = - \int_t^\infty \frac{\sin((t-s)|\nabla|)}{|\nabla|} (P_k Q_{<j-C} F)(s, \cdot) \, ds,
 \]
 it is enough to show
 \[
  \sum_{k, j} \, \Bigl\| P_k Q_{\leq j-C} \Delta^{-1} \nabla^r {\mathcal N}_{ir} \bigl( Q_j (\chi_I \phi), Q_{\leq j-C} (\chi_I \overline{\phi}) \bigr) \Bigr\|_{L^1_t L^2_x(\R\times\R^4)} \lesssim \varepsilon.
 \]
 By estimate (143) in \cite{KST} we may reduce to the case where $j = k + O(1)$ and both inputs are at frequency $\sim 2^k$. Then we find
 \begin{align*}
  \sum_k \, \Bigl\| P_k Q_{\leq k-C} \Delta^{-1} \nabla^r {\mathcal N}_{ir} \bigl( Q_k (\chi_I \phi_k), Q_{\leq k-C} (\chi_I \overline{\phi}_k) \bigr) \Bigr\|_{L^1_t L^2_x} &\lesssim \sum_k \bigl\| \chi_I \nabla_x \phi_k \bigr\|_{X_\infty^{0, \frac{1}{2}}} 2^{-\frac{5}{6} k} \bigl\| \chi_I \nabla_x \phi_k \bigr\|_{L^2_t L^6_x} \\
  &\lesssim \|\phi\|_{S^1} \biggl( \sum_k \Bigl( 2^{-\frac{5}{6} k} \bigl\| \chi_I \nabla_x \phi_k \bigr\|_{L^2_t L^6_x} \Bigr)^2 \biggr)^{1/2}
 \end{align*}
 and smallness follows by divisibility. Next, we consider the cubic term in \eqref{equ:nonlinear_structure_ell_1_L_2_small_short_interval}. Here we have to prove that on suitable intervals $I$ it holds that
 \begin{equation} \label{equ:nonlinear_structure_small_short_interval_cubic_term}
  \sum_k \, \Bigl\| \nabla_{t,x} \sum_j \Box^{-1} P_k Q_j \mathcal{P}_i \bigl( \chi_I A |\phi|^2 \bigr)  \Bigr\|_{L_t^\infty L_x^2(\R\times\R^4)} \lesssim \varepsilon.
 \end{equation}
 By similar arguments as above, this reduces to showing
 \[
  \sum_k \, \bigl\| P_k \bigl( \chi_I A |\phi|^2 \bigr) \bigr\|_{L^1_t L^2_x(\R\times\R^4)} \lesssim \varepsilon,
 \]
 which follows from estimate (64) in \cite{KST} and a divisibility argument. We note that the bound \eqref{equ:nonlinear_structure_ell_1_L_2_small_short_interval} implies that the estimates \eqref{equ:A_free_L_infty_t_L_2_x} and \eqref{equ:A_nonlin_L_infty_t_L_2_x} hold on each such interval $I$.

 \medskip

 It remains to choose the intervals so that the bound \eqref{equ:nonlinear_structure_ell_1_N_small_short_interval} also holds. The energy estimate for the $S_k$ and $N_k$ spaces together with the bounds \eqref{equ:nonlinear_structure_ell_1_L_2_small_short_interval} and \eqref{equ:nonlinear_structure_ell_1_N_small_short_interval} then also imply the bound \eqref{equ:A_nonlin_ell_1_S_1}. We pick $M_1\bigl(\|(A, \phi)\|_{S^1}, \varepsilon \bigr)$ many time intervals $I_m$, $m = 1, \ldots, M_1,$ on which the bound \eqref{equ:nonlinear_structure_ell_1_L_2_small_short_interval} already holds. We show that, if necessary, each time interval $I_m$ can be subdivided into $M_2 = M_2 \bigl(\|(A, \phi)\|_{S^1}, \varepsilon \bigr)$ many intervals $I_{ma}$, $a = 1, \ldots, M_2$, such that we have 
 \begin{equation} \label{equ:nonlinear_structure_ell_1_N_small_short_interval_proof}
  \bigl\| {\mathcal P}_i \bigl( (\chi_{I_{ma}} \phi) \cdot \nabla_x (\chi_{I_{ma}} \overline{\phi}) - \chi_{I_{ma}} i A |\phi|^2 \bigr) \bigr\|_{(\ell^1 N \cap \ell^1 L^2_t \dot{H}^{-\frac{1}{2}}_x)(\R\times\R^4)} \lesssim \varepsilon.
 \end{equation}
 For the rest of the proof of \eqref{equ:nonlinear_structure_ell_1_N_small_short_interval_proof} we denote an interval $I_{ma}$ just by $I$ and say that it is of the form $I = [t_0, t_1]$ for some $t_0 < t_1$. We only outline how to make the left hand side of \eqref{equ:nonlinear_structure_ell_1_N_small_short_interval_proof} small in $\ell^1 N$ for suitable intervals $I$, the $\ell^1 L^2_t \dot{H}^{-\frac{1}{2}}_x$ component being easier. We first estimate the quadratic interaction term in \eqref{equ:nonlinear_structure_ell_1_N_small_short_interval_proof},
 \[
  \sum_k \, \bigl\| {\mathcal P}_i \bigl( (\chi_{I} \phi) \cdot \nabla_x (\chi_{I} \overline{\phi}) \bigr) \bigr\|_{N_k} = \sum_k \, \bigl\| P_k \Delta^{-1} \nabla^r {\mathcal N}_{ir} \bigl( \chi_I \phi, \chi_I \overline{\phi} \bigr) \bigr\|_{N_k}.
 \]
 By (131) in \cite{KST}, it suffices to consider the case where both inputs are at frequency $\sim 2^k$ and have angular separation $\sim 1$ in Fourier space,
 \begin{equation} \label{equ:nonlinear_structure_quadratic_term_reduced}
  \sum_k \, \bigl\| P_k \Delta^{-1} \nabla^r {\mathcal N}_{ir} \bigl( \chi_I \phi_k, \chi_I \overline{\phi}_k \bigr)' \bigr\|_{N_k}.
 \end{equation}
 Here, the prime indicates the angular separation. We split into high and low modulation output.
 \begin{align*}
  \sum_k \, \bigl\| P_k \Delta^{-1} \nabla^r {\mathcal N}_{ir} \bigl( \chi_I \phi_k, \chi_I \overline{\phi}_k \bigr)' \bigr\|_{N_k} &\leq \sum_k \, \bigl\| P_k Q_{> k-C} \Delta^{-1} \nabla^r {\mathcal N}_{ir} \bigl( \chi_I \phi_k, \chi_I \overline{\phi}_k \bigr)' \bigr\|_{N_k} \\
  &\quad + \sum_k \, \bigl\| P_k Q_{\leq k-C} \Delta^{-1} \nabla^r {\mathcal N}_{ir} \bigl( \chi_I \phi_k, \chi_I \overline{\phi}_k \bigr)' \bigr\|_{N_k}.
 \end{align*}
 The term with high modulation output is estimated by
 \begin{align*}
  \sum_k \bigl\| P_k Q_{>k-C} \Delta^{-1} \nabla^r {\mathcal N}_{ir} \bigl( \chi_I \phi_k, \chi_I \overline{\phi}_k \bigr)' \bigr\|_{N_k} &\lesssim \sum_k 2^{-\frac{3}{2} k} \bigl\| \chi_I \nabla_x \phi_k \bigr\|_{L^2_t L^\infty_x} \bigl\| \chi_I \nabla_x \phi_k \bigr\|_{L^\infty_t L^2_x} \\
  &\lesssim \biggl( \sum_k \Bigl( 2^{-\frac{3}{2} k} \bigl\| \chi_I \nabla_x \phi_k \bigr\|_{L^2_t L^\infty_x} \Bigr)^2 \biggr)^{1/2} \|\phi\|_{S^1}
 \end{align*}
 and can be made small on suitable intervals $I$ using the divisibility of the quantity
 \[
  \sum_k \Bigl( 2^{-\frac{3}{2} k} \bigl\| \nabla_x \phi_k \bigr\|_{L^2_t L^\infty_x(\R\times\R^4)} \Bigr)^2 \lesssim \|\phi\|_{S^1(\R\times\R^4)}^2.
 \]
 For the term with low modulation output we note that the angular separation of the inputs allows us to write schematically
 \begin{align*}
  P_k Q_{\leq k-C} \Delta^{-1} \nabla^r {\mathcal N}_{ir} \bigl( \chi_I \phi_k, \chi_I \overline{\phi}_k \bigr)' &= P_k Q_{\leq k-C} \Delta^{-1} \nabla^r {\mathcal N}_{ir} \bigl( Q_{> k-C} ( \chi_I \phi_k ), \chi_I \overline{\phi}_k \bigr)' \\
  &\quad + P_k Q_{\leq k-C} \Delta^{-1} \nabla^r {\mathcal N}_{ir} \bigl( Q_{\leq k-C} (\chi_I \phi_k), Q_{>k-C}(\chi_I \overline{\phi}_k) \bigr)'.
 \end{align*}
 Then we estimate
 \begin{align*}
  \sum_k \bigl\| P_k Q_{\leq k-C} \Delta^{-1} \nabla^r {\mathcal N}_{ir} \bigl( Q_{> k-C} (\chi_I \phi_k), \chi_I \overline{\phi}_k \bigr)' \bigr\|_{N_k} &\lesssim \sum_k 2^{-k} \bigl\| Q_{> k-C} \bigl( \chi_I \nabla_x \phi_k \bigr) \bigr\|_{L^2_t L^2_x} \bigl\| \chi_I \nabla_x \phi_k \bigr\|_{L^2_t L^\infty_x} \\
  &\lesssim \|\phi\|_{S^1} \biggl( \sum_{k} \Bigl( 2^{-\frac{3}{2} k} \bigl\| \chi_I \nabla_x \phi_k \bigr\|_{L^2_t L^\infty_x} \Bigr)^2 \biggr)^{1/2}
 \end{align*}
 and similarly for the other term. Smallness follows as before by divisibility. The cubic interaction term in \eqref{equ:nonlinear_structure_ell_1_N_small_short_interval_proof} is much simpler to treat, it can be made small on suitable intervals $I$ using estimate (64) from \cite{KST} and divisibility of the $L^2_t \dot{W}^{\frac{1}{6}, 6}_x$ norm.
 \end{proof}

 It remains to prove that the bound \eqref{equ:phi_S_1_norm_bound_short_interval} in the statement of Proposition \ref{prop:decomposition} holds. For $\varepsilon > 0$ to be fixed sufficiently small further below, depending only on the size of $\|(A,\phi)\|_{S^1}$ and $E_{crit}$, there exist $M\bigl(\|(A,\phi)\|_{S^1}, \varepsilon \bigr)$ many intervals $I_m$, $m = 1, \ldots, M$, partitioning the time axis $\R$ on which the conclusion of Lemma \ref{lem:nonlinear_structure_small_short_interval} holds. We pick such an interval $I_m$ and now show that, if necessary, it can be subdivided into $M_3 \bigl(\|(A, \phi)\|_{S^1}, E_{crit} \bigr)$ many intervals $I_{ma}$, $a = 1, \ldots, M_3,$ such that 
 \[
  \|\phi\|_{S^1(I_{ma}\times\R^4)} \leq C(E_{crit}),
 \]
 where $C(\cdot)$ grows at most polynomially. Upon renumbering the intervals $I_{ma}$, we will then have finished the proof of Proposition \ref{prop:decomposition}.

 \medskip

 For the remainder of the proof, we denote an interval $I_{ma}$ just by $I$ and assume that it is of the form $I = [t_0, t_1]$ for some $t_0 < t_1$. From the equation $\Box_A \phi = 0$ and the decomposition $A|_{I} = A^{free, (I)} + A^{nonlin, (I)}$ provided by Lemma \ref{lem:nonlinear_structure_small_short_interval}, we conclude that on $I \times \R^4$ it holds that
 \begin{equation} \label{equ:Box_A_on_short_interval}
  \begin{split}
   \Box_{A^{free, (I)}}^p \phi &= - 2i \sum_k \bigl(P_{>k-C} A_j^{free, (I)}\bigr) P_k \partial^j \phi - 2i A_j^{nonlin, (I)} \partial^j \phi + 2i A_0 \partial_t \phi + i (\partial_t A_0) \phi + A_\alpha A^\alpha \phi \\
   &\equiv {\mathcal M}_1 + {\mathcal M}_2,
  \end{split}
 \end{equation}
 where we use the notation
 \begin{align*}
  \Box_{A^{free, (I)}}^p \phi &= \Box \phi + 2i \sum_k \bigl( P_{\leq k-C} A_j^{free, (I)} \bigr) P_k \partial^j \phi, \\
  {\mathcal M}_1 &= - 2i \sum_k \bigl(P_{>k-C} A_j^{free, (I)}\bigr) P_k \partial^j \phi - 2i A_j^{nonlin, (I)} \partial^j \phi + 2i A_0 \partial_t \phi, \\
  {\mathcal M}_2 &= i (\partial_t A_0) \phi + A_\alpha A^{\alpha} \phi.
 \end{align*}
 We further split the term ${\mathcal M}_1$ into
 \[
  {\mathcal M}_1 \equiv \sum_k {\mathcal N} \bigl( P_{>k-C} A^{free, (I)}, P_k \phi \bigr) + {\mathcal N} \bigl( A^{nonlin, (I)}, \phi \bigr) + {\mathcal N}_0 \bigl( A_0, \phi \bigr).
 \]
 Since $A^{free, (I)}$ and $A^{nonlin, (I)}$ are in Coulomb gauge, we observe that the terms ${\mathcal N}\bigl( P_{>k-C} A^{free, (I)}, P_k \phi \bigr)$ and ${\mathcal N}\bigl( A^{nonlin, (I)}, \phi\bigr)$ exhibit a null structure,
 \begin{align*}
  {\mathcal N}\bigl( P_{>k-C} A^{free, (I)}, P_k \phi \bigr) &= - 2i \sum_{j,r} {\mathcal N}_{jr} \bigl( \Delta^{-1} \nabla_j P_{>k-C} A^{free, (I)}_r, P_k \phi \bigr), \\
  {\mathcal N}\bigl( A^{nonlin, (I)}, \phi \bigr) &= - 2i \sum_{j,r} {\mathcal N}_{jr} \bigl( \Delta^{-1} \nabla_j A_r^{nonlin, (I)}, \phi \bigr).
 \end{align*}
 We emphasize that the right hand side of \eqref{equ:Box_A_on_short_interval} is defined on the whole space-time, but which only coincides with $\Box_{A^{free, (I)}}^p \phi$ on $I \times \R^4$. Using the linear estimate \eqref{equ:linear_estimates_magnetic_wave_equation} for the  magnetic wave operator $\Box_{A^{free, (I)}}^p$ and working with suitable Schwartz extensions, we obtain that 
 \begin{align*}
  \|\phi\|_{S^1(I\times\R^4)} &\lesssim \|\nabla_{t,x} \phi(t_0)\|_{L^2_x} + \bigl\| \chi_I \bigl( {\mathcal M}_1 + {\mathcal M}_2 \bigr) \bigr\|_{N \cap \ell^1 L^2_t \dot{H}^{-\frac{1}{2}}_x(\R\times\R^4)} \\
  &\lesssim E_{crit} + \bigl\| \chi_I \bigl( {\mathcal M}_1 + {\mathcal M}_2 \bigr) \bigr\|_{N \cap \ell^1 L^2_t \dot{H}^{-\frac{1}{2}}_x(\R\times\R^4)}.
 \end{align*}
 We note that by Theorem \ref{thm:linear_estimates_magnetic_wave_equation}, the implicit constant in the above estimate for the magnetic wave operator depends polynomially on $\|\nabla_{t,x} A^{free, (I)}\|_{L^\infty_t L^2_x}$ and we have $\|\nabla_{t,x} A^{free, (I)}\|_{L^\infty_t L^2_x} \lesssim E_{crit}^{1/2}$ by Lemma \ref{lem:nonlinear_structure_small_short_interval}. In order to prove the bound \eqref{equ:phi_S_1_norm_bound_short_interval}, it therefore suffices to show that we can choose the intervals $I$ such that
 \[
  \bigl\| {\mathcal M}_1 + {\mathcal M}_2 \bigr\|_{N \cap \ell^1 L^2_t \dot{H}^{-\frac{1}{2}}_x(I\times\R^4)} \lesssim E_{crit}.
 \]
 Our general strategy to achieve this consists in first using the off-diagonal decay in the multilinear estimates from \cite{KST} to reduce to a situation in which a suitable divisibility argument works.

 \medskip

 We only outline how to obtain smallness of the term ${\mathcal M}_1$ in $N(I\times\R^4)$, the estimate of ${\mathcal M}_1$ in $\ell^1 L^2_t \dot{H}^{-\frac{1}{2}}_x$ and of ${\mathcal M}_2$ in $N \cap \ell^1 L^2_t \dot{H}^{-\frac{1}{2}}_x$ being easier. We begin with the first term in the definition of ${\mathcal M}_1$,
 \[
  \bigl\| \sum_k {\mathcal N} \bigl( P_{>k-C} A^{free, (I)}, P_k \phi \bigr) \bigr\|_{N(I \times \R^4)}.
 \]
 From the estimate (131) in \cite{KST}, we conclude that it suffices to bound the expression
 \begin{equation} \label{equ:decomposition_A_free_phi_term_divisibility}
  \sum_{k_1} \bigl\| P_{k_1} {\mathcal N} \bigl( P_{k_2} A^{free, (I)}, P_{k_3} \phi \bigr)' \bigr\|_{N_{k_1}(I\times\R^4)}^2,
 \end{equation}
 where $k_2 = k_3 = k_1 + O(1)$ and both inputs have angular separation $\sim 1$. Similarly to the estimate of \eqref{equ:nonlinear_structure_quadratic_term_reduced}, we bound this term by
 \[
  \biggl( \sum_k \Bigl( 2^{-\frac{3}{2} k} \bigl\| \chi_I P_k \nabla_x A^{free, (I)} \bigr\|_{L^2_t L^\infty_x} \Bigr)^2 \biggr) \, \|\phi\|_{S^1}^2
 \]
 and a divisibility argument then yields smallness. To deal with the other two terms in ${\mathcal M}_1$, we need to achieve
 \[
  \bigl\| {\mathcal N}\bigl(A^{nonlin, (I)}, \phi \bigr) + {\mathcal N}_0(A_0, \phi) \bigr\|_{N(I\times\R^4)} \lesssim E_{crit}
 \]
 on suitable intervals $I$. To this end we will make similar reductions as in Section 4 of \cite{KST}, peeling off the ``good parts'' of ${\mathcal N}\bigl(A^{nonlin, (I)}, \phi\bigr)$ and of ${\mathcal N}_0\bigl( A_0, \phi \bigr)$ until we are left with three quadrilinear null form bounds.

 We introduce the expressions
 \[
  {\mathcal N}^{lowhi}\bigl( A^{nonlin, (I)}, \phi \bigr) = \sum_k {\mathcal N}\bigl( P_{\leq k-C} A^{nonlin, (I)}, P_k \phi \bigr)
 \]
 and 
 \[
  {\mathcal H}^\ast {\mathcal N}^{lowhi}(A^{nonlin, (I)}, \phi) = \sum_k \sum_{k' \leq k-C} \sum_{j \leq k' + C} Q_{\leq j-C} {\mathcal N}\bigl( Q_j P_{k'} A^{nonlin, (I)}, Q_{\leq j-C} P_k \phi \bigr).
 \] 
 By estimate (53) in \cite{KST}, we have
 \begin{equation} \label{equ:N_minus_N_lowhi}
  \bigl\| {\mathcal N}\bigl( A^{nonlin, (I)}, \phi \bigr) - {\mathcal N}^{lowhi}\bigl( A^{nonlin, (I)}, \phi \bigr) \bigr\|_{N} \lesssim \bigl\| A^{nonlin, (I)} \bigr\|_{S^1} \|\phi\|_{S^1}
 \end{equation}
 and by estimate (54) in \cite{KST}, it holds that
 \begin{equation} \label{equ:N_lowhi_minus_H_ast_N_lowhi}
  \bigl\| {\mathcal N}^{lowhi}\bigl( A^{nonlin, (I)}, \phi \bigr) - {\mathcal H}^\ast {\mathcal N}^{lowhi}\bigl( A^{nonlin, (I)}, \phi \bigr) \bigr\|_{N} \lesssim \bigl\| A^{nonlin, (I)} \bigr\|_{\ell^1 S^1} \|\phi\|_{S^1}.
 \end{equation}
 Fixing $\varepsilon > 0$ sufficiently small, depending only on the size of $\|(A, \phi)\|_{S^1}$ and $E_{crit}$, Lemma \ref{lem:nonlinear_structure_small_short_interval} ensures that $\| A^{nonlin, (I)} \|_{\ell^1 S^1}$ is small enough so that the right hand sides of \eqref{equ:N_minus_N_lowhi} and \eqref{equ:N_lowhi_minus_H_ast_N_lowhi} are bounded by $E_{crit}$. We now define
 \[
  {\mathcal H} A_i^{nonlin, (I)} := - \chi_I \sum_{  \substack{k_1, k_2, k \\ k \leq \min\{k_1, k_2\}-C } } \sum_{j \leq k+C} \Box^{-1} P_k Q_j \Im {\mathcal P}_i \bigl( Q_{\leq j-C} (\chi_I \phi_{k_1}) \cdot \nabla_x Q_{\leq j-C} (\chi_I \overline{\phi}_{k_2}) \bigr).
 \]
 By estimate (55) in \cite{KST} it holds that
 \[
  \bigl\| {\mathcal H}^\ast {\mathcal N}^{lowhi}\bigl( A^{nonlin, (I)} - {\mathcal H} A^{nonlin, (I)}, \phi \bigr) \bigr\|_{N} \lesssim \bigl\| A^{nonlin, (I)} - {\mathcal H} A^{nonlin, (I)} \bigr\|_{Z} \|\phi\|_{S^1},
 \]
 so we have to make $\bigl\| A^{nonlin, (I)} - {\mathcal H} A^{nonlin, (I)} \bigr\|_{Z}$ small. We recall the definition of the $Z$ space,
 \[
   \|\phi\|_{Z} = \sum_k \|P_k \phi\|_{Z_k}, \quad \|\phi\|_{Z_k}^2 = \sup_{l < C} \sum_\omega 2^l \|P_l^\omega Q_{k+2l} \phi \|_{L^1_t L^\infty_x}^2.
 \]
 Using estimate (134) in \cite{KST} and that one obtains an extra gain for very negative $l$ when estimating in the $Z$ space, we are reduced to bounding 
 \[
  \sum_k \Box^{-1} P_k Q_{k+O(1)} \Delta^{-1} \nabla_x {\mathcal N} \bigl(\chi_I \phi_k, \chi_I \overline{\phi}_k \bigr).
 \]
 We easily find that
 \begin{align*}
  &\sum_k \, \bigl\| \Box^{-1} P_k Q_{k+O(1)} \Delta^{-1} \nabla_x {\mathcal N} \bigl(\chi_I \phi_k, \chi_I \overline{\phi}_k \bigr) \bigr\|_{L^1_t L^\infty_x} \lesssim \biggl( \sum_k \Bigl( 2^{-\frac{5}{6} k} \bigl\| \chi_I \nabla_x \phi_k \bigr\|_{L^2_t L^6_x} \Bigr)^2 \biggr)^{1/2} \|\phi\|_{S^1},
 \end{align*}
 which can be made small by a divisibility argument. We are thus left with the term
 \[
  {\mathcal H}^\ast {\mathcal N}^{lowhi}\bigl( {\mathcal H} A^{nonlin, (I)}, \phi \bigr).
 \]

 \medskip

 Carrying out similar reductions as in Section 4 of \cite{KST} for the ``elliptic term'' ${\mathcal N}_0(A_0,\phi)$, we arrive at the key remaining term
 \[
  {\mathcal H}^\ast {\mathcal N}_0^{lowhi} \bigl( {\mathcal H} A^{(I)}_0, \phi \bigr),
 \]
 where
 \[
  {\mathcal H}^\ast {\mathcal N}^{lowhi}_0 \bigl( {\mathcal H} A^{(I)}_0, \phi \bigr) = \sum_k \sum_{k' \leq k-C} \sum_{j \leq k' + C} Q_{\leq j-C} {\mathcal N}_0 \bigl( Q_j P_{k'} {\mathcal H} A^{(I)}_0, Q_{\leq j-C} P_k \phi \bigr)
 \]
 and
 \[
  {\mathcal H} A^{(I)}_0 := - \chi_I \sum_{\substack{k_1, k_2, k \\ k \leq \min \{k_1, k_2\} -C}} \sum_{j \leq k+C} \Delta^{-1} P_k Q_j \Im \bigl( Q_{\leq j-C} (\chi_I \phi_{k_1}) \cdot Q_{\leq j-C} \partial_t (\chi_I \overline{\phi}_{k_2}) \bigr).
 \]
 As in \cite{KST}, we combine the ``hyperbolic term'' ${\mathcal H}^\ast {\mathcal N}^{lowhi}\bigl( {\mathcal H} A^{nonlin, (I)}, \phi \bigr)$ and the preceding ``elliptic term'' ${\mathcal H}^\ast {\mathcal N}_0^{lowhi} \bigl( {\mathcal H} A^{(I)}_0, \phi \bigr)$ and wind up with the null forms (61) -- (63) in \cite{KST}. We formulate these as quadrilinear expressions as in \cite{KST} and then prove that smallness can be achieved for each of these. 

 \medskip

 \noindent {\it First null form ((61) in \cite{KST}).} By estimate (148) in \cite{KST}, it suffices to consider the following two cases. First, we show that
 \begin{align*}
  &\sum_k \sum_{ k_1 = k_2 + O(1) } \bigl| \bigl\langle \Box^{-1} P_k Q_j \bigl( Q_{\leq j-C} (\chi_I \phi_{k_1}) \cdot \partial_\alpha Q_{\leq j-C} (\chi_I \phi_{k_2}) \bigr), P_k Q_j \bigl( \partial^\alpha Q_{\leq j-C} \phi_{k_3} \cdot Q_{\leq j-C} \psi_{k_4} \bigr) \bigr\rangle \bigr| \\
  &\quad \quad \ll \| \psi \|_{N^\ast},
 \end{align*}
 where $k_1 > k + C$, $j = k + O(1)$ and $k_3 = k_4 + O(1) = k + O(1)$. Second, we prove that
 \begin{align*}
  &\sum_k \sum_{ k_3 = k_4 + O(1) } \bigl| \bigl\langle \Box^{-1} P_k Q_j \bigl( Q_{\leq j-C} (\chi_I \phi_{k_1}) \cdot \partial_\alpha Q_{\leq j-C} (\chi_I \phi_{k_2}) \bigr), P_k Q_j \bigl( \partial^\alpha Q_{\leq j-C} \phi_{k_3} \cdot Q_{\leq j-C} \psi_{k_4} \bigr) \bigr\rangle \bigr| \\
  &\quad \quad \ll \| \psi \|_{N^\ast},
 \end{align*}
 where $k_3 > k + C$, $j = k + O(1)$ and $k_1 = k_2 + O(1) = k + O(1)$. 

 We begin with the first case. Here, the inputs $Q_{\leq j-C} (\chi_I \phi_{k_1})$ and $\partial_\alpha Q_{\leq j-C} (\chi_I \phi_{k_2})$ have Fourier supports in identical (or opposite) angular sectors $\omega$ of size $\sim 2^{k-k_1}$. Then we bound
 \begin{align*}
  &\sum_k \sum_{ k_1 > k + O(1)} \big| \big\langle \Box^{-1} P_k Q_{k + O(1)} \bigl( Q_{\leq k -C} (\chi_I \phi_{k_1}) \cdot \partial_\alpha Q_{\leq k-C} (\chi_I \phi_{k_1 + O(1)}) \bigr), \\
  &\qquad \qquad \qquad \qquad \qquad   P_k Q_{k+O(1)} \bigl( \partial^\alpha Q_{\leq k-C} \phi_{k+O(1)} \cdot Q_{\leq k-C} \psi_{k + O(1)} \big) \big\rangle \big| \\
  &\lesssim \sum_k \sum_{k_1 > k+O(1)} 2^{\frac{1}{6}(k-k_1)} \biggl( \sum_{\omega}  2^{\frac{1}{3} k_1} \bigl\| P^\omega Q_{\leq k-C} (\chi_I \phi_{k_1}) \bigr\|_{L^2_t L^6_x}^2 \biggr)^{\frac{1}{2}} \biggl( \sum_\omega \, \bigl\| P^\omega Q_{\leq k-C} \nabla_{t,x} (\chi_I \phi_{k_2}) \bigr\|_{L^\infty_t L^2_x}^2 \biggr)^{\frac{1}{2}} \times \\
  &\qquad \qquad \qquad \qquad \qquad \times 2^{-\frac{5}{6} k} \bigl\| \nabla_{t,x} \phi_{k+O(1)} \bigr\|_{L^2_t L^6_x} \|\psi_{k+O(1)}\|_{L^\infty_t L^2_x} \\
  &\lesssim \biggl( \sum_{k_1} \sup_{l < -C} \sum_\omega 2^{\frac{1}{3} k_1} \bigl\|P^\omega_l Q_{\leq k_1 + l -C} (\chi_I \phi_{k_1}) \bigr\|_{L^2_t L^6_x}^2 \biggr)^{\frac{1}{2}} \|\phi\|_{S^1}^2 \|\psi\|_{N^\ast}.
 \end{align*}
 The desired smallness comes from the divisibility of the quantity
 \[
  \biggl( \sum_{k_1} \sup_{l < -C} \sum_\omega 2^{\frac{1}{3} k_1} \bigl\|P^\omega_l Q_{\leq k_1 + l -C} (\chi_I \phi_{k_1}) \bigr\|_{L^2_t L^6_x}^2 \biggr)^{\frac{1}{2}}.
 \]
 To see the divisibility, we write
 \begin{equation} \label{equ:smallness_first_null_form_splitting_for_divisibility}
  P^\omega_l Q_{\leq k_1 + l -C} (\chi_I \phi_{k_1}) = P^\omega_{\frac{1}{2} l} Q_{\leq k_1 + l -C} (\chi_I P_l^\omega Q_{\leq k_1 + l + M} \phi_{k_1}) + P^\omega_l Q_{\leq k_1 + l -C} (\chi_I Q_{> k_1 + l + M} \phi_{k_1})
 \end{equation}
 for some $M > 0$ to be chosen sufficiently large. By disposability of the operator $P_{\frac{1}{2} l}^\omega Q_{\leq k_1 + l -C}$, we estimate the first term on the right hand side of \eqref{equ:smallness_first_null_form_splitting_for_divisibility} by
 \[
  \biggl( \sum_{k_1} \sup_{l < -C} \sum_\omega 2^{\frac{1}{3} k_1} \bigl\|P^\omega_{\frac{1}{2} l} Q_{\leq k_1 + l -C} (\chi_I P_l^\omega Q_{\leq k_1 + l + M} \phi_{k_1}) \bigr\|_{L^2_t L^6_x}^2 \biggr)^{\frac{1}{2}} \lesssim \biggl( \sum_{k_1} \sup_{l < -C} \sum_\omega 2^{\frac{1}{3} k_1} \bigl\|\chi_I P^\omega_l Q_{\leq k_1 + l + M} \phi_{k_1} \bigr\|_{L^2_t L^6_x}^2 \biggr)^{\frac{1}{2}}
 \]
 and smallness can be forced by divisibility of the quantity
 \[
  \biggl( \sum_{k_1} \sup_{l < -C} \sum_\omega 2^{\frac{1}{3} k_1} \bigl\| P^\omega_l Q_{\leq k_1 + l + M} \phi_{k_1} \bigr\|_{L^2_t L^6_x}^2 \biggr)^{\frac{1}{2}} \lesssim \|\phi\|_{S^1}. 
 \]
 For the second term on the right hand side of \eqref{equ:smallness_first_null_form_splitting_for_divisibility}, we use
 \[
  P^\omega_l Q_{\leq k_1 + l -C} (\chi_I Q_{> k_1 + l + M} \phi_{k_1}) = P^\omega_l Q_{\leq k_1 + l - C} \bigl( Q_{> k_1 + l + \frac{M}{2}}(\chi_I) Q_{> k_1 + l + M} \phi_{k_1} \bigr).
 \]
 By Bernstein's inequality in space and in time, we then have
 \begin{align*}
  &\bigl\| P^\omega_l Q_{\leq k_1 + l - C} \bigl( Q_{> k_1 + l + \frac{M}{2}}(\chi_I) Q_{> k_1 + l + M} \phi_{k_1} \bigr) \bigr\|_{L^2_t L^6_x} \\
  &\lesssim 2^{\frac{1}{2} (k_1 + l)} 2^{\frac{4}{3} k_1} 2^l \bigl\| Q_{> k_1 + l + \frac{M}{2}} \chi_I \bigr\|_{L^2_t} \bigl\| P_l^\omega Q_{>k_1 + l + M} \phi_{k_1} \bigr\|_{L^2_t L^2_x} \\
  &\lesssim 2^{\frac{1}{2} (k_1 + l)} 2^{\frac{4}{3} k_1} 2^l 2^{-\frac{1}{2} (k_1 + l + \frac{M}{2})} \bigl\| P_l^\omega Q_{>k_1 + l + M} \phi_{k_1} \bigr\|_{L^2_t L^2_x} \\
  &\lesssim 2^{\frac{4}{3} k_1} 2^l 2^{- \frac{M}{4}} \bigl\| P_l^\omega Q_{>k_1 + l + M} \phi_{k_1} \bigr\|_{L^2_t L^2_x}.
 \end{align*}
 Thus, we obtain
 \begin{align*}
  &\biggl( \sum_{k_1} \sup_{l < -C}  \sum_\omega 2^{\frac{1}{3} k_1} \bigl\|P^\omega_l Q_{\leq k_1 + l -C} \bigl( Q_{> k_1 + l + \frac{M}{2}}(\chi_I) Q_{> k_1 + l + M} \phi_{k_1} \bigr) \bigr\|_{L^2_t L^6_x}^2 \biggr)^{\frac{1}{2}} \\
  &\lesssim \biggl( \sum_{k_1} \sup_{l < -C} 2^l 2^{-\frac{3}{2} M} \|\nabla_x \phi_{k_1}\|_{X^{0, \frac{1}{2}}_\infty}^2 \biggr)^{\frac{1}{2}} \\
  &\lesssim 2^{-\frac{3}{4} M} \|\phi\|_{S^1}
 \end{align*}
 and a smallness factor follows for sufficiently large $M > 0$. 

 The second case is easier to treat. Here we estimate
 \begin{align*}
  &\sum_k \sum_{k_3 > k+O(1)} \big| \big\langle \Box^{-1} P_k Q_{k+O(1)} \bigl( Q_{\leq k-C} (\chi_I \phi_{k+O(1)}) \cdot \partial_\alpha Q_{\leq k-C} (\chi_I \phi_{k+O(1)}) \bigr), \\
  &\qquad \qquad \qquad \qquad \qquad \qquad \qquad P_k Q_{k+O(1)} \bigl( \partial^\alpha Q_{\leq k-C} \phi_{k_3} \cdot Q_{\leq k-C} \psi_{k_3 + O(1)} \bigr) \big\rangle \big| \\
  &\lesssim \sum_k \sum_{k_3 > k+O(1)} 2^{-\frac{3}{2} k} \bigl\| \chi_I \nabla_x \phi_{k+O(1)} \bigr\|_{L^2_t L^\infty_x} 2^{-\frac{3}{2} k} \bigl\| \nabla_{t,x} (\chi_I \phi_{k+O(1)}) \bigr\|_{L^2_t L^\infty_x} \times \\
  &\qquad \qquad \qquad \qquad \qquad \qquad \qquad \times \bigl\| Q_{\leq k-C} \nabla_{t,x} \phi_{k_3} \bigr\|_{L^\infty_t L^2_x} \bigl\| Q_{\leq k-C} \psi_{k_3 + O(1)} \bigr\|_{L^\infty_t L^2_x} \\
  &\lesssim \biggl( \sum_k \Bigl( 2^{-\frac{3}{2} k} \bigl\| \chi_I \nabla_x \phi_{k+O(1)} \bigr\|_{L^2_t L^\infty_x} \Bigr)^2 \biggr)^{\frac{1}{2}} \|\phi\|_{S^1}^2 \|\psi\|_{N^\ast}
 \end{align*}
 and immediately obtain smallness by divisibility.
 
 \medskip

 \noindent {\it Second null form ((62) in \cite{KST})}. By the estimates (149) and (150) in \cite{KST}, we only have to show that
 \begin{align*}
  &\sum_k \sum_{k_3 = k_4 + O(1)} \bigl| \bigl\langle (\Box \Delta)^{-1} P_k Q_j \partial_t \partial_\alpha \bigl( Q_{\leq j-C} (\chi_I \phi_{k_1}) \cdot \partial^\alpha Q_{\leq j-C} (\chi_I \phi_{k_2}) \bigr), P_k Q_j \bigl( \partial_t Q_{\leq j-C} \phi_{k_3} \cdot Q_{\leq j-C} \psi_{k_4} \bigr) \bigr\rangle \bigr| \\
  &\quad \quad \ll \|\psi\|_{N^\ast},
 \end{align*}
 where $j = k + O(1)$, $k_1 = k_2 + O(1) = k + O(1)$ and $k_3 > k + C$. Then we estimate
 \begin{align*}
  &\sum_k \sum_{k_3 > k + C} \big| \big\langle (\Box \Delta)^{-1} P_k Q_{k+O(1)} \partial_t \partial_\alpha \bigl( Q_{\leq k-C} (\chi_I \phi_{k+O(1)}) \cdot \partial^\alpha Q_{\leq k-C} (\chi_I \phi_{k+O(1)}) \bigr), \\
  &\qquad \qquad \qquad \qquad \qquad \qquad \qquad \qquad \qquad P_k Q_{k+O(1)} \bigl( \partial_t Q_{\leq k-C} \phi_{k_3} \cdot Q_{\leq k-C} \psi_{k_3 + O(1)} \bigr) \big\rangle \big| \\
  &\lesssim \sum_k \sum_{k_3 > k+C} \bigl\| (\Box \Delta)^{-1} P_k Q_{k+O(1)} \partial_t \partial_\alpha \bigl( Q_{\leq k-C} (\chi_I \phi_{k+O(1)}) \cdot \partial^\alpha Q_{\leq k-C} (\chi_I \phi_{k+O(1)}) \bigr) \bigr\|_{L^1_t L^\infty_x} \times \\
  &\qquad \qquad \qquad \qquad \qquad \qquad \qquad \qquad \qquad \times \bigl\| P_k Q_{k+O(1)} \bigl( \partial_t Q_{\leq k-C} \phi_{k_3} \cdot Q_{\leq k-C} \psi_{k_3 + O(1)} \bigr) \bigr\|_{L^\infty_t L^1_x} \\
  &\lesssim \sum_k \sum_{k_3 > k+C} 2^{-\frac{3}{2} k} \bigl\| \nabla_x (\chi_I \phi_{k+O(1)}) \bigr\|_{L^2_t L^\infty_x} 2^{-\frac{3}{2} k} \bigl\| \nabla_x (\chi_I \phi_{k+O(1)}) \bigr\|_{L^2_t L^\infty_x} \|\partial_t \phi_{k_3}\|_{L^\infty_t L^2_x} \|\psi_{k_3 + O(1)}\|_{L^\infty_t L^2_x} \\
  &\lesssim \biggl( \sum_k \Bigl( 2^{-\frac{3}{2} k} \bigl\| \nabla_x (\chi_I \phi_{k+O(1)}) \bigr\|_{L^2_t L^\infty_x} \Bigr)^2 \biggr)^{1/2} \|\phi\|^2_{S^1} \|\psi\|_{N^\ast}
 \end{align*}
 and smallness follows by divisibility.

 \medskip

 \noindent {\it Third null form ((63) in \cite{KST}).} By the estimates (152) -- (154) in \cite{KST}, it suffices to consider the following two cases. First, we show that
 \begin{align*}
  &\sum_k \sum_{ k_1 = k_2 + O(1) } \bigl| \bigl\langle (\Box \Delta)^{-1} P_k Q_j \partial^i \bigl( Q_{\leq j-C} (\chi_I \phi_{k_1}) \cdot \partial_i Q_{\leq j-C} (\chi_I \phi_{k_2}) \bigr), P_k Q_j \partial_\alpha \bigl( \partial^\alpha Q_{\leq j-C} \phi_{k_3} \cdot Q_{\leq j-C} \psi_{k_4} \bigr) \bigr\rangle \bigr| \\
  &\quad \quad \ll \| \psi \|_{N^\ast},
 \end{align*}
 where $k_1 > k + C$, $j = k + O(1)$ and $k_3 = k_4 + O(1) = k + O(1)$. Second, we prove that
 \begin{align*}
  &\sum_k \sum_{ k_3 = k_4 + O(1) } \bigl| \bigl\langle (\Box \Delta)^{-1} P_k Q_j \partial^i \bigl( Q_{\leq j-C} (\chi_I \phi_{k_1}) \cdot \partial_i Q_{\leq j-C} (\chi_I \phi_{k_2}) \bigr), P_k Q_j \partial_\alpha \bigl( \partial^\alpha Q_{\leq j-C} \phi_{k_3} \cdot Q_{\leq j-C} \psi_{k_4} \bigr) \bigr\rangle \bigr| \\
  &\quad \quad \ll \| \psi \|_{N^\ast},
 \end{align*}
 where $k_3 > k+C$, $j = k+O(1)$ and $k_1 = k_2 + O(1) = k + O(1)$. 

 In the first case we note that the first two inputs have Fourier supports in identical (or opposite) angular sectors $\omega$ of size $\sim 2^{k-k_1}$. Using Bernstein's inequality, we then place the first input in $L^2_t L^6_x$, the second one in $L^\infty_t L^2_x$, the third one in $L^2_t L^\infty_x$ and the fourth one in $L^\infty_t L^2_x$. As in the first case of the first null form we obtain the desired smallness by divisibility of the quantity
 \[
  \biggl( \sum_{k_1} \sup_{l < -C} \sum_\omega 2^{\frac{1}{3} k_1} \bigl\|P^\omega_l Q_{\leq k_1 + l -C} (\chi_I \phi_{k_1}) \bigr\|_{L^2_t L^6_x}^2 \biggr)^{\frac{1}{2}}.
 \]
 The second case is easier to deal with and we omit the details.
\end{proof}

\noindent {\bf Step 3.} {\it Solution of perturbative problems on suitable space-time slices.} This is the crucial technical step. We write
\[
 \bigl( A^{n1, (L)}_{\Lambda_0}, \phi^{n1, (L)}_{\Lambda_0} \bigr) = \bigl( A^{n1, (L-1)}_{\Lambda_0}, \phi^{n1, (L-1)}_{\Lambda_0} \bigr) + \bigl( \delta A^{(L)}, \delta \phi^{(L)} \bigr).
\]
Then we obtain the following system of equations for the perturbations $\bigl( \delta A^{(L)}, \delta \phi^{(L)} \bigr)$,
\begin{equation} \label{equ:perturbation_equation_phi}
 \Box_{A_{\Lambda_0}^{n1, (L-1)} + \delta A^{(L)}} \bigl( \phi_{\Lambda_0}^{n1, (L-1)} + \delta \phi^{(L)} \bigr) - \Box_{A_{\Lambda_0}^{n1, (L-1)}} \phi_{\Lambda_0}^{n1, (L-1)} = 0,
\end{equation}
\begin{equation} \label{equ:perturbation_equation_A}
 \begin{split} 
  \Box \delta A^{(L)} &= - \Im {\mathcal P} \Bigl( \phi^{n1, (L-1)}_{\Lambda_0} \cdot \nabla_x \overline{\delta \phi}^{(L-1)} + \delta \phi^{(L-1)} \cdot \nabla_x \overline{\phi}^{n1, (L-1)}_{\Lambda_0} + \delta \phi^{(L-1)} \cdot \nabla_x \overline{\delta \phi}^{(L-1)} \Bigr) \\
  &\quad \quad + \Im {\mathcal P} \Bigl( \bigl(A^{n1, (L-1)}_{\Lambda_0} + \delta A^{(L)}\bigr) \bigl| \phi^{n1, (L-1)}_{\Lambda_0} + \delta \phi^{(L)} \bigr|^2 - A^{n1, (L-1)}_{\Lambda_0} \bigl| \phi^{n1, (L-1)}_{\Lambda_0} \bigr|^2 \Bigr).
 \end{split}
\end{equation}
We have to show that if the initial data $(\delta A^{(L)}, \delta \phi^{(L)})[0]$ are less than the absolute constant $\varepsilon_0$ in the energy sense, then we can prove frequency localized $S^1$ norm bounds via bootstrap on any space-time slice on which certain ``divisible'' norms of $\bigl(A^{n1, (L-1)}_{\Lambda_0}, \phi^{n1, (L-1)}_{\Lambda_0} \bigr)$ are small. Furthermore, the number of such space-time slices needed to fill all of space-time depends on the a priori assumed $S^1$ norm bounds for the components $\bigl( A^{n1, (L-1)}_{\Lambda_0}, \phi^{n1, (L-1)}_{\Lambda_0} \bigr)$.

One technical difficulty is the formulation of the correct frequency localized $S^1$ norm bound for the propagation of $\delta \phi^{(L)}$, because there is a contribution from low frequencies of $\phi^{n1, (L-1)}_{\Lambda_0}$, and similarly for $\delta A^{(L)}$. However, this low frequency contribution can be made arbitrarily small by picking $n$ large and $\delta_1$ small enough. 

We note that while $\bigl(A^{n1, (L-1)}_{\Lambda_0}, \phi^{n1, (L-1)}_{\Lambda_0} \bigr)$ exists globally in time, $\bigl( \delta A^{(L)}, \delta \phi^{(L)} \bigr)$ only exists locally in time and we will have to prove global existence and $S^1$ norm bounds for it. For now, any statement we make about $\bigl( \delta A^{(L)}, \delta \phi^{(L)} \bigr)$ is meant locally in time on some interval $I_0$ around $t = 0$. Proposition \ref{prop:decomposition} yields a partition of the time axis $\R$ into $N = N \bigl( \| \bigl( A^{n1, (L-1)}_{\Lambda_0}, \phi^{n1, (L-1)}_{\Lambda_0} \bigr) \|_{S^1(\R\times\R^4)} \bigr)$ many time intervals $\{I_j\}_{j=1}^N$, on which the smallness conclusions (in terms of $E_{crit}$) of Proposition \ref{prop:decomposition} hold. We tacitly assume that these intervals are intersected with $I_0$ and now fix the interval $I_1$, which we assume to contain $t = 0$. All the arguments in this step can be carried out for any of the later intervals $I_2, \ldots, I_N$.

\medskip

\noindent {\bf Bootstrap assumptions }: Suppose that there exist decompositions
\begin{align*}
 \delta A^{(L)} = \delta A^{(L)}_{\text{\ding{172}}} + \delta A^{(L)}_{\text{\ding{173}}}, \quad \delta \phi^{(L)} = \delta \phi^{(L)}_{\text{\ding{172}}} + \delta \phi^{(L)}_{\text{\ding{173}}}
\end{align*}
satisfying the following bounds.

(i) Let $\bigl\{ c^{(L)}_{\delta A, k} \bigr\}_{k \in \Z}$ be a frequency envelope controlling the data $P_k \delta A^{(L)}[0]$ at time $t = 0$ and let $\bigl\{ d^{(L)}_{\delta A, k} \bigr\}_{k \in \Z}$ be a frequency envelope that decays exponentially for $k > b_L$ but is otherwise not localized and satisfies the smallness condition 
\[
 \sum_k d^{(L)}_{\delta A, k} \leq \delta_2 = \delta_2(\delta_1).
\]
Then we assume that for all $k \in \Z$,
\begin{align*}
 \bigl\| P_k \delta A^{(L)}_{\text{\ding{172}}} \bigr\|_{S^1(I_1\times\R^4)} &\leq C c^{(L)}_{\delta A, k}, \\
 \bigl\| P_k \delta A^{(L)}_{\text{\ding{173}}} \bigr\|_{S^1(I_1\times\R^4)} &\leq C d^{(L)}_{\delta A, k},
\end{align*}
where $C \equiv C\bigl(E_{crit}\bigr)$ is sufficiently large. 

(ii) Let $\bigl\{ c^{(L)}_{\delta \phi, k} \bigr\}_{k \in \Z}$ be a frequency envelope controlling the data $P_k \delta \phi^{(L)}[0]$ at time $t = 0$ and let $\bigl\{ d^{(L)}_{\delta \phi, k} \bigr\}$ be a frequency envelope that decays exponentially for $k > b_L$, but is otherwise not localized and satisfies the smallness condition
\[
 \Bigl( \sum_k \bigl( d^{(L)}_{\delta \phi, k} \bigr)^2 \Bigr)^{\frac{1}{2}} \leq \delta_3 = \delta_3(\delta_1).
\]
Then we assume that for all $k \in \Z$,
\begin{align*}
 \bigl\| P_k \delta\phi^{(L)}_{\text{\ding{172}}} \bigr\|_{S^1_k(I_1\times\R^4)} \leq C c^{(L)}_{\delta \phi, k}, \\
 \bigl\| P_k \delta\phi^{(L)}_{\text{\ding{173}}} \bigr\|_{S^1_k(I_1\times\R^4)} \leq C d^{(L)}_{\delta \phi, k},
\end{align*}
where $C \equiv C\bigl(E_{crit}\bigr)$ is sufficiently large.

\medskip

We now show that we can improve this to a similar decomposition with 
\begin{equation} \label{equ:improved_bootstrap_bounds}
 \begin{split}
  \bigl\| P_k \delta A^{(L)}_{\text{\ding{172}}} \bigr\|_{S^1_k(I_1\times\R^4)} &\leq \frac{C}{2} c^{(L)}_{\delta A, k}, \\
  \bigl\| P_k \delta A^{(L)}_{\text{\ding{173}}} \bigr\|_{S^1_k(I_1\times\R^4)} &\leq \frac{C}{2} d^{(L)}_{\delta A, k}, \\
  \bigl\| P_k \delta\phi^{(L)}_{\text{\ding{172}}} \bigr\|_{S^1_k(I_1\times\R^4)} &\leq \frac{C}{2} c^{(L)}_{\delta \phi, k}, \\
  \bigl\| P_k \delta\phi^{(L)}_{\text{\ding{173}}} \bigr\|_{S^1_k(I_1\times\R^4)} &\leq \frac{C}{2} d^{(L)}_{\delta \phi, k},
 \end{split}
\end{equation}
provided we make the additional assumption
\[
 \delta_2 \ll \delta_3
\]
with implied constant depending only on $E_{crit}$. 

\medskip

Observe that we have
\[
 \sum_k \bigl( c_{\delta A, k}^{(L)} \bigr)^2 + \bigl( c_{\delta \phi, k}^{(L)} \bigr)^2 \lesssim \varepsilon_0
\]
and that our smallness parameters satisfy
\[
 \delta_1 \ll \delta_2 \ll \delta_3 \ll \varepsilon_0.
\]
For the remainder of this step we simply write $I \equiv I_1$ and $\phi \equiv \phi^{n1, (L-1)}_{\Lambda_0}$, $\delta \phi \equiv \delta \phi^{(L)}$, $A \equiv A^{n1, (L-1)}_{\Lambda_0}$, $\delta A \equiv \delta A^{(L)}$.

\medskip

\noindent {\bf Step 3a.} {\it Reorganizing the key equation \eqref{equ:perturbation_equation_phi}.} We introduce the connection form $(A+\delta A)^{nonlin, (I)}$ analogously to \eqref{equ:definition_A_nonlin_I} by setting for $i = 1, \ldots, 4$,
\begin{equation} \label{eq:AplusdeltaA}
 (A+\delta A)^{nonlin, (I)}_i := - \chi_I \sum_{k, j} \Box^{-1} P_k Q_j \mathcal{P}_i \bigl( \chi_I (\phi + \delta\phi) \cdot \nabla_x \bigl( \chi_I (\overline{\phi} + \overline{\delta \phi}) \bigr) - \chi_I i (A + \delta A) |\phi + \delta \phi|^2 \bigr),
 \end{equation}
and define $(A+\delta A)^{free, (I)}$ as the free wave with initial data at time $t = 0$ given by 
\[
 (A + \delta A)^{free, (I)}[0] = (A+\delta A)[0] -  (A+\delta A)^{nonlin, (I)}[0].
\]
Then we have
\[
 (A + \delta A)|_I = (A + \delta A)^{free, (I)} + (A + \delta A)^{nonlin, (I)}.
\]
On $I \times \R^4$ we may rewrite the equation \eqref{equ:perturbation_equation_phi} for $\delta \phi$ into the following frequency localized form
\begin{equation}\label{equ:perturbation_equation_phi_frequency_localized}
 \begin{split}
  \Box_{(A + \delta A)^{free, (I)}}^p \bigl( P_0 \delta \phi \bigr) &= - \bigl[ P_0, \Box_{(A+\delta A)^{free, (I)}}^p \bigr] \delta \phi \\
  &\quad - P_0 \Bigl( 2i  \sum_k P_{> k-C} (A + \delta A)_j^{free, (I)} P_k \partial^j \delta \phi \Bigr) \\
  &\quad - P_0 \Bigl( 2i (A + \delta A)^{nonlin, (I)}_j \partial^j \delta \phi - 2i (A + \delta A)_0 \partial_t \delta \phi \Bigr) \\
  &\quad - P_0 \Bigl( 2i (\delta A)_j \partial^j \phi - 2i (\delta A)_0 \partial_t \phi \Bigr) \\
  &\quad + P_0 \Bigl( i (\partial_t A_0 + \partial_t \delta A_0) (\phi + \delta \phi) - i (\partial_t A_0) \phi \Bigr) \\
  &\quad + P_0 \Bigl( (A + \delta A)_\alpha (A + \delta A)^\alpha \phi - A_\alpha A^\alpha \phi \Bigr).
 \end{split}
\end{equation}
We immediately see that compared to \eqref{equ:Box_A_on_short_interval}, a qualitatively new feature in \eqref{equ:perturbation_equation_phi_frequency_localized} is the interaction term 
\begin{equation} \label{equ:new_interaction_term}
 P_0 \Bigl( (\delta A)_j \partial^j \phi - (\delta A)_0 \partial_t \phi \Bigr).
\end{equation}

\medskip

\noindent {\bf Step 3b.} {\it Improving the bounds for $\delta \phi$ using \eqref{equ:perturbation_equation_phi_frequency_localized}.} In order to obtain bounds on the $S^1(I\times\R^4)$ norm of $P_0 \delta \phi$ by bootstrap, we work with suitable Schwartz extensions and use the linear estimate \eqref{equ:linear_estimates_magnetic_wave_equation} for the magnetic wave operator $\Box_{(A + \delta A)^{free, (I)}}^p$. We first consider the new interaction term \eqref{equ:new_interaction_term}. As usual the main difficulty comes from the low-high interactions, so we begin with this case, i.e. the term 
\[
 P_{<0} (\delta A)_j P_0 \partial^j \phi - P_{<0} (\delta A)_0 P_0 \partial_t \phi.
\]
For the spatial components of $\delta A$, we define
\[
 (\delta A)^{free, (I)} := (A + \delta A)^{free, (I)} - A^{free, (I)}, \quad (\delta A)^{nonlin, (I)} := (A + \delta A)^{nonlin, (I)} - A^{nonlin, (I)}
\]
and correspondingly have on $I \times \R^4$ that
\[
 (\delta A)|_I = (\delta A)^{free, (I)} + (\delta A)^{nonlin, (I)}.
\]
We can therefore split on $I \times \R^4$,
\begin{align*}
 P_{<0} (\delta A)_{j} P_0 \partial^j \phi = P_{<0} (\delta A)_j^{free, (I)} P_0 \partial^j \phi + P_{<0} (\delta A)_j^{nonlin, (I)} P_0 \partial^j \phi.
\end{align*}
The first term on the right hand side can in turn be split into two contributions
\begin{equation} \label{equ:new_interaction_term_free_part_split}
 P_{<0} (\delta A)_j^{free, (I)} P_0 \partial^j \phi = P_{<0} (\delta A)_j^{free_1, (I)} P_0 \partial^j \phi + P_{<0} (\delta A)_j^{free_2, (I)} P_0 \partial^j \phi,
\end{equation}
where $(\delta A)^{free_1, (I)}$ is the free evolution of the data $(\delta A)^{(L)}[0]$, while $(\delta A)^{free_2, (I)}$ is the free wave with data 
\begin{align*}
 &\biggl( \sum_{k, j} \Box^{-1} P_k Q_j \mathcal{P} \bigl( \chi_I (\phi + \delta\phi) \cdot \nabla_x \bigl( \chi_I (\overline{\phi} + \overline{\delta \phi}) \bigr) - \chi_I i (A + \delta A) |\phi + \delta \phi|^2 \bigr) \biggr)[0], \\
 &\quad - \biggl( \sum_{k, j} \Box^{-1} P_k Q_j \mathcal{P} \bigl( (\chi_I \phi) \cdot \nabla_x (\chi_I \overline{\phi}) - \chi_I i A |\phi|^2 \bigr) \biggr)[0].
\end{align*}
In order to estimate the terms on the right hand side of \eqref{equ:new_interaction_term_free_part_split}, we will invoke the following estimate from \cite{KST} for a free wave $A^{free}$ in Coulomb gauge for $k_1 \leq k_2 - C$,
\begin{equation} \label{equ:low_high_N_estimate_for_free_wave}
 \bigl\| P_{k_1} A^{free}_j P_{k_2} \partial^j \phi \bigr\|_{N} \lesssim \bigl\| P_{k_1} A^{free} \bigr\|_{S^1} \bigl\| P_{k_2} \phi \bigr\|_{S^1} \lesssim \bigl\| P_{k_1} A^{free}[0] \bigr\|_{\dot{H}^1_x \times L^2_x} \bigl\| P_{k_2} \phi \bigr\|_{S^1}.
\end{equation}
We begin with the first term on the right hand side of \eqref{equ:new_interaction_term_free_part_split},
\[
 P_{<0} (\delta A)_j^{free_1, (I)} P_0 \partial^j \phi.
\]
Here we have to take advantage of the properties of the Fourier support of the data $P_{<0} (\delta A)_j^{free_1, (I)}[0]$. It suffices to assume that 
\[
 \bigl\| P_k (\delta A)_j^{free_1}[0] \bigr\|_{\dot{H}^1_x \times L^2_x} \leq C \bigl( c_{\delta A, k} + d_{\delta A, k} \bigr)
\]
for $C \equiv C(E_{crit})$ sufficiently large. This is an assumption that will hold inductively at later initial times (for the intervals $I_2, \ldots, I_N$). We observe that the frequency envelope $\bigl\{ c_{\delta A, k} \bigr\}_{k \in \Z}$ is ``sharply localized'' to the dyadic frequency interval $[a_L, b_L]$ in the sense that it is exponentially decaying for $k < a_L$ and $k > b_L$. By \eqref{equ:low_high_N_estimate_for_free_wave} we then have
\begin{equation} \label{equ:new_interaction_term_free_1_bound}
 \bigl\| P_{<0} (\delta A)_j^{free_1, (I)} P_0 \partial^j \phi \bigr\|_{N_0(I\times \R^4)} \lesssim \sum_{k<0} c_{\delta A, k} \bigl\| P_0 \phi \bigr\|_{S^1(I\times \R^4)} + \sum_{k<0} d_{\delta A, k} \bigl\| P_0 \phi \bigr\|_{S^1(I\times \R^4)}.
\end{equation}
We begin to estimate the first term on the right hand side of \eqref{equ:new_interaction_term_free_1_bound}, where we only consider the case when $a_L < 0$. For $R > 0$ to be chosen sufficiently large later on, we split
\begin{equation} \label{equ:bootstrap_new_interaction_term_low_high_free_wave}
 \begin{split}
  \sum_{k<0} c_{\delta A, k} \bigl\| P_0 \phi \bigr\|_{S^1(I\times \R^4)} &= \sum_{k \leq a_L - R} c_{\delta A, k} \bigl\| P_0 \phi \bigr\|_{S^1(I\times \R^4)} + \sum_{a_L - R < k \leq a_L + R} c_{\delta A, k} \bigl\| P_0 \phi \bigr\|_{S^1(I\times \R^4)} \\
  &\quad \quad + \sum_{a_L + R < k < 0} c_{\delta A, k} \bigl\| P_0 \phi \bigr\|_{S^1(I\times \R^4)}.
 \end{split}
\end{equation}
To make the first term on the right hand side of \eqref{equ:bootstrap_new_interaction_term_low_high_free_wave} small, we use the exponential decay of the frequency envelope $\bigl\{ c_{\delta A, k} \bigr\}_{k \in \Z}$ to bound
\[
 \sum_{k \leq a_L - R} c_{\delta A, k} \bigl\| P_0 \phi \bigr\|_{S^1(I\times \R^4)} \lesssim \sum_{k \leq a_L - R} 2^{-\sigma (a_L - k)} \|\delta A[0]\|_{\dot{H}^1_x \times L^2_x}  \|P_0 \phi\|_{S^1(I\times\R^4)} \lesssim_{E_{crit}} 2^{-\sigma R} \|P_0 \phi\|_{S^1(I\times\R^4)}.
\]
Upon replacing the output frequency $0$ by general $l \in \Z$, square summing over $l$ and choosing $R > 0$ sufficiently large, we bound the preceding by $\ll_{E_{crit}} \delta_3$, as desired. In order to make the third term on the right hand side of \eqref{equ:bootstrap_new_interaction_term_low_high_free_wave} small, we exploit that by Step 1 the $S^1$ norms of $P_l \phi$ are exponentially decaying beyond the scale $l > a_L$. We have
\begin{align*}
 \sum_{a_L + R < k < 0} c_{\delta A, k} \bigl\| P_0 \phi \bigr\|_{S^1(I\times \R^4)} \lesssim_{E_{crit}} ( |a_L| - R ) c_0,
\end{align*}
where $\{ c_l \}_{l \in \Z}$ is a sufficiently flat frequency envelope covering the initial data $\bigl( A^{n1, (L-1)}_{\Lambda_0}, \phi^{n1, (L-1)}_{\Lambda_0} \bigr)[0]$ as in Step 1. Then replacing the frequency $0$ in $P_0 \phi$ by a general dyadic frequency $l > a_L + R$, square summing over $l$ and choosing $R > 0$ sufficiently large, we find
\begin{align*}
 \sum_{l > a_L + R} \Bigl| \sum_{a_L + R < k < l} c_{\delta A, k} \bigl\| P_l \phi \bigr\|_{S^1(I\times\R^4)} \Bigr|^2 &\lesssim_{E_{crit}} \sum_{l > a_L + R} (l - a_L - R)^2 c_l^2 \lesssim_{E_{crit}} 2^{-\sigma R} \ll_{E_{crit}} \delta_3,
\end{align*}
which is acceptable. It remains to make the second term on the right hand side of \eqref{equ:bootstrap_new_interaction_term_low_high_free_wave} small. To this end we exploit the frequency evacuation property of the data $\bigl( A_{\Lambda_0}^{n1}, \phi_{\Lambda_0}^{n1} \bigr)$ at the edges of the frequency intervals $[a_L, b_L]$ that we established in Lemma \ref{lem:asymptoevacuate}. For sufficiently small $\delta_1 > 0$ and all sufficiently large $n$, we then have
\[
 \sum_{a_L - R < k \leq a_L + R} c_{\delta A, k} \bigl\| P_0 \phi \bigr\|_{S^1(I\times \R^4)} \lesssim R \delta_1 \bigl\| P_0 \phi \bigr\|_{S^1(I\times \R^4)} \ll \delta_3 \bigl\| P_0 \phi \bigr\|_{S^1(I\times\R^4)}.
\]
Upon replacing the frequency $0$ in $P_0 \phi$ by an arbitrary dyadic frequency $l \in \Z$ and square summing, we obtain the desired smallness $\ll_{E_{crit}} \delta_3$ for the last estimate. 

The contribution of the second term on the right hand side of \eqref{equ:new_interaction_term_free_1_bound} is acceptable, because, upon replacing the output frequency $0$ by $l \in \Z$, square summing and using the bootstrap assumptions on the interval $I$, we obtain the bound
\[
 \biggl( \sum_l \Bigl| \sum_{k<l} d_{\delta A, k} \bigl\| P_l \phi \bigr\|_{S^1(I\times \R^4)} \Bigr|^2 \biggr)^{\frac{1}{2}} \lesssim_{E_{crit}} \delta_2 \ll \delta_3,
\]
where the implied constant in $\lesssim_{E_{crit}}$ depends at most polynomially on $E_{crit}$. 

\medskip

Next, we estimate the second term on the right hand side of \eqref{equ:new_interaction_term_free_part_split},
\[
 P_{<0} (\delta A)^{free_2, (I)}_j P_0 \partial^j \phi.
\]
By \eqref{equ:low_high_N_estimate_for_free_wave} we have
\begin{equation}
 \bigl\| P_{<0} (\delta A)^{free_2, (I)}_j P_0 \partial^j \phi \bigr\|_{N_0(I\times\R^4)} \lesssim \bigl\| P_{<0} (\delta A)^{free_2, (I)} \bigr\|_{\ell^1 S^1(I\times\R^4)} \bigl\| P_0 \phi \bigr\|_{S^1(I\times\R^4)}.
\end{equation}
We illustrate how to obtain the desired smallness in this case by assuming for simplicity that $P_{<0} (\delta A)^{free_2, (I)}$ is just the free evolution of the data
\[
 \sum_{k<0} \sum_j \Box^{-1} P_k Q_j {\mathcal P} \bigl( \chi_I \phi \cdot \nabla_x (\chi_I \overline{\delta \phi}) \bigr)[0].
\]
If $\delta \phi = (\delta \phi)_{\text{\ding{172}}}$, we obtain by similar estimates as in the proof of Lemma \ref{lem:nonlinear_structure_small_short_interval} that
\[
 \bigl\| P_{<0} (\delta A)^{free_2, (I)} \bigr\|_{\ell^1 S^1(I\times\R^4)} \lesssim \sum_{k<0} \, \bigl\|\chi_I \phi\bigr\|_{S^1} c_{\delta \phi, k}.
\]
Then we achieve the desired smallness for 
\[
 \bigl\| P_{<0} (\delta A)^{free_2, (I)}_j P_0 \partial^j \phi \bigr\|_{N_0(I\times\R^4)} \lesssim_{E_{crit}} \sum_{k<0} c_{\delta \phi, k} \bigl\| P_0 \phi \bigr\|_{S^1(I\times\R^4)}
\]
by proceeding exactly as with the term \eqref{equ:bootstrap_new_interaction_term_low_high_free_wave}. If instead $\delta \phi = (\delta \phi)_{\text{\ding{173}}}$, we find
\[
 \bigl\| P_{<0} (\delta A)^{free_2, (I)}_j P_0 \partial^j \phi \bigr\|_{N_0(I\times\R^4)} \lesssim \bigl\| \chi_I \phi \bigr\|_{S^1} \biggl( \sum_k d_{\delta \phi, k}^2 \biggr)^{\frac{1}{2}} \bigl\| P_0 \phi \bigr\|_{S^1(I\times\R^4)} \lesssim \delta_3 \bigl\| \chi_I \phi \bigr\|_{S^1} \bigl\| P_0 \phi \bigr\|_{S^1(I\times\R^4)}.
\]
Upon replacing the output frequency $0$ by $l \in \Z$, square summing and using that $\|\phi\|_{S^1(I\times\R^4)} \lesssim C(E_{crit})$ by the choice of the interval $I$, we obtain the bound $\lesssim_{E_{crit}} \delta_3$. This is unfortunately not yet enough to close the bootstrap. To gain the extra smallness we partition the interval $I$ further and use divisibility arguments as in the proof of Lemma \ref{lem:nonlinear_structure_small_short_interval}. However, the number of intervals needed for this partition depends only on $E_{crit}$ (and not on the stage of the induction), which is acceptable.

\medskip

This finishes the estimate of the contribution of 
\[
 P_{<0} (\delta A)_j^{free, (I)} P_0 \partial^j \phi
\]
and we now have to bound
\[
 \bigl\| P_{<0} (\delta A)_j^{nonlin, (I)} P_0 \partial^j \phi - P_{<0} (\delta A)_0 P_0 \partial_t \phi \bigr\|_{N_0(I\times\R^4)}.
\]
At this point we can proceed by analogy to the treatment of the $\phi$ equation in the proof of Proposition~\ref{prop:decomposition}. After a further partitioning of the interval $I$ and similar divisibility arguments, we can replace the output frequency $0$ by $l \in \Z$ and upon square summing, we obtain a bound of the desired form $\ll_{E_{crit}} \delta_3$. 

\medskip

The remaining frequency interactions in the estimate of the term \eqref{equ:new_interaction_term} as well as all other terms on the right hand side of \eqref{equ:perturbation_equation_phi_frequency_localized} are easier to control. We omit the details.
 
\medskip

\noindent {\bf Step 3c.} {\it Improving the bounds for $\delta A$ using \eqref{equ:perturbation_equation_A}.} In order to deduce $S^1(I\times\R^4)$ norm bounds on $P_0 \delta A$ from the perturbation equation \eqref{equ:perturbation_equation_A} by bootstrap, we perform the same kind of divisibility arguments as in the proof of estimate \eqref{equ:nonlinear_structure_ell_1_N_small_short_interval} in Lemma \ref{lem:nonlinear_structure_small_short_interval} for the terms linear in $\delta \phi$ or $\delta A$.

\medskip

\noindent {\bf Step 4.} {\it Repetition of the bootstrap on suitable space-time slices; proof that the energy of perturbation remains small.} 
In this final step we show that the crucial assumption on the energy of the perturbation 
\[
 E \bigl(\delta A^{(L)}, \delta \phi^{(L)} \bigr)(0) < \varepsilon_0
\]
remains in tact along the evolution up to a very small correction. We recall that
\[
 \delta A^{(L)} = A^{n1, (L)}_{\Lambda_0} - A^{n1, (L-1)}_{\Lambda_0}, \quad \delta \phi^{(L)} = \phi^{n1, (L)}_{\Lambda_0} - \phi^{n1, (L-1)}_{\Lambda_0}.
\]

\begin{lem} \label{lem:approximate_energy_conservation_for_perturbation}
 Assuming the bounds \eqref{equ:improved_bootstrap_bounds} on the evolution of $\bigl( \delta A^{(L)}, \delta \phi^{(L)} \bigr)$ on $I_1 \times \R^4$, we have for sufficiently small $\delta_1 > 0$ and all sufficiently large $n$ that
 \[
  E \bigl( \delta A^{(L)}, \delta \phi^{(L)} \bigr)(t) < \varepsilon_0 \quad \text{ for } t \in I_1.
 \]
\end{lem}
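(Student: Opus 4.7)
The plan is to compare $E(\delta A^{(L)}, \delta \phi^{(L)})$ to the conserved MKG-CG energies of the full evolutions $(A^{n1,(L)}_{\Lambda_0}, \phi^{n1,(L)}_{\Lambda_0})$ and $(A^{n1,(L-1)}_{\Lambda_0}, \phi^{n1,(L-1)}_{\Lambda_0})$, both of which are genuine MKG-CG solutions on $I_1 \times \R^4$ and therefore have total energies that are constant in $t \in I_1$. Substituting $A^{(L)} = A^{(L-1)} + \delta A^{(L)}$ and $\phi^{(L)} = \phi^{(L-1)} + \delta \phi^{(L)}$ into the definition of $E$ and expanding $\tfrac{1}{4} F_{\alpha\beta}(A^{(L)}) F^{\alpha\beta}(A^{(L)})$ (which is quadratic in $A$) together with $\tfrac{1}{2}|D^{(L)}_\alpha \phi^{(L)}|^2$ algebraically yields an identity of the form
\begin{align*}
E(A^{n1,(L)}_{\Lambda_0}, \phi^{n1,(L)}_{\Lambda_0}) = E(A^{n1,(L-1)}_{\Lambda_0}, \phi^{n1,(L-1)}_{\Lambda_0}) + E(\delta A^{(L)}, \delta\phi^{(L)}) + \mathcal{C},
\end{align*}
where $\mathcal{C}$ denotes the sum of all mixed bilinear, trilinear, and quartic spatial integrals pairing ``background'' factors ($F(A^{(L-1)})$, $D^{(L-1)} \phi^{(L-1)}$, $A^{(L-1)}$, $\phi^{(L-1)}$) against ``perturbation'' factors ($F(\delta A^{(L)})$, $D^{(L-1)} \delta\phi^{(L)}$, $\delta A^{(L)}$, $\delta\phi^{(L)}$). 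Subtracting the values at $t = 0$, the conserved energies drop out and leave
\begin{align*}
E(\delta A^{(L)}, \delta\phi^{(L)})(t) - E(\delta A^{(L)}, \delta\phi^{(L)})(0) = \mathcal{C}(0) - \mathcal{C}(t).
\end{align*}
Since $(\delta A^{(L)}, \delta\phi^{(L)})[0]$ equals $(P_{\tilde{J}_L} A^{n1}_{\Lambda_0}, P_{\tilde{J}_L} \phi^{n1}_{\Lambda_0})[0]$, whose energy is a small multiple of $\varepsilon_0$ by the choice of $\tilde{J}_L$ in the proof of Proposition \ref{prop:lowfreq}, matters reduce to showing that $\sup_{t \in I_1} |\mathcal{C}(t)| \ll \varepsilon_0$ for sufficiently small $\delta_1 > 0$ and all sufficiently large $n$.

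To control $\mathcal{C}(t)$ we use the decompositions $\delta A^{(L)} = \delta A^{(L)}_{\text{\ding{172}}} + \delta A^{(L)}_{\text{\ding{173}}}$ and $\delta \phi^{(L)} = \delta \phi^{(L)}_{\text{\ding{172}}} + \delta \phi^{(L)}_{\text{\ding{173}}}$ supplied by the improved bootstrap bounds \eqref{equ:improved_bootstrap_bounds} and split each summand of $\mathcal{C}$ accordingly. Any piece containing a factor of $\delta A^{(L)}_{\text{\ding{173}}}$ or $\delta \phi^{(L)}_{\text{\ding{173}}}$ is directly bounded by combining the $\ell^1$ smallness $\sum_k d^{(L)}_{\delta A, k} \leq \delta_2$, respectively the $\ell^2$ smallness $\bigl(\sum_k (d^{(L)}_{\delta\phi, k})^2\bigr)^{1/2} \leq \delta_3$, with the Sobolev embedding $\dot H^1_x \hookrightarrow L^4_x$ and the energy bound $E(A^{n1,(L-1)}_{\Lambda_0}, \phi^{n1,(L-1)}_{\Lambda_0}) \leq C(E_{crit})$ on the background. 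This yields a contribution $\lesssim_{E_{crit}} \delta_2 + \delta_3 \ll \varepsilon_0$ provided $\delta_1$ (and hence $\delta_2, \delta_3$) is sufficiently small. For pieces whose only perturbation factors are of type ${\text{\ding{172}}}$, we exploit near frequency orthogonality with the background: by Step 1 of the proof of Proposition \ref{prop:bootstrap}, the frequency profile of $(A^{n1,(L-1)}_{\Lambda_0}, \phi^{n1,(L-1)}_{\Lambda_0})$ decays exponentially for $k > b_{L-1} = a_L$, whereas $\delta A^{(L)}_{\text{\ding{172}}}$ and $\delta \phi^{(L)}_{\text{\ding{172}}}$ are sharply frequency-localized to the dyadic interval $[a_L, b_L]$ by the envelopes $c^{(L)}_{\delta A, k}$ and $c^{(L)}_{\delta\phi, k}$, which decay exponentially outside this interval at $t = 0$ and whose exponential localization is propagated in time by the same envelope mechanism. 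The cross contributions near the transition window of dyadic radius $R$ around $k = a_L$ are governed by the energy-evacuation property in Lemma \ref{lem:asymptoevacuate} and are bounded by $O(R\,\delta_1)$, while contributions outside this window are $O(2^{-\sigma R})$. First choosing $R$ large and then $\delta_1$ sufficiently small renders each such term $\ll \varepsilon_0$.

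Summing over the finitely many summands of $\mathcal{C}$ gives $\sup_{t \in I_1} |\mathcal{C}(t)| \ll \varepsilon_0$, and the same argument applied at $t = 0$ gives $|\mathcal{C}(0)| \ll \varepsilon_0$; combining these with the identity above produces $E(\delta A^{(L)}, \delta\phi^{(L)})(t) < \varepsilon_0$ on $I_1$. The main obstacle will be the bookkeeping of the trilinear and quartic cross terms arising from the expansion of $|D^{(L)}_\alpha \phi^{(L)}|^2$, in particular trilinear integrals of the schematic shape $\int \overline{D^{(L-1)}_\alpha \phi^{(L-1)}} \cdot i \delta A^{(L),\alpha} \phi^{(L-1)} \, dx$, in which the background factor $\phi^{(L-1)}$ appears both differentiated and undifferentiated and one must simultaneously extract frequency-orthogonality against the single perturbation factor $\delta A^{(L)}_{\text{\ding{172}}}$ and smallness against $\delta A^{(L)}_{\text{\ding{173}}}$. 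These estimates are of the same character as the trilinear bounds already required to close Step 3 of the proof of Proposition \ref{prop:bootstrap} and so introduce no new analytic difficulties.
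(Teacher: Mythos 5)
Your proposal is correct and follows essentially the same route as the paper: reduce, via conservation of the two full MKG-CG energies, to bounding the cross terms uniformly on $I_1$, then estimate them through Plancherel/Littlewood--Paley and the frequency envelopes from \eqref{equ:improved_bootstrap_bounds} and Step~1, treating the \ding{173}-components by the $\delta_2,\delta_3$ smallness and the \ding{172}-components by the three-region frequency splitting around $a_L$ (exponential envelope decay outside a window of size $R$, Lemma~\ref{lem:asymptoevacuate} giving $O(R\delta_1)$ inside), choosing $R$ large and then $\delta_1$ small and $n$ large. This matches the paper's argument in both structure and the key inputs.
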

\begin{proof}
 By energy conservation for the evolutions of $\bigl( A^{n1, (L)}_{\Lambda_0}, \phi^{n1, (L)}_{\Lambda_0} \bigr)$ and of $\bigl( A^{n1, (L-1)}_{\Lambda_0}, \phi^{n1, (L-1)}_{\Lambda_0}\bigr)$, it suffices to show that 
 \[
  \Bigl| E \bigl( A^{n1, (L)}_{\Lambda_0}, \phi^{n1, (L)}_{\Lambda_0} \bigr)(t) - E \bigl( A^{n1, (L-1)}_{\Lambda_0}, \phi^{n1, (L-1)}_{\Lambda_0}\bigr)(t) - E \bigl( \delta A^{(L)}, \delta \phi^{(L)} \bigr)(t) \Bigr|
 \]
 can be made arbitrarily small uniformly for all $t \in I_1$ by choosing $\delta_1 > 0$ sufficiently small and $n$ sufficiently large. This reduces to bounding the following expression evaluated at any time $t \in I_1$,
 \begin{align*}
  &\biggl| 2 \sum_{i < j} \int_{\R^4} \bigl( \partial_i A_{\Lambda_0, j}^{n1, (L-1)} \bigr) \bigl( \partial_i \delta A_j^{(L)} \bigr) \, dx + \sum_i \int_{\R^4} \bigl( \partial_t A_{\Lambda_0, i}^{n1, (L-1)} \bigr) \bigl( \partial_t \delta A_i^{(L)} \bigr) + \bigl( \partial_i A_{\Lambda_0, 0}^{n1, (L)} \bigr) \bigl( \partial_i \delta A_0^{(L-1)} \bigr) \, dx \\
  &\, + \frac{1}{2} \sum_\alpha \int_{\R^4} \bigl| \bigl( A_{\Lambda_0, \alpha}^{n1, (L-1)} \bigr) \bigl( \delta \phi^{(L)} \bigr) + \bigl( \delta A^{(L)}_\alpha \bigr) \bigl( \phi_{\Lambda_0}^{n1, (L-1)} \bigr)\bigr|^2 \, dx \\
  &\, + \sum_\alpha \Re \int_{\R^4} \bigl( \partial_\alpha \phi_{\Lambda_0}^{n1, (L-1)} + i A_{\Lambda_0, \alpha}^{n1, (L-1)} \phi_{\Lambda_0}^{n1, (L-1)} \bigr) \bigl( \overline{ \partial_\alpha \delta \phi^{(L)} + i \delta A_\alpha^{(L)} \delta \phi^{(L)} } \bigr) \\
  &\quad \quad \quad \quad \quad \quad + \bigl( \partial_\alpha \delta \phi^{(L)} + i \delta A^{(L)}_\alpha \delta \phi^{(L)} \bigr) \bigl( \overline{ i A_{\Lambda_0, \alpha}^{n1, (L-1)} \delta \phi^{(L)} + i \delta A^{(L)}_\alpha \phi_{\Lambda_0}^{n1, (L-1)} } \bigr) \\
  &\quad \quad \quad \quad \quad \quad \quad \quad + \bigl( \partial_\alpha \phi_{\Lambda_0}^{n1, (L-1)} + i A_{\Lambda_0, \alpha}^{n1, (L-1)} \phi_{\Lambda_0}^{n1, (L-1)} \bigr) \bigl( \overline{ i A_{\Lambda_0, \alpha}^{n1, (L-1)} \delta \phi^{(L)} + i \delta A^{(L)}_\alpha \phi_{\Lambda_0}^{n1, (L-1)} } \bigr) \, dx \biggr|.
 \end{align*}
 We note that in this expression at least one term of the form $A_{\Lambda_0}^{n1, (L-1)}$ or $\phi_{\Lambda_0}^{n1, (L-1)}$ is paired against at least one term of the form $\delta A^{(L)}$ or $\delta \phi^{(L)}$. By Plancherel's theorem (and a Littlewood-Paley trichotomy to deal with the multilinear interactions), we reduce to estimating a sum of the form
 \begin{align*}
  \biggl| 2 \sum_{k \in \Z} \sum_{i < j} \int_{\R^4} P_k \bigl( \partial_i A_{\Lambda_0, j}^{n1, (L-1)} \bigr) \, P_k \bigl( \partial_i \delta A_j^{(L)} \bigr) \, dx + \ldots \biggr|. 
 \end{align*}
 By the bounds \eqref{equ:improved_bootstrap_bounds} and Step 1, we estimate this by
 \begin{align*}
  &\lesssim \sum_{k \in \Z} \bigl\| P_k \nabla_x A_{\Lambda_0}^{n1, (L-1)}(t) \bigr\|_{L^2_x} \bigl\| P_k \nabla_x \delta A^{(L)}(t) \bigr\|_{L^2_x} + \ldots \\
  &\lesssim \sum_{k \in \Z} c_{k}^{(L-1)} \bigl( c_{\delta A, k}^{(L)} + d_{\delta A, k}^{(L)} \bigr) + \ldots  
 \end{align*}
 To see that this expression can be made arbitrarily small, we split
 \[
  \sum_{k \in \Z} c_{k}^{(L-1)} c_{\delta A, k}^{(L)} = \sum_{k \leq a_L - R} c_{k}^{(L-1)} c_{\delta A, k}^{(L)} + \sum_{a_L - R < k \leq a_L + R} c_{k}^{(L-1)} c_{\delta A, k}^{(L)} + \sum_{k > a_L + R} c_{k}^{(L-1)} c_{\delta A, k}^{(L)}.
 \]
 The first term can be made arbitrarily small for sufficiently large $R > 0$ by the exponential decay of the frequency envelope $\bigl\{ c_{\delta A, k}^{(L)} \bigr\}_{k\in\Z}$ beyond $[a_L, b_L]$. Similarly, we achieve smallness for the third term for sufficiently large $R > 0$ by the exponential decay of $\bigl\{ c_{k}^{(L-1)} \bigr\}_{k\in\Z}$ for $k > a_L$ established in Step 1. Finally, we gain smallness for the second term for all sufficiently large $n$ from the frequency evacuation property in Lemma \ref{lem:asymptoevacuate}. Moreover, we have by \eqref{equ:improved_bootstrap_bounds} that
 \[
  \sum_{k \in \Z} c_{k}^{(L-1)} d_{\delta A, k}^{(L)} \lesssim_{E_{crit}} \delta_2(\delta_1) \ll 1.
 \]
\end{proof}

We now summarize how the previous steps yield the proof of Proposition~\ref{prop:bootstrap}. In order to derive $S^1$ norm bounds on the evolutions $\bigl( A_{\Lambda_0}^{n1, (L)}, \phi_{\Lambda_0}^{n1, (L)} \bigr)$, we first use Proposition~\ref{prop:decomposition} from Step~2 to partition the time axis $\R$ into $N = N \Bigl( \bigl\| \bigl( A_{\Lambda_0}^{n1, (L-1)}, \phi_{\Lambda_0}^{n1, (L-1)} \bigr) \bigr\|_{S^1(\R\times\R^4)} \Bigr)$ many time intervals $I_1, \ldots, I_N$, on which certain ``divisible norms'' of $\bigl( A^{n1, (L-1)}_{\Lambda_0}, \phi_{\Lambda_0}^{n1, (L-1)} \bigr)$ are small in terms of $E_{crit}$. Let $I_1$ be the time interval containing $t=0$. By construction of the frequency intervals $J_L$, $1 \leq L \leq C_1$, the energy of the perturbation 
\[
 \bigl( \delta A^{(L)}, \delta \phi^{(L)} \bigr)[0] = \bigl( A^{n1, (L)}_{\Lambda_0} - A^{n1, (L-1)}_{\Lambda_0}, \phi^{n1, (L)}_{\Lambda_0} - \phi^{n1, (L-1)}_{\Lambda_0} \bigr)[0]
\]
at time $t=0$ is less than the absolute constant $\varepsilon_0$. Thus, we can prove frequency localized $S^1$ norm bounds for $\bigl( \delta A^{(L)}, \delta \phi^{(L)} \bigr)$ on $I_1 \times \R^4$ by bootstrap as in Step~3. Crucially, Lemma~\ref{lem:approximate_energy_conservation_for_perturbation} from Step~4 ensures that the energy of the perturbation $\bigl( \delta A^{(L)}, \delta \phi^{(L)} \bigr)[t]$, $t \in I_1$, is approximately conserved on the time interval $I_1$, up to a very small error term that is controlled by the size of $\delta_1$. Hence, we can ensure that at the starting points of all later (or earlier) time intervals $I_2, \ldots, I_N$, the energy of the perturbation is still less than the absolute constant $\varepsilon_0$ by choosing $\delta_1$ sufficiently small depending on the number $N$ of ``divisibility intervals'', which is bounded by the size of 
\[
 \bigl\| \bigl( A_{\Lambda_0}^{n1, (L-1)}, \phi_{\Lambda_0}^{n1, (L-1)} \bigr) \bigr\|_{S^1(\R\times\R^4)} \leq C_2.
\]
 This allows us to repeat the same bootstrap argument from Step~3 on all other time intervals $I_2, \ldots, I_N$. Putting all estimates together, we then obtain the desired $S^1$ norm bounds 
\[
 \bigl\| \bigl( A_{\Lambda_0}^{n1, (L)}, \phi_{\Lambda_0}^{n1, (L)} \bigr) \bigr\|_{S^1(\R\times\R^4)} \leq C_3(C_2),
\]
where the bound $C_3$ only depends on the size of $C_2$.
\end{proof}

\subsection{Interlude: Proofs of Proposition~\ref{prop:breakdown_criterion_frequency_envelopes}, Proposition~\ref{prop:perturbation}, Proposition~\ref{prop:morelocaldatastuff} and Proposition~\ref{prop:minenblowup}} \label{subsec:interlude}

\begin{proof}[Proof of Proposition \ref{prop:breakdown_criterion_frequency_envelopes}] Let $E$ denote the conserved energy of the admissible solution $(A, \phi)$. Analogously to the proof of Proposition \ref{prop:decomposition}, we can partition the time interval $(-T_0, T_1)$ into $N = N \bigl( \|(A, \phi)\|_{S^1((-T_0, T_1)\times\R^4)} \bigr)$ many intervals $I$ such that
 \[
  A|_I = A^{free, (I)} + A^{nonlin, (I)}, \quad \Box A^{free, (I)} = 0
 \]
 and 
 \begin{align*}
  \bigl\| \nabla_{t,x} A^{free, (I)} \bigr\|_{L^\infty_t L^2_x(\R\times\R^4)} &\lesssim E^{1/2}, \\
  \bigl\| A^{nonlin, (I)} \bigr\|_{\ell^1 S^1(I\times\R^4)} &\ll 1, \\
  \|\phi\|_{S^1(I\times\R^4)} &\lesssim C(E),
 \end{align*}
 where $C(\cdot)$ grows at most polynomially. For each such interval $I$, say of the form $I = [t_0, t_1]$ for some $t_0 < t_1$, we let $\{ c_k \}_{k\in\Z}$ be a sufficiently flat frequency envelope covering the data $(A, \phi)[t_0]$ at time $t_0$. Then we show that the bootstrap assumption
 \[
  \bigl\| P_k A \bigr\|_{S^1_k(I\times\R^4)} + \bigl\| P_k \phi \bigr\|_{S^1_k(I\times\R^4)} \leq D c_k
 \]
 for $D = D(E)$ sufficiently large, implies the improved bound
 \[
  \bigl\| P_k A \bigr\|_{S^1_k(I\times\R^4)} + \bigl\| P_k \phi \bigr\|_{S^1_k(I\times\R^4)} \leq \frac{D}{2} c_k.
 \]
 We only discuss the equation for $\phi$, because the equation for $A$ is easier. It suffices to consider the case $k = 0$. On $I \times \R^4$ we may rewrite the equation for $\phi$ into the following frequency localized form
 \begin{equation} \label{equ:breakdown_criterion_phi_equation_freq_localized}
  \begin{split}
   \Box_{A^{free, (I)}}^p \bigl( P_0 \phi \bigr) &= - \bigl[ P_0, \Box_{A^{free, (I)}}^p \bigr] \phi - 2i P_0 \biggl( \sum_k P_{>k-C} A^{free, (I)}_j P_k \partial^j \phi \biggr) \\
   &\quad - 2i P_0 \bigl( A_j^{nonlin, (I)} \partial^j \phi - A_0 \partial_t \phi \bigr) + P_0 \bigl( i (\partial_t A_0) \phi + A^\alpha A_\alpha \phi \bigr).
  \end{split}
 \end{equation}
 In order to close the bootstrap argument we now translate the estimates in the proof of Proposition \ref{prop:decomposition} into the language of frequency envelopes. For example, to bound the high-high interactions in the term
 \[
  P_0 \biggl( \sum_k P_{>k-C} A^{free, (I)}_j P_k \partial^j \phi \biggr),
 \]
 we use estimate (131) from \cite{KST} to obtain
 \begin{align*}
  \sum_{\substack{ k_1 = k_2 + O(1) \\ k_1 > O(1) }} \bigl\| P_0 \bigl( P_{k_1} A^{free, (I)}_j P_{k_2} \partial^j \phi \bigr) \bigr\|_{N_0(I\times\R^4)} &\lesssim \sum_{\substack{ k_1 = k_2 + O(1) \\ k_1 > O(1) }} 2^{-\delta k_1} \bigl\| P_{k_1} A^{free, (I)} \bigr\|_{S^1(I\times\R^4)} \bigl\| P_{k_2} \phi \bigr\|_{S^1(I\times\R^4)} \\
  &\lesssim \sum_{\substack{ k_1 = k_2 + O(1) \\ k_1 > O(1) }} 2^{-\delta k_1} \bigl\| P_{k_1} A^{free, (I)} \bigr\|_{S^1(I\times\R^4)} c_{k_2}.
 \end{align*}
 Summing over all sufficiently large $k_1 \gg 1$ and using the properties of frequency envelopes, we can bound this by 
 \[
  \leq \frac{D}{2} c_0.
 \]
 This allows us to reduce to the case $k_1 = k_2 + O(1) = O(1)$. Here we gain the necessary smallness by further partitioning the interval $I$ (where the total number of subintervals depends only on the size of $E$), using exactly the same divisibility argument as for the term \eqref{equ:decomposition_A_free_phi_term_divisibility} in the proof of Proposition \ref{prop:decomposition}. All other terms on the right hand side of \eqref{equ:breakdown_criterion_phi_equation_freq_localized} can be treated analogously to the above argument.
\end{proof}

\medskip

\begin{proof}[Proof of Proposition \ref{prop:perturbation}]
Here we are in the situation of Step 3 of the proof of Proposition~\ref{prop:bootstrap}. We obtain the bound 
\[
 \big\| (\delta A, \delta\phi) \big\|_{S^1([-T_0, T_1]\times\R^4)} \lesssim_L \delta_0
\]
for sufficiently small $\delta_0$ by means of a bootstrap argument performed on a finite number of space-time slices, whose number depends on $L$. We select these space-time slices as in Proposition~\ref{prop:decomposition}. The main difficulty arises from the equation for $\phi$. As in Step 3a of the proof of Proposition~\ref{prop:bootstrap}, we localize the equation for $\phi$ to frequency $0$ on a suitable space-time slice $I \times \R^4$ with $0 \in I$. Then, as in Step 3b there, the main difficulty comes from the new low-high interaction term 
\[
 P_{<0} (\delta A)_j P_0 \partial^j \phi - P_{<0} (\delta A)_0 P_0 \partial_t \phi.
\]
Using notation from the proof of Proposition \ref{prop:bootstrap}, the worst contribution comes from
\begin{equation} \label{equ:perturbation_result_low_high_A_free_interaction_term}
 P_{<0} \delta A_j^{free_1, (I)} P_0 \partial^j \phi,
\end{equation}
where we recall that $\delta A^{free_1, (I)}$ is the free evolution of the data $\delta A[0]$. We observe that for $\delta A^{free_1, (I)}$ the interaction term \eqref{equ:perturbation_result_low_high_A_free_interaction_term} vanishes by assumption on the frequency support of $\delta A[0]$ unless $K \leq 0$. By our assumption on the frequency supports of the data $(A, \phi)[0]$ and by Proposition~\ref{prop:breakdown_criterion_frequency_envelopes}, we obtain 
\[
 \bigl\| P_0 \phi \bigr\|_{S^1([-T_0, T_1]\times\R^4)} \lesssim_L 2^{\sigma K}.
\]
More generally, replacing the frequency $0$ by $l \in \Z$ with $l\geq K$, we have
\[
 \bigl\| P_l \phi \bigr\|_{S^1([-T_0, T_1]\times\R^4)} \lesssim_L 2^{- \sigma (l - K)}.
\]
By estimate \eqref{equ:low_high_N_estimate_for_free_wave}, we then find for $l \geq K$ that
\begin{align*}
 \bigl\| P_{<l} \delta A_j^{free_1, (I)} P_l \partial^j \phi \bigr\|_{N_l(I\times\R^4)} \lesssim_L \delta_0 |l-K| 2^{- \sigma (l-K)},
\end{align*}
where the extra factor $ |l-K|$ arises due to the $\ell^1$ summation over the frequencies of $P_{<l} \delta A^{free_1, (I)}$. But then we get the bound 
\[
 \biggl( \sum_{l \geq K} \, \bigl\| P_{<l} \delta A_j^{free_1, (I)} P_l \partial^j \phi \bigr\|_{N_l(I\times\R^4)}^2 \biggr)^{\frac{1}{2}} \lesssim_{L} \delta_0,
\]
which gives the required smallness for this term. Then the argument proceeds as for Proposition~\ref{prop:bootstrap}. 
\end{proof}

\medskip

\begin{proof}[Proof of Proposition~\ref{prop:morelocaldatastuff}]
We write for large $R_2 \geq R_1 \geq R_0$
\[
 \bigl( \tilde{A}_{R_2}, \tilde{\phi}_{R_2} \bigr) = \bigl( \tilde{A}_{R_1} + \delta A, \tilde{\phi}_{R_1} + \delta \phi \bigr).
\]
Then we analyze the equations for $(\delta A, \delta\phi)$. In fact, the only new feature occurs for the $\delta\phi$ equation and so we explain this here. We obtain the equation 
\[
 \Box_{\tilde{A}_{R_1}+\delta A}(\delta\phi) + \big(\Box_{\tilde{A}_{R_1}+\delta A} - \Box_{\tilde{A}_{R_1}}\big)\tilde{\phi}_{R_1} = 0. 
\]
Here we only retain the key difficult term that cannot be treated via a perturbative argument, using suitable divisibility properties as for example done in great detail in Step 3 of the proof of Proposition~\ref{prop:bootstrap}. This term is given by 
\[
\sum_{k\in \Z} P_{<k} (\delta A^{free})_j P_k \partial^j \tilde{\phi}_{R_1}.
\]
However, since we localize to a small time interval $[-T, T]$ around $t = 0$, it will be possible to obtain good $N$ norm bounds. Note that on account of the estimates in Subsection~\ref{subsec:proof_of_joint_time_interval_prop}, we may assume that 
\[
 \limsup_{R\rightarrow\infty} \big\|(\tilde{A}_R, \tilde{\phi}_R)\big\|_{S^1([-T, T]\times \R^4)} < \infty
\]
for all $R$ sufficiently large, provided that $T$ is sufficiently small. We shall assume, as we may that $T<1$. Then write 
\begin{equation} \label{equ:localize_in_physical_space_difficult_term_split}
 \begin{split}
  \sum_k P_{<k} (\delta A^{free})_j P_k \partial^j \tilde{\phi}_{R_1} = &\sum_k P_{< \, \min\{-\frac{1}{2}\log R_1, k\} } (\delta A^{free})_j P_k \partial^j \tilde{\phi}_{R_1} \\
  &\quad + \sum_{k} P_{[-\frac{1}{2}\log R_1, k]} (\delta A^{free})_j P_k \partial^j \tilde{\phi}_{R_1}.
 \end{split}
\end{equation}
The last term will be estimated by taking advantage of Huygens' principle as well as our particular choice of initial data, namely that $\tilde{\phi}_{R_1}[0]$ is supported on the set $\bigl\{ |x|\leq \frac{R_1}{10} \bigr\}$, while $\delta A[0]$ is supported  on $\bigl\{ |x| \geq R_1 \bigr\}$ up to tails that essentially decay exponentially fast. We now estimate both terms on the right hand side of \eqref{equ:localize_in_physical_space_difficult_term_split}. For the first term we find
\begin{align*}
&\big\| \sum_k P_{< \, \min\{-\frac{1}{2}\log R_1, k\} } (\delta A^{free})_j P_k \partial^j \tilde{\phi}_{R_1} \big\|_{N([-T, T] \times \R^4)} \\
&\lesssim \biggl( \sum_k \big\| P_{< \, \min\{-\frac{1}{2}\log R_1, k\} } (\delta A^{free})_j P_k \partial^j \tilde{\phi}_{R_1} \big\|_{L_t^1 L_x^2([-T, T]\times \R^4)}^2 \biggr)^{\frac{1}{2}} \\
&\lesssim \sup_{l} \big\| P_{< \, \min\{-\frac{1}{2}\log R_1, l\} } \delta A^{free} \big\|_{L^\infty_t L^\infty_x} \biggl( \sum_k \big\|P_k \nabla_x \tilde{\phi}_{R_1} \big\|_{L_t^\infty L_x^2([-T, T]\times \R^4)}^2 \biggr)^{\frac{1}{2}} \\
&\lesssim R_1^{-\frac{1}{2}} \big\|\delta A[0]\big\|_{\dot{H}^1_x\times L^2_x}\big\|\tilde{\phi}_{R_1}\big\|_{S^1([-T, T]\times \R^4)}
\end{align*}
and so this converges to $0$ as $R_1\rightarrow +\infty$. For the second term we have
\begin{align*}
&\big\| \sum_{k} P_{[-\frac{1}{2}\log R_1, k]} (\delta A^{free})_j P_k \partial^j \tilde{\phi}_{R_1} \big\|_{N([-T, T]\times \R^4)}\\
&\lesssim  \biggl( \sum_{k} \, \bigl\| P_{[-\frac{1}{2}\log R_1, k]} (\delta A^{free})_j P_k \partial^j \tilde{\phi}_{R_1} \big\|_{L_t^1 L_x^2([-T, T]\times \R^4)}^2 \biggr)^{\frac{1}{2}}\\
&\lesssim \biggl( \sum_{k} \big\|\chi_{ \{ |x|<\frac{R_1}{2} \} } P_{[-\frac{1}{2}\log R_1, k]} (\delta A^{free})_j P_k \partial^j \tilde{\phi}_{R_1} \big\|_{L_t^1 L_x^2([-T, T]\times \R^4)}^2 \biggr)^{\frac{1}{2}}\\
&\quad+ \biggl( \sum_{k} \big\| \chi_{ \{ |x|\geq \frac{R_1}{2} \} } P_{[-\frac{1}{2}\log R_1, k]} (\delta A^{free})_j P_k \partial^j \tilde{\phi}_{R_1} \big\|_{L_t^1 L_x^2([-T, T]\times \R^4)}^2 \biggr)^{\frac{1}{2}}.
\end{align*}
For the last term but one we use the localization properties of $\delta A^{free}$ to conclude
\begin{align*}
& \biggl( \sum_{k} \big\| \chi_{ \{ |x|<\frac{R_1}{2} \} } P_{[-\frac{1}{2}\log R_1, k]} (\delta A^{free})_j P_k \partial^j \tilde{\phi}_{R_1} \big\|_{L_t^1 L_x^2([-T, T]\times \R^4)}^2 \biggr)^{\frac{1}{2}} \\
&\lesssim \sup_{l > -\frac{1}{2} \log(R_1)} \big\| \chi_{ \{ |x|<\frac{R_1}{2} \} } P_{[-\frac{1}{2}\log R_1, l]} (\delta A^{free}) \big\|_{L^\infty_t L^\infty_x([-T, T]\times \R^4)} \biggl( \sum_{k} \big\|P_k \nabla_x \tilde{\phi}_{R_1}\big\|_{L_t^\infty L_x^2([-T, T]\times \R^4)}^2 \biggr)^{\frac{1}{2}} \\
&\lesssim R_1^{-M} \big\| \tilde{\phi}_{R_1} \big\|_{S^1([-T, T]\times \R^4)} \lesssim R_1^{-M},
\end{align*}
while for the last term, we get 
\begin{align*}
&\biggl( \sum_{k} \big\| \chi_{ \{ |x|\geq \frac{R_1}{2} \} } P_{[-\frac{1}{2}\log R_1, k]} (\delta A^{free})_j P_k \partial^j \tilde{\phi}_{R_1} \big\|_{L_t^1 L_x^2([-T, T]\times \R^4)}^2 \biggr)^{\frac{1}{2}} \\
&\lesssim \biggl( \sum_{k} \big\| P_{[-\frac{1}{2}\log R_1, k]} (\delta A^{free}) \big\|_{L^\infty_t L^\infty_x}^2 \big\| \chi_{|x|\geq \frac{R_1}{2}} P_k \nabla_x \tilde{\phi}_{R_1} \big\|_{L_t^\infty L_x^2([-T, T]\times \R^4)}^2 \biggr)^{\frac{1}{2}}.
\end{align*}
Here we use the localization properties of $\tilde{\phi}_{R_1}$ to bound the second factor by 
\[
 \big\| \chi_{|x|\geq \frac{R_1}{2}} P_k \nabla_x \tilde{\phi}_{R_1} \big\|_{L_t^\infty L_x^2([-T, T]\times \R^4)}^2 \lesssim \Bigl( 2^{\max\{k,0\}} R_1 \Bigr)^{-M}
\]
as long as $k > -\frac{1}{2} \log R_1$ as we may assume and also we have the crude bound 
\[
 \big\| P_{[-\frac{1}{2}\log R_1, k]} (\delta A^{free}) \big\|_{L^\infty_t L^\infty_x} \lesssim 2^k,
\]
whence we finally obtain the bound 
\begin{align*}
\biggl( \sum_{k} \big\| P_{[-\frac{1}{2}\log R_1, k]} (\delta A^{free}) \big\|_{L^\infty_t L^\infty_x}^2 \big\| \chi_{|x|\geq \frac{R_1}{2}} P_k \nabla_x \tilde{\phi}_{R_1} \big\|_{L_t^\infty L_x^2([-T, T]\times \R^4)}^2 \biggr)^{\frac{1}{2}} \lesssim R_1^{-M}.
\end{align*}
Letting $R_1 \rightarrow +\infty$ then again gives the required smallness. 
\end{proof}

\medskip

\begin{proof}[Proof of Proposition \ref{prop:minenblowup}] 
In view of Lemma~\ref{lem:characerization_maximal_lifespan}, it suffices to consider the case $I = \R$. We argue by contradiction. Assume that we have 
\begin{equation} \label{eq:nominblow1}
 \|(A, \phi)\|_{S^1(\R \times \R^4)} < \infty.
\end{equation}
Then the idea is that using this ingredient as well as a correct perturbative ansatz for the evolutions $(A_n, \phi_n)$ for $n$ large enough, we can show that the corresponding $S^1$ norms of $(A_n, \phi_n)$ must stay finite, contradicting the assumption. We introduce the perturbative term $\delta A_n$ for the magnetic potential by
\[
 A_n = A + \delta A_n
\]
and the perturbative term $\delta \phi_n$ by means of 
\[
 \phi_n = \chi_{I_1} \phi + \chi_{I_2} \tilde{\phi}_{A, \delta A_n^{free}} + \delta \phi_n,
\]
where $I_1$ is a very large time interval centered around $t = 0$ and $I_2$ represents the complement. The function $\tilde{\phi}_{A, \delta A_n^{free}}$ solves the wave equation 
\begin{align*}
 \tilde{\Box}_{A + \delta A_n^{free}}(\tilde{\phi}_{A, \delta A_n^{free}}) &= 0, \\
 \tilde{\phi}_{A, \delta A_n^{free}}[0] &= \phi[0], 
\end{align*}
where
\[
 \tilde{\Box}_{A + \delta A_n^{free}} = \Box + 2 i (A + \delta A_n^{free})_\nu \partial^\nu
\]
and in this context we let $\delta A_n^{free}$ be the actual free evolution of the data $\delta A_n[0]$ (as usual only the spatial components). We let $\chi_{I_1}, \chi_{I_2}$ be a smooth partition of unity subordinate to dilates of the intervals $I_1$, $I_2$. 
We note that in this argument one has to in fact replace the energy class solution $(A,\phi)$ by the evolution of a low frequency approximation of the energy class data very close to it and then show that this implies $S^1$ norm bounds for $(A_n, \phi_n)$ uniformly for all sufficiently close low frequency approximations.

To begin with, observe that we can show by a variant of the proof of Lemma~\ref{lem:uniform_dispersive_bounds}, proved later independently, that given any $\gamma > 0$ and choosing $I_1$ suitably large (depending on $A, \phi, \gamma$), we can arrange that 
\[
 \tilde{\phi}_{A, \delta A_n^{free}} = \big( \tilde{\phi}_{A, \delta A_n^{free}} \big)_1 + \big( \tilde{\phi}_{A, \delta A_n^{free}} \big)_2
\]
with
\[
\big\| \big(\tilde{\phi}_{A, \delta A_n^{free}}\big)_1 \big\|_{S^1} < \gamma, \quad \big\|\chi_{I_2} \big(\tilde{\phi}_{A, \delta A_n^{free}}\big)_2 \big\|_{L^\infty_t L^\infty_x} < \gamma. 
\]
Now the equation for $\delta\phi_n$ becomes the following 
\begin{align*}
 \Box_{A + \delta A_n} \delta \phi_n &= -\chi_{I_1} \Box_{A+\delta A_n} \phi - \chi_{I_2} \Box_{A+\delta A_n} \tilde{\phi}_{A, \delta A_n^{free}} + (\partial_t^2 \chi_{I_1}) \bigl( \phi - \tilde{\phi}_{A, \delta A_n^{free}} \bigr) \\
 &\quad \quad + 2 (\partial_t \chi_{I_1}) \bigl( \partial_t \phi - \partial_t \tilde{\phi}_{A, \delta A_n^{free}} \bigr) + 2i (\partial_t \chi_{I_1}) (A + \delta A_n)_0 \bigl( \phi - \tilde{\phi}_{A, \delta A_n^{free}} \bigr).
\end{align*}
The error term $\partial_t^2(\chi_{I_1})(\phi - \tilde{\phi}_{A, \delta A_n^{free}})$ is potentially problematic, because we cannot place the factor 
\[
 \bigl( \phi - \tilde{\phi}_{A, \delta A_n^{free}} \bigr)
\]
into $L_t^{\infty} L_x^2$. In fact, the latter is only possible provided we have compact spatial support (precisely, in a ball of radius $R$ with  $1 \ll R \leq |I_1|$) according to the Huygens principle, because then the extra factor $|I_1|^{-1}$ stemming from $ \partial_t^2(\chi_{I_1})$ will counterbalance the factor $|I_1|$ in 
\[
 \big\|\phi - \tilde{\phi}_{A, \delta A_n^{free}}\big\|_{L^\infty_t L_x^2(I_1\times\R^4)} \lesssim |I_1| \big\|\nabla_x \bigl( \phi - \tilde{\phi}_{A, \delta A_n^{free}} \bigr)\big\|_{L^\infty_t L_x^2(I_1\times\R^4)}.
\]
Here it is natural to truncate the data $\phi[0]$ in physical space to force this spatial localization later in time via Huygens' principle, but one needs to ensure that this does not destroy the good $S^1$ norm bounds for $(A, \phi)$. In fact, since we use the same $A$ data, the argument for Proposition~\ref{prop:perturbation} applies to yield a global $S^1$ norm bound for the new $(A, \phi)$. We then incorporate the error due to truncating the data $\phi[0]$ into $\delta\phi_n$ (while $\delta A_n[0]$ remains unchanged!), and hence infer the desired bound 
\[
 \big\|\partial_t^2(\chi_{I_1}) \bigl( \phi - \tilde{\phi}_{A, \delta A_n^{free}} \bigr) \big\|_{N} \lesssim \big\|\nabla_x \bigl( \phi - \tilde{\phi}_{A, \delta A_n^{free}} \bigr) \big\|_{L_t^\infty L_x^2(I_1\times \R^4)}.
\]
This gives the required smallness provided we can make $ \big\|\nabla_x \bigl(\phi - \tilde{\phi}_{A, \delta A_n^{free}} \bigr) \big\|_{L_t^\infty L_x^2(I_1\times \R^4)}$ small. For this observe that (we omit the cubic interaction terms) 
\[
\Box_A \bigl(\phi - \tilde{\phi}_{A, \delta A_n^{free}} \bigr) = 2i (\delta A_n^{free})_j \partial^j \tilde{\phi}_{A, \delta A_n^{free}} + \ldots, 
\]
and further
\[
\big\|\chi_{I_1} 2i (\delta A_n^{free})_j \partial^j\tilde{\phi}_{A, \delta A_n^{free}}\big\|_{N} \leq C(I_1, \phi, A) \big\|\delta A_n[0]\big\|_{\dot{H}^1_x \times L^2_x}.
\]
This implies
\[
 \big\|\nabla_{t,x} \bigl( \phi - \tilde{\phi}_{A, \delta A_n^{free}}) \big\|_{L^\infty_t L_x^2(I_1\times\R^4)} \leq C(I_1, \phi, A) \big\|\delta A_n[0]\big\|_{\dot{H}^1_x \times L^2_x}
\]
and so we conclude that 
\begin{align*}
\big\|\partial_t(\chi_{I_1}) \bigl( \partial_t\phi - \partial_t\tilde{\phi}_{A, \delta A_n^{free}} \bigr)\big\|_{N} + \big\|\partial_t^2(\chi_{I_1}) \bigl(\phi - \tilde{\phi}_{A, \delta A_n^{free}} \bigr)\big\|_{N} \lesssim C(I_1, \phi, A) \big\|\delta A_n[0]\big\|_{\dot{H}^1_x\times L^2_x}.
\end{align*}
Furthermore, we can write
\[
-\chi_{I_1} \Box_{A+\delta A_n} \phi = -\chi_{I_1} \Box_{A + \delta A_n^{nonlin}} \phi - \text{error},
\]
where we use the decomposition 
\[
 \delta A_n = \delta A_n^{free} + \delta A_n^{nonlin}
\]
with the first term on the right hand side the free propagation of $\delta A_n[0]$. For the error term we get
\[
 \big\|\text{error}\big\|_{N} \leq C(|I_1|, A) \big\|\delta A_n[0]\big\|_{\dot{H}^1_x\times L^2_x}.
\]
Furthermore, by using a divisibility argument and subdividing time axis into $N \bigl(\|(A, \phi)\|_{S^1} \bigr)$ many time intervals $J$, using the argument for Proposition~\ref{prop:bootstrap}, we can force (for each such $J$)
\[
 \big\|\chi_{I_1} \Box_{A + \delta A_n^{nonlin}} \phi \big\|_{N(J\times \R^4)} \ll \|\delta \phi_n\|_{S^1} + \|\delta A_n\|_{S^1}.
\]
Similarly, we have 
\begin{align*}
 \big\|\chi_{I_2} \Box_{A + \delta A_n} \tilde{\phi}_{A, \delta A_n^{free}} \big\|_{N(J\times \R^4)} \ll \|\delta\phi_n\|_{S^1} + \|\delta A_n\|_{S^1},
\end{align*}
which then suffices for the bootstrap for $\delta\phi_n$. 

\medskip

Next, we consider the equation for $\delta A_n$, which is of the schematic form
\begin{align*}
 \Box \delta A_n &= \phi \cdot \nabla_x \phi - \bigl( \chi_{I_1} \phi \bigr) \cdot \nabla_x \bigl(\chi_{I_1} \phi \bigr) \\
 &\quad - \bigl( \chi_{I_1} \phi \bigr) \cdot \nabla_x \bigl( \chi_{I_2} \tilde{\phi}_{A, \delta A_n^{free}} + \delta \phi_n \bigr) \\
 &\quad - \bigl( \chi_{I_2} \tilde{\phi}_{A, \delta A_n^{free}} + \delta \phi_n \bigr) \cdot \nabla_x \bigl( \chi_{I_1} \phi + \chi_{I_2} \tilde{\phi}_{A, \delta A_n^{free}} + \delta \phi_n \bigr) \\
 &\quad + (A + \delta A_n) \bigl| \chi_{I_1} \phi + \chi_{I_2} \tilde{\phi}_{A, \delta A_n^{free}} + \delta \phi_n \bigr|^2 - A |\phi|^2.
\end{align*}
Then we make the following observations. The first line on the right hand side satisfies
\[
 \big\| \phi \cdot \nabla_x \phi - \bigl( \chi_{I_1} \phi \bigr) \cdot \nabla_x \bigl(\chi_{I_1} \phi \bigr) \big\|_{N} \leq \nu_1
\]
for any prescribed $\nu_1>0$, provided we pick $I_1$ sufficiently large. The reason for this is that this term is supported around the endpoints of $I_1$ (which is centered around $t = 0$), and since $\Box_A\phi = 0$, we obtain similarly to the proof of Lemma~\ref{lem:uniform_dispersive_bounds} the dispersive decay for $\phi$ at large times, which easily gives the desired smallness for the $N$ norm. For the second and third line on the right, we find
\begin{align*}
&\big\| \bigl( \chi_{I_1} \phi \bigr) \cdot \nabla_x \bigl( \chi_{I_2} \tilde{\phi}_{A, \delta A_n^{free}} + \delta \phi_n \bigr) \big\|_{N(J\times\R^4)} + \big\| \bigl( \chi_{I_2} \tilde{\phi}_{A, \delta A_n^{free}} + \delta \phi_n \bigr) \cdot \nabla_x \bigl( \chi_{I_1} \phi + \chi_{I_2} \tilde{\phi}_{A, \delta A_n^{free}} + \delta \phi_n \bigr) \big\|_{N(J\times \R^4)} \\ 
&\lesssim \nu_1 + M^{-1} \| \delta\phi_n \|_{S^1(J\times \R^4)} + C \| \delta\phi_n \|_{S^1(J\times \R^4)}^2,
\end{align*}
where $J$ is a member of a suitable partition of the time axis into $N\big( \|(A,\phi)\|_{S^1}, M\big)$ many intervals and $C$ is a universal constant. Here we exploit the uniform dispersive decay of $\tilde{\phi}_{A, \delta A_n^{free}}$. The last line is handled similarly,
\begin{align*}
&\big\| (A + \delta A_n) \bigl| \chi_{I_1} \phi + \chi_{I_2} \tilde{\phi}_{A, \delta A_n^{free}} + \delta \phi_n \bigr|^2 - A |\phi|^2 \big\|_{N(J\times \R^4)} \\
&\lesssim \nu_1 + C_1 M^{-1} \bigl( \|\delta\phi_n \|_{S^1(J\times \R^4)} + \| \delta A_n \|_{S^1(J\times \R^4)} \bigr) \\
&\quad + C_2 \bigl( \|\delta\phi_n\|_{S^1(J\times \R^4)}^2 + \|\delta A_n\|_{S^1(J\times \R^4)}^2 \bigr) + \|\delta A_n\|_{S^1(J\times\R^4)} \|\delta \phi_n\|_{S^1(J\times\R^4)}^2.
\end{align*}
Combining these bounds, we then finally infer for the interval $J$ containing $t = 0$
\begin{align*}
 \| \delta A_n \|_{S^1(J\times \R^4)} &\lesssim \|\delta A_n[0]\|_{\dot{H}^1_x\times L^2_x} + \nu_1 + C_1 M^{-1} \bigl( \|\delta\phi_n\|_{S^1(J\times \R^4)} + \|\delta A_n\|_{S^1(J\times \R^4)} \bigr) \\
 &\quad +  C_2 \bigl( \|\delta\phi_n\|_{S^1(J\times \R^4)}^2 + \|\delta A_n\|_{S^1(J\times \R^4)}^2 \bigr) + \|\delta A_n\|_{S^1(J\times\R^4)} \|\delta \phi_n\|_{S^1(J\times\R^4)}^2,
\end{align*}
which suffices to bootstrap the bound for $\|\delta A_n\|_{S^1}$ on $J$. The bootstrap on the remaining intervals follows by induction (and choosing $\nu_1$ and $\|\delta A_n[0]\|_{\dot{H}^1_x \times L^2_x}$ sufficiently small depending on $M$ and $\|(A, \phi)\|_{S^1}$). Finally, we observe that the $S^1$ norm bounds on $\phi$, $\tilde{\phi}_{A, \delta A_n^{free}}$, and $\delta\phi_n$ are ``inherited'' by the expression 
\[
 \chi_{I_1} \phi + \chi_{I_2} \tilde{\phi}_{A, \delta A_n^{free}} + \delta\phi_n
\]
on account of the support properties of the functions $\phi$, $\tilde{\phi}_{A, \delta A_n^{free}}$. 
\end{proof}

\subsection{Selecting concentration profiles and adding the first large frequency atom}  \label{subsec:adding_first_atom}

We recall that we decomposed the essentially singular sequence of data $\bigl\{ (A^n, \phi^n)[0] \bigr\}_{n \in \N}$ into frequency atoms
\begin{align*}
 A^n[0] &= \sum_{a=1}^{\Lambda_0} A^{na}[0] + A^n_{\Lambda_0}[0], \\
 \phi^n[0] &= \sum_{a=1}^{\Lambda_0} \phi^{na}[0] + \phi^n_{\Lambda_0}[0],
\end{align*}
where $\Lambda_0$ was chosen such that
\[
 \sum_{a \geq \Lambda_0 + 1} \limsup_{n \to \infty} E(A^{na}, \phi^{na}) < \varepsilon_0.
\]
Moreover, we remind the reader that the frequency atoms split the errors $\big( A^n_{\Lambda_0}, \phi^n_{\Lambda_0} \big)[0]$ into $\Lambda_0 + 1$ pieces $\big( A^{nj}_{\Lambda_0}, \phi^{nj}_{\Lambda_0} \big)[0]$, $1 \leq j \leq \Lambda_0 + 1$, ordered by the size of $|\xi|$ in their Fourier supports. 

Having established control over the evolution of the data $\big(A^{n1}_{\Lambda_0}, \phi^{n1}_{\Lambda_0}\big)[0]$ in the preceding subsections, we now add the components $\big(A^{n1}, \phi^{n1}\big)[0]$, i.e. we pass to the initial data 
\begin{equation} \label{equ:addfirstatom}
 \big(A^{n1}_{\Lambda_0} + A^{n1}, \phi^{n1}_{\Lambda_0} + \phi^{n1}\big)[0].
\end{equation}
Here we first have to understand the lack of compactness of the ``large'' added term $\big( A^{n1}, \phi^{n1} \big)[0]$. To this end we carry out a careful profile decomposition in physical space of the added data $\big( A^{n1}, \phi^{n1} \big)[0]$. To obtain a profile decomposition for the magnetic potential components $A^{n1}_j[0]$, $j = 1, \ldots, 4$, we just use the standard Bahouri-G\'erard method \cite{Bahouri-Gerard} to extract the profiles via the free wave evolution. However, for the $\phi$ field, we mimic \cite{KS} and select the concentration profiles by evolving the data $\phi^{n1}[0]$ using the following ``covariant'' wave operator
\begin{equation} \label{equ:definition_covariant_wave_operator}
 \widetilde{\Box}_{A^{n1}} := \Box + 2i \bigl( A^{n1}_{\Lambda_0, \nu} + A^{n1, free}_\nu \bigr) \partial^\nu.
\end{equation}
Here, the functions $A^{n1, free}_\nu$ are defined as the solutions to the free wave equation
\begin{align*}
 \left\{ \begin{aligned}
  \Box A^{n1, free}_j &= 0, \\
  A^{n1, free}_j[0] &= A^{n1}_j[0]
 \end{aligned} \right.
\end{align*}
for $j = 1, \ldots, 4$, while we simply put
\[
 A_0^{n1, free} \equiv 0.
\]
It follows from standard results that the solution $u$ to
\[
 \widetilde{\Box}_{A^{n1}} u = 0
\]
with initial data $u[0] \in \dot{H}^1_x(\R^4) \times L^2_x(\R^4)$ exists globally in time. Moreover, the parametrix construction from Section~\ref{sec:magnetic_wave_equation} together with suitable divisibility arguments yields that this solution satisfies the global $S^1$ norm bound
\begin{equation} \label{equ:covariant_wave_operator_S1_norm_bound}
 \|u\|_{S^1(\R\times\R^4)} \lesssim_{E_{crit}} \|\nabla_{t,x} u(0)\|_{L^2_x}.
\end{equation}

\medskip

At this point we emphasize that both the influence of the evolution of the low frequency magnetic potential $A^{n1}_{\Lambda_0}$ and the influence of the free wave evolution of the data $A^{n1}[0]$ are built into the ``covariant'' wave operator $\widetilde{\Box}_{A^{n1}}$. This is different from the situation for critical wave maps in~\cite{KS}, where only the corresponding low frequency components are built into the ``covariant'' wave operator there, see Definition~9.18 in \cite{KS}. The reason for this is that the interaction term
\[
 A^{n1}_\nu \partial^\nu \phi^{n1}
\]
where \emph{both factors} are essentially supported at frequency $\sim 1$, cannot be bounded due to the contribution of the free term $A^{n1, free}_\nu$. Thus, the $\phi$ field experiences not only an ``asymptotic'' twisting due to the contribution of the extremely low frequency components $A^{n1}_{\Lambda_0}$ (as is the case for critical wave maps), but also from the frequency $\sim 1$ field $A^{n1, free}_\nu$. This needs to be reflected by our choice of concentration profiles.

\medskip

An important fact about the wave operator $\widetilde{\Box}_{A^{n1}}$ is that solutions to $\widetilde{\Box}_{A^{n1}} u = 0$ preserve the free energy in a certain asymptotic sense as $n \to \infty$. By rescaling we may assume that $\lambda^{n1} = 1$, which means that the frequency support of $\big( A^{n1}, \phi^{n1} \big)[0]$ is uniformly concentrated around $|\xi| \sim 1$. Precisely, we then have
\begin{lem} \label{lem:asymptotic_energy_conservation}
 Assume that the Schwartz data $u[0]$ is essentially supported at frequency $|\xi| \sim 1$ with $\|u[0]\|_{\dot{H}^1_x \times L^2_x} \lesssim 1$. Moreover, assume that $A^{n1}$ is $1$-oscillatory and that $A^{n1}_{\Lambda_0}$ satisfies a uniform $S^1$ norm bound
 \[
  \limsup_{n \to \infty} \big\|A^{n1}_{\Lambda_0}\big\|_{S^1} < \infty
 \]
 as well as $\sup_n \big\|A^{n1}_j[0]\big\|_{\dot{H}^1_x \times L^2_x} < \infty$ for $j = 1, \ldots, 4$. Let $\widetilde{\Box}_{A^{n1}}$ be defined as in \eqref{equ:definition_covariant_wave_operator}. Then the solutions $u(t, x)$ of the linear problem (with implicit $n$ dependence suppressed)
 \[
  \widetilde{\Box}_{A^{n1}} u = 0
 \]
 with fixed initial data $u[0]$ satisfy
 \begin{equation} \label{equ:asymptotic_energy_conservation_single}
  \lim_{R \to +\infty} \lim_{n \to \infty} \sup_{t \in \R_{+}} \big| \|\nabla_{t,x} u(R +t, \cdot)\|_{L_x^2}^2 - \|\nabla_{t,x} u(R, \cdot)\|_{L_x^2}^2 \big| = 0.
 \end{equation}
 The same holds even when replacing $+\infty$ by $-\infty$ and $\R_+$ by $\R_-$. Furthermore, assume that $u_k$ is a sequence of solutions to (again suppressing the $n$ dependence)
 \[
  \widetilde{\Box}_{A^{n1}} u_k = 0, 
 \]
 supported at frequency $|\xi|\sim 1$ (in the sense of $1$-oscillatory), and satisfying $S^1$ norm bounds uniform in $k$, while $u$ is as above (with fixed data $u[0]$). Then we have 
 \begin{equation} \label{equ:asymptotic_energy_conservation_mixed}
  \lim_{R \to +\infty} \lim_{n \to \infty} \sup_{t \in \R_{+}} \bigg| \int_{\R^4} \Big( \nabla_{t,x} u(t+R, x) \cdot \nabla_{t,x} u_k(t, x) - \nabla_{t,x} u(R, x) \cdot \nabla_{t,x} u_k(0, x) \Big) \, dx \bigg| = 0
 \end{equation}
 uniformly in $k$, and the same holds when replacing $+\infty$ by $-\infty$, and $\R_+$ by $\R_-$.
\end{lem}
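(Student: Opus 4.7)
The plan is to derive the energy identity for $\widetilde{\Box}_{A^{n1}}u=0$ by a multiplier argument. Pairing the equation against $\partial_t\bar u$ and taking the real part, the standard wave-equation calculation produces $-\tfrac12\partial_t\|\nabla_{t,x}u\|_{L^2_x}^2$ plus a spatial divergence; the temporal connection coefficient contributes $-2A^{n1}_{\Lambda_0,0}\mathrm{Re}(i|\partial_t u|^2)=0$ by reality of $A_0$; the spatial coefficients $A_j:=A^{n1}_{\Lambda_0,j}+A^{n1,free}_j$ survive, yielding
\begin{equation*}
\|\nabla_{t,x}u(t_2)\|_{L^2_x}^2 - \|\nabla_{t,x}u(t_1)\|_{L^2_x}^2 = -4\int_{t_1}^{t_2}\!\!\int_{\R^4} A_j\,\mathrm{Im}(\partial_j u\,\partial_t\bar u)\,dx\,dt.
\end{equation*}
Hence \eqref{equ:asymptotic_energy_conservation_single} reduces to showing that the right-hand side with $(t_1,t_2)=(R,R+t)$ vanishes in the iterated limit $\lim_{R\to\infty}\lim_{n\to\infty}$, uniformly in $t\in\R_+$.

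\textbf{Low-frequency piece.} For the contribution of $A^{n1}_{\Lambda_0}$, I would exploit the frequency separation built into the modified Bahouri--G\'erard construction: $A^{n1}_{\Lambda_0}$ is essentially supported at frequencies $\lesssim R_n^{-1}\to 0$, while $u$ and $\partial_t u$ live at frequency $1$. Bernstein in $\R^4$ gives $\|A^{n1}_{\Lambda_0}(t)\|_{L^\infty_x}\lesssim R_n^{-1}\|A^{n1}_{\Lambda_0}(t)\|_{\dot H^1_x}$, and combined with the uniform $S^1$ bound on $A^{n1}_{\Lambda_0}$ and $\|u\|_{S^1}\lesssim_{E_{crit}}\|\nabla_{t,x}u(0)\|_{L^2_x}$ from~\eqref{equ:covariant_wave_operator_S1_norm_bound}, a H\"older estimate disposes of this contribution uniformly in $R,t$ once $n\to\infty$.

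\textbf{Free-wave piece.} The hard case is $A^{n1,free}$, whose $L^2_t L^\infty_x$ tail on $[R,\infty)$ need not vanish uniformly in $n$ --- a concentration profile of $A^{n1,free}$ centered at $t^{n1b}\to+\infty$ directly obstructs this. My approach is a double integration by parts: first in $x_j$ using the Coulomb gauge $\partial^j A^{n1,free}_j=0$ (preserved by the frequency-atom construction and the free evolution), then in $t$, yielding
\begin{equation*}
\int_R^{R+t}\!\!\int A^{n1,free}_j\,\mathrm{Im}(\partial_j u\,\partial_t\bar u)\,dx\,ds = -\tfrac12\bigl[\mathcal B(s)\bigr]_{s=R}^{s=R+t} + \tfrac12\!\int_R^{R+t}\!\!\int (\partial_t A^{n1,free}_j)\,\mathrm{Im}(u\,\partial_j\bar u)\,dx\,ds,
\end{equation*}
with $\mathcal B(s):=\mathrm{Im}\int A^{n1,free}_j(s)\,u(s)\,\partial_j\bar u(s)\,dx$. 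The key observation is that the Coulomb condition converts the magnetic interaction into a bilinear null form $Q_{0j}(u,\bar u)$ in $u$. I then apply the Bahouri--G\'erard profile decomposition to the $1$-oscillatory free wave $A^{n1,free}$, writing it as a finite sum of translated profiles $V^{1b}(\cdot-t^{n1b},\cdot-x^{n1b})$ plus a remainder small in $L^2_t L^\infty_x$ (scales are bounded by the $1$-oscillation hypothesis). Profiles with $t^{n1b}$ bounded or $t^{n1b}\to-\infty$ are controlled by the $(1+|s|)^{-3/2}$ pointwise dispersive decay of a frequency-$1$ free wave in $\R^{1+4}$, giving $L^1_t L^\infty_x$ smallness on the tail $[R,\infty)$ in the iterated limit. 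Profiles with $t^{n1b}\to+\infty$ are the genuine obstruction: after shifting time by $t^{n1b}$, one uses that $u(t^{n1b}+\cdot,\cdot)$ scatters to a free wave $u^\infty$ in the energy topology (since the coefficients of $\widetilde{\Box}_{A^{n1}}$ become negligible at such late times, by the same dispersive analysis as above), reducing the obstruction to a trilinear free-wave null-form integral which vanishes by the standard asymptotic-orthogonality argument of~\cite{Bahouri-Gerard,KS}.

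\textbf{Bilinear version and main obstacle.} Estimate~\eqref{equ:asymptotic_energy_conservation_mixed} follows from the polarized identity obtained by integrating $\partial_t\bar u_k\cdot\widetilde{\Box}_{A^{n1}}u + \partial_t\bar u\cdot\widetilde{\Box}_{A^{n1}}u_k=0$; the resulting right-hand side has the same $A_j$ structure, now bilinear in $(u,u_k)$ rather than in $u$ alone, and is controlled by the uniform-in-$k$ $S^1$ bound on $u_k$. The whole analysis of the three paragraphs above transfers, and the main obstacle throughout is the $t^{n1b}\to+\infty$ case of the free-wave profile decomposition: because $\sup_{t\in\R_+}$ forces one to pass through the peak of every such profile, this case cannot be resolved by direct dispersive bounds and genuinely requires the scattering of $u$ (and $u_k$) to a free wave at such late times, combined with null-form cancellation.
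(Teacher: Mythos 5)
There is a genuine gap, in fact two. First, the low-frequency piece: your ``Bernstein plus H\"older'' dismissal does not close, because the time integral runs over $[R,R+t]$ with $\sup_{t\in\R_+}$ taken \emph{before} any limit, so you cannot pay a factor of the interval length; the smallness $\|A^{n1}_{\Lambda_0}\|_{L^\infty_{t,x}}\lesssim R_n^{-1}$ must be paired with the two differentiated $u$-factors placed in \emph{globally} time-integrable norms, and no admissible Strichartz pairing does this (e.g.\ $\|\nabla u\|_{L^4_tL^2_x}$ is not controlled by $S^1$). This is precisely why the paper does not argue this way: it uses the Coulomb condition on $\partial_t A^{n1}_{\Lambda_0,j}$ to expose the null form $\Delta^{-1}\partial^i\mathcal N_{ij}(u,\bar u)$ and then runs a frequency- and modulation-localized trilinear estimate (splitting the $A$-factor with $Q_l$, using its $X^{0,\frac12}$ component, and Strichartz pairs like $L^4_tL^3_x$ and $L^6_tL^{18/7}_x\times L^3_tL^9_x$) to obtain an off-diagonal gain $2^{\sigma(\min-\max)}$ in the three frequencies; the smallness then comes from the frequency separation between $u$ (at $|\xi|\sim 1$) and $A^{n1}_{\Lambda_0}$ (at $|\xi|\lesssim R_n^{-1}$, which itself requires the frequency-envelope bounds since the nonlinear evolution does not preserve sharp Fourier support). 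Your sketch skips all of this, and the naive H\"older estimate you propose in its place is false as stated.

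Second, and more seriously, the free-wave piece: your treatment of the profiles of $A^{n1,free}$ with $t^{n1b}\to+\infty$ rests on the claim that ``the coefficients of $\widetilde\Box_{A^{n1}}$ become negligible at such late times'', so that $u(t^{n1b}+\cdot)$ scatters to a free wave. This is circular: near $t^{n1b}$ the coefficient $A^{n1,free}$ is exactly \emph{not} negligible -- that is where the profile concentrates. What actually saves the argument, and what the paper uses, is the uniform-in-$n$ dispersive decay of the solution $u$ of the covariant equation itself, Lemma~\ref{lem:uniform_dispersive_bounds} (whose proof requires the parametrix of Section~\ref{sec:magnetic_wave_equation} and a nested double iteration); with it one gets $\|P_{O(1)}u\|_{L^4_tL^{3+}_x([R,\infty)\times\R^4)}\ll 1$ uniformly in $T$ and $n$, and the whole $A^{n1,free}$-contribution is then handled in one stroke by placing $\partial_t A^{n1,free}$ in an admissible $L^2_tL^{3-}_x$ Strichartz norm -- no profile decomposition of $A^{n1,free}$, no scattering of $u$, and no asymptotic-orthogonality argument are needed. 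Your proposal neither proves nor invokes this lemma, yet without it your remaining pieces also fail: the boundary terms $\mathcal B(R)$, $\mathcal B(R+t)$ from your integration by parts in time are merely energy-bounded, not small, unless $u$ has late-time amplitude decay; and for the bilinear statement \eqref{equ:asymptotic_energy_conservation_mixed} you cannot appeal to decay or scattering of $u_k$ at all (the $u_k$ have only uniform $S^1$ bounds and may concentrate at late times) -- the correct argument uses the decay of the single fixed-data factor $u$ only. So the late-time case, which you correctly identify as the crux, is not resolved by your proposal.
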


In the proof of Lemma~\ref{lem:asymptotic_energy_conservation} we shall need the following uniform dispersive type bounds. These will also play a crucial role to control the interactions of the concentration profiles to be discussed below. Note that this is an analogue of Proposition~9.20 in \cite{KS} and is proved in an analogous fashion. 
\begin{lem} \label{lem:uniform_dispersive_bounds} 
 Let $u[0] \in \dot{H}^1_x(\R^4)\times L^2_x(\R^4)$ be fixed initial data and consider the solution $u(t, x)$ of the linear problem 
 \[
  \widetilde{\Box}_{A^{n1}} u = 0
 \]
 with given data $u[0]$ at time $t = 0$. Then for any $\gamma > 0$, there exists a decomposition 
 \[
  u = u_1 + u_2
 \]
 such that
 \[
  \|u_2\|_{S^1} < \gamma
 \]
 and there exists a time $t_0 = t_0\big(u[0], \gamma\big)$ such that for any $|t|>t_0$, 
 \[
  \|u_1(t, \cdot)\|_{L_x^\infty} < \gamma.
 \] 
\end{lem}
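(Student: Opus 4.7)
The plan is to follow the scheme of Proposition~9.20 in \cite{KS}: approximate the data by smooth, frequency-localized, physically compactly supported functions, use the $S^1$ bound \eqref{equ:covariant_wave_operator_S1_norm_bound} to absorb the approximation error into $u_2$, and then extract dispersive decay of the approximate piece $u_1$ from the parametrix for $\widetilde{\Box}_{A^{n1}}$.

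First, I would approximate $u[0]$ in the $\dot{H}^1_x \times L^2_x$ topology by Schwartz data $v[0]$ whose Fourier transform is supported in a compact annulus $\{R^{-1} \leq |\xi| \leq R\}$ and which is, in addition, compactly supported in physical space (the second truncation incurs only a small $\dot{H}^1_x\times L^2_x$ error because $v[0]$ is already smooth). Writing $u[0] = v[0] + w[0]$, we can ensure $\|w[0]\|_{\dot{H}^1_x \times L^2_x}$ is arbitrarily small. Declaring $u_2$ to be the solution of $\widetilde{\Box}_{A^{n1}} u_2 = 0$ with data $w[0]$, the global $S^1$ bound \eqref{equ:covariant_wave_operator_S1_norm_bound} (which depends polynomially on $E_{crit}$ and on the uniform $S^1$ bound for $A^{n1}_{\Lambda_0}$) yields $\|u_2\|_{S^1} \lesssim_{E_{crit}} \|w[0]\|_{\dot{H}^1_x \times L^2_x}$, which can be made smaller than $\gamma$.

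Next, let $u_1$ be the $\widetilde{\Box}_{A^{n1}}$-evolution of $v[0]$. After a dyadic decomposition, I would represent each frequency-localized piece $P_k u_1$ via the parametrix of Theorem~\ref{thm:approx_local_solution_freq_localized_magnetic_wave_equation}, adapted to $\widetilde{\Box}_{A^{n1}}$, so that schematically
\[
 P_k u_1(t,\cdot) \approx \sum_\pm e_{<k}^{-i\psi_\pm}(t,x,D)\,|D|^{-1} e^{\pm i t |D|} e_{<k}^{+i\psi_\pm}(D,y,0)\bigl(|D| P_k v[0] \pm (-i) P_k \partial_t v[0]\bigr),
\]
up to an error controlled in $S^1_k$. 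The dispersive bound \eqref{equ:dispersive_estimate_kernel_K_0} on the kernel of the composed operator gives $\langle t\rangle^{-3/2}$ decay in $L^\infty_x$ when applied to $v[0]$, with a constant depending on finitely many Schwartz seminorms of $v[0]$. Summing over the finitely many relevant dyadic frequencies yields $\|u_1(t,\cdot)\|_{L^\infty_x} \lesssim_{v[0]} \langle t\rangle^{-3/2}$, which is less than $\gamma$ for $|t|>t_0(u[0],\gamma)$. Any parametrix error, as well as the error from absorbing the physical-space truncation of $v[0]$, can be folded into $u_2$ without spoiling its $S^1$ smallness.

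The main obstacle is justifying the parametrix for the operator $\widetilde{\Box}_{A^{n1}}$ rather than the paradifferential operator $\Box^p_A$ of Section~\ref{sec:magnetic_wave_equation}. Two modifications are needed. First, the coefficient $A^{n1,\mathrm{free}}_\nu$ is inserted without paradifferential truncation, so its high-modulation/high-frequency interactions with $P_k u_1$ must be removed by a preliminary divisibility argument on finitely many time intervals, along the lines of Lemma~\ref{lem:nonlinear_structure_small_short_interval}. Second, $A^{n1}_{\Lambda_0}$ is not a free wave, so the phase correction \eqref{equ:definition_phase} does not apply directly; I would handle this by splitting $A^{n1}_{\Lambda_0} = A^{\mathrm{free},(I)}_{\Lambda_0} + A^{\mathrm{nonlin},(I)}_{\Lambda_0}$ on each divisibility interval $I$ via Proposition~\ref{prop:decomposition}, incorporating the free piece into the phase (which is legitimate as it is a Coulomb free wave with uniformly bounded energy, so the bounds of Section~\ref{sec:magnetic_wave_equation} go through) and treating the $\ell^1 S^1$-small nonlinear piece perturbatively. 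With these adaptations, the kernel estimate \eqref{equ:dispersive_estimate_kernel_K_0} yields the desired dispersive bound for $u_1$.
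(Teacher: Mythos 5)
There is a genuine gap at the heart of your argument, namely the claim that you can represent $u_1$ globally in time by a single application of the parametrix to $v[0]$ and ``fold any parametrix error into $u_2$''. The parametrix of Section~\ref{sec:magnetic_wave_equation} is accurate only on the divisibility intervals $I_j$ (the error estimate \eqref{equ:error_estimate_tilde_phi} is localized by $\chi_{I_j}$, and its smallness comes from smallness of space-time norms of $A$ on $I_j$), so the exact evolution of $v[0]$ is built by iterating the approximate parametrix within each interval \emph{and} patching across the intervals. Two consequences defeat your scheme: first, the accumulated corrections are of size $O(\varepsilon)\cdot\|v[0]\|_{\dot H^1_x\times L^2_x}$ with $\varepsilon$ a fixed constant tied to the convergence of the iteration and to the number $J(\varepsilon,E)$ of intervals, not a quantity you can drive below an arbitrary $\gamma$, so they are not $S^1$-small and cannot be absorbed into $u_2$; second, and more fatally, for large $t$ the local representation on the interval containing $t$ starts from the (merely finite-energy) data at that interval's left endpoint, so the kernel bound \eqref{equ:dispersive_estimate_kernel_K_0} produces decay only in $t-T_j^{(l)}$, which stays bounded, and yields no decay in $t$ at all. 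Establishing pointwise decay of the \emph{exact} solution is precisely the difficulty; it cannot be reduced to decay of the zeroth parametrix term plus an $S^1$-small remainder.

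The paper's proof confronts this by following the inductive scheme of Proposition~9.20 in \cite{KS}: one writes the solution via Duhamel from time $0$, re-inserts the low-high error terms $N^{lh}(u)$ interval by interval (so the dispersive kernel acts over the genuinely large time separation $t-s$), and propagates decay through the induction using the bound (9.81) of \cite{KS}. The new ingredient here, which your proposal does not address, is that the homogeneous and inhomogeneous propagators $\tilde S,\tilde U$ for $\Box^p_A$ are themselves infinite sums of parametrix corrections; the paper shows the correction with the commutator structure gains $2^{-\sigma C_1}$, while the remaining corrections (the terms built from $e^{-i\psi_\pm}e^{+i\psi_\pm}-1$ and its variants) are handled by observing, via Proposition~6 of \cite{KST}, that their kernels decay rapidly off the diagonal, so they act essentially as local operators and the \cite{KS} induction survives this nested double iteration. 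Your preliminary reductions (splitting off an $S^1$-small evolution of the rough part of the data via \eqref{equ:covariant_wave_operator_S1_norm_bound}, and removing the non-paradifferential and low-frequency potential terms $A^{n1}_{\Lambda_0},A^{n1,free}$ by divisibility) are consistent with the paper's reduction to $\Box^p_A$, but without the inductive mechanism just described the decay claim for $u_1$ is unsupported.
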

\begin{proof}
 We first prove the dispersive type bounds for solutions to the microlocalized magnetic wave equation
 \begin{align*}
  \Box_A^p u \equiv \Box u + 2i \sum_{k \in \Z} P_{\leq k-C} A_j^{free} P_k \partial^j u &= 0
 \end{align*}
 with initial data $u[0] = (f, g) \in \dot{H}^1_x(\R^4) \times L^2_x(\R^4)$. Here, the spatial components $A_j^{free}$ of the magnetic potential are in Coulomb gauge and are solutions to the free wave equation. We recall that the magnetic wave operator $\Box_A^p$ was treated in detail in Section~\ref{sec:magnetic_wave_equation}. The asserted dispersive type bounds for solutions to $\widetilde{\Box}_{A^{n1}} u = 0$ then follow by suitable divisibility arguments, which allow to iterate away the additional error terms.

 The main difference over the argument for wave maps in \cite[Proposition 9.20]{KS} is that we need to use a nested double iteration, on account of the fact that our parametrix for $\Box_A^p u = 0$ is only approximate. To begin with, we recall that the phase correction $\psi_{\pm}(t, x, \xi)$ defined in \eqref{equ:definition_phase} for the construction of the parametrix for, say, the frequency $0$ mode is truncated to low frequencies $k \leq -\frac{C_1}{\sigma}$. This generates the additional error terms 
 \[
  2i \sum_{-\frac{C_1}{\sigma} \leq k < 0} P_k A_j^{free} P_0 \partial^{j} u.
 \]
 These can only be iterated away by using divisibility, i.e. by restricting to a finite number of suitable time intervals. In fact, due to the summation over $k\in [-\frac{C_1}{\sigma}, 0]$, it is seen that this number of intervals needs to be proportional to $C_1$ (and also depends on the energy and $\sigma$, of course). Now we formally denote the (exact) Duhamel propagator for the equation $\Box_A^p u = F$ by 
 \[
  u(t, \cdot) = \int_0^t\tilde{U}(t-s)F(s)\,ds. 
 \]
 Moreover, we denote by $J_0, J_1, \ldots, J_N$ the partition of the forward time axis $[0,\infty)$ into consecutive time intervals on which the error terms 
 \[
  N^{lh}(u) := 2i \sum_{m \in \Z} \sum_{-\frac{C_1}{\sigma} + m \leq k < m} P_k A_j^{free} P_m \partial^{j} u
 \]
 as well as the remaining errors generated by the parametrix $\tilde{U}$ need to be handled by divisibility. As observed before, their number depends linearly on $C_1$ and implicitly on the energy and $\sigma$. We write $J_i = [t_i, t_{i+1}]$ for $0 \leq i \leq N-1$ with $t_0 = 0$ and $J_N = [t_N, \infty)$. Then, proceeding by exact analogy to the proof of Proposition 9.20 in \cite{KS}, we can write for $u^{(i)} := u|_{J_i}$, 
 \begin{align*}
  u^{(i)} &= \sum_{l=0}^{\infty} u^{(J_i, l)}, \quad u^{(J_i, 0)}(t) = \tilde{S}(t-t_i) u^{(i-1)}[t_i], \\
  u^{(J_i, l)}(t) &= -\int_{t_i}^t\tilde{U}(t-s)N^{lh}(u^{(J_i, l-1)})(s)\,ds,
 \end{align*}
 where $\tilde{S}$ is the homogeneous data propagator for $\Box_A^p$, while $\tilde{U}$ is the homogeneous propagator for data of the special form $(0, g)$.  Then the inductive nature of the construction is revealed by the relation (see (9.74) in \cite{KS})
 \[
  u^{(J_i, 0)}(t) = \tilde{S}(t)(f, g) - \sum_{k=0}^{i-1} \sum_{l=1}^{\infty}\int_{t_k}^{t_{k+1}}\tilde{U}(t-s)N^{lh}(u^{(J_k, l)})(s)\,ds. 
 \]
 The new aspect of our setting is that the propagators $\tilde{U}, \tilde{S}$ themselves are only obtained as infinite convergent sums of further terms, which need to be analyzed. Our strategy is to reduce precisely to the situation treated in \cite{KS}, by using the error analysis in \cite{KST}. Thus, denote the approximate inhomogeneous Duhamel parametrix by 
 \[
  \int_0^t\tilde{U}^{(app)}(t-s)F(s)\,ds.
 \]
 Note that due to Proposition 7 in \cite{KST}, the parametrix $\tilde{U}^{(app)}(t-s)$ is given by an integral kernel that satisfies the same decay estimates as the standard d'Alembertian propagator, independent of the precise potential $A^{free}$ used (but with implicit constants depending on its energy, of course). Then recall from the proof of Theorem 4  in \cite{KST} that we may write 
\[
 \int_0^t\tilde{U}(t-s)F(s)\,ds = \sum_{j=0}^{\infty} \int_0^t\tilde{U}^{(app)}(t-s)F_j(s)\,ds, 
 \]
 where we have $F_0 = F$ and writing inductively
 \[
  B_j :=  \int_0^t\tilde{U}^{(app)}(t-s)F_j(s)\,ds,
 \]
 we have for $j \geq 1$,
 \[
  F_j = F_j^1 + F_j^2 + F_j^3 + F_j^4
 \]
 with (schematic notation following \cite{KST})
 \begin{align*}
  P_0 F_j^1 &= \Big( \Box_{A_{<0}}^pe_{<0}^{-i\psi_{\pm}}(t, x D) - e_{<0}^{-i\psi_{\pm}}(t, x D)\Box \Big) P_0 B_{j-1}, \\
  P_0 F_j^2 &= \frac{1}{2} \Big( e_{<0}^{-i\psi_{\pm}}(t, x, D)e_{<0}^{i\psi_{\pm}}(D, y, t) - 1 \Big) P_0 F_{j-1}, \\
  P_0 F_j^3 &= \frac{1}{2} \Big( e_{<0}^{-i\psi_{\pm}}(t, x, D)|D|^{-1}e_{<0}^{i\psi_{\pm}}(D, y, t) - |D|^{-1} \Big) P_0 \partial_t F_{j-1}, \\
  P_0F_j^4 &= \frac{1}{2} \Big( e_{<0}^{-i\psi_{\pm}}(t, x, D)|D|^{-1} \partial_t e_{<0}^{i\psi_{\pm}}(D, y, t) - |D|^{-1} \Big) P_0 F_{j-1}.
 \end{align*}
 Here the first term, which is treated in Section 10.2 in \cite{KST}, gains a smallness factor of the form $2^{-\sigma C_1}$, which of course overwhelms any losses polynomial in $C_1$ for $C_1\gg 1$. However, the remaining three terms do not gain smallness from $C_1$, but rather by divisibility, and so we have to be more careful to force smallness for them (we cannot make the number of intervals depend on the prescribed smallness threshold $\gamma$). Here we exploit the fact that due to Proposition 6 in \cite{KST}, the kernels of the operators
 \begin{align*}
  &\frac{1}{2} \Big( e_{<0}^{-i\psi_{\pm}}(t, x, D)e_{<0}^{i\psi_{\pm}}(D, y, t) - 1 \Big), \quad \frac{1}{2} \Big( e_{<0}^{-i\psi_{\pm}}(t, x, D)|D|^{-1}e_{<0}^{i\psi_{\pm}}(D, y, t) - |D|^{-1} \Big), \\
  &\frac{1}{2} \Big( e_{<0}^{-i\psi_{\pm}}(t, x, D)|D|^{-1}\partial_t e_{<0}^{i\psi_{\pm}}(D, y, t) - |D|^{-1} \Big)
 \end{align*}
 are rapidly decaying away from the diagonal $x = y$. This means that up to small errors (which may be incorporated into the small energy part of $u$), we may think of these operators as local ones, and then the estimates in the proof of  Proposition 9.20 in \cite{KS} which rely on the inductive bound (9.81) there, go through for the error terms $F_j^r, r = 2,3,4$, as long as $F_{j-1}^r, r = 2,3,4$, satisfy these bounds. This means that the inductive argument in \cite{KS} goes through here as well. 
\end{proof}

\medskip

We are now in a position to prove the asymptotic energy conservation for solutions to $\widetilde{\Box}_{A^{n1}} u = 0$ as stated in Lemma~\ref{lem:asymptotic_energy_conservation}.
\begin{proof}[Proof of Lemma~\ref{lem:asymptotic_energy_conservation}]
 We consider the natural energy functional 
 \[
  E_{A^{n1}}(u)(t) = \int_{\R^4} \frac{1}{2} \sum_{\alpha = 0}^4 \bigl| \bigl( \partial_\alpha u + i \bigl( A_{\Lambda_0, \alpha}^{n1} + A_\alpha^{n1,free} \bigr) u \bigr)(t,x) \bigr|^2 \, dx,
 \]
 where it is to be kept in mind that the potential $A$ is in Coulomb gauge. Differentiating this energy functional with respect to $t$ and using that $\widetilde{\Box}_{A^{n1}} u = 0$, we infer the following relation
 \begin{equation} \label{equ:energyalmostconserve}
  \begin{split}
   &E_{A^{n1}}(u)(R+T) - E_{A^{n1}}(u)(R) \\
   &= \Re \int_R^{R+T} \int_{\R^4} \big( \partial_t A^{n1}_{\Lambda_0, 0} \bigr) u \, \overline{ \big( \partial_t + iA^{n1}_{\Lambda_0, 0} \big) u}  \, dx \, dt \\
   &\quad + \Re \int_R^{R+T} \int_{\R^4} \Big( - \big( A^{n1}_{\Lambda_0,0} \big)^2 + \sum_j \big( A_{\Lambda_0,j}^{n1} + A_j^{n1, free} \big)^2 \Big) u \, \overline{ \big( \partial_t + i A^{n1}_{\Lambda_0,0} \big) u } \, dx \, dt \\
   &\quad + \sum_j \Re \int_R^{R+T} \int_{\R^4} \big( \partial_j + i \big( A^{n1}_{\Lambda_0,j} + A^{n1, free}_j \big) \big) u \, \overline{ i \big( \partial_t A^{n1}_{\Lambda_0, j} + \partial_t A^{n1, free}_j - \partial_j A_{\Lambda_0,0}^{n1} \big) u } \, dx \, dt.
 \end{split}
\end{equation}
We now show that uniformly in $T \geq 0$, the terms on the right hand side converge to zero as $n\rightarrow\infty$ and then $R \to +\infty$.

\medskip

The quartic and quintic terms are all expected to be straightforward and so we focus on the more difficult cubic interaction terms. Here we note that the cubic interaction terms
\[
 \int_R^{R+T} \int_{\R^4} \big( \partial_t A^{n1}_{\Lambda_0,0} \big) u \overline{\partial_t u} \, dx \, dt 
\]
and 
\[
 \sum_j \int_R^{R+T} \int_{\R^4} \partial_j u \, \overline{ i \big( \partial_j A^{n1}_{\Lambda_0,0} \big) u } \, dx \, dt
\]
are also easier to treat due to the inherent quadratic nonlinear struture of the temporal components $A^{n1}_{\Lambda_0, 0}$ as solutions to the elliptic compatibility equation of MKG-CG. 

\medskip

So we now consider the delicate cubic interaction terms
\begin{align}
 \sum_j \Re \int_R^{R+T} \int_{\R^4} \partial_j u \, \overline{i \big( \partial_t A^{n1}_{\Lambda_0, j} \big) u} \,dx \, dt &=  - \sum_j \int_R^{R+T} \int_{\R^4} \Im \big( \partial_j u \overline{u} \big) \, \big( \partial_t A^{n1}_{\Lambda_0, j} \big) \, dx \, dt, \label{equ:asymptotic_energy_conservation_delicate1} \\
 \sum_j \Re \int_R^{R+T} \int_{\R^4} \partial_j u \, \overline{i \big( \partial_t A^{n1, free}_j \big) u} \,dx \, dt &=  - \sum_j \int_R^{R+T} \int_{\R^4} \Im \big( \partial_j u \overline{u} \big) \, \big( \partial_t A^{n1, free}_j \big) \, dx \, dt. \label{equ:asymptotic_energy_conservation_delicate2}
\end{align}
We begin with the first term \eqref{equ:asymptotic_energy_conservation_delicate1}. The Coulomb condition satisfied by $\partial_t A^{n1}_{\Lambda_0, j}$ allows us to project the term $\Im \big(\partial_j u \overline{u} \big)$ onto its divergence-free part, which means that we can replace this by a null form of the schematic type
\[
 \Im \big(\partial_j u \overline{u} \big) \longrightarrow \Delta^{-1} \partial^i \mathcal{N}_{ij} \big(u, \overline{u} \big).
\]
Thus we reduce to bounding uniformly the following schematic integral
\[
 \sum_j \int_R^{R+T} \int_{\R^4} \Delta^{-1} \partial^i \mathcal{N}_{ij} \big( u, \overline{u} \big) \, \big( \partial_t A^{n1}_{\Lambda_0, j} \big) \, dx \, dt.
\]
Now we claim the microlocalized bound 
\begin{align*}
 &\bigg| \int_R^{R+T} \int_{\R^4} \Delta^{-1} \partial^i \mathcal{N}_{ij} \big( P_{k_1} u, \overline{P_{k_2} u} \big) \, P_{k_3} \big( \partial_t A^{n1}_{\Lambda_0, j} \big) \, dx \, dt \bigg| \\
 &\quad \quad \lesssim 2^{\sigma (\min\{k_1, k_2, k_3\} - \max\{k_1, k_2, k_3\})} \big\|P_{k_1} u \big\|_{S^1} \big\| P_{k_2} u \big\|_{S^1} \big\|P_{k_3} A^{n1}_{\Lambda_0}\big\|_{S^1}
\end{align*}
for suitable $\sigma>0$. Since there are at least two comparable frequencies in the above, this is enough to give the desired result in view of the frequency localizations of $u$ and $A^{n1}_{\Lambda_0}$. In order to prove this, we localize the above expression further and also omit the localization to the time interval $[R, R+T]$, as we may get rid of it via a suitable cutoff (which is compatible with the $S^1$ norms), 
\begin{align*}
 &\int_{\R^{1+4}} \Delta^{-1} \partial^i \mathcal{N}_{ij} \big( P_{k_1} u, \overline{P_{k_2} u} \big) \, P_{k_3} \big( \partial_t A^{n1}_{\Lambda_0, j} \big) \, dx \, dt \\
 &\quad = \int_{\R^{1+4}} \Delta^{-1} \partial^i \mathcal{N}_{ij} \big( P_{k_1} u, \overline{P_{k_2} u} \big) \, P_{k_3} Q_{> k_3}  \big( \partial_t A^{n1}_{\Lambda_0, j} \big) \, dx \, dt \\
 &\quad \quad + \int_{\R^{1+ 4}} \Delta^{-1} \partial^i \mathcal{N}_{ij} \big(P_{k_1} u, \overline{P_{k_2} u} \big) \, P_{k_3} Q_{\leq k_3} \big(\partial_t A^{n1}_{\Lambda_0, j} \big) \, dx \, dt.
\end{align*}
Here we only estimate the more difficult second term on the right hand side. We write this term as 
\begin{align*}
 &\int_{\R^{1+4}} \Delta^{-1} \partial^i \mathcal{N}_{ij} \big( P_{k_1} u, \overline{P_{k_2} u} \big) \, P_{k_3} Q_{\leq k_3} \big( \partial_t A^{n1}_{\Lambda_0, j} \big) \, dx \, dt \\
 &\quad = \sum_{l \leq k_3} \int_{\R^{1+4}} \Delta^{-1} \partial^i \mathcal{N}_{ij} \big( P_{k_1} u, \overline{P_{k_2} u} \big) \, P_{k_3} Q_l \big( \partial_t A^{n1}_{\Lambda_0, j} \big) \, dx \, dt.
\end{align*}
By symmetry we may assume $k_2\leq k_1$. Then we distinguish the following cases.

\medskip

\noindent {\it{Case 1: $k_1 = k_2+O(1) > k_3+O(1)$}}. Since the $Q_l$ transfers to the null form $\mathcal{N}_{ij}$, we save 
\[
 2^{k_3 - k_2 + \frac{1}{2} (l-k_3)}.
\]
Thus, we obtain
\begin{align*}
&\bigg| \int_{\R^{1+4}} \Delta^{-1} \partial^i \mathcal{N}_{ij} \big( P_{k_1} u, \overline{P_{k_2} u} \big) \, P_{k_3} Q_{l} \big(\partial_t A^{n1}_{\Lambda_0, j}\big) \, dx \, dt \bigg| \\
&\quad \lesssim 2^{k_3 - k_2 + \frac{1}{2} (l-k_3)}  2^{-k_3} \big\| P_{k_1} \nabla_x u\big\|_{L_t^4 L_x^3} \big\| P_{k_2} \nabla_x u\big\|_{L_t^4 L_x^3} \big\| P_{k_3} Q_l \big( \partial_t A^{n1}_{\Lambda_0, j} \big) \big\|_{L_t^2 L_x^3},
\end{align*}
where we observe that the exponent pair $(4,3)$ is Strichartz admissible in four space dimensions. Then we use the improved Sobolev type bound 
\[
 \big\| P_{k_3} Q_l \big( \partial_t A^{n1}_{\Lambda_0, j} \big) \big\|_{L_t^2 L_x^3} \lesssim 2^{\frac{2}{3} k_3} 2^{\gamma (l-k_3)} \big\| P_{k_3} Q_l \big( \partial_t A^{n1}_{\Lambda_0, j} \big) \big\|_{L_t^2 L_x^2}
\]
for suitable $\gamma>0$ to infer 
\begin{align*}
&\bigg| \int_{\R^{1+4}} \Delta^{-1} \partial^i \mathcal{N}_{ij} \big( P_{k_1} u, \overline{P_{k_2} u} \big) P_{k_3} Q_l \big( \partial_t A^{n1}_{\Lambda_0, j} \big) \, dx \, dt \bigg| \\
&\quad \lesssim  2^{k_3 - k_2 + \frac{1}{2} (l-k_3)} 2^{-k_3}  2^{\frac{5}{12} k_1}  2^{\frac{5}{12} k_2} 2^{-\frac{1}{2} l} 2^{\frac{2}{3} k_3} 2^{\gamma (l-k_3)} \big\|P_{k_1} u\big\|_{S^1} \big\|P_{k_2} u\big\|_{S^1} \big\|P_{k_3} A^{n1}_{\Lambda_0} \big\|_{S^1},
\end{align*}
which in turn can be bounded by 
\begin{align*}
\lesssim 2^{\frac{1}{6} (k_3 - k_2)} 2^{\gamma (l-k_3)}  \big\| P_{k_1} u \big\|_{S^1} \big\| P_{k_2} u\big\|_{S^1} \big\|P_{k_3} A^{n1}_{\Lambda_0}\big\|_{S^1}.
\end{align*}
Summing over $l \leq k_3$ yields the desired bound in this case. 

\medskip

\noindent {\it Case 2: $k_1 = k_3+O(1)>k_2$}. We distinguish between the cases $l \leq k_2$ and $l > k_2$. 

\medskip

\noindent {\it{Case 2a: $l \leq k_2$.}} Here we estimate 
\begin{align*}
 &\bigg| \int_{\R^{1+4}} \Delta^{-1} \partial^i \mathcal{N}_{ij} \big( P_{k_1} u, \overline{P_{k_2} u} \big) \, P_{k_3} Q_l \big( \partial_t A^{n1}_{\Lambda_0, j} \big) \, dx \, dt \bigg| \\
 &\lesssim 2^{-k_1} 2^{\frac{1}{2} (l - k_2)} \big\| \nabla_{t,x} P_{k_1} u \big\|_{L_t^6 L_x^{\frac{18}{7}}} \big\| \nabla_{t,x} P_{k_2} u \big\|_{L_t^3 L_x^{9}} \big\|P_{k_3} Q_{l} \big( \partial_t A^{n1}_{\Lambda_0} \big) \big\|_{L^2_t L^2_x} \\
 &\lesssim 2^{-\frac{13}{18} k_1} 2^{\frac{22}{18} k_2}2^{\frac{1}{2} (l-k_2)} 2^{-\frac{1}{2} l} \big\|P_{k_1} u\big\|_{S^1} \big\|P_{k_2} u\big\|_{S^1} \big\|P_{k_3} A^{n1}_{\Lambda_0} \big\|_{S^1}.
\end{align*}
To get summability over $l$ one can replace the norm $\|\cdot\|_{L^2_t L^2_x}$ by $\|\cdot\|_{L_{t}^2 L_x^{2+}}$ and then use $\|\cdot\|_{L_t^3 L_x^{9-}}$ instead for the second factor. 

\medskip

\noindent {\it{Case 2b: $l > k_2$.}} Here we simply get the bound 
\begin{align*}
\bigg|\int_{\R^{1+4}} \Delta^{-1} \partial^i \mathcal{N}_{ij} \, \big( P_{k_1} u, \overline{P_{k_2} u} \big) P_{k_3} Q_l \big( \partial_t A^{n1}_{\Lambda_0, j} \big) \, dx \, dt \bigg| \lesssim 2^{\frac{13}{18}(k_2 - k_1)} \big\| P_{k_1} u\big\|_{S^1} \big\| P_{k_2} u\big\|_{S^1} \big\|P_{k_3} A^{n1}_{\Lambda_0}\big\|_{S^1},
\end{align*}
which can then be summed over $k_1 > l > k_2$ to give the desired bound. This in essence finishes the estimate of the cubic interaction term \eqref{equ:asymptotic_energy_conservation_delicate1}. 
 
\medskip

Next, we consider the other delicate cubic interaction term \eqref{equ:asymptotic_energy_conservation_delicate2}. Using that $\partial_t A^{n1, free}_j$ also satisfies the Coulomb condition, we reduce as before to bounding uniformly the expression 
\[
 \sum_j \int_R^{R+T} \int_{\R^4} \Delta^{-1}\partial^i \mathcal{N}_{ij}\big(P_{k_1} u, \overline{P_{k_2} u} \big) \, P_{k_3} \big( \partial_t A^{n1, free}_{j} \big) \, dx \, dt.
\]
Compared with the treatment of the previous cubic term, the issue here is how to deal with the interactions of $u$ and $A^{n1, free}$, which are now both $1$-oscillatory. We may assume that all frequencies $2^{k_{1,2,3}} \sim 1$, otherwise smallness follows from the treatment of the previous cubic interaction term \eqref{equ:asymptotic_energy_conservation_delicate1}. Choosing $R > 0$ sufficiently large, we obtain from the dispersive decay from Lemma~\ref{lem:uniform_dispersive_bounds} and interpolation with the endpoint Strichartz estimate that 
\[
 \big\|P_{k_{1,2}} u\big\|_{L_t^4 L_x^{3+}([R, R+T]\times \R^4)} \ll 1
\]
uniformly for all $T \geq 0$ and $n$ (recalling that the implicit dependence of $u$ on $n$ is suppressed). On the other hand, for the factor $P_{k_3} \big( \partial_t A^{n1, free}_{j} \big)$, we can use $L_t^2 L_x^{3-}$ instead. 

\medskip

The last statement of the lemma follows similarly, by expressing the inner product in terms of the energies of $u$ and $u_k$, and reducing to bounding expressions such as 
\[
 \sum_j \int_R^{R+T} \int_{\R^4} \Delta^{-1} \partial^i \mathcal{N}_{ij} \big( P_{k_1} u, \overline{P_{k_2} u_k} \big) \, P_{k_3} \big( \partial_t A^{n1, free}_{j} \big) \, dx \, dt.
\] 
\end{proof}

We now begin to quantify the lack of compactness for the functions $\big\{ (A^{n1}, \phi^{n1})[0] \big\}_{n \in \N}$. To clarify the notation and make it adapted to the ensuing induction procedure, we replace the superscript $1$ in $(A^{n1}, \phi^{n1})[0]$ by $a$ to indicate the frequency level of the large frequency atom, although we are only considering $a = 1$ in this subsection. We first consider the functions $\big\{ \phi^{na}[0] \big\}_{n \in \N}$. We evolve each of these using the flow of the covariant wave operator $\widetilde{\Box}_{A^{na}}$ and extract concentration profiles. The method for this follows along the lines of the modified Bahouri-G\'erard profile extraction procedure of Lemma~9.23 in \cite{KS}. However, we have to use the asymptotic energy conservation from Lemma~\ref{lem:asymptotic_energy_conservation} instead of the stronger asymptotic energy conservation in \cite[Lemma 9.19]{KS}, which forces us to modify the asymptotic orthogonality relation for the free energies of the profiles. We first introduce the following terminology.
\begin{defn}
 Given initial data $u[0] \in \dot{H}^1_x(\R^4) \times L^2_x(\R^4)$, we denote by
 \[
  S_{A^{na}} \big( u[0] \big)
 \]
 the solution to the initial value problem $\widetilde{\Box}_{A^{na}} u = 0$ with data $u[0]$ at time $t=0$.
\end{defn}
Following \cite{KS}, which in turn mimics \cite{Bahouri-Gerard}, we introduce the set $\mathcal{U}_{A^{na}}(\phi^{na}[0])$, which consists of all functions that can be extracted as weak limits in the following fashion
\begin{align*}
 \mathcal{U}_{A^{na}}(\phi^{na}[0]) &= \Big\{ V \in L_{t,loc}^2 H^1_x \cap C^1 L_x^2 \, : \,\exists \, \big\{(t_n, x_n)\big\}_{n\geq 1} \subset \R\times\R^4 \,\, \text{s.t.} \\ &\hspace{5cm} S_{A^{na}}\big(\phi^{na}[0]\big)(t+t_n, x+x_n)\rightharpoonup V(t, x) \Big\}.
\end{align*}
Here the weak limit is in the sense of $L^2_{t,loc} H^1_x$. We emphasize that the sequences $\big\{(t_n, x_n)\big\}_{n\geq 1} \subset \R \times \R^4$ are completely arbitrary. We observe that for a non-trivial profile $V \in \mathcal{U}_{A^{na}}(\phi^{na}[0])$ with associated sequence of space-time translations $\big\{ (t_n^{ab}, x_n^{ab}) \big\}_{n \geq 1}$, by passing to a further subsequence, we may assume that either $S\big( A^{na}[0] \big)(t+t_n^{ab}, x+x_n^{ab}) \rightharpoonup 0$ or else $S\big( A^{na}[0] \big)(t+t_n^{ab}, x+x_n^{ab}) \rightharpoonup A^{ab}(t,x)$. Here, $S\big(\cdot\big)(t,x)$ denotes the free wave propagator and $A^{ab}$ are free waves, see Proposition~\ref{prop:A_concentration_profiles} below. Noting that the contribution of $A^{n1}_{\Lambda_0}$ in the definition of $\widetilde{\Box}_{A^{na}}$ vanishes in the limit, then in the former case we have $\Box V = 0$, i.e. $V$ is actually a weak solution to the free wave equation, while in the latter situation, $V$ solves the linear magnetic wave equation $\big( \Box + 2i A^{ab}_j \partial^j \bigr) V = 0$. 

Moreover, we set
\[
 \eta_{A^{na}} \big( \phi^{na}[0] \big) := \sup \Big\{ E_0(V) \, : \, V \in \mathcal{U}_{A^{na}}\big( \phi^{na}[0] \big) \Big\} < \infty,
\]
where $E_0$ refers to the functional
\[
 E_0(V) = \int_{\R^4} \big| \nabla_{t,x} V(0,x) \big|^2 \, dx.
\]
Observe that for temporally unbounded sequences, i.e. $|t_n| \to \infty$, the energy $E_0(V)$ is identical to the ``asymptotic free energy'' associated with solutions to $\big( \Box + 2i A^{ab}_j \partial^j \bigr) V = 0$ in light of Lemma~\ref{lem:asymptotic_energy_conservation}. 
In the next proposition we establish the crucial linear profile decomposition for the sequence $\big\{ \phi^{na}[0] \big\}_{n \in \N}$, which is at the core of the second stage of the modified Bahouri-G\'erard procedure for MKG-CG. Recall that we consider $a = 1$ here. 
\begin{prop} \label{prop:phi_concentration_profiles}
There exists a collection of sequences $\{(t_n^{ab}, x_n^{ab})\}_{n \in \N} \subset \R \times \R^4$, $b\geq 1$, as well as a corresponding family of concentration profiles 
\[
 \phi^{ab}[0] \in \dot{H}^1_x(\R^4)\times L_x^2(\R^4), \quad b \geq 1,
\]
with the following properties: Introducing the space-time translated gauge potentials 
\[
 \tilde{A}^{nab}_{\nu}(t, x) := A^{na}_{\Lambda_0, \nu}(t + t_n^{ab}, x+x_n^{ab}) + A^{na, free}_{\nu}(t+t_n^{ab}, x+x_n^{ab}), \quad \nu = 0, 1, \ldots, 4,
\]
we have 
\begin{itemize}[leftmargin=*]
\item For any $B \geq 1$, there exists a decomposition 
\begin{equation} \label{equ:phi_concentration_profiles_decomposition}
 S_{A^{na}}\big( \phi^{na}[0] \big)(t, x) = \sum_{b=1}^B S_{\tilde{A}^{nab}}\big( \phi^{ab}[0] \big)(t - t^{ab}_n, x-x^{ab}_n) + \phi^{naB}(t, x),
\end{equation}
where each of the functions 
\[
 \tilde{\phi}^{nab}(t,x) := S_{\tilde{A}^{nab}}\big( \phi^{ab}[0] \big)(t - t^{ab}_n, x-x^{ab}_n), \quad \phi^{naB}(t, x)
\]
solves the covariant wave equation 
\[
 \widetilde{\Box}_{A^{na}} u = 0.
\]
Moreover, the error satisfies the crucial asymptotic vanishing condition 
\begin{equation} 
 \lim_{B \to \infty} \eta_{A^{na}} \big( \phi^{naB}[0] \big) = 0.
\end{equation}
\item The sequences are mutually divergent, by which we mean that for $b\neq b'$, 
\begin{equation} \label{equ:phi_concentration_profiles_mutual_divergence}
 \lim_{n\rightarrow\infty} \big( |t_n^{ab} - t_{n}^{ab'}| + |x_n^{ab} - x_n^{ab'}| \big) = \infty.
\end{equation}
\item There is asymptotic energy partition
\begin{equation} \label{equ:phi_concentration_profiles_asymptotic_energy_partition}
 E_0(\phi^{na}[0]) = \sum_{b=1}^B E_0(\tilde{\phi}^{nab}[0]) + E_0(\phi^{naB}[0]) + o(1),
\end{equation}
where the meaning of $o(1)$ here is $\limsup_{n\rightarrow\infty} o(1) = 0$. 
\item All profiles $\phi^{ab}[0]$ as well as all errors $\phi^{naB}[0]$ are $1$-oscillatory.   
\end{itemize}
\end{prop}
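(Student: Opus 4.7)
The plan is to follow the iterative weak-limit extraction procedure of Bahouri--G\'erard, as adapted in Lemma~9.23 of \cite{KS}, but with the free wave propagator replaced throughout by the covariant propagator $S_{A^{na}}$. The starting point is the uniform global $S^1$ bound \eqref{equ:covariant_wave_operator_S1_norm_bound}, which guarantees that $S_{A^{na}}(\phi^{na}[0])$ is bounded in $L^2_{t,loc}\dot{H}^1_x \cap C^1L^2_x$ uniformly in $n$, so that $\mathcal{U}_{A^{na}}(\phi^{na}[0])$ is well-defined and $\eta_{A^{na}}(\phi^{na}[0])<\infty$.

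For the extraction, if $\eta_{A^{na}}(\phi^{na}[0])=0$ we take $B=0$. Otherwise, choose a first profile $\phi^{a1}$ realized as a weak limit along a sequence $\{(t_n^{a1},x_n^{a1})\}_{n\geq 1}$ with $E_0(\phi^{a1})\geq \tfrac{1}{2}\eta_{A^{na}}(\phi^{na}[0])$, passing to a subsequence so that $\{(t_n^{a1},x_n^{a1})\}_{n\geq 1}$ is either stationary or divergent, and similarly so that $S(A^{na}[0])(\cdot+t_n^{a1},\cdot+x_n^{a1})$ either converges weakly to zero or to a free wave $A^{a1}$; in either case the translated coefficients $\tilde A^{na1}_\nu$ are defined so that $\tilde\phi^{na1}(t,x):=S_{\tilde A^{na1}}(\phi^{a1}[0])(t-t_n^{a1},x-x_n^{a1})$ is itself a solution of $\widetilde\Box_{A^{na}}u=0$ (a tautology from the definition of $\tilde A^{nab}$). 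Set $\phi^{na1}:=S_{A^{na}}(\phi^{na}[0])-\tilde\phi^{na1}$, which again solves $\widetilde\Box_{A^{na}}u=0$, and iterate the procedure on $\phi^{na1}[0]$. Maximality in the selection enforces the mutual divergence \eqref{equ:phi_concentration_profiles_mutual_divergence}: if for some pair $b\neq b'$ the differences $(t_n^{ab}-t_n^{ab'},x_n^{ab}-x_n^{ab'})$ remained bounded, then by passing to a further subsequence and shifting, $\phi^{ab'}$ could have been absorbed into $\phi^{ab}$, violating the near-maximality of $E_0(\phi^{ab})$ at step $b$.

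For the asymptotic energy partition \eqref{equ:phi_concentration_profiles_asymptotic_energy_partition}, expand
\[
E_0(\phi^{na}[0]) = \sum_{b=1}^B E_0(\tilde\phi^{nab}[0]) + E_0(\phi^{naB}[0]) + 2\sum_{b<b'}\langle \nabla_{t,x}\tilde\phi^{nab}[0],\nabla_{t,x}\tilde\phi^{nab'}[0]\rangle + 2\sum_{b=1}^B \langle \nabla_{t,x}\tilde\phi^{nab}[0],\nabla_{t,x}\phi^{naB}[0]\rangle,
\]
and show that each inner product in the cross terms vanishes as $n\to\infty$. Recenter the first cross term around the shift with the smaller temporal coordinate and apply the mixed version \eqref{equ:asymptotic_energy_conservation_mixed} of Lemma~\ref{lem:asymptotic_energy_conservation}: the divergence \eqref{equ:phi_concentration_profiles_mutual_divergence} together with the weak convergence definition of the profiles forces the inner product to tend to zero; a similar argument using that $\phi^{naB}(\cdot+t_n^{ab},\cdot+x_n^{ab})\rightharpoonup 0$ (by construction of $\phi^{naB}$ through subtraction) handles the residual cross terms. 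The telescoping estimate $\sum_{b=1}^B E_0(\tilde\phi^{nab}[0])\leq E_0(\phi^{na}[0])+o(1)$ then forces $E_0(\tilde\phi^{nab}[0])\to 0$ and, by a standard argument as in \cite{Bahouri-Gerard}, yields $\eta_{A^{na}}(\phi^{naB}[0])\to 0$ as $B\to\infty$. The $1$-oscillatory property of the profiles and errors follows from the $1$-oscillatory property of $\phi^{na}[0]$ together with the frequency-preserving nature of $S_{A^{na}}$, which is a consequence of the parametrix construction of Section~\ref{sec:magnetic_wave_equation} and the sharp frequency separation built into the atomic decomposition.

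The principal technical obstacle is the replacement of exact energy conservation, available in \cite{Bahouri-Gerard,KS}, by the merely asymptotic energy conservation of Lemma~\ref{lem:asymptotic_energy_conservation}. This forces two distinct limits ($n\to\infty$ followed by $R\to\infty$ in \eqref{equ:asymptotic_energy_conservation_mixed}), so all energy identities hold only up to $o(1)$ errors that must be carefully tracked through every iteration. In particular, the cross-term argument above requires one to first recenter, then take $n\to\infty$ for fixed recentering time $R$, then let $R\to\infty$; verifying that this double limit interchanges properly with the finite sum $\sum_{b=1}^B$ and remains compatible with the maximality selection at each stage $b$ is where the main care must be exercised.
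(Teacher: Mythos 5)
Your overall route is the same as the paper's: iterative weak-limit extraction along the covariant flow $S_{A^{na}}$, with the asymptotic energy conservation of Lemma~\ref{lem:asymptotic_energy_conservation} (in particular \eqref{equ:asymptotic_energy_conservation_mixed}) replacing exact conservation in the decoupling of cross terms, and this part of your sketch is essentially a reorganization of the paper's inductive argument. There is, however, a genuine gap at the last step. From \eqref{equ:phi_concentration_profiles_asymptotic_energy_partition} you obtain summability, uniformly in $B$, of the quantities $E_0(\tilde{\phi}^{nab}[0])$, i.e.\ of the free energies at time $t=0$ of the translated covariant evolutions. But the selection criterion at stage $b$ controls $\eta_{A^{na}}$ of the residual after $b-1$ extractions by $2E_0(\phi^{ab})$, where $E_0(\phi^{ab})$ is the energy of the \emph{fixed profile} itself, and --- precisely because $S_{\tilde{A}^{nab}}$ conserves the free energy only asymptotically --- $E_0(\phi^{ab}[0])$ is not equal to $E_0(\tilde{\phi}^{nab}[0])$ for temporally unbounded profiles; the paper stresses this point immediately after the proposition. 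So ``a standard argument as in \cite{Bahouri-Gerard}'' does not close the proof: summability of $E_0(\tilde{\phi}^{nab}[0])$ does not by itself bound $\sum_b E_0(\phi^{ab}[0])$, which is what forces $\eta_{A^{na}}(\phi^{naB}[0])\to 0$. The missing ingredient is the uniform global bound \eqref{equ:covariant_wave_operator_S1_norm_bound}: since $\phi^{ab}[0]$ coincides, after translation, with the data of $\tilde{\phi}^{nab}$ at time $t_n^{ab}$, one has $E_0(\phi^{ab}[0]) \lesssim \|\tilde{\phi}^{nab}\|_{S^1}^2 \lesssim_{E_{crit}} E_0(\tilde{\phi}^{nab}[0])$ uniformly in $n$, and only then does $\sum_{b} E_0(\phi^{ab}[0]) \lesssim_{E_{crit}} 1$ follow, which is how the paper concludes the asymptotic vanishing of $\eta_{A^{na}}(\phi^{naB}[0])$.

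A smaller point: your justification of the mutual divergence \eqref{equ:phi_concentration_profiles_mutual_divergence} by ``near-maximality'' does not work --- with the factor $\tfrac12$ in the selection, the possibility that $\phi^{ab'}$ could have been absorbed into $\phi^{ab}$ contradicts nothing. The correct reason is that, by construction, the residual at stage $b'-1$ converges weakly to zero along the frame $(t_n^{ab}, x_n^{ab})$; if $(t_n^{ab'}, x_n^{ab'})$ remained at bounded distance from $(t_n^{ab}, x_n^{ab})$ along a subsequence, the weak limit defining $\phi^{ab'}$ would vanish, contradicting $E_0(\phi^{ab'}) \geq \tfrac12\, \eta_{A^{na}}$ of the residual, which is strictly positive at that stage.
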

Before we begin with the proof of Proposition~\ref{prop:phi_concentration_profiles}, we introduce the following important distinction between two possible types of profiles.
\begin{itemize}[leftmargin=*]
\item {\bf Temporally unbounded profiles:} Those profiles for which
\[
 \lim_{n \to \infty} |t_n^{ab}| = \infty.
\]
\item {\bf Temporally bounded profiles:} Those profiles for which 
\[
 \liminf_{n \to \infty} |t_{n}^{ab}| < \infty.
\]
By passing to a subsequence we may then as well assume that for all $n \in \N$,
\[
 t_n^{ab} = 0. 
\]
For two distinct such profiles corresponding to $b \neq b'$, we must have 
\[
 \lim_{n \to \infty} |x_n^{ab} - x_n^{ab'}| = \infty.
\]
\end{itemize}
\begin{proof}[Proof of Proposition~\ref{prop:phi_concentration_profiles}]
 There is nothing to do if 
 \[
  \eta_{A^{na}} \big( \phi^{na}[0] \big) = 0.
 \]
 Let us therefore assume that this quantity is strictly greater than $0$. Then we pick a profile
 \[
  \phi^{a1} \in L^2_{t,loc} H^1_x \cap C^1 L^2_x
 \]
 and an associated sequence $\big\{ (t_n^{a1}, x_n^{a1}) \big\}_{n \in \N} \subset \R \times \R^4$ such that
 \begin{equation} \label{equ:weak_convergence_first_profile_phi}
  S_{A^{na}}\big( \phi^{na}[0] \big)(t + t_n^{a1}, x + x_n^{a1}) \rightharpoonup \phi^{a1}(t,x)
 \end{equation}
 with
 \[
  E_0(\phi^{a1}) \geq \frac{1}{2} \eta_{A^{na}} \big( \phi^{na}[0] \big).
 \]
 Then we have
 \begin{align*}
  &S_{A^{na}} \big( \phi^{na}[0] \big)(t+t_n^{a1}, x+x_n^{a1}) - S_{A^{na}}\Big( S_{\tilde{A}^{na1}}\big(\phi^{a1}[0] \big)(0 - t_n^{a1}, \cdot - x_n^{a1}) \Big)(t+t_n^{a1}, x+x_n^{a1}) \\
  &\quad = S_{A^{na}} \big( \phi^{na}[0] \big)(t+t_n^{a1}, x+x_n^{a1}) - S_{\tilde{A}^{na1}}\big( \phi^{a1}[0] \big)(t,x) \rightharpoonup 0
 \end{align*}
 as $n \to \infty$ by the construction. Furthermore, it holds that
 \begin{align} \label{equ:phi_concentration_profiles_energy_partition_proof}
 \begin{aligned}
  E_0 \big( \phi^{na}[0] \big) &= E_0 \Big( \tilde{\phi}^{na1}[0] \Big) + E_0 \Big( \phi^{na}[0] - \tilde{\phi}^{na1}[0] \Big) \\
  &\qquad + 2 \int_{\R^4} \nabla_{t,x} \Big( S_{\tilde{A}^{na1}}\big( \phi^{a1}[0] \big)(0-t_n^{a1}, x - x_n^{a1}) \Big) \cdot \\
  &\qquad \qquad \qquad \cdot \nabla_{t,x} \Big( \phi^{na}(0,x) - S_{\tilde{A}^{na1}} \big( \phi^{a1}[0] \big)(0 - t_n^{a1}, x - x_n^{a1}) \Big) \, dx,
 \end{aligned}
 \end{align}
 where in the last term we ignored that the $\phi$ field is complex-valued. If $\phi^{a1}[0]$ is a temporally unbounded profile, we may without loss of generality assume that $t_n^{a1} \to + \infty$. In view of \eqref{equ:asymptotic_energy_conservation_mixed} from Lemma~\ref{lem:asymptotic_energy_conservation} the last term on the right hand side of \eqref{equ:phi_concentration_profiles_energy_partition_proof} can be arbitrarily well approximated by
 \begin{align} \label{equ:phi_concentration_profiles_energy_partition_proof_last_term}
  \begin{aligned}
   &2 \int_0^1 \int_{\R^4} \nabla_{t,x} S_{A^{na}} \Big( S_{\tilde{A}^{na1}}\big( \phi^{a1}[0] \big)(0-t_n^{a1}, \cdot - x_n^{a1}) \Big)(t - R + t_n^{a1}, x+x_n^{a1}) \cdot \\
   &\qquad \qquad \cdot \nabla_{t,x} S_{A^{na}} \Big( \phi^{na}[0] - S_{\tilde{A}^{na1}} \big( \phi^{a1}[0] \big)(0 - t_n^{a1}, \cdot - x_n^{a1}) \Big)(t - R +t_n^{a1}, x+x_n^{a1}) \, dx \, dt
  \end{aligned}
 \end{align}
 as $n \to \infty$ by choosing $R > 0$ sufficiently large. Then we observe that the first factor in the integrand in \eqref{equ:phi_concentration_profiles_energy_partition_proof_last_term} satisfies
 \[
  \nabla_{t,x} S_{A^{na}} \Big( S_{\tilde{A}^{na1}}\big( \phi^{a1}[0] \big)(0-t_n^{a1}, \cdot - x_n^{a1}) \Big)(t - R + t_n^{a1}, x+x_n^{a1}) = \nabla_{t,x} \phi^{a1}(t-R,x) + o_{L^2_x}(1)
 \]
 as $n \to \infty$, while by construction 
 \[
  S_{A^{na}} \Big( \phi^{na}[0] - S_{\tilde{A}^{na1}} \big( \phi^{a1}[0] \big)(0 - t_n^{a1}, \cdot - x_n^{a1}) \Big)(\cdot+t_n^{a1}, \cdot+x_n^{a1}) \rightharpoonup 0 
 \]
 weakly in $L^2_{t,loc} H^1_x$ as $n \to \infty$. Thus, we conclude that
 \[
  E_0 \big( \phi^{na}[0] \big) = E_0 \Big( \tilde{\phi}^{na1}[0] \Big) + E_0 \Big( \phi^{na}[0] - \tilde{\phi}^{na1}[0] \Big) + o(1)
 \]
 as $n \to \infty$. If instead $\phi^{a1}[0]$ is a temporally bounded profile, we may and shall have $t_n^{a1} = 0$ for all $n \in \N$. Then the last term on the right hand side of \eqref{equ:phi_concentration_profiles_energy_partition_proof} is given by
 \[
  2 \int_{\R^4} \nabla_{t,x} \phi^{a1}(0, x-x_n^{a1}) \cdot \nabla_{t,x} \bigl( \phi^{na}(0,x) - \phi^{a1}(0, x-x_n^{a1}) \bigr) \, dx,
 \]
 which vanishes as $n \to \infty$ by the weak convergence \eqref{equ:weak_convergence_first_profile_phi} and therefore yields the desired asymptotic energy partition \eqref{equ:phi_concentration_profiles_asymptotic_energy_partition}.

 \medskip

 Now we repeat this procedure, but replace $\phi^{na}[0]$ by
 \[
  \phi^{na}[0] - \tilde{\phi}^{na1}[0].
 \]
 Thus, if $\eta_{A^{na}}\bigl( \phi^{na}[0] - \tilde{\phi}^{na1}[0] \bigr) > 0$, we select a sequence $\bigl\{ (t_n^{a2}, x_n^{a2}) \bigr\}_{n\in\N}$ and a concentration profile $\phi^{a2}(t,x)$ such that 
 \[
  E_0(\phi^{a2}) \geq \frac{1}{2} \eta_{A^{na}} \bigl( \phi^{na}[0] - \tilde{\phi}^{na1}[0] \bigr) 
 \]
 and 
 \[
  S_{A^{na}} \bigl( \phi^{na}[0] - \tilde{\phi}^{na1}[0] \bigr)(t+t_n^{a2}, x+x_n^{a2}) \rightharpoonup \phi^{a2}(t,x).
 \]
 We observe that we must necessarily have
 \[
  \lim_{n\to\infty} \bigl( |t_n^{a1} - t_n^{a2}| + |x_n^{a1} - x_n^{a2}| \bigr) = \infty.
 \]
 Iterating this process yields the decomposition \eqref{equ:phi_concentration_profiles_decomposition} together with \eqref{equ:phi_concentration_profiles_mutual_divergence} and \eqref{equ:phi_concentration_profiles_asymptotic_energy_partition}. 

 \medskip

 Finally, we turn to proving the crucial asymptotic vanishing condition 
 \[
  \lim_{B\to\infty} \eta_{A^{na}} \bigl( \phi^{naB}[0] \bigr) = 0.
 \]
 Here we observe that the fixed profiles $\phi^{ab}[0]$ satisfy
 \[
  \phi^{ab}(0,x) = S_{A^{na}} \big( \tilde{\phi}^{nab}[0] \big)(0+t_n^{ab}, x + x_n^{ab}).
 \]
 Then the global $S^1$ norm bounds \eqref{equ:covariant_wave_operator_S1_norm_bound} for solutions to the covariant wave equation $\widetilde{\Box}_{A^{na}} u = 0$ imply in particular that
 \[
  \|\nabla_{t,x} \phi^{ab}(0, \cdot)\|^2_{L^2_x} \lesssim \big\| S_{A^{na}} \big( \tilde{\phi}^{nab}[0] \big) \big\|^2_{S^1} \lesssim_{E_{crit}} \|\nabla_{t,x} \tilde{\phi}^{nab}(0, \cdot)\|^2_{L^2_x} \lesssim_{E_{crit}} E_0 (\tilde{\phi}^{nab}[0]),
 \]
 where the implied constant is independent of $n$. From the asymptotic energy partition \eqref{equ:phi_concentration_profiles_asymptotic_energy_partition} we conclude that for any $B \geq 1$, by passing to a subsequence in $n$, if necessary, we have
 \[
  \sum_{b=1}^B \limsup_{n\to\infty} E_0(\tilde{\phi}^{nab}[0]) \leq \limsup_{n\to\infty} E_0(\phi^{na}[0]) \lesssim_{E_{crit}} 1.
 \]
 Thus, we have that uniformly in $B$,
 \[
  \sum_{b=1}^B E_0(\phi^{ab}[0]) \lesssim_{E_{crit}} 1.
 \]
 By construction, the error $\eta_{A^{na}}(\phi^{naB})$ must therefore vanish as $B \to \infty$. This finishes the proof of Proposition~\ref{prop:phi_concentration_profiles}.
\end{proof}

We emphasize that in the preceding linear profile decomposition for the $\phi^{na}$ fields, the asymptotic energy partition \eqref{equ:phi_concentration_profiles_asymptotic_energy_partition} does not yield a sharp energy bound for the actual profiles $\phi^{ab}[0]$ of temporally unbounded character, which is in contrast to the standard Bahouri-G\'erard profile decomposition \cite{Bahouri-Gerard} and the modified Bahouri-G\'erard profile decomposition in the context of critical wave maps \cite[Lemma 9.23]{KS}. Fortunately, this will not doom the construction of the nonlinear concentration profiles, because there is a kind of ``asymptotic orthogonality statement'', see Lemma~\ref{lem:MKG_energy_almost_orthogonality_of_profiles}, in particular \eqref{equ:MKG_energy_asymptotic_temporally_unbounded}. This will allow us to circumvent the problem. 

\medskip

Having selected the linear concentration profiles for the $\phi^{na}$ fields, it remains to pick corresponding profiles for the magnetic potential components $A_j^{na}$ for $j = 1, \ldots, 4$. In fact, for the latter, we simply use the standard Bahouri-G\'erard method \cite{Bahouri-Gerard} to extract the profiles via the free wave evolution. By passing to suitable subsequences, one obtains an intertwined linear profile decomposition for $(A^{na}, \phi^{na})[0]$. Thus, the same sequences of space-time shifts $\{(t_n^{ab}, x_n^{ab})\}_{n\geq 1}, b \geq 1,$ are being used for the linear concentration profiles for $A^{na}[0]$ and for $\phi^{na}[0]$. This will be crucial later on when we construct the associated nonlinear profiles, as the truly nonlinear behavior of \emph{both} $(A, \phi)$ will be exhibited in space-time boxes centered around the points $(t_n^{ab}, x_n^{ab})$, see Step 1 in the proof of Theorem~\ref{thm:nonlinear_profiles_S1_bound}. We quote 
\begin{prop} \label{prop:A_concentration_profiles} 
There exists a collection of sequences $\{(t_n^{ab}, x_n^{ab})\}_{n \in \N} \subset \R\times\R^4$, $b\geq 1$, as well as a corresponding family of concentration profiles 
\[
A_j^{ab}[0]\in \dot{H}^1_x(\R^4)\times L_x^2(\R^4), \quad b\geq 1
\]
for $j = 1, \ldots, 4$ with the following properties:
\begin{itemize}[leftmargin=*]
\item For any $B\geq 1$, we have a decomposition 
\[
 S\big( A^{na}_j[0] \big)(t, x) = \sum_{b=1}^B S\big(A_j^{ab}[0]\big)(t - t^{ab}_n, x-x^{ab}_n) + A_j^{naB}(t, x),
\]
where $S(\cdot)(t,x)$ denotes the free wave propagator. Then each of the functions 
\[
 S \big( A^{ab}_j[0] \big)(t - t^{ab}_n, x-x^{ab}_n), \quad A_j^{naB}(t, x)
\]
solves the linear wave equation 
\[
 \Box u = 0.
\]
Moreover, the error satisfies the crucial asymptotic vanishing condition 
\begin{equation} \label{equ:Avanishing}
\lim_{B\rightarrow\infty} \eta \big( A_j^{naB}[0] \big) = 0.
\end{equation}
\item The sequences are mutually divergent, by which we mean that for $b \neq b'$,
\[
\lim_{n\rightarrow\infty} \big( |t_n^{ab} - t_{n}^{ab'}| + |x_n^{ab} - x_n^{ab'}| \big) = \infty.
\]
\item There is asymptotic energy partition 
\[
 E_0(A_j^{na}[0]) = \sum_{b=1}^B E_0(A_j^{ab}[0]) + E_0(A_j^{naB}[0]) + o(1),
\]
where the meaning of $o(1)$ is $\limsup_{n\rightarrow\infty}o(1) = 0$. 
\item All profiles $A_j^{ab}[0]$ as well as all errors $A_j^{naB}[0]$ are $1$-oscillatory. Moreover, they all satisfy the Coulomb condition.  
\end{itemize}
\end{prop}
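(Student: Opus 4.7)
The plan is to mimic the standard Bahouri--G\'erard extraction procedure \cite{Bahouri-Gerard} for each of the four components $A_j^{na}[0]$, $j = 1, \ldots, 4$, using the free wave flow $\Box u = 0$ in place of the covariant flow, and then to synchronize the resulting sequences with those already selected in Proposition~\ref{prop:phi_concentration_profiles} by a diagonal argument. Concretely, one defines
\[
 \mathcal{U}(A_j^{na}[0]) := \Bigl\{ V \in L^2_{t,loc} H^1_x \cap C^1 L^2_x \, : \, \exists \{(s_n, y_n)\} \subset \R \times \R^4 \text{ with } S\bigl(A_j^{na}[0]\bigr)(t+s_n, x+y_n) \rightharpoonup V \Bigr\},
\]
and sets $\eta\bigl(A_j^{na}[0]\bigr) = \sup \{ E_0(V) : V \in \mathcal{U}(A_j^{na}[0]) \}$, which is finite since the free flow preserves $E_0$ exactly and $\limsup_n E_0(A_j^{na}[0]) \lesssim 1$. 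Because the free wave equation is translation-invariant and $\Box$ is weakly closed, every element of $\mathcal{U}(A_j^{na}[0])$ is itself a free wave.

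The first substantive step is to match sequences. For the family $\{(t_n^{ab}, x_n^{ab})\}_{b \geq 1}$ already selected for the $\phi$-profiles, pass (by weak-$*$ compactness in $L^2_{t,loc} H^1_x$) to a common subsequence along which $S(A_j^{na}[0])(t + t_n^{ab}, x + x_n^{ab})$ converges weakly to a free wave whose Cauchy data we denote $A_j^{ab}[0]$ (possibly zero). This is where one uses a Cantor diagonal argument together with the mutual divergence \eqref{equ:phi_concentration_profiles_mutual_divergence} to align countably many weak limits simultaneously. After subtracting $\sum_{b=1}^B S(A_j^{ab}[0])(t - t_n^{ab}, x - x_n^{ab})$ from $S(A_j^{na}[0])$, if the residual still has nonzero $\eta$, one selects additional sequences in the standard iterative Bahouri--G\'erard fashion, each capturing at least half of the current $\eta$ of the residual, and enlarges the indexing family by these new sequences, declaring $\phi^{ab}[0] := 0$ for those indices. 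A further diagonal extraction yields one common subsequence in $n$ serving all finitely many constructions for a given $B$, and a terminal diagonal across $B$ produces the decomposition.

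The remaining properties are straightforward. Asymptotic energy partition follows because free wave propagation conserves $E_0$ exactly, so all cross terms take the form
\[
 \int_{\R^4} \nabla_{t,x} S(A_j^{ab}[0])\bigl(t + (t_n^{ab'} - t_n^{ab}), x + (x_n^{ab'} - x_n^{ab})\bigr) \cdot \nabla_{t,x} S(A_j^{ab'}[0])(t, x) \, dx,
\]
which vanishes as $n \to \infty$ by weak convergence to zero combined with the mutual divergence of the sequences; the same reasoning handles the cross term between any profile and the error. Asymptotic vanishing $\lim_{B \to \infty} \eta(A_j^{naB}[0]) = 0$ then follows from the energy decoupling and the uniform bound $\sum_{b=1}^\infty E_0(A_j^{ab}[0]) \lesssim \limsup_n E_0(A_j^{na}[0]) < \infty$, which forces the successive $\eta$ values (each bounded by twice the next extracted $E_0$) to tend to zero. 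The Coulomb condition $\partial^j A_j^{na}[0] = 0$ is preserved by free propagation, by space-time translation, and is stable under weak limits in $\dot{H}^1_x \times L^2_x$, so it passes to every profile and to every error. The $1$-oscillatory property is a statement about the Fourier support of the data; since space-time translations only multiply the Fourier transform by unimodular phases, the $1$-oscillatory character of $A_j^{na}[0]$ transfers both to the weak limits $A_j^{ab}[0]$ and to the differences $A_j^{naB}[0]$.

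The main obstacle is the simultaneous sequence alignment: one must ensure that the single family $\{(t_n^{ab}, x_n^{ab})\}_{b \geq 1}$ serves both the $\phi$-decomposition of Proposition~\ref{prop:phi_concentration_profiles} and the $A$-decomposition above, while retaining asymptotic vanishing of \emph{both} errors as $B \to \infty$. This is handled by interleaving the two extractions into a single induction on $B$ in which, at each step, one first tests whether any new $\phi$-sequence carries a nontrivial $A$-weak limit (and vice versa) and only then searches for further profiles whose sequences are divergent from all previously selected ones; Cantor's diagonal argument then delivers a final subsequence in $n$ along which both statements hold for every $B$ simultaneously.
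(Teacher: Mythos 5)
Your proposal is correct and takes essentially the same route as the paper: the paper simply invokes the standard Bahouri--G\'erard extraction via the free wave flow for the components $A_j^{na}[0]$ and remarks that, after passing to suitable subsequences, the same space-time shifts $\{(t_n^{ab},x_n^{ab})\}$ serve both the $A$- and the $\phi$-decompositions, which is precisely the synchronization your interleaved extraction and diagonal argument supply. The supporting details you add (orthogonality of the cross terms for free waves, stability of the Coulomb condition and of the $1$-oscillatory property under translations and weak limits, and summability of the profile energies forcing $\eta(A_j^{naB}[0])\to 0$) are exactly the standard ingredients the paper leaves implicit.
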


In the preceding propositions on the linear profile decompositions for the $\phi^{na}$ fields and for the spatial components $A^{na}_j$ of the connection form, we established an asymptotic orthogonality of the profiles with respect to the standard free energy functional. However, for our induction on energy procedure, we have to use the energy functional of the Maxwell-Klein-Gordon system, which involves nonlinear interactions between the $\phi$ field and the connection form $A$. In the next proposition we carefully analyze the asymptotic orthogonality relations of the linear profiles with respect to this proper energy functional. 
\begin{lem} \label{lem:MKG_energy_almost_orthogonality_of_profiles}
 Given any $\delta_4 > 0$, there exists $B_0 = B_0(\delta_4)$ such that\footnote{The $B_0$ also depends on the sequence of linear concentration profiles, but we omit this dependency here.}
 \begin{equation} 
  \limsup_{n \to \infty} \bigg| E(A^{na}, \phi^{na})(0) - \sum_{b=1}^{B_0} E(\tilde{A}^{nab}, \tilde{\phi}^{nab})(0) - E(A^{naB_0}, \phi^{naB_0})(0) \bigg| < \delta_4, 
 \end{equation}
 where $E$ refers to the energy functional of the Maxwell-Klein-Gordon system. Here we denote 
 \begin{align*}
  \tilde{\phi}^{nab} &:= S_{\tilde{A}^{nab}} \big( \phi^{ab}[0] \big)(0 - t^{ab}_n, x-x^{ab}_n), \\
  \tilde{A}^{nab}_j &:=  S \big( A_j^{ab}[0] \big)(0 - t^{ab}_n, x - x^{ab}_n), \quad j = 1, \ldots, 4, 
 \end{align*}
 and the temporal components $\tilde{A}^{nab}_0(0)$ are determined in terms of $\tilde{\phi}^{nab}[0]$ via the elliptic compatibility equation, and similarly for $A^{naB_0}_0(0)$. In particular, if there are at least two non-zero concentration profiles $(A^{ab}, \phi^{ab})[0]$ (corresponding to two distinct values of $b$), then there exists $\delta > 0$ such that for all $b$,
 \[
  \limsup_{n\rightarrow\infty}E(\tilde{A}^{nab}, \tilde{\phi}^{nab})(0) < E_{crit} - \delta.
 \]
 Moreover, for a temporally unbounded profile $(\tilde{A}^{nab}, \tilde{\phi}^{nab})$ with, say,  $t^{ab}_n\rightarrow +\infty$ as $n\rightarrow \infty$, we have 
 \begin{equation} \label{equ:MKG_energy_asymptotic_temporally_unbounded}
  E(\tilde{A}^{nab}, \tilde{\phi}^{nab})(0) = E(\tilde{A}^{nab}, \tilde{\phi}^{nab})(t_n^{ab} - R_b) + \kappa_{ab}(n, R_b),
 \end{equation}
 where
 \[
  \lim_{R_b\rightarrow +\infty}\limsup_{n\rightarrow\infty}\kappa_{ab}(n, R_b) = 0.
 \]
\end{lem}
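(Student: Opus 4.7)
The plan is to expand the MKG energy
\[
 E(A,\phi)(0) = \tfrac{1}{4}\!\sum_{\alpha,\beta}\!\int F_{\alpha\beta}^2\,dx + \tfrac{1}{2}\!\sum_\alpha\!\int |\partial_\alpha\phi|^2\,dx - \sum_\alpha\!\int A_\alpha\Im(\bar\phi\,\partial_\alpha\phi)\,dx + \tfrac{1}{2}\!\sum_\alpha\!\int A_\alpha^2|\phi|^2\,dx
\]
into its quadratic, cubic, and quartic pieces, substitute the profile decompositions of Proposition~\ref{prop:A_concentration_profiles} for $A^{na}_j$ and Proposition~\ref{prop:phi_concentration_profiles} for $\phi^{na}$, and show that all off-diagonal contributions involving pairs of profiles with distinct indices $b\neq b'$ vanish as $n\to\infty$. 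The temporal component $A^{na}_0$ is determined elliptically from $(\Delta-|\phi^{na}|^2)A^{na}_0 = -\Im(\bar\phi^{na}\,\partial_t\phi^{na})$, so all contributions involving $A_0$ reduce to higher-order multilinear expressions in $(A_x,\phi)$ that can be treated by the same mechanism as the genuine cubic/quartic terms.

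The first step is the quadratic part. Asymptotic orthogonality of $\int|\nabla A^{na}|^2$ is immediate from Proposition~\ref{prop:A_concentration_profiles}. For $\int|\nabla\phi^{na}(0)|^2$ one combines the asymptotic energy partition \eqref{equ:phi_concentration_profiles_asymptotic_energy_partition} with Lemma~\ref{lem:asymptotic_energy_conservation}: for a temporally unbounded profile $\tilde\phi^{nab}$ the quadratic energy at time $0$ equals the $E_0$ of $\phi^{ab}$ up to an error that vanishes as $|t_n^{ab}|\to\infty$, while for a temporally bounded one ($t_n^{ab}=0$) the identity is exact. The core of the proof is then the cubic and quartic off-diagonal interactions such as
\[
 \int_{\R^4}\tilde A^{nab}_\alpha\,\Im\bigl(\overline{\tilde\phi^{nab'}}\,\partial^\alpha\tilde\phi^{nab''}\bigr)\!(0,x)\,dx,\qquad \int_{\R^4}\tilde A^{nab}_\alpha\tilde A^{nab'}_\alpha\,\tilde\phi^{nab''}\overline{\tilde\phi^{nab'''}}\!(0,x)\,dx,
\]
when the indices are not all equal. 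For two temporally bounded profiles the spatial divergence $|x_n^{ab}-x_n^{ab'}|\to\infty$ allows a change of variables centered at $x_n^{ab}$, reducing each cross-term to a pairing against a sequence that converges weakly to zero in $L^2_{t,\mathrm{loc}}H^1_x$. For any tuple involving at least one temporally unbounded profile I will use the uniform dispersive decay of Lemma~\ref{lem:uniform_dispersive_bounds} together with the global $S^1$ bound \eqref{equ:covariant_wave_operator_S1_norm_bound}: split that profile into a small $S^1$ piece and a piece whose $L^\infty_x$ norm at the translation time can be made arbitrarily small, and pair against the remaining factors via H\"older with suitable Strichartz-admissible pairs. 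Cross-terms involving the error $(A^{naB_0},\phi^{naB_0})$ are treated analogously by invoking \eqref{equ:Avanishing} and $\lim_{B\to\infty}\eta_{A^{na}}(\phi^{naB})=0$ after passing from $\eta_{A^{na}}$ to honest Strichartz smallness by a further application of the parametrix construction in Theorem~\ref{thm:linear_estimates_magnetic_wave_equation}.

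The strict energy gap $E(\tilde A^{nab},\tilde\phi^{nab})<E_{crit}-\delta$ in the presence of at least two non-trivial profiles then follows from non-negativity of the MKG energy together with the almost orthogonality just proved. For the final identity \eqref{equ:MKG_energy_asymptotic_temporally_unbounded}, I will apply Lemma~\ref{lem:asymptotic_energy_conservation} to the covariant wave operator $\widetilde\Box_{\tilde A^{nab}}$ on the interval $[t_n^{ab}-R_b,\,0]$, which controls the purely quadratic part of $E$, and then show by the same dispersive decay argument as above that the cubic and quartic interaction pieces evaluated at the two endpoints each tend to zero in the double limit $n\to\infty$ followed by $R_b\to\infty$. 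The principal obstacle throughout is the cubic term $\int A_\alpha\Im(\bar\phi\,\partial^\alpha\phi)\,dx$ where one of the $\phi$ factors carries a derivative and cannot be placed in $L^\infty_x$; here I expect to rewrite the interaction via the Coulomb condition on $A$ and the null-form structure $\Delta^{-1}\partial^i\mathcal N_{i\alpha}(\phi,\bar\phi)$ already exploited in the proof of Lemma~\ref{lem:asymptotic_energy_conservation}, which enables a genuine Strichartz-type estimate with the smallness furnished by Lemma~\ref{lem:uniform_dispersive_bounds}.
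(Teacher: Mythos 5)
Your overall route is the same as the paper's: a case analysis of the cross terms, with diverging spatial centers handling pairs of temporally bounded profiles, the dispersive amplitude decay of Lemma~\ref{lem:uniform_dispersive_bounds} (equivalently, the vanishing amplitude at $t=0$) handling any tuple containing a temporally unbounded profile, asymptotic energy conservation (Lemma~\ref{lem:asymptotic_energy_conservation}) for the quadratic part and for \eqref{equ:MKG_energy_asymptotic_temporally_unbounded}, and the elliptic determination of $A_0$ reducing its contributions to higher-order multilinear terms. Your observation about the cubic term with the derivative factor is also in the right direction: at fixed time the natural fix is the Coulomb condition (integrate by parts to move the derivative off the dispersing factor), not the space-time null-form/Strichartz machinery, which is tailored to the space-time integrals in Lemma~\ref{lem:asymptotic_energy_conservation} and does not directly apply to a single-time integral.

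The genuine gap is in your treatment of the interactions involving the error $(A^{naB_0},\phi^{naB_0})$. First, your plan to ``pass from $\eta_{A^{na}}$ to honest Strichartz smallness by a further application of the parametrix construction'' is not justified: smallness of $\eta_{A^{na}}(\phi^{naB})$ only says that all weak limits under space-time translations have small energy; it does not give smallness of any Strichartz norm of $\phi^{naB}$, and no such upgrade is available (nor needed). The paper instead uses only the splitting $\phi^{naB_0}=\phi_1^{naB_0}+\phi_2^{naB_0}$ with $\|\phi_1^{naB_0}\|_{L^\infty_{t,x}}<\delta_4$ and $\|\nabla_{t,x}\phi_2^{naB_0}\|_{L^\infty_tL^2_x}<\delta_4$ for $B_0$ large. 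Second, and more importantly, you do not address the order-of-limits issue that the paper explicitly flags: making the error small forces $B_0$ large, which creates more and more error--profile interaction terms, and these can no longer all be killed by sending $n\to\infty$ (the choice of $B_0$ precedes the $\limsup$ in $n$, and the error terms do not vanish as $n\to\infty$ anyway). The resolution in the paper is a two-parameter bookkeeping: first fix $\tilde B_0$ so that $\limsup_n\sum_{b=\tilde B_0}^{B}\bigl(E_0(\tilde\phi^{nab})+E_0(\tilde A^{nab})\bigr)\ll\delta_4$ uniformly in $B\ge\tilde B_0$, control the interaction of the error with the tail $\sum_{b=\tilde B_0}^{B_0}$ by this smallness (for $n$ large depending on $B_0$), and only then choose $B_0\ge\tilde B_0$ large enough to make the interactions of the error with the finitely many head profiles $1\le b\le\tilde B_0$ small. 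Without this step (or an equivalent uniformity argument) your estimate of the error contributions does not close, since the bound you would get is ``small $\times$ (number of profiles up to $B_0$)'' with no control as $B_0$ grows.
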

\begin{proof} We check the various interaction terms and show that they become small when choosing $B_0$ as well as $n$ sufficiently large. 

\medskip

\noindent {\it{(1) Two temporally bounded profiles.}} This is straightforward since $\lim_{n\rightarrow\infty} |x_n^{ab} - x_n^{ab'}| = \infty$. In fact, we immediately infer that schematically
\begin{align*}
&\lim_{n\rightarrow\infty} \sum_{\text{temporally bounded profiles},\,b\neq b'} \Big|\int_{\R^4} \Re\big((\partial_{\alpha}\tilde{\phi}^{nab} + i \tilde{A}_{\alpha}^{nab}\tilde{\phi}^{nab}) \cdot\overline{(\partial_{\alpha}\tilde{\phi}^{nab'} + i\tilde{A}_{\alpha}^{nab'}\tilde{\phi}^{nab'})} \big) \,dx\Big|\\
& + \lim_{n\rightarrow\infty}\sum_{\text{temporally bounded profiles},\,b\neq b'} \sum_{j=1}^4 \Big|\int_{\R^4}\nabla_{t,x}\tilde{A}^{nab}_j\cdot \nabla_{t,x}\tilde{A}^{nab'}_j\,dx \Big| \\
& +  \lim_{n\rightarrow\infty}\sum_{\text{temporally bounded profiles},\,b\neq b'}\sum_{j=1}^4 \Big|\int_{\R^4}\nabla_{x}\tilde{A}^{nab}_0\cdot \nabla_{x}\tilde{A}^{nab'}_0\,dx\Big|\\
& = 0.
\end{align*}

\medskip

\noindent {\it{(2) One temporally bounded and one temporally unbounded profile.}} Here we exploit that the amplitude of the temporally unbounded profile vanishes asymptotically (at time $t = 0$) as $n\rightarrow\infty$, while the temporally bounded profile has bounded support. We conclude that schematically
\begin{align*}
&\lim_{n\rightarrow\infty} \sum_{\substack{b\,\text{temporally bounded}\\ b'\,\text{temporally unbounded}}} \Big|\int_{\R^4}\Re\big((\partial_{\alpha}\tilde{\phi}^{nab} + i \tilde{A}_{\alpha}^{nab}\tilde{\phi}^{nab}) \cdot\overline{(\partial_{\alpha}\tilde{\phi}^{nab'} + i\tilde{A}_{\alpha}^{nab'}\tilde{\phi}^{nab'})}\big) \,dx\Big|\\
& + \lim_{n\rightarrow\infty}\sum_{\substack{b\,\text{temporally bounded}\\ b'\,\text{temporally unbounded}}} \sum_{j=1}^4 \Big|\int_{\R^4}\nabla_{t,x}\tilde{A}^{nab}_j\cdot \nabla_{t,x}\tilde{A}^{nab'}_j\,dx \Big| \\
& +  \lim_{n\rightarrow\infty}\sum_{\substack{b\,\text{temporally bounded}\\ b'\,\text{temporally unbounded}}}\sum_{j=1}^4 \Big|\int_{\R^4}\nabla_{x}\tilde{A}^{nab}_0\cdot \nabla_{x}\tilde{A}^{nab'}_0\,dx\Big|\\
& = 0.
\end{align*}

\medskip

\noindent {\it{(3) Two temporally unbounded profiles.}} Here we exploit the asymptotic energy conservation and that the functions 
\[
 \phi^{ab}[0], \quad S_{\tilde{A}^{nab'}}\big(\phi^{ab'}[0]\big)(t_n^{ab} - t_{n}^{ab'}, x-x_n^{ab'})
\]
are asymptotically orthogonal. Similarly, we argue for the interaction terms between the components of the profiles $\tilde{A}^{nab}$ and $\tilde{A}^{nab'}$.

\medskip

\noindent {\it{(4) Weakly small error $\phi^{naB_0}$ and profiles.}} This is handled like the interaction of a temporally bounded and a temporally unbounded profile. One uses the fact that we get 
 \[
 \phi^{naB_0} =  \phi_1^{naB_0} +  \phi_2^{naB_0},
\]
where we have the bounds
\[
\big\| \phi_1^{naB_0}\big\|_{L^\infty_t L^\infty_x}<\delta_4,\,\big\|\nabla_{t,x}\phi_2^{naB_0}\big\|_{L_t^\infty L_x^2}<\delta_4,
\]
provided $B_0$ is sufficiently large. Of course, choosing $B_0$ large means that more and more interactions have to be controlled, and we can no longer simply use the choice of extremely large $n$ to ``asymptotically kill'' all such interactions as in the preceding cases. Thus, one has to argue carefully as follows: Given $\delta_4>0$, we pick $\tilde{B}_0$ sufficiently large such that for any $B\geq \tilde{B}_0$, we have 
\[
\limsup_{n\rightarrow\infty} \sum_{b = \tilde{B}_0}^{B} \Bigl( E_0(\tilde{\phi}^{nab})+ E_0(\tilde{A}^{nab}) \Bigr) \ll \delta_4,
\]
where $E_0$ indicates the standard free energy. Then, passing to the interaction terms in the Maxwell-Klein-Gordon energy functional corresponding to $\phi^{naB_0}$ and $A^{naB_0}$ with the sum 
\[
\sum_{b = \tilde{B}_0}^{B_0}\tilde{\phi}^{nab},\,\sum_{b = \tilde{B}_0}^{B_0}\tilde{A}^{nab}
\]
leads to terms bounded by $\ll\delta_4$ for any $B_0\geq \tilde{B}_0$, provided $n$ is chosen sufficiently large (depending on $B_0$). But then picking $B_0$ large enough, we can also ensure that the sum of all the interactions in $E(A, \phi)$ generated by the profiles $\tilde{\phi}^{nab}, \tilde{A}^{nab}$, $1\leq b\leq \tilde{B}_0$ are small, since $B_0 \geq \tilde{B}_0$ can be chosen independently.

\medskip

The last assertion \eqref{equ:MKG_energy_asymptotic_temporally_unbounded} is again a consequence of the asymptotic energy conservation from Lemma~\ref{lem:asymptotic_energy_conservation} and the asymptotic vanishing of the amplitude of a temporally unbounded profile at $t=0$ as $n \to \infty$.
\end{proof}

We now begin with the construction of the nonlinear concentration profiles. In what follows, we assume that the linear concentration profiles $\big( A^{ab}, \phi^{ab} \big)[0]$, $b \geq 1$, have been chosen, as well as the parameter sequences $\big\{ (t^{ab}_n, x^{ab}_n) \big\}_{n \geq 1}$. We recall that when the profile is temporally bounded, i.e. 
\[
 \limsup_{n\rightarrow\infty} |t^{ab}_n| < \infty,
\]
we may and shall have $t^{ab}_n = 0$ identically. We also recall the notation
\begin{align*}
 \tilde{\phi}^{nab}(t,x) &:= S_{\tilde{A}^{nab}} \big( \phi^{ab}[0] \big)(t - t^{ab}_n, x - x^{ab}_n), \\
 \tilde{A}^{nab}_j(t,x) &:=  S \big( A_j^{ab}[0] \big)(t - t^{ab}_n, x - x^{ab}_n), \quad j = 1, \ldots, 4.
\end{align*}
Thus, if the profile is temporally bounded, it holds that
\[
 \tilde{A}^{nab}[0] = A^{ab}[0], \quad \tilde{\phi}^{nab}[0] = \phi^{ab}[0].
\]
We can now state the key result of this subsection.
\begin{thm} \label{thm:nonlinear_profiles_S1_bound}  
 Let $a = 1$. Assume that there exist at least two non-zero profiles $\big(A^{ab}, \phi^{ab}\big)[0]$, or all such profiles are zero, or else there exists only one such profile but with 
 \[
  \liminf_{n\rightarrow\infty} E(\tilde{A}^{nab}, \tilde{\phi}^{nab})(0) < E_{crit}.
 \]
 Then the initial data $\big(A^{n1}_{\Lambda_0} + A^{n1}, \phi^{n1}_{\Lambda_0} + \phi^{n1}\big)[0]$ can be evolved globally in time, resulting in a solution with finite $S^1$ norm bounds uniformly for all sufficiently large $n$.
\end{thm}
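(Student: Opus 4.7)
The strategy is to construct a highly accurate approximate solution $(A^{n,app}, \phi^{n,app})$ to (MKG-CG) on $\R \times \R^4$ with initial data asymptotically close to $\big(A^{n1}_{\Lambda_0} + A^{n1}, \phi^{n1}_{\Lambda_0} + \phi^{n1}\big)[0]$, constructed by superposing global nonlinear evolutions of the individual concentration profiles onto the already-controlled low frequency evolution $\big(A^{n1,(C_1)}_{\Lambda_0},\phi^{n1,(C_1)}_{\Lambda_0}\big)$ from Proposition~\ref{prop:lowfreq}, and then to close the argument via a perturbation step modelled on Step~3 of the proof of Proposition~\ref{prop:bootstrap}. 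The hypothesis of the theorem is used precisely so that each profile $(\tilde A^{nab},\tilde\phi^{nab})$, taken individually, carries asymptotic Maxwell--Klein--Gordon energy strictly below $E_{crit}$, by Lemma~\ref{lem:MKG_energy_almost_orthogonality_of_profiles}; this gives us access to a priori global $S^1$ bounds for its nonlinear evolution through the induction-on-energy hypothesis and Definition~\ref{defn:energy_class_solution}.

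First I would construct, for each profile index $b$, a global nonlinear profile $(\mathcal A^{nab},\Phi^{nab})$ by solving (MKG-CG) in a very large but finite space-time box $Q_n^{ab}$ centered at $(t^{ab}_n, x^{ab}_n)$, with initial data at time $t^{ab}_n$ given by the transported concentration profile (together with a truncation in physical space of the type performed in Subsection~\ref{subsec:localizing_in_physical_space} so that Proposition~\ref{prop:joint_time_interval} and the energy-class evolution of Definition~\ref{defn:energy_class_solution} apply). Outside $Q_n^{ab}$, the spatial components of $\mathcal A^{nab}$ are extended by the free wave, while $\Phi^{nab}$ is extended by the ``full'' covariant wave operator in which \emph{all} other profiles (including the low frequency evolution $A^{n1,(C_1)}_{\Lambda_0}$ and the free evolutions of the remaining $A^{ab'}[0]$) enter, in analogy with the covariant operator $\widetilde\Box_{A^{n1}}$ of \eqref{equ:definition_covariant_wave_operator}. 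For a temporally bounded profile this is a fixed nonlinear energy-class evolution, while for a temporally unbounded profile, the crucial asymptotic energy conservation \eqref{equ:MKG_energy_asymptotic_temporally_unbounded} ensures that at time $t_n^{ab}-R_b$ the energy is still strictly less than $E_{crit}$ for $R_b$ sufficiently large and $n$ sufficiently large; this lets us apply the induction hypothesis and obtain a uniform-in-$n$ $S^1$ bound $\big\|(\mathcal A^{nab},\Phi^{nab})\big\|_{S^1}\leq K(E_{crit}-\delta)$.

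Next I would form the candidate approximate solution
\[
 A^{n,app} := A^{n1,(C_1)}_{\Lambda_0} + \sum_{b=1}^{B_0} \mathcal A^{nab} + A^{naB_0,free},\qquad
 \Phi^{n,app} := \sum_{b=1}^{B_0} \Phi^{nab} + \phi^{naB_0,cov},
\]
where $A^{naB_0,free}$ is the free wave evolution of the error $A^{naB_0}[0]$ and $\phi^{naB_0,cov}$ is the covariant evolution of $\phi^{naB_0}[0]$ under $\widetilde\Box_{A^{n1}}$; by the asymptotic vanishing properties \eqref{equ:Avanishing} and $\lim_{B\to\infty}\eta_{A^{na}}(\phi^{naB}[0])=0$ together with the uniform bound \eqref{equ:covariant_wave_operator_S1_norm_bound}, these tail pieces contribute arbitrarily small $S^1$ norm when $B_0$ is chosen large enough. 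The main task is then to verify that $(A^{n,app},\Phi^{n,app})$ solves (MKG-CG) up to an error that is small in $N\cap\ell^1 L^2_t\dot H^{-1/2}_x$. The cross interactions of two distinct nonlinear profiles (both in the magnetic bilinear $A_j\partial^j\phi$, the null-structured quadratic term in the $A$ equation, and the cubic terms) are controlled by the mutual divergence \eqref{equ:phi_concentration_profiles_mutual_divergence}, the Lorentz-invariant scale separation, and crucially the uniform dispersive decay of Lemma~\ref{lem:uniform_dispersive_bounds}: every such cross interaction either localizes on a space-time region where at most one of the two profiles has non-negligible amplitude (so the other can be placed in an $L^\infty_{t,x}$-type norm with smallness $\gamma$) or else can be made small by space-time divisibility in $\ell^1 L^2_t\dot H^{-1/2}_x$ as in the proof of Proposition~\ref{prop:decomposition} and Step~3b of the proof of Proposition~\ref{prop:bootstrap}. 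The interactions of the profiles with the low frequency part $A^{n1,(C_1)}_{\Lambda_0}$ are controlled by the frequency evacuation Lemma~\ref{lem:asymptoevacuate} together with the exponential decay from Step~1 of Proposition~\ref{prop:bootstrap}, exactly as in Step~3b there; the interactions involving the weakly small error $(A^{naB_0},\phi^{naB_0})$ are handled by choosing $B_0$ large so that $\eta_{A^{na}}(\phi^{naB_0}[0])$ and the $\dot B^0_{2,\infty}$-type norms of the error are below the smallness threshold needed in the bootstrap.

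Finally I would close the argument by applying the perturbation machinery of Step~3 of the proof of Proposition~\ref{prop:bootstrap}, together with Proposition~\ref{prop:perturbation} for the high-frequency tails: writing the true MKG-CG evolution of $\big(A^{n1}_{\Lambda_0} + A^{n1}, \phi^{n1}_{\Lambda_0} + \phi^{n1}\big)[0]$ as $(A^{n,app}+\delta A^{n},\phi^{n,app}+\delta\phi^{n})$, the perturbation $(\delta A^{n},\delta\phi^{n})$ at $t=0$ has arbitrarily small energy (by asymptotic energy partition plus appropriate truncations) and satisfies a system of the form analysed in Step~3 of Proposition~\ref{prop:bootstrap}, with the coefficients $(A^{n,app},\phi^{n,app})$ replacing $\big(A^{n1,(L-1)}_{\Lambda_0},\phi^{n1,(L-1)}_{\Lambda_0}\big)$. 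The $N$-many divisibility intervals are inherited from the uniform $S^1$ bound on $(A^{n,app},\phi^{n,app})$; the bootstrap then closes on each such interval, and Lemma~\ref{lem:approximate_energy_conservation_for_perturbation} keeps the energy of $(\delta A^{n},\delta\phi^{n})$ below the bootstrap threshold along the entire time axis. The main obstacle I anticipate is the correct treatment of the bilinear magnetic low-high term $P_{<0}(\delta A^{n})_j P_0\partial^j\Phi^{nab}$ for a temporally unbounded profile $b$, because the large shift $t^{ab}_n\to\infty$ prevents one from directly using the frequency evacuation on fixed slices; here one must combine the asymptotic energy conservation of Lemma~\ref{lem:asymptotic_energy_conservation}, which ensures smallness of the perturbation energy at the far-away center $t^{ab}_n$, with the uniform dispersive decay of Lemma~\ref{lem:uniform_dispersive_bounds} applied to $\Phi^{nab}$ itself, exactly as in Step~3b of Proposition~\ref{prop:bootstrap}.
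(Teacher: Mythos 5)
Your proposal follows essentially the same route as the paper: construct nonlinear profiles by solving (MKG-CG) on finite slabs around $(t_n^{ab},x_n^{ab})$ with data whose energy lies strictly below $E_{crit}$ (Lemma~\ref{lem:MKG_energy_almost_orthogonality_of_profiles}, and \eqref{equ:MKG_energy_asymptotic_temporally_unbounded} at time $t_n^{ab}-R_b$ for temporally unbounded profiles), extend them by the free flow for $A$ and by the covariant linear flow involving all other profiles for $\Phi$, superpose with the low-frequency evolution and the decomposition tails, and close by the bootstrap/divisibility machinery of Step~3 of Proposition~\ref{prop:bootstrap}; this is exactly Steps~1--3 of the paper's proof, culminating in Proposition~\ref{prop:keydecomp}.

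Two points, however, would fail as written. First, your displayed ansatz for $\Phi^{n,app}$ omits the low-frequency component $\phi^{n1,(C_1)}_{\Lambda_0}$ (your opening paragraph includes it, but the formula does not); since the data being evolved contain $\phi^{n1}_{\Lambda_0}[0]$, which carries non-small energy, the perturbation $\delta\phi^n[0]$ would then not be small and the perturbative step cannot close -- compare the paper's ansatz \eqref{eq:ansatzPhi}, which contains $\phi^{na,low}$ (and for which, on the $A$ side, $\delta_A^n[0]=0$ exactly, so that the initial smallness issue is confined to $\delta_\Phi^n[0]$ and is settled by Lemma~\ref{lem:inprop1}, not merely by ``energy partition''). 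Second, the tails $(A^{naB_0},\phi^{naB_0})$ do \emph{not} have small $S^1$ norm for $B_0$ large: \eqref{equ:Avanishing} and the vanishing of $\eta_{A^{na}}(\phi^{naB}[0])$ only express the absence of further concentration profiles (weak limits along translations vanish), while the energy and $S^1$ norms of the tails remain of size $O(1)$. In the paper these pieces are kept as bounded-in-$S^1$ objects, and smallness is gained only in their \emph{interactions}: one splits the tail into a small-$S^1$ part plus a part with small $L^\infty_{t,x}$ norm (as in case (4) of the proof of Lemma~\ref{lem:MKG_energy_almost_orthogonality_of_profiles}), first fixes a threshold $B_1$ via the tail estimate Lemma~\ref{lem:tail estimate}, and then chooses $B\geq B_1$ and finally $n$ large, in that order (see the treatment of the terms $\chi_{I_b^n}\mathcal{A}^{naB}_\nu\partial^\nu\Phi^{nab}$ and $(V)_5$ in the proof of Proposition~\ref{prop:keydecomp}). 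Without this two-parameter argument, the self-interactions of the tails (e.g. $\Im\big(\Phi^{naB_0}\overline{D_x\Phi^{naB_0}}\big)$ in the $A$-equation) are not controlled by your stated smallness claim.
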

\begin{proof}
We proceed in several steps.

\medskip

\noindent {\bf Step 1:} {\it Construction of the nonlinear concentration profiles.} We distinguish between temporally bounded and unbounded $(\tilde{A}^{nab}, \tilde{\phi}^{nab})$. In what follows we shall use the notation 
\begin{align*}
 A^{na, low} := A^{n1}_{\Lambda_0}, \quad \phi^{na, low} := \phi^{n1}_{\Lambda_0}.
\end{align*}

\medskip

\noindent {\it{Temporally bounded case:}} Here we have $(\tilde{A}^{nab}[0], \tilde{\phi}^{nab}[0]) = (A^{ab}[0], \phi^{ab}[0])$ with $A^{ab}$ as usual in the Coulomb gauge. Then we define the nonlinear concentration profile
\[
 \big(\mathcal{A}^{nab}, \Phi^{nab}\big)
\]
as follows. Pick a large time $T_b>0$ whose size will be fixed later on. On $[-T_b, T_b] \times \R^4$, we define the profiles to be the solutions $(\cA^{ab}, \Phi^{ab})$ to the MKG-CG system with data $(A^{ab}, \phi^{ab})[0]$ at time $t=0$, which exist globally in time by Lemma~\ref{lem:MKG_energy_almost_orthogonality_of_profiles} and the assumption of the theorem with a global finite $S^1$ norm bound
\[
 \big\|\big(\mathcal{A}^{ab}, \Phi^{ab}\big)\big\|_{S^1}<\infty.
\]
Here the profiles do not depend on $n$, but we include this superscript since the profiles on the rest of space-time will be $n$-dependent. On the complement $[-T_b, T_b]^c\times \R^4$, we define the profiles as follows. On $[T_b, \infty)\times \R^4$, we let 
\[
 \Box \mathcal{A}^{nab} = 0
\]
with data $\mathcal{A}^{ab}[T_b]$ given by the profile constructed on $[-T_b, T_b]\times \R^4$, and we proceed analogously on $(-\infty, -T_b]\times\R^4$. 
As for the $\Phi$-field, we postulate on $[-T_b, T_b]^c\times \R^4$ the linear equation 
\[
 \Box_{A^{na, low} + \sum_{b'=1}^B\mathcal{A}^{nab'} + A^{naB}}\Phi^{nab} = 0
\]
with data given at time $T_b$, respectively $-T_b$, by the profile on $[-T_b, T_b] \times \R^4$. Note that in order for this to make sense, we also need to know the definition of the temporally unbounded $\mathcal{A}^{nab'}$, which is, of course, accomplished below without knowing the temporally bounded $\Phi^{nab}$ to avoid circularity. 

\medskip

\noindent {\it{Temporally unbounded case:}} Assume, for example, that $\lim_{n\rightarrow\infty}t^{ab}_n = +\infty$. Using Lemma~\ref{lem:MKG_energy_almost_orthogonality_of_profiles} and the assumption of the theorem, we can pick $R_b>0$ sufficiently large such that 
\[
 \tilde{\phi}^{nab}(t^{ab}_n - R_b, \cdot) = S_{\tilde{A}^{nab}}(\phi^{ab}[0])(-R_b, \cdot-x^{ab}_n) 
\]
satisfies 
\[
 E \Big( S(A^{ab}[0])(-R_b, \cdot - x_n^{ab}), \, S_{\tilde{A}^{nab}}(\phi^{ab}[0])(-R_b, \cdot-x^{ab}_n) \Big) < E_{crit}.
\]
Then we use the data 
\[
 \Big( S(A^{ab}[0])[-R_b](\cdot - x_n^{ab}),\,S_{\tilde{A}^{nab}}(\phi^{ab}[0])[-R_b](\cdot-x^{ab}_n) \Big)
\]
at time $t = t^{ab}_n - R_b$, and evolve them {\it{forward in time}} using the MKG-CG system up to time $t^{ab}_n +R_b$, say, resulting  in the nonlinear profiles 
\[
\big(\mathcal{A}^{nab}, \Phi^{nab}\big)
\]
on $[t^{ab}_n - R_b, t^{ab}_n + R_b] \times \R^4$. Observe that this construction does not require knowledge of the other profiles $\big(\mathcal{A}^{nab'}, \Phi^{nab'}\big)$. Finally, on the complement $[t^{ab}_n - R_b, t^{ab}_n + R_b]^c \times \R^4$, we evolve $\mathcal{A}^{nab}$ via the free equation $\Box \mathcal{A}^{nab} = 0$, and $\Phi^{nab}$ via the linear evolution
\[
\Box_{A^{na, low} + \sum_{b'=1}^B\mathcal{A}^{nab'} + A^{naB}}\Phi^{nab} = 0,
\]
with data given at time $t_n^{ab} - R_b$, respectively $t_n^{ab} + R_b$, by the profiles constructed on $[t_n^{ab}-R_b, t_n^{ab}+R_b] \times \R^4$.

\medskip

\noindent {\bf Step 2:} {\it Making an ansatz for the evolution $(\mathcal{A}^n, \Phi^n)$ of the full data $\big(A^{n1}_{\Lambda_0} + A^{n1}, \phi^{n1}_{\Lambda_0} + \phi^{n1}\big)[0]$.} 
We now assemble the pieces that we have constructed. We shall write
\begin{equation} \label{eq:ansatzA}
 \mathcal{A}^n: = A^{na, low} + \sum_{b=1}^B\mathcal{A}^{nab} + \mathcal{A}^{naB} + \delta_A^n,
\end{equation}
where $\mathcal{A}^{naB}$ is actually simply given by $A^{naB}$ from Proposition~\ref{prop:A_concentration_profiles}. We immediately observe the crucial fact that 
\[
\delta_A^n[0] = 0,
\]
i.e. the choice of profiles matches the data. We proceed analogously for $\Phi^n$, writing 
\begin{equation}\label{eq:ansatzPhi}
\Phi^n: = \phi^{na, low} + \sum_{b=1}^B\Phi^{nab} + \Phi^{naB} + \delta_{\Phi}^n,
\end{equation}
where $\Phi^{naB}$ is actually simply given by $\phi^{naB}$ from Proposition~\ref{prop:phi_concentration_profiles}. We finally observe that by truncating the frequency support of the data of the $\Phi^{nab}$ to a set $\{|\xi|\leq K\}$ for some very large $K$ and incorporating the error into $\delta_{\Phi}^n$, we may assume that the $\Phi^{nab}$ have frequency support in $|\xi|\leq K$ up to (slowly) exponentially decaying tails. This will be of use later on when controlling the errors. 

\medskip

\noindent {\bf Step 3:} {\it Showing accuracy of the ansatz.} Here we finally prove the following key proposition.
\begin{prop} \label{prop:keydecomp} 
Assuming the conditions of Theorem~\ref{thm:nonlinear_profiles_S1_bound} and given any $\delta_5>0$, there exists $B$ sufficiently large (depending on the bounds on $(A^{na, low}, \phi^{na, low})$, the actual concentration profiles and on $\delta_5$) such that for all sufficiently large $n$,
\[
 \big\|\delta_\Phi^n\big\|_{S^1} + \big\|\delta^n_A\big\|_{\ell^1 S^1} <\delta_5. 
\]
\end{prop}
In light of the immediately verified facts that 
\[
\limsup_{n\rightarrow\infty}\sum_{b=1}^B\big\|\Phi^{nab}\big\|_{S^1} + \limsup_{n\rightarrow\infty}\sum_{b=1}^B\big\|\mathcal{A}^{nab}\big\|_{S^1}<\infty
\]
and
\[
 \limsup_{n\rightarrow\infty}\big\|\Phi^{naB}\big\|_{S^1} + \limsup_{n\rightarrow\infty}\big\|\mathcal{A}^{naB}\big\|_{S^1}<\infty,
\]
this proposition then implies Theorem~\ref{thm:nonlinear_profiles_S1_bound}.
\end{proof}

\begin{proof}[Proof of Proposition~\ref{prop:keydecomp}]
For the most part, this consists in checking that the (very large number of) interaction terms sum up to something negligible upon correct choice of $B$ and $n$. We start with the equation for $\delta_\Phi^n$. To begin with, we note that $\delta_\Phi^n[0]$ is not necessarily  $0$, since the asymptotic evolution of the profiles $\Phi^{nab}$ given by 
\[
\Box_{A^{na, low} + \sum_{b'=1}^B\mathcal{A}^{nab'} + A^{naB}}\Phi^{nab} = 0
\]
is different than the one used to extract the concentration profiles, i.e. $\widetilde{\Box}_{A^{na}} u = 0$. But we also observe that each profile $\mathcal{A}^{nab'}$ differs from the corresponding linear component in Proposition~\ref{prop:A_concentration_profiles} given by 
\[
  S(A^{ab'}[0])(t - t^{ab'}_n, x-x^{ab'}_n)
\]
by a possibly large term, which however lives in a better space 
\[
\big\|\mathcal{A}^{nab'}(t,x) - S(A^{ab'}[0])(t - t^{ab'}_n, x-x^{ab'}_n) \big\|_{\ell^1 S^1} < \infty.
\]
Denote this difference by $\mathcal{B}^{nab'}(t,x)$. Then it suffices to show 
\begin{lem} \label{lem:inprop1} 
For any temporally unbounded profile $\Phi^{nab}$ we have 
\[
 \lim_{n\rightarrow\infty} \sum_{ \substack{ 1 \leq b' \leq B, \\ b' \neq b} } \big\|2i \mathcal{B}_{\nu}^{nab'} \partial^{\nu} \Phi^{nab}\big\|_{N} = 0.
\]
\end{lem}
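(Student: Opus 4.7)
The driving mechanism is the mutual divergence property $\lim_{n\to\infty}\bigl(|t_n^{ab}-t_n^{ab'}|+|x_n^{ab}-x_n^{ab'}|\bigr)=\infty$ from \eqref{equ:phi_concentration_profiles_mutual_divergence}, combined with a density/approximation argument in which each centered profile is replaced by a Schwartz function of compact space-time support (up to an arbitrarily small $S^1$ remainder), after which the supports of the approximations become disjoint for large $n$ and the product vanishes. Since the sum $\sum_{b'\neq b}$ has only $B-1$ terms, it is enough to prove the vanishing for each fixed pair $b\neq b'$.

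\emph{Step 1: frequency-localized reduction.} Using that both $\Phi^{nab}$ and $\mathcal{B}^{nab'}$ are essentially $1$-oscillatory and controlled in $S^1$ and $\ell^1 S^1$ respectively, I would first invoke the bilinear null-form bounds of Section~\ref{sec:magnetic_wave_equation} (in particular the low-high estimate \eqref{equ:low_high_N_estimate_for_free_wave} together with the high-high analogue) to reduce, up to a small off-diagonal loss that is summable over dyadic frequencies, to bounding $\|P_{k_1}\mathcal{B}_\nu^{nab'}\cdot\partial^\nu P_{k_2}\Phi^{nab}\|_{N_{k_3}}$ with $k_1,k_2,k_3=O(1)$. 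The Coulomb gauge of $\mathcal{A}^{nab'}$ together with the free-wave/elliptic structure of $\mathcal{B}^{nab'}$ lets one exploit the null form $\mathcal{N}_{jr}$ as in the discussion preceding \eqref{equ:decomposition_A_free_phi_term_divisibility}, which is safe since we only need a qualitative smallness.

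\emph{Step 2: approximation of the centered profiles.} Introduce the centered objects
\[
\tilde{\mathcal{B}}^{nab'}(t,x):=\mathcal{B}^{nab'}(t+t_n^{ab'},x+x_n^{ab'}),\quad \tilde{\Phi}^{nab}(t,x):=\Phi^{nab}(t+t_n^{ab},x+x_n^{ab}),
\]
which by translation invariance of the $N,S^1,\ell^1 S^1$ norms carry the same bounds. For $\tilde{\Phi}^{nab}$, Lemma~\ref{lem:uniform_dispersive_bounds} (applied after minor adaptation to the linear equation satisfied by $\Phi^{nab}$, whose potential still enjoys uniform $S^1$ control via divisibility) yields, for any $\gamma>0$, a splitting $\tilde{\Phi}^{nab}=u_1^\gamma+u_2^\gamma$ with $\|u_2^\gamma\|_{S^1}<\gamma$ and $\|u_1^\gamma(t,\cdot)\|_{L^\infty_x}<\gamma$ for $|t|>T_\gamma$. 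A further spatial truncation $\chi_{\{|x|\leq R_\gamma\}}u_1^\gamma$ costs only $\gamma$ in $S^1$ by a Huygens-type argument using the finite energy of the data. For $\tilde{\mathcal{B}}^{nab'}$, a similar truncation is valid in $\ell^1 S^1$: on the bounded time window $[-R_{b'},R_{b'}]$ the $\ell^1 S^1$ bound on the full MKG-CG evolution together with spatial $L^2_x$-density gives the required compactly supported approximation, while outside that window $\mathcal{B}^{nab'}$ is a free wave for which spatial truncation is again controlled by energy and Huygens' principle.

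\emph{Step 3: support disjointness and conclusion.} After these reductions, the leading contributions come from Schwartz approximations $\eta_1(t,x)$ and $\eta_2(t,x)$ (independent of $n$ up to a small error) with space-time supports contained in a fixed box $[-R,R]\times B(0,R)$. Translating back, the products $\mathcal{B}_\nu^{nab'}\,\partial^\nu\Phi^{nab}$ are supported in $\bigl([t_n^{ab'}-R,t_n^{ab'}+R]\times B(x_n^{ab'},R)\bigr)\cap\bigl([t_n^{ab}-R,t_n^{ab}+R]\times B(x_n^{ab},R)\bigr)$, which is empty for $n$ large by \eqref{equ:phi_concentration_profiles_mutual_divergence}. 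The three residual cross-terms (approximation error $\times$ profile, in both slots) are controlled by the bilinear estimates of Step~1 and are each bounded by $C\gamma$ uniformly in $n$. Taking $\gamma\to 0$ after summing the $B-1$ pairs gives the lemma.

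\emph{Main obstacle.} The delicate point is the density statement for $\mathcal{B}^{nab'}$ in $\ell^1 S^1$: because the $\ell^1$ summation over frequencies is not automatically compatible with sharp physical-space truncations, one must either first perform the frequency decomposition of Step~1 and then truncate each dyadic piece (so that only the $S^1$ density of Schwartz functions is needed, uniformly across frequencies controlled by the envelope coming from Step~1 of the proof of Proposition~\ref{prop:bootstrap}), or argue via a Paley-Littlewood square function together with the spatial-decay properties of the free-wave tails of $\mathcal{B}^{nab'}$. Verifying that this procedure is compatible with all the norms in the $S^1$ definition, in particular the angular $S_k^\omega(l)$ components, is the only genuinely technical part of the argument.
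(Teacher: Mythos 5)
There is a genuine gap: the central mechanism of your Step 3 (compact space-time supports around the two centers becoming disjoint as $n\to\infty$) is not available for this product. The difference $\mathcal{B}^{nab'}=\mathcal{A}^{nab'}-S(A^{ab'}[0])(\cdot-t_n^{ab'},\cdot-x_n^{ab'})$ contains, by construction, a \emph{global-in-time free-wave component of unit size}: outside the window $I^n_{b'}$ both $\mathcal{A}^{nab'}$ and the reference wave are free, and their difference is a free wave whose data (generated by the nonlinear increment across $I^n_{b'}$) is $O(1)$ in $\ell^1(\dot H^1_x\times L^2_x)$. Such a free wave cannot be approximated, up to an error small in $\ell^1 S^1$, by anything supported in a bounded box around $(t_n^{ab'},x_n^{ab'})$; even after spatial localization of its data it fills out the light cone $\{|x-x_n^{ab'}|\lesssim R+|t-t_n^{ab'}|\}$. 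In the scenario where the divergence in \eqref{equ:phi_concentration_profiles_mutual_divergence} is purely temporal (bounded $|x_n^{ab}-x_n^{ab'}|$, $|t_n^{ab}-t_n^{ab'}|\to\infty$), that cone sweeps right through the box where your truncated $\Phi^{nab}$ lives, so the product does not vanish by support considerations; its smallness there is a \emph{dispersive} fact (decay of the free wave's amplitude, or of its interpolated Strichartz norms, far from where its data concentrates), not a support fact. Symmetrically, the far-time tail of $\Phi^{nab}$ that Lemma~\ref{lem:uniform_dispersive_bounds} gives you is only small in $L^\infty_x$, not in $S^1$, so it cannot be absorbed into the "$C\gamma$ residual cross-terms": the bilinear bounds you cite in Step 1 (e.g.\ \eqref{equ:low_high_N_estimate_for_free_wave}) only see $S^1$ and $\ell^1S^1$ norms, which are $O(1)$ for exactly these pieces. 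In other words, the terms your bookkeeping dismisses are precisely where the content of the lemma sits.

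What is actually needed (and what the paper does) is a dispersion-plus-Strichartz argument rather than a support argument: use the explicit structure of $\mathcal{B}^{nab'}$ (a free wave with $\ell^1$-Besov data, or a term of the schematic form $\sum_{k,j}\Box^{-1}P_kQ_j\mathcal{P}(\phi\cdot\nabla_x\phi+A|\phi|^2)$), together with the $1$-oscillatory character of all factors and the multilinear/null-form estimates of \cite{KST}, to reduce to unit frequencies, unit output modulation and angularly generic Fourier supports; then estimate the output in $L^2_tL^2_x$ (resp.\ $L^1_tL^2_x$) by placing one factor in the $(\tfrac{10}{3},\tfrac{10}{3})$ Strichartz space and the other in an interpolate of $L^\infty_{t,x}$ with a Strichartz space, where the smallness as $n\to\infty$ comes from the vanishing $L^\infty_{t,x}$ norm of the dispersed factor provided by Lemma~\ref{lem:uniform_dispersive_bounds} (at unit frequency the derivative $\partial^\nu$ costs nothing). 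Your proposal does invoke the right ingredients (frequency-diagonal reduction, Lemma~\ref{lem:uniform_dispersive_bounds}), but it deploys the dispersive information only to license a compact truncation, and the truncation is impossible for the free-wave part of $\mathcal{B}^{nab'}$; if you replace Step 3 by the interpolation estimate just described, applied to the cross-terms you currently wave away, the argument closes and essentially becomes the paper's proof. The issue you flag as the "main obstacle" (compatibility of physical-space cutoffs with $\ell^1S^1$ and the angular components) is real but secondary to this structural problem.
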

\begin{proof}
We proceed as in the proof of Proposition~\ref{prop:decomposition}, expressing the difference $\mathcal{B}^{nab'}$ in the schematic form 
\[
 \sum_{k, j} \Box^{-1} P_k Q_j \mathcal{P} \big(\phi\cdot\nabla_x\phi + A |\phi|^2 ),
\]
or else as a free wave satisfying a Besov $\ell^1$-bound for the data instead of the weaker energy bound. Using the multilinear estimates from \cite{KST} and that all factors as well as $\Phi^{nab}$ are $1$-oscillatory, we reduce to a diagonal situation, where the frequency of all factors as well as the output modulation are essentially restricted to $\sim 1$, and have generic position, i.e. the Fourier supports do not have angular alignment. Then, using that the profiles  $\mathcal{A}^{nab'}$ disperse away from $t_n^{ab'}$ uniformly in $n$  by Lemma~\ref{lem:uniform_dispersive_bounds}, we easily infer the claim. 

To be more precise, we first consider the case when $A_j^n$, $j = 1,2,3,4$, are free waves, which are $1$-oscillatory, obey the Coulomb condition, and satisfy 
\[
 \big\|A_j^n\big\|_{\ell^1 S^1}<\infty.
\]
Moreover, assume that $\Phi^{n}$ is $1$-oscillatory and satisfies
\[
 \sup_{n}\big\|\Phi^{n}\big\|_{S^1}<\infty
\]
and in view of the dispersive bounds from Lemma~\ref{lem:uniform_dispersive_bounds} also
\[
 \lim_{n\rightarrow\infty}\big\|\Phi^{n}\big\|_{L^\infty_t L^\infty_x} = 0.
\]
We now prove that 
\[
 \lim_{n\rightarrow\infty} \big\|2i A_j^n \partial^j\Phi^n\big\|_{N} = 0.
\]
By the $1$-oscillatory character of the inputs and the $\ell^1$-Besov bound for $A^n$, one may restrict to frequencies $\sim 1$ in both factors, and assume the output to be at modulation $\sim 1$ (else the null structure gives smallness). Then one uses the Strichartz exponents $\big(\frac{10}{3}, \frac{10}{3}\big)$ for the first factor, and an interpolate of $(\infty, \infty)$ with that same space for the second factor to place the output into $L^2_t L^2_x$. 

Next, consider the case where $A_j^n$ is of the schematic form 
\[
\sum_{k, j} \Box^{-1} P_k Q_j \mathcal{P}_i\big(\phi\cdot\nabla\phi + A |\phi|^2).
\]
We only consider the most difficult case, where the space-time frequency localizations have been implemented and the null form structure revealed as in \cite[Theorem 12.1]{KST}. For example, consider an expression 
\[
 \Box^{-1} P_k Q_j \big( Q_{\leq j-C} P_{k_1} \phi^n \partial_{\alpha} Q_{\leq j-C} P_{k_2} \phi^n \big) \partial^{\alpha} Q_{\leq j-C} P_{k_3} \Phi^n,
\]
where the $k_j$ indicate frequency localizations, all inputs are $1$-oscillatory, and satisfy uniform $S^1$ norm bounds, and $\Phi^n$ satisfies the same vanishing relation as above. Also, from \cite{KST} we have the alignments $k_1 = k_2+O(1)$, $k_3\geq k+O(1)$, $j\leq k+O(1)$. One may then in fact assume $j = k+O(1)$, since else one gets smallness, and the $1$-oscillatory character allows us to assume $k_{1,2,3} = k+O(1) = O(1)$. Then one places the output into $L_t^1 L_x^2$ by using the Strichartz exponents $\big(\frac{10}{3}, \frac{10}{3}\big)$ for the first two factors, and an interpolate of $\big(\frac{5}{2}-, \frac{30}{7}+\big)$ with $\big(\infty, \infty\big)$ for the last factor. The remaining null forms (see (62) and (63) in \cite{KST}) are handled similarly.  
\end{proof}

From the preceding lemma, we infer that we can force 
\[
\big\|\delta^n_\Phi[0]\big\|_{\dot{H}^1_x \times L^2_x} \ll \delta_5,
\]
provided we pick $n$ sufficiently large. The equation for $\delta^n_\Phi$ is given by 
\begin{equation} \label{eqn:deltaphi1}
\Box_{A^{na,low} + \sum_{b' = 1}^B\mathcal{A}^{nab'} + \mathcal{A}^{naB}+\delta_A^n} \Big( \phi^{na, low} + \sum_{b=1}^B\Phi^{nab} + \Phi^{naB} + \delta_{\Phi}^n \Big) = 0.
\end{equation}
We rewrite this in the following form 
\begin{equation} \label{eqn:deltaphi2}
 \Box_{A^{na,low} + \sum_{b' = 1}^B\mathcal{A}^{nab'} + \mathcal{A}^{naB}+\delta_A^n}\delta_{\Phi}^n = - I - II - III,
\end{equation}
where we put 
\begin{align*}
I &:= \Box_{A^{na,low} + \sum_{b' = 1}^B\mathcal{A}^{nab'} + \mathcal{A}^{naB}+\delta_A^n} \Big( \phi^{na, low} \Big), \\
II &:= \Box_{A^{na,low} + \sum_{b' = 1}^B\mathcal{A}^{nab'} + \mathcal{A}^{naB}+\delta_A^n} \Big(\sum_{b=1}^B\Phi^{nab}\Big), \\
III &:= \Box_{A^{na,low} + \sum_{b' = 1}^B\mathcal{A}^{nab'} + \mathcal{A}^{naB}+\delta_A^n} \Big(\Phi^{naB}\Big).
\end{align*}
Now the idea is to show smallness of all these terms (in the $N$ norm sense) provided $B$ and then $n$ are chosen sufficiently large. Of course, one needs to be careful with the fact that increasing $B$ also leads to more and more terms in the sums 
\[
\sum_{b' = 1}^B\mathcal{A}^{nab'}, \quad \sum_{b=1}^B\Phi^{nab}.
\]
To deal with this, we use 
\begin{lem} \label{lem:tail estimate} 
Given $\delta_6>0$, there is a $B_1>0$ such that for all $B \geq B_1$ and all sufficiently large $n$ (depending on $B$), it holds that
\[
 \Big\| \sum_{b=B_1}^B \mathcal{A}^{nab} \Big\|_{S^1} < \delta_6, \quad \Big\|\sum_{b=B_1}^B\Phi^{nab}\Big\|_{S^1} < \delta_6.  
\]
\end{lem}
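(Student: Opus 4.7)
The plan is to exploit two features simultaneously: first, the asymptotic energy partition from Proposition~\ref{prop:phi_concentration_profiles} and Proposition~\ref{prop:A_concentration_profiles} forces the tail profiles to be of arbitrarily small energy (uniformly in $n$) once $B_1$ is large enough; second, the mutual divergence \eqref{equ:phi_concentration_profiles_mutual_divergence} of the parameter sequences $\{(t_n^{ab},x_n^{ab})\}$ implies asymptotic orthogonality, allowing us to pass from an $\ell^1$ sum to something essentially diagonal. More precisely, by \eqref{equ:phi_concentration_profiles_asymptotic_energy_partition} (and its analogue for $A$), given any $\eta > 0$ we can pick $B_1$ so large that
\[
\limsup_{n\to\infty} \sum_{b \geq B_1} \Bigl( E_0(\tilde{\phi}^{nab}[0]) + E_0(\tilde{A}^{nab}[0]) \Bigr) < \eta.
\]
Fix $\eta$ smaller than the small energy threshold $\varepsilon_0$ of the global well-posedness result \cite{KST}. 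Then each individual profile $(\tilde{A}^{nab},\tilde{\phi}^{nab})[0]$ with $b \geq B_1$ has energy below $\varepsilon_0$, and moreover the evolutions $(\mathcal{A}^{nab},\Phi^{nab})$ from Step 1 of Theorem~\ref{thm:nonlinear_profiles_S1_bound} coincide with the small energy MKG-CG evolutions inside the core boxes and with the (essentially free) linearly-driven evolutions outside, so that \cite{KST} yields for each such $b$ the bound
\[
\|\mathcal{A}^{nab}\|_{S^1} + \|\Phi^{nab}\|_{S^1} \lesssim E_0(\tilde{A}^{nab}[0])^{1/2} + E_0(\tilde{\phi}^{nab}[0])^{1/2},
\]
uniformly in $n$.

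The second step is to convert the mutual space-time divergence of the sequences $(t_n^{ab}, x_n^{ab})$ into approximate square-summability for the $S^1$ norm. Each profile has been prepared so that it remains essentially $1$-oscillatory and concentrated near $(t_n^{ab},x_n^{ab})$ in the sense that outside of fixed-size space-time boxes around these points it is well-approximated by a solution of a linear covariant wave equation with small energy data. For $b\neq b'$ with $b,b'\geq B_1$, by passing to a subsequence in $n$ we may assume the space-time shifts diverge by \eqref{equ:phi_concentration_profiles_mutual_divergence}. An argument in the spirit of Lemma~\ref{lem:almost_orthogonality_A} and Lemma~\ref{lem:almost_orthogonality_phi} -- with the partition of unity now subordinated to a uniformly finitely overlapping cover adapted to the well-separated concentration cores $(t_n^{ab},x_n^{ab})$ -- will yield, for any $B \geq B_1$,
\[
\Bigl\| \sum_{b=B_1}^{B} \mathcal{A}^{nab} \Bigr\|_{S^1}^2 \lesssim \sum_{b=B_1}^{B} \|\mathcal{A}^{nab}\|_{S^1}^2 + o(1), \quad \Bigl\| \sum_{b=B_1}^{B} \Phi^{nab} \Bigr\|_{S^1}^2 \lesssim \sum_{b=B_1}^{B} \|\Phi^{nab}\|_{S^1}^2 + o(1),
\]
where $o(1) \to 0$ as $n \to \infty$. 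Combining with the energy bound from the first paragraph gives
\[
\Bigl\| \sum_{b=B_1}^{B} \mathcal{A}^{nab} \Bigr\|_{S^1}^2 + \Bigl\| \sum_{b=B_1}^{B} \Phi^{nab} \Bigr\|_{S^1}^2 \lesssim \eta + o(1),
\]
from which the claim follows by first choosing $\eta \ll \delta_6^2$ and then $n$ sufficiently large.

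The main technical obstacle is the asymptotic orthogonality step for the $S^1$ norm, since $S^1$ is not Hilbertian and involves Strichartz, null-frame, and $X^{s,b}$-type components. The mutual divergence of the shifts is in physical space-time, whereas the $S^1$ norm is most naturally defined on the Fourier side, and one must exploit the uniform $1$-oscillatory character of all profiles (together with the uniform dispersive decay from Lemma~\ref{lem:uniform_dispersive_bounds}) to localize each profile to a fattened space-time box of fixed size around its core $(t_n^{ab},x_n^{ab})$, modulo a tail that can be absorbed using a divisibility argument. A secondary technical point is that the $\Phi^{nab}$ satisfy the covariant linear equation $\Box_{A^{na,low}+\sum_{b'}\mathcal{A}^{nab'}+A^{naB}} \Phi^{nab}=0$ outside the core boxes; however, since these profiles have small energy, Theorem~\ref{thm:linear_estimates_magnetic_wave_equation} together with the uniform $S^1$ norm bounds already established in Subsection~\ref{subsec:evolving_non_atomic} for the low-frequency background, plus the fact that the total energy of $\sum_{b'}\mathcal{A}^{nab'}+A^{naB}$ is controlled by $E_{crit}$, yield the desired $S^1$ bound of the order of $E_0(\tilde{\phi}^{nab}[0])^{1/2}$, which is what the argument requires.
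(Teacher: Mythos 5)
There is a genuine gap, and it sits exactly where you located the ``main technical obstacle'': the claimed asymptotic almost-orthogonality
\[
\Bigl\| \sum_{b=B_1}^{B} \Phi^{nab} \Bigr\|_{S^1}^2 \lesssim \sum_{b=B_1}^{B} \|\Phi^{nab}\|_{S^1}^2 + o(1)
\]
is not supplied by anything in the paper, and the lemmas you invoke do not give it. Lemma~\ref{lem:almost_orthogonality_A} and Lemma~\ref{lem:almost_orthogonality_phi} concern sums of pieces with spatially disjoint, uniformly finitely overlapping supports at fixed dyadic scales on a \emph{short, fixed} time interval $[0,T_0]$, where the separation is literal support separation at every time; here the tail profiles are \emph{global-in-time} objects (free or covariant waves outside the core boxes) which disperse and overlap, and their separation is only asymptotic in the parameters $(t_n^{ab},x_n^{ab})$. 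Decoupling of the full $S^1$ norm --- including the square-summed $X^{0,\frac12}_\infty$ pieces, the null-frame components $S_k^{ang}$, and the $\ell^1 L^2_t\dot H^{-1/2}_x$ part of $\|\Box\cdot\|$ --- for such profiles (part nonlinear MKG evolution inside the boxes, part covariant linear evolution outside) is a substantial unproven statement, not a routine adaptation; moreover you cannot recover it from the linear energy estimate for $\Box$ applied to each summand, because $\Box\Phi^{nab}$ contains the non-perturbative low-high magnetic terms. You correctly saw that the naive triangle inequality fails (it produces an $\ell^1$ sum of square roots of small energies, which is not small uniformly in $B$), but the fix you propose is exactly the hard, missing ingredient.

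The paper avoids any $S^1$-level orthogonality. It treats the whole tail sum as a single solution of a (covariant) wave equation and applies the linear estimate to the sum: for the $\mathcal{A}$-part, $\Box\sum_b\mathcal{A}^{nab}$ is the sum of the cutoff quadratic sources $\chi_{I^n_b}\mathcal{P}\Im(\Phi^{nab}\overline{D_x\Phi^{nab}})$, whose $N$-norms are bounded by the \emph{energies} $E(\tilde A^{nab},\tilde\phi^{nab})$ (quadratic in the profile, no square root), hence $\ell^1$-summable below $\delta_6$ once $B_1$ is large; the data $\sum_b\mathcal{A}^{nab}[0]$ are small because orthogonality is only needed in the Hilbertian space $\dot H^1_x\times L^2_x$, where it is immediate from the profile decomposition. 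For the $\Phi$-part one shows $\bigl\|\Box_{A^{na,low}+\sum_{b'}\mathcal{A}^{nab'}+\mathcal{A}^{naB}}\bigl(\sum_{b\geq B_1}\chi_{I^n_b}\Phi^{nab}+(1-\chi_{I^n_b})\tilde\Phi^{nab}\bigr)\bigr\|_N\to 0$: the diagonal terms vanish by construction, and the cutoff commutators and operator-difference terms $\chi_{I^n_b}(\Box_{\mathrm{full}}-\Box_{\mathcal{A}^{nab}})\Phi^{nab}$ are killed as $n\to\infty$ using the divergence of the cores, the dispersive decay of the other profiles, and --- a point your sketch omits --- the spatial truncation of the profile data arranged in Remark~\ref{rem:trickycutoff}, which is what makes the $\chi_{I^n_b}''$-type errors controllable. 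Smallness of the summed data in $\dot H^1_x\times L^2_x$ plus the covariant linear estimate then gives the $S^1$ bound on the sum directly. If you want to salvage your route, you would have to actually prove the $S^1$ decoupling statement; as written, the argument does not close, whereas the paper's mechanism uses only tools already established in the text.
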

\begin{proof}
By construction we have 
\[
\Box \mathcal{A}^{nab} = - \chi_{I_b^n} {\mathcal P} \Im \big(\Phi^{nab}\overline{D_x \Phi^{nab}}\big),
\]
where $I_b^n = [-T_b, T_b]$ for temporally bounded profiles and $I_b^n = [t_{n}^{ab}-R_b, t_n^{ab}+R_b]$ for temporally unbounded ones. By picking $B_1$ sufficiently large, so that 
\[
E(\mathcal{A}^{nab}, \Phi^{nab})\ll 1,\,b\geq B_1, 
\]
we get 
\begin{align*}
 \sum_{b=B_1}^B \Big\|\chi_{I_b^n} {\mathcal P} \Im\big(\Phi^{nab}\overline{D\Phi^{nab}}\big)\Big\|_{N} \lesssim \sum_{b=B_1}^B E(\tilde{A}^{nab}, \tilde{\phi}^{nab}),
\end{align*}
where we recall the notation 
\begin{align*}
 \tilde{\phi}^{nab} &= S_{\tilde{A}^{nab}}\big(\phi^{ab}[0]\big)(0 - t^{ab}_n, x-x^{ab}_n), \\
 \tilde{A}^{nab}    &= S\big(A_j^{ab}[0]\big)(0 - t^{ab}_n, x-x^{ab}_n), \quad j = 1, \ldots, 4.
\end{align*}
But then since 
\[
\limsup_{n\rightarrow\infty} \Big\|\sum_{b=B_1}^B \mathcal{A}^{nab}[0]\Big\|_{\dot{H}^1_x\times L^2_x}<\delta_6, \quad \limsup_{n\rightarrow\infty} \sum_{b=B_1}^B E(\tilde{A}^{nab}, \tilde{\phi}^{nab})<\delta_6,
\]
upon choosing $B_1$ large enough, the first bound of the lemma follows. To get the second bound, one uses that for $B$ and $n$ large enough, as well as making some small additional assumption on the data $\phi^{ab}[0]$ (see Remark~\ref{rem:trickycutoff} below),  
\begin{equation} \label{eq:ohboy}
 \lim_{n\rightarrow\infty} \Big\|\Box_{A^{na, low} +\sum_{b'=1}^B \mathcal{A}^{nab'} + \mathcal{A}^{naB}} \biggl( \sum_{b=B_1}^B \chi_{I_b^n} \Phi^{nab} + (1- \chi_{I_b^n}) \tilde{\Phi}^{nab} \biggr) \Big\|_{N} \ll \delta_7,
\end{equation}
where now $\chi_{I_b^n}$ are suitable smooth time cutoffs and $\Phi^{nab}$ is as in Step 1 with $I_b^n = [-T_b, T_b]$ or $I_b^n = [t^{ab}_n+R_b, t^{ab}_n - R_b]$, while $\tilde{\Phi}^{nab}$ is as in Step 1 but on the complement $(I_b^n)^c$.
Again 
\[
\limsup_{n\rightarrow\infty} \Big\|\sum_{b=B_1}^B \Phi^{nab}[0]\Big\|_{\dot{H}^1_x \times L^2_x} < \delta_6,
\]
provided $B_1$ is chosen sufficiently large. We then infer 
\[
\Big\|\sum_{b=B_1}^B\Phi^{nab}\Big\|_{S^1}\ll \delta_6, 
\]
provided $\delta_7$ is sufficiently small. 
Note that there are small error terms due to the cutoff, which however are harmless and can be made arbitrarily small by picking the cutoff suitably, see \cite{KS}. In fact, we make the 
\begin{rem} \label{rem:trickycutoff} 
 To ensure smallness of the errors generated by the cutoffs $\chi_{I^n_b}$ and $1-\chi_{I^n_b}$, it suffices to localize each $\phi^{ab}[0]$ in physical space to a ball of radius $10|I^n_b|$, and each $A^{ab}[0]$ to a ball of radius $100|I^n_b|$, say.  The errors committed thereby may be included in $\Phi^{naB}$, respectively $\mathcal{A}^{naB}$. 
\end{rem}
Observe that for the term $\Box_{A^{na, low} +\sum_{b'=1}^B \mathcal{A}^{nab'} + \mathcal{A}^{naB}}\sum_{b=B_1}^B\chi_{I_b^n}\Phi^{nab}$, one generates errors of the schematic form 
\[
\chi_{I_b^n}''\Phi^{nab}, \quad \chi_{I_b^n}'(A^{na, low} +\mathcal{A}^{nab'}+\mathcal{A}^{naB})\Phi^{nab}, \quad \chi_{I_b^n} \bigl( \Box_{A^{na, low} +\sum_{b'=1}^B \mathcal{A}^{nab'} + \mathcal{A}^{naB}} - \Box_{\mathcal{A}^{nab}} \bigr) \Phi^{nab}.
\]
Then by using the crude bound
\[
\big\|\chi_{I_b^n}\mathcal{A}^{nab'}\nabla_{t,x}\Phi^{nab}\big\|_{L_t^1 L_x^2}\lesssim |I_b^n| \bigl\|\chi_{C_{bn}}\mathcal{A}^{nab'}\bigr\|_{L_t^\infty L_x^{4}} \bigl\|\nabla_{t,x}\Phi^{nab}\bigr\|_{L_t^\infty L_x^2},
\]
where $C_{bn}$ is a suitable space-time cube of width $\sim |I_b^n|$ centered around $(t^{ab}_n, x^{ab}_n)$, and the implied constant depends on the frequency support cutoff for the $\Phi^{nab}$ (see the end of Step 2), we see that in light of the decay properties of the $\mathcal{A}^{nab'}$ for $b'\neq b$, the norm converges to zero as $n\rightarrow\infty$. One argues similarly for 
\[
\big\|\chi_{I_b^n}\mathcal{A}^{nab'}\mathcal{A}^{nab''}\Phi^{nab}\big\|_{L_t^1 L_x^2},\,b'\neq b,
\]
as well as those terms generated when we replace $\mathcal{A}^{nab'}$ by $A^{na, low}$ or $\mathcal{A}^{naB}$, which then takes care of the third expression 
\[
\chi_{I_b^n} \bigl( \Box_{A^{na, low} + \sum_{b'=1}^B \mathcal{A}^{nab'}+ \mathcal{A}^{naB}} - \Box_{\mathcal{A}^{nab}} \bigr) \Phi^{nab}.
\]
Note that since we can force things to be arbitrarily small here if we simply choose $n$ large enough, we can also sum over $b\in [B_1, B]$, while maintaining smallness. The terms 
\[
 \chi_{I_b^n}''\Phi^{nab}, \quad \chi_{I_b^n}'(A^{na, low} +\mathcal{A}^{nab'}+\mathcal{A}^{naB})\Phi^{nab}
\]
almost cancel the corresponding ones generated by 
\[
\Box_{A^{na, low} + \sum_{b' = 1}^B \mathcal{A}^{nab'} + \mathcal{A}^{naB}} \Big(\sum_{b=B_1}^B(1-\chi_{I^n_b})\tilde{\Phi}^{nab} \Big),
\]
except the $\tilde{\Phi}^{nab}$ used in the latter differs from $\Phi^{nab}$ by a term $\delta\Phi^{nab}$ whose energy is bounded by 
\[
 \big\|\nabla_{t,x} \delta\Phi^{nab}\big\|_{L_t^\infty L_x^2(I_b^n\times \R^4)} \lesssim \big\|\chi_{I_b^n} \bigl( \Box_{A^{na, low} + \sum_{b' = 1}^B \mathcal{A}^{nab'} + \mathcal{A}^{naB}}-\Box_{\mathcal{A}^{nab}} \bigr) \Phi^{nab}\big\|_{N},
\]
and the expression on the right here, even when summed over $b\in [B_1, B]$, is $\ll\delta_7$ if we pick $n$ sufficiently large. This then suffices to bound 
\[
 \sum_{b=B_1}^B \big\|\chi_{I_b^n}'' \delta\Phi^{nab}\big\|_{L_t^1 L_x^2} + \sum_{b=B_1}^B\big\|\chi_{I_b^n}'(A^{na, low} +\mathcal{A}^{nab'}+ \mathcal{A}^{naB})\delta\Phi^{nab}\big\|_{L_t^1L_x^2}\ll \delta_7
\]
for sufficiently large $n$, where we take advantage of the spatial support properties that we assume about the data for $\Phi^{nab}$. 
\end{proof}

Next, we show that each of the terms I -- III can be made arbitrarily small up to certain error terms by picking $B$ and then $n$ sufficiently large.

\medskip

\noindent {\it The contribution of $I$.} One writes schematically 
\begin{align*}
\Box_{A^{na,low} + \sum_{b' = 1}^B\mathcal{A}^{nab'} + \mathcal{A}^{naB}+\delta_A^n}\phi^{na, low} &= \Box_{A^{na,low} + \sum_{b' = 1}^B\mathcal{A}^{nab'} + \mathcal{A}^{naB}}\phi^{na, low} + 2i(\delta^n_A)_{\nu} \partial^{\nu}\phi^{na, low}\\
&\quad \quad + \Big( A^{na,low} + \sum_{b' = 1}^B\mathcal{A}^{nab'} + \mathcal{A}^{naB}+\delta_A^n \Big) \delta_A^n \phi^{na, low}.
\end{align*}
Then one has for any $B$,
\begin{align*}
\lim_{n\rightarrow\infty} \big\|\Box_{A^{na,low} + \sum_{b' = 1}^B\mathcal{A}^{nab'} + \mathcal{A}^{naB}}\phi^{na, low}\big\|_{N} = 0
\end{align*}
due to the frequency localizations (up to exponential tails) of the inputs $\sum_{b' = 1}^B\mathcal{A}^{nab'}, \mathcal{A}^{naB}$ and $\phi^{na, low}$ as well as due to the fact that by construction we have 
\[
\Box_{A^{na,low}}\phi^{na, low} = 0. 
\]
More precisely, one uses an argument as in the proof of Proposition~\ref{prop:bootstrap}. We then still have the error terms 
\begin{equation}\label{eq:errorI}
 2i(\delta^n_A)_{\nu} \partial^{\nu}\phi^{na, low}
\end{equation}
and
\begin{equation*}
 \Big( A^{na,low} + \sum_{b' = 1}^B\mathcal{A}^{nab'} + \mathcal{A}^{naB}+\delta_A^n \Big) \delta^n_A \phi^{na, low}. 
\end{equation*}
The second term here shall be straightforward to treat by means of a simple divisibility argument, while the first will require the equation satisfied by $\delta^n_A$ in conjunction with a divisibility argument. 

\medskip

\noindent {\it{The contribution of $II$.}} We write schematically
\begin{align*}
\Box_{A^{na,low} + \sum_{b' = 1}^B\mathcal{A}^{nab'} + \mathcal{A}^{naB}+\delta_A^n} \Big(\sum_{b=1}^B\Phi^{nab}\Big) &= \sum_{b=1}^B \chi_{I_b^n} \Big( \Box_{A^{na,low} + \sum_{b' = 1}^B\mathcal{A}^{nab'} + \mathcal{A}^{naB}} - \Box_{\mathcal{A}^{nab}} \Big) \Phi^{nab} \\
 &\quad + \sum_{b=1}^B 2i (\delta_A^n)_{\nu} \partial^{\nu} \Phi^{nab} \\
 &\quad + \sum_{b=1}^B \Big( A^{na,low} + \sum_{b' = 1}^B \mathcal{A}^{nab'} + \mathcal{A}^{naB} + \delta_A^n \Big) \delta_A^n \Phi^{nab}.
\end{align*}
Here the time intervals $I_b^n$ correspond to $[-T_b, T_b]$ for the temporally bounded profiles and to $[t_n^{ab} - R_b, t_{n}^{ab}+R_b]$ for the temporally unbounded ones. We shall henceforth make the following {\bf{additional assumption}} that 
\[
 |I_b^n| = M \quad \forall b
\]
chosen very large (eventually depending on $\delta_5$ and the profiles). Then we observe that given any $\delta_5>0$, we can pick $B$ large enough such that for any sufficiently large $n$, we have 
\[
 \Big\| \sum_{b=1}^B \chi_{I_b^n} \big( \Box_{A^{na,low} + \sum_{b' = 1}^B\mathcal{A}^{nab'} + \mathcal{A}^{naB}} - \Box_{\mathcal{A}^{nab}}\big) \Phi^{nab} \Big\|_{N} \ll \delta_5. 
\]
To show this, we need 
\begin{align*}
\Big\| \sum_{b=1}^B\chi_{I_b^n} 2i \Big( A^{na,low} + \sum_{\substack{b' = 1\\ b'\neq b}}^B\mathcal{A}^{nab'} + \mathcal{A}^{naB} \Big)_{\nu} \partial^{\nu} \Phi^{nab} \Big\|_{N} &\ll \delta_5, \\
\Big\|\sum_{b=1}^B\chi_{I_b^n} \Big( (A^{na,low} + \sum_{b' = 1}^B\mathcal{A}^{nab'} + \mathcal{A}^{naB})^2 - (\mathcal{A}^{nab})^2 \Big) \Phi^{nab}\Big\|_{N} &\ll \delta_5.
\end{align*}
For the first expression, observe that the interactions of $\mathcal{A}^{nab'}$, $b'\neq b$, with $\Phi^{nab}$ are easily seen to vanish as $n\rightarrow\infty$, using crude bounds, due to the time localization from $\chi_{I_b^n}$, and the diverging supports of these profiles or their dispersive decay. Similarly, the interaction of $A^{na,low} $ with $\Phi^{nab}$ is seen to vanish asymptotically as $n\rightarrow\infty$, due to the divergent frequency supports and again taking advantage of the extra cutoff $\chi_{I_b^n}$. Note that at this point we have not yet used the parameter $B$. Finally, we also need to bound 
\[
 \Big\| \sum_{b=1}^B \chi_{I_b^n} \mathcal{A}^{naB}_{\nu}\partial^{\nu} \Phi^{nab} \Big\|_{N},
\]
and it is here that we shall take advantage of the size of $B$. Precisely, we divide the above term into two. First, pick $B_1$ very large, depending on the parameter $M$ (which controls the $I_b^n$ via $|I_b^n|\leq M$), such that we have for any $B\geq B_1$,
\[
\limsup_{n\rightarrow\infty} \Big\| \sum_{b=B_1}^B\chi_{I_b^n}\mathcal{A}^{naB}_{\nu}\partial^{\nu} \Phi^{nab} \Big\|_{N} \ll \delta_5. 
\]
That this is possible follows from Lemma~\ref{lem:tail estimate}. Then, with $B_1$ chosen, pick $B\geq B_1$ sufficiently large such that 
\[
\limsup_{n\rightarrow\infty} \Big\| \sum_{b=1}^{B_1}\chi_{I_b^n} \mathcal{A}^{naB}_{\nu} \partial^{\nu} \Phi^{nab} \Big\|_{N}\ll \delta_5. 
\]
Here we take advantage of the fact that we essentially have 
\[
\limsup_{n\rightarrow\infty}\big\|\mathcal{A}^{naB}_{\nu}\big\|_{L^\infty_t L^\infty_x +L_t^\infty \dot{H}^1_x}\rightarrow 0
\]
as $B\rightarrow\infty$. In fact, we have to be a bit careful here, because in Remark~\ref{rem:trickycutoff} we assume that we have incorporated some extra errors into the tail terms $\mathcal{A}^{naB}$ and $\Phi^{naB}$, which do not vanish as $B\rightarrow\infty$. However, considering the term corresponding to a fixed $b\in [1, B_1]$, we have that the extra contribution to $\mathcal{A}^{naB}$ (coming from truncating $\mathcal{A}^{nab}[0]$) interacts weakly with $\Phi^{nab}$ (in the sense that it vanishes as $|I_b|\rightarrow\infty$), see e.g. the proof of Proposition~\ref{prop:morelocaldatastuff}. The remaining contributions from truncating $\mathcal{A}^{nab'}[0]$ are easily seen to result in interactions vanishing as $n\rightarrow\infty$. The cubic term 
\[
 \Big\|\sum_{b=1}^B \chi_{I_b^n} \Big( (A^{na,low} + \sum_{b' = 1}^B\mathcal{A}^{nab'} + \mathcal{A}^{naB})^2 - (\mathcal{A}^{nab})^2 \Big) \Phi^{nab} \Big\|_{N}
\]
is actually simpler, because the temporal cutoffs $\chi_{I_b^n}$ are not even necessary to get the desired bound. This completes the estimate for $II$ except for the error terms
\begin{equation} \label{eq:errorII}
 \Box_{A^{na,low} + \sum_{b' = 1}^B\mathcal{A}^{nab'} + \mathcal{A}^{naB}+\delta_A^n} \Big( \sum_{b=1}^B \Phi^{nab} \Big)
 - \Box_{A^{na,low} + \sum_{b' = 1}^B\mathcal{A}^{nab'} + \mathcal{A}^{naB}} \Big( \sum_{b=1}^B \Phi^{nab} \Big).
\end{equation}
 
\medskip

\noindent {\it The contribution of $III$.} Here we take advantage of the fact that $\Phi^{naB}$ satisfies the equation $\widetilde{\Box}_{A^{na}} u = 0$ to first show that we can pick $B$ large enough such that for all sufficiently large $n$, 
\[
 \biggl\| \Box_{A^{na, low} + \sum_{b'=1}^B {\mathcal A}^{nab'} + {\mathcal A}^{naB}} \Phi^{naB} \biggr\|_N \ll \delta_5.
\]
Of course, the profiles $\mathcal{A}^{nab'}$ are not free waves, but they differ from free waves by terms that are negligible as far as interactions with $\Phi^{naB}$ are concerned. In fact, we recall that 
\[
\big\|\mathcal{A}^{nab'}(t,x) - S(A^{ab'}[0])(t - t^{ab'}_n, x-x^{ab'}_n) \big\|_{\ell^1 S^1} < \infty.
\]
Using Lemma~\ref{lem:tail estimate}, we can refine this to a tail estimate as follows. There exists $B_1$ sufficiently large such that denoting 
\[
 \mathcal{B}^{nab'} := \mathcal{A}^{nab'}(t,x) - S(A^{ab'}[0])(t - t^{ab'}_n, x-x^{ab'}_n),
\]
we have for any $B\geq B_1$,
\begin{align*}
\limsup_{n\rightarrow\infty} \sum_{b' = B_1}^B \big\|2i\mathcal{B}^{nab'}_{\nu}\partial^{\nu}\Phi^{naB}\big\|_{N}\ll \delta_5.
\end{align*}
On the other hand, with this $B_1$ fixed, we can use the argument for Lemma~\ref{lem:inprop1} to conclude that there exists $B\geq B_1$ such that we have 
\begin{align*}
\limsup_{n\rightarrow\infty}\sum_{b' = 1}^{B_1}\big\|2i\mathcal{B}^{nab'}_{\nu}\partial^{\nu}\Phi^{naB}\big\|_{N}\ll \delta_5.
\end{align*}
Finally, we are still left with the error terms 
\begin{equation}\label{eq:errorIII}
 \Box_{A^{na,low} + \sum_{b' = 1}^B\mathcal{A}^{nab'} + \mathcal{A}^{naB}+\delta_A^n}\Phi^{naB} - \Box_{A^{na,low} + \sum_{b' = 1}^B\mathcal{A}^{nab'} + \mathcal{A}^{naB}}\Phi^{naB}.
\end{equation}

\medskip

We have now shown smallness of the terms $I$ -- $III$ up to errors that are at least linear in $\delta_A^n$ given by \eqref{eq:errorI} -- \eqref{eq:errorIII}. 

\medskip

Having dealt with the equation for $\delta_\Phi^n$, we now come to the equation for $\delta_A^n$ given by
\begin{equation} 
 \begin{split} 
  &\Box \Big(A^{na,low}_j + \sum_{b' = 1}^B\mathcal{A}^{nab'}_j + \mathcal{A}^{naB}_j+(\delta_A^n)_j \Big) \\
  &\quad = - {\mathcal P}_j \Im \bigg( \Big( \phi^{na, low} + \sum_{b=1}^B\Phi^{nab} + \Phi^{naB} + \delta_{\Phi}^n \Big) \overline{D_x \Big( \phi^{na, low} + \sum_{b=1}^B\Phi^{nab} + \Phi^{naB} + \delta_{\Phi}^n \Big)} \bigg),
 \end{split}
\end{equation}
where the covariant derivative $D_x$ uses the underlying connection form 
\[
 A^{na,low} + \sum_{b' = 1}^B \mathcal{A}^{nab'} + \mathcal{A}^{naB} + (\delta_A^n).
\]
We rewrite this in the form
\begin{equation} 
 \begin{split} 
  \Box (\delta_A^n) = - IV - V,
  \end{split}
\end{equation}
where we put schematically
\begin{align*}
 IV &:= \Im \bigg( \Big( \phi^{na, low} + \sum_{b=1}^B\Phi^{nab} + \Phi^{naB} + \delta_{\Phi}^n \Big) \overline{D_x \Big( \phi^{na, low} + \sum_{b=1}^B\Phi^{nab} + \Phi^{naB} + \delta_{\Phi}^n \Big)} \bigg) \\
 &\quad \quad - \Im \bigg( \Big( \sum_{b=1}^B\Phi^{nab} + \Phi^{naB} + \delta_{\Phi}^n \Big) \overline{D_x \Big( \sum_{b=1}^B\Phi^{nab} + \Phi^{naB} + \delta_{\Phi}^n \Big)} \bigg) -  \Im \Big( \phi^{na, low} \overline{D_x  \phi^{na, low}} \Big), \\
 V &:= \Im \bigg( \Big( \sum_{b=1}^B\Phi^{nab} + \Phi^{naB} + \delta_{\Phi}^n \Big) \overline{D_x \Big(\sum_{b=1}^B\Phi^{nab} + \Phi^{naB} + \delta_{\Phi}^n \Big)} \bigg) - \sum_{b=1}^B\Box \mathcal{A}^{nab}.
\end{align*}
The term $IV$ can be written in terms of null forms as well as cubic terms involving at least one low frequency factor $ \phi^{na, low}$ as well as at least one high frequency term from 
\[
 \sum_{b=1}^B\Phi^{nab} + \Phi^{naB},
 \]
or else error terms involving at least one factor $\delta_{\Phi}^n$. The former type of interaction is easily seen to converge to zero with respect to $\|\cdot\|_{N}$ as $n\rightarrow\infty$, and so only the latter type of error term needs to be kept. 
As for term $V$, again ignoring the terms involving at least one factor $\delta_{\Phi}^n$, we reduce this to 
\begin{align*}
 \Im \bigg( \Big( \sum_{b=1}^B \Phi^{nab} + \Phi^{naB} \Big) \overline{D_x \Big( \sum_{b=1}^B\Phi^{nab} + \Phi^{naB} \Big)} \bigg) - \sum_{b=1}^B \Box \mathcal{A}^{nab}.
\end{align*}
Then from the definition of the profiles $\Phi^{nab}$, we can write this for some large $B_2$ and $B \geq B_2$ as
\begin{align*}
&\sum_{b=1}^{B_2}\chi_{(I_b^n)^c} \Im \Big( \Phi^{nab} \overline{D_x \Phi^{nab}} \Big) \\
&+\Im \bigg( \Big( \sum_{b=1}^{B_2}\Phi^{nab} \Big) \overline{D_x \Big( \sum_{b=1}^{B_2}\Phi^{nab} \Big) } \bigg) - \sum_{b=1}^{B_2} \Im \Big( \Phi^{nab} \overline{D_x \Phi^{nab} } \Big) \\
& + \Im \bigg( \Big( \sum_{b=B_2}^B\Phi^{nab} \Big) \overline{D_x \Big(\sum_{b=1}^B\Phi^{nab} + \Phi^{naB} \Big)} \bigg) + \Im \bigg( \Phi^{naB} \overline{D_x \Big( \sum_{b=B_2}^B \Phi^{nab} \Big) } \bigg) \\
& - \sum_{b=B_2}^B \chi_{I_b^n} \Im \Big( \Phi^{nab} \overline{D_x \Phi^{nab}} \Big)  \\
&+ \Im \Big( \Phi^{naB} \overline{D_x \Phi^{naB}} \Big) \\
&\equiv (V)_1 + (V)_2 + (V)_3 - (V)_4 + (V)_5. 
\end{align*}
Then given a $\delta_5>0$ arbitrarily small, we first pick $B_2$ sufficiently large such that for all sufficiently large $n$ we have 
\[
 \big\|(V)_3\big\|_{N} + \big\|(V)_4\big\|_{N} \ll \delta_5,
\]
using Lemma~\ref{lem:tail estimate}. Then one picks $n$ large enough such that 
\[
\big\|(V)_2\big\|_{N} \ll \delta_5. 
\]
Further, with $B_2$ fixed, pick $B\geq B_2$ sufficiently large such that 
\[
\big\|(V)_5\big\|_{N} \ll \delta_5. 
\]
Finally, with $B_2$ fixed, we choose $M = |I_b^n|$ large enough (depending on the profiles $\Phi^{nab}, b = 1,\ldots, B_2$, where these of course depend on the $n$-independent $\phi^{ab}[0]$), such that 
\[
\big\|(V)_1\big\|_{N} \ll \delta_5. 
\]
This is then the $M$ that needs to be used in the analysis of the $\delta_{\Phi}^n$ equation in the ``additional assumption'' there. 
\end{proof}

\subsection{Conclusion of the induction on frequency process} \label{subsec:conclusion}

In the preceding subsection we obtained global $S^1$ norm bounds for the MKG-CG evolution of the data
\[
 \bigl( A^{n1}_{\Lambda_0} + A^{n1}, \phi^{n1}_{\Lambda_0} + \phi^{n1} \bigr)[0]
\]
under the assumption that $\big( A^{n1}, \phi^{n1} \big)[0]$ has at least two non-zero concentration profiles, or all such profiles are zero, or else there exists only one such profile $(\tilde{A}^{n1b}, \tilde{\phi}^{n1b})$ but with 
\[
 \liminf_{n\to\infty} E (\tilde{A}^{n1b}, \tilde{\phi}^{n1b})(0) < E_{crit}.
\]
We now make this assumption and continue the process by considering the data
\begin{equation} \label{equ:adding_second_non_atomic_component_data}
 \bigl( A^{n1}_{\Lambda_0} + A^{n1} + A^{n2}_{\Lambda_0}, \phi^{n1}_{\Lambda_0} + \phi^{n1} + \phi^{n2}_{\Lambda_0} \bigr)[0]
\end{equation}
at time $t=0$. Proceeding almost identically to Subsection~\ref{subsec:evolving_non_atomic}, we prove that the MKG-CG evolution of this data exists globally and satisfies a priori $S^1$ norm bounds. These bounds depend on $E_{crit}$ and the a priori bounds on the evolution of the data $\bigl( A^{n1}_{\Lambda_0} + A^{n1}, \phi^{n1}_{\Lambda_0} + \phi^{n1} \bigr)[0]$. The only difference here is that in the decompositions (see Subsection~\ref{subsec:evolving_non_atomic})
\begin{align*}
 A^{n2}_{\Lambda_0}[0] &= \sum_{j=1}^{\Lambda_1(\delta_1)} A^{n2(j)}_{\Lambda_0}[0] + A^{n2}_{\Lambda_0(\Lambda_1)}[0], \\
 \phi^{n2}_{\Lambda_0}[0] &= \sum_{j=1}^{\Lambda_1(\delta_1)} \phi^{n2(j)}_{\Lambda_0}[0] + \phi^{n2}_{\Lambda_0(\Lambda_1)}[0],
\end{align*}
we now have to make sure that 
\[
 \bigl\| A^{n2}_{\Lambda_0(\Lambda_1)}[0] \bigr\|_{\dot{B}^1_{2,\infty} \times \dot{B}^0_{2,\infty}} + \bigl\| \phi^{n2}_{\Lambda_0(\Lambda_1)}[0] \bigr\|_{\dot{B}^1_{2,\infty} \times \dot{B}^0_{2,\infty}} 
\]
is small enough depending both on $E_{crit}$ and the a priori bounds for the MKG-CG evolution of the data $\bigl( A^{n1}_{\Lambda_0} + A^{n1}, \phi^{n1}_{\Lambda_0} + \phi^{n1} \bigr)[0]$. Then we continue by adding the second frequency atom $\bigl( A^{n2}, \phi^{n2} \bigr)[0]$ to the data at time $t=0$ and by repeating the procedure in Subsection~\ref{subsec:adding_first_atom}, but now using the ``covariant'' wave operator
\[
 \widetilde{\Box}_{A^{n2}} := \Box + 2i \bigl( A^{n1}_{\Lambda_0, \nu} + A^{n1}_\nu + A^{n2}_{\Lambda_0,\nu} + A^{n2, free}_\nu \bigr) \partial^\nu,
\]
where $A^{n1}_{\Lambda_0, \nu} + A^{n1}_\nu + A^{n2}_{\Lambda_0,\nu}$ is given by the global MKG-CG evolution of the data \eqref{equ:adding_second_non_atomic_component_data}.

\medskip

All in all we may carry out this process $\Lambda_0$ many times in order to finally conclude that if either there are at least two frequency atoms, or else there is only one frequency atom but with 
\[
 \liminf_{n\rightarrow\infty} E(A^{n1}, \phi^{n1})< E_{crit},
\]
or if we do have 
\[
 \lim_{n\rightarrow\infty}E(A^{n1}, \phi^{n1}) = E_{crit},
\]
but such that there are at least two concentration profiles, or finally if there is only one frequency atom of asymptotic energy $E_{crit}$ and only one concentration profile $(\tilde{A}^{n1b}, \tilde{\phi}^{n1b})$ with 
\[
\liminf_{n\rightarrow\infty}E(\tilde{A}^{n1b}, \tilde{\phi}^{n1b})(0)<E_{crit},
\]
then the sequence $(A^n, \phi^n)$ cannot possibly have been essentially singular, resulting in a contradiction to our assumption. We can then formulate the following 
\begin{cor} \label{cor:final}
Assume that $(A^n, \phi^n)$ is an essentially singular sequence. Then by re-scaling we may assume that the sequence of data $(A^n, \phi^n)[0]$ is $1$-oscillatory, and that there exist sequences 
\[
 \{(t_n, x_n)\}_{n \in \N} \subset \R \times \R^4
\]
and fixed profiles 
\[
 (A, \phi)[0] \in (\dot{H}^1_x \times L^2_x)^4 \times (\dot{H}^1_x \times L^2_x)
\]
with $A$ satisfying the Coulomb condition, such that we have for $j = 1, \ldots, 4$,
\[
 A^n_j[0] = S \big( A_j[0] \big)(\cdot - t_n, \cdot - x_n)[0] + o_{\dot{H}^1_x \times L^2_x}(1) \quad \text{ as } n \to \infty.
\]
Here, $S(\cdot)(t,x)$ denotes the standard free wave propagator. Furthermore, define for $j = 1, \ldots, 4$,
\[
 \tilde{A}_j(t,x) = S\big( A_j[0] \big)(t,x)
\]
and denote by $S_{\tilde{A}}\big( u[0] \big)(t,x)$ the solution to
\[
 \bigl( \Box + 2 i \tilde{A}_j \partial^j \bigr) u = 0
\]
with data $u[0] \in \dot{H}^1_x \times L^2_x$ at time $t=0$. Then we have
\[
 \phi^n[0] = S_{\tilde{A}}\big( \phi[0] \big)(\cdot - t_n, \cdot - x_n)[0] + o_{\dot{H}^1_x \times L^2_x}(1) \quad \text{ as } n \to \infty.
\]
\end{cor}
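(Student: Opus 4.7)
The plan is to package together all the pieces of the modified Bahouri--G\'erard procedure from Subsections~\ref{subsec:setting_up_induction_on_freq_scales}--\ref{subsec:adding_first_atom}: the essentially singular hypothesis must force the data to consist of exactly one frequency atom carrying exactly one nonlinear concentration profile of asymptotic energy $E_{crit}$.

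First, I would invoke Proposition~\ref{prop:selecting_frequency_atoms} to decompose $(A^n,\phi^n)[0]$ into at most $\Lambda_0$ ``large'' frequency atoms $(A^{na},\phi^{na})[0]$ plus a non-atomic error $(A^n_{\Lambda_0},\phi^n_{\Lambda_0})[0]$ whose Besov norm is controlled by a parameter $\delta$. The induction on frequency process carried out in Subsection~\ref{subsec:evolving_non_atomic} (to evolve the lowest-frequency non-atomic piece with a priori $S^1$-bounds via Proposition~\ref{prop:lowfreq}) combined with the atom-adding step in Subsection~\ref{subsec:adding_first_atom} (which gives global $S^1$-bounds after the addition of each atom provided the conclusion of Theorem~\ref{thm:nonlinear_profiles_S1_bound} applies) shows that if two or more of the atoms $(A^{na},\phi^{na})[0]$ are non-trivial, the MKG-CG evolution of $(A^n,\phi^n)[0]$ satisfies a uniform $S^1$-bound, contradicting the essential singularity. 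Therefore only one atom can persist, and after rescaling by $\lambda^{n1}$ we may assume $\lambda^{n1}=1$, so that the surviving atom $(A^{n1},\phi^{n1})[0]$ is $1$-oscillatory. A further application of the same induction argument, now letting $\delta\to 0$, forces the remaining non-atomic piece to converge to $0$ in $\dot H^1_x\times L^2_x$; otherwise one could treat it as an additional non-atomic component and re-derive the same $S^1$-bound.

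Next, I would extract concentration profiles from the $1$-oscillatory sequence $(A^{n1},\phi^{n1})[0]$ by means of Proposition~\ref{prop:A_concentration_profiles} for the $A$-field and Proposition~\ref{prop:phi_concentration_profiles} for the $\phi$-field, using the same space-time translation sequences $\{(t^{1b}_n,x^{1b}_n)\}_{n\geq 1}$, $b\geq 1$, obtained from the intertwined procedure discussed just above Proposition~\ref{prop:A_concentration_profiles}. If at least two concentration profiles were non-trivial, or if the unique profile had strictly subcritical asymptotic energy, Theorem~\ref{thm:nonlinear_profiles_S1_bound} would again yield global $S^1$-bounds on the evolution of $(A^{n1}_{\Lambda_0}+A^{n1},\phi^{n1}_{\Lambda_0}+\phi^{n1})[0]$, and iterating over the remaining (empty) atoms would contradict essential singularity. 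Hence there is precisely one non-trivial concentration profile $(A,\phi)[0]$ attached to a single sequence $(t_n,x_n):=(t^{11}_n,x^{11}_n)$, and the asymptotic energy partition from Lemma~\ref{lem:MKG_energy_almost_orthogonality_of_profiles}, together with $E(A^n,\phi^n)\to E_{crit}$ and the vanishing of the non-atomic error, forces this unique profile to carry asymptotically the full energy $E_{crit}$.

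It then remains to identify the form of the data-level decomposition. By construction in Proposition~\ref{prop:A_concentration_profiles}, the Coulomb-gauged $A$-profile is extracted via the free wave propagator, which directly yields $A^n_j[0]=S(A_j[0])(\cdot -t_n,\cdot -x_n)[0]+o_{\dot H^1_x\times L^2_x}(1)$ for $j=1,\dots,4$. For the $\phi$-field, Proposition~\ref{prop:phi_concentration_profiles} extracts the profile via the covariant evolution $\widetilde\Box_{A^{n1}}u=0$ with $A^{n1}_\nu=A^{n1}_{\Lambda_0,\nu}+A^{n1,free}_\nu$; by the vanishing of the non-atomic component established in the first step, the contribution of $A^{n1}_{\Lambda_0}$ to this operator disappears in the limit, while $A^{n1,free}_\nu(\,\cdot+t_n,\cdot+x_n)$ converges to $\tilde A_\nu=S(A_\nu[0])(\cdot,\cdot)$. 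The main technical point of this paragraph --- and the step I expect to be the genuine obstacle --- is showing that the replacement of $\widetilde\Box_{A^{n1}}$ by $\Box+2i\tilde A_j\partial^j$ is legitimate at the level of the initial data, i.e.\ that $S_{A^{n1}}(\phi^{11}[0])(\,-t_n,\,\cdot-x_n)=S_{\tilde A}(\phi[0])(\,-t_n,\,\cdot-x_n)+o_{\dot H^1_x\times L^2_x}(1)$. This is accomplished by a perturbative argument in the spirit of Step~3 of the proof of Proposition~\ref{prop:bootstrap}, exploiting the uniform dispersive bounds of Lemma~\ref{lem:uniform_dispersive_bounds} to handle the low-high frequency interactions on the non-trivial time interval $[-|t_n|-1,1]$ and Lemma~\ref{lem:asymptotic_energy_conservation} to transfer energy bounds along the covariant flow, setting $\phi[0]$ equal to the weak limit identified in Proposition~\ref{prop:phi_concentration_profiles}. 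Once this identification is in place, the announced representation of $\phi^n[0]$ follows, completing the proof.
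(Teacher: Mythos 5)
Your proposal is correct and follows essentially the same route as the paper: the corollary is stated there without a separate proof, as the summary of the induction on frequency scales (Proposition~\ref{prop:selecting_frequency_atoms}, Proposition~\ref{prop:lowfreq}) and the profile extraction/energy-induction step (Propositions~\ref{prop:phi_concentration_profiles}, \ref{prop:A_concentration_profiles}, Lemma~\ref{lem:MKG_energy_almost_orthogonality_of_profiles}, Theorem~\ref{thm:nonlinear_profiles_S1_bound}), exactly the chain you assemble. Your final paragraph, replacing $\widetilde{\Box}_{A^{n1}}$ by $\Box + 2i\tilde{A}_j\partial^j$ via a perturbative comparison (vanishing energy of the non-atomic potential plus convergence $A^{n1}_j[0]\to S(A_j[0])(\cdot-t_n,\cdot-x_n)[0]$, handled with estimates of the type \eqref{equ:low_high_N_estimate_for_free_wave} and the covariant $S^1$ bound \eqref{equ:covariant_wave_operator_S1_norm_bound}), is a reasonable filling-in of a step the paper leaves implicit and is consistent with its framework.
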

If the sequence $(t_n)_{n \in \N}$ admits a subsequence that is bounded, then by passing to this subsequence, we may as well replace $t_n$ by $t_n = 0$ for all $n$, and correspondingly obtain up to rescaling and spatial translations that
\[
 (A^n, \phi^n)[0] = (A, \phi)[0] + o_{\dot{H}^1_x \times L^2_x}(1).
\]
Then Proposition~\ref{prop:minenblowup} implies that the evolution $(\mathcal{A}^\infty, \Phi^\infty)$ of $(A, \phi)[0]$ is in fact a minimal energy blowup solution. In the case that $t_n \to + \infty$ (or $t_n \to - \infty$), we need to introduce the concept of a minimum regularity MKG-CG evolution associated with scattering data, or ``a solution at infinity''. Here we have the following 
\begin{prop} \label{prop:scatdata} 
 Let $(A, \phi)[0]$ be Coulomb energy class data and let $\{(t_n, x_n)\}_{n \in \N} \subset \R \times \R^4$ with $t_n \to +\infty$. We introduce the scattering data
 \begin{align} \label{eq:purescatter}
  \begin{aligned}   
   \mathcal{A}^n_j[0] &= S \big( A_j[0] \big)(\cdot - t_n, \cdot - x_n)[0], \quad j = 1, \ldots, 4, \\
   \Phi^n[0] &= S_{\tilde{A}} \big( \phi[0] \big)(\cdot - t_n, \cdot - x_n)[0],
  \end{aligned}
 \end{align}
where we use the notation $\tilde{A}_j(t,x) = S \big( A_j[0] \big)(t, x)$ for $j = 1, \ldots, 4$. Moreover, we denote by $(\mathcal{A}^n, \Phi^n)(t,x)$ the MKG-CG evolution (in the sense of Section~\ref{sec:concept_of_weak_evolution}) of the Coulomb data $(\mathcal{A}^n, \Phi^n)[0]$. Then there exists a sufficiently large $C \in \R_+$ such that there exists an energy class solution $\big(\mathcal{A}^\infty, \Phi^\infty\big)$ to MKG-CG on $(-\infty, -C) \times \R^4$, which is the limit of admissible solutions as in Section~\ref{sec:concept_of_weak_evolution} with 
\[
\big\|\big(\mathcal{A}^\infty, \Phi^\infty\big)\big\|_{S^1((-\infty, -C_0]\times \R^4)}<\infty \quad \forall C_0>C, 
\]
and such that for any $t\in (-\infty, -C)$ we have in the energy topology
\[
\lim_{n\rightarrow\infty} \big(\mathcal{A}^n, \Phi^n \big)(t+t_n, x+x_n) = \big(\mathcal{A}^\infty, \Phi^\infty\big)(t,x).
\]
In particular, the expressions on the left are well-defined (in the sense of Section~\ref{sec:concept_of_weak_evolution}) for $n$ sufficiently large. 
\end{prop}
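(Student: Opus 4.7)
The plan is to construct $(\mathcal{A}^\infty, \Phi^\infty)$ on $(-\infty, -C) \times \R^4$ as the ``wave operator'' applied to the scattering profile
\[
\bigl(\tilde{A}(t,x), \psi(t,x)\bigr) := \bigl(S(A[0])(t,x),\, S_{\tilde{A}}(\phi[0])(t,x)\bigr),
\]
and then verify that the translated evolutions $(\mathcal{A}^n, \Phi^n)(\cdot + t_n, \cdot + x_n)$ converge to it in the energy topology. By translation invariance, if we replaced the MKG-CG evolution by a pure free evolution on the $A$-side and a pure covariant evolution on the $\phi$-side, the translated quantities would equal $\tilde{A}(t,x)$ and $\psi(t,x)$ exactly; the task is therefore to treat the nonlinear MKG-CG corrections as a perturbation that becomes negligible deep in the past.

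First, I would show that $(\tilde{A},\psi)$ is a highly accurate approximate MKG-CG solution on $(-\infty, -C] \times \R^4$ for $C$ sufficiently large. Since $\tilde{A}_j$ is a free wave in Coulomb gauge with energy class data, and since $\psi$ solves $(\Box + 2i \tilde{A}_j \partial^j)\psi = 0$ with $\|\psi\|_{S^1(\R \times \R^4)} < \infty$ by Theorem~\ref{thm:linear_estimates_magnetic_wave_equation}, the uniform dispersive decay furnished by Lemma~\ref{lem:uniform_dispersive_bounds} (and classical scattering for the free wave equation) lets me choose $C$ so large that every relevant Strichartz, $L^p_t L^q_x$ and $X^{s,b}$ type ``divisible'' norm of $\tilde{A}$ and $\psi$ on $(-\infty, -C]$ is arbitrarily small. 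Invoking the same divisibility arguments as in the proof of Lemma~\ref{lem:nonlinear_structure_small_short_interval} and Proposition~\ref{prop:decomposition}, the residuals
\[
\Box \tilde{A}_j + \mathcal{P}_j \Im\bigl(\psi\, \overline{(\nabla_x + i\tilde{A}_x)\psi}\bigr), \qquad \bigl[\Box + 2i \tilde{A}_\nu \partial^\nu + \tilde{A}_\alpha \tilde{A}^\alpha - i(\partial_t \tilde{A}_0)\bigr]\psi - (\Box + 2i\tilde{A}_j\partial^j)\psi,
\]
where $\tilde{A}_0$ is determined from $\psi$ via the elliptic compatibility equation, are small in $(N \cap \ell^1 L^2_t \dot{H}^{-1/2}_x)((-\infty, -C] \times \R^4)$.

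Next, I would build $(\mathcal{A}^\infty, \Phi^\infty)$ by a truncation-and-limit procedure. For each large $T > C$, let $(\mathcal{A}^{(T)}, \Phi^{(T)})$ be the energy class MKG-CG evolution of Definition~\ref{defn:energy_class_solution} with Cauchy data $(\tilde{A},\psi)[-T]$ at time $-T$. Writing $(\delta A^{(T)}, \delta \phi^{(T)}) := (\mathcal{A}^{(T)} - \tilde{A},\,\Phi^{(T)} - \psi)$, this perturbation has zero Cauchy data at $-T$ and is driven by the small defect above. The core perturbative analysis of Step~3 of the proof of Proposition~\ref{prop:bootstrap}, applied with the small threshold afforded by the divisibility, yields $\|(\delta A^{(T)}, \delta \phi^{(T)})\|_{S^1([-T, -C] \times \R^4)} \ll 1$ uniformly in $T$. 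A Cauchy argument of the same perturbative form across two levels $T, T'$, combined with Proposition~\ref{prop:perturbation}, gives convergence in $S^1$ on each compact subinterval of $(-\infty, -C)$ as $T \to \infty$, and a diagonal procedure realizing each $(\mathcal{A}^{(T)}, \Phi^{(T)})$ as a limit of admissible solutions produces the limit $(\mathcal{A}^\infty, \Phi^\infty)$ as a genuine energy class MKG-CG solution, with $\|(\mathcal{A}^\infty, \Phi^\infty)\|_{S^1((-\infty, -C_0] \times \R^4)} < \infty$ for every $C_0 > C$.

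Finally, for the convergence of $(\mathcal{A}^n, \Phi^n)(\cdot + t_n, \cdot + x_n)$ to $(\mathcal{A}^\infty, \Phi^\infty)$, I would observe that, after translation, the $n$-th evolution is the MKG-CG propagation of the perturbation $\bigl(\mathcal{A}^n - \tilde{A}(\cdot - t_n, \cdot - x_n),\, \Phi^n - \psi(\cdot - t_n, \cdot - x_n)\bigr)$, which has vanishing Cauchy data at $\tau = 0$ (i.e.\ at translated time $-t_n$) and is driven by the same small MKG-CG defect restricted to $[-t_n, t]$. The identical perturbative argument, exploiting the translation invariance of MKG-CG and the divisibility smallness on $(-\infty, -C]$, yields
\[
\bigl\|(\mathcal{A}^n - \tilde{A},\,\Phi^n - \psi)(\cdot + t_n, \cdot + x_n)\bigr\|_{S^1([-C_0, -C] \times \R^4)} \longrightarrow 0 \quad \text{as } n \to \infty
\]
for any $C_0 > C$. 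Comparing this with the $T \to \infty$ construction of $(\mathcal{A}^\infty, \Phi^\infty)$ in the previous step (which also furnishes $\|(\mathcal{A}^\infty - \tilde{A}, \Phi^\infty - \psi)\|_{S^1([-C_0, -C]\times\R^4)} \ll 1$), Proposition~\ref{prop:perturbation} glues the two conclusions and yields the claimed energy topology convergence at every $t \in (-\infty, -C)$. The main obstacle is running the perturbation theory over a time interval extending to $-\infty$; this is precisely why the proposition only asserts existence on $(-\infty, -C)$, since only on such a tail interval are the divisible norms of the scattering profile uniformly small enough to drive the argument.
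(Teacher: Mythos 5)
Your overall route is essentially the paper's: decompose the solution as the translated linear/covariant profile $(\tilde A,\psi)$ plus a perturbation with vanishing data in the far past, prove uniform smallness of that perturbation on $(-\infty,t_n-C]$ by a bootstrap in which the non-perturbative source terms (the residual of the profile) are made small via the dispersive decay of Lemma~\ref{lem:uniform_dispersive_bounds} and the remaining terms are handled by divisibility as in Step~3 of Proposition~\ref{prop:bootstrap}, and then pass to the limit by a Cauchy-type argument. This matches the paper, which writes $\mathcal{A}^n=\mathcal{A}^{1n}+\delta\mathcal{A}^n$, $\Phi^n=\Phi^{1n}+\delta\Phi^n$, proves $\|\delta\mathcal{A}^n\|_{\ell^1 S^1}+\|\delta\Phi^n\|_{S^1}<\delta$ on $(-\infty,t_n-C]$, and then verifies the Cauchy property of the data $\mathcal{A}^{n'}[t_{n'}-t_n]$ versus $\mathcal{A}^n[0]$ in $\ell^1\dot H^1_x\times\ell^1L^2_x$ (the $\ell^1$ control of the $A$-part is what makes the perturbation theory applicable despite the low-frequency divergence).

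There is, however, a concrete error in your final step: the claim that $\bigl\|(\mathcal{A}^n-\tilde A,\Phi^n-\psi)(\cdot+t_n,\cdot+x_n)\bigr\|_{S^1([-C_0,-C]\times\R^4)}\to 0$ as $n\to\infty$ is false. The perturbation with zero data at translated time $-t_n$ is driven by the profile's MKG-CG residual over the whole of $[-t_n,-C]$, and that residual is only uniformly small (of size $\delta(C)$), not vanishing; accordingly the limit $(\mathcal{A}^\infty,\Phi^\infty)$ differs from $(\tilde A,\psi)$ by a genuinely nonzero correction. With only ``$O(\delta)$-close to the profile'' on both sides, your gluing yields approximation of the translated solutions by the limit up to $O(\delta)$, not the asserted convergence in the energy topology at each fixed $t<-C$. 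The fix is already contained in your second step: by translation invariance the translated $n$-th evolution \emph{is} your $(\mathcal{A}^{(T)},\Phi^{(T)})$ with $T=t_n$ (it is the MKG-CG evolution of the data $(\tilde A,\psi)[-t_n]$ posed at time $-t_n$), so the Cauchy-in-$T$ argument directly gives the convergence claim; this is exactly how the paper concludes, via the Cauchy property of the matched-time data together with standard perturbation theory. A minor related point: Proposition~\ref{prop:perturbation} concerns high-frequency perturbations of frequency-truncated data and is not quite the right gluing tool here; what one needs (and what the paper propagates) is the $\ell^1 S^1$-smallness of the $A$-perturbation so that the core perturbative result of Proposition~\ref{prop:bootstrap}, Step~3, applies.
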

\begin{proof}
This is a perturbative argument, which exploits the dispersive behaviour as evidenced by amplitude decay of the functions $\mathcal{A}^n[0]$ and $\Phi^n[0]$. We write 
\begin{align*}
 \mathcal{A}^n(t, x) &= \mathcal{A}^{1n}(t,x) + \delta \mathcal{A}^n(t,x), \\ 
 \Phi^n(t,x) &= \Phi^{1n}(t,x) + \delta\Phi^n(t,x),
\end{align*}
where we use the notation
\begin{align*}
 \mathcal{A}^{1n}_j(t,x) &= S \big( A_j[0] \big)(t-t_n, x-x_n), \quad j = 1, \ldots, 4, \\
 \Phi^{1n}(t,x) &= S_{\tilde{A}} \big( \phi[0] \big)(t-t_n, x-x_n).
\end{align*}
Also, keep in mind that $(\mathcal{A}^n, \Phi^n)(t,x)$ denotes the MKG-CG evolution (in the sense of Section~\ref{sec:concept_of_weak_evolution}) of the data $(\mathcal{A}^n, \Phi^n)[0]$. Then we show that $(\delta\mathcal{A}^n, \delta\Phi^n)$ satisfy good $S^1$-bounds on $(-\infty, t_n - C)\times \R^4$ for some $C>0$ sufficiently large, and all $n$ large enough. This means that the evolutions $(\mathcal{A}^n, \Phi^n)$ are well-defined on $ (-\infty, t_n - C)\times \R^4$. Furthermore, assuming as we may that $t_n$ is monotonously increasing, we will show that for $n'>n$, we have 
 \[
 \lim_{\substack{n, n'\rightarrow\infty\\n'>n}}\big\|\mathcal{A}^{n'}[t_{n'} - t_n] - \mathcal{A}^n[0]\big\|_{\ell^1\dot{H}^1_x \times \ell^1 L^2_x} +  \lim_{\substack{n, n'\rightarrow\infty\\n'>n}}\big\|\Phi^{n'}[t_{n'} - t_n] -  \Phi^n[0]\big\|_{\dot{H}^1_x \times L^2_x} = 0, 
 \]
 which together with standard perturbation theory then results in the fact that 
 \[
 \lim_{n\rightarrow\infty}\big(\mathcal{A}^n, \Phi^n\big)(t+t_n, x+x_n) = \big(\mathcal{A}^\infty, \Phi^\infty\big)(t,x),
 \]
provided $t\in (-\infty, -C)$, and the right hand side is a solution to MKG-CG in the sense of Section~\ref{sec:concept_of_weak_evolution}. 

\medskip

To get the desired bounds on $(\delta\mathcal{A}^n, \delta\Phi^n)$, we record the schematic system of equations that they satisfy 
\begin{align} 
 0 &= \Box_{\mathcal{A}^{1n}}\Phi^{1n} + \big(\Box_{\mathcal{A}^{1n} + \delta\mathcal{A}^n} - \Box_{\mathcal{A}^{1n}}\big)\Phi^{1n} + \Box_{\mathcal{A}^{1n} + \delta\mathcal{A}^n}\delta\Phi^n, \label{eq:deltaphiagain} \\
 \Box(\delta\mathcal{A}^n_{j}) &= \mathcal{P}_j\big(\Phi^{1n}\overline{\mathcal{D}_x\Phi^{1n}}\big) +  \mathcal{P}_j\big(\delta\Phi^{n}\overline{\mathcal{D}_x\Phi^{1n}}\big) + \mathcal{P}_j\big(\Phi^{n}\overline{\mathcal{D}_x\delta\Phi^{n}}\big) + \mathcal{P}_j\big(\delta\Phi^{n}\overline{\mathcal{D}_x\delta\Phi^{n}}\big). \label{eq:deltaAagain} 
\end{align}
We then show that given $\delta>0$, there exists $C = C(\delta,  A[0], \phi[0])$ such that we have 
\[
\big\|\delta\mathcal{A}^n\big\|_{\ell^1 S^1((-\infty, t_n - C]\times \R^4)} + \big\|\delta\Phi^n\big\|_{S^1((-\infty, t_n - C]\times \R^4)}<\delta.
\]
This follows as usual via a bootstrap argument. We show here how to obtain smallness of the non-perturbative source terms on the right hand side, i.e. the terms 
\[
 \Box_{\mathcal{A}^{1n}}\Phi^{1n}, \quad \mathcal{P}_i\big(\Phi^{1n}\overline{\mathcal{D}_x\Phi^{1n}}\big),
\]
while the remaining terms are handled either via the smallness of $\delta$ (provided they are quadratic in $\delta\mathcal{A}^n, \delta\Phi^n$), or else via a standard divisibility argument, just as in the proof of Proposition~\ref{prop:bootstrap}. 
Now the first term on the right is in effect equal to 
\[
 \mathcal{A}^{1n}_{\nu}\mathcal{A}^{1n, \nu}\Phi^{1n}.
\]
To treat it, we note that we may reduce all inputs as well as the output to frequency $\sim 1$, since else we gain smallness for the $L_t^1 L_x^2$-norm of the output by using standard Strichartz norms. Then we estimate the remainder by 
\begin{align*}
&\big\|P_{O(1)}\mathcal{A}^{1n}_{\nu}P_{O(1)}\mathcal{A}^{1n, \nu} P_{O(1)}\Phi^{1n}\big\|_{L_t^1 L_x^2((-\infty, t_n-C]\times \R^4)}\\
&\lesssim \big\|P_{O(1)}\mathcal{A}^{1n}_{\nu}\big\|_{L_t^{\frac{10}{3}} L_x^{\frac{10}{3}}((-\infty, t_n-C]\times \R^4)} \big\|P_{O(1)}\mathcal{A}^{1n, \nu}\big\|_{L_t^{5} L^5_x((-\infty, t_n-C]\times \R^4)} \big\| P_{O(1)}\Phi^{1n}\big\|_{L_t^2 L_x^\infty((-\infty, t_n-C]\times \R^4)}.
\end{align*}
Then by exploiting the $L^\infty_x$ decay and interpolation, for example, we get 
\[
\big\|P_{O(1)}\mathcal{A}^{1n, \nu}\big\|_{L^5_t L^5_x((-\infty, t_n-C]\times \R^4)}\ll \delta
\]
for $C$ sufficiently large, uniformly in $n$, and this suffices to get the necessary smallness on account of the fact that uniformly in $n$, 
\begin{align*}
&\big\|P_{O(1)}\mathcal{A}^{1n}_{\nu}\big\|_{L_t^{\frac{10}{3}}L_x^{\frac{10}{3}}((-\infty, t_n-C]\times \R^4)}+\big\| P_{O(1)}\Phi^{1n}\big\|_{L_t^2 L_x^\infty((-\infty, t_n-C]\times \R^4)} \lesssim \big\|(A[0], \phi[0])\big\|_{\dot{H}^1_x \times L^2_x}.
\end{align*}
As for the quadratic term 
\[
\mathcal{P}_j \big(\Phi^{1n}\overline{\mathcal{D}_x\Phi^{1n}}\big),
\]
its inherent null structure allows to reduce to the case of frequencies $\sim 1$ and inputs with an angular separation between their Fourier supports so that the output is at modulation $\sim 1$.  In that situation we have 
\[
\big\|\mathcal{P}_j\big(\Phi^{1n}\overline{\mathcal{D}_x\Phi^{1n}}\big)\big\|_{N((-\infty, t_n-C]\times \R^4)}\lesssim \big\|\mathcal{P}_j \big(\Phi^{1n}\overline{\mathcal{D}_x\Phi^{1n}}\big)\big\|_{L^2_t L^2_x((-\infty, t_n-C]\times \R^4)},
\]
which can be estimated by placing one input into $L_t^{\frac{10}{3}}L_x^{\frac{10}{3}}((-\infty, t_n-C]\times \R^4)$ and the other one into $L^5_t L^5_x((-\infty, t_n-C]\times \R^4)$. The latter norm is small uniformly in $n$ for $C$ sufficiently large on account of (a variant of) Lemma~\ref{lem:uniform_dispersive_bounds}. 
\end{proof}

\begin{rem} \label{rem:prop7.15}
The preceding proof implies in particular that if $(A, \phi)[0]$ is Coulomb energy class data, then there exists $t_0>0$ sufficiently large such that the initial data 
\[
 \Big( S \big( A[0] \big)(\cdot - t_0, \cdot), S_{\tilde{A}}\big( \phi[0] \big)(\cdot - t_0, \cdot) \Big)[0],
\]
where for $j = 1, \ldots, 4$,
\[
 \tilde{A}_j(t, x) = S \big( A_j[0] \big)(t,x),
\]
can be evolved in the sense of Section~\ref{sec:concept_of_weak_evolution} on $(-\infty, 0]\times \R^4$ and satisfies a global $S^1$-bound there. This is the analogue of Proposition 7.15 in \cite{KS}. 
\end{rem}

Extracting a minimal blowup solution in the case of one temporally unbounded profile is still not a direct consequence of the preceding proposition on account of the somewhat delicate perturbation theory, but follows by a slightly indirect argument. Here we state
\begin{prop}\label{prop:scatblowupsol} 
 Assume that the essentially singular sequence $(A^n, \phi^n)$ satisfies 
 \begin{align*}
  A^n[0] &= S \big( A[0] \big)(\cdot - t_n, \cdot - x_n)[0] + o_{\dot{H}^1_x \times L^2_x}(1), \\
  \phi^n[0] &= S_{\tilde{A}}\big( \phi[0] \big)(\cdot - t_n, \cdot - x_n)[0] + o_{\dot{H}^1_x \times L^2_x}(1),
 \end{align*}
 where $t_n\rightarrow +\infty$, say, and we use the same notation as in the preceding Corollary~\ref{cor:final}. Then denoting the corresponding MKG-CG evolution of these data by $(A^n, \phi^n)(t,x)$, its lifespan comprises $(-\infty, t_n-C)$ for $C$ sufficiently large, uniformly in $n$. Also, the sequence 
 \[
  \big\{(A^n, \phi^n)[t_n - 2C] \big\}_{n \in \N}
 \]
 forms a pre-compact set in the energy topology. Denoting a limit point (any such satisfies the Coulomb condition) by $(\mathcal{A}^\infty, \Phi^\infty)[0]$, we have $E(\mathcal{A}^\infty, \Phi^\infty) = E_{crit}$, and moreover, denoting the lifespan of its MKG-CG evolution by $I$, we get 
 \[
  \sup_{J\subset I}\big\|(\mathcal{A}^\infty, \Phi^\infty)\big\|_{S^1(J\times \R^4)} = \infty.
 \]
\end{prop}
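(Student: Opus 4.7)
The plan proceeds in three movements, building a ``minimum regularity solution at infinity'' and then shifting time. First, I would apply Proposition~\ref{prop:scatdata} to the Coulomb energy class profile $(A,\phi)[0]$ and the sequence $\{(t_n,x_n)\}$ with $t_n\to+\infty$. This produces a constant $C>0$ and an energy class MKG-CG solution $(\mathcal{A}^\infty_{-\infty},\Phi^\infty_{-\infty})$ on $(-\infty,-C)\times\R^4$, finite in $S^1$ on every $(-\infty,-C_0]\times\R^4$ with $C_0>C$, such that the MKG-CG evolutions $(\mathcal{A}^n_{\mathrm{sc}},\Phi^n_{\mathrm{sc}})$ of the pure scattering data \eqref{eq:purescatter} satisfy, at each fixed $t<-C$,
\[
 (\mathcal{A}^n_{\mathrm{sc}},\Phi^n_{\mathrm{sc}})(t+t_n,\cdot+x_n)\longrightarrow (\mathcal{A}^\infty_{-\infty},\Phi^\infty_{-\infty})(t,\cdot)
\]
in the energy topology, and in particular satisfy a uniform-in-$n$ $S^1$ norm bound on $(-\infty,t_n-C_0]\times\R^4$.

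Second, I would transfer this information from the pure scattering data to the actual data $(A^n,\phi^n)[0]$. Since $(A^n,\phi^n)[0]-(\mathcal{A}^n_{\mathrm{sc}},\Phi^n_{\mathrm{sc}})[0]=o_{\dot H^1_x\times L^2_x}(1)$, I would combine the high-frequency perturbation Proposition~\ref{prop:perturbation} with the low-frequency truncation mechanism of Definition~\ref{defn:energy_class_solution}, and run the divisibility-based perturbative argument of Step~3 in the proof of Proposition~\ref{prop:bootstrap} on a partition of the interval $[t_n-C_0,0]$ whose cardinality is controlled by the uniform $S^1$ bound just obtained. This yields that the lifespan of $(A^n,\phi^n)$ contains $(-\infty,t_n-C)$ for all sufficiently large $n$, with uniform $S^1$ norm bound on $(-\infty,t_n-C_0]\times\R^4$, and that the analogous convergence
\[
 (A^n,\phi^n)(t+t_n,\cdot+x_n)\longrightarrow (\mathcal{A}^\infty_{-\infty},\Phi^\infty_{-\infty})(t,\cdot)
\]
holds in the energy topology for each $t\in(-\infty,-C)$. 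Specializing to $t=-2C$ yields the lifespan claim and pre-compactness of $\{(A^n,\phi^n)[t_n-2C]\}_n$ in the energy topology (the spatial shift by $x_n$ being absorbed into the identification of the limit). Any limit point $(\mathcal{A}^\infty,\Phi^\infty)[0]$ then equals $(\mathcal{A}^\infty_{-\infty},\Phi^\infty_{-\infty})[-2C]$ and inherits the Coulomb condition pointwise as a limit of Coulomb data.

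Third, I would verify the remaining two properties. Energy conservation for each admissible approximation used to define the MKG-CG evolutions in the sense of Section~\ref{sec:concept_of_weak_evolution}, combined with the hypothesis $E(A^n,\phi^n)\to E_{crit}$, yields $E(\mathcal{A}^\infty,\Phi^\infty)=E_{crit}$. For the singular character of $(\mathcal{A}^\infty,\Phi^\infty)$, I would then apply Proposition~\ref{prop:minenblowup} to the space-time translated sequence $\{(A^n,\phi^n)(\cdot+(t_n-2C),\cdot+x_n)\}_n$: this is still an essentially singular sequence (the $S^1$ norm over $\R\times\R^4$ is translation invariant), and its time-zero data converge in the energy topology to $(\mathcal{A}^\infty,\Phi^\infty)[0]$ by construction. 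The conclusion of Proposition~\ref{prop:minenblowup} then reads precisely as $\sup_{J\subset I}\|(\mathcal{A}^\infty,\Phi^\infty)\|_{S^1(J\times\R^4)}=\infty$.

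The main obstacle is the second step. The MKG-CG perturbation theory admits only frequency-localized high-frequency perturbations (Proposition~\ref{prop:perturbation}), not arbitrary energy class perturbations, so the $o(1)$ error between $(A^n,\phi^n)[0]$ and the pure scattering data has to be handled carefully: one splits it into a low-frequency part (absorbed through the approximation scheme of Definition~\ref{defn:energy_class_solution}) and a high-frequency part (handled by Proposition~\ref{prop:perturbation}), and then runs a divisibility bootstrap analogous to the one in Step~3 of Proposition~\ref{prop:bootstrap}. Ensuring that the number of divisibility subintervals does not blow up as $t_n\to+\infty$ is precisely what the uniform $S^1$ bound on the scattering data evolution from the first step (itself a consequence of the dispersive decay in Lemma~\ref{lem:uniform_dispersive_bounds}) buys us, and this is the technical heart of the argument.
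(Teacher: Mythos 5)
Your overall skeleton (solution at infinity, uniform bounds on $(-\infty,t_n-C)$, extraction of a limit, then Proposition~\ref{prop:minenblowup}) matches the paper, but the two mechanisms you propose at the technical heart do not close, and the paper does both steps differently. First, your second step treats the full $o_{\dot{H}^1_x\times L^2_x}(1)$ data error---including its low-frequency component in the magnetic potential---as a perturbation off the pure scattering evolution, to be absorbed by Proposition~\ref{prop:perturbation} together with the scheme of Definition~\ref{defn:energy_class_solution}. Neither tool does this job: Proposition~\ref{prop:perturbation} requires the background data to be frequency localized at $|\xi|\leq K$ and the perturbation supported at $|\xi|>K$, and Definition~\ref{defn:energy_class_solution} merely defines the evolution of a fixed datum through its own low-frequency truncations; it is not a statement comparing evolutions of nearby data. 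The obstruction is exactly the low-frequency divergence the paper keeps emphasizing: by \eqref{equ:low_high_N_estimate_for_free_wave}, the non-perturbative interaction $\sum_k P_{<k-C}(\delta A^{free})_j P_k\partial^j\phi$ costs $\|\delta A[0]\|_{\ell^1(\dot{H}^1_x\times L^2_x)}$, which is not controlled by the $o(1)$ energy of the error. The paper's proof sidesteps this structurally: it takes $A^{1n}$ to be the free evolution of the \emph{actual} data $A^n[0]$, so that $\delta A^n[0]=0$ identically and $\delta A^n$ is purely nonlinear (hence $\ell^1 S^1$-perturbative), while the $o(1)$ mismatch sits only in $\delta\phi^n[0]$, which is harmless; the bootstrap then runs exactly as in Proposition~\ref{prop:scatdata}, with smallness coming from the dispersive decay of Lemma~\ref{lem:uniform_dispersive_bounds} and divisibility, once $C$ is large.

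Second, your route to pre-compactness overreaches what these estimates give. Once $C$ is fixed, the source terms on $(-\infty,t_n-C)$ (e.g. $\mathcal{A}^{1n}_\nu\mathcal{A}^{1n,\nu}\Phi^{1n}$ and the null-form terms) are only $\ll 1$ uniformly in $n$, with smallness depending on $C$, not $o_n(1)$; so the bootstrap yields $\|(\delta A^n,\delta\phi^n)\|_{\ell^1S^1\times S^1}\ll 1$ but no convergence of $(A^n,\phi^n)(t+t_n,\cdot+x_n)$ to the scattering solution, and $\{(A^n,\phi^n)[t_n-2C]\}_n$ is only within a fixed $O(\varepsilon(C))$ of a convergent sequence, which is not pre-compactness; in particular your identification of every limit point with $(\mathcal{A}^\infty_{-\infty},\Phi^\infty_{-\infty})[-2C]$ is unjustified. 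The paper instead obtains compactness from the Bahouri--G\'erard machinery: since $(A^n,\phi^n)$ is essentially singular, the results culminating in Corollary~\ref{cor:final}, applied at time $t_n-2C$, force the data there to be concentrated at frequency $\sim 1$ with exactly one concentration profile, which must be temporally bounded; pre-compactness in the energy topology is then immediate, and the final blowup statement follows from Proposition~\ref{prop:minenblowup}, as in your third step, which (together with the energy identification) is fine once compactness is in hand.
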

\begin{proof} 
The fact that the evolution of $(A^n, \phi^n)(t,x)$ is defined and has finite $S^1$-bounds on $(-\infty, t_n-C)$ follows by exactly the same method as in the proof of the preceding proposition. We set
\begin{align*}
 A^n(t,x) &= A^{1n}(t,x) + \delta A^n(t,x), \\
 \phi^n(t,x) &= S_{\tilde{A}} \big( \phi[0] \big)(t-t_n, x-x_n) + \delta\phi^n(t,x),
\end{align*}
where we let $A^{1n}$ be the free wave evolution of $A^n[0]$, i.e. for $j = 1, \ldots, 4$,
\[
 A^{1n}_j(t,x) =  S \big( A_j[0] \big)(t-t_n, x-x_n) + o_{\dot{H}^1_x \times L^2_x}(1),
\]
and $\tilde{A}_j(t,x) = S \big( A_j[0] \big)(t,x)$ for $j = 1, \ldots, 4$. Also, note that $\delta A^n[0] = 0$. Then choosing $C$ large enough, we infer the bounds 
\[
\big\|(\delta A^n, \delta\phi^n)\big\|_{(\ell^1S^1 \times S^1)(-\infty, t_n - C)\times\R^4} \ll 1
\]
via bootstrap. Since $(A^n, \phi^n)$ is essentially singular, we know by the preceding results that the data 
\[
 (A^n, \phi^n)[t_n - 2C]
\]
are concentrated at fixed frequency $\sim 1$ and consist of exactly one concentration profile, which is necessarily temporally bounded. But this implies that the sequence 
\[
 \big\{(A^n, \phi^n)[t_n - 2C]\big\}_{n\geq 1}
\]
is pre-compact in the energy topology. Extracting a limiting profile $(\mathcal{A}^\infty, \Phi^\infty)[0]$, the last statement of the proposition follows directly from Proposition~\ref{prop:minenblowup}. 
\end{proof}

To conclude this section, we finally state the following crucial {\bf compactness property} of the minimal blowup solution $(\mathcal{A}^\infty, \Phi^\infty)$ extracted in the preceding.

\begin{thm} \label{thm:compact_orbit} 
Denote the lifespan of $(\mathcal{A}^\infty, \Phi^\infty)$ by $I$. There exist continuous functions $\bar{x}: I \rightarrow \R^4$, $\lambda: I\rightarrow \R_+$, so that each of the family of functions 
  \[
   \Bigg\{ \bigg( \frac{1}{\lambda(t)} {\mathcal A}^\infty_j \bigg(t, \frac{\cdot-\bar{x}(t)}{\lambda(t)} \bigg), \frac{1}{\lambda(t)^2} \partial_t {\mathcal A}^\infty_j \bigg(t, \frac{\cdot-\bar{x}(t)}{\lambda(t)} \bigg) \bigg) \, \colon \, t \in I \Bigg\}
  \]
  for $j = 1, \ldots, 4$ and 
  \[
   \Bigg\{ \bigg( \frac{1}{\lambda(t)} \Phi^\infty \bigg(t, \frac{\cdot-\bar{x}(t)}{\lambda(t)} \bigg), \frac{1}{\lambda(t)^2} \partial_t \Phi^\infty \bigg(t, \frac{\cdot-\bar{x}(t)}{\lambda(t)} \bigg) \bigg) \, \colon \, t \in I \Bigg\}
  \]
  is pre-compact in $\dot{H}^1_x(\R^4) \times L^2_x(\R^4)$. 
\end{thm}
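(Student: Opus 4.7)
The proof proceeds by contradiction, in the spirit of the Kenig--Merle rigidity method. Suppose the asserted pre-compactness fails. Then, combining the required statements for the four components of $\mathcal{A}^\infty$ and for $\Phi^\infty$ into a single $(\dot{H}^1_x \times L^2_x)^5$-valued claim, one can extract a sequence of times $\{t_n\} \subset I$ and some $\varepsilon > 0$ such that for every choice of scales $\lambda_n > 0$ and spatial shifts $\bar{x}_n \in \R^4$ the corresponding rescaled data admit no Cauchy subsequence in the energy topology. The key initial observation is that, since $(\mathcal{A}^\infty, \Phi^\infty)$ is the minimal blowup solution produced at the end of Section~\ref{sec:concentration_compactness_step}, for each $n$ the evolution of the data $(\mathcal{A}^\infty, \Phi^\infty)[t_n]$ still has infinite $S^1$ norm on some non-trivial sub-interval of the maximal lifespan, while the energy equals $E_{crit}$; hence $\{(\mathcal{A}^\infty, \Phi^\infty)[t_n]\}_{n \geq 1}$ is itself an essentially singular sequence of Coulomb energy class data, in the sense of Section~\ref{sec:how_to_arrive_at}.

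We then feed this essentially singular sequence back into the modified Bahouri--Gérard procedure developed in Subsections~\ref{subsec:general_considerations}--\ref{subsec:conclusion}. By the induction on frequency argument culminating in Corollary~\ref{cor:final}, the sequence must consist of exactly one frequency atom of asymptotic energy $E_{crit}$ and exactly one concentration profile. Thus, after rescaling by the unique frequency scale $\lambda_n$ of this atom and extracting a subsequence, there exist spatial shifts $x_n \in \R^4$, time shifts $s_n \in \R$, and a fixed Coulomb energy class datum $(A, \phi)[0]$ of energy $E_{crit}$ so that, simultaneously, all five components $\mathcal{A}^\infty_j$ and $\Phi^\infty$ at time $t_n$ are approximated by suitably translated and rescaled copies of $(A, \phi)[0]$ (or of its linearly-evolved image in the temporally unbounded case). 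The fact that the \emph{same} sequences $(s_n, x_n)$ can be used for the magnetic potential components and for the $\Phi$ field is the crucial intertwining provided by Proposition~\ref{prop:phi_concentration_profiles} and Proposition~\ref{prop:A_concentration_profiles}.

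The temporally bounded case $s_n = 0$ yields strong convergence in the energy topology after rescaling, immediately contradicting the assumed failure of pre-compactness. The temporally unbounded case $s_n \to \pm\infty$ is the main obstacle: say $s_n \to +\infty$. Then Proposition~\ref{prop:scatdata}, Proposition~\ref{prop:scatblowupsol}, and Remark~\ref{rem:prop7.15} applied to $(A, \phi)[0]$ show that the associated ``solution at infinity'' is globally well-defined on $(-\infty, -C] \times \R^4$ for some $C>0$ and admits a finite $S^1$ norm bound there. Transporting this bound back through the rescaling by $\lambda_n$ and spatial translation by $x_n$, and then comparing with the actual MKG-CG evolution of $(\mathcal{A}^\infty, \Phi^\infty)[t_n]$ via the perturbation theory from Step~3 of the proof of Proposition~\ref{prop:bootstrap}, one obtains uniform finite $S^1$ norm control of $(\mathcal{A}^\infty, \Phi^\infty)$ on a non-trivial interval to the left of $t_n$ for all $n$ large. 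This is incompatible with the singularity of the minimal blowup solution, ruling out the temporally unbounded scenario. With pre-compactness modulo translations and dilations established, the existence of the continuous selection functions $\bar{x} \colon I \to \R^4$ and $\lambda \colon I \to (0,\infty)$ follows from a standard argument exploiting the continuity in time of $(\mathcal{A}^\infty, \Phi^\infty)$ with values in the energy space together with the pre-compactness just proven.
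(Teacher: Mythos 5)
Your overall skeleton is the intended one: the paper's own proof is a pointer to the argument for Corollary 9.36 in \cite{KS}, which runs exactly along your lines — view the data $(\mathcal{A}^\infty,\Phi^\infty)[t_n]$ (via admissible approximations, using Proposition~\ref{prop:minenblowup}) as an essentially singular sequence, invoke the outcome of the induction on frequency/energy (Corollary~\ref{cor:final}: one frequency atom, one concentration profile, energy $E_{crit}$, with the same shifts for $\mathcal{A}$ and $\Phi$), dispose of the temporally unbounded alternative using Remark~\ref{rem:prop7.15} together with the perturbation theory, and then select $\bar{x}(t),\lambda(t)$ continuously by a standard argument.

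However, the way you draw the contradiction in the temporally unbounded case has a genuine gap. You conclude ``uniform finite $S^1$ norm control of $(\mathcal{A}^\infty,\Phi^\infty)$ on a non-trivial interval to the left of $t_n$'' and declare this incompatible with singularity. It is not: a singular solution in the sense of Section~\ref{sec:concept_of_weak_evolution} only has $\sup_{J\subset I}\|\cdot\|_{S^1(J\times\R^4)}=\infty$, which is perfectly consistent with finite $S^1$ norm on many (even long) compact subintervals adjacent to any given $t_n$; the divergence may live at the other end of $I$ or only appear in the limit of exhausting intervals. To close the argument you must first reduce to the only non-trivial situation, namely $t_n$ tending to an endpoint of $I$ at which the $S^1$ norm accumulates (for $t_n\to t_\infty$ in the interior, continuity of the flow in the energy topology gives compactness outright with $\lambda_n=1$, $x_n=0$), and then match the temporal direction of the unbounded profile to that endpoint: if, say, the norm diverges toward $\sup I$ and the single profile has $s_n\to-\infty$, the Remark~\ref{rem:prop7.15}-type bound transported back through the symmetries gives a \emph{uniform-in-$n$} $S^1$ bound on $I\cap[t_n,\sup I)$ (respectively, in the other configuration, on $[t_0,t_n]$ for fixed $t_0$ with $t_n\to\sup I$, i.e. on intervals exhausting the singular portion); it is this uniform bound on a semi-infinite remaining lifespan, or on an exhausting family, that contradicts the divergence of the $S^1$ norm there — not finiteness on some bounded interval. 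As written, your final step does not contradict anything, and you also never address which side of $t_n$ the singularity sits on relative to the sign of $s_n$; both points need to be made explicit for the proof to be complete.
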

The proof of this follows exactly as for Corollary 9.36 in \cite{KS}, using the preceding Remark~\ref{rem:prop7.15}.

\section{Rigidity argument} \label{sec:rigidity_argument}

 In this final section we rule out the existence of a minimal blowup solution $(\cA^\infty, \Phi^\infty)$ with the compactness property from Theorem~\ref{thm:compact_orbit}. To this end we largely follow the scheme of the rigidity argument by Kenig-Merle \cite{KM}. 

 In Subsection~\ref{subsec:rigidity0} we derive several energy and virial identities for energy class solutions to MKG-CG. Then we prove some preliminary properties of the minimal blowup solution $(\cA^\infty, \Phi^\infty)$, in particular that its momentum must vanish. Denoting by $I$ the lifespan of $(\cA^\infty, \Phi^\infty)$, we distinguish between $I^+ := I \cap [0,\infty)$ being a finite or an infinite time interval. In the next Subsection~\ref{subsec:rigidity1}, we exclude the existence of a minimal blowup solution $(\cA^\infty, \Phi^\infty)$ with infinite time interval $I^+$ using the virial identities, the fact that the momentum of $(\cA^\infty, \Phi^\infty)$ must vanish and an additional Vitali covering argument introduced in \cite{KS}. Moreover, we reduce the case of finite lifespan $I^+$ to a self-similar blowup scenario. In the last Subsection~\ref{subsec:rigidity2}, we then derive a suitable Lyapunov functional for the Maxwell-Klein-Gordon system in self-similar variables, which will finally enable us to also rule out the self-similar case.

\subsection{Preliminary properties of minimal blowup solutions with the compactness property} \label{subsec:rigidity0}

 We will sometimes use the following notation for the covariant derivatives
 \[
  \cD_\alpha = \partial_\alpha + i \cA_\alpha^\infty
 \]
 and the curvature components 
 \[
  \cF_{\alpha \beta}^\infty = \partial_\alpha \cA_\beta^\infty - \partial_\beta \cA_\alpha^\infty
 \]
 associated with the minimal blowup solution $(\cA^\infty, \Phi^\infty)$.

 \begin{lem} \label{lem:energy_on_slice_outside_cone}
  Let $(A,\phi)$ be an energy class solution to MKG-CG in the sense of Definition~\ref{defn:energy_class_solution} with lifespan $I$ containing $0$. For given $\varepsilon > 0$, let $R > 0$ be such that
  \[
   \int_{\{|x| \geq R\}} \biggl( \frac{1}{4} \sum_{\alpha,\beta} F_{\alpha \beta}(0,x)^2 + \frac{1}{2} \sum_\alpha |D_\alpha \phi(0,x)|^2 \biggr) \, dx \leq \varepsilon.
  \]
  Then we have for any $t \in I^+$ that
  \[
   \int_{\{|x| \geq R + t\}} \biggl( \frac{1}{4} \sum_{\alpha, \beta} F_{\alpha \beta}(t,x)^2 + \frac{1}{2} \sum_\alpha |D_\alpha \phi(t,x)|^2 \biggr) \, dx \leq \varepsilon.
  \]
 \end{lem}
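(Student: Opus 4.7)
The plan is to derive the inequality from finite-speed-of-propagation applied to approximating admissible solutions, via the Maxwell--Klein--Gordon energy--momentum tensor. I would first reduce to the admissible case: by Definition~\ref{defn:energy_class_solution}, $(A,\phi)$ arises as an energy-topology limit on compact subintervals of $I$ of admissible solutions $(A_n,\phi_n)$; combined with the elliptic equation \eqref{equ:elliptic} for $A_0$ and the Sobolev embedding $\dot{H}^1_x(\R^4) \hookrightarrow L^4_x(\R^4)$, the $\dot{H}^1_x \times L^2_x$ convergence of $(A_n,\phi_n)[s]$ upgrades to $L^1_x$-convergence of the full MKG energy densities $e_n(s,\cdot) \to e(s,\cdot)$ at each of the endpoints $s \in \{0,t\}$. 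Thus for $n$ large we have $\int_{\{|x|\geq R\}} e_n(0,x)\,dx \leq \varepsilon + o(1)$, and it suffices to prove the conclusion for each admissible $(A_n,\phi_n)$ and pass to the limit.

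\medskip

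For admissible solutions I would invoke the divergence-free energy--momentum tensor of the MKG system,
\[
 T_{\alpha\beta} \;=\; F_{\alpha}{}^{\gamma} F_{\beta\gamma} \;+\; \Re\bigl(\,\overline{D_\alpha \phi}\, D_\beta \phi\,\bigr) \;-\; \eta_{\alpha\beta}\Bigl(\tfrac14 F_{\gamma\delta} F^{\gamma\delta} \;+\; \tfrac12\, D_\gamma \phi\,\overline{D^\gamma \phi}\Bigr),
\]
whose $00$-component coincides with the energy density $e(s,y)$ appearing in the statement. Integrating $\partial^\alpha T_{\alpha 0} = 0$ over the truncated exterior cone $\Omega_t := \{(s,y) : 0 \leq s \leq t,\ |y| \geq R+s\}$ and applying the divergence theorem yields
\[
 \int_{\{|y|\geq R+t\}} e(t,y)\,dy \;+\; \mathrm{Flux}(t,R) \;=\; \int_{\{|y|\geq R\}} e(0,y)\,dy,
\]
where $\mathrm{Flux}(t,R)$ is the integral of $T_{00} + \omega^i T_{i0}$, with $\omega = y/|y|$, over the lateral null hypersurface $\{(s,y) : 0 \leq s \leq t,\ |y| = R+s\}$. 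The contribution from the boundary at spatial infinity vanishes thanks to admissibility: $\phi$ and $F_{jk}$ are Schwartz on each time slice, $|A_j| \lesssim \langle x \rangle^{-3}$ by Definition~\ref{def:admissible_data}, and \eqref{equ:elliptic} then yields adequate control of $A_0$.

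\medskip

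The remaining point is the non-negativity $\mathrm{Flux}(t,R) \geq 0$, which together with the above identity gives the admissible inequality, and hence the lemma after taking $n \to \infty$. This is the classical null-frame positivity for the MKG flux density: decomposing $F$ and $D\phi$ along the outgoing null direction $\ell_+ = \partial_s + \omega^i \partial_i$, its complement $\ell_-$, and the directions tangent to the sphere $|y| = R+s$, the integrand reorganizes as a manifest sum of squares of the angular-angular and time-angular components of $F$ together with $|\ell_+^{\,\alpha} D_\alpha \phi|^2$ and $|\slashed{D}\phi|^2$. The main technical step of the proof is therefore this null-frame reorganization together with the justification of the divergence theorem in the unbounded region $\Omega_t$ via the decay assumed in Definition~\ref{def:admissible_data}; both are standard features of MKG on Minkowski space, and once they are in hand the proof is complete.
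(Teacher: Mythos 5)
Your proposal is correct and follows essentially the same route as the paper: approximate by admissible solutions, apply the divergence theorem for the divergence-free MKG energy--momentum tensor on the truncated cone region, use non-negativity of the null flux density $T_{00}+\omega^j T_{j0}$, and pass to the limit in the energy topology. The only cosmetic differences are that the paper integrates over the interior cone (so no boundary term at spatial infinity is needed) and verifies the flux positivity via the elementary identity $\sum_{j,k}(\omega_j r_k-\omega_k r_j)^2\leq 2|r|^2$ rather than a null-frame decomposition.
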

 \begin{proof}
  Let $(A,\phi)$ be an admissible solution to MKG-CG with lifespan $I$ containing $0$. For $R > 0$ and $t \in I^+$, we define
  \[
   E_R(t) = \int_{\{ |x| \geq R  + t \}} \biggl( \frac{1}{4} \sum_{\alpha, \beta} F_{\alpha \beta}(t,x)^2 + \frac{1}{2} \sum_\alpha |D_\alpha \phi(t,x)|^2 \biggr) \, dx.
  \]
  Using that the energy-momentum tensor for the Maxwell-Klein-Gordon system
  \[
   T_{\alpha \beta} = F_\alpha{}^\gamma F_{\beta \gamma} - \frac{1}{4} m_{\alpha \beta} F_{\gamma \delta} F^{\gamma \delta} + \Re \, \bigl( D_\alpha \phi \overline{D_\beta \phi} \bigr) - \frac{1}{2} m_{\alpha \beta} D^\gamma \phi \overline{D_\gamma \phi},
  \]
  with $m_{\alpha \beta}$ denoting the Minkowski metric, is divergence free
  \[
   \partial^\alpha T_{\alpha \beta} = 0,
  \]
  we easily obtain from the divergence theorem that for any $t_0, t_1 \in I^+$ with $t_0 < t_1$,
  \begin{equation}
   E_R(t_0) = E_R(t_1) + \int_{M_{t_0}^{t_1}} \biggl( T_{00}(t,x) + \frac{x^j}{|x|} T_{j0}(t,x) \biggr) \, d\sigma(t,x).
  \end{equation}
  Here, $M_{t_0}^{t_1}$ denotes the part of the mantle of the forwards light cone $\{ (t, x) \in I^+ \times \R^4 : |x| \leq R + t \}$ enclosed by the time slices $\{t_0\} \times \R^4$ and $\{t_1\} \times \R^4$, and $d\sigma$ denotes the standard surface measure. One easily verifies that the flux
  \[
   T_{00}(t,x) + \frac{x^j}{|x|} T_{j0}(t,x)
  \]
  is non-negative using the general identity
  \[
   \sum_{j,k} \bigl( \omega_j r_k - \omega_k r_j \bigr)^2 = 2 \bigl( r^2 - (r \cdot \omega)^2 \bigr) \leq 2 r^2
  \]
  for $r, \omega \in \R^4$ with $|\omega| = 1$. We conclude that
  \begin{equation} \label{equ:energy_monotonicity_outside_forwards_light_cone}
   E_R(t_1) \leq E_R(t_0).
  \end{equation}
  Since an energy class solution to MKG-CG in the sense of Definition~\ref{defn:energy_class_solution} is a locally uniform limit of admissible solutions, the corresponding inequality \eqref{equ:energy_monotonicity_outside_forwards_light_cone} follows by passing to the limit. This implies the claim. 
 \end{proof}

 Next, we prove the following energy and virial identities for energy class solutions to MKG-CG.

 \begin{prop} \label{prop:virial_identities}
  Let $(A,\phi)$ be an energy class solution to MKG-CG in the sense of Definition~\ref{defn:energy_class_solution}. Then the following identities hold.
  \begin{itemize}[leftmargin=*]
   \item Energy conservation
    \begin{equation} \label{equ:conserved_energy}
     \frac{d}{dt} \int_{\R^4} \biggl( \frac{1}{4} \sum_{\alpha, \beta} F_{\alpha \beta}^2 + \frac{1}{2} \sum_\alpha |D_\alpha \phi|^2 \biggr) \, dx = 0.
    \end{equation}
   \item Momentum conservation
    \begin{equation} \label{equ:conserved_momentum}
     \frac{d}{dt} \int_{\R^4} \Bigl( F_{0j} F_{k}{}^j + \Re \, \bigl( D_0 \phi \overline{D_k \phi} \bigr) \Bigr) \, dx = 0
    \end{equation}
    for $k = 1, \ldots, 4$.
   \item Weighted energy
    \begin{equation} \label{equ:weighted_energy}
     \frac{d}{dt} \int_{\R^4} x_k \varphi_R \biggl( \frac{1}{4} \sum_{\alpha, \beta} F_{\alpha \beta}^2 + \frac{1}{2} \sum_\alpha |D_\alpha \phi|^2 \biggr) \, dx = - \int_{\R^4} \Bigl( F_{0j} F_k {}^j + \Re \, \bigl( D_0 \phi \overline{D_k \phi} \bigr) \Bigr) \, dx + O(r(R))
    \end{equation}
    for $k = 1, \ldots, 4$.
   \item Weighted momentum monotonicity 
    \begin{equation} \label{equ:weighted_momentum_monotonicity}
     \begin{split}
      &\frac{d}{dt} \int_{\R^4} x_k \varphi_R \Bigl( F_{0j} F^{kj} + \Re\, \bigl( D_0 \phi \overline{D^k \phi} \bigr) \Bigr) \, dx + \frac{d}{dt} \int_{\R^4} \varphi_R \Re \, \bigl( \phi \overline{D_0 \phi} \bigr) \, dx \\
      &\quad = - \int_{\R^4} \biggl( \sum_k F_{0k}^2 + |D_0 \phi|^2 \biggr) \, dx + O(r(R)).
     \end{split}
    \end{equation}
  \end{itemize}
  Here, $\varphi \in C_c^\infty(\R^4)$ is a smooth cutoff with $\varphi(x) = 1$ for $|x| \leq 1$ and $\varphi(x) = 0$ for $|x| \geq 2$. Moreover, for $R > 0$ we define $\varphi_R(x) = \varphi\bigl( \frac{x}{R} \bigr)$ and 
  \begin{equation} \label{equ:virial_identities_remainder}
   r(R) := \int_{ \{ |x| \geq R \} } \biggl( \sum_{\alpha, \beta} F_{\alpha \beta}^2 + \sum_\alpha |D_\alpha \phi|^2 + \frac{|\phi|^2}{|x|^2} \biggr) \, dx.
  \end{equation}
 \end{prop}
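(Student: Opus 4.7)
The plan is to first reduce to the case of admissible (smooth) solutions, then derive all four identities from the divergence-free energy-momentum tensor
\[
 T_{\alpha\beta} = F_\alpha{}^\gamma F_{\beta\gamma} - \tfrac14 m_{\alpha\beta} F_{\gamma\delta}F^{\gamma\delta} + \Re\bigl( D_\alpha\phi \,\overline{D_\beta\phi}\bigr) - \tfrac12 m_{\alpha\beta}\, D^\gamma\phi\,\overline{D_\gamma\phi},
\]
which satisfies $\partial^\alpha T_{\alpha\beta}=0$ by direct computation using $\partial^{\beta}F_{\alpha\beta}=\Im(\phi\overline{D_\alpha\phi})$ and $\Box_A\phi=0$. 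Since an energy class solution is, by Definition~\ref{defn:energy_class_solution}, the limit in the energy topology of a sequence of admissible MKG-CG evolutions and every quantity appearing in \eqref{equ:conserved_energy}--\eqref{equ:weighted_momentum_monotonicity} is continuous on bounded sets of $\dot H^1_x\times L^2_x$ (using Sobolev and the elliptic bounds on $A_0$ from Section~\ref{subsec:general_considerations}), it suffices to establish the identities for admissible solutions and pass to the limit in $n$, invoking Proposition~\ref{prop:perturbation} and Proposition~\ref{prop:joint_time_interval} to ensure the convergence is uniform on compact time intervals inside the lifespan.

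Having made this reduction, the first two identities are obtained by integrating $\partial^\alpha T_{\alpha 0}=0$ and $\partial^\alpha T_{\alpha k}=0$ respectively against $1$ over a fixed time slice; the Schwartz-type decay of admissible data, together with the pointwise bound $|T_{\alpha\beta}|\lesssim \sum F_{\gamma\delta}^2 + \sum|D_\gamma\phi|^2$, justifies the vanishing of all boundary terms at spatial infinity. The weighted energy identity \eqref{equ:weighted_energy} then follows by multiplying $\partial_t T_{00}=\partial_j T_{j0}$ by $x_k\varphi_R$ and integrating by parts in $x$:
\[
 \frac{d}{dt}\int x_k\varphi_R T_{00}\,dx = -\int \varphi_R T_{k0}\,dx - \int x_k(\partial_j\varphi_R) T_{j0}\,dx,
\]
where the first term on the right equals $-\int (F_{0j}F_k{}^j+\Re(D_0\phi\,\overline{D_k\phi}))\,dx$ up to an $O(r(R))$ error (trading $\varphi_R$ for $1$ contributes only mass lying in $\{|x|\geq R\}$), and the second term is bounded by $R^{-1}\cdot R\cdot\int_{\{R\leq|x|\leq 2R\}}|T_{j0}|\,dx\lesssim r(R)$ since $|x\partial\varphi_R|\lesssim 1$ on the annular support of $\nabla\varphi_R$.

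The weighted momentum identity \eqref{equ:weighted_momentum_monotonicity} is the main technical point. Start from $\partial_t T_{0}{}^{k}=\partial_j T_j{}^k$ and test against $x_k\varphi_R$ (summed over $k$). After integration by parts in space, derivatives falling on $\varphi_R$ again produce only an $O(r(R))$ remainder, while the principal term becomes
\[
 -\int \varphi_R\bigl(\delta_{jk}+x_k\partial_j\varphi_R/\varphi_R\bigr)T_j{}^k\,dx = -\int \varphi_R\, T_j{}^j\,dx + O(r(R)).
\]
Expanding $T_j{}^j$ one finds a term $\sum_k F_{0k}^2$ from the Maxwell contribution plus $\sum_j|D_j\phi|^2-\tfrac{3}{2}D^\gamma\phi\overline{D_\gamma\phi}$ from the matter part, the latter being $-|D_0\phi|^2 + (\text{curvature-like terms})$ up to a residual piece that does not have a definite sign. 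The purpose of adding the corrector $\frac{d}{dt}\int\varphi_R\Re(\phi\overline{D_0\phi})\,dx$ is to absorb this residual piece: differentiating in $t$, using $\partial_t(\phi\overline{D_0\phi})=\phi\overline{D_0^2\phi}+|D_0\phi|^2 + i A_0|\phi|^2(\ldots)$ and invoking $\Box_A\phi = D^\alpha D_\alpha\phi = 0$ to rewrite $D_0^2\phi=\sum_j D_j^2\phi$, one integrates by parts once more to convert $\phi\,\overline{D_j^2\phi}$ into $-|D_j\phi|^2$ plus boundary terms. The latter are handled by $|\partial_j\varphi_R|\lesssim R^{-1}$ together with Hardy's inequality $\int|\partial_j\varphi_R|\,|\phi||D_j\phi|\,dx\lesssim \bigl(\int |\phi|^2/|x|^2\,dx\bigr)^{1/2}\bigl(\int_{\{|x|\geq R\}}|D\phi|^2\bigr)^{1/2}\lesssim r(R)$, which is precisely why the weighted term $|\phi|^2/|x|^2$ was included in the definition \eqref{equ:virial_identities_remainder} of $r(R)$. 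Combining the two pieces, the indefinite spatial matter terms cancel exactly and one is left with $-\int(\sum_k F_{0k}^2+|D_0\phi|^2)\,dx+O(r(R))$, as claimed.

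The main obstacle is the bookkeeping of signs and cutoff-derivative terms in the last identity: one must verify that the cross terms generated by integration by parts against $x_k\varphi_R$ combine with the time derivative of the $\Re(\phi\overline{D_0\phi})$-corrector to produce the clean negative definite right-hand side, and that every error term truly fits into $O(r(R))$ either directly or via one application of Hardy's inequality. Once this algebra is verified for smooth admissible solutions, passage to the energy class limit closes the argument.
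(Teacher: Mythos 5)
Your proposal follows essentially the same route as the paper: reduce to admissible solutions and pass to the limit in an integrated formulation, derive \eqref{equ:conserved_energy}--\eqref{equ:weighted_energy} from $\partial^\alpha T_{\alpha\beta}=0$ with the weight $x_k\varphi_R$, and obtain \eqref{equ:weighted_momentum_monotonicity} by adding the corrector $\frac{d}{dt}\int\varphi_R\Re(\phi\overline{D_0\phi})\,dx$, using $\Box_A\phi=0$, one more integration by parts, and Hardy's inequality to absorb the cutoff terms into $O(r(R))$ — exactly as in the paper's proof. The only slip is the trace coefficient: in four spatial dimensions the matter part of $\sum_k T_{kk}$ carries $-2\,D^\gamma\phi\overline{D_\gamma\phi}$ (not $-\tfrac32$), giving $\sum_k T_{kk}=\sum_k F_{0k}^2+2|D_0\phi|^2-\sum_k|D_k\phi|^2$, and with this value the exact cancellation against the corrector that you assert indeed holds.
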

 \begin{proof}
  It suffices to verify these identities for admissible solutions to MKG-CG. Since energy class solutions in the sense of Definition~\ref{defn:energy_class_solution} are locally uniform limits of admissible solutions, the corresponding identities follow by passing to the limit in an integrated formulation. 

  So let $(A, \phi)$ be an admissible solution to MKG-CG. Then the energy conservation \eqref{equ:conserved_energy} and momentum conservation \eqref{equ:conserved_momentum} identities follow immediately from the divergence theorem and the fact that the energy-momentum tensor of the Maxwell-Klein-Gordon system
  \[
   T_{\alpha \beta} =  F_\alpha{}^\gamma F_{\beta \gamma} - \frac{1}{4} m_{\alpha \beta} F_{\gamma \delta} F^{\gamma \delta} + \Re \, \bigl( D_\alpha \phi \overline{D_\beta \phi} \bigr) - \frac{1}{2} m_{\alpha \beta} D^\gamma \phi \overline{D_\gamma \phi}
  \]
  for $\alpha, \beta \in \{0, 1, \ldots, 4\}$ is divergence free
  \begin{equation} \label{equ:em_tensor_div_free}
   \partial^\alpha T_{\alpha \beta} = 0.
  \end{equation}
  To prove the weighted energy identity \eqref{equ:weighted_energy}, we also use the divergence-free property \eqref{equ:em_tensor_div_free} of $T_{\alpha \beta}$ and compute for $k = 1, \ldots, 4$ that
  \begin{align*}
   \frac{d}{dt} \int_{\R^4} x_k \varphi_R(x) T_{00} \, dx &= \int_{\R^4} x_k \varphi_R(x) \partial^j T_{0j} \, dx \\
   &= - \int_{\R^4} \varphi_R(x) T_{0k} \, dx - \int_{\R^4} {\textstyle \frac{x_k}{R} } (\partial^j \varphi)({\textstyle \frac{x}{R} }) T_{0j} \, dx \\
   &= - \int_{\R^4} T_{0k} \, dx + O(r(R)),
  \end{align*}
  where we integrated by parts in the second to last step. This yields \eqref{equ:weighted_energy}. Finally, to show the weighted momentum monotonicity identity \eqref{equ:weighted_momentum_monotonicity}, we compute
  \begin{align} \label{equ:weighted_momentum_monotonicity_derive1}
   \begin{aligned}
    \frac{d}{dt} \int_{\R^4} x_k \varphi_R(x) T_0 {}^k \, dx &= \int_{\R^4} x_k \varphi_R(x) \partial_j T^{jk} \, dx \\
    &= - \int_{\R^4} \varphi_R(x) \biggl( \sum_{k=1}^4 T_{kk} \biggr) \, dx - \int_{\R^4} {\textstyle \frac{x_k}{R} } (\partial_j \varphi)({\textstyle \frac{x}{R} }) T^{jk} \, dx \\
    &= - \int_{\R^4} \biggl( \sum_{k=1}^4 F_{0k}^2 + 2 |D_0 \phi|^2 - \sum_{k=1}^4 |D_k \phi|^2 \biggr) \, dx + O(r(R)).
   \end{aligned}
  \end{align}
  Since the right hand side of \eqref{equ:weighted_momentum_monotonicity_derive1} does not yet exhibit the desired monotonicity, we also consider
  \begin{align*}
   \frac{d}{dt} \int_{\R^4} \varphi_R(x) \Re \, \bigl( \phi \overline{D_0 \phi} \bigr) \, dx &= \int_{\R^4} \varphi_R(x) \Re \, \bigl( \partial_t \phi \overline{D_0 \phi} \bigr) \, dx + \int_{\R^4} \varphi_R(x) \Re \, \bigl( \phi \overline{\partial_t D_0 \phi} \bigr) \, dx \\
   &= \int_{\R^4} \varphi_R(x) |D_0 \phi|^2 \, dx + \int_{\R^4} \varphi_R(x) \Re \, \Bigl( \phi \overline{D_0^2 \phi} \Bigr) \, dx. 
  \end{align*}
  Inserting the equation for $\phi$ and integrating by parts leads to
  \begin{align} \label{equ:weighted_momentum_monotonicity_derive2}
   \begin{aligned}
    \frac{d}{dt} \int_{\R^4} \varphi_R(x) \Re \, \bigl( \phi \overline{D_0 \phi} \bigr) \, dx &= \int_{\R^4} \varphi_R(x) |D_0 \phi|^2 \, dx - \sum_{k=1}^4 \int_{\R^4} \varphi_R(x) |D_k \phi|^2 \, dx \\
    &\quad \quad - \int_{\R^4} {\textstyle \frac{1}{R} } (\partial^k \varphi)({\textstyle \frac{x}{R}}) \Re \, \Bigl( \varphi \overline{D_k \phi} \Bigr) \, dx \\
    &= \int_{\R^4} \biggl( |D_0 \phi|^2 - \sum_{k=1}^4 |D_k \phi|^2 \biggr) \, dx + O(r(R)).
   \end{aligned}
  \end{align}
  Putting together \eqref{equ:weighted_momentum_monotonicity_derive1} and \eqref{equ:weighted_momentum_monotonicity_derive2}, we obtain \eqref{equ:weighted_momentum_monotonicity}.
 \end{proof}

 If $I^+$ is a finite time interval, we obtain a lower bound on $\lambda(t)$ from Theorem~\ref{thm:compact_orbit}. 
 \begin{lem} \label{lem:finite_lifespan_lower_bound_lambda}
  Assume that $I^+$ is finite and after re-scaling that $I^+ = [0,1)$. Let $\lambda: I^+ \rightarrow \R_+$ be as in Theorem~\ref{thm:compact_orbit}. Then there exists a constant $C_0(K) > 0$ such that 
  \[
   0<\frac{C_0(K)}{1-t}\leq \lambda(t)
  \]
  for all $0 \leq t < 1$.
 \end{lem}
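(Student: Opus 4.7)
The plan is to argue by contradiction and to exploit the scaling and translation invariance of (MKG-CG) in order to reduce the desired lower bound on $\lambda(t)$ to the uniform lifespan bound for compact families of Coulomb energy class data provided by Corollary~\ref{cor:lifespancompact}. Suppose that no such constant $C_0(K)$ exists. Then one finds a sequence $t_n \in [0,1)$ with $t_n \to 1^-$ satisfying
\[
 \lambda(t_n)(1 - t_n) \to 0 \quad \text{as } n \to \infty.
\]
The idea is to rescale the minimal blowup solution $(\cA^\infty, \Phi^\infty)$ around each space-time point $(t_n, \bar{x}(t_n))$ at the scale $\lambda(t_n)$. The resulting rescaled solutions will have initial data at rescaled time $0$ coinciding exactly with the element of the compact orbit from Theorem~\ref{thm:compact_orbit} at the original time $t_n$, while their forward lifespans in rescaled time are precisely $[0, \lambda(t_n)(1-t_n))$ and hence contract to the degenerate interval $[0,0)$. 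This is to be played off against the uniform lower bound on lifespans from Corollary~\ref{cor:lifespancompact}.

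Concretely, for each $n$ I would define the rescaled solutions (with the corresponding rescalings for the time derivatives of the data)
\[
 \tilde{\cA}^n_\alpha(s, y) := \frac{1}{\lambda(t_n)}\, \cA^\infty_\alpha\Bigl(t_n + \tfrac{s}{\lambda(t_n)},\; \tfrac{y - \bar{x}(t_n)}{\lambda(t_n)}\Bigr), \quad \tilde{\Phi}^n(s, y) := \frac{1}{\lambda(t_n)}\, \Phi^\infty\Bigl(t_n + \tfrac{s}{\lambda(t_n)},\; \tfrac{y - \bar{x}(t_n)}{\lambda(t_n)}\Bigr).
\]
A direct computation shows that $(\tilde{\cA}^n, \tilde{\Phi}^n)$ is again a Coulomb energy class solution to (MKG-CG) in the sense of Definition~\ref{defn:energy_class_solution}: both the scaling $A_\alpha(t,x) \mapsto \lambda A_\alpha(\lambda t, \lambda x)$ and the space-time translations are symmetries of the system that preserve the Coulomb condition and the elliptic compatibility equation for the temporal component, and the rescaled data can be obtained as a limit of the rescaled versions of the admissible approximations used to define $(\cA^\infty, \Phi^\infty)$. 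By the very form of the rescaling, the family of initial data $\{(\tilde{\cA}^n, \tilde{\Phi}^n)[0]\}_{n \geq 1}$ coincides with a subset of the compact orbit from Theorem~\ref{thm:compact_orbit} and is therefore precompact in $\dot{H}^1_x(\R^4) \times L^2_x(\R^4)$. Applying Corollary~\ref{cor:lifespancompact} then produces a fixed open interval $I_\ast \ni 0$ contained in the maximal lifespan of every $(\tilde{\cA}^n, \tilde{\Phi}^n)$ for all sufficiently large $n$. On the other hand, the forward portion of the maximal lifespan of $(\tilde{\cA}^n, \tilde{\Phi}^n)$ is exactly $[0, \lambda(t_n)(1-t_n))$, which under our hypothesis shrinks to $[0, 0)$ as $n \to \infty$; this is incompatible with the fixed neighborhood $I_\ast$ of $0$, and the desired contradiction follows.

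The main technical point to handle carefully is to verify, within the framework of Section~\ref{sec:concept_of_weak_evolution}, that the rescaled objects $(\tilde{\cA}^n, \tilde{\Phi}^n)$ really are genuine energy class evolutions of their initial data with maximal forward lifespan equal to exactly $[0, \lambda(t_n)(1-t_n))$ in rescaled time. This boils down to commuting the scaling and spatial translation through the low-frequency-truncation/limiting procedure of Definition~\ref{defn:energy_class_solution} and invoking uniqueness of energy class evolutions, so that any extension of $(\tilde{\cA}^n, \tilde{\Phi}^n)$ past its rescaled forward lifespan would pull back to an extension of $(\cA^\infty, \Phi^\infty)$ past its lifespan $[0,1)$. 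Modulo this bookkeeping the compactness argument closes immediately.
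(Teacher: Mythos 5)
Your proposal is correct and is essentially the paper's own argument: the paper proves this lemma exactly by combining Theorem~\ref{thm:compact_orbit} with Corollary~\ref{cor:lifespancompact} (following Lemma~10.4 of \cite{KS}), i.e.\ by rescaling around $(t_n,\bar{x}(t_n))$ at scale $\lambda(t_n)$ and playing the shrinking rescaled forward lifespan $[0,\lambda(t_n)(1-t_n))$ off against the uniform lifespan interval $I_\ast$ for the precompact family of rescaled data. The bookkeeping point you flag (that scaling and translation commute with the energy class evolution of Definition~\ref{defn:energy_class_solution}, so maximal lifespans transform accordingly) is indeed the only thing to check, and it goes through as you describe.
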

 \begin{proof}
  The proof follows exactly as in \cite[Lemma 10.4]{KS} by combining Corollary~\ref{cor:lifespancompact} and Theorem~\ref{thm:compact_orbit}.
 \end{proof}

 Moreover, when $I^+$ is a finite time interval, we conclude the following sharp support properties of $\Phi^\infty$ and the curvature components $\cF^\infty_{\alpha \beta}$.

 \begin{lem} \label{lem:finite_lifespan_support_in_ball}
  Under the same assumptions as in Lemma~\ref{lem:finite_lifespan_lower_bound_lambda} there exists $x_0 \in \R^4$ such that 
  \[
   \text{supp} \, \Big( \cF_{\alpha \beta}^\infty(t, \cdot), \Phi^\infty(t, \cdot) \Big) \subset \overline{B}(x_0, 1-t) 
  \]
  for all $0 \leq t < 1$ and all $\alpha, \beta \in \{ 0, 1, \ldots,4 \}$.
 \end{lem}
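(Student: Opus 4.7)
The plan is to implement the standard Kenig--Merle support argument for finite-time blowup solutions with the compactness property. From Theorem~\ref{thm:compact_orbit}, for any $\eps > 0$ there exists $R_0 = R_0(\eps)$ so that
\begin{equation*}
\int_{|x - \bar{x}(t)| > R_0/\lambda(t)} \biggl( \sum_{\alpha,\beta} |\cF^\infty_{\alpha\beta}|^2 + \sum_\alpha |\cD_\alpha \Phi^\infty|^2 \biggr) dx < \eps \qquad \text{for all } t \in [0,1),
\end{equation*}
and by Lemma~\ref{lem:finite_lifespan_lower_bound_lambda} the concentration radius satisfies $R_0/\lambda(t) \leq R_0 (1-t)/C_0(K) \to 0$ as $t \to 1$.

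First I would establish that $\bar{x}(t)$ converges to some $x_0 \in \R^4$ as $t \to 1$. Boundedness of $\{\bar{x}(t) : t \in [0,1)\}$ follows from Lemma~\ref{lem:energy_on_slice_outside_cone} applied to the data at $t=0$: choosing $R$ large enough that the energy outside $B(0,R)$ at time $0$ is below a fixed small $\eps$, the energy outside $B(0,R+1)$ stays below $\eps$ for all $t \in [0,1)$, which traps the concentration center inside a fixed bounded region once $R_0/\lambda(t)$ is small. Uniqueness of the limit then follows by contradiction: if $\bar{x}(t_n) \to x_0$ and $\bar{x}(s_n) \to y_0$ with $x_0 \neq y_0$, one can select $t_n < s_n$ both near $1$ with $s_n - t_n$ much smaller than $|x_0 - y_0|$, and observe that Lemma~\ref{lem:energy_on_slice_outside_cone} forbids transport of essentially all the energy from a small neighborhood of $x_0$ (where it lives at time $t_n$) to a disjoint small neighborhood of $y_0$ (where it must live at time $s_n$) in such a short time interval.

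The support claim is then proved by contradiction. Suppose there is $(t_1, x_1)$ with $|x_1 - x_0| > 1 - t_1$ and a ball $B(x_1, \rho)$ in which the energy density of $(\cA^\infty, \Phi^\infty)$ at time $t_1$ is at least $\delta_0 > 0$. By Lemma~\ref{lem:energy_on_slice_outside_cone} applied forward from $t_1$, the energy inside $B(x_1, \rho + (t - t_1))$ remains at least $\delta_0$ for all $t \in (t_1, 1)$. Choose $\rho > 0$ and $\eps > 0$ with $\eps + \rho + (1 - t_1) < |x_1 - x_0|$. For $t$ sufficiently close to $1$, the concentration and $\bar{x}(t) \to x_0$ ensure $B(\bar{x}(t), R_0/\lambda(t)) \subset B(x_0, \eps)$, so the energy inside $B(x_0, \eps)$ is at least $E_{crit} - \eps$. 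Since $B(x_0, \eps)$ and $B(x_1, \rho + (t - t_1))$ are disjoint by the choice of parameters, summing the two lower bounds yields a total energy strictly exceeding $E_{crit}$, contradicting conservation. This shows that $\cF^\infty_{\alpha\beta}(t,\cdot)$ and $\cD_\alpha \Phi^\infty(t,\cdot)$ vanish outside $\overline{B}(x_0, 1-t)$; the corresponding vanishing of $\Phi^\infty(t,\cdot)$ follows from the diamagnetic inequality $|\nabla_x |\Phi^\infty|| \leq \sum_j |\cD_j \Phi^\infty|$, which forces $|\Phi^\infty(t,\cdot)|$ to be constant on the (connected) complement, and the $L^4_x$-integrability of $\Phi^\infty$ forces this constant to be zero.

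The main obstacle will be making the uniqueness of $x_0$ precise: the compactness statement only pins down $\bar{x}(t)$ up to scale $1/\lambda(t)$, and one must be careful that the ambiguity in the concentration center at the two times $t_n, s_n$ does not swamp the gap between hypothetical distinct limit points when quantifying the energy-flux inequality. The translation from concentration and finite speed of propagation to the contradiction in the final step is then essentially routine given Lemma~\ref{lem:energy_on_slice_outside_cone}.
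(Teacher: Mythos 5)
Your proposal is correct, but it follows a genuinely different route from the paper's proof, so let me compare. The paper takes a sequence $t_n \to 1$, notes $\lambda(t_n)\to\infty$ by Lemma~\ref{lem:finite_lifespan_lower_bound_lambda}, shows the (unrescaled) concentration centers are bounded, extracts a \emph{subsequential} limit $x_0$, and then applies Lemma~\ref{lem:energy_on_slice_outside_cone} \emph{backwards} in time from $t_n$ to an arbitrary earlier time $s$: this directly yields that the energy outside $B(x_0,\eta_0+1-s)$ at time $s$ is $\leq \varepsilon_0$ for every $\eta_0,\varepsilon_0>0$, whence $\cF^\infty_{\alpha\beta}$ and $\cD_\alpha\Phi^\infty$ vanish outside $\overline{B}(x_0,1-s)$, and the diamagnetic inequality handles $\Phi^\infty$ exactly as you do. You instead argue by contradiction, propagating a hypothetical energy packet \emph{forwards} from $(t_1,x_1)$ via interior-energy monotonicity (which you correctly derive from Lemma~\ref{lem:energy_on_slice_outside_cone} plus energy conservation) and playing it off against the near-total concentration of $E_{crit}$ near $x_0$ at times close to $1$; this is valid. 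Two remarks: first, the step you single out as the main obstacle — full convergence of the center as $t\to 1$ — is not needed: a subsequential limit suffices even in your own final contradiction, since the forward-propagated $\delta_0$ of energy persists at \emph{all} later times and can be tested against the concentration at the times $t_n$ of the subsequence (your uniqueness argument via interleaved times $t_n<s_{m(n)}$ with $s_{m(n)}-t_n\to 0$ does go through, but it is superfluous, and it is precisely what the paper's backward-propagation formulation avoids). Second, two bookkeeping points: with the normalization of Theorem~\ref{thm:compact_orbit}, the concentration center in unrescaled coordinates is $-\bar{x}(t)/\lambda(t)$ rather than $\bar{x}(t)$, and passing from precompactness of the $\nabla_{t,x}$-components to smallness of the gauge-covariant energy density outside the concentration ball also uses smallness of the $L^4_x$ tails (as the paper does); both are minor and do not affect the argument.
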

 \begin{proof}
  We follow the proof of Lemma 4.8 in \cite{KM}. Consider a sequence $\{t_n\}_n \subset [0,1)$ with $t_n \to 1$ as $n \to \infty$. From the preceding Lemma~\ref{lem:finite_lifespan_lower_bound_lambda} we know that $\lambda(t_n) \to \infty$ as $n \to \infty$. Together with the compactness property expressed in Theorem~\ref{thm:compact_orbit}, we obtain for every $R > 0$ and $\varepsilon_0 > 0$ that for all sufficiently large $n$, it holds that
  \[
   \int_{\big\{ |x+\frac{\bar{x}(t_n)}{\lambda(t_n)}| \geq R \big\}} \biggl( \sum_\alpha \bigl|\nabla_{t,x} {\mathcal A}_{\alpha}^\infty(t_n, x) \bigr|^2 + \bigl|\nabla_{t,x} \Phi^\infty(t_n, x) \bigr|^2 \biggr) \, dx \leq \frac{\varepsilon_0}{100}
  \]  
  and 
  \[
   \int_{\big\{ |x+\frac{\bar{x}(t_n)}{\lambda(t_n)}| \geq R \big\}} \biggl( \sum_\alpha \bigl|{\mathcal A}_{\alpha}^\infty(t_n, x)\bigr|^4 + \bigl|\Phi^\infty(t_n, x) \bigr|^4 \biggr) \, dx \leq \frac{\varepsilon_0}{100}.
  \]
  Applying Lemma~\ref{lem:energy_on_slice_outside_cone} backwards in time, we conclude for every $R > 0$, $\varepsilon_0 > 0$, and $s \in [0,1)$ that we have for all sufficiently large $n$,
  \begin{equation} \label{equ:finite_lifespan_energy_outside_ball}
   \int_{\big\{ |x+\frac{\bar{x}(t_n)}{\lambda(t_n)}| \geq R + t_n - s \big\}} \biggl( \frac{1}{4}\sum_{\alpha, \beta} \cF_{\alpha \beta}^\infty(s, x)^2 + \frac{1}{2} \sum_{\alpha} \bigl|{\mathcal D}_{\alpha} \Phi^\infty(s, x) \bigr|^2 \biggr) \, dx  \leq \varepsilon_0.
  \end{equation}

  Next, we show that there exists $M > 0$ such that $\left| \frac{\overline{x}(t)}{\lambda(t)} \right| \leq M$ for all $0 \leq t < 1$. Suppose not. Then it suffices to consider a sequence $t_n \to 1$ with $\left| \frac{\overline{x}(t_n)}{\lambda(t_n)} \right| \to \infty$. For all $R > 0$, we have for sufficiently large $n$ that
  \[
   \bigl\{ x : |x| \leq R \bigr\} \subset \biggl\{ x : \Bigl| x + \frac{\bar{x}(t_n)}{\lambda(t_n)} \Bigr| \geq R + t_n \biggr\}.
  \]
  But then we obtain from \eqref{equ:finite_lifespan_energy_outside_ball} with $s = 0$ that for all $R > 0$,
  \[
   \int_{\{ |x| \leq R \}} \biggl( \frac{1}{4}\sum_{\alpha, \beta} \cF_{\alpha \beta}^\infty(0, x)^2 + \frac{1}{2} \sum_{\alpha} \bigl|{\mathcal D}_{\alpha} \Phi^\infty(0, x) \bigr|^2 \biggr) \, dx  \leq \varepsilon_0.
  \]
  Since $\varepsilon_0 > 0$ was arbitrary, this is a contradiction.

  Thus, we may pick a sequence $t_n \to 1$ such that
  \[
   \frac{\bar{x}(t_n)}{\lambda(t_n)} \to - x_0 \in \R^4.
  \]
  Now observe that for every $\eta_0 > 0$ and $s \in [0,1)$, we have for all sufficiently large $n$ that
  \[
   \bigl\{ x : |x-x_0| \geq \eta_0 + 1 - s \bigr\} \subset \biggl\{ x : \Bigl| x + \frac{\bar{x}(t_n)}{\lambda(t_n)} \Bigr| \geq \frac{1}{2} \eta_0 + t_n - s \biggr\}.
  \]
  Hence, we obtain from \eqref{equ:finite_lifespan_energy_outside_ball} that for every $\varepsilon_0 > 0$, $\eta_0 > 0$ and $s \in [0,1)$,
  \[
   \int_{\{ |x - x_0| \geq \eta_0 + 1 - s \}} \biggl( \frac{1}{4} \sum_{\alpha, \beta} \cF_{\alpha \beta}^\infty(s, x)^2 + \frac{1}{2} \sum_{\alpha} \bigl|{\mathcal D}_{\alpha} \Phi^\infty(s, x) \bigr|^2 \biggr) \, dx  \leq \varepsilon_0.
  \]
  We conclude that
  \[
   \text{supp} \, \Big( \cF_{\alpha \beta}^\infty(t, \cdot), \big( {\mathcal D}_\alpha \Phi^\infty \big)(t, \cdot) \Big) \subset \overline{B}(x_0, 1-t)
  \]
  for all $0 \leq t < 1$ and all $\alpha, \beta = 0, 1, \ldots, 4$. The claim then follows from the diamagnetic inequality.
 \end{proof}

 In the next key proposition we prove that the momentum of the minimal blowup solution $(\cA^\infty, \Phi^\infty)$ must vanish. This will later allow us to control the movement of the ``center of mass'', or more precisely a weighted energy of $(\cA^\infty, \Phi^\infty)$. For technical reasons we have to distinguish between the case of finite and infinite lifespan.

 \begin{prop} \label{prop:vanishing_momentum_finite_time}
  Let $({\mathcal A}^\infty, \Phi^\infty)$ be as above. Assume that $I^+$ is a finite interval. Then we have for $k = 1, \ldots, 4$ and all $t \in I^+$ that
  \begin{equation}
   \int_{\R^4} \biggl( \sum_{j=1}^4 \cF_{0j}^\infty \cF_{kj}^\infty + \Re \, \bigl( \mathcal{D}_0 \Phi^\infty \overline{ \mathcal{D}_k \Phi^\infty } \bigr) \biggr)(t,x) \, dx = 0.
  \end{equation}
 \end{prop}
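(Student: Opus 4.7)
The proof proceeds by contradiction through a Lorentz boost argument, mirroring the strategy pioneered by Kenig and Merle \cite{KM}. Writing $P_k^\infty := \int \bigl( \cF_{0j}^\infty \cF_k{}^{j,\infty} + \Re(\cD_0 \Phi^\infty \overline{\cD_k \Phi^\infty}) \bigr) dx$ for the (conserved, by \eqref{equ:conserved_momentum}) momentum components, I would assume $P^\infty \neq 0$ and derive a contradiction with the minimality of $E_{crit}$ by producing a singular energy class solution of strictly smaller energy.

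The first step is to verify the strict inequality $|P^\infty| < E_{crit}$. The pointwise dominant energy condition $T_{00} \geq |\vec T_0|$ for the Maxwell-Klein-Gordon energy-momentum tensor is elementary (apply Cauchy-Schwarz separately to the Maxwell and scalar contributions), giving $|P^\infty| \leq E_{crit}$. Saturation would force $T_{00} = |\vec T_0|$ pointwise, i.e., a coherent null configuration, which is incompatible with the compact support conclusion of Lemma~\ref{lem:finite_lifespan_support_in_ball} together with the presence of positive terms $\sum_{1\leq j<k\leq 4} \cF_{jk}^{\infty\,2}$ in $T_{00}$ that have no analogue in $T_{0k}$. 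Hence $|P^\infty| < E_{crit}$ strictly, and I may set $v := P^\infty / E_{crit}$ with $|v| < 1$.

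I would then apply the Lorentz boost $\Lambda_v$ to $(\cA^\infty, \Phi^\infty)$, followed by a gauge transformation restoring the Coulomb condition (which is not preserved by $\Lambda_v$), producing a new Coulomb-gauged solution $(\tilde{\cA}, \tilde{\Phi})$ to (MKG-CG). The compact support conclusion of Lemma~\ref{lem:finite_lifespan_support_in_ball} is essential here: because $\cF^\infty_{\alpha \beta}$ and $\Phi^\infty$ are supported in the shrinking ball $\overline{B}(x_0, 1-t)$, the boosted solution lives inside the corresponding backward light cone from the blowup point $(1, x_0)$, so there are no issues with the spatial tails of a general energy class solution. The standard transformation law for the energy-momentum tensor under Lorentz boosts then yields that the boosted solution has vanishing total momentum and total energy $E' = \sqrt{E_{crit}^2 - |P^\infty|^2} < E_{crit}$.

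The main technical obstacle—and the reason this step is considerably harder than its Kenig-Merle counterpart—is to verify that $(\tilde{\cA}, \tilde{\Phi})$ remains a \emph{singular} energy class solution in the sense of Section~\ref{sec:concept_of_weak_evolution}, i.e., that its time-localized $S^1$ norm still blows up on approach to the boosted blowup time. This requires understanding how the complicated $S^1$ norm transforms under Lorentz boosts, which is precisely the content of the auxiliary Proposition~\ref{prop:S_norm_Lorentz}; it is used both to propagate the failure of global $S^1$ bounds through $\Lambda_v$ and to verify that $(\tilde{\cA}, \tilde{\Phi})$ fits into the weak evolution framework. Granting that proposition, the existence of a singular energy class solution of energy strictly below $E_{crit}$ contradicts the very definition of $E_{crit}$ as the infimum of energies at which singular behavior occurs. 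Hence $P^\infty = 0$, as claimed.
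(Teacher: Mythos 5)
Your overall strategy (boost to lower the energy, then contradict minimality via Proposition~\ref{prop:S_norm_Lorentz} and the support Lemma~\ref{lem:finite_lifespan_support_in_ball}) belongs to the same family of ideas as the paper's, but the specific route has genuine gaps that the paper's argument is built to avoid. First, you apply the boost $\Lambda_v$ with $v = P^\infty/E_{crit}$ directly to $(\cA^\infty, \Phi^\infty)$. This object is only an energy class solution with finite lifespan, defined as a limit of admissible evolutions; Lorentz transforming it, restoring the Coulomb gauge, checking that the result is again an energy class solution in the sense of Section~\ref{sec:concept_of_weak_evolution}, and justifying the conservation-law/divergence-theorem computations behind the transformation of energy and momentum are all nontrivial for such an object. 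The paper circumvents exactly this by never boosting the minimal solution itself: it builds smooth approximations $(A_n, \phi_n)[0]$ by frequency truncation, multiplies $\phi_n[0]$ by scalars tending to $1$ so as to force $E(A_n, \phi_n) < E_{crit}$ (so that, by the very definition of $E_{crit}$, these data admit global smooth evolutions that can legitimately be boosted), and in addition localizes in physical space so that the boosted solutions have compact support on time slices and the identities of Proposition~\ref{prop:virial_identities} apply.

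Second, and more decisively, your argument requires a \emph{finite} boost, of velocity $|v| = |P^\infty|/E_{crit}$ which need not be small, together with the exact four-vector law $E' = \sqrt{E_{crit}^2 - |P^\infty|^2}$ and the strict inequality $|P^\infty| < E_{crit}$ (whose saturation analysis you only sketch). But Proposition~\ref{prop:S_norm_Lorentz}, which you invoke to see that singularity survives the boost, is established in the paper only for Lorentz transformations sufficiently close to the identity; smallness of $|d|$ is used throughout its proof, e.g.\ to commute frequency/modulation localizations with $L$. The paper's proof is structured precisely so that only an infinitesimal boost is needed: it averages the boosted energy from Lemma~\ref{lem:energy_transformation_behavior_under_lorentz} over a short time interval, differentiates in $d$ at $d = 0$, and identifies the derivative, via the weighted energy identity \eqref{equ:weighted_energy}, with (minus) the momentum. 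A nonzero momentum then produces, for some small $d$ of the right sign, a uniform energy drop $E(A_n^L, \phi_n^L) \leq E_{crit} - \kappa$, hence uniform $S^1$ bounds for the boosted (Coulomb-gauged) solutions, and Proposition~\ref{prop:S_norm_Lorentz} — now legitimately applicable since $d$ is small — transfers these bounds back to $(A_n, \phi_n)$, contradicting essential singularity. This removes any need for the strict bound $|P^\infty| < E_{crit}$ or the exact relativistic energy formula. As written, your proof cannot be completed with the tools the paper provides; to repair it you would either have to extend Proposition~\ref{prop:S_norm_Lorentz} to large boosts and justify boosting energy class solutions directly, or switch to the infinitesimal-boost derivative argument.
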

 As for the critical focusing nonlinear wave equation \cite{KM} and for critical wave maps \cite{KS}, the Lorentz invariance of the Maxwell-Klein-Gordon system and transformational properties of the energy under Lorentz transformations are essential ingredients in the proof of Proposition~\ref{prop:vanishing_momentum_finite_time}. We begin by considering the relativistic invariance properties of our system. Assume that 
 \[
  L:\R^{1+4} \to \R^{1+4}
 \]
 is a Lorentz transformation, acting on column vectors via multiplication with the matrix $L$. Then $\phi$ transforms according to 
 \begin{equation} \label{equ:Lorentz_transformation_of_phi}
  \phi \mapsto \phi^L := \phi \bigl( L (t,x) \bigr),
 \end{equation}
 which results in 
 \[
  \nabla_{t,x} \phi \mapsto L^t \nabla_{t,x} \phi \bigl( L (t,x) \bigr).
 \]
 Then the potential $A_{\alpha}$ needs to transform accordingly, i.e. writing this as a column vector indexed by $\alpha$, we transform 
 \begin{equation} \label{equ:Lorentz_transformation_of_A}
  A \mapsto A^L := L^t A \bigl( L (t,x) \bigr).
 \end{equation}
 Then the expression $\partial^{\beta}F_{\alpha\beta}$, when interpreted as a column vector in $\alpha$, also transforms according to multiplication with $L^t$, as does the expression 
 \[
  \Im\big(\phi\overline{D_{\alpha}\phi}\big).
 \]
 Under these transformations, the Maxwell-Klein-Gordon system is then invariant. However, the conserved energy does not remain invariant under general Lorentz transformations, and our first step is to quantify this. In the sequel we only consider very specific Lorentz transformations of the form
 \begin{equation} \label{equ:specific_Lorentz_transform}
  L = \left( \begin{array}{ccccc}\frac{1}{\sqrt{1-d^2}}&\frac{-d}{\sqrt{1-d^2}}&0&0&0\\\frac{-d}{\sqrt{1-d^2}}&\frac{1}{\sqrt{1-d^2}}&0&0&0 \\
   0&0&1&0&0 \\
   0&0&0&1&0 \\
   0&0&0&0&1 \end{array} \right)
 \end{equation}
 for small $d \in \R$.
 \begin{lem} \label{lem:energy_transformation_behavior_under_lorentz}
  Let $(A, \phi)$ be an admissible global solution to MKG-CG and let $L: \R^{1+4} \to \R^{1+4}$ be a Lorentz transformation of the form \eqref{equ:specific_Lorentz_transform} for some $d \in \R$. Then we have for all $t \in \R$ that
  \begin{align} \label{equ:energy_transformation_behavior_under_lorentz}
   \begin{aligned}   
    E\bigl(A^L, \phi^L\bigr)(t) &= \int_{\R^4} \biggl( \frac{1}{4} \sum_{\alpha, \beta} F_{\alpha \beta}^2 + \frac{1}{2} \sum_\alpha |D_\alpha \phi|^2 \biggr)\bigl(L(t,x)\bigr) \, dx \\
    &\quad + \frac{d^2}{1-d^2} \int_{\R^4} \biggl( \sum_{j=2}^4 (F_{0j}^2 + F_{1j}^2) + \sum_{\alpha = 0}^1 |D_\alpha \phi|^2 \biggr)\bigl(L(t,x)\bigr) \, dx \\
    &\quad - \frac{2d}{1-d^2} \int_{\R^4} \biggl( \sum_{j=1}^4 F_{0j} F_{1j} + \Re \, \bigl( D_0 \phi \overline{D_1 \phi} \bigr) \biggr)\bigl(L(t,x)\bigr) \, dx.
   \end{aligned}
  \end{align}
 \end{lem}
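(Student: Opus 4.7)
My plan is to treat this as a direct, albeit careful, tensorial computation, exploiting the explicit block structure of the boost $L$ in \eqref{equ:specific_Lorentz_transform} (which only mixes the $0$ and $1$ coordinates and leaves $x_2, x_3, x_4$ fixed). The first step is to record the transformation rules implied by \eqref{equ:Lorentz_transformation_of_phi}--\eqref{equ:Lorentz_transformation_of_A}: writing $L^\gamma{}_\alpha$ for the matrix entries of $L$, a direct computation gives the tensorial rules
\[
 D^L_\alpha \phi^L(t,x) = L^\gamma{}_\alpha \, (D_\gamma \phi)(L(t,x)), \qquad F^L_{\alpha\beta}(t,x) = L^\gamma{}_\alpha L^\delta{}_\beta \, F_{\gamma\delta}(L(t,x)),
\]
where the second relation follows from the first applied to $\partial_\alpha A_\beta^L - \partial_\beta A_\alpha^L$. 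For the specific boost \eqref{equ:specific_Lorentz_transform}, one then reads off explicitly, with everything on the right evaluated at $L(t,x)$:
\[
 D^L_0 \phi^L = \tfrac{1}{\sqrt{1-d^2}}(D_0\phi - d\, D_1\phi), \quad D^L_1 \phi^L = \tfrac{1}{\sqrt{1-d^2}}(-d\, D_0\phi + D_1\phi), \quad D^L_j \phi^L = D_j\phi \ (j=2,3,4),
\]
and analogously for $F^L_{0j}, F^L_{1j}$ with $j \in \{2,3,4\}$, while $F^L_{01} = (\det L_{\{0,1\}})\, F_{01} = F_{01}$ and $F^L_{jk} = F_{jk}$ for $2 \le j < k \le 4$ are left unchanged.

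Next I would plug these relations into the energy integrand at fixed $t$, using antisymmetry to rewrite $\tfrac14 \sum_{\alpha,\beta}F_{\alpha\beta}^2 = \tfrac12 \sum_{\alpha<\beta}F_{\alpha\beta}^2$, and split the sum according to whether each of the two indices lies in $\{0,1\}$ or $\{2,3,4\}$. The blocks $F_{01}$ and $F_{jk}$ with $j,k\ge 2$, and the covariant derivatives $D_j\phi$ with $j\ge 2$, contribute directly to the first integral on the right-hand side of \eqref{equ:energy_transformation_behavior_under_lorentz}. The boost-sensitive blocks can be handled by the identity
\[
 (a - d b)^2 + (-d a + b)^2 = (1+d^2)(a^2+b^2) - 4d\, a b,
\]
applied with $(a,b) = (F_{0j}, F_{1j})$ for $j = 2,3,4$ and, after taking real parts, with $(a,b) = (D_0\phi, D_1\phi)$. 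Dividing by $1-d^2$ and regrouping
\[
 \frac{1+d^2}{1-d^2} = 1 + \frac{2d^2}{1-d^2}
\]
produces exactly the coefficients $\tfrac12$ (which combines with the first integral), $\tfrac{d^2}{1-d^2}$, and $-\tfrac{2d}{1-d^2}$ appearing in the three lines of the claimed identity.

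Finally, I would collect everything and observe that the sum $\sum_{j=1}^4 F_{0j}F_{1j}$ in the last line of \eqref{equ:energy_transformation_behavior_under_lorentz} agrees with $\sum_{j=2}^4 F_{0j}F_{1j}$ coming out of the calculation, because $F_{01}F_{11} = 0$ by antisymmetry. There is no genuine analytic obstacle here: the only risk is in the index bookkeeping (verifying that cross terms coming from the $(0,1)$-block and the $(0,j),(1,j)$-blocks assemble correctly, and that the $(0,1)$ curvature is boost-invariant because $\det L_{\{0,1\}} = 1$). For this reason I would carry out the computation for the $F$-part and the $\phi$-part in parallel and compare coefficients of $F_{0j}^2, F_{1j}^2, F_{0j}F_{1j}$ and of $|D_0\phi|^2, |D_1\phi|^2, \Re(D_0\phi\,\overline{D_1\phi})$ term by term with the right-hand side of \eqref{equ:energy_transformation_behavior_under_lorentz}, which is the step where the proof is essentially concluded.
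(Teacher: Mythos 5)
Your proposal is correct and follows essentially the same route as the paper's proof: explicit componentwise transformation of $F_{\alpha\beta}$ and $D_\alpha\phi$ under the boost (the paper derives these by chain-rule computations on the potentials, you phrase them as tensorial covariance, which amounts to the same verification), followed by the identical algebraic regrouping via $(a-db)^2+(-da+b)^2=(1+d^2)(a^2+b^2)-4dab$ and $\frac{1+d^2}{1-d^2}=1+\frac{2d^2}{1-d^2}$. The bookkeeping points you flag (invariance of $F_{01}$, the vanishing $j=1$ cross term) are exactly the ones the paper checks, so no gap remains.
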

 \begin{proof}
  The potential $A$ is transformed into $A^L$ as follows
  \begin{align*}
   A_0^L(t,x) &= \frac{1}{\sqrt{1-d^2}} A_0(L(t,x)) - \frac{d}{\sqrt{1-d^2}} A_1(L(t,x)), \\
   A_1^L(t,x) &= - \frac{d}{\sqrt{1-d^2}} A_0(L(t,x)) + \frac{1}{\sqrt{1-d^2}} A_1(L(t,x)), \\
   A_j^L(t,x) &= A_j(L(t,x)), \quad j = 2, 3, 4.
  \end{align*}
  Then we compute the corresponding curvature components
  \begin{align*}
   F_{01}^L &= \partial_t A_1^L - \partial_1 A_0^L \\
   &= - \frac{d}{\sqrt{1-d^2}} \biggl( \frac{1}{\sqrt{1-d^2}} \partial_t A_0 - \frac{d}{\sqrt{1-d^2}} \partial_1 A_0 \biggr) + \frac{1}{\sqrt{1-d^2}} \biggl( \frac{1}{\sqrt{1-d^2}} \partial_t A_1 - \frac{d}{\sqrt{1-d^2}} \partial_1 A_1 \biggr) \\
   &\quad - \frac{1}{\sqrt{1-d^2}} \biggl( - \frac{d}{\sqrt{1-d^2}} \partial_t A_0 + \frac{1}{\sqrt{1-d^2}} \partial_1 A_0 \biggr) + \frac{d}{\sqrt{1-d^2}} \biggl( - \frac{d}{\sqrt{1-d^2}} \partial_t A_1 + \frac{1}{\sqrt{1-d^2}} \partial_1 A_1 \biggr) \\
   &= F_{01}.
  \end{align*}
  Here the right hand side has to be evaluated at $L (t,x)$. We use this convention for the remainder of the proof. Further, we obtain
  \begin{align*}
   F_{02}^L &= \frac{1}{\sqrt{1-d^2}} \partial_t A_2 - \frac{d}{\sqrt{1-d^2}} \partial_1 A_2 - \partial_2 \biggl( \frac{1}{\sqrt{1-d^2}} A_0 - \frac{d}{\sqrt{1-d^2}} A_1 \biggr) \\
   &= \frac{1}{\sqrt{1-d^2}} F_{02} - \frac{d}{\sqrt{1-d^2}} F_{12}
  \end{align*}
  as well as
  \begin{align*}
   F_{03}^L = \frac{1}{\sqrt{1-d^2}} F_{03} - \frac{d}{\sqrt{1-d^2}} F_{13}, \quad F_{04}^L = \frac{1}{\sqrt{1-d^2}} F_{04} - \frac{d}{\sqrt{1-d^2}} F_{14}.
  \end{align*}
  Similarly, we compute 
  \begin{align*}
   F_{12}^L &= - \frac{d}{\sqrt{1-d^2}} \partial_t A_2 + \frac{1}{\sqrt{1-d^2}} \partial_1 A_2 + \frac{d}{\sqrt{1-d^2}} \partial_2 A_0 - \frac{1}{\sqrt{1-d^2}} \partial_2 A_1 \\
   &= - \frac{d}{\sqrt{1-d^2}} F_{02} + \frac{1}{\sqrt{1-d^2}} F_{12}
  \end{align*}
  and 
  \begin{align*}
   F_{13}^L = - \frac{d}{\sqrt{1-d^2}} F_{03} + \frac{1}{\sqrt{1-d^2}} F_{13}, \quad F_{14}^L = - \frac{d}{\sqrt{1-d^2}} F_{04} + \frac{1}{\sqrt{1-d^2}} F_{14}.
  \end{align*}
  Finally, we have for $i, j \geq 2$ that
  \begin{align*}
   F_{ij}^L = F_{ij}.
  \end{align*}
  In summary, we have found that
  \begin{align} \label{equ:energy_transformation_behavior_under_lorentz_A}
   \sum_{\alpha, \beta} \bigl( F_{\alpha \beta}^L \bigr)^2 &= \sum_{\alpha, \beta} F_{\alpha \beta}^2 + \frac{4 d^2}{1-d^2} \sum_{j=2}^4 \bigl( F_{0j}^2 + F_{1j}^2 \bigr) - \frac{8d}{1-d^2} \sum_{j=1}^4 F_{0j} F_{1j}.
  \end{align}
  We have to carry out the analogous computations for the part of the energy associated with the scalar field $\phi$. Here we have
  \begin{align*}
   &\bigl| \bigl( \partial_t + i A_0^L \bigr) \phi^L \bigr|^2 + \bigl| \bigl( \partial_1 + i A_1^L \bigr) \phi^L \bigr|^2 \\
   &= \biggl| \frac{1}{\sqrt{1-d^2}} \partial_t \phi - \frac{d}{\sqrt{1-d^2}} \partial_1 \phi + i \biggl( \frac{1}{\sqrt{1-d^2}} A_0 - \frac{d}{\sqrt{1-d^2}} A_1 \biggr) \phi \biggr|^2 \\
   &\quad + \biggl| - \frac{d}{\sqrt{1-d^2}} \partial_t \phi + \frac{1}{\sqrt{1-d^2}} \partial_1 \phi + i \biggl( - \frac{d}{\sqrt{1-d^2}} A_0 + \frac{1}{\sqrt{1-d^2}} A_1 \biggr) \phi \biggr|^2 \\
   &= \frac{1+d^2}{1-d^2} \Bigl( |D_0 \phi|^2 + |D_1 \phi|^2 \Bigr) - \frac{4d}{1-d^2} \Re \, \bigl( D_0 \phi \overline{D_1 \phi} \bigr)
  \end{align*}
  and for $j = 2, 3, 4$,
  \[
   \bigl| \bigl( \partial_j + i A_j^L \bigr) \phi^L \bigr|^2 = |D_j \phi|^2. 
  \]
  Thus, we obtain that
  \begin{align} \label{equ:energy_transformation_behavior_under_lorentz_phi}
   \sum_\alpha \bigl| \bigl( \partial_\alpha + i A_\alpha^L \bigr) \phi^L \bigr|^2 = \sum_\alpha |D_\alpha \phi|^2 + \frac{2 d^2}{1-d^2} \bigl( |D_0 \phi|^2 + |D_1 \phi|^2 \bigr) - \frac{4d}{1-d^2} \Re \, \bigl( D_0 \phi \overline{D_1 \phi} \bigr).
  \end{align}
  The assertion now follows from \eqref{equ:energy_transformation_behavior_under_lorentz_A} and \eqref{equ:energy_transformation_behavior_under_lorentz_phi}.
 \end{proof}

 The identity \eqref{equ:energy_transformation_behavior_under_lorentz} strongly suggests that if it is impossible to lower the energy by means of a Lorentz transform of the form \eqref{equ:specific_Lorentz_transform} for very small $d$ with a suitable sign, then the momentum must vanish. To make this observation rigorous, we also need to establish a relation between the $S^1$ norm of an admissible global solution $(A, \phi)$ to MKG-CG and the $S^1$ norm of a suitable evolution of the data $(A^L, \phi^L)[0]$ obtained from the Lorentz transformed solution $(A^L, \phi^L)$. Here we first observe that for an admissible global solution $(A, \phi)$ to MKG-CG, the Lorentz transformed solution $(A^L, \phi^L)$ is actually globally defined. We can therefore consider the data pair $(A^L, \phi^L)[0]$ and note that $(A^L, \phi^L)[0]$ is $C^\infty$-smooth, but not in Coulomb gauge. Moreover, if $(t,x) \in \R^{1+4}$ are restricted to a space-like hyperplane containing the origin, then we have
 \[
  \bigl| F_{jk}(t,x) \bigr| \lesssim \bigl( 1 + |t| + |x| \bigr)^{-N}
 \]
 for $j,k \in \{1, \ldots, 4\}$ and any $N \geq 1$. From the equation satisfied by $F_{\alpha \beta}$ we obtain after integration in time that
 \[
  \bigl| F_{0k}(t,x) \bigr| \lesssim \bigl( 1 + |t| + |x| \bigr)^{-3}
 \]
 for $k = 1, \ldots, 4$. Thus, the curvature components of $(A^L, \phi^L)[0]$ decay like $\langle x \rangle^{-3}$ as $|x| \to \infty$, which ensures $L^2_x$-integrability, and the components $\nabla_{t,x} \phi^L$ decay rapidly with respect to $x$. In particular, upon transforming $(A^L, \phi^L)[0]$ into Coulomb gauge, it is meaningful to consider its MKG-CG evolution and its $S^1$ norm. Then we prove the following technical 
 \begin{prop} \label{prop:S_norm_Lorentz}
  Let $(A, \phi)$ be an admissible global solution to MKG-CG and let $L: \R^{1+4} \to \R^{1+4}$ be a Lorentz transformation of the form \eqref{equ:specific_Lorentz_transform} for sufficiently small $|d|$. Let $(A^L, \phi^L)[0]$ be the data pair obtained from the Lorentz transformed solution $(A^L, \phi^L)$. Assume that $(A^L, \phi^L)[0]$, when transformed into the Coulomb gauge, results in a smooth global solution $\bigl( \tilde{A}^L, \tilde{\phi}^L \bigr)$ to MKG-CG satisfying
  \begin{equation*}
   \bigl\| \bigl( \tilde{A}^L, \tilde{\phi}^L \bigr) \bigr\|_{S^1} < \infty.
  \end{equation*}
  Then we have for the original evolution $(A, \phi)$ that
  \begin{equation*}
   \bigl\| (A, \phi) \bigr\|_{S^1} \leq C \Bigl( \bigl\| \bigl( \tilde{A}^L, \tilde{\phi}^L \bigr) \bigr\|_{S^1}, L \Bigr).
  \end{equation*}
 \end{prop}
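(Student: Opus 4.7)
The plan is to deduce the $S^1$-bound on $(A,\phi)$ in three conceptual steps, the last of which is a bootstrap argument on the full $S^1$ norm driven by two key observations.

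First, I would transfer the hypothesized $S^1$ control from $(\tilde A^L, \tilde\phi^L)$ to $(A^L, \phi^L)$ via the gauge transformation relating them. Writing $\tilde A^L_\alpha = A^L_\alpha - \partial_\alpha \chi$ and $\tilde\phi^L = e^{i\chi}\phi^L$ with $\chi$ determined from the elliptic equation $\Delta \chi = \partial^j A^L_j$ (using the rapid spatial decay of the curvature components of $(A^L,\phi^L)$ noted prior to the proposition to fix the data at infinity), elliptic regularity and the control on $\tilde A^L$ give bounds on $\chi$, hence on $A^L$ and $\phi^L$ in Strichartz and $X^{s,b}$ norms on $\R\times\R^4$. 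Next, I would transport these norms to $(A,\phi)$ by the change of variables $(t,x)\mapsto L(t,x)$: since $|\det L|=1$, diagonal Strichartz norms $L^q_{t,x}$ are isometric, off-diagonal $L^q_t L^r_x$ norms transform with a constant depending only on $|d|$ (because $L$ is bi-Lipschitz with constant $\tfrac{1+|d|}{\sqrt{1-d^2}}$), and the $X^{0,\frac12}_\infty$ components transform controllably because $L$ preserves the light cone $\{\tau^2=|\xi|^2\}$ and distorts modulation regions $\{||\tau|-|\xi||\sim 2^j\}$ only by the frequency ratio, which can be absorbed after dyadic reorganization. The cumulative effect of these two steps is a bound of the $\tilde S^1$ norm of $(A,\phi)$ (the Strichartz plus $X^{0,\frac12}_\infty$ part of $S^1$ introduced in Subsection~\ref{subsec:proof_of_joint_time_interval_prop}) of the form $\|(A,\phi)\|_{\tilde S^1}\le C(|d|)\,\|(\tilde A^L,\tilde\phi^L)\|_{S^1}$.

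Second, I would upgrade this $\tilde S^1$-control to the full $S^1$-norm via a bootstrap. The bootstrap rests on two observations. Observation~1: for any Schwartz function $u$, the dyadic energy estimate
\[
 \|P_k u\|_{S_k}\lesssim \|\nabla_{t,x}P_k u(0)\|_{L^2_x}+\|\Box P_k u\|_{N_k}
\]
allows the full $S_k$-norm (including the null-frame component $S^{ang}_k$) to be recovered from the $\tilde S_k$-norm plus an $N_k$-bound on the source. Observation~2: applied to the MKG-CG equations $\Box A_j=-\mathcal P_j\Im(\phi\overline{D_x\phi})$ and $\Box_A\phi=0$, the multilinear estimates of \cite{KST}, together with a partition of $\R$ into finitely many divisibility intervals along the lines of Proposition~\ref{prop:decomposition}, bound $\|\Box A_j\|_{N}$ and $\|\Box_A\phi\|_{N}$ by $\tfrac12\|(A,\phi)\|_{S^1}$ plus a constant multiple of $\|(A,\phi)\|_{\tilde S^1}$ and lower-order terms. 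Inserting Observation~2 into Observation~1 closes the bootstrap, yielding $\|(A,\phi)\|_{S^1}\le C(\|(\tilde A^L,\tilde\phi^L)\|_{S^1},L)$. The global admissibility of $(A,\phi)$, together with finite speed of propagation and Proposition~\ref{prop:breakdown_criterion}, ensures that this a priori bound can be promoted to a genuine bound on the full time axis.

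The main obstacle is the upgrade from $\tilde S^1$ to the full $S^1$ norm, i.e.\ recovering the null-frame components $S^{ang}_k$, which do not behave well under Lorentz transformations because they depend on directional frequency localizations whose angular geometry is distorted by $L$. It is precisely for this reason that a direct transport of the full $S^1$ norm under $L$ cannot work, and one must reconstruct $S^{ang}_k$ from the equation. Executing the divisibility partition of $\R$ so that the nonlinear contributions enter only as a small perturbation — hence closable by bootstrap — is the technically delicate core of the argument, and the number of divisibility intervals (and therefore the final constant) depends polynomially on $\|(\tilde A^L,\tilde\phi^L)\|_{S^1}$ and on $|d|^{-1}$.
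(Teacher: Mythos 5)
Your overall architecture matches the paper's (gauge transform back to $(A^L,\phi^L)$, transfer of Lorentz-compatible information, then a divisibility-driven bootstrap through the equations to rebuild the full $S^1$ norm, with the null-frame components correctly identified as the part that must come from the equation). However, there is a genuine gap at the transfer step: the claim that the full $\tilde S^1$ norm --- all mixed Strichartz norms plus the full $X^{0,\frac12}_\infty$ component --- transports under $L$ with a constant $C(|d|)$ "because $L$ is bi-Lipschitz" is false as stated. Mixed norms $L^q_tL^r_x$ with $q\neq r$ are not controlled under a change of variables that tilts the time slicing: a function concentrated on a thin time slab $\{|t|\leq\varepsilon\}$ has $L^2_tL^\infty_x$ norm $\sim\varepsilon^{1/2}$, while its boosted version lives on a tilted slab and has $L^2_tL^\infty_x$ norm $\sim(d+\varepsilon)^{1/2}$, so no bound with constant depending only on $d$ holds. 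Likewise, the statement that modulation regions are distorted "only by the frequency ratio" fails exactly where it matters: for $\,||\tau|-|\xi||\sim|\xi|$ the boost maps $\{|\xi|\sim 2^k\}$ onto sets containing spatial frequencies $\ll 2^k$, so $P_kQ_j$ localizations do not quasi-commute with $L$ there. This is precisely why the paper does \emph{not} transfer $\tilde S^1$: its Observation~1 only controls $X^{0,\frac12}_\infty$ off a modulation window $[k+\tfrac12 C(L),k+C(L)]$ (with weaker Bernstein-type $L^2_tL^{8/3+}_x$, $L^2_tL^{4+}_x$ substitutes inside the window) together with low-modulation $L^\infty_tL^2_x$ and $L^2_tL^\infty_x$ bounds, and its Observation~2 provides bilinear space-time $L^2$ norms of low-modulation products; it is these restricted, genuinely quasi-Lorentz-invariant and \emph{divisible} quantities that are carried over to $(A,\phi)$ and that supply the smallness in Steps~1--2.

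The gap propagates: since your divisibility partition is supposed to be generated by $\tilde S^1$-controlled norms of $(A,\phi)$, and that control is not available, you would be forced to partition according to the unknown $\|(A,\phi)\|_{S^1}$, which is circular (the constant must depend only on $\|(\tilde A^L,\tilde\phi^L)\|_{S^1}$ and $L$). Two further points need repair in the bootstrap itself: the dyadic energy inequality with the plain d'Alembertian cannot absorb the low-high magnetic term $P_{<k-C}A^{free}_j\,\partial^jP_k\phi$ (it is non-perturbative even for small data), so the reconstruction must run through the paradifferential operator $\sum_k\Box_{A^{free}_{<k}}P_k$ and Theorem~\ref{thm:linear_estimates_magnetic_wave_equation}, as in Step~2 of the paper's proof; and writing $\|\Box_A\phi\|_N$ as the quantity to be made small is vacuous ($\Box_A\phi=0$) --- the object to estimate is the source $F$ left after peeling off the paradifferential magnetic part, and its smallness must be extracted from the transferred bilinear $L^2_{t,x}$ norms and restricted-modulation $X^{0,\frac12}$ pieces rather than from $\tilde S^1$.
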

 We defer the technical proof of Proposition~\ref{prop:S_norm_Lorentz} to the end of this subsection and first prove Proposition~\ref{prop:vanishing_momentum_finite_time} by combining Lemma~\ref{lem:energy_transformation_behavior_under_lorentz} and Proposition~\ref{prop:S_norm_Lorentz}.
 \begin{proof}[Proof of Proposition~\ref{prop:vanishing_momentum_finite_time}]
  In order to be able to apply Proposition~\ref{prop:S_norm_Lorentz}, we have to use smooth solutions that are globally defined, because otherwise we cannot meaningfully apply a Lorentz transformation. In fact, we may exploit that by the preceding Lemma~\ref{lem:finite_lifespan_support_in_ball}, the function $\Phi^\infty$ is compactly supported, which means that its Fourier transform cannot also be compactly supported (we may of course assume $\Phi^\infty[0]$ to be non-vanishing, since otherwise, the solution extends trivially in a global fashion and cannot be singular). But then, truncating the data $(\cA^\infty, \Phi^\infty)[0]$ in Fourier space as in Proposition~\ref{prop:perturbation} and the discussion following it, we may construct a sequence of smooth Coulomb data $(A_n, \phi_n)[0]$ converging to $(\mathcal{A}^\infty, \Phi^\infty)[0]$, and if necessary, multiplying the $\phi_n[0]$ in the resulting $(A_n, \phi_n)[0]$ by a small scalar $\lambda_n \in [0,1]$ with $\lambda_n \rightarrow 1$ as $n \to \infty$, we may force that for all $n \geq 1$,
  \begin{equation} \label{equ:S_norm_prop_energy_inequality}
   E(A_n, \phi_n) < E_{crit}.
  \end{equation}
  Note that then the perturbation theory developed in Proposition~\ref{prop:perturbation} still applies in relation to $(\mathcal{A}^\infty, \Phi^\infty)$, since we have not changed the data for $A_n$. This means that the data $(A_n, \phi_n)[0]$ do admit a global MKG-CG evolution by definition of $E_{crit}$, and can thus be Lorentz transformed. In order to justify various conservation laws for the Lorentz transformed $(A_n, \phi_n)$, we observe that we may also localize the data $(A_n, \phi_n)[0]$ in physical space to a sufficiently large ball, using the argument in Subsection~\ref{subsec:localizing_in_physical_space} as well as \cite{KMPT} such that the Lorentz transformed solution also has compact support on bounded time slices, and we still have the above inequality \eqref{equ:S_norm_prop_energy_inequality} for the energy. 

  \medskip

  We make the hypothesis that the momentum of $(\cA^\infty, \Phi^\infty)$ does not vanish. Then without loss of generality, there exists $\gamma > 0$ such that for all sufficiently large $n$, we have
  \begin{equation} \label{equ:momentum_lower_and_upper_bound}
   \int_{\R^4} \biggl( \sum_{j=1}^4 F_{n, 0j} F_{n, 1j} + \Re \, \bigl( (\partial_t  + i A_{n,0}) \phi_n \overline{ (\partial_1 + i A_{n,1} ) \phi_n } \bigr) \biggr)(t,x) \, dx \geq \gamma,
  \end{equation}
  where $F_{n, \alpha \beta}$ denote the curvature components of $(A_n, \phi_n)$. It suffices to show that a suitable Lorentz transformation $L$ of the form \eqref{equ:specific_Lorentz_transform} exists such that the transformed solutions $(A_n^L, \phi_n^L)$ to the Maxwell-Klein-Gordon system have energies
  \begin{equation} \label{equ:vanishing_momentum_uniform_upper_bound_energies}
   E(A_n^L, \phi_n^L) \leq E_{crit} - \kappa(\gamma, \mathcal{A}^\infty, \Phi^\infty) 
  \end{equation}
  uniformly in $n$ for some $\kappa(\gamma, \mathcal{A}^\infty, \Phi^\infty) > 0$. Then, upon transforming $(A_n^L, \phi_n^L)[0]$ into the Coulomb gauge, we obtain a global solution to MKG-CG with a finite global $S^1$ norm bound, and using Proposition~\ref{prop:S_norm_Lorentz}, we can infer a global $S^1$ norm bound for $(A_n, \phi_n)$ uniformly in $n$, which contradicts that $(\mathcal{A}^\infty, \Phi^\infty)$ is a singular solution. To implement this strategy, we combine the argument for Proposition 4.10 in \cite{KM} with Lemma~\ref{lem:energy_transformation_behavior_under_lorentz}. 

  By energy conservation for $(A_n^L, \phi_n^L)$, we have the relation
  \begin{equation*}
   \frac{1}{4} E \bigl( A_n^L, \phi_n^L \bigr)(0) = \int_{0}^{\frac{1}{4}} E\bigl( A_n^L, \phi_n^L \bigr)(t) \, dt,
  \end{equation*}
  where we recall that for a solution $(A, \phi)$ to the Maxwell-Klein-Gordon system the energy at time $t \in \R$ is given by
  \[
   E\bigl(A, \phi\bigr)(t) = \int_{\R^4} \biggl( \frac{1}{4} \sum_{\alpha, \beta} F_{\alpha \beta}^2 + \frac{1}{2} \sum_{\alpha} |D_{\alpha} \phi|^2 \biggr)(t,x) \, dx.
  \]
  According to Lemma~\ref{lem:energy_transformation_behavior_under_lorentz}, we can write 
  \[
   \frac{1}{4} E \bigl( A_n^L, \phi_n^L \bigr)(0) = I_1 + I_2,
  \]
  where
  \begin{align*}
   I_1 &= \int_{0}^{\frac{1}{4}} \int_{\R^4} \biggl( \frac{1}{4} \sum_{\alpha, \beta} F_{n, \alpha \beta}^2 + \frac{1}{2} \sum_\alpha \bigl|(\partial_\alpha + i A_{n,\alpha}) \phi_n \bigr|^2 \biggr)\bigl(L(t,x)\bigr) \, dx \, dt \\
   &\quad + \frac{d^2}{1-d^2} \int_{0}^{\frac{1}{4}} \int_{\R^4} \biggl( \sum_{j=2}^4 \bigl( F_{n,0j}^2 + F_{n,1j}^2 \bigr) + \sum_{\alpha = 0}^1 \bigl| (\partial_\alpha + i A_{n, \alpha}) \phi_n \bigr|^2 \biggr)\bigl(L(t,x)\bigr) \, dx \, dt
  \end{align*}
  and 
  \begin{align*}
   I_2 &= - \frac{2d}{1-d^2} \int_{0}^{\frac{1}{4}} \int_{\R^4} \biggl( \sum_{j=1}^4 F_{n,0j} F_{n,1j} + \Re \, \bigl( (\partial_t + i A_{n,0}) \phi_n \overline{ (\partial_1 + i A_{n,1}) \phi_n } \bigr) \biggr)\bigl(L(t,x)\bigr) \, dx \, dt.
  \end{align*}
  We recall that the above integrands are evaluated at 
  \[
   L(t,x) = \biggl( \frac{t - d x_1}{\sqrt{1 - d^2}}, \frac{x_1 - d t}{\sqrt{1-d^2}}, x_2, x_3, x_4 \biggr).
  \]
  Next we compute the derivative of $I_1 + I_2$ with respect to $d$. To this end we note that for a regular function $f$ of compact support, it holds that (see page 173 in \cite{KM})
  \[
   \frac{\partial}{\partial d} \int_{\R^4} f \bigl( L(t,x) \bigr) \, dx = - \frac{1}{1-d^2} \frac{\partial}{\partial t} \int_{\R^4} x_1 f\bigl( L(t,x) \bigr) \, dx.
  \]
  Using our assumption that the spatial components of $A_n$ as well as $\phi_n$ are compactly supported on fixed time slices, we thus obtain
  \begin{align*}
   \frac{\partial}{\partial d} I_1(d) &= - \frac{1}{1-d^2} \int_0^{\frac{1}{4}} \frac{\partial}{\partial t} \int_{\R^4} x_1 \biggl( \frac{1}{4} \sum_{\alpha, \beta} F_{n, \alpha \beta}^2 + \frac{1}{2} \sum_\alpha \bigl|(\partial_\alpha + i A_{n,\alpha}) \phi_n \bigr|^2 \biggr)\bigl(L(t,x)\bigr) \, dx \, dt \\
   &\quad + \frac{2 d}{(1-d^2)^2} \int_0^{\frac{1}{4}} \int_{\R^4} \biggl( \sum_{j=2}^4 \bigl( F_{n,0j}^2 + F_{n,1j}^2 \bigr) + \sum_{\alpha = 0}^1 \bigl| (\partial_\alpha + i A_{n, \alpha}) \phi_n \bigr|^2 \biggr)\bigl(L(t,x)\bigr) \, dx \, dt \\
   &\quad - \frac{d^2}{(1-d^2)^2} \int_0^{\frac{1}{4}} \frac{\partial}{\partial t} \int_{\R^4} x_1 \biggl( \sum_{j=2}^4 \bigl( F_{n,0j}^2 + F_{n,1j}^2 \bigr) + \sum_{\alpha = 0}^1 \bigl| (\partial_\alpha + i A_{n, \alpha}) \phi_n \bigr|^2 \biggr)\bigl(L(t,x)\bigr) \, dx \, dt 
  \end{align*}
  and 
  \begin{align*}
   \frac{\partial}{\partial d} I_2(d) &= - \frac{2 + 2 d^2}{(1-d^2)^2} \int_0^{\frac{1}{4}} \int_{\R^4} \biggl( \sum_{j=1}^4 F_{n,0j} F_{n,1j} + \Re \, \bigl( (\partial_t + i A_{n,0}) \phi_n \overline{ (\partial_1 + i A_{n,1}) \phi_n } \bigr) \biggr)\bigl(L(t,x)\bigr) \, dx \, dt \\
   &\quad + \frac{2d}{(1-d^2)^2} \int_0^{\frac{1}{4}} \frac{\partial}{\partial t} \int_{\R^4} x_1 \biggl( \sum_{j=1}^4 F_{n,0j} F_{n,1j} + \Re \, \bigl( (\partial_t + i A_{n,0}) \phi_n \overline{ (\partial_1 + i A_{n,1}) \phi_n } \bigr) \biggr)\bigl(L(t,x)\bigr) \, dx \, dt.
  \end{align*}
  But then
  \begin{align*}
   \frac{\partial}{\partial d} (I_1 + I_2) \bigg|_{d=0} &= - \int_{\R^4} x_1 \biggl( \frac{1}{4} \sum_{\alpha, \beta} F_{n, \alpha \beta}^2 + \frac{1}{2} \sum_\alpha \bigl|(\partial_\alpha + i A_{n,\alpha}) \phi_n \bigr|^2 \biggr)(t,x) \, dx \bigg|_{t = 0}^{t = \frac{1}{4}} \\
   &\quad - 2 \int_0^{\frac{1}{4}} \int_{\R^4} \biggl( \sum_{j=1}^4 F_{n,0j} F_{n,1j} + \Re \, \bigl( (\partial_t + i A_{n,0}) \phi_n \overline{ (\partial_1 + i A_{n,1}) \phi_n } \bigr) \biggr)(t,x) \, dx \, dt.
  \end{align*}
  Using the weighted energy identity \eqref{equ:weighted_energy} (for $R \to \infty$) and \eqref{equ:momentum_lower_and_upper_bound}, we conclude that
  \begin{align*}
   \frac{\partial}{\partial d} (I_1 + I_2) \bigg|_{d=0} &= - \int_0^{\frac{1}{4}} \int_{\R^4} \biggl( \sum_{j=1}^4 F_{n,0j} F_{n,1j} + \Re \, \bigl( (\partial_t + i A_{n,0}) \phi_n \overline{ (\partial_1 + i A_{n,1}) \phi_n } \bigr) \biggr)(t,x) \, dx \, dt \leq - \frac{1}{4} \gamma
  \end{align*}
  uniformly for all sufficiently large $n$. Also, by energy conservation for $(A_n, \phi_n)$, we have for all $n$ that
  \[
   (I_1 + I_2)(d=0) = \frac{1}{4} E(A_n, \phi_n) < \frac{1}{4} E_{crit}.
  \]
  Hence, we can find a small $d > 0$ so that
  \[
   \frac{1}{4} E\bigl( A_n^L, \phi_n^L \bigr)(0) \leq \frac{1}{4} E_{crit} - \kappa
  \]
  uniformly for all sufficiently large $n$ for some $\kappa \equiv \kappa(\gamma, \cA^\infty, \Phi^\infty) > 0$, which yields \eqref{equ:vanishing_momentum_uniform_upper_bound_energies} and thus finishes the proof of Proposition~\ref{prop:vanishing_momentum_finite_time}.
 \end{proof}

 We next state the analogous result to Proposition~\ref{prop:vanishing_momentum_finite_time} when $I^+$ is infinite. Its proof essentially follows the argument of the proof of Proposition~4.11 in \cite{KM} using the same modifications as in the preceding proof of Proposition~\ref{prop:vanishing_momentum_finite_time}.
 \begin{prop} \label{prop:vanishing_momentum_infinite_time}
  Let $({\mathcal A}^\infty, \Phi^\infty)$ be as above. Assume that $I^+ = [0,\infty)$. Suppose in addition that $\lambda(t) \geq \lambda_0 > 0$ for all $t \geq 0$. Then we have for $k = 1, \ldots, 4$ and all $t \geq 0$ that
  \begin{equation}
   \int_{\R^4} \biggl( \sum_{j=1}^4 \cF_{0j}^\infty \cF_{kj}^\infty + \Re \, \bigl( \mathcal{D}_0 \Phi^\infty \overline{ \mathcal{D}_k \Phi^\infty } \bigr) \biggr)(t,x) \, dx = 0.
  \end{equation}
 \end{prop}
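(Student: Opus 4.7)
The plan is to adapt the Lorentz-transform / lowering-the-energy scheme from the proof of Proposition~\ref{prop:vanishing_momentum_finite_time} to the infinite-time case, with the extra hypothesis $\lambda(t) \geq \lambda_0 > 0$ used to replace the compact support property from Lemma~\ref{lem:finite_lifespan_support_in_ball} by a uniform spatial concentration property. Suppose for contradiction that the momentum does not vanish; by momentum conservation \eqref{equ:conserved_momentum} and a spatial rotation we may assume that there exists $\gamma > 0$ such that
\[
 \int_{\R^4} \biggl( \sum_{j=1}^4 \cF_{0j}^\infty \cF_{1j}^\infty + \Re \, \bigl( \mathcal{D}_0 \Phi^\infty \overline{ \mathcal{D}_1 \Phi^\infty } \bigr) \biggr)(t,x) \, dx \geq \gamma
\]
for all $t \in I^+$. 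The compactness property from Theorem~\ref{thm:compact_orbit} together with the lower bound $\lambda(t) \geq \lambda_0 > 0$ implies the uniform non-concentration bound
\[
 \lim_{R\rightarrow\infty} \sup_{t \in I^+} \int_{\{|x - \overline{x}(t)| \geq R\}} \biggl( \sum_{\alpha,\beta} \bigl(\cF^\infty_{\alpha\beta}\bigr)^2 + \sum_\alpha |\cD_\alpha \Phi^\infty|^2 + \frac{|\Phi^\infty|^2}{|x-\overline{x}(t)|^2} \biggr)(t,x)\,dx = 0,
\]
so the remainder $r(R)$ in \eqref{equ:virial_identities_remainder} can be made uniformly small on the translated balls.

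Next I approximate $(\cA^\infty, \Phi^\infty)[0]$ by smooth Coulomb data $(A_n, \phi_n)[0]$ exactly as in the proof of Proposition~\ref{prop:vanishing_momentum_finite_time}, truncating in frequency via Proposition~\ref{prop:perturbation}, multiplying $\phi_n[0]$ by a scalar $\lambda_n \uparrow 1$ to enforce $E(A_n, \phi_n) < E_{crit}$, and finally localizing in physical space as in Subsection~\ref{subsec:localizing_in_physical_space} to obtain globally defined solutions $(A_n, \phi_n)$ that on any bounded time slice are compactly supported. By choosing the spatial truncation scale to grow sufficiently rapidly compared to $\lambda_0^{-1}$ and using the uniform non-concentration displayed above, the momentum lower bound $\geq \gamma/2$ is inherited by $(A_n, \phi_n)$ for all sufficiently large $n$, uniformly on a fixed time interval $[0, T]$ with $T$ to be chosen.

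For each such $n$, apply a Lorentz boost $L$ of the form \eqref{equ:specific_Lorentz_transform} and split $\frac{1}{T} \int_0^T E(A_n^L, \phi_n^L)(0)\,dt = I_1(d) + I_2(d)$ as in the proof of Proposition~\ref{prop:vanishing_momentum_finite_time}. Following Kenig--Merle's infinite-time argument, I differentiate $I_1 + I_2$ in $d$ at $d=0$ using the weighted energy identity \eqref{equ:weighted_energy} applied on the translated balls $B(\overline{x}(t), R)$ via the cutoff $\varphi_R(\cdot - \overline{x}(t))$; the boundary terms at $t = 0$ and $t = T$ are controlled by $R \cdot E_{crit}$ while the $O(r(R))$ error term integrated in time yields $T \cdot o_R(1)$. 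The leading contribution is
\[
 -\frac{1}{T}\int_0^T \int_{\R^4}\Bigl(\sum_j F_{n,0j}F_{n,1j} + \Re\,(D_0\phi_n\overline{D_1\phi_n})\Bigr)\,dx\,dt \leq -\frac{\gamma}{2},
\]
so choosing $R$ large, then $T$ much larger than $R$, and then $n$ large, we obtain $\partial_d(I_1+I_2)|_{d=0} \leq -\gamma/4$ uniformly. Together with $(I_1+I_2)(0) < \tfrac{1}{T}\int_0^T E_{crit}\,dt = E_{crit}$, this produces a small $d > 0$ and $\kappa = \kappa(\gamma, \cA^\infty, \Phi^\infty) > 0$ with
\[
 E(A_n^L, \phi_n^L) \leq E_{crit} - \kappa
\]
uniformly in $n$. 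By the definition of $E_{crit}$ and Section~\ref{sec:concept_of_weak_evolution}, after transforming $(A_n^L, \phi_n^L)[0]$ into Coulomb gauge we obtain a global MKG-CG evolution with a finite $S^1$ norm bound depending only on $E_{crit} - \kappa$. Proposition~\ref{prop:S_norm_Lorentz} then gives a uniform global $S^1$ bound for $(A_n, \phi_n)$, and passing $n \to \infty$ via the perturbation theory of Proposition~\ref{prop:perturbation} yields a finite global $S^1$ bound for $(\cA^\infty, \Phi^\infty)$, contradicting that it is a singular solution.

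The main technical obstacle, as in Kenig--Merle's treatment, is controlling the weighted quantities in the absence of compact support: the time derivative $\partial_d(I_1 + I_2)|_{d=0}$ involves $\int_{\R^4} x_1 \varphi_R(x - \overline{x}(t))(\cdots)\,dx$, which is not finite a priori, and whose endpoint contributions at $t = 0$ and $t = T$ grow like $R$. The crucial cancellation between these endpoint terms must be extracted by time-averaging over $[0,T]$ and by exploiting that the bound $\lambda(t) \geq \lambda_0$ prevents the center $\overline{x}(t)$ from drifting in a manner that would destroy the Vitali-type covering estimates needed to convert the uniform non-concentration into genuine smallness of the $O(r(R))$ term after integration in $t$.
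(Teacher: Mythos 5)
Your overall strategy is the intended one: the paper proves Proposition~\ref{prop:vanishing_momentum_infinite_time} precisely by running the Kenig--Merle infinite-time argument with the same modifications as in Proposition~\ref{prop:vanishing_momentum_finite_time} (frequency truncation and rescaling below $E_{crit}$, spatial localization so the approximants $(A_n,\phi_n)$ are compactly supported on time slices, Lemma~\ref{lem:energy_transformation_behavior_under_lorentz}, then Proposition~\ref{prop:S_norm_Lorentz} and the perturbation theory of Proposition~\ref{prop:perturbation}). The gap is in the one step that distinguishes the infinite-time case, namely your derivation of $\partial_d(I_1+I_2)|_{d=0}\leq-\gamma/4$. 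You propose to apply the weighted energy identity \eqref{equ:weighted_energy} with a cutoff $\varphi_R(\cdot-\overline{x}(t))$ centered at the moving point $\overline{x}(t)$; but \eqref{equ:weighted_energy} is derived for a fixed weight, and a time-dependent center produces an extra term involving the motion of $\overline{x}(t)$, which is merely continuous and of unknown size, so the identity does not apply as written. If instead you keep a fixed cutoff of scale $R$ and then take $T\gg R$, you need $r(R)$ small uniformly on $[0,T]$, hence $\sup_{[0,T]}|\overline{x}(t)|\lesssim R$; the assertion that $\lambda(t)\geq\lambda_0$ "prevents the center from drifting" is unfounded (a unit-speed drift is perfectly consistent with it --- controlling the drift is exactly the content of Lemmas~\ref{lem:analogue_WM_Lemma_10.9} and \ref{lem:analogue_WM_Lemma_10.10}, the latter of which \emph{uses} the vanishing momentum, so invoking drift control here is circular), and the Vitali covering enters this paper only in the proof of Lemma~\ref{lem:analogue_WM_Lemma_10.9} for a different purpose.

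The point is that no hierarchy $T\gg R$ and no drift control are needed. Since the approximants are compactly supported on each time slice, globally defined, and conserve energy and momentum, the computation of Proposition~\ref{prop:vanishing_momentum_finite_time} goes through verbatim on $[0,T]$ for any fixed $T$: the untruncated identity \eqref{equ:weighted_energy} (with $R\to\infty$) converts the endpoint terms $-\bigl[\int x_1\,e_n\,dx\bigr]_0^T$ into $+\int_0^T p_{n,1}\,dt$, which cancels half of the momentum contribution and gives $\partial_d(I_1+I_2)|_{d=0}=-\int_0^T p_{n,1}\,dt\leq -T\gamma/4$ by momentum conservation of the approximants. Alternatively, arguing at the level of the non-truncated solution as Kenig--Merle do, one chooses the cutoff scale $M\sim R_0(\varepsilon)+T$ and uses the exterior cone energy estimate of Lemma~\ref{lem:energy_on_slice_outside_cone} (which only requires concentration at $t=0$, supplied by Theorem~\ref{thm:compact_orbit} together with $\lambda(t)\geq\lambda_0$) to make all $O(r(M))$ errors of size $\varepsilon$ uniformly on $[0,T]$; again the endpoint terms are absorbed through the identity rather than by crude bounds, so no comparison between $T$ and the cutoff scale is required. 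The hypothesis $\lambda(t)\geq\lambda_0$ enters only through the uniform-in-$t$ physical concentration radius, not through any a priori control of $\overline{x}(t)$.
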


 It remains to give the proof of Proposition~\ref{prop:S_norm_Lorentz}.
 \begin{proof}[Proof of Proposition~\ref{prop:S_norm_Lorentz}]
  We are given an admissible global solution $(A, \phi)$ to MKG-CG and a Lorentz transformation $L: \R^{1+4} \to \R^{1+4}$ of the form \eqref{equ:specific_Lorentz_transform} for small $d \in \R$. Applying the Lorentz transformation $L$ to $(A, \phi)$, we obtain a global solution $(A^L, \phi^L)$ to the Maxwell-Klein-Gordon system. Next we define the gauge transform 
  \begin{equation*}
   \gamma = \sum_{l=1}^4 \Delta^{-1} \bigl( \partial_l A_l^L \bigr) = \Delta^{-1} \bigl( \partial^l A_l^L \bigr)
  \end{equation*}
  and set 
  \[
   \tilde{\phi}^L = e^{i \gamma} \phi^L, \qquad \tilde{A}^L_\alpha = A^L_\alpha - \partial_\alpha \gamma, \quad \alpha = 0, 1, \ldots, 4.
  \]
  Then $(\tilde{A}^L, \tilde{\phi}^L)$ is in Coulomb gauge and a global solution to MKG-CG. By assumption we have that
  \[
   \bigl\| \bigl( \tilde{A}^L, \tilde{\phi}^L \bigr) \bigr\|_{S^1} < \infty.
  \]
  Now the difficulty in controlling the $S^1$ norm of $(A, \phi)$ is that this norm is far from invariant under the operation of Lorentz transformations. Nonetheless, one can establish control over a certain set of norms of $(A, \phi)$ that are essentially invariant under Lorentz transformations, and which in turn imply control over the full $S^1$ norm of $(A, \phi)$. We do this in the following observations.

  \medskip

  \noindent {\bf Observation 1:} {\it For $C = C(L)$ with $C(L) \to \infty$ as $L \to Id$, i.e. as $d \to 0$, we have the bounds 
  \begin{align}
   \biggl( \sum_{k\in \Z} \big\| \nabla_{x} P_k Q_{[k+\frac{1}{2} C, k+C]^c} \phi^L \big\|_{X^{0, \frac{1}{2}}_{\infty}}^2 \biggr)^{\frac{1}{2}} &\lesssim \big\| \big(\tilde{A}^L, \tilde{\phi}^{L}\big) \big\|_{S^1}, \label{equ:Lorentz_observation1_phi1} \\
   \biggl( \sum_{k\in \Z} 2^{-\nu k} \big\| \nabla_{x} P_k Q_{[k+\frac{1}{2} C, k+C]} \phi^L \big\|_{L_t^2 L_x^{\frac{8}{3}+}}^2 \biggr)^{\frac{1}{2}} &\lesssim \bigl\| \bigl( \tilde{A}^L, \tilde{\phi}^{L} \bigr) \bigr\|_{S^1} \label{equ:Lorentz_observation1_phi2}
  \end{align}
  for some $\nu > 0$. Similarly for $A^L$, we have the bounds 
  \begin{align}
   \biggl( \sum_{k\in \Z} \big\| \nabla_{x} P_k Q_{[k+\frac{1}{2}C, k + C]^c} A^L \big\|_{X^{0, \frac{1}{2}}_{\infty}}^2 \biggr)^{\frac{1}{2}} &\lesssim \big\|\big(\tilde{A}^L, \tilde{\phi}^{L}\big)\big\|_{S^1}, \label{equ:Lorentz_observation1_A1} \\
   \biggl( \sum_{k\in \Z} 2^{-(1+)k} \big\|\nabla_{x} P_k Q_{[k+\frac{1}{2}C, k+C]} A^L \big\|_{L_t^2 L_x^{4+}}^2 \biggr)^{\frac{1}{2}} &\lesssim \big\|\big(\tilde{A}^L, \tilde{\phi}^{L}\big)\big\|_{S^1}. \label{equ:Lorentz_observation1_A2}
  \end{align}
  Moreover, we have the bounds 
  \begin{align}
   \biggl( \sum_{k\in \Z} \big\|\nabla_{x} P_k Q_{\leq k + \frac{1}{4}C} \phi^L \big\|_{L_t^\infty L_x^2}^2 \biggr)^{\frac{1}{2}} \lesssim  \big\|\bigl(\tilde{A}^L, \tilde{\phi}^{L}\bigr)\big\|_{S^1}, \label{equ:Lorentz_observation1_phi3} \\
   \biggl( \sum_{k\in \Z} 2^{-k} \big\|P_k Q_{\leq k + \frac{1}{4}C} \phi^L \big\|_{L^2_t L^\infty_x}^2 \biggr)^{\frac{1}{2}} \lesssim  \big\|\bigl(\tilde{A}^L, \tilde{\phi}^{L}\bigr)\big\|_{S^1}, \label{equ:Lorentz_observation1_phi4}   
  \end{align}
  Here the implicit constants may also depend on $\big\|\big(\tilde{A}^L, \tilde{\phi}^L \big)\big\|_{S^1}$. }
  \begin{proof}[Proof of Observation 1] We first derive suitable estimates on the gauge transform $\gamma$, which will then allow us to obtain the claimed bounds in the statement of Observation 1. To this end we compute $\gamma$ in terms of $\tilde{A}^L$, for which we already have good bounds by assumption. Note that in view of \eqref{equ:Lorentz_transformation_of_A}, we have 
  \[
   \bigl( \tilde{A}^L \bigr)^{L^{-1}} = A - ( \nabla_{t,x} \gamma )^{L^{-1}} = A - \nabla_{t,x} \bigl( \gamma(L^{-1}\cdot) \bigr)
  \]
  and so we get 
  \[
   \gamma(L^{-1} \cdot) = - \Delta^{-1} \partial^l \bigl( \bigl( \tilde{A}^L \bigr)^{L^{-1}} \bigr)_l.
  \]
  Thus, we find 
  \[
   \gamma(t,x) = - \Bigl( \Delta^{-1} \partial^l \bigl( \bigl( \tilde{A}^L \bigr)^{L^{-1}} \bigr)_l \Bigr)^L(t,x) = - \Bigl( \Delta^{-1} \partial^l \bigl( \bigl( \tilde{A}^L \bigr)^{L^{-1}} \bigr)_l \Bigr)(L(t,x)).
  \]
  Now for any fixed dyadic frequency $k \in \Z$, we can write 
  \[
   \Bigl( P_k \Delta^{-1} \partial^l \bigl( \bigl( \tilde{A}^L \bigr)^{L^{-1}} \bigr)_l \Bigr)(t,x) = \int_{\R^4} m_k^l(a) \, \bigl( \bigl( \tilde{A}^L \bigr)^{L^{-1}} \bigr)_l(t, x-a) \, da 
  \]
  for suitable $L^1_x$-functions $m_k^l(a)$ with $L^1_x$-mass $\sim 2^{-k}$, and further
  \[
   \Bigl( P_k \Delta^{-1} \partial^l \bigl( \bigl( \tilde{A}^L \bigr)^{L^{-1}} \bigr)_l \Bigr)^L(t,x) = \int_{\R^4} m_k^l(a) \, \Bigl( \bigl( L^{-1} \bigr)^t \tilde{A}^L \Bigr)_l \bigl( (t,x) - L^{-1}(0,a) \bigr) \, da.
  \]
  Also, if $j \leq k + \frac{1}{2} C$ for suitable $C = C(L)$, then Fourier localization to dyadic modulation $2^j$ and spatial frequency $2^k$ essentially commute with the Lorentz transformation, provided $C$ is not too large depending on $d$, and so we have 
  \[
   \Bigl( P_k Q_j \Delta^{-1} \partial^l \bigl( \bigl( \tilde{A}^L \bigr)^{L^{-1}} \bigr)_l \Bigr)^L(t,x) = P_k Q_j \biggl( \int_{\R^4} m_k^l(a) \, \Bigl( P_{k+O(1)} Q_{j+O(1)} \Bigl( \bigl( L^{-1} \bigr)^t \tilde{A}^L \Bigr)_l \bigl( (t,x) - L^{-1}(0,a) \bigr) \, da \biggr),
  \]
  where we note that the right hand side is a linear combination of all the components $\bigl( \tilde{A}^L \bigr)_\alpha$.
  This immediately implies for $j \leq k + \frac{1}{2} C$ that
  \begin{align*}
   2^{\frac{1}{2}j} \big\| P_k Q_j \nabla_x \gamma \big\|_{L^2_t L^2_x} = 2^{\frac{1}{2} j} \big\|P_k  Q_j \nabla_x \Bigl( \Delta^{-1} \partial^l \bigl( \bigl( \tilde{A}^L \bigr)^{L^{-1}} \bigr)_l \Bigr)^L \big\|_{L^2_t L^2_x} \lesssim \big\|P_k \tilde{A}^L\big\|_{X^{0,\frac{1}{2}}_{\infty}}.
  \end{align*}
  Similarly, one shows that for $j > k+C$ we have 
  \begin{align*}
   2^{\frac{1}{2}j} \big\|P_k Q_j \nabla_x^2 \gamma \big\|_{L^2_t L^2_x} \lesssim 2^{k-j} \big\| P_j \nabla_{t,x} \tilde{A}^L \big\|_{X^{0,\frac{1}{2}}_{\infty}}.
  \end{align*}
  In fact, here, the very large modulation $j$ then gets transferred to the frequency after Lorentz transform. Finally, for the expression $P_{k} Q_{[k+\frac{1}{2}C, k+C]} \gamma$, the Lorentz transformation may lead to small frequencies~$\lesssim 2^k$, which is why we can only place the expression into $L_t^2 L_x^{4+}$ then via Bernstein, i.e. 
  \begin{equation*}
   \biggl( \sum_{k\in \Z} 2^{-(1+)k} \big\| P_k Q_{[k+\frac{1}{2}C, k+C]} \nabla_{t,x} \gamma\big\|_{L_t^2 L_x^{4+}}^2 \biggr)^{\frac{1}{2}} \lesssim \big\|\tilde{A}^L\big\|_{S^1}.
  \end{equation*}
  One also infers by similar reasoning that 
  \begin{equation*}
   \biggl( \sum_{k\in \Z} \big\| P_k \nabla_{t,x} \gamma \big\|^2_{L_t^2 L_x^{8} \cap L_t^2 \dot{W}^{\frac{1}{6}, 6}_x} \biggr)^{\frac{1}{2}} \lesssim \big\|\tilde{A}^L\big\|_{S^1}
  \end{equation*}
  as well as 
  \begin{equation*}
  \big\| P_k Q_{\leq k+\frac{1}{2}C} \nabla_{t,x} \gamma \big\|_{L_t^\infty L_x^2} \lesssim \big\| P_{k+O(1)} \tilde{A}^L \big\|_{L_t^\infty L_x^2}.
  \end{equation*}

  \medskip

  With these bounds on $\gamma$ in hand, we can now start the derivation of the bounds for $\phi^L = e^{-i\gamma}\tilde{\phi}^L$. For $j \leq k+\frac{1}{2} C$ we write
  \begin{align} \label{equ:Lorentz_obs1_phi1}
   \begin{aligned}   
    P_k Q_j \big( e^{-i\gamma}\tilde{\phi}^L\big) &= P_k Q_j \big( P_{\leq j-20} Q_{\leq j-10}(e^{-i\gamma}) \tilde{\phi}^L \big) + P_k Q_j \big( P_{\leq j-20} Q_{> j-10} (e^{-i\gamma}) \tilde{\phi}^L \big) \\
    &\qquad + P_k Q_j \big( P_{> j-20} (e^{-i\gamma}) \tilde{\phi}^L \big).
   \end{aligned}
  \end{align}
  For the first term on the right, we have 
  \[
   P_k Q_j \big( P_{\leq j-20} Q_{\leq j-10}(e^{-i\gamma}) \tilde{\phi}^L \big) = P_k Q_j \big( P_{\leq j-20} Q_{\leq j-10}(e^{-i\gamma}) P_{k+O(1)} Q_{j+O(1)} \tilde{\phi}^L\big),
  \]
  and so we infer 
  \begin{equation} \label{eq:philly1}
   2^{\frac{1}{2} j} \big\| \nabla_{t,x} P_k Q_j \big( P_{\leq j-20} Q_{\leq j-10}(e^{-i\gamma}) \tilde{\phi}^L \big) \big\|_{L^2_t L^2_x} \lesssim \big\| P_k \nabla_{t,x} \tilde{\phi}^L\big\|_{X^{0,\frac{1}{2}}_{\infty}}.
  \end{equation}
  For the second term on the right hand side of \eqref{equ:Lorentz_obs1_phi1}, we write schematically 
  \begin{align*}
   P_k Q_j \big( P_{\leq j-20} Q_{> j-10}(e^{-i\gamma}) \tilde{\phi}^L \big) = 2^{-j} P_k Q_j \big( P_{\leq j-20} Q_{> j-10}(\partial_t \gamma e^{-i\gamma}) \tilde{\phi}^L \big)
  \end{align*}
  and so we get from the preceding 
  \begin{equation} \label{eq:philly2}
   \begin{split}
    &2^{\frac{1}{2}j} \big\| \nabla_{t,x} P_k Q_j \big( P_{\leq j-20} Q_{> j-10}(e^{-i\gamma}) \tilde{\phi}^L \big) \big\|_{L^2_t L^2_x} \\
    &\lesssim 2^{\frac{1}{2}j} \cdot 2^{-\frac{1}{2}j} \big\| P_{\leq j-20} Q_{> j-10}(\partial_t \gamma e^{-i\gamma}) \big\|_{L_t^2 L_x^8} \big\| P_k \nabla_{t,x} \tilde{\phi}^L\big\|_{L_t^\infty L_x^2} \\
    &\lesssim \big\| \tilde{A}^L \big\|_{S^1} \big\|P_k \nabla_{t,x} \tilde{\phi}^L\big\|_{L_t^\infty L_x^2}.
   \end{split}
  \end{equation}
  For the last term on the right hand side of \eqref{equ:Lorentz_obs1_phi1}, write it as 
  \begin{align} \label{equ:Lorentz_obs1_phi1_last_term}
   \begin{aligned}
    P_k Q_j \big( P_{> j-20}(e^{-i\gamma})\tilde{\phi}^L \big) &= P_k Q_j \big( P_{[j-20, k-10]}(e^{-i\gamma}) \tilde{\phi}^L \big) + P_k Q_j \big( P_{[k-10, k+10]}(e^{-i\gamma}) \tilde{\phi}^L \big) \\
    &\quad \quad + P_k Q_j \big( P_{>k+10}(e^{-i\gamma}) \tilde{\phi}^L\big).
   \end{aligned}
  \end{align}
  The first term on the right is bounded by 
  \begin{equation} \label{eq:philly3}
   \begin{aligned}
    &2^{\frac{1}{2} j} \big\|\nabla_x P_k Q_j \big( P_{[j-20, k-10]}(e^{-i\gamma}) \tilde{\phi}^L \big) \big\|_{L^2_t L^2_x} \\
    &\lesssim 2^{\frac{1}{2}j} 2^{\frac{1}{2} k} 2^{-j} \big\| P_{[j-20, k-10]}(\nabla_x \gamma e^{-i\gamma})\big\|_{L_t^2 L_x^8} \big\|\nabla_x P_k \tilde{\phi}^L\big\|_{L_t^\infty L_x^2} \\
    &\lesssim 2^{\frac{1}{2}(k-j)} \big\|\tilde{A}^L\big\|_{S^1} \big\|P_k \nabla_x \tilde{\phi}^L\big\|_{L_t^\infty L_x^2}.
   \end{aligned}
  \end{equation}
  For the second term on the right hand side of \eqref{equ:Lorentz_obs1_phi1_last_term}, we write it schematically as
  \begin{align} \label{equ:Lorentz_obs1_phi_1_last_term_second_term}
   \begin{aligned}
    &P_k Q_j \big( P_{[k-10, k+10]}(e^{-i\gamma}) \tilde{\phi}^L \big) \\ 
    &= 2^{-k} P_k Q_j \big( P_{[k-10, k+10]}(\nabla_x \gamma P_{\leq k-20} Q_{\leq k-20}(e^{-i\gamma})) P_{\leq k+20} \tilde{\phi}^L \big) \\
    &\quad + 2^{-k} P_k Q_j \big( P_{[k-10, k+10]}(\nabla_x \gamma P_{\leq k-20} Q_{> k-20}(e^{-i\gamma})) P_{\leq k+20} \tilde{\phi}^L \big) \\
    &\quad + 2^{-k} P_k Q_j \big( P_{[k-10, k+10]}(\nabla_x \gamma P_{>k-20}(e^{-i\gamma})) P_{\leq k+20} \tilde{\phi}^L \big).
   \end{aligned}
  \end{align}
  Then we estimate the first term on the right of \eqref{equ:Lorentz_obs1_phi_1_last_term_second_term} by
  \begin{equation} \label{eq:philly4}
   \begin{aligned}
    &2^{\frac{1}{2} j} \big\|\nabla_x 2^{-k} P_k Q_j \big( P_{[k-10, k+10]}(\nabla_x \gamma P_{\leq k-20} Q_{\leq k-20}(e^{-i\gamma})) P_{\leq k+20} \tilde{\phi}^L \big) \big\|_{L^2_t L^2_x} \\
    &\lesssim 2^{\frac{1}{2} j} \big\|P_{[k-10, k+10]} Q_{\leq k + \frac{1}{2} C} \nabla_x \gamma \big\|_{L_t^\infty L_x^2} \big\|P_{\leq k+20}Q_{\leq k} \tilde{\phi}^L \big\|_{L_t^2 L_x^\infty} \\
    &\quad + 2^{\frac{1}{2} j} \big\| P_{[k-10, k+10]} \nabla_x \gamma \big\|_{L_t^2 L_x^{4+}} \big\|P_{\leq k+20} Q_{> k} \tilde{\phi}^L \big\|_{L_t^\infty L_x^{4-}} \\
    &\lesssim 2^{\frac{1}{2} (j-k)} \big\|P_{k+O(1)} \nabla_x \tilde{A}^L \big\|_{L_t^\infty L_x^2} \big\| \tilde{\phi}^L \big\|_{S^1}.
   \end{aligned}
  \end{equation}
  Further, we get for the second term on the right of \eqref{equ:Lorentz_obs1_phi_1_last_term_second_term} that 
  \begin{equation} \label{eq:philly5}
   \begin{split}
    &2^{\frac{1}{2} j} \big\|\nabla_x 2^{-k} P_k Q_j \big( P_{[k-10, k+10]}(\nabla_x \gamma P_{\leq k-20} Q_{> k-20}(e^{-i\gamma})) P_{\leq k+20} \tilde{\phi}^L \big) \big\|_{L^2_t L^2_x} \\
    &\lesssim 2^{j-k} \big\| P_{k+O(1)} \nabla_x \gamma \big\|_{L_t^2 L_x^6} \big\|\partial_t \gamma \big\|_{L_t^2 L_x^8} \big\|P_{\leq k+20} \tilde{\phi}^L \big\|_{L_t^\infty L_x^{\frac{24}{5}}}\\
    &\lesssim 2^{j-k} 2^{\frac{1}{6} k} \big\|P_{k+O(1)} \nabla_x \gamma\big\|_{L_t^2 L_x^6} \big\|(\tilde{A}^L, \tilde{\phi}^L)\big\|_{S^1} \big\|\tilde{\phi}^L\big\|_{S^1}.
   \end{split}
  \end{equation}
  The third term on the right hand side of \eqref{equ:Lorentz_obs1_phi_1_last_term_second_term}
  \[
   2^{-k}P_k Q_j \big( P_{[k-10, k+10]} (\nabla_x \gamma P_{> k-20} (e^{-i\gamma})) P_{\leq k+20} \tilde{\phi}^L \big)
  \]
  is handled similarly, which concludes the treatment of the contribution of the second term on the right hand side of \eqref{equ:Lorentz_obs1_phi1_last_term}, namely
  \[
   P_k Q_j \big( P_{[k-10, k+10]}(e^{-i\gamma}) \tilde{\phi}^L \big). 
  \]
  To treat the third term on the right hand side of \eqref{equ:Lorentz_obs1_phi1_last_term}, i.e. the high-high interaction term 
  \[
   P_k Q_j \big( P_{>k+10}(e^{-i\gamma}) \tilde{\phi}^L \big),
  \]
  we write it schematically as
  \begin{align*}
   P_k Q_j \big( P_{>k+10}(e^{-i\gamma}) \tilde{\phi}^L \big) &= \sum_{\substack{k_1 > k+10 \\ k_1 = k_2+O(1)}} P_k Q_j \big( P_{k_1}(e^{-i\gamma}) P_{k_2} \tilde{\phi}^L \big) \\
   &= \sum_{\substack{k_1 > k+10 \\ k_1 = k_2 + O(1)}} 2^{-k_1} P_k Q_j \big( P_{k_1}(\nabla_x \gamma e^{-i\gamma}) P_{k_2} \tilde{\phi}^L \big)
  \end{align*}
  and so we can estimate this by 
  \begin{equation} \label{eq:philly6}
   \begin{split}
    2^{\frac{1}{2} j} \big\| \nabla_x P_k Q_j \big( P_{>k+10}(e^{-i\gamma}) \tilde{\phi}^L \big) \big\|_{L^2_t L^2_x} \lesssim \sum_{\substack{k_1>k+10 \\ k_1 = k_2+O(1)}} 2^{\frac{1}{2}(j+k)} 2^{k-k_1} 2^{k-k_2} \big\| \nabla_x \gamma \big\|_{L_t^2 L_x^8} \big\|P_{k_2} \nabla_x \tilde{\phi}^L\big\|_{L_t^\infty L_x^2}.
   \end{split}
  \end{equation}
  Combining the bounds \eqref{eq:philly1} -- \eqref{eq:philly6} and square-summing over $k$, the estimate  
  \[
   \biggl( \sum_{k\in \Z} \big\|\nabla_{t,x} P_k Q_{\leq k + \frac{1}{2} C} \phi^L \big\|_{X^{0, \frac{1}{2}}_{\infty}}^2 \biggr)^{\frac{1}{2}} \lesssim \big\| (\tilde{A}^L, \tilde{\phi}^{L}) \big\|_{S^1}
  \]
  with implied constant also depending on $\big\|(\tilde{A}^L, \tilde{\phi}^{L})\big\|_{S^1}$ easily follows. We omit the estimate for 
  \[
   \biggl( \sum_{k\in \Z} \big\|\nabla_{t,x} P_k Q_{>k+C} \phi^L \big\|_{X^{0, \frac{1}{2}}_{\infty}}^2 \biggr)^{\frac{1}{2}}
  \]
  as it is similar. This proves the first bound \eqref{equ:Lorentz_observation1_phi1} in the statement of Observation~1.

  \medskip

  Next, we turn to the proof of the second bound \eqref{equ:Lorentz_observation1_phi2} and consider 
  \begin{align} \label{equ:Lorentz_obs1_phi2}
   \begin{aligned}
    P_kQ_{[k + \frac{1}{2} C, k+C]}(e^{-i\gamma} \tilde{\phi}^L) &= P_k Q_{[k+\frac{1}{2}C, k+C]} \big( P_{\leq k-20} Q_{\leq k-20}(e^{-i\gamma}) \tilde{\phi}^L \big) \\ 
    &\quad + P_k Q_{[k+\frac{1}{2} C, k+C]} \big( P_{\leq k-20}Q_{> k-20}(e^{-i\gamma}) \tilde{\phi}^L \big) \\
    &\quad + P_k Q_{[k+\frac{1}{2} C, k+C]} \big( P_{[k-20, k+20]}(e^{-i\gamma})\tilde{\phi}^L \big) \\ 
    &\quad + P_k Q_{[k+\frac{1}{2} C, k+C]} \big( P_{>k+20}(e^{-i\gamma})\tilde{\phi}^L \big).
   \end{aligned}
  \end{align}
  Each of these terms is straightforward to estimate. For the first term on the right, we obtain 
  \begin{align*}
   &\big\|P_kQ_{[k+\frac{1}{2}C, k+C]} \big(P_{\leq k-20} Q_{\leq k-20}(e^{-i\gamma}) \tilde{\phi}^L \big) \big\|_{L_t^2 L_x^{\frac{8}{3}+}} \lesssim \big\|P_{k+O(1)}Q_{k+O(1)}\tilde{\phi}^L\big\|_{L_t^2 L_x^{\frac{8}{3}+}} \lesssim 2^{(\nu-1)k} \big\|P_k\tilde{\phi}^L\big\|_{S^1}.
  \end{align*}
  Also, we get 
  \begin{align*}
   \big\|P_k Q_{[k+\frac{1}{2}C, k+C]} (P_{\leq k-20} Q_{> k-20}(e^{-i\gamma}) \tilde{\phi}^L) \big\|_{L_{t}^2 L_x^{\frac{8}{3}+}} &\lesssim 2^{-k} \big\|\partial_t\gamma\big\|_{L_t^2 L_x^8} \big\|P_{k+O(1)} \tilde{\phi}^L \big\|_{L_t^\infty L_x^{4+}} \\
   &\lesssim  2^{(\nu-1)k} \big\|\partial_t\gamma\big\|_{L_t^2 L_x^8} \big\|P_k\tilde{\phi}^L\big\|_{S^1},
  \end{align*}
  and
  \begin{align*}
   \big\|P_k Q_{[k+\frac{1}{2}C, k+C]} \big( P_{[k-20, k+20]}(e^{-i\gamma})\tilde{\phi}^L \big) \big\|_{L_{t}^2 L_x^{\frac{8}{3}+}} &\lesssim 2^{-k} \big\|\nabla_x \gamma \big\|_{L_t^2 L_x^8} \big\|P_{\leq k+O(1)} \tilde{\phi}^L \big\|_{L_t^\infty L_x^{4+}} \\
   &\lesssim 2^{(\nu-1)k} \big\|\nabla_x \gamma \big\|_{L_t^2 L_x^8} \sum_{l \leq k + O(1)} 2^{\nu (l-k)} \big\|P_l \tilde{\phi}^L\big\|_{S^1}.
  \end{align*}
  The last term on the right hand side of \eqref{equ:Lorentz_obs1_phi2} can be handled similarly. These estimates then yield the second inequality \eqref{equ:Lorentz_observation1_phi2} of Observation~1. 

  \medskip 
 
  We also observe that the estimates on $\gamma$ established earlier yield the required bounds \eqref{equ:Lorentz_observation1_A1} and \eqref{equ:Lorentz_observation1_A2} for $A^L  = \tilde{A}^L + \nabla_{t,x} \gamma$. 

  \medskip

  Now we turn to the last bounds \eqref{equ:Lorentz_observation1_phi3} and \eqref{equ:Lorentz_observation1_phi4} in the statement of Observation~1. We only prove \eqref{equ:Lorentz_observation1_phi3}, the proof of \eqref{equ:Lorentz_observation1_phi4} being similar. We write 
  \begin{align} \label{equ:Lorentz_obs1_phi3}
   \begin{aligned}
    P_kQ_{\leq k+\frac{1}{4} C} \phi^L &= P_k Q_{\leq k+\frac{1}{4}C} \big(P_{\leq k-10}(e^{-i\gamma})\tilde{\phi}^L \big) + P_k Q_{\leq k+\frac{1}{4}C} \big(P_{[k-10, k+10]}(e^{-i\gamma}) \tilde{\phi}^L\big) \\
    &\quad \quad + P_k Q_{\leq k+\frac{1}{4}C} \big(P_{>k+10}(e^{-i\gamma}) \tilde{\phi}^L \big).
   \end{aligned}
  \end{align}
  The first term is directly bounded by 
  \begin{equation} \label{eq:muehsam1}
   \big\|\nabla_x P_k Q_{\leq k+\frac{1}{4}C} \big(P_{\leq k-10}(e^{-i\gamma}) \tilde{\phi}^L \big)\big\|_{L_t^\infty L_x^2} \lesssim \big\|\nabla_x P_{k+O(1)}\tilde{\phi}^L\big\|_{L_t^\infty L_x^2}.
  \end{equation}
  The second term on the right of \eqref{equ:Lorentz_obs1_phi3} is a bit more complicated. We write it schematically as
  \begin{align} \label{equ:Lorentz_obs1_phi3_second_term}
   \begin{aligned}
    &P_k Q_{\leq k+ \frac{1}{4} C} \big(P_{[k-10, k+10]}(e^{-i\gamma})\tilde{\phi}^L\big) \\
    &= P_k Q_{\leq k + \frac{1}{4} C} \big(2^{-k}P_{[k-10, k+10]}(\nabla_x \gamma e^{-i\gamma})P_{\leq k+O(1)}\tilde{\phi}^L\big)\\
    &= P_k Q_{\leq k + \frac{1}{4} C} \big(2^{-k} P_{[k-10, k+10]}(\nabla_x \gamma P_{\leq k-30}Q_{\leq k-30}(e^{-i\gamma}))P_{\leq k-30} \tilde{\phi}^L \big) \\
    &\quad + P_k Q_{\leq k+\frac{1}{4}C} \big( 2^{-k} P_{[k-10, k+10]}(\nabla_x \gamma P_{\leq k-30} Q_{> k-30}(e^{-i\gamma})) P_{\leq k-30} \tilde{\phi}^L \big)\\
    &\quad + P_kQ_{\leq k + \frac{1}{4}C} \big(2^{-k} P_{[k-10,k+10]}(\nabla_x \gamma P_{> k-30}(e^{-i\gamma})) P_{\leq k-30} \tilde{\phi}^L \big)\\
    &\quad + P_k Q_{\leq k + \frac{1}{4}C} \big( 2^{-k}P_{[k-10, k+10]}(\nabla_x \gamma P_{\leq k-30} (e^{-i\gamma})) P_{> k-30} \tilde{\phi}^L \big).
   \end{aligned}
  \end{align}
  Then we get for the first term of the last list of four terms 
  \begin{equation}\label{eq:muehsam2}
   \begin{split}
    &\big\|\nabla_x P_k Q_{\leq k+\frac{1}{4}C} \big(2^{-k} P_{[k-10, k+10]}(\nabla_x \gamma P_{\leq k-30} Q_{\leq k-30}(e^{-i\gamma})) P_{\leq k-30} \tilde{\phi}^L \big) \big\|_{L_t^\infty L_x^2} \\
    &\lesssim \big\| P_{[k-15, k+15]} Q_{\leq k+\frac{1}{4} C + 5} \nabla_x \gamma \big\|_{L_t^\infty L_x^2} \big\|P_{\leq k-30} \tilde{\phi}^L \big\|_{L^\infty_t L^\infty_x} \\
    &\lesssim \big\| P_{k+O(1)} \tilde{A}^L \big\|_{L^\infty_t L^2_x} \sum_{l \leq k-30} 2^l \big\|P_l \tilde{\phi}^L \big\|_{L^\infty_t L^4_x} \\
    &\lesssim \big\| P_{k+O(1)} \nabla_x \tilde{A}^L \big\|_{S^1} \big\| \tilde{\phi}^L \big\|_{S^1},
   \end{split}
  \end{equation}
  where we have taken advantage of our previous considerations on the structure of $\gamma$. For the second term on the above list \eqref{equ:Lorentz_obs1_phi3_second_term}, we get 
  \begin{equation} \label{eq:muehsam3} 
   \begin{aligned}
    &\big\|\nabla_x P_k Q_{\leq k + \frac{1}{4} C} \big(2^{-k} P_{[k-10, k+10]}(\nabla_x \gamma P_{\leq k-30} Q_{> k-30}(e^{-i\gamma})) P_{\leq k-30} \tilde{\phi}^L \big) \big\|_{L_t^\infty L_x^2} \\
    &\lesssim \big\|P_{\leq k+O(1)}\nabla_x \gamma \big\|_{L_t^\infty L_x^{4+}} \big\|P_{\leq k-30} Q_{> k-30}(e^{-i\gamma})\big\|_{L_t^\infty  L_x^{4+}} \big\|P_{\leq k-30} \tilde{\phi}^L\big\|_{L_t^\infty L_x^{\infty-}} \\
    &\lesssim \bigl\|\tilde{A}^L\big\|_{S^1}^2 \sum_{l \leq k-30} 2^{-\sigma (k-l) } \big\|P_l \tilde{\phi}^L \big\|_{S^1}. 
   \end{aligned}
  \end{equation}
  The term 
  \[
   P_k Q_{\leq k + \frac{1}{4} C} \big( 2^{-k}P_{[k-10,k+10]}(\nabla_x \gamma P_{> k-30}(e^{-i\gamma}))P_{\leq k-30} \tilde{\phi}^L \big)
  \]
  is handled similarly. Finally, we have 
  \begin{equation} \label{eq:muehsam4}
   \begin{split}
    &\big\|\nabla_x P_k Q_{\leq k + \frac{1}{4}C} \big( 2^{-k} P_{[k-10, k+10]}(\nabla_x \gamma P_{\leq k-30}(e^{-i\gamma})) P_{> k-30} \tilde{\phi}^L \big)\big\|_{L_t^\infty L_x^2} \\
    &\lesssim \big\|P_{k+O(1)} \nabla_x \gamma \big\|_{L_t^\infty L_x^4} \big\|P_{k+O(1)} \tilde{\phi}^L \big\|_{L_t^\infty L_x^4}.
   \end{split}
  \end{equation}
  The bounds \eqref{eq:muehsam1} -- \eqref{eq:muehsam4} suffice to perform the square summation over $k$ in the last inequality of Observation~1. The last term on the right hand side of \eqref{equ:Lorentz_obs1_phi3}
  \[
   P_kQ_{\leq k+\frac{1}{4}C} \big( P_{>k+10}(e^{-i\gamma}) \tilde{\phi}^L\big)
  \]
  is treated similarly and hence omitted here. 
 \end{proof}

 \medskip

 \noindent {\bf Observation 2:} {\it We have the bound 
 \[
  \sum_{k_1 \leq k_2} 2^{-k_1} \big\| P_{k_1} Q^{\pm}_{\leq k_1 + \frac{1}{4}C} \phi^L \, P_{k_2} Q^{\pm}_{\leq k_1 +\frac{1}{4}C} \nabla_{t,x} \phi^L \big\|_{L^2_t L^2_x}^2 \lesssim \big\|(\tilde{A}^L, \tilde{\phi}^L) \big\|_{S^1}^4.
 \]
 Moreover, for any $L^1$-space-time integrable weight function $m(a)$, $a\in \R^{1+4}$, we have 
 \[
  \sum_{k_1 \leq k_2} 2^{-k_1} \biggl\| \int_{\R^{1+4}} m(a) P_{k_1} Q^{\pm}_{\leq k_1 + \frac{1}{4}C} \phi^L(\cdot - a) \, P_{k_2} Q^{\pm}_{\leq k_1 +\frac{1}{4}C} \nabla_{t,x} \phi^L(\cdot - a) \, da \biggr\|_{L^2_t L^2_x}^2 \lesssim \big\|(\tilde{A}^L, \tilde{\phi}^L)\big\|_{S^1}^4.
 \]
 Similar bounds can be obtained upon replacing one or more factors by $A^L$. We make the crucial observation that these bounds are essentially invariant under mild Lorentz transformations. Thus, we infer similar bounds for $A$ and $\phi$. }
 \begin{proof}[Proof of Observation 2] Here one places the low frequency input
 \[
  P_{k_1} Q_{\leq k_1+\frac{1}{4}C}^{\pm}\phi^L
 \]
 into $L_t^2 L_x^\infty$ and the high frequency input 
 \[
  P_{k_2} Q_{\leq k_1+\frac{1}{4}C}^{\pm} \nabla_{t,x} \phi^L
 \]
 into $L_t^\infty L_x^2$ by using Observation 1. 
 \end{proof}

 Using Observation~1 and Observation~2, we can now move toward controlling the norm $\|(A, \phi)\|_{S^1}$. From above, we know a priori that we control 
 \[
  \biggl( \sum_{k \in \Z} \big\| P_k Q_{[k+\frac{1}{2}C, k+C]^c} \nabla_{t,x} \phi^L \big\|_{X^{0, \frac{1}{2}}_{\infty}}^2 \biggr)^{\frac{1}{2}}
 \]
 as well as norms of the form 
 \[
  \Biggl( \sum_{k_1 \leq k_2} 2^{-k_1} \biggl\| \int_{\R^{1+4}} m(a) P_{k_1} Q^{\pm}_{\leq k_1 + \frac{1}{4}C} \phi^L(\cdot - a) \, P_{k_2} Q^{\pm}_{\leq k_1 +\frac{1}{4}C} \nabla_{t,x} \phi^L(\cdot - a) \, da \biggr\|_{L^2_t L^2_x}^2 \Biggr)^{1/2}.
 \]
 The latter has the crucial divisibility property, i.e. for any $\delta>0$ we can partition the time axis $\R$ into intervals $I_1, I_2,\ldots, I_N$, with $N$ depending on the size of the norm as well as $\delta$ such that we have for each $n = 1, \ldots, N$ that
 \begin{align*}
  \Biggl( \sum_{k_1 \leq k_2} 2^{-k_1} \biggl\| \chi_{I_n}(t) \int_{\R^{1+4}} m(a)  P_{k_1} Q^{\pm}_{\leq k_1 + \frac{1}{4}C} \phi^L(\cdot - a) \, P_{k_2} Q^{\pm}_{\leq k_1 +\frac{1}{4}C} \nabla_{t,x} \phi^L(\cdot - a) \, da \biggr\|_{L^2_t L^2_x}^2 \Biggr)^{1/2} \leq \delta.
 \end{align*}
 Of course, we get a similar statement for weakened versions of the former norm such as
 \[
  \biggl( \sum_{k\in \Z} \big\| P_k Q_{[k-\frac{1}{2}C, k+\frac{1}{2}C]} \nabla_{t,x} \phi^L \big\|_{X^{0, \frac{1}{2}}_{\infty}}^2 \biggr)^{\frac{1}{2}}.
 \]
 In order to infer the desired $S^1$ norm bound on $(A, \phi)$, we shall partition the time axis $\R$ into finitely many intervals $I_1, \ldots, I_N$, whose number depends on 
 \[
  \big\| \big( \tilde{A}^L, \tilde{\phi}^L \big) \big\|_{S^1}
 \]
 and such that on each of these $I_n$, we can infer via a direct bootstrap argument a bound on 
 \[
  \|(A, \phi)\|_{S^1(I_n\times \R^4)}.
 \]
 This will then suffice to obtain the desired bound on $\|( A, \phi)\|_{S^1(\R \times \R^4)}$. We do this in two steps, which we outline below.

 \medskip

 \noindent {\bf Step 1:} {\it Given $\delta_1>0$ and $\delta_2>0$, using the known a priori bound on $\bigl\| \bigl(\tilde{A}^L, \tilde{\phi}^L\bigr) \bigr\|_{S^1}$ and choosing the intervals $I_n$ suitably as above (whose number will depend on $\delta_1, \delta_2$, as  well as the assumed bound on $\big\|\bigl(\tilde{A}^L, \tilde{\phi}^L \bigr)\big\|_{S^1})$, we infer from the equation for $A$, upon writing
 \[
  A|_{I_n} = A^{free, (I_n)} + A^{nonlin, (I_n)},
 \]
 that there exists a decomposition 
 \[
  A^{nonlin, (I_n)} =  A^{1nonlin, (I_n)} +  A^{2nonlin, (I_n)},
 \]
 where we have schematically
 \[
  A^{2nonlin} = \sum_k \Box^{-1} P_k Q_{k+O(1)} (P_{k+O(1)} Q_{<k-C_1}\phi\nabla_xP_{k+O(1)}Q_{<k-C_1}\phi),
 \]
 while we also have the bound 
 \[
  \big\| A^{1nonlin} \big\|_{\ell^1 S^1(I_n \times \R^4)} \leq \delta_1 + \delta_2 \Bigl( \|(A, \phi)\|_{S^1(I_n\times \R^4)} + \|(A, \phi)\|_{S^1(I_n\times \R^4)}^3 \Bigr)
 \]
 for all $n = 1, 2, \ldots, N$. Moreover, it holds that
 \[
  \big\| A^{2nonlin} \big\|_{S^1(I_n \times \R^4)} \leq \delta_1
 \]
 for all $n = 1, 2, \ldots, N$.
 }
 
 \medskip

 The idea behind this bound is to insert it into the equation for $\phi$ and pick $\delta_1$, $\delta_2$ small depending on $E_{crit}$. 

 \begin{proof}[Proof of Step 1]
 We proceed as in the proof of Lemma~\ref{lem:nonlinear_structure_small_short_interval} and write the source term in the equation for $A$ in the schematic form 
 \[
  \Box A_i = \Delta^{-1} \nabla^r \mathcal{N}_{ir}\bigl(\phi, \overline{\phi} \bigr) + A |\phi|^2.
 \]
 Localizing this to frequency $k = 0$, we write the right hand side in the form 
 \begin{align*}
  P_0 \Big( \Delta^{-1} \nabla^r \mathcal{N}_{ir} \bigl( \phi, \overline{\phi} \bigr) + A |\phi|^2 \Big) = P_0 \bigg( \sum_{k_1, k_2} \Delta^{-1} \nabla^r \mathcal{N}_{ir} \bigl( P_{k_1} \phi, P_{k_2} \overline{\phi} \bigr) + \sum_{k_1, k_2, k_3} P_{k_1}A P_{k_2}\phi P_{k_3} \overline{\phi} \bigg).
 \end{align*}
 We first deal with the quadratic null form term and reduce this to moderate frequencies by observing that for $C = C(\delta_2)$ large enough, we obtain (for a suitable absolute constant $\sigma$ independent of all other constants)
 \[
  \biggl\| \sum_{|k_1|>C, k_2} P_0 \Big( \Delta^{-1}\nabla^r \mathcal{N}_{ir}(P_{k_1}\phi, P_{k_2}\overline{\phi}) \Big) \biggr\|_{N_0} \leq \delta_2 \sum_{k_1>C} 2^{-\sigma |k_1|} \big\|P_{k_1} \phi\big\|_{S^1}^2.
 \]
 Generalizing to arbitrary output frequencies, one easily gets from here the bound 
 \begin{align*}
  \sum_{k} \bigg\| \sum_{|k_1-k|>C, k_2} P_k \Big( \Delta^{-1} \nabla^r \mathcal{N}_{ir} \big( P_{k_1} \phi, P_{k_2} \overline{\phi} \big) \Big) \biggr\|_{N_k} \lesssim \delta_2 \big\|\phi\big\|_{S^1}^2.
 \end{align*}
 Next, we pick $C_1 = C_1(E_{crit})$ such that 
 \[
  \bigg\|P_0 \Big( \sum_{|k_1|, |k_2| < C} \Delta^{-1} \nabla^r \mathcal{N}_{ir}\bigl( P_{k_1} Q_{>C_1} \phi, P_{k_2} \overline{\phi}) \Big) \biggr\|_{N_0} \leq \delta_2 \sum_{|k_1|, |k_2|< C} \big\|P_{k_1}\phi\big\|_{S_{k_1}} \big\|P_{k_2} \phi \big\|_{S_{k_2}},
 \]
 and generalizing to general output frequencies, we then reduce to 
 \[
  \sum_k P_k \bigg( \sum_{|k_{1,2}-k|<C} \Delta^{-1} \nabla^r \mathcal{N}_{ir} \big( P_{k_1} Q_{\leq k_1 + C_1} \phi, P_{k_2}Q_{\leq k_2+C_1} \overline{\phi} \big) \bigg).
 \]
 Depending on our choice of $C_1$, we may assume the Lorentz transform $L$ to be chosen sufficiently close to the identity, i.e. $|d|$ sufficiently small, such that according to Observation~1 we have 
 \[
  \bigg( \sum_{k_1} \big\|P_{k_1} Q_{[k_1-C_1, k_1+C_1]} \nabla_{t,x} \phi \big\|_{X^{0,\frac{1}{2}}_{\infty}}^2 \biggr)^{\frac{1}{2}} \lesssim \big\|(\tilde{A}^L, \tilde{\phi}^L)\big\|_{S^1}.
 \]
 As observed before, this norm has the divisibility property, so that restricting to suitable time intervals $I_n$, $n = 1, \ldots, N$, which form a partition of the time axis $\R$, we may assume 
 \[
  \biggl( \sum_{k_1} \big\|P_{k_1} Q_{[k_1-C_1, k_1+C_1]} \nabla_{t,x} \phi \big\|_{X^{0,\frac{1}{2}}_{\infty}(I_n\times \R^4)}^2 \biggr)^{\frac{1}{2}} \leq \delta_2 
 \]
 for all $n = 1, \ldots, N$. But then we easily infer the bound 
 \begin{align*}
  &\bigg\| P_0 \bigg( \sum_{|k_1|, |k_2|<C} \Delta^{-1} \nabla^r \mathcal{N}_{ir}\bigl( P_{k_1} Q_{[k_1-C_1, k_1+C_1]} \phi, P_{k_2} \overline{\phi}\big) \bigg) \bigg\|_{N_0(I_n \times \R^4)} \\
  &\lesssim \big\|P_{k_1}Q_{[k_1-C_1, k_1+C_1]}\phi\big\|_{L^2_t L^2_x(I_n \times \R^4)} \big\|P_{k_2} \phi \big\|_{L_t^2 L_x^\infty} \\
  &\leq \delta_2 \big\|P_{k_2}\phi\big\|_{S_{k_2}},
 \end{align*}
 and this suffices again, after generalizing this to arbitrary output frequency. In fact, we get 
 \begin{align*}
  &\sum_k \, \biggl\| P_k \bigg( \sum_{|k_{1,2}-k|<C} \Delta^{-1} \nabla^r \mathcal{N}_{ir}\big( P_{k_1} Q_{[k_1-C_1, k_1+C_1]} \phi, P_{k_2} \overline{\phi} \big) \bigg) \biggr\|_{N_k(I_n \times \R^4)} \\
  &\lesssim \bigg( \sum_{k_1} \big\|P_{k_1} Q_{[k_1-C_1, k_1+C_1]} \nabla_{t,x} \phi \big\|_{X^{0,\frac{1}{2}}_{\infty}(I_n\times \R^4)}^2 \bigg)^{\frac{1}{2}} \big\|\phi\big\|_{S^1(I_n \times \R^4)} \\
  &\leq \delta_2 \big\|\phi\big\|_{S^1(I_n \times \R^4)}.
 \end{align*}
 We have now reduced to the expression 
 \[
  \sum_k P_k \bigg( \sum_{|k_{1,2}-k|<C} \Delta^{-1} \nabla^r \mathcal{N}_{ir}\big( P_{k_1}Q_{\leq k_1-C_1} \phi, P_{k_2} Q_{\leq k_2-C_1} \overline{\phi}\big) \bigg).
 \]
 The last reduction here consists in removing extremely small angular separation between the inputs 
 \[
  P_{k_1} Q_{\leq k_1-C_1} \phi, \quad P_{k_2} Q_{\leq k_2-C_1} \phi.
 \]
 Thus, there exists $C_2 = C_2(\delta_2)$ such that we have 
 \begin{align*}
  \bigg\| P_0 \bigg( \sum_{|k_{1,2}|<C} \Delta^{-1} \nabla^r \mathcal{N}_{ir} \big( P_{k_1} Q_{\leq k_1-C_1} \phi, P_{k_2} Q_{\leq k_2 - C_1} \overline{\phi} \big) \bigg)' \bigg\|_{N_0} \leq \delta_2 \sum_{|k_{1,2}|<C} \big\|P_{k_1}\phi\big\|_{S_{k_1}} \big\|P_{k_2} \phi\big\|_{S_{k_2}},
 \end{align*}
 where the prime indicates that the inputs are reduced to have closely aligned Fourier supports of angular separation $C_2^{-1}$. Finally, we write
 \begin{align*}
  &\sum_{|k_{1,2}|<C} P_0 \bigg( \Delta^{-1}\nabla^r \mathcal{N}_{ir} \big( P_{k_1} Q_{\leq k_1-C_1} \phi, P_{k_2} Q_{\leq k_2-C_1} \overline{\phi} \big) \bigg) \\
  &= \sum_{|k_{1,2}|<C} P_0 \bigg( \Delta^{-1} \nabla^r \mathcal{N}_{ir}\big( P_{k_1}Q_{\leq k_1-C_1} \phi, P_{k_2} Q_{\leq k_2-C_1} \overline{\phi} \big) \bigg)'\\
  &\quad + \sum_{|k_{1,2}|<C} P_0 \bigg( \Delta^{-1}\nabla^r \mathcal{N}_{ir} \big( P_{k_1} Q_{\leq k_1-C_1} \phi, P_{k_2} Q_{\leq k_2-C_1} \overline{\phi} \big) \biggr)'',
 \end{align*}
 where the second term is of the form $A^{2nonlin}$ as required for Step 1. In fact, the angular separation of the inputs and small modulation forces the output to have modulation $\sim 1$. Moreover, replacing the output frequency by $k$ and square-summing over $k$ results in a small norm due to the fact that 
 \begin{align*}
  &\bigg\| \sum_{|k_{1,2}|<C} P_0 \bigg(\Delta^{-1} \nabla^r \mathcal{N}_{ir}\big( P_{k_1} Q_{\leq k_1-C_1} \phi, P_{k_2} Q_{\leq k_2-C_1} \overline{\phi}\big) \bigg)''\bigg\|_{N_0(I_n\times \R^4)} \\
  &\lesssim \bigg\| \sum_{|k_{1,2}|<C} P_0 \bigg( \Delta^{-1} \nabla^r \mathcal{N}_{ir}\big( P_{k_1} Q_{\leq k_1-C_1} \phi, P_{k_2} Q_{\leq k_2-C_1} \overline{\phi}) \bigg)'' \bigg\|_{L^2_t L^2_x(I_n \times \R^4)}
 \end{align*}
 and we can then take advantage of Observation~2 to obtain 
 \begin{align*}
  &\bigg( \sum_k \bigg\| \sum_{|k_{1,2}-k|<C} P_k \bigg( \Delta^{-1}\nabla^r \mathcal{N}_{ir}\big( P_{k_1} Q_{\leq k_1-C_1} \phi, P_{k_2} Q_{\leq k_2-C_1} \overline{\phi} \big) \bigg)'' \bigg\|_{N_k(I_n \times \R^4)}^2 \bigg)^{\frac{1}{2}} \leq \delta_1
 \end{align*}
 by choosing the intervals $I_n$ suitably. The cubic term $ \sum_{k_{1,2,3}} P_{k_1}A P_{k_2}\phi P_{k_3}\overline{\phi}$ is handled similarly. 
 \end{proof}

 \medskip

 \noindent {\bf Step 2:} {\it Choosing the time intervals $I_n$ suitably as in Step 1, we obtain the equation 
 \[
  \sum_{k\in \Z} \Box_{A^{free}_{<k}}P_k \phi = F,
 \]
 where we have 
 \[
  \|F\|_{N(I_n\times \R^4)} \leq \delta_1 + \delta_2 \big( \|(A, \phi)\|_{S^1(I_n \times \R^4)} + \big\|(A, \phi)\big\|_{S^1(I_n \times \R^4)}^4 \big),
 \]
 where $A^{free}$ is the free wave evolution of the data for $A$ at the beginning endpoint of $I_n$.}
 \begin{proof}[Proof of Step 2]
  This to a large extent mimics the argument for the proof of Proposition~\ref{prop:decomposition}. In fact, we recall from there that we can write $F = \sum_{k\in\Z}F_k$ with 
  \begin{align*}
   F_k &= -2i P_k \big(A^{free}_{\geq k, \nu}\partial^{\nu}\phi\big) - [P_k, \Box_{A^{free}_{<k}}]\phi - P_k \big( (\Box_A - \Box_{A^{free}}) \phi \big) \\
   &\quad \quad + P_k \big( \Box_{A^{free}_{<k}}\phi + 2i\big(A^{free}_{\geq k, \nu}\partial^{\nu}\phi\big) - \Box_{A^{free}}\phi\big).
  \end{align*}
  As usual, we treat each term separately.

  \medskip

  \noindent {\it First term}. Similar to the proof of Proposition~\ref{prop:decomposition}, we reduce it to 
  \[
   -2i P_k \big(P_{k+O(1)} A^{free}_{, \nu} \partial^{\nu} P_{k+O(1)} \phi \big)
  \]
  up to terms satisfying the conclusion of Step 2. Then using divisibility for the norm 
  \[
   \sum_{k\in \Z}2^{-k}\big\|P_kA^{free}_{, \nu}\big\|_{L_t^2 L_x^\infty}^2
  \]
  as well as the inequality 
  \begin{align*}
   &\bigg\| \sum_{k\in \Z} -2i P_k \Big( P_{k+O(1)}A^{free}_{, \nu}\partial^{\nu}P_{k+O(1)}\phi \Big) \bigg\|_{N(I_n\times \R^4)} \lesssim \bigg( \sum_{k\in \Z} 2^{-k} \big\|\chi_{I_n} P_k A^{free}_{, \nu} \big\|_{L_t^2 L_x^\infty}^2 \bigg)^{\frac{1}{2}} \|\phi\|_{S^1(I_n\times \R^4)},
  \end{align*}
  we get the conclusion of Step 2 by choosing the intervals $I_n$ suitably and by subdividing the intervals obtained from Step 1, if necessary. 

  \medskip

  \noindent {\it{Second term}}. This is handled like the first term, since it can be written in the form 
  \[
   \sum_{k}\tilde{P}_k\big( 2i\nabla_xA^{free}_{<k, \nu}\partial^{\nu}\phi\big) +  \tilde{P}_k\big(\nabla_x((A^{free}_{<k})^2)\phi\big).
  \]

  \medskip

  \noindent {\it{Third term}}. As usual this term is the  most difficult one, since it contains 
  \[
   \sum_{k\in \Z} 2i P_{<k}A_{\nu}^{nonlin} \partial^{\nu} P_k \phi. 
  \]
  We essentially follow the reductions performed in the proof of Proposition~\ref{prop:decomposition}, whence we shall be correspondingly brief.

  \medskip

  \noindent {\bf{Reduction to $\mathcal{H}^\ast {\mathcal N}^{lowhi}$}}. 
  Using the same notation as in that proof and restricting to frequency $k = 0$, and also keeping in mind Step 1, we get 
  \begin{align*}
   &\big\|{\mathcal N}^{low hi} \big( P_{<0} A^{1nonlin, (I_n)}, P_0 \phi \big) - \mathcal{H}^\ast {\mathcal N}^{low hi} \big( P_{<0} A^{1nonlin, (I_n)}, P_0 \phi \big) \big\|_{N(I_n\times \R^4)} \\
   &\qquad \qquad \qquad \qquad \qquad \qquad \qquad \qquad \qquad \qquad \lesssim \big\| A^{1nonlin, (I_n)} \big\|_{S^1(I_n \times \R^4)} \big\| P_0 \phi \big\|_{S^1(I_n\times \R^4)}.
  \end{align*}
  Hence, replacing the output frequency by general $k \in \Z$ and square-summing gives the bound 
  \[
   \lesssim \|(A, \phi)\|_{S^1(I_n \times \R^4)} \Big( \delta_1 + \delta_2 \big( \|(A, \phi)\|_{S^1(I_n\times \R^4)} + \|(A, \phi)\|_{S^1(I_n\times \R^4)}^3 \big) \Big),
  \]
  which is of the desired form. This then reduces the estimate of
  \[
   {\mathcal N}^{low hi}\big( P_{<0} A^{nonlin, (I_n)}, P_0 \phi \big) -  \mathcal{H}^\ast {\mathcal N}^{low hi} \big( P_{<0} A^{nonlin, (I_n)}, P_0 \phi \big) 
  \]
  to the contribution of $P_{<0} A^{2nonlin, (I_n)}$, whose explicit form we recall from Step 1. This means we have to estimate the expression 
  \[
   \big(1-\mathcal{H}^*\big) \bigg( \sum_{k<0} \Box^{-1} P_k Q_{k+O(1)} \big( P_{k+O(1)} Q_{\leq k-C_1} \phi \, \nabla_x P_{k+O(1)} Q_{\leq k-C_1} \phi \big) \nabla_{t,x} P_0 \phi \bigg).
  \]
  The idea here is to use the a priori bounds from Observation~1 and Observation~2 to arrive at the required estimate. For this, we split the above expression into the following 
  \begin{align*}
   &\big(1-\mathcal{H}^*\big) \bigg( \sum_{k<0} \Box^{-1} P_k Q_{k+O(1)} \big( P_{k+O(1)} Q_{\leq k-C_1} \phi \, \nabla_x P_{k+O(1)} Q_{\leq k-C_1}\phi \big) \nabla_{t,x} P_0 \phi \bigg) \\
   &= \big(1-\mathcal{H}^*\big) \bigg( \sum_{k<0} \Box^{-1} P_k Q_{k+O(1)} \big( P_{k+O(1)}Q_{\leq k-C_1} \phi \, \nabla_x P_{k+O(1)} Q_{\leq k-C_1} \phi \big) Q_{[\frac{1}{2}C, C]} \nabla_{t,x} P_0 \phi \bigg) \\
   &\quad + \big(1-\mathcal{H}^*\big) \bigg( \sum_{k<0} \Box^{-1} P_k Q_{k+O(1)} \big( P_{k+O(1)} Q_{\leq k-C_1} \phi \, \nabla_x P_{k+O(1)} Q_{\leq k-C_1} \phi \big) Q_{[k-C_1, \frac{1}{2}C]} \nabla_{t,x} P_0 \phi \bigg) \\
   &\quad + \big(1-\mathcal{H}^*\big) \bigg( \sum_{k<0} \Box^{-1} P_k Q_{k+O(1)} \big( P_{k+O(1)} Q_{\leq k-C_1} \phi \, \nabla_x P_{k+O(1)} Q_{\leq k-C_1} \phi \big) Q_{\leq k-C_1} \nabla_{t,x} P_0 \phi \bigg) \\
   &\quad + \big(1-\mathcal{H}^*\big) \bigg( \sum_{k<0} \Box^{-1}P_k Q_{k+O(1)} \big( P_{k+O(1)}Q_{\leq k-C_1}\phi \, \nabla_x P_{k+O(1)}Q_{\leq k-C_1} \phi \big) Q_{>C}\nabla_{t,x} P_0 \phi \bigg) \\
   &\equiv I + II + III + IV.
  \end{align*}
  We now estimate each of the terms on the right in turn. 

  \medskip

  \noindent {\it Estimate of term $I$}. We distinguish between very small $k$ and $k= O(1)$. In the latter case, we schematically estimate the term in the following fashion. We shall suppress the distinction between space-time translates of $\phi$ and $\phi$, as our norms are invariant under these, and also keep in mind that the operator 
  \[
   \Box^{-1} P_k Q_{k+O(1)}
  \]
  is given by (space-time) convolution with a kernel of $L^1$-mass $\sim 2^{-2k}$. Then we get in case $k = O(1)$,
  \begin{align*}
   &\bigg\| \big(1-\mathcal{H}^*\big) \bigg( \sum_{k<0} \Box^{-1} P_k Q_{k+O(1)} \big( P_{k+O(1)}Q_{\leq k-C_1} \phi \, \nabla_x P_{k+O(1)} Q_{\leq k-C_1}\phi \big) Q_{[\frac{1}{2}C, C]} \nabla_{t,x}P_0 \phi \bigg) \bigg\|_{N(I_n\times \R^4)} \\
   &\leq \bigg\| \big(1-\mathcal{H}^*\big) \bigg( \sum_{k<0} \Box^{-1} P_k Q_{k+O(1)} \big( P_{k+O(1)} Q_{\leq k-C_1} \phi \, \nabla_x P_{k+O(1)} Q_{\leq k-C_1} \phi \big) Q_{[\frac{1}{2}C, C]} \nabla_{t,x} P_0 \phi \bigg) \bigg\|_{L_t^1 L_x^2(I_n \times \R^4)} \\
   &\lesssim 2^{-2k} \big\|P_{k+O(1)} Q_{\leq k-C_1} \phi \big\|_{L_t^2 L_x^\infty} \big\|\nabla_x P_{k+O(1)} Q_{\leq k-C_1} \phi \, Q_{[\frac{1}{2} C, C]} \nabla_{t,x} P_0 \phi \big\|_{L^2_t L^2_x}.
  \end{align*}
  Here the second factor is essentially invariant under mild Lorentz transformations, and so we get (up to changing the meaning of the constants slightly)
  \begin{align*}
   &\big\| \nabla_x P_{k+O(1)} Q_{\leq k-C_1} \phi \, Q_{[\frac{1}{2} C, C]} \nabla_{t,x} P_0 \phi \big\|_{L^2_t L^2_x} \lesssim \big\|\nabla_x P_{k+O(1)} Q_{\leq k-C_1} \phi^L \, Q_{[\frac{1}{2}C, C]} \nabla_{t,x} P_{\leq O(1)} \phi^L \big\|_{L^2_t L^2_x}.
  \end{align*}
  We estimate the last norm using Observation 1, resulting in the bound 
  \begin{align*}
   \big\|\nabla_x P_{k+O(1)} Q_{\leq k-C_1} \phi \, Q_{[\frac{1}{2} C, C]} \nabla_{t,x} P_0 \phi \big\|_{L^2_t L^2_x} &\lesssim \big\|\nabla_x P_{k+O(1)} Q_{\leq k-C_1} \phi^L \big\|_{L_t^\infty L_x^{8-}} \big\| Q_{[\frac{1}{2} C, C]} \nabla_{t,x} P_{\leq O(1)} \phi^L \big\|_{L_t^2 L_x^{\frac{8}{3}+}} \\
   &\leq C \Big( \big\| \big( \tilde{A}^L, \tilde{\phi}^L \big) \big\|_{S^1} \Big).
  \end{align*}
  Then by divisibility of the norm 
  \begin{align*}
   \bigg( \sum_{l \in \Z} 2^{-l} \Big( \sum_{k-l = O(1)} \big\|\nabla_x P_{k+O(1)} Q_{\leq k-C_1} \phi \, Q_{[l-C,l+ C]} \nabla_{t,x} P_l \phi \big\|_{L^2_t L^2_x} \Big)^2 \bigg)^{\frac{1}{2}},
  \end{align*}
  we arrive upon suitable choice of the intervals $I_n$ at the conclusion that 
  \begin{align*}
   &\bigg\| \sum_{l \in \Z} \big(1-\mathcal{H}^\ast \big) \bigg( \sum_{|k-l| = O(1)} \Box^{-1} P_k Q_{k+O(1)} \big( P_{k+O(1)}Q_{\leq k-C_1} \phi \\&\hspace{4cm} \cdot \nabla_x P_{k+O(1)} Q_{\leq k-C_1} \phi \big) Q_{[l+\frac{1}{2}C, l+C]} \nabla_{t,x} P_l \phi \bigg) \bigg\|_{N(I_n\times \R^4)} \\
   &\leq \delta_2 \|\phi\|_{S^1(I_n\times \R^4)}.
  \end{align*}
  This completes the contribution of the term $I$ when $k = O(1)$. On the other hand, when $k \ll -1$, the smallness gain comes directly from $k$. Indeed, we can then estimate 
  \begin{align*}
   &\bigg\| \big(1-\mathcal{H}^\ast\big) \bigg( \sum_{k \ll -1} \Box^{-1} P_k Q_{k+O(1)} \big( P_{k+O(1)}Q_{\leq k-C_1} \phi \, \nabla_x P_{k+O(1)} Q_{\leq k-C_1} \phi \big) Q_{[\frac{1}{2}C, C]} \nabla_{t,x} P_0 \phi \bigg) \bigg\|_{N(I_n\times \R^4)} \\
   &\lesssim \sum_{k \ll -1} 2^{-2k} \big\|P_{k+O(1)} Q_{\leq k-C_1} \phi \big\|_{L^\infty_t L^\infty_x(I_n \times \R^4)} \big\|\nabla_x P_{k+O(1)} Q_{\leq k-C_1} \phi \big\|_{L_t^2 L_x^\infty(I_n \times \R^4)} \big\|Q_{[\frac{1}{2}C, C]} \nabla_{t,x} P_0 \phi \big\|_{L^2_t L^2_x(I_n\times \R^4)} \\
   &\lesssim \sum_{k\ll -1} 2^{\frac{1}{2} k} \big\|P_0 \phi \big\|_{S^1(I_n\times \R^4)} \|\phi\|_{S^1(I_n\times \R^4)}^2 \\
   &\leq \delta_2 \big\|P_0 \phi\big\|_{S^1(I_n\times \R^4)}.
  \end{align*}
  Replacing $P_0 \phi$ by $P_l \phi$, $l \in \Z$, and square-summing over $l$ results in the desired bound. This completes the estimate for term $I$.

  \medskip

  \noindent {\it{Estimate of term $II$}}. Here we use the bound 
  \begin{align*}
   &\bigg\| \big(1-\mathcal{H}^*\big) \bigg( \sum_{k<0} \Box^{-1}P_k Q_{k+O(1)} \big( P_{k+O(1)} Q_{\leq k-C_1} \phi \, \nabla_x P_{k+O(1)} Q_{\leq k-C_1} \phi \big) Q_{[k-2C, \frac{1}{2}C]} \nabla_{t,x} P_0 \phi \bigg) \bigg\|_{N(I_n \times \R^4)} \\
   &\lesssim \sum_{\substack{k<0 \\ l \in [k-2C,\frac{1}{2}C]}} 2^{-2k} \big\|P_{k+O(1)} Q_{\leq k-C_1} \phi \big\|_{L_t^2 L_x^\infty(I_n\times \R^4)} \big\|\nabla_x P_{k+O(1)} Q_{\leq k-C_1} \phi \big\|_{L^\infty_t L^\infty_x(I_n \times \R^4)} \big\| Q_l \nabla_{t,x} P_0 \phi \big\|_{L^2_t L^2_x(I_n\times \R^4)}.
  \end{align*}
  Now if we further restrict the above term to $|k-l| \gg 1$, we easily bound it by 
  \begin{align*}
   &\leq \delta_2 \big\|\phi\big\|_{S^1(I_n\times \R^4)}^2 \big\|P_0 \phi\big\|_{X^{0,\frac{1}{2}}_{\infty}(I_n\times \R^4)} \leq \delta_2 \|\phi \|_{S^1(I_n \times \R^4)}^3,
  \end{align*}
  which is as desired. On the other hand, when restricting the modulation of $Q_{[k-2C, \frac{1}{2} C]} \nabla_{t,x} P_0 \phi$ to $l = k+O(1)$, we use the fact that for $k \ll -1$,
  \begin{align*}
   \big\| \nabla_x P_{k+O(1)} Q_{\leq k-C_1} \phi Q_{k+O(1)} \nabla_{t,x} P_0 \phi \big\|_{L^2_t L^2_x} &\lesssim \big\|\nabla_x P_{k+O(1)} Q_{\leq k-C_1} \phi^L Q_{k+O(1)} \nabla_{t,x} P_0 \phi^L \big\|_{L^2_t L^2_x} \\
   &\lesssim \big\| P_{k+O(1)}Q_{\leq k-C_1} \nabla_x \phi^L \big\|_{L^\infty_t L^\infty_x} \big\|Q_{k+O(1)}\nabla_{t,x}P_0 \phi^L \big\|_{L^2_t L^2_x}.
  \end{align*}
  Then changing the frequency $0$ to general $m \in \Z$ and using Observation 1, we infer 
  \begin{align*}
   \sum_{k<m-C} 2^{-3k} \big\|\nabla_x P_{k+O(1)} Q_{\leq k-C_1} \phi \, Q_{k+O(1)} \nabla_{t,x} P_m \phi \big\|_{L^2_t L^2_x}^2 \leq C \Big(\big\|\big(\tilde{A}^L, \tilde{\phi}^L \big)\big\|_{S^1} \Big). 
  \end{align*}
  Also, the square-sum norm on the left has the divisibility property, whence by restricting to suitable time intervals $I_n$, we may arrange it to be $\ll \delta_2$. Finally, we infer the bound
  \begin{align*}
   &\sum_m \bigg\| \big(1-\mathcal{H}^*\big) \bigg( \sum_{k<m-C} \Box^{-1}P_k Q_{k+O(1)} \big( P_{k+O(1)}Q_{\leq k-C_1} \phi \nabla_x P_{k+O(1)} Q_{\leq k-C_1} \phi \big) Q_{k+O(1)} \nabla_{t,x} P_m \phi \bigg) \bigg\|_{N(I_n\times \R^4)}^2 \\
   &\lesssim \bigg( \sum_k 2^{-k} \big\|P_{k+O(1)} Q_{\leq k-C_1} \phi \big\|_{L_t^2 L_x^\infty}^2 \bigg)^{\frac{1}{2}} \bigg( \sum_{k < m-C} 2^{-3k} \big\|\nabla_x P_{k+O(1)} Q_{\leq k-C_1} \phi \, Q_{k+O(1)} \nabla_{t,x} P_m \phi \big\|_{L^2_t L^2_x}^2 \bigg)^{\frac{1}{2}} \\
   &\leq \delta_2 \|\phi\|_{S^1(I_n\times \R^4)}.
  \end{align*}

  \medskip

  \noindent {\it{Estimate of term $III$}}. This follows the same pattern as for term $I$, by placing the product 
  \[
   \nabla_x P_{k+O(1)} Q_{\leq k-C_1} \phi \, Q_{\leq k-2C} \nabla_{t,x} P_0 \phi
  \]
  into $L^2_t L^2_x$ and using Observation 2. 

  \medskip

  \noindent {\it{Estimate of term $IV$}}. Here one places 
  \[
   \Box^{-1} P_k Q_{k+O(1)} \big( P_{k+O(1)} Q_{\leq k-C_1} \phi \, \nabla_x P_{k+O(1)} Q_{\leq k-C_1} \phi \big)
  \]
  into $L_t^2 L_x^\infty$ and 
  \[
   Q_{>C} \nabla_{t,x} P_0 \phi
  \]
  into $L^2_t L^2_x$, keeping in mind that $C \gg C_1 = C_1(E_{crit})$ is very large. 

  \medskip

  \noindent {\bf Reduction to $\mathcal{H}^\ast {\mathcal N}^{lowhi} \big( \mathcal{H} P_{<0} A^{nonlin, (I_n)} , P_0 \phi \big)$}. To begin with, recall the notation from the proof of Proposition \ref{prop:decomposition} for the definition of the symbol $\mathcal{H}$ applied to bilinear expressions. To reduce to this term, we need to estimate the difference 
  \begin{align*}
   \big\|\mathcal{H}^* {\mathcal N}^{lowhi} \big( P_{<0} A^{nonlin, (I_n)}, P_0 \phi \big) - \mathcal{H}^* {\mathcal N}^{lowhi} \big( \mathcal{H} P_{<0} A^{nonlin, (I_n)}, P_0 \phi \big) \big\|_{N(I_n\times \R^4)}.
  \end{align*}
  Here we recall that 
  \[
   \mathcal{H}_k \mathcal{M}(\phi, \psi) = \sum_{j \leq k+C} Q_{j} \mathcal{M} \big( Q_{\leq j-C} \phi, Q_{\leq j-C} \psi \big)
  \]
  as well as 
  \[
   \mathcal{H} \mathcal{M} \big( \phi, \psi \big) = \sum_{ \substack{ k \leq k_{1,2}-C, \\ k \leq \min \{k_1, k_2\} - C } } \mathcal{H}_k \mathcal{M}\big( P_{k_1} \phi, P_{k_2} \psi \big). 
  \]
  Then write for the spatial components of $\big( I-\mathcal{H} \big) P_{<0} A^{nonlin, (I_n)}$,
  \begin{align*}
   \big( I-\mathcal{H} \big) P_{<0} A^{nonlin, (I_n)} &= \sum_{ \substack{ k<0 \\ k > \max\{k_1, k_2\}-C}}\Box^{-1}P_k\mathcal{P}_x\big(P_{k_1}\phi\nabla_x P_{k_2}\overline{\phi}\big) \\
   &\quad + \sum_{\substack{k \leq k_{1,2}-C \\ j > k+C}}\Box^{-1}P_kQ_j\mathcal{P}_x\big(P_{k_1}\phi\nabla_x P_{k_2} \overline{\phi} \big) \\
   &\quad + \sum_{\substack{k \leq k_{1,2}-C \\ j \leq k+C}} \Box^{-1} P_k Q_j \mathcal{P}_x\big( P_{k_1} Q_{> j-C} \phi \nabla_x P_{k_2} \overline{\phi} \big) \\
   &\quad + \sum_{\substack{k \leq k_{1,2} -C \\ j \leq k+C}} \Box^{-1}P_kQ_j\mathcal{P}_x\big(P_{k_1}Q_{\leq j-C} \phi\nabla_x P_{k_2}Q_{> j-C} \overline{\phi} \big).
  \end{align*}
  For the first term on the right, employing notation introduced in \cite{KST} and also used in the proof of Proposition~\ref{prop:decomposition}, we get upon further restricting to 
  \[
   \big| \max\{k_{1,2}\} - \min\{k_{1,2}\} \big| \gg 1,
  \]
  the smallness gain 
  \begin{align*} 
   \bigg\| \sum_{ \substack{ k<0, \\ k \geq \max\{k_1, k_2\}-C}} \Box^{-1} P_k \mathcal{P}_x\big(P_{k_1}\phi\nabla_x P_{k_2} \overline{\phi} \big) \bigg\|_{Z} \ll \delta_2 \big\|\phi\big\|_{S^1}^2
  \end{align*}
  and the corresponding contribution to $\mathcal{H}^* {\mathcal N}^{lowhi} \big( (I-\mathcal{H}) P_{<0} A^{nonlin, (I_n)}, P_0 \phi \big)$ can then be bounded with respect to $\big\|\cdot\big\|_{N(I_n \times \R^4)}$ by 
  \begin{align*}
   \leq \delta_2\big\|\phi\big\|_{S^1}^2\big\|P_0 \phi\big\|_{S^1},
  \end{align*}
  which upon replacing $0$ by general frequencies and square summing gives the desired bound. Similarly, for the remaining terms on the right above, one may reduce to $k_{1,2} = k+O(1)$, see estimate (134) in \cite{KST}. Finally, in each of these terms, we may reduce the output to modulation $\sim 2^k$, since else one gains smallness due to the null form structure for 
  \[
   \bigg\|\mathcal{H}^* {\mathcal N}^{lowhi} \big( P_{<0} A^{nonlin, (I_n)}, P_0 \phi \big) - \mathcal{H}^* {\mathcal N}^{lowhi} \big( \mathcal{H} P_{<0} A^{nonlin, (I_n)}, P_0 \phi \big) \bigg\|_{N(I_n\times \R^4)}.
  \]
  Thus we have now reduced to estimating (and gaining a smallness factor) for the schematic expression 
  \begin{align*}
   \sum_{\substack{k<0, \\ k_{1,2} = k+O(1)}} \Box^{-1} P_k Q_{k+O(1)} \mathcal{P}_x \big( P_{k_1} \phi \nabla_x P_{k_2} \overline{\phi} \big) \partial^{\nu} Q_{\leq k-C} P_0 \phi. 
  \end{align*}
  Here we can suppress the operator $\Box^{-1}P_kQ_{k+O(1)}$, which is given by convolution with a space-time kernel of $L^1$-norm $\sim 2^{-2k}$, and then schematically estimate the preceding via 
  \begin{align*}
   &\bigg\| \sum_{\substack{k<0\\ k_{1,2} = k+O(1)}} \Box^{-1}P_kQ_{k+O(1)}\mathcal{P}_x\big(P_{k_1}\phi\nabla_x P_{k_2} \overline{\phi} \big)\partial^{\nu}Q_{\leq k-C} P_0 \phi \bigg\|_{N(I_n \times \R^4)} \\
   &\lesssim \sum_{k_1 = k_2+O(1)<O(1)} 2^{-2k_1} \big\|P_{k_1}\phi\big\|_{L_t^2 L_x^\infty(I_n\times \R^4)} \big\| \nabla_x P_{k_2} \phi Q_{\leq k-C} P_0 \phi \big\|_{L^2_t L^2_x(I_n \times \R^4)}.
  \end{align*}
  Here we exploit Lorentz invariance of the norm of the right factor to obtain 
  \begin{align*}
   \sum_{k_2<0}2^{-3k_2}\big\|\nabla_x P_{k_2}\phi Q_{\leq k-C} P_0 \phi \big\|_{L^2_t L^2_x}^2 \lesssim \sum_{k_2<0} 2^{-3k_2} \big\|\nabla_{t,x} (P_{k_2}\phi)^L Q_{\leq k-C}(P_0 \phi)^L \big\|_{L^2_t L^2_x}^2 \lesssim \big\|(\tilde{A}^L, \tilde{\phi}^L)\big\|_{S^1}^4.
  \end{align*}
  In fact, distinguishing as usual between different frequency/modulation configurations for either of the factors, one estimates the $L^2_t L^2_x$-norm of the input by placing the first input into $L_t^2 L_x^\infty$ and the second into $L_t^\infty L_x^2$, both of which are controlled by Observation 1. Using divisibility of the $L^2_t L^2_x$ norm, it now follows that upon proper choice of the intervals $I_n$, whose number of course only depends on $\big\|\big( \tilde{A}^L, \tilde{\phi}^L \big)\big\|_{S^1}$, we get the estimate 
  \begin{align*}
   &\biggl\| \sum_{\substack{k<0 \\ k_{1,2} = k+O(1)}} \Box^{-1} P_k Q_{k+O(1)} \mathcal{P}_x\big(P_{k_1}\phi\nabla_x P_{k_2}\overline{\phi}\big)\partial^{\nu}Q_{\leq k-C} P_0 \phi \bigg\|_{N(I_n\times \R^4)} \\
   &\lesssim \bigg( \sum_{k_1<0} 2^{-k_1} \big\|P_{k_1}\phi\big\|_{L_t^2 L_x^\infty(I_n\times \R^4)}^2 \bigg)^{\frac{1}{2}} \bigg( \sum_{k_2<0} 2^{-3k_2} \big\|\nabla_x P_{k_2}\phi Q_{\leq k-C} P_0 \phi \big\|_{L^2_t L^2_x(I_n \times \R^4)}^2 \bigg)^{\frac{1}{2}} \\
   &\ll \delta_2 \big\|\phi\big\|_{S^1(I_n \times \R^4)}.
  \end{align*}
  Of course, one gets the same bound upon replacing the frequency $0$ by general $l \in \Z$ and square summing. 

  \medskip

  As usual, similar reductions can be applied to the elliptic interaction term $P_{<0} A_{0} \partial_t P_0 \phi$. 

  \medskip

  \noindent {\bf{Dealing with $\mathcal{H}^* {\mathcal N}^{lowhi} \big( \mathcal{H} P_{<0} A^{nonlin, (I_n)}, P_0 \phi \big)$}}. Here we exploit the null structure arising from combining the elliptic as well as hyperbolic terms, just as in the proof of Proposition~\ref{prop:decomposition}, or as in \cite{KST}. Correspondingly, we have to analyze three null forms, each in turn.

  \medskip

  \noindent {\it{The first null form}}. We can write it as 
  \begin{align*}
   \sum_{ \substack{ j \leq k < 0, \\ k_{1,2}>k+O(1)} } \Box^{-1}P_kQ_j \big(Q_{\leq j-C} P_{k_1} \phi \partial_{\alpha} Q_{\leq j-C} P_{k_2} \phi \big) \partial^{\alpha} Q_{\leq j-C} P_0 \phi.
  \end{align*}
  From (148) in \cite{KST}, it follows that we may restrict to $j = k+O(1)$, as otherwise the desired smallness just follows from the off-diagonal decay of the estimate (even without restriction to smaller time intervals). Furthermore, if $k_{1}, k_2 < 0$, then we gain exponentially in the difference $k-k_1$, while if $k_{1}, k_{2} \geq 0$, we gain exponentially in $k$. 
  So we may further restrict to 
  \begin{align*}
   &\sum_{ \substack{ k<0, \\ k_{1,2}=k+O(1) } } \Box^{-1} P_k Q_{k+O(1)} \big( Q_{\leq k-C} P_{k_1} \phi \partial_{\alpha} Q_{\leq k-C} P_{k_2} \phi \big) \partial^{\alpha} Q_{\leq k-C} P_0 \phi
  \end{align*}
  and from here the argument proceeds exactly as before by suppressing the operator $\Box^{-1} P_k Q_{k+O(1)}$ and schematically estimating 
  \[
   \big\|\partial_{\alpha}Q_{\leq k-C} P_{k_2} \phi \partial^{\alpha} Q_{\leq k-C} P_0 \phi \big\|_{L^2_t L^2_x}
  \]
  using Observation 2, while placing $Q_{\leq k-C} P_{k_1} \phi$ into $L_t^2 L_x^\infty$. 

  \medskip

  \noindent {\it{The second and third null forms}}. These are treated identically and hence omitted here. 
 \end{proof}

 \medskip

 This completes Step 2. Together with Step 1, the linear theory for the operator $\sum_k \Box_{A^{free}_{<k}}P_k$ and a standard bootstrap argument, this yields the bounds claimed in Proposition~\ref{prop:S_norm_Lorentz} for the localized norms $\|(A, \phi)\|_{S^1(I_n\times\R^4)}$. From there one can glue the localized components together to get the global bounds. 
 \end{proof}

\subsection{Rigidity I: Infinite time interval and reduction to the self-similar case for finite time intervals} \label{subsec:rigidity1}

 As in \cite[Theorem 5.1]{KM}, our goal is now to establish the following rigidity result. 
 \begin{prop} \label{prop:rigidity} 
  Let $(\mathcal{A}^\infty, \Phi^\infty)$ be as before with lifespan $I = (-T_0, T_1)$. Then it is not possible to have $T_0 < \infty$ or $T_1 < \infty$. Moreover, if $\lambda(t) \geq \lambda_0 > 0$ for all $t\in \R$, then we necessarily have $(\mathcal{A}^\infty, \Phi^\infty) = (0,0)$, whence there cannot be a minimal energy blowup solution under the given assumptions. 
 \end{prop}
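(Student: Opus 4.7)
The plan is to split the analysis according to whether the lifespan endpoint $T_1$ (by time reversibility, we treat only the forward direction) is finite or infinite, and in each case derive a contradiction from the compactness property of $(\mathcal{A}^\infty, \Phi^\infty)$ guaranteed by Theorem~\ref{thm:compact_orbit}, together with the vanishing momentum results in Proposition~\ref{prop:vanishing_momentum_finite_time} and Proposition~\ref{prop:vanishing_momentum_infinite_time}, and the virial identities of Proposition~\ref{prop:virial_identities}.

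In the infinite lifespan case $I^+ = [0,\infty)$ under the additional hypothesis $\lambda(t) \geq \lambda_0 > 0$, I would combine the weighted momentum monotonicity identity \eqref{equ:weighted_momentum_monotonicity} with the vanishing momentum from Proposition~\ref{prop:vanishing_momentum_infinite_time} to show that the integral
\[
\int_0^T \int_{\R^4} \biggl( \sum_k \cF^{\infty \, 2}_{0k} + |\cD_0 \Phi^\infty|^2 \biggr) \, dx \, dt
\]
grows at most linearly with a sublinear correction, forcing a non-trivial amount of energy to remain inside balls of appropriately chosen radius $R(T)$. On the other hand, by using the weighted energy identity \eqref{equ:weighted_energy} together with the vanishing of the momentum for $k = 1, \ldots, 4$, the ``center of mass'' $\int x_k \varphi_R \, T_{00} \, dx$ stays essentially bounded, which, combined with the fact that $\lambda(t) \geq \lambda_0$ bounds the concentration scale from below, places the bulk of the conserved energy in a fixed spatial ball. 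Quantifying the tension between these two pieces of information requires the Vitali covering argument introduced in \cite{KS}: one covers the forward time axis by a family of disjoint intervals on each of which a definite fraction of the energy is captured by a suitable weighted localization, and then sums to extract a contradiction unless the conserved energy vanishes, giving $(\mathcal{A}^\infty, \Phi^\infty) \equiv 0$. The main obstacle here is to carry out the Vitali argument with the $F_{\alpha\beta}^2$ and $|D_\alpha \phi|^2$ terms together and to handle the elliptic component $A_0$ (which is not compactly supported) within the compact orbit framework; I expect one must repeatedly invoke Lemma~\ref{lem:energy_on_slice_outside_cone} and the non-concentration of energy to absorb the remainder term $r(R)$ in \eqref{equ:virial_identities_remainder}.

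In the finite lifespan case, after rescaling assume $T_1 = 1$. Lemma~\ref{lem:finite_lifespan_lower_bound_lambda} gives $\lambda(t) \geq C_0/(1-t)$ and Lemma~\ref{lem:finite_lifespan_support_in_ball} places the support of $\cF_{\alpha\beta}^\infty(t,\cdot)$ and $\Phi^\infty(t,\cdot)$ in $\overline{B}(x_0, 1-t)$. The plan is to translate $x_0$ to the origin and to use Proposition~\ref{prop:vanishing_momentum_finite_time} together with the weighted energy/virial identities applied with the self-similar scaling $y = x/(1-t)$ in order to show that the only way the orbit can remain compact up to the singular time is for the evolution to be, after this change of variables, stationary — in other words a self-similar blowup solution. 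Concretely, inserting the vanishing momentum into the weighted momentum identity over $[0, 1-\delta]$ yields an $L^2$ bound on $(\cF_{0k}^\infty, \cD_0 \Phi^\infty)$ over the backwards light cone with tip $(1,0)$ which, combined with the support property, forces the self-similar profile ansatz. The reduction thus positions us to invoke the Lyapunov functional to be constructed in Subsection~\ref{subsec:rigidity2}, which rules out the self-similar scenario and hence also the finite lifespan case.

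The hardest technical obstacle, beyond the self-similar reduction itself, will be controlling the non-local, non-finite-speed-of-propagation contributions coming from $\mathcal{A}_0^\infty$ (determined elliptically via \eqref{equ:elliptic}) inside both the Vitali argument and the reduction to self-similar coordinates, since these break the clean energy-flux picture used in the Kenig--Merle treatment of the scalar wave equation \cite{KM} and the Krieger--Schlag treatment of wave maps \cite{KS}. I expect this to be manageable using the a priori bounds $\|\mathcal{A}_0^\infty\|_{L^\infty_t \dot H^1_x} + \|\partial_t \mathcal{A}_0^\infty\|_{L^\infty_t L^2_x} \lesssim E_{crit}$ derived from the compatibility equation and the compactness property, together with Hardy-type inequalities absorbing the $|\phi|^2/|x|^2$ remainder in \eqref{equ:virial_identities_remainder}.
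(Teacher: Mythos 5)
Your overall skeleton (split into infinite/finite lifespan, use the compactness of Theorem~\ref{thm:compact_orbit}, the vanishing momentum of Propositions~\ref{prop:vanishing_momentum_finite_time}--\ref{prop:vanishing_momentum_infinite_time}, the virial identities, a Vitali covering, and a reduction of the finite-time case to a self-similar scenario killed by the Lyapunov functional) matches the paper, but in both cases the step that actually produces the contradiction is missing. In the infinite-time case the paper's argument is an exit-time dichotomy: Lemma~\ref{lem:analogue_WM_Lemma_10.9} shows that $\bar{x}(t)/\lambda(t)$ must reach distance $R-R_0(\varepsilon)$ by some time $t_0\leq CR$, and Lemma~\ref{lem:analogue_WM_Lemma_10.10} (weighted energy identity \eqref{equ:weighted_energy} plus vanishing momentum) shows $t_0\geq C_0R/\varepsilon$, which is contradictory for $\varepsilon$ small. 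The engine of the first lemma is the uniform lower bound \eqref{equ:cruxlowerbound}, $\int_I\int(\sum_k(\cF^\infty_{0k})^2+|\cD_0\Phi^\infty|^2)\,dx\,dt\geq\alpha>0$ on every unit time interval: its failure would let one extract (after rescaling) a limiting solution with $\cD_0\Phi^\ast\equiv 0$, hence $\Phi^\ast\equiv 0$ and a free-wave connection with finite $S^1$ norm, contradicting minimality via Proposition~\ref{prop:minenblowup}/Proposition~\ref{prop:scatdata}. This is the only place where nontriviality/minimality of $(\cA^\infty,\Phi^\infty)$ enters, and it is absent from your outline; without it, the upper bound from \eqref{equ:weighted_momentum_monotonicity} together with ``slow drift of the center of mass'' contradicts nothing. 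Moreover the Vitali covering does not play the role you assign to it (covering the time axis and summing ``definite fractions of energy'' over disjoint intervals cannot contradict energy conservation); it is used \emph{inside} the proof of \eqref{equ:cruxlowerbound}, within a single unit interval, to select disjoint subintervals of length $\sim\lambda(s)^{-1}$ when $\lambda$ is unbounded there, so that after rescaling to the intrinsic time scale the compactness extraction still applies.

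In the finite-time case your proposed mechanism for the self-similar reduction (``vanishing momentum inserted into the weighted momentum identity gives an $L^2$ bound on $(\cF^\infty_{0k},\cD_0\Phi^\infty)$ over the cone, which with the support property forces the self-similar ansatz'') does not work and is not what is needed. What the reduction requires, and what your outline omits, is the \emph{upper} bound $\lambda(t)\leq C_1(K)/(1-t)$ of Lemma~\ref{lem:finite_lifespan_upper_bound_lambda}: the lower bound of Lemma~\ref{lem:finite_lifespan_lower_bound_lambda} alone does not prevent the concentration scale from being much finer than $(1-t)^{-1}$, in which case the $(1-t)$-rescaled family would not be pre-compact. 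Proving the upper bound is itself nontrivial: one works with $z(t)=\int x_k(\text{momentum density})+\int\Re(\Phi^\infty\overline{\cD_0\Phi^\infty})$, uses $z'(t)=-\int(\sum_k(\cF^\infty_{0k})^2+|\cD_0\Phi^\infty|^2)$, $z(t)\to0$ as $t\to1$ (support plus Hardy), and a dichotomy whose first branch invokes Proposition~\ref{prop:vanishing_momentum_finite_time} as in Kenig--Merle and whose second branch again needs the Vitali covering and compactness extraction of a trivial limit. Only with the two-sided bound $C_0\leq(1-t)\lambda(t)\leq C_1$ (and boundedness of $\bar{x}(t)$) does Corollary~\ref{cor:reduction_to_exactly_self_similar_case} give pre-compactness of $\{(1-t)^2\nabla_{t,x}(\cA^\infty_x,\Phi^\infty)(t,(1-t)x)\}$; note that this is all the ``reduction to the self-similar case'' asserts --- the solution is not shown to be stationary in self-similar variables at this stage (that only emerges at the very end from the Lyapunov monotonicity), so your claim that compactness of the orbit forces stationarity after the change of variables overstates what can be proved there.
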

 
 We begin the proof of Proposition~\ref{prop:rigidity} in the case when $T_1 = \infty$ and $\lambda(t) \geq \lambda_0 > 0$ on $[0,\infty)$. To this end we follow the method of proof in \cite{KS}, which in turn follows the strategy in \cite{KM}, but also adds a crucial Vitali type covering argument that is inspired by the covering argument in \cite{Struwe}. Using the assumption $\lambda(t) \geq \lambda_0 > 0$ on $[0,\infty)$ and the compactness property expressed in Theorem~\ref{thm:compact_orbit}, we obtain that for any $\varepsilon > 0$, there exists $R_0(\varepsilon) > 0$ such that for all $t \geq 0$,
 \begin{equation} \label{equ:small_outside}
  \int_{\big\{ |x + \frac{\bar{x}(t)}{\lambda(t)}| \geq R_0(\varepsilon) \big\}} \biggl( \sum_{\alpha} \bigl|\nabla_{t,x} \mathcal{A}^\infty_{\alpha}(t, x) \bigr|^2 + \bigl| \nabla_{t,x} \Phi^\infty(t,x) \bigr|^2 + \frac{|\Phi^\infty(t,x)|^2}{|x|^2} \biggr) \, dx \leq \varepsilon. 
 \end{equation}
 Then we have in perfect analogy with \cite[Lemma 10.9]{KS} and \cite[Lemma 5.4]{KM} the following 
 \begin{lem} \label{lem:analogue_WM_Lemma_10.9} 
  There exist $\varepsilon_1 > 0$ and $C > 0$ such that if $\varepsilon \in (0, \varepsilon_1)$, there exists $R_0(\varepsilon)$ so that if $R > 2 R_0(\varepsilon)$, then there exists $t_0 = t_0(R, \varepsilon)$ with $0 \leq t_0 \leq C R$ and the property that for all $0 < t <t_0$ we have
  \[
   \Bigl| \frac{\bar{x}(t)}{\lambda(t)} \Bigr| < R - R_0(\varepsilon), \quad \Big|\frac{\bar{x}(t_0)}{\lambda(t_0)} \Big| = R - R_0(\varepsilon).
  \]
 \end{lem}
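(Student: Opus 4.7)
The plan is to prove Lemma~\ref{lem:analogue_WM_Lemma_10.9} by a contradiction argument built on the weighted momentum monotonicity identity \eqref{equ:weighted_momentum_monotonicity} of Proposition~\ref{prop:virial_identities}, combined with the sharp tail estimate \eqref{equ:small_outside} and a uniform-in-time lower bound on $\int_{\R^4} \bigl( \sum_k (\cF_{0k}^\infty)^2 + |\cD_0 \Phi^\infty|^2 \bigr)(t,x) \, dx$ extracted from the compactness property of Theorem~\ref{thm:compact_orbit}.

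After a harmless spatial translation, I may normalize so that $\bar x(0)/\lambda(0) = 0$, so that $|\bar x(0)/\lambda(0)| = 0 < R - R_0(\varepsilon)$ whenever $R > 2 R_0(\varepsilon)$. By the continuity of $t \mapsto \bar x(t)/\lambda(t)$ ensured by Theorem~\ref{thm:compact_orbit}, it suffices to show that for some absolute constant $C$ (independent of $R$ and $\varepsilon$), the strict inequality $|\bar x(t)/\lambda(t)| < R - R_0(\varepsilon)$ cannot persist throughout the entire interval $[0, CR]$; the first time of equality then serves as $t_0$. Suppose for contradiction that it does persist. Then \eqref{equ:small_outside} forces the remainder $r(R)$ from \eqref{equ:virial_identities_remainder} to satisfy $r(R) \leq C_1 \varepsilon$ uniformly on $[0, CR]$. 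Integrating \eqref{equ:weighted_momentum_monotonicity} from $0$ to $CR$ and bounding the boundary terms by $2 C_2 R E_{crit}$ (since $x_k \varphi_R$ is supported in $|x| \leq 2R$ while the total MKG energy is $E_{crit}$) yields
\[
 \int_0^{CR} \int_{\R^4} \biggl( \sum_{k=1}^4 (\cF_{0k}^\infty)^2 + |\cD_0 \Phi^\infty|^2 \biggr)(t,x) \, dx \, dt \leq 2 C_2 R E_{crit} + C_3 \varepsilon CR.
\]

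The remaining ingredient is a uniform lower bound $\int_{\R^4} \bigl( \sum_k (\cF_{0k}^\infty)^2 + |\cD_0 \Phi^\infty|^2 \bigr)(t, \cdot) \, dx \geq \delta_0 > 0$ for every $t \geq 0$; combined with the display above this forces $(\delta_0 - C_3 \varepsilon) C \leq 2 C_2 E_{crit}$, and choosing $\varepsilon_1$ with $C_3 \varepsilon_1 < \delta_0/2$ together with $C > 4 C_2 E_{crit}/\delta_0$ closes the contradiction. By the pre-compactness of the rescaled orbit in $\dot H^1_x \times L^2_x$, the nonnegative function $t \mapsto \int_{\R^4} \bigl( \sum_k (\cF_{0k}^\infty)^2 + |\cD_0 \Phi^\infty|^2 \bigr)(t, \cdot) \, dx$ attains its infimum along a convergent subsequence of rescaled profiles; vanishing of this infimum would produce a nontrivial limit with $\cD_0 \Phi = 0$ and $\cF_{0k} = 0$, i.e.\ a minimal-energy static energy class solution of MKG-CG carrying the compactness property. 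The hard part of the argument is ruling out such a static object; this is exactly the static case of Proposition~\ref{prop:rigidity} and is handled by the Pohozaev-type virial identities developed in Subsection~\ref{subsec:rigidity2}. Granting that complementary thread, the uniform bound $\delta_0 > 0$ is secured and Lemma~\ref{lem:analogue_WM_Lemma_10.9} follows.
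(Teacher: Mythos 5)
The second half of your argument (normalizing $\bar x(0)=0$, integrating the weighted momentum identity \eqref{equ:weighted_momentum_monotonicity} over $[0,CR]$, bounding the boundary terms by $\lesssim R\,E_{crit}$ and the remainder by $r(R)\lesssim\varepsilon$, then playing the lower bound against $CR$) is exactly the paper's argument. The gap is in the lower-bound ingredient, and it is a genuine one. First, a pointwise-in-time lower bound on $\int_{\R^4}\bigl(\sum_k(\cF_{0k}^\infty)^2+|\cD_0\Phi^\infty|^2\bigr)(t,\cdot)\,dx$ cannot be obtained the way you sketch: if this quantity vanishes along a sequence of times $t_n$, compactness only produces, after rescaling, a limiting \emph{data set} whose temporal curvature and covariant time derivative vanish at a single time. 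Such data are by no means static (nor trivial) — they merely have vanishing ``kinetic'' part at one instant and can evolve nontrivially — so there is no static solution to rule out and no contradiction yet. This is precisely why the paper works with the time-integrated quantity over \emph{unit intervals}: assuming it degenerates along a sequence of unit intervals $J_n$, one uses Theorem~\ref{thm:compact_orbit} together with Corollary~\ref{cor:lifespancompact} (and, in the case where $\lambda$ is unbounded on $J_n$, an additional Vitali covering selection of times $s_n$ with intervals of length $\lambda(s_n)^{-1}$) to extract a limiting energy-class solution $(\cA^\ast,\Phi^\ast)$ with $\cD_0\Phi^\ast\equiv 0$ on a nontrivial time interval; the covariant wave equation then gives $\sum_k(\partial_k+i\cA_k^\ast)^2\Phi^\ast\equiv 0$, whence $\Phi^\ast\equiv 0$ by integration by parts and the diamagnetic inequality, so the limit is a \emph{trivial} solution (free spatial waves, vanishing scalar field) with finite $S^1$ norm, contradicting Proposition~\ref{prop:minenblowup}. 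No Pohozaev or virial rigidity is needed at this point.

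Second, your deferral of the ``hard part'' to ``the static case of Proposition~\ref{prop:rigidity}, handled in Subsection~\ref{subsec:rigidity2}'' is both misattributed and circular: Subsection~\ref{subsec:rigidity2} treats the self-similar, finite-lifespan scenario, and the exclusion of static-like behavior in the infinite-time case is carried out \emph{through} Lemma~\ref{lem:analogue_WM_Lemma_10.9} (together with Lemma~\ref{lem:analogue_WM_Lemma_10.10}), so you cannot invoke it to prove the lemma. To repair your proof you should replace the pointwise infimum argument by the unit-interval integrated lower bound \eqref{equ:cruxlowerbound}, prove it by the compactness-plus-Vitali extraction just described, and conclude triviality of the limit directly from $\cD_0\Phi^\ast\equiv 0$ on an interval rather than appealing to any static-solution rigidity.
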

 \begin{proof} 
  We adjust the proof of \cite[Lemma 10.9]{KS} by using the weighted momentum monotonicity identity \eqref{equ:weighted_momentum_monotonicity} from Proposition~\ref{prop:virial_identities}. To begin with, we show that there exists $\alpha > 0$ with 
  \begin{equation} \label{equ:cruxlowerbound}
   \int_{I} \int_{\R^4} \biggl( \sum_k \cF_{0k}^\infty(t,x)^2 + \big| {\mathcal D}_0 \Phi^\infty(t,x) \big|^2 \biggr) \, dx \, dt \geq \alpha > 0
  \end{equation}
  for all intervals $I \subset [0,\infty)$ of unit length. We argue by contradiction. Suppose not, then there exists a sequence of intervals $J_n = [t_n, t_n+1]$ with $t_n \rightarrow \infty$ such that 
  \begin{equation} \label{equ:getting_small_1}
   \int_{J_n} \int_{\R^4} \biggl( \sum_k \cF_{0k}^\infty(t,x)^2 + \big|{\mathcal D}_0 \Phi^\infty(t,x) \big|^2 \biggr) \, dx \, dt \leq \frac{1}{n}. 
  \end{equation}
  For a sequence of times $\{s_n\}_n$ with $s_n \in J_n$, the set
  \[
   \Bigg\{ \bigg( \frac{1}{\lambda(s_n)^2} \nabla_{t,x} {\mathcal A}^\infty_x \bigg(s_n, \frac{\cdot-\bar{x}(s_n)}{\lambda(s_n)} \bigg), \frac{1}{\lambda(s_n)^2} \nabla_{t,x} \Phi^\infty \bigg(s_n, \frac{\cdot-\bar{x}(s_n)}{\lambda(s_n)} \bigg) \bigg) \Bigg\}_n
  \]
  is pre-compact in $\big(L^2_x(\R^4)\big)^5$ by Theorem~\ref{thm:compact_orbit}. Then by Corollary~\ref{cor:lifespancompact}, there exists a non-empty open interval $I^\ast$ around $t = 0$ such that 
  \[
   \frac{1}{\lambda(s_n)^2} \bigg( \nabla_{t,x} \mathcal{A}^\infty_x, \nabla_{t,x} \Phi^\infty \bigg) \bigg( s_n + t \lambda(s_n)^{-1}, \frac{\cdot - \bar{x}(s_n)}{\lambda(s_n)} \bigg)
  \]
  converges to a limiting function 
  \[
   \big( \nabla_{t,x} \mathcal{A}^\ast_x, \nabla_{t,x} \Phi^\ast \big) \in C^0 \big(I^\ast, (L^2_x(\R^4))^5\big)
  \]  
  as $n \to \infty$ in the given topology. $(\mathcal{A}^\ast, \Phi^\ast)$ is a weak solution to MKG-CG on $I^\ast \times \R^4$ in the $L^2_t \dot{H}^1_x$-sense and satisfies the Coulomb condition. 

  \medskip
 
  We now distinguish two cases: Either there exists a sequence of times $\{s_n\}_n$ with $s_n \in J_n$ such that $\{\lambda(s_n)\}_n$ remains bounded or $\{ \lambda(s_n) \}_n$ does not remain bounded for any sequence of times $\{s_n\}_n$ with $s_n \in J_n$. In the first case, noting that $\lambda(t) \geq \lambda_0 > 0$, we may replace $I^\ast$ by a smaller non-empty time interval $I^\dagger$ and assume that
  \[
   s_n + \lambda(s_n)^{-1} I^\dagger \subset J_n 
  \]
  for all $n \geq 1$. From \eqref{equ:getting_small_1} we infer that 
  \[
   \int_{I^\dagger} \int_{\R^4} \big| \big(\partial_t \Phi^\ast + i \mathcal{A}_0^\ast \Phi^\ast \big)(t,x) \big|^2 \, dx \, dt = 0,
  \]
  whence $\partial_t \Phi^\ast + i \mathcal{A}_0^\ast \Phi^\ast \equiv 0$ on $I^\dagger \times \R^4$. But then we have in the weak sense that
  \[
   \sum_{k = 1}^4 \big( \partial_k + i \mathcal{A}^\ast_k \big)^2 \Phi^\ast \equiv 0 \text{ on } I^\dagger \times \R^4.
  \]
  This implies $\Phi^\ast|_{I^\dagger \times \R^4} \equiv 0$ and hence also $\partial_t \Phi^\ast|_{I^\dagger \times \R^4} \equiv 0$. We conclude that $(\mathcal{A}^\ast, \Phi^\ast)$ must be a ``trivial'' solution in that the spatial components of $\mathcal{A}^\ast$ are finite energy free waves, while the temporal component vanishes, and we have $\Phi^\ast \equiv 0$. But this solution has finite $S^1$-bounds, which is a contradiction upon applying Proposition~\ref{prop:scatdata}. 

  \medskip

  Next, we consider the case that $\{ \lambda(s_n) \}_n$ does not remain bounded for any sequence of times $\{s_n\}_n$ with $s_n \in J_n$. Then we essentially replicate the preceding argument, but need to also add a Vitali type covering trick. We write for each $n \geq 1$,
  \[
   J_n = \bigcup_{s\in J_n} \big[s - \lambda(s)^{-1}, s + \lambda(s)^{-1} \big] \cap J_n.
  \]
  Applying Vitali's covering lemma, we may pick a disjoint subcollection of intervals $\{ I_s \}_{s \in A_n}$ with $I_s := [s - \lambda(s)^{-1}, s + \lambda(s)^{-1}] \cap J_n$ for some subset $A_n \subset J_n$ such that 
  \[
   \sum_{s \in A_n} 5 |I_s| \geq 1.
  \]
  It follows that we may pick a sequence of times $\{s_n\}_n$ with $s_n \in J_n$ such that we have 
  \begin{align*}
   \int_{I_{s_n}} \int_{\R^4} \biggl( \sum_k \cF_{0k}^\infty(t,x)^2 + \big|{\mathcal D}_0 \Phi^\infty (t,x) \big|^2 \biggr) \, dx \, dt = o(\lambda(s_n)^{-1}). 
  \end{align*}
  In particular, we obtain
  \begin{align*}
   \int_{-1}^1 \int_{\R^4} \Biggl( \chi_{J_n} \biggl( \sum_k \bigl( \cF_{0k}^\infty \bigr)^2 + \big| \mathcal{D}_0 \Phi^\infty \big|^2 \biggr) \Biggr)\bigl( s_n + t \lambda(s_n)^{-1}, x \bigr) \, dx \, dt = o(1).  
  \end{align*}
  But then, passing to a subsequence, we may again extract a limiting function from 
  \begin{align*}
   \frac{1}{\lambda(s_n)^2} \bigg( \nabla_{t,x} \mathcal{A}^\infty_x, \nabla_{t,x} \Phi^\infty \bigg) \bigg( s_n + t \lambda(s_n)^{-1}, \frac{\cdot - \bar{x}(s_n)}{\lambda(s_n)} \bigg),
  \end{align*}
  which yields a time independent solution and leads to a contradiction as before. 

 \medskip

 This shows that \eqref{equ:cruxlowerbound} is indeed valid for suitable $\alpha > 0$. We note that $\lambda(t) \geq \lambda_0 > 0$ and that we may assume $\bar{x}(0) = 0$. If the assertion of the lemma was false, then we would have
 \[
  \Bigl| \frac{\bar{x}(t)}{\lambda(t)} \Bigr| < R - R_0(\varepsilon)
 \]
 for all $0 \leq t < C R$ with $C > 0$ to be picked sufficiently large later on. We now use the weighted momentum monotonicity identity \eqref{equ:weighted_momentum_monotonicity} to obtain a contradiction. In view of \eqref{equ:small_outside}, we conclude that the corresponding remainder term \eqref{equ:virial_identities_remainder} satisfies
 \[
  r(R) \lesssim \varepsilon.
 \]
 Now choose $\varepsilon > 0$ so small that we have for any time interval $I$ of unit length,
 \[
  \int_I \Biggl( \int_{\R^4} \sum_k \cF_{0k}^\infty(t,x)^2 + \big| {\mathcal D}_0 \Phi^\infty(t,x) \big|^2 \, dx + O(r(R)) \Biggr) \, dt \geq \frac{\alpha}{2},
 \]
 provided $C R$ is sufficiently large. In particular, this implies 
 \[
  \int_0^{CR} \Biggl( \int_{\R^4} \sum_k \cF_{0k}^\infty(t,x)^2 + \big| {\mathcal D}_0 \Phi^\infty(t,x) \big|^2 \, dx + O(r(R)) \Biggr) \, dt \geq \frac{\alpha}{2} (C R - 1).
 \]
 On the other hand, integrating \eqref{equ:weighted_momentum_monotonicity} in time from $0$ to $C R$, we find 
 \[
  \int_0^{CR} \Biggl( \int_{\R^4} \sum_k \cF_{0k}^\infty(t,x)^2 + \big| {\mathcal D}_0 \Phi^\infty(t,x) \big|^2 \, dx + O(r(R)) \Biggr) \, dt \lesssim R E_{crit}
 \]
 with a universal implied constant. The two preceding bounds contradict each other for $C$ large.
\end{proof}

To finish off the proof of Proposition~\ref{prop:rigidity} in the case when $T_1 = \infty$ and $\lambda(t) \geq \lambda_0 > 0$ on $[0,\infty)$, we now use Proposition~\ref{prop:vanishing_momentum_infinite_time} to conclude a contradiction to the preceding Lemma~\ref{lem:analogue_WM_Lemma_10.9}.
\begin{lem} \label{lem:analogue_WM_Lemma_10.10}
 There exist $\varepsilon_2 > 0$, $R_1(\varepsilon) > 0$, $C_0 > 0$ such that if $R > R_1(\varepsilon)$, $t_0 = t_0(R,\varepsilon)$ are as in Lemma~\ref{lem:analogue_WM_Lemma_10.9}, then for $0 < \varepsilon < \varepsilon_2$,
 \[
  t_0(R,\varepsilon) \geq \frac{C_0 R}{\varepsilon}.
 \]
\end{lem}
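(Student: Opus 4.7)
The plan is to integrate the weighted energy identity \eqref{equ:weighted_energy} from $t=0$ to $t=t_0$, exploit that the momentum of $(\cA^\infty, \Phi^\infty)$ vanishes identically by Proposition~\ref{prop:vanishing_momentum_infinite_time}, and compare with a lower bound on the change of the weighted energy coming from the displacement $|\bar{x}(t_0)/\lambda(t_0)| = R - R_0(\varepsilon)$ guaranteed by Lemma~\ref{lem:analogue_WM_Lemma_10.9}. Concretely, I would choose a unit vector $e \in \R^4$ aligned with $-\bar{x}(t_0)/\lambda(t_0)$ and set
\[
 W_e(t) := \sum_{k=1}^4 e_k \int_{\R^4} x_k \varphi_R(x) \biggl( \frac{1}{4} \sum_{\alpha,\beta} (\cF_{\alpha\beta}^\infty)^2 + \frac{1}{2} \sum_\alpha |\cD_\alpha \Phi^\infty|^2 \biggr)(t,x) \, dx.
\]
Summing \eqref{equ:weighted_energy} against $e_k$, integrating in $t$ from $0$ to $t_0$, and invoking Proposition~\ref{prop:vanishing_momentum_infinite_time} to kill the momentum contribution yields
\[
 |W_e(t_0) - W_e(0)| \lesssim t_0 \, r(R),
\]
where $r(R)$ is the tail remainder \eqref{equ:virial_identities_remainder}. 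The hypothesis $R > R_1(\varepsilon)$ (which I take $\geq 2 R_0(\varepsilon)$) together with \eqref{equ:small_outside}, augmented by a standard Hardy estimate to handle the $|\Phi^\infty|^2/|x|^2$ piece, gives $r(R) \lesssim \varepsilon$, hence $|W_e(t_0) - W_e(0)| \lesssim t_0 \varepsilon$.

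The next step is to produce a matching lower bound of the form $|W_e(t_0) - W_e(0)| \gtrsim R E_0$, where $E_0 := E(\cA^\infty, \Phi^\infty) > 0$ is the conserved MKG energy. After translating so that $\bar{x}(0) = 0$, the concentration bound \eqref{equ:small_outside} implies that the energy density at time $0$ is $\varepsilon$-concentrated in $B(0, R_0(\varepsilon))$, while at time $t_0$ it is $\varepsilon$-concentrated in the ball $B(-\bar{x}(t_0)/\lambda(t_0), R_0(\varepsilon))$, which lies entirely inside $\{|x| \leq R\}$ where $\varphi_R \equiv 1$. Splitting each of the integrals defining $W_e(0)$ and $W_e(t_0)$ into a core contribution over the concentration ball and a tail contribution produces
\[
 W_e(0) = O(R_0(\varepsilon) E_0) + O(R \varepsilon), \qquad W_e(t_0) = -(R - R_0(\varepsilon)) E_0 + O(R_0(\varepsilon) E_0) + O(R \varepsilon).
\]
Combining the resulting lower bound with the earlier upper bound yields
\[
 R E_0 \lesssim t_0 \varepsilon + R_0(\varepsilon) E_0 + R \varepsilon,
\]
so that choosing $R_1(\varepsilon)$ large enough that $R_0(\varepsilon) \ll R$ for all $R > R_1(\varepsilon)$ and $\varepsilon_2$ small enough that $C \varepsilon \leq \tfrac{1}{2} E_0$, the last two terms are absorbed and we conclude $t_0 \geq C_0 R /\varepsilon$ for an appropriate $C_0 > 0$ depending only on $E_0$.

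The main obstacle lies not in the virial computation itself but in the precise quantitative justification that the MKG energy density localizes around $-\bar{x}(t)/\lambda(t)$ with a \emph{uniform} scale $R_0(\varepsilon)$ rather than the rescaled scale $R_0(\varepsilon)/\lambda(t)$. This is where the standing hypothesis $\lambda(t) \geq \lambda_0 > 0$ on $[0,\infty)$ enters decisively: it allows one to transfer the $\dot H^1_x\times L^2_x$-pre-compactness statement of Theorem~\ref{thm:compact_orbit} from the rescaled frame to the original frame with a concentration radius that is uniform in $t$. Once this uniform localization and the corresponding uniform upper bound on the MKG energy outside the concentration ball are in hand, the splitting argument for $W_e(t_0)$ and $W_e(0)$ reduces to a routine weight-splitting computation, and combining the two inequalities produces the asserted lower bound on $t_0$.
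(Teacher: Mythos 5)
Your argument is exactly the paper's proof: the paper simply defers to Lemma~5.5 of Kenig--Merle, which is precisely this scheme of integrating the weighted energy identity \eqref{equ:weighted_energy} from $0$ to $t_0$, killing the momentum term via Proposition~\ref{prop:vanishing_momentum_infinite_time}, bounding the remainder $r(R)\lesssim\varepsilon$ via \eqref{equ:small_outside} (where, as you note, $\lambda(t)\geq\lambda_0$ is what makes the concentration radius $R_0(\varepsilon)$ uniform in $t$), and using $|\bar{x}(t_0)/\lambda(t_0)|=R-R_0(\varepsilon)$ from Lemma~\ref{lem:analogue_WM_Lemma_10.9} to force a change of size $\gtrsim R\,E_{crit}$ in the weighted energy. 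The only blemish is the sign of the core contribution to $W_e(t_0)$ (with $e$ aligned with $-\bar{x}(t_0)/\lambda(t_0)$ it should be $+(R-R_0(\varepsilon))E_0$), which is immaterial since you only use $|W_e(t_0)-W_e(0)|$.
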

\begin{proof}
 The proof proceeds exactly as in \cite[Lemma 5.5]{KM} using the weighted energy identity \eqref{equ:weighted_energy} and that the minimal blowup solution $(\mathcal{A}^\infty, \Phi^\infty)$ has vanishing momentum by Proposition~\ref{prop:vanishing_momentum_infinite_time}.
\end{proof}

It remains to prove Proposition~\ref{prop:rigidity} when $T_1 < \infty$. As in \cite{KM} and \cite{KS}, we first reduce this case to a self-similar blowup scenario. After rescaling we may assume that $T_1 = 1$. We recall from Lemma~\ref{lem:finite_lifespan_lower_bound_lambda} that there exists a constant $C_0(K) > 0$ such that
\[
 0 < \frac{C_0(K)}{1-t} \leq \lambda(t)
\]
for all $0 \leq t < 1$. Moreover, we know from Lemma~\ref{lem:finite_lifespan_support_in_ball} that after spatial translation 
\begin{equation} \label{equ:min_blowup_solution_support_in_ball}
 \text{supp} \, \Big( \cF_{\alpha \beta}^\infty(t, \cdot), \Phi^\infty(t, \cdot) \Big) \subset \overline{B}(0, 1-t) 
\end{equation}
for all $0 \leq t < 1$ and all $\alpha, \beta \in \{0, 1, \ldots,4\}$. Next, we prove an upper bound on $\lambda(t)$.
\begin{lem} \label{lem:finite_lifespan_upper_bound_lambda}
 Let $(\mathcal{A}^\infty, \Phi^\infty)$ be as above with $T_1 = 1$. Then there exists $C_1(K) > 0$ such that
 \[
  \lambda(t) \leq \frac{C_1(K)}{1-t}
 \]
 for all $0 \leq t < 1$.
\end{lem}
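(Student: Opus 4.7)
\medskip

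\noindent \textbf{Proof plan.} I will argue by contradiction. Suppose there exists a sequence $t_n \nearrow 1$ with $\lambda(t_n)(1-t_n) \to \infty$. Set $\lambda_n := \lambda(t_n)$ and $x_n := \bar{x}(t_n)$, and consider the scale- and translation-normalized data
\begin{align*}
 \tilde{\mathcal{A}}_{n,j}[0](y) &:= \Big( \lambda_n^{-1} \mathcal{A}^\infty_j(t_n, x_n + y/\lambda_n),\, \lambda_n^{-2} \partial_t \mathcal{A}^\infty_j(t_n, x_n + y/\lambda_n) \Big), \\
 \tilde{\Phi}_n[0](y) &:= \Big( \lambda_n^{-1} \Phi^\infty(t_n, x_n + y/\lambda_n),\, \lambda_n^{-2} \partial_t \Phi^\infty(t_n, x_n + y/\lambda_n) \Big),
\end{align*}
for $j = 1, \ldots, 4$. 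By Theorem~\ref{thm:compact_orbit}, after passing to a subsequence we may assume that $(\tilde{\mathcal{A}}_n, \tilde{\Phi}_n)[0]$ converges in the energy topology to a limit $(\mathcal{A}^*, \Phi^*)[0]$. By scale invariance of $\|\nabla_{t,x}\cdot\|_{L^2_x}$ and energy conservation for the original minimal blowup solution, the data $(\tilde{\mathcal{A}}_n, \tilde{\Phi}_n)[0]$ carry the fixed energy $E(\mathcal{A}^\infty, \Phi^\infty) = E_{crit}$, so $(\mathcal{A}^*, \Phi^*)[0]$ is non-trivial, with $E(\mathcal{A}^*, \Phi^*) \leq E_{crit}$.

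\medskip

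The next, and conceptually central, step is to promote this limit to a \emph{global} energy class solution. The MKG-CG evolution of $(\tilde{\mathcal{A}}_n, \tilde{\Phi}_n)$ is simply the solution $(\mathcal{A}^\infty, \Phi^\infty)$ rescaled and translated, and hence has lifespan $(-\lambda_n t_n, \lambda_n(1-t_n))$ in the rescaled time. Since $\lambda_n \geq C_0(K)/(1-t_n)$ by Lemma~\ref{lem:finite_lifespan_lower_bound_lambda} and $t_n \to 1$, we have $\lambda_n t_n \to \infty$; by assumption $\lambda_n(1-t_n) \to \infty$. Hence both endpoints of the rescaled lifespans diverge. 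Combining Corollary~\ref{cor:lifespancont} with Corollary~\ref{cor:lifespancompact}, the limit $(\mathcal{A}^*, \Phi^*)$ has lifespan all of $\R$: on any compact $J \subset \R$, $J$ is eventually contained in the lifespan of $(\tilde{\mathcal{A}}_n, \tilde{\Phi}_n)$, so by pre-compactness of the data and the joint lifespan corollary, the evolution of $(\mathcal{A}^*, \Phi^*)$ also covers $J$.

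\medskip

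Now there are two possibilities for $(\mathcal{A}^*, \Phi^*)$, and ruling each out gives the desired contradiction. If $(\mathcal{A}^*, \Phi^*)$ is not singular on $\R$, then by Corollary~\ref{cor:lifespancont} the approximating sequence has uniformly bounded $S^1$-norm on every compact interval. Translating back to the original coordinates, this forces $\sup_n \|(\mathcal{A}^\infty, \Phi^\infty)\|_{S^1([t_n, 1)\times\R^4)}$ to be controlled, contradicting the fact that $1$ is the right endpoint of the maximal lifespan of the singular solution $(\mathcal{A}^\infty, \Phi^\infty)$. Hence $(\mathcal{A}^*, \Phi^*)$ must itself be a singular energy class solution on $\R$, with $E(\mathcal{A}^*, \Phi^*) = E_{crit}$ by minimality of $E_{crit}$, and with the compactness property (inherited as a weak limit of families that already satisfy it, via Theorem~\ref{thm:compact_orbit}).

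\medskip

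The hard part will be closing out this second case: we have extracted a second minimal blowup solution $(\mathcal{A}^*, \Phi^*)$ whose lifespan contains $[0,\infty)$. Here I will invoke the already-established rigidity for the infinite-time direction (Lemma~\ref{lem:analogue_WM_Lemma_10.9} together with Lemma~\ref{lem:analogue_WM_Lemma_10.10}) applied to $(\mathcal{A}^*, \Phi^*)$, whose concentration function $\lambda^*(t)$ satisfies $\lambda^*(0) = 1$. Provided $\lambda^*(t) \geq \lambda_0 > 0$ on $[0,\infty)$, the rigidity result forces $(\mathcal{A}^*, \Phi^*) \equiv 0$, contradicting the non-triviality established above. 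If instead $\lambda^*(t_k^*) \to 0$ along some sequence $t_k^* \to \infty$, one rescales $(\mathcal{A}^*, \Phi^*)$ a second time to extract a further limit object with strictly smaller concentration scale; by minimality this second limit must again have energy $E_{crit}$ and live in the same compact orbit modulo symmetries, but the support property of Lemma~\ref{lem:finite_lifespan_support_in_ball} (which rescales to balls of radius $\lambda^*(t_k^*)^{-1} \to \infty$) combined with energy conservation yields a contradiction by the standard Kenig--Merle argument. This completes the proof.
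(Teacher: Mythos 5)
Your proposal diverges from the paper's argument (which is a virial computation, not a second concentration-compactness extraction), and as written it has genuine gaps. The central one is the passage from the rescaled data at times $t_n$ to a \emph{global} limit solution with the compactness property. Corollary~\ref{cor:lifespancont} and Corollary~\ref{cor:lifespancompact} do not give this: the first bounds the $S^1$ norms of solutions with converging data on compact subintervals of the \emph{limit's} lifespan (so it runs in the wrong direction — lifespan is only lower semicontinuous this way), and the second only produces one fixed small interval $I_\ast$ around $t=0$ for a compact family of data. Knowing that the rescaled solutions exist on $[0,\lambda_n(1-t_n))$ with $\lambda_n(1-t_n)\to\infty$ does not by itself prevent the limit from having finite forward lifespan, and showing that the limit evolution is the limit of the evolutions (hence inherits global existence, energy $E_{crit}$, and the compact orbit) is exactly the kind of statement that, in this paper, requires the delicate machinery of Propositions~\ref{prop:minenblowup} and~\ref{prop:scatblowupsol} because of the weak perturbation theory. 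Your dichotomy is also handled incorrectly in the ``non-singular limit'' branch: Corollary~\ref{cor:lifespancont} controls $S^1$ norms on a \emph{fixed} compact $J$, which translates back to intervals of length $O(\lambda_n^{-1})$ in original time, not to $[t_n,1)$, so no contradiction with the singularity of $(\mathcal{A}^\infty,\Phi^\infty)$ follows as claimed. Finally, the branch where the extracted solution's scale $\lambda^\ast$ is not bounded below is precisely the case that the available infinite-time rigidity (Lemma~\ref{lem:analogue_WM_Lemma_10.9}, Lemma~\ref{lem:analogue_WM_Lemma_10.10}, Proposition~\ref{prop:rigidity}) does \emph{not} cover — the paper defers this reduction to the very end via \cite{KS} — and your sketch for it invokes the support property of Lemma~\ref{lem:finite_lifespan_support_in_ball}, which belongs to the original finite-time solution and does not transfer to the limit: the rescaled supports have radii $\sim\lambda_n(1-t_n)\to\infty$, so $(\mathcal{A}^\ast,\Phi^\ast)$ need not be compactly supported at all.

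For comparison, the paper proves the upper bound directly with the monotonicity formulas already in hand: one considers
\[
 z(t)=\int_{\R^4}\sum_k x_k\Bigl(\sum_j \cF^\infty_{0j}\cF^\infty_{kj}+\Re\bigl(\mathcal{D}_0\Phi^\infty\overline{\mathcal{D}_k\Phi^\infty}\bigr)\Bigr)\,dx+\int_{\R^4}\Re\bigl(\Phi^\infty\overline{\mathcal{D}_0\Phi^\infty}\bigr)\,dx,
\]
uses the weighted momentum identity of Proposition~\ref{prop:virial_identities} to get $z'(t)=-\int(\sum_k(\cF^\infty_{0k})^2+|\mathcal{D}_0\Phi^\infty|^2)\,dx$, the support property and Hardy to get $z(t)\to 0$ as $t\to1$, and then either a lower bound $\int_t^1\int(\dots)\,dx\,ds\geq\alpha(1-t)$ — in which case the argument of \cite[Lemma 5.6]{KM}, crucially using the vanishing momentum from Proposition~\ref{prop:vanishing_momentum_finite_time}, gives $\lambda(t)\lesssim(1-t)^{-1}$ — or a degenerate sequence of intervals, which is excluded by the same Vitali covering/compactness extraction of a trivial limit as in Lemma~\ref{lem:analogue_WM_Lemma_10.9}. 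If you want to pursue your route, you would essentially have to re-prove the extraction results of Section~\ref{sec:how_to_arrive_at} and Subsection~\ref{subsec:conclusion} for the rescaled sequence and independently supply the reduction to $\lambda^\ast\geq\lambda_0>0$; the virial argument avoids all of this.
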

\begin{proof}
 We follow the argument in Lemma~10.11 in \cite{KS}. Suppose the claim was false. Then consider for $0 \leq t < 1$ the functional
 \begin{align*}
  z(t) &= \int_{\R^4} \sum_k x_k \, \biggl( \sum_j \cF_{0j}^\infty \cF_{kj}^\infty + \Re \, \bigl( \mathcal{D}_0 \Phi^\infty \overline{\mathcal{D}_k \Phi^\infty} \bigr) \biggr) \, dx + \int_{\R^4} \Re \, \bigl( \Phi^\infty \overline{\mathcal{D}_0 \Phi^\infty} \bigr) \, dx. 
 \end{align*}
 From the weighted momentum monotonicity identity \eqref{equ:weighted_momentum_monotonicity} in Proposition~\ref{prop:virial_identities} we obtain that
 \[
  z'(t) = - \int_{\R^4} \biggl( \sum_k \bigl( \cF_{0k}^\infty \bigr)^2 + \big| {\mathcal D}_0 \Phi^\infty \big|^2 \biggr) \, dx.
 \]
 Since we have by \eqref{equ:min_blowup_solution_support_in_ball} and Hardy's inequality that $z(t) \to 0$ as $t \to 1$, we can write
 \[
  z(t) = \int_t^1 \int_{\R^4} \biggl( \sum_k \cF_{0k}^\infty(s,x)^2 + \big| {\mathcal D}_0 \Phi^\infty(s,x)\big|^2 \biggr) \, dx \, ds.
 \]
 Now we distinguish between two possibilities: Either there exists $\alpha > 0$ such that for all $0 \leq t < 1$ we have
 \[
  \int_t^1 \int_{\R^4} \biggl( \sum_k \cF_{0k}^\infty(s,x)^2 + \big| {\mathcal D}_0 \Phi^\infty(s,x)\big|^2 \biggr) \, dx \, ds \geq \alpha (1-t),
 \]
 or else there exists a sequence $\{t_n\}_{n} \subset [0,1)$ with $t_n \to 1$ such that, denoting $J_n := [t_n, 1)$, it holds that
 \[
  \frac{1}{|J_n|} \int_{J_n} \int_{\R^4} \biggl( \sum_k \cF_{0k}^\infty(s,x)^2 + \big| {\mathcal D}_0 \Phi^\infty(s,x)\big|^2 \biggr) \, dx \, ds \to 0.
 \]
 In the former case, we argue exactly as in \cite[Lemma 5.6]{KM} to obtain the conclusion of the lemma. In particular, here we invoke Proposition~\ref{prop:vanishing_momentum_finite_time}. In the latter case, a contradiction ensues as follows. Using the same Vitali covering argument as in the proof of Lemma~\ref{lem:analogue_WM_Lemma_10.9}, we can select a sequence of intervals $J_n' = [s_n - \lambda(s_n)^{-1}, s_n + \lambda(s_n)^{-1}]$ with $s_n \in J_n$ such that 
 \[
  \frac{1}{|J_n'|} \int_{J_n'} \int_{\R^4} \biggl( \sum_k \cF_{0k}^\infty(s,x)^2 + \big| {\mathcal D}_0 \Phi^\infty(s,x)\big|^2 \biggr) \, dx \, ds \to 0.
 \]
But then, using compactness, we again extract a trivial limiting solution, and obtain a contradiction as in the proof of Lemma~\ref{lem:analogue_WM_Lemma_10.9}.
\end{proof}

We are now in a position to reduce to the exactly self-similar case. 

\begin{cor} \label{cor:reduction_to_exactly_self_similar_case}
 Let $(\mathcal{A}^\infty, \Phi^\infty)$ be as above with $T_1 = 1$. Then the set
 \[
  \biggl\{ \Bigl( (1-t)^2 \bigl( \nabla_{t,x} \mathcal{A}^\infty_x \bigr)(t, (1-t) x), (1-t)^2 \bigl( \nabla_{t,x} \Phi^\infty \bigr)(t, (1-t) x) \Bigr) : 0 \leq t < 1 \biggr\}
 \]
 is pre-compact in $\bigl( L^2_x(\R^4) \bigr)^5$.
\end{cor}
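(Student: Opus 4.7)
The plan is to express the self-similar rescaling appearing in the statement in terms of the $\lambda(t)$-based rescaling from Theorem~\ref{thm:compact_orbit}, and to deduce the pre-compactness of the former from that of the latter together with the compatibility bound $\lambda(t)(1-t)\in[C_0,C_1]$ supplied by Lemmas~\ref{lem:finite_lifespan_lower_bound_lambda} and~\ref{lem:finite_lifespan_upper_bound_lambda}. I may assume $(\mathcal A^\infty,\Phi^\infty)\not\equiv 0$, since otherwise the claim is trivial.

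The pivotal preliminary is to show that, after the translation made in Lemma~\ref{lem:finite_lifespan_support_in_ball} which places $\mathrm{supp}\bigl(\mathcal F^\infty_{\alpha\beta}(t,\cdot),\Phi^\infty(t,\cdot)\bigr)\subset\overline B(0,1-t)$, the translation parameter $\bar x(t)$ appearing in Theorem~\ref{thm:compact_orbit} is actually uniformly bounded on $[0,1)$. For this I would argue: the rescaled curvature $\lambda(t)^{-2}\mathcal F^\infty(t,(y-\bar x(t))/\lambda(t))$ and the rescaled derivatives of $\Phi^\infty$ are supported in $B(\bar x(t),\lambda(t)(1-t))\subset B(\bar x(t),C_1)$, and the corresponding families are pre-compact, hence tight, in $L^2_y$ by Theorem~\ref{thm:compact_orbit}; energy conservation together with non-triviality forces the sum of their $L^2$-masses to be bounded below uniformly in $t$. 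If $|\bar x(t_n)|\to\infty$ along some sequence $t_n\to 1$, the supports of these families would then escape to infinity, contradicting tightness, and continuity of $\bar x(\cdot)$ on compact sub-intervals of $[0,1)$ yields $|\bar x(t)|\le M_0$ throughout.

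With $h_t:=\nabla_{t,x}(\mathcal A^\infty_x,\Phi^\infty)(t,\cdot)$, $a_t:=\lambda(t)(1-t)\in[C_0,C_1]$, and $b_t:=\bar x(t)\in\overline B(0,M_0)$, the substitution $y=a_t x+b_t$ produces the pointwise identity
\[
 (1-t)^2\,h_t\bigl((1-t)x\bigr)\;=\;a_t^{\,2}\,g_t(a_t x+b_t),
 \qquad
 g_t(y):=\frac{1}{\lambda(t)^2}\,h_t\!\left(\frac{y-\bar x(t)}{\lambda(t)}\right),
\]
and Theorem~\ref{thm:compact_orbit} asserts that $\{g_t\}_{t\in[0,1)}$ is pre-compact in $L^2_x(\R^4)^5$. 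Introducing the $L^2$-isometry $\mathcal T_{a,b}f(x):=a^{2}f(ax+b)$, the left-hand side above equals $\mathcal T_{a_t,b_t}g_t$, and $(a_t,b_t)$ varies in the compact set $[C_0,C_1]\times\overline B(0,M_0)$.

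The conclusion is then the abstract principle that applying a family of $L^2$-isometries $\mathcal T_{a,b}$ with parameters ranging over a compact set to a pre-compact subset of $L^2$ yields a pre-compact subset: given any sequence $t_n\subset[0,1)$, one extracts a diagonal subsequence along which $g_{t_n}\to g^\ast$ in $L^2$, $a_{t_n}\to a^\ast$, and $b_{t_n}\to b^\ast$; then $\mathcal T_{a_{t_n},b_{t_n}}g_{t_n}\to\mathcal T_{a^\ast,b^\ast}g^\ast$ in $L^2$ by strong continuity of translations and dilations on $L^2$ applied to $g^\ast$, combined with the uniform operator bound. The main obstacle in this plan is the boundedness of $\bar x(t)$ asserted above, since the proof of Lemma~\ref{lem:finite_lifespan_support_in_ball} only directly yields $|\bar x(t)/\lambda(t)|\le M$, which would allow $|\bar x(t)|$ to grow like $1/(1-t)$; the required upgrade rests on the tightness inherent in the pre-compactness of Theorem~\ref{thm:compact_orbit} played against the uniform lower bound on the $L^2$-mass coming from energy conservation and non-triviality. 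Everything else is a routine change of variables and an Arzel\`a-type extraction.
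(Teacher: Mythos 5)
Your proposal is correct and follows essentially the same route as the paper: the paper's (very terse) proof likewise combines Theorem~\ref{thm:compact_orbit} with the two-sided bound $C_0(K)\le (1-t)\lambda(t)\le C_1(K)$ from Lemmas~\ref{lem:finite_lifespan_lower_bound_lambda} and~\ref{lem:finite_lifespan_upper_bound_lambda}, deduces $|\bar{x}(t)|\le C$ from the sharp support property \eqref{equ:min_blowup_solution_support_in_ball} together with $E_{crit}>0$ (exactly the tightness-versus-escaping-support argument you sketch), and then concludes by the compactness of Theorem~\ref{thm:compact_orbit} under the now compactly parametrized change of variables. Your write-up simply supplies the details the paper leaves implicit, and your identification of the upgrade from $|\bar{x}(t)/\lambda(t)|\le M$ to $|\bar{x}(t)|\le C$ as the only nontrivial point is exactly right.
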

\begin{proof}
 Here we can proceed similarly to the proof of Proposition~5.7 in \cite{KM}. Our point of departure is Theorem~\ref{thm:compact_orbit}. From Lemma~\ref{lem:finite_lifespan_lower_bound_lambda} and Lemma~\ref{lem:finite_lifespan_upper_bound_lambda} we know that 
 \[
  C_0(K) \leq (1-t) \lambda(t) \leq C_1(K)
 \]
 for all $0 \leq t < 1$. Using the sharp support properties \eqref{equ:min_blowup_solution_support_in_ball} and that $E_{crit} > 0$, we also conclude that $|\overline{x}(t)| \leq C$ for all $0 \leq t < 1$ for some constant $C > 0$. Then the claim follows from the compactness assertion in Theorem~\ref{thm:compact_orbit}.
\end{proof}

\subsection{Rigidity II: The self-similar case} \label{subsec:rigidity2}

 In this subsection we rule out the existence of a minimal blowup solution $\bigl( \cA^\infty, \Phi^\infty \bigr)$ as in Corollary~\ref{cor:reduction_to_exactly_self_similar_case}. To this end we use self-similar variables and derive a suitable Lyapunov functional. For ease of notation we drop the superscript $\infty$ and denote the minimal blowup solution from Corollary~\ref{cor:reduction_to_exactly_self_similar_case} just by $(\mathcal{A}, \Phi)$. Following \cite{KM} and \cite{KS}, we introduce the self-similar variables
 \[
  y = \frac{x}{1-t}, \quad s = - \log(1-t), \quad 0 \leq t < 1
 \]
 and set
 \begin{align*}
  \widetilde{\Phi}(s,y, 0) &= e^{-s} \Phi(1 - e^{-s}, e^{-s} y), \\
  \widetilde{{\mathcal A}}_\alpha(s,y,0) &= e^{-s} {\mathcal A}_\alpha(1 - e^{-s}, e^{-s} y), \quad 0 \leq \alpha \leq 4.
 \end{align*}
 We also define the associated covariant derivatives of $\widetilde{\Phi}$ and the curvature 2-form associated with $\widetilde{\cA}$ in self-similar variables by
 \begin{alignat}{2} \label{equ:definition_cov_derivative_and_curvature_self_sim}
  \begin{aligned}
   \widetilde{ {\mathcal D}_\alpha \Phi }(s,y,0) &= e^{-2s} {\mathcal D}_\alpha \Phi(1 - e^{-s}, e^{-s} y), \quad &  &0 \leq \alpha \leq 4, \\
   \widetilde{ {\mathcal F}_{\alpha \beta} }(s,y,0) &= e^{-2s} {\mathcal F}_{\alpha \beta}(1 - e^{-s}, e^{-s} y), \quad &  &0 \leq \alpha, \beta \leq 4.
  \end{aligned}
 \end{alignat}
 Observe that $\bigl(\widetilde{{\mathcal A}}, \widetilde{\Phi}\bigr)(s,y,0)$ are defined for $0 \leq s < \infty$. Moreover, in view of \eqref{equ:min_blowup_solution_support_in_ball}, $\widetilde{\Phi}(s,\cdot, 0)$ and the curvature components $\widetilde{F_{\alpha \beta}}(s,\cdot,0)$ have support in $\{ y \in \R^4 : |y| \leq 1 \}$. For small $\delta > 0$, we also define
 \[
  y = \frac{x}{1+\delta-t}, \quad s = - \log(1+\delta-t), \quad 0 \leq t < 1
 \]
 and set
 \begin{align*}
  \widetilde{\Phi}(s,y,\delta) &= e^{-s} \Phi(1 + \delta - e^{-s}, e^{-s} y), \\
  \widetilde{{\mathcal A}}_\alpha(s,y,\delta) &= e^{-s} {\mathcal A}_\alpha(1 + \delta - e^{-s}, e^{-s} y), \quad 0 \leq \alpha \leq 4.
 \end{align*}
 Analogously to \eqref{equ:definition_cov_derivative_and_curvature_self_sim}, we introduce $\widetilde{ {\mathcal D}_\alpha \Phi }(s, y, \delta)$ for $0 \leq \alpha \leq 4$ and $\widetilde{F_{\alpha \beta}}(s,y,\delta)$ for $0 \leq \alpha,\beta \leq 4$. We note that $\bigl(\widetilde{{\mathcal A}}, \widetilde{\Phi}\bigr)(s,y, \delta)$ is defined for $-\log(1+\delta) \leq s < \log(\frac{1}{\delta})$.

 \medskip

 In self-similar variables the Maxwell-Klein-Gordon system is given by
 \begin{equation} \label{equ:mkg_system_self_sim}
  \left\{ \begin{aligned}
   \partial^k \widetilde{\cF_{0k}} &= \Im \bigl( \widetilde{\Phi} \overline{\widetilde{\cD_0 \Phi}} \bigr), \\
   - \bigl(\partial_s + 2 + y \cdot \nabla_y \bigr) \widetilde{\cF_{j0}} + \partial^k \widetilde{\cF_{jk}} &= \Im \bigl( \widetilde{\Phi} \overline{ \widetilde{\cD_j \Phi} } \bigr), \\ 
   \bigl( \partial_s + i \widetilde{\cA}_0 + 2 + y \cdot \nabla_y \bigr) \widetilde{\cD_0 \Phi} &= \bigl( \partial^k + i \widetilde{\cA}^k \bigr) \widetilde{\cD_k \Phi},
  \end{aligned} \right.  
 \end{equation}
 where $\partial_k$ denotes partial differentiation with respect to the $y$ variable. We begin by stating the following properties of $\bigl( \widetilde{\cA}, \widetilde{\Phi} \bigr)$.

 \begin{lem}
  \begin{enumerate}[wide,itemindent={0.4em}]
   \item For fixed $\delta > 0$, we have for all $0 \leq s < \log(\frac{1}{\delta})$ that
    \begin{align} \label{equ:support_in_self_similar_variables_delta}
     \begin{aligned}
      \supp \, \bigl( \widetilde{\Phi}(s, \cdot, \delta) \bigr) &\subset \bigl\{ y \in \R^4 : |y| \leq 1 - \delta \bigr\}, \\
      \supp \, \bigl( \widetilde{F_{\alpha \beta}}(s, \cdot, \delta) \bigr) &\subset \bigl\{ y \in \R^4 : |y| \leq 1 - \delta \bigr\}.
     \end{aligned}
    \end{align}
   \item Uniformly for all $\delta > 0$ and all $0 \leq s < \log (\frac{1}{\delta})$, it holds that
    \begin{equation} \label{equ:auxiliary_estimate_kinetic_energy_self_sim}
     \int_{B_1} \sum_\alpha \bigl| \widetilde{\cD_\alpha \Phi}(s,y,\delta) \bigr|^2 \, dy \lesssim E_{crit}
    \end{equation}
   and
    \begin{equation} \label{equ:hardy_type_inequality_self_sim}
     \int_{B_1} \frac{ | \widetilde{\Phi}(s,y,\delta) |^2 }{ (1-|y|^2)^2 } \, dy \lesssim E_{crit}.
    \end{equation}
  \end{enumerate}
 \end{lem}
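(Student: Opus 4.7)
The overall approach is to transfer both estimates from the self-similar coordinates $(s,y)$ back to the physical coordinates $(t,x)$ via the change of variables $x = e^{-s}y$ with $t = 1+\delta-e^{-s}$, and then to invoke the sharp support property from Lemma~\ref{lem:finite_lifespan_support_in_ball}, conservation of the Maxwell-Klein-Gordon energy, and a boundary Hardy inequality on~$B_1$. For part~(1), Lemma~\ref{lem:finite_lifespan_support_in_ball} yields that $\Phi(t,\cdot)$ and $\cF_{\alpha\beta}(t,\cdot)$ are supported in $\overline{B}(0,1-t)$ for all $0 \leq t < 1$. Since $1-t = e^{-s}-\delta$, the $y$-support is contained in $|y| \leq e^{s}(e^{-s}-\delta) = 1-\delta e^{s}$. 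As $s \geq 0$ forces $\delta e^{s} \geq \delta$, we obtain $\supp\,\widetilde\Phi(s,\cdot,\delta) \subset \{|y| \leq 1-\delta\}$, and analogously for the curvature components, yielding~\eqref{equ:support_in_self_similar_variables_delta}.

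For the first estimate~\eqref{equ:auxiliary_estimate_kinetic_energy_self_sim}, I would apply the change of variables $x = e^{-s}y$ with Jacobian $dy = e^{4s}\,dx$. From the scalings in~\eqref{equ:definition_cov_derivative_and_curvature_self_sim}, $|\widetilde{\cD_\alpha \Phi}(s,y,\delta)|^2 = e^{-4s}|\cD_\alpha \Phi(t,x)|^2$, which exactly cancels the Jacobian, so that
\[
\int_{B_1} |\widetilde{\cD_\alpha \Phi}(s,y,\delta)|^2 \, dy \;=\; \int_{|x| \leq e^{-s}} |\cD_\alpha \Phi(t,x)|^2 \, dx.
\]
Summing over $\alpha$ and using energy conservation~\eqref{equ:conserved_energy}, the right-hand side is bounded by $2 E(\cA,\Phi) = 2 E_{crit}$.

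For the Hardy-type bound~\eqref{equ:hardy_type_inequality_self_sim}, the key observation is that part~(1) ensures $|\widetilde\Phi(s,\cdot,\delta)| \in H^1_0(B_1)$, since its support is strictly separated from $\partial B_1$. Noting the pointwise inequality $(1-|y|^2)^{-2} \leq (1-|y|)^{-2}$ on $B_1$, I would apply the Brezis--Marcus boundary Hardy inequality on the convex domain $B_1$,
\[
\int_{B_1} \frac{|f|^2}{(1-|y|)^2} \, dy \;\leq\; 4 \int_{B_1} |\nabla f|^2 \, dy \quad \text{for } f \in H^1_0(B_1),
\]
to $f = |\widetilde\Phi(s,\cdot,\delta)|$. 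The diamagnetic inequality, combined with the identity $(\partial_{y_j} + i\widetilde\cA_j)\widetilde\Phi = \widetilde{\cD_j \Phi}$ (immediate from the scalings defining $\widetilde\cA_j$ and $\widetilde{\cD_j \Phi}$), gives $|\nabla_y|\widetilde\Phi||^2 \leq \sum_{j=1}^4 |\widetilde{\cD_j \Phi}|^2$ a.e., so the Hardy bound reduces to $\int_{B_1} \sum_j |\widetilde{\cD_j\Phi}|^2 \, dy$, which is controlled by~\eqref{equ:auxiliary_estimate_kinetic_energy_self_sim}. The uniformity in $\delta > 0$ is automatic, as the Brezis--Marcus constant depends only on the convex geometry of $B_1$. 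The only subtle point is verifying $|\widetilde\Phi|\in H^1_0(B_1)$ rather than merely $H^1(B_1)$, but this is immediate from the strictly interior support guaranteed by part~(1) for any $\delta > 0$.
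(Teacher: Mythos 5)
Your proof is correct and follows essentially the same route as the paper: the support statement comes from Lemma~\ref{lem:finite_lifespan_support_in_ball} via the same computation $\frac{1-t}{1+\delta-t} = 1-\delta e^{s} \leq 1-\delta$, the kinetic bound from the exact cancellation of the scaling against the Jacobian plus energy conservation, and the weighted bound from $(1-|y|^2)^{-2}\leq(1-|y|)^{-2}$, the Brezis--Marcus Hardy inequality applied to $|\widetilde\Phi|\in H^1_0(B_1)$, and the diamagnetic inequality. No gaps; your write-up merely spells out the change of variables that the paper leaves implicit.
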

 \begin{proof}
  (i) For $0 \leq s < \log(\frac{1}{\delta})$ we infer from the support properties \eqref{equ:min_blowup_solution_support_in_ball} that
  \[
   \supp \, \bigl( \widetilde{\Phi}(s, \cdot, \delta) \bigr) \subset \biggl\{ y \in \R^4 : |y| \leq \frac{1-t}{1+\delta - t} = \frac{e^{-s} - \delta}{e^{-s}} \leq 1 - \delta \biggr\}
  \]
  and similarly for the support of $\widetilde{\cF_{\alpha \beta}}(s, \cdot, \delta)$.  \\
  (ii) The estimate \eqref{equ:auxiliary_estimate_kinetic_energy_self_sim} follows immediately from a change of variables. Noting that $\widetilde{\Phi}(s, \cdot, \delta) \in H_0^1(B_1)$ for all $\delta > 0$ and $0 \leq s < \log(\frac{1}{\delta})$, we then use the Hardy-type inequality (0.5) from \cite{Brezis_Marcus} together with the diamagnetic inequality to conclude that
  \begin{align*}
   \int_{B_1} \frac{|\widetilde{\Phi}|^2}{(1-|y|^2)^2} \, dy &\leq \int_{B_1} \frac{|\widetilde{\Phi}|^2}{(1-|y|)^2} \, dy \lesssim \int_{B_1} \bigl|\nabla_y |\widetilde{\Phi}| \bigr|^2 \, dy \lesssim \int_{B_1} \sum_k \bigl| \widetilde{\cD_k \Phi} \bigr|^2 \, dy \lesssim E_{crit}.
  \end{align*}
 \end{proof}

 For small $\delta > 0$ and $\bigl( \widetilde{\cA}, \widetilde{\Phi} \bigr)(s,y,\delta)$ with associated covariant derivatives $\widetilde{\cD_\alpha \Phi}(s,y,\delta)$ and curvature components $\widetilde{\cF_{\alpha \beta}}(s,y,\delta)$ as above, we now introduce a Lyapunov functional
 \begin{equation*}
  \begin{aligned}
   \widetilde{E}(s) &= \int_{B_1} \biggl( \frac{1}{2} \sum_j \widetilde{\cF_{j0}}^2 + \frac{1}{4} \sum_{j,k} \widetilde{\cF_{jk}}^2 - \sum_{j,k} y_k \widetilde{\cF_{j0}} \widetilde{\cF_{jk}}  \biggr) \frac{dy}{(1-|y|^2)^{\frac{1}{2}}} \\
   &\quad + \int_{B_1} \biggl( \frac{1}{2} \sum_\alpha \bigl| \widetilde{{\cD}_\alpha \Phi} \bigr|^2 - \sum_k y_k \Re \big( \widetilde{\cD_k \Phi} \overline{ \widetilde{\cD_0 \Phi} } \big) - \Re \big( \widetilde{\Phi} \overline{ \widetilde{\cD_0 \Phi} } \big) - \frac{1}{2} \frac{|\widetilde{\Phi}|^2}{1-|y|^2} \biggr) \frac{dy}{(1-|y|^2)^{\frac{1}{2}}}
  \end{aligned}
 \end{equation*}
 and define the non-negative quantity
 \begin{align*}
  \widetilde{\Xi}(s) &= \int_{B_1} \sum_{k} \bigg( \widetilde{\cF_{k0}} - \Big( \sum_{j} \frac{y_j}{|y|} \widetilde{\cF_{j0}} \Big) \frac{y_k}{|y|} + \sum_{j} y_j \widetilde{\cF_{jk}} \bigg)^2 \frac{dy}{(1-|y|^2)^{\frac{3}{2}}} \\
  &\quad + \int_{B_1} \frac{1}{|y|^2} \bigg( \sum_{j} y_j \widetilde{\cF_{j0}} \bigg)^2 \frac{dy}{(1-|y|^2)^{\frac{1}{2}}} + \int_{B_1} \Bigl| \widetilde{\cD_0 \Phi} - \sum_k y_k \widetilde{\cD_k \Phi} - \widetilde{\Phi} \Bigr|^2 \frac{dy}{(1-|y|^2)^{\frac{3}{2}}}.
 \end{align*}
 We emphasize that both $\widetilde{E}$ and $\widetilde{\Xi}$ are gauge invariant quantities. They are well-defined for all $\delta > 0$ in view of the support properties \eqref{equ:support_in_self_similar_variables_delta}. In the next proposition we establish a key monotonicity property of the functional $\widetilde{E}$.
 \begin{prop} \label{prop:monotonicity_lyapunov}
  Let $\bigl( \widetilde{\cA}, \widetilde{\Phi} \bigr)(s,y,\delta)$ for $\delta > 0$ be as above. Then we have for 
  \[
   0 \leq s_1 < s_2 < \log \Bigl( \frac{1}{\delta} \Bigr)
  \]
  that
  \begin{equation} \label{equ:monotonicity_lyapunov}
   \widetilde{E}(s_2) - \widetilde{E}(s_1) = \int_{s_1}^{s_2} \widetilde{\Xi}(s) \, ds.
  \end{equation}
  Moreover, it holds that
  \begin{equation} \label{equ:limit_lyapunov_energy_at_time_infty}
   \lim_{s \to \log(\frac{1}{\delta})} \widetilde{E}(s) \leq E_{crit}.
  \end{equation}
 \end{prop}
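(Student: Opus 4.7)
The plan is to exploit the gauge invariance of both $\widetilde{E}$ and $\widetilde{\Xi}$ to fix a Cronstrom-type gauge adapted to the self-similar geometry, namely $y^k \widetilde{\cA}_k(s,y)=0$ inside the backward light cone (together with a suitable choice of $\widetilde{\cA}_0$). In this gauge, $y^k \widetilde{\cD_k \Phi}$ collapses to $y^k \partial_k \widetilde{\Phi}$ up to a pure scalar term, which makes the cross expressions $y_k \widetilde{\cF_{j0}}\widetilde{\cF_{jk}}$ and $y_k \Re(\widetilde{\cD_k\Phi}\overline{\widetilde{\cD_0\Phi}})$ and the Hardy-type term $|\widetilde{\Phi}|^2/(1-|y|^2)$ amenable to the same kind of identity-based manipulations as in \cite{KS}, with the added Maxwell equation treated via the Bianchi identity. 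Once the gauge is fixed, I would differentiate $\widetilde{E}(s)$ under the integral sign, substitute each $\partial_s$ that appears using the self-similar MKG system \eqref{equ:mkg_system_self_sim}, integrate by parts in $y$ (boundary terms vanish by \eqref{equ:support_in_self_similar_variables_delta}), and reorganize the result into the three squares that make up $\widetilde{\Xi}(s)$.

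\textbf{Key steps.} First, establish the Cronstrom gauge and verify that in it all quantities are still sufficiently regular inside $B_{1-\delta}$ to integrate by parts. Second, compute $\frac{d}{ds}\widetilde{E}(s)$ by splitting it into its Maxwell and scalar pieces: for the Maxwell piece, differentiate $\widetilde{\cF_{\alpha\beta}}^2$ and feed the first two equations of \eqref{equ:mkg_system_self_sim} together with the Bianchi identity $\partial_{[\alpha} \widetilde{\cF_{\beta\gamma]}} = 0$; for the scalar piece, differentiate $|\widetilde{\cD_\alpha\Phi}|^2$ and use the third equation of \eqref{equ:mkg_system_self_sim}. The weight $(1-|y|^2)^{-1/2}$ is not differentiated in $s$, but combining the dilation factor $2+y\cdot\nabla_y$ (that comes out of the equations) with an integration by parts against the weight produces precisely the extra factor $(1-|y|^2)^{-1}$ visible in $\widetilde{\Xi}$. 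Third, reorganize the resulting integrand by a completion-of-squares argument that recognizes the transverse Maxwell combination $\widetilde{\cF_{k0}}-(y\cdot \widetilde{\cF_{\cdot 0}}/|y|^2)y_k + y^j\widetilde{\cF_{jk}}$, the radial flux $y^j\widetilde{\cF_{j0}}/|y|$, and the conformal scalar quantity $\widetilde{\cD_0\Phi}-y^k\widetilde{\cD_k\Phi}-\widetilde{\Phi}$; the mass term $-\tfrac12 |\widetilde{\Phi}|^2/(1-|y|^2)$ and the cross term $-\Re(\widetilde{\Phi}\overline{\widetilde{\cD_0\Phi}})$ in $\widetilde{E}$ have been inserted precisely so that the third square closes.

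\textbf{The bound $\widetilde{E}(s)\leq E_{crit}$ in the limit.} Changing variables back to $(t,x)$ via $t=1+\delta-e^{-s}$, $x=e^{-s}y$, and using the sharp support property $\supp\,(\cF_{\alpha\beta}(t,\cdot),\Phi(t,\cdot))\subset \overline{B}(0,1-t)$ from Lemma~\ref{lem:finite_lifespan_support_in_ball}, one sees that the leading positive quadratic terms in $\widetilde{E}(s)$ together with the cross terms $-\sum y_k \widetilde{\cF_{j0}}\widetilde{\cF_{jk}}$ and $-\sum y_k\Re(\widetilde{\cD_k\Phi}\overline{\widetilde{\cD_0\Phi}})$ reassemble, up to the weight $(1-|y|^2)^{-1/2}$ which becomes a genuine surface element, into the flux density $T_{00}+\tfrac{x^j}{|x|}T_{j0}$ integrated over the mantle of the forward light cone through $(1+\delta,0)$. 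This flux density is pointwise non-negative, as in the proof of Lemma~\ref{lem:energy_on_slice_outside_cone}, and its integral is dominated by the total energy $E_{crit}$. The remaining lower-order terms $-\Re(\widetilde{\Phi}\overline{\widetilde{\cD_0\Phi}})$ and $-\tfrac12|\widetilde{\Phi}|^2/(1-|y|^2)$ are negligible in the limit: by the Hardy-type estimate \eqref{equ:hardy_type_inequality_self_sim} and \eqref{equ:auxiliary_estimate_kinetic_energy_self_sim} they are uniformly bounded, and their contribution is strictly negative so they only improve the inequality.

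\textbf{Main obstacle.} The delicate point is the cancellation in Step~3: MKG-CG couples a hyperbolic scalar equation to the elliptic Gauss constraint $\partial^k\widetilde{\cF_{0k}}=\Im(\widetilde{\Phi}\overline{\widetilde{\cD_0\Phi}})$, and the cubic interaction $\widetilde{\cA}_\alpha \widetilde{\cA}^\alpha\widetilde{\Phi}$ produces cross terms that must cancel precisely against those generated by differentiating $\sum y_k\widetilde{\cF_{j0}}\widetilde{\cF_{jk}}$ via the Bianchi identity and those arising from $-\Re(\widetilde{\Phi}\overline{\widetilde{\cD_0\Phi}})$. The Cronstrom gauge is what makes this cancellation transparent, because $y^k\widetilde{\cA}_k=0$ removes all connection contributions from the radial integrations by parts and reduces the matter equation to a charge-free d'Alembertian modulated by $\widetilde{\cA}_0$; nevertheless, verifying that the Gauss constraint is exactly what is needed to complete the scalar square $|\widetilde{\cD_0\Phi}-y^k\widetilde{\cD_k\Phi}-\widetilde{\Phi}|^2$, while simultaneously converting the Maxwell terms into the two Maxwell squares in $\widetilde{\Xi}$, is the core calculational challenge of the argument.
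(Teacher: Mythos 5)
Your treatment of the monotonicity identity \eqref{equ:monotonicity_lyapunov} is essentially the paper's own argument: fix the Cronstrom-type gauge $\sum_k y_k\widetilde{\cA}_k=0$, differentiate $\widetilde{E}$ in $s$, insert the self-similar system \eqref{equ:mkg_system_self_sim}, integrate by parts against the weight $(1-|y|^2)^{-1/2}$, and complete the three squares, with the Gauss constraint $\partial^k\widetilde{\cF_{0k}}=\Im\bigl(\widetilde{\Phi}\overline{\widetilde{\cD_0\Phi}}\bigr)$ supplying the cancellation between the Maxwell and scalar pieces. One point you pass over: $(\cA^\infty,\Phi^\infty)$ is only of energy class, so the computation has to be justified on regularizations, and smoothing destroys precisely the support property \eqref{equ:support_in_self_similar_variables_delta} that you invoke to discard boundary terms; the paper inserts cutoffs near $|y|=1$ (and near $y=0$, where the Cronstrom gauge is not smooth) and removes them at the end. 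That is a technical caveat, not a flaw in the strategy.

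The genuine gap is in your proof of \eqref{equ:limit_lyapunov_energy_at_time_infty}. First, $\widetilde{E}(s)$ is an integral over a fixed time slice, so it cannot literally reassemble into the flux $T_{00}+\tfrac{x^j}{|x|}T_{j0}$ integrated over the mantle of a light cone; nor is there a pointwise domination of the weighted integrand by the energy density that could substitute for such an identity: taking $\widetilde{\cD_0\Phi}$ real and equal to minus the radial part of $\widetilde{\cD_x\Phi}$, the density $\bigl(\tfrac12\sum_\alpha|\widetilde{\cD_\alpha\Phi}|^2-\sum_k y_k\Re(\widetilde{\cD_k\Phi}\overline{\widetilde{\cD_0\Phi}})\bigr)(1-|y|^2)^{-1/2}$ exceeds the unweighted energy density by a factor $\sim(1-|y|)^{-1/2}$ as $|y|\to1$. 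Second, the term $-\Re\bigl(\widetilde{\Phi}\overline{\widetilde{\cD_0\Phi}}\bigr)$ is sign-indefinite, so the claim that the lower-order terms are ``uniformly bounded and strictly negative'' fails for it, and uniform boundedness via \eqref{equ:auxiliary_estimate_kinetic_energy_self_sim}, \eqref{equ:hardy_type_inequality_self_sim} does not yield the limiting inequality. What is actually needed --- and what the paper uses, following Kenig--Merle --- is the sharp support information at fixed $\delta$: $\widetilde{\Phi}(s,\cdot,\delta)$ and $\widetilde{\cF_{\alpha\beta}}(s,\cdot,\delta)$ are supported in $\{|y|\le 1-\delta e^{s}\}$, which shrinks to the origin as $s\to\log(\tfrac1\delta)$. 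On this set the weight $(1-|y|^2)^{-1/2}$ tends to $1$ uniformly, the cross terms carry an explicit factor $|y|\le 1-\delta e^{s}$ and hence contribute $o(1)\,E_{crit}$, and $\|\widetilde{\Phi}(s)\|_{L^2}\to0$ by Hardy (with the diamagnetic inequality) on the shrinking ball, so both lower-order terms vanish by Cauchy--Schwarz; undoing the change of variables then bounds the remaining main quadratic part by $(1+o(1))E_{crit}$ using energy conservation. Replacing the mantle-flux step by this argument closes the proof of \eqref{equ:limit_lyapunov_energy_at_time_infty}.
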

 The crucial monotonicity identity \eqref{equ:monotonicity_lyapunov} can be derived in a gauge invariant manner. However, the computations simplify significantly by imposing the Cronstrom-type gauge condition
 \begin{equation} \label{equ:cronstrom_type_gauge}
  \sum_{k=1}^4 x_k \cA_k(t,x) = 0
 \end{equation}
 for all $0 \leq t < 1$ and $x \in \R^4$. This does not change the energy regularity of $\bigl( \cA, \Phi \bigr)$. In self-similar variables the gauge condition \eqref{equ:cronstrom_type_gauge} reads
 \begin{equation} \label{equ:cronstrom_type_gauge_self_similar}
  \sum_{k=1}^4 y_k \widetilde{\cA}_k(s,y,\delta) = 0
 \end{equation}
 for all $\delta > 0$, $0 \leq s < \log (\frac{1}{\delta})$ and $y \in \R^4$. Under the gauge condition \eqref{equ:cronstrom_type_gauge_self_similar} the functional $\widetilde{E}$ can be written as 
 \begin{align*}
  \widetilde{E}(s) &= \int_{B_1} \biggl( \frac{1}{2} \sum_j \bigl( \partial_s \widetilde{\cA}_j - \partial_j \widetilde{\cA}_0 \bigr)^2 + \frac{1}{4} \sum_{j,k} \bigl( \partial_j \widetilde{\cA}_k - \partial_k \widetilde{\cA}_j \bigr)^2 - \frac{1}{2} \sum_j \bigl( (1 + y \cdot \nabla_y) \widetilde{\cA}_j \bigr)^2 \biggr) \frac{dy}{(1-|y|^2)^{\frac{1}{2}}} \\
  &\quad + \int_{B_1} \biggl( \frac{1}{2} \bigl| (\partial_s + i \widetilde{\cA}_0) \widetilde{\Phi} \bigr|^2 + \sum_k \bigl|(\partial_k + i \widetilde{\cA}_k) \widetilde{\Phi} \bigr|^2 - \bigl| (1 + y \cdot \nabla_y) \widetilde{\Phi} \bigr|^2 - \frac{|\widetilde{\Phi}|^2}{1-|y|^2} \biggr) \frac{dy}{(1-|y|^2)^{\frac{1}{2}}}
 \end{align*}
 and the quantity $\widetilde{\Xi}$ reads
 \begin{align*}
  \widetilde{\Xi}(s) &= \int_{B_1} \sum_k \biggl( \partial_k \widetilde{\cA}_0 - \Bigl( \frac{y}{|y|} \cdot \nabla_y \widetilde{\cA}_0 \Big) \frac{y_k}{|y|} - \partial_s \widetilde{\cA}_k \biggr)^2 \frac{dy}{(1-|y|^2)^{\frac{3}{2}}} \\
  &\quad + \int_{B_1} \frac{1}{|y|^2} \Bigl( y \cdot \nabla_y \widetilde{\cA}_0 \Bigr)^2 \frac{dy}{(1-|y|^2)^{\frac{1}{2}}} + \int_{B_1} \bigl|(\partial_s + i \widetilde{\cA}_0) \widetilde{\Phi}\bigr|^2 \frac{dy}{(1-|y|^2)^{\frac{3}{2}}},
 \end{align*}
 where $\partial_k$ denotes partial differentiation with respect to the $y$ variable. It is not obvious that the above expressions for $\widetilde{E}$ and $\widetilde{\Xi}$ in the Cronstrom-type gauge \eqref{equ:cronstrom_type_gauge_self_similar} are even well-defined for all $\delta > 0$. However, this follows from the gauge invariant support properties of $\widetilde{\Phi}$ and $\widetilde{\cF_{\alpha \beta}}$, and the following easily verified identities (assuming the gauge condition \eqref{equ:cronstrom_type_gauge_self_similar})
 \begin{align} \label{equ:identities_self_sim}
  \begin{aligned}   
   \partial_s \widetilde{\cA}_j - \partial_j \widetilde{\cA}_0 &= \widetilde{\cF_{0j}} - \sum_k y_k \widetilde{\cF_{kj}}, \\
   \partial_j \widetilde{\cA}_k - \partial_k \widetilde{\cA}_j &= \widetilde{\cF_{jk}}, \\
   y \cdot \nabla_y \widetilde{\cA}_0 &= \sum_j y_j \widetilde{\cF_{j0}}, \\
   \bigl( 1 + y \cdot \nabla_y \bigr) \widetilde{\cA}_j &= \sum_k y_k \widetilde{\cF_{kj}}, \\
   \bigl( \partial_s + i \widetilde{\cA}_0 \bigr) \widetilde{\Phi} &= \widetilde{\cD_0 \Phi} - \sum_k y_k \widetilde{\cD_k \Phi} - \widetilde{\Phi}, \\
   \bigl( \partial_k + i \widetilde{\cA}_k \bigr) \widetilde{\Phi} &= \widetilde{\cD_k \Phi}, \\
   y \cdot \nabla_y \widetilde{\Phi} &= \sum_k y_k \widetilde{\cD_k \Phi}.
  \end{aligned}
 \end{align}

 \begin{proof}[Proof of Proposition~\ref{prop:monotonicity_lyapunov}]
  In order to justify the computations in the derivation of the monotonicity identity \eqref{equ:monotonicity_lyapunov}, we shall have to assume smoothness of $\bigl( \cA, \Phi \bigr)$. However, smoothing the components as in Definition~\ref{defn:energy_class_solution} destroys the crucial support properties of $\Phi$ and the curvature components $\F_{\alpha \beta}$, and thus of $\widetilde{\Phi}$ and $\widetilde{\cF_{\alpha \beta}}$. In that sense certain expressions below involving weights of the form $(1-|y|^2)^{-\frac{1}{2}}$ or $(1-|y|^2)^{-\frac{3}{2}}$ become singular at $|y| = 1$. To deal with this, we need to introduce an additional smooth cutoff $\chi \bigl( \frac{1 - |y|}{\varepsilon} \bigr)$ that smoothly localizes away from the boundary, but such that $\lim_{\varepsilon \rightarrow 0} \chi \bigl(\frac{y}{\varepsilon}\bigr) = \chi_{(0,1]}(|y|)$, where $\chi_{(0,1]}$ is the sharp characteristic cutoff to the the interval $(0,1]$. Thus, for the calculations below to be rigorous, we really need to consider the weight
  \[
   \chi \biggl( \frac{1 - |y|}{\varepsilon} \biggr) \frac{1}{(1-|y|^2)^{-\frac{1}{2}}},
  \]
  which will lead to additional error terms localized near the boundary. But then letting the frequency cutoff converge toward $|\xi| = +\infty$ in the regularization for fixed but sufficiently small $\varepsilon>0$, it will be easy to convince oneself that the additional errors vanish in the limit due to the support properties \eqref{equ:min_blowup_solution_support_in_ball} of the underlying $(\cA, \Phi)$. We shall formally omit this additional cutoff. 

  \medskip 

  Further, in order to simplify the computations below, we impose the Cronstrom-type gauge condition \eqref{equ:cronstrom_type_gauge} on $(\cA, \Phi)$. This leads to another technical complication in that the $C^\infty$ smoothness of the regularized $(\cA, \Phi)$ in Coulomb gauge will be lost. This can again be dealt with via smooth truncation of the functional, this time away from the origin by including the cutoff $\chi \bigl( \frac{|y|}{\varepsilon} \bigr)$. Since all the integrations by parts to be performed below involve an operator $y \cdot \nabla_y$, the error terms are seen to be controllable in terms of the energy on smaller and smaller balls, and hence negligible in the limit as $\varepsilon \to 0$. Again, we shall gloss over this technicality in the formulas below. 

  \medskip

  We now begin with the derivation of the monotonicity identity \eqref{equ:monotonicity_lyapunov}, where we assume that $\bigl( \widetilde{\cA}, \widetilde{\Phi} \bigr)(s,y,\delta)$ satisfy the Cronstrom-type gauge condition
  \begin{equation} \label{equ:cronstrom_type_gauge_in_derivation}
   \sum_{k=1}^4 y_k \widetilde{\cA}_k(s,y,\delta) = 0
  \end{equation}
  for all $0 \leq s < \log(\frac{1}{\delta})$, $y \in \R^4$ and are smooth solutions to the Maxwell-Klein-Gordon system \eqref{equ:mkg_system_self_sim} in self-similar variables. In order to make the notation less heavy in this derivation, we write $(\tilde{A}, \tilde{\phi})$ instead of $\bigl( \widetilde{\cA}, \widetilde{\Phi} \bigr)$, and $\widetilde{D_\alpha \phi}, \widetilde{F_{\alpha \beta}}$ instead of $\widetilde{\cD_\alpha \Phi}$, $\widetilde{\cF_{\alpha \beta}}$. We will repeatedly apply the following easily verified identities without further referencing,
  \begin{align*}
   ( \partial_s + i \tA_0  + 1 + y \cdot \nabla_y ) \tp &= \widetilde{D_0 \phi}, \\
   (\partial_k + i \tA_k) (\partial_s + i \tA_0) &= (\partial_s + i \tA_0) (\partial_k + i \tA_k) + i (\partial_k \tA_0 - \partial_s \tA_k), 
  \end{align*}
  where $\partial_k$ denotes partial differentiation with respect to the $y$ variable. We also recall the identities \eqref{equ:identities_self_sim}. Moreover, we use the notation
  \[
   \rho(y) = \frac{1}{(1-|y|^2)^{\frac{1}{2}}}
  \]
  and observe that
  \[
   \partial_k \rho(y) = y_k \rho(y)^3, \quad (1 + y \cdot \nabla_y) \rho(y) = \rho(y)^3.
  \]
  The equation for $\tp$ can be written in expanded form as
  \begin{equation*}
   (\partial_s + i \tA_0 + 2 + y \cdot \nabla_y) (\partial_s + i \tA_0 + 1 + y\cdot\nabla_y) \tp = \sum_k (\partial_k + i \tA_k)^2 \tp,
  \end{equation*}
  or alternatively as
  \begin{equation*}
   (\partial_s + i \tA_0)^2 \tp + (3 + 2 y \cdot \nabla_y) (\partial_s + i \tA_0) \tp + (2 + y \cdot \nabla_y) (1 + y \cdot \nabla_y) \tp - i (y \cdot \nabla_y \tA_0) \tp = \sum_k (\partial_k + i \tA_k)^2 \tp.
  \end{equation*}
  We start analyzing the derivative with respect to $s$ of the following energy functional
  \begin{align*}
   \frac{d}{ds} \int \frac{1}{2} |(\partial_s + i\tA_0)\tp|^2 \rho(y) \, dy &= \int \Re \, \Bigl( \partial_s (\partial_s + i \tA_0)\tp \, \overline{(\partial_s + i\tA_0)\tp} \Bigr) \rho(y) \, dy \\
   &= \int \Re \, \Bigl( (\partial_s + i \tA_0)^2 \tp \, \overline{(\partial_s + i\tA_0)\tp} \Bigr) \rho(y) \, dy.
  \end{align*}
  Inserting the equation for $\tp$, we obtain 
  \begin{align*}
   \int \Re \, \Bigl( (\partial_s + i \tA_0)^2 \tp \, \overline{(\partial_s + i\tA_0)\tp} \Bigr) \rho(y) \, dy &= \sum_k \int \Re \, \Bigl( (\partial_k + i \tA_k)^2 \tp \, \overline{ (\partial_s + i \tA_0) \tp } \Bigr) \rho(y) \, dy \\
   &\quad - \int \Re \, \Bigl( (3 + 2 y \cdot \nabla_y) (\partial_s + i \tA_0) \tp \, \overline{(\partial_s + i \tA_0) \tp} \Bigr) \rho(y) \, dy \\
   &\quad - \int \Re \, \Bigl( ( 2 + y \cdot \nabla_y) (1 + y \cdot \nabla_y ) \tp \, \overline{ (\partial_s + i \tA_0) \tp } \Bigr) \rho(y) \, dy \\
   &\quad + \int \Re \, \Bigl( i (y \cdot \nabla_y \tA_0) \tp \, \overline{ (\partial_s + i \tA_0) \tp } \Bigr) \rho(y) \, dy \\
   &\equiv I + II + III + IV.   
  \end{align*}
  Integrating by parts in the term $I$, we find
  \begin{align*}
   I &= - \sum_k \int \Re \, \Bigl( (\partial_k + i \tA_k) \tp \, \overline{(\partial_k + i \tA_k) (\partial_s + i \tA_0) \tp} \Bigr) \rho(y) \, dy \\
   &\quad - \sum_k \int \Re \, \Bigl( (\partial_k + i \tA_k) \tp \, \overline{(\partial_s + i \tA_0) \tp} \Bigr) \partial_k \rho(y) \, dy \\
   &= - \sum_k \int \Re \, \Bigl( (\partial_k + i \tA_k) \tp \, \overline{ (\partial_s + i \tA_0) (\partial_k + i \tA_k) \tp } \Bigr) \rho(y) \, dy \\
   &\quad - \sum_k \int \Re \, \Bigl( (\partial_k + i \tA_k) \tp \, \overline{ i (\partial_k \tA_0 - \partial_s \tA_k) \tp } \Bigr) \rho(y) \, dy \\
   &\quad - \sum_k \int \Re \, \Bigl( (\partial_k + i \tA_k) \tp \, \overline{ (\partial_s + i \tA_0) \tp } \Bigr) y_k \rho(y)^3 \, dy \\
   &= - \frac{d}{ds} \int \biggl( \sum_k \frac{1}{2} |(\partial_k + i \tA_k)\tp|^2 \biggr) \rho(y) \, dy \\
   &\quad - \sum_k \int \Im \, \Bigl( \tp \overline{\widetilde{D_k \phi}} \Bigr) (\partial_s \tA_k - \partial_k \tA_0) \rho(y) \, dy \\
   &\quad - \sum_k \int \Re \, \Bigl( \partial_k \tp \, \overline{(\partial_s + i \tA_0)\tp} \Bigr) y_k \rho(y)^3 \, dy,
  \end{align*}
  where in the last step we took advantage of the gauge condition \eqref{equ:cronstrom_type_gauge_in_derivation}. We expect the second to last term to cancel against a corresponding term from a suitable energy functional for $\tA$. On the other hand, the last term is expected to cancel against other terms from the equation for $\tp$. Next, integrating by parts in the term $II$ yields
  \[
   II = - \int \Re \, \Bigl( (3 + 2 y \cdot \nabla_y) (\partial_s + i \tA_0) \tp \, \overline{(\partial_s + i \tA_0) \tp} \Bigr) \rho(y) \, dy = \int |(\partial_s + i \tA_0) \tp|^2 \rho(y)^3 \, dy.
  \]
  Performing another round of integration by parts, now in the term $III$, we find that
  \begin{align*}
   III &= - \int \Re \, \Bigl( ( 2 + y \cdot \nabla_y) (1 + y \cdot \nabla_y ) \tp \, \overline{ (\partial_s + i \tA_0) \tp } \Bigr) \rho(y) \, dy \\
   &= \int \Re \, \Bigl( (1 + y \cdot \nabla_y) \tp \overline{(\partial_s + i \tA_0) (1+y\cdot\nabla_y) \tp} \Bigr) \rho(y) \, dy \\
   &\quad + \int \Re \, \Bigl( (1 + y \cdot \nabla_y) \tp \, \overline{ i (y \cdot \nabla_y \tA_0) \tp } \Bigr) \rho(y) \, dy \\
   &\quad + \int \Re \, \Bigl( y \cdot \nabla_y \tp \, \overline{(\partial_s + i \tA_0) \tp} \Bigr) \rho(y)^3 \, dy \\
   &\quad + \int \Re \, \Bigl( \tp \, \overline{(\partial_s + i \tA_0) \tp} \Bigr) \rho(y)^3 \, dy,
  \end{align*}
  and thus, 
  \begin{align*}
   III &= \frac{d}{ds} \int \frac{1}{2} |(1 + y \cdot \nabla_y) \tp|^2 \rho(y) \, dy \\
   &\quad + \int \Re \, \Bigl( (1 + y \cdot \nabla_y) \tp \, \overline{i (y \cdot \nabla_y \tA_0) \tp} \Bigr) \rho(y) \, dy \\
   &\quad + \sum_k \int \Re \, \Bigl( \partial_k \tp \, \overline{ (\partial_s + i \tA_0) \tp } \Bigr) y_k \rho(y)^3 \, dy \\
   &\quad + \frac{d}{ds} \int \frac{1}{2} |\tp|^2 \rho(y)^3 \, dy.
  \end{align*}
  Now we note that the second term on the right hand side of $III$ and the term $IV$ nicely combine to give
  \begin{align*}
   &\int \Re \, \Bigl( (1 + y \cdot \nabla_y) \tp \, \overline{ i (y \cdot \nabla_y \tA_0) \tp } \Bigr) \rho(y) \, dy + \int \Re \, \Bigl( i (y \cdot \nabla_y \tA_0) \tp \, \overline{ (\partial_s + i \tA_0) \tp } \Bigr) \rho(y) \, dy \\
   &= \int \Re \, \Bigl( i (y \cdot \nabla_y \tA_0) \tp \, \overline{ (\partial_s + i \tA_0 + 1 + y \cdot \nabla_y) \tp } \Bigr) \rho(y) \, dy \\
   &= - \int (y \cdot \nabla_y \tA_0 ) \Im \, \Bigl( \tp \overline{ \widetilde{D_0 \phi} } \Bigr) \rho(y) \, dy.
  \end{align*}
  Moreover, we observe that the last term on the right hand side of $I$ cancels out with the third term on the right hand side of $III$. We summarize the preceding computations as follows
  \begin{align} \label{equ:derivative_phi_part_lyapunov_self_sim}
   \begin{aligned}
    &\frac{d}{ds} \int \biggl( \frac{1}{2} |(\partial_s + i \tA_0) \tp|^2 + \sum_k \frac{1}{2} |(\partial_k + i \tA_k) \tp|^2 - \frac{1}{2} |(1 + y \cdot \nabla_y) \tp|^2 - \frac{1}{2} |\tp|^2 \rho(y)^2 \biggr) \rho(y) \, dy \\
    &= \int |(\partial_s + i \tA_0) \tp|^2 \rho(y)^3 \, dy - \sum_k \int \Im \, \Bigl( \tp \overline{\widetilde{D_k \phi}} \Bigr) (\partial_s \tA_k - \partial_k \tA_0) \rho(y) \, dy \\
    &\quad \quad  - \int (y \cdot \nabla_y \tA_0 ) \Im \, \Bigl( \tp \overline{ \widetilde{D_0 \phi} } \Bigr) \rho(y) \, dy.
   \end{aligned}
  \end{align}
  We expect the last two terms on the right hand side to cancel against corresponding terms generated by differentiating a suitable energy functional for $\tA$, while the first term furnishes the key monotonicity.

  \medskip

  At this point we have to pass to the corresponding equation for $\tilde{A}$. It is given in expanded form for $j = 1, \ldots, 4$ by
  \begin{align*}
   &\partial_s (\partial_s \tA_j - \partial_j \tA_0) + (3 + 2 y \cdot \nabla_y) (\partial_s \tA_j) + (2 + y \cdot \nabla_y) (1 + y \cdot \nabla_y) \tA_j \\
   &\qquad \qquad \qquad \qquad - (2 + y \cdot \nabla_y) (\partial_j \tA_0) - \sum_k \partial_k (\partial_k \tA_j - \partial_j \tA_k) = \Im \, \Bigl( \tp \overline{ \widetilde{D_j \phi} } \Bigr).
  \end{align*}
  We begin with a tentative ansatz for the correct energy functional for $\tA$ to leading order, which we differentiate with respect to $s$,
  \begin{align*}
   &\frac{d}{ds} \int \biggl( \frac{1}{2} \sum_j (\partial_s \tA_j - \partial_j \tA_0)^2 + \frac{1}{4} \sum_{j,k} (\partial_j \tA_k - \partial_k \tA_j)^2 \biggr) \rho(y) \, dy \\
   &= \int \sum_j \partial_s (\partial_s \tA_j - \partial_j \tA_0) \, (\partial_s \tA_j - \partial_j \tA_0) \rho(y) \, dy + \frac{1}{2} \int \sum_{j,k} \partial_s (\partial_j \tA_k - \partial_k \tA_j) \, (\partial_j \tA_k - \partial_k \tA_j) \rho(y) \, dy \\
   &\equiv \text{\ding{172}} + \text{\ding{173}}.
  \end{align*}
  Inserting the equation for $\tA$ in the term \ding{172}, we obtain
  \begin{align*}
   \text{\ding{172}} &= - \sum_j \int (3 + 2 y \cdot \nabla_y) (\partial_s \tA_j) \, (\partial_s \tA_j - \partial_j \tA_0) \rho(y) \, dy \\
   &\quad - \sum_j \int (2 + y \cdot \nabla_y) (1 + y \cdot \nabla_y) \tA_j \, (\partial_s \tA_j - \partial_j \tA_0) \rho(y) \, dy \\
   &\quad + \sum_j \int (2 + y \cdot \nabla_y) (\partial_j \tA_0) \, (\partial_s \tA_j - \partial_j \tA_0) \rho(y) \, dy \\
   &\quad + \sum_{j,k} \int \partial_k (\partial_k \tA_j - \partial_j \tA_k) \, (\partial_s \tA_j - \partial_j \tA_0) \rho(y) \, dy \\
   &\quad + \sum_j \int \Im \, \Bigl( \tp \overline{ \widetilde{D_j \phi} } \Bigr) \, (\partial_s \tA_j - \partial_j \tA_0) \rho(y) \, dy \\
   &= \widetilde{I} + \widetilde{II} + \widetilde{III} + \widetilde{IV} + \widetilde{V},
  \end{align*}
  where we already see that the term $\widetilde{V}$ cancels against the second term on the right hand side of \eqref{equ:derivative_phi_part_lyapunov_self_sim}. The term \ding{173} can be rewritten as 
  \begin{align*}
   \text{\ding{173}} &= \frac{1}{2} \sum_{j,k} \int \Bigl( \partial_j (\partial_s \tA_k - \partial_k \tA_0) - \partial_k (\partial_s \tA_j - \partial_j \tA_0) \Bigr) \, (\partial_j \tA_k - \partial_k \tA_j) \rho(y) \, dy \\
   &= \sum_{j,k} \int \partial_j (\partial_s \tA_k - \partial_k \tA_0) \, (\partial_j \tA_k - \partial_k \tA_j) \rho(y) \, dy \\
   &= - \sum_{j,k} \int (\partial_s \tA_k - \partial_k \tA_0) \, \partial_j (\partial_j \tA_k - \partial_k \tA_j) \rho(y) \, dy \\
   &\quad - \sum_{j,k} \int (\partial_s \tA_k - \partial_k \tA_0) \, (\partial_j \tA_k - \partial_k \tA_j) y_j \rho(y)^3 \, dy \\
   &= - \sum_{j,k} \int (\partial_s \tA_k - \partial_k \tA_0) \, \partial_j (\partial_j \tA_k - \partial_k \tA_j) \rho(y) \, dy \\
   &\quad - \sum_k \int (\partial_s \tA_k - \partial_k \tA_0) \, (1 + y \cdot \nabla_y) \tA_k \rho(y)^3 \, dy,
  \end{align*}
  where in the second to last step we integrated by parts and in the last step we used that 
  \[
   \sum_j y_j (\partial_j \tA_k - \partial_k \tA_j) = (1 + y \cdot \nabla_y) \tA_k
  \]
  due to the gauge condition \eqref{equ:cronstrom_type_gauge_in_derivation}. We see that the term $\widetilde{IV}$ on the right hand side of \ding{172} cancels against the first term on the right hand side of \ding{173}. Next, we integrate by parts in the term $\widetilde{I}$ to find
  \begin{align*}
   \widetilde{I} &= - \sum_j \int (3 + 2 y \cdot \nabla_y) (\partial_s \tA_j) \, (\partial_s \tA_j) \rho(y) \, dy + \sum_j \int (3 + 2 y \cdot \nabla_y) (\partial_s \tA_j) \, (\partial_j \tA_0) \rho(y) \, dy \\
   &= \sum_j \int (\partial_s \tA_j)^2 \rho(y)^3 \, dy - \sum_j \int (\partial_s \tA_j) \, (5 + 2 y \cdot \nabla_y) (\partial_j \tA_0) \rho(y) \, dy \\
   &\quad - \sum_j 2 \int (\partial_s \tA_j) \, (\partial_j \tA_0) y \cdot \nabla_y \rho(y) \, dy.
  \end{align*}
  Integrating by parts also in the term $\widetilde{II}$ yields
  \begin{align*}
   \widetilde{II} &= \sum_j \int (1 + y \cdot \nabla_y) \tA_j \, (1 + y \cdot \nabla_y) (\partial_s \tA_j - \partial_j \tA_0) \rho(y) \, dy \\
   &\quad + \sum_j \int (1 + y \cdot \nabla_y) \tA_j \, (\partial_s \tA_j - \partial_j \tA_0) \rho(y)^3 \, dy \\
   &= \frac{d}{ds} \int \biggl( \sum_j \frac{1}{2} |(1 + y \cdot \nabla_y) \tA_j|^2 \biggr) \rho(y) \, dy - \sum_j \int (1 + y \cdot \nabla_y) \tA_j \, (1 + y \cdot \nabla_y) (\partial_j \tA_0) \rho(y) \, dy \\
   &\quad + \sum_j \int (1 + y \cdot \nabla_y) \tA_j \, (\partial_s \tA_j - \partial_j \tA_0) \rho(y)^3 \, dy
  \end{align*}
  and we observe that the third term on the right hand side of $\widetilde{II}$ cancels against the second term on the right hand side of \ding{173}. Another round of integration by parts, now in the term $\widetilde{III}$, leads to
  \begin{align*}
   \widetilde{III} &= \sum_j \int (2 + y \cdot \nabla_y) (\partial_j \tA_0) \, (\partial_s \tA_j) \rho(y) \, dy + \sum_j \int \frac{1}{2} (\partial_j \tA_0)^2 \, y \cdot \nabla_y \rho(y) \, dy.
  \end{align*}
  Combining the above expressions, we are thus reduced to 
  \begin{align*} 
    \text{\ding{172}} + \text{\ding{173}} &= \sum_j \int (\partial_s \tA_j)^2 \rho(y)^3 \, dy + \frac{d}{ds} \int \biggl( \sum_j \frac{1}{2} |(1 + y \cdot \nabla_y) \tA_j|^2 \biggr) \rho(y) \, dy \\
    &\quad + \sum_j \int \frac{1}{2} (\partial_j \tA_0)^2 \, y \cdot \nabla_y \rho(y) \, dy - \sum_j 2 \int (\partial_s \tA_j) \, (\partial_j \tA_0) y \cdot \nabla_y \rho(y) \, dy \\
    &\quad - \sum_j \int (\partial_s \tA_j) \, (3 + y \cdot \nabla_y) (\partial_j \tA_0) \rho(y) \, dy \\
    &\quad - \sum_j \int (1 + y \cdot \nabla_y) \tA_j \, (1 + y \cdot \nabla_y) (\partial_j \tA_0) \, \rho(y) \, dy \\
    &\quad + \sum_j \int \Im \, \Bigl( \tp \overline{ \widetilde{D_j \phi} } \Bigr) \, (\partial_s \tA_j - \partial_j \tA_0) \rho(y) \, dy.
  \end{align*}
  We reformulate this as
  \begin{align} \label{equ:derivation_monotonicity_sum_1_2}
   \begin{aligned} 
    \text{\ding{172}} + \text{\ding{173}} &= \sum_j \int (\partial_s \tA_j)^2 \rho(y)^3 \, dy + \frac{d}{ds} \int \biggl( \sum_j \frac{1}{2} |(1 + y \cdot \nabla_y) \tA_j|^2 \biggr) \rho(y) \, dy \\
    &\quad + \sum_j \int \frac{1}{2} (\partial_j \tA_0)^2 \, y \cdot \nabla_y \rho(y) \, dy - \sum_j 2 \int (\partial_s \tA_j) \, (\partial_j \tA_0) \rho(y)^3 \, dy \\
    &\quad - \sum_j \int (\partial_s + 1 + y \cdot \nabla_y) \tA_j \, (1 + y \cdot \nabla_y) (\partial_j \tA_0) \rho(y) \, dy \\
    &\quad + \sum_j \int \Im \, \Bigl( \tp \overline{ \widetilde{D_j \phi} } \Bigr) \, (\partial_s \tA_j - \partial_j \tA_0) \rho(y) \, dy.
   \end{aligned}
  \end{align}
  Next, we further analyze the second to last term on the right hand side of the above identity. Integration by parts gives
  \begin{align} \label{equ:derivation_monotonicity_difficult_term_time_derivative}
   \begin{aligned}
    &- \sum_j \int (\partial_s + 1 + y \cdot \nabla_y) \tA_j \, (1 + y \cdot \nabla_y) (\partial_j \tA_0) \rho(y) \, dy \\
    &= - \sum_j \int (\partial_s + 1 + y \cdot \nabla_y) \tA_j \, \partial_j  (y \cdot \nabla_y \tA_0) \rho(y) \, dy \\
    &= + \sum_j \int \partial_j (\partial_s + 1 + y \cdot \nabla_y) \tA_j \, (y \cdot \nabla_y \tA_0) \rho(y) \, dy \\
    &\quad + \sum_j \int (\partial_s + y + y \cdot \nabla_y) \tA_j \, (y \cdot \nabla_y \tA_0) y_j \rho(y)^3 \, dy \\
    &= + \sum_j \int \partial_j (\partial_s + 1 + y \cdot \nabla_y) \tA_j \, (y \cdot \nabla_y \tA_0) \rho(y) \, dy,
   \end{aligned}
  \end{align}
  where in the last step we used that due to the gauge condition \eqref{equ:cronstrom_type_gauge_in_derivation},
  \[
   \sum_j y_j (\partial_s + 1 + y \cdot \nabla_y) \tA_j = 0.
  \]
  Moreover, one easily verifies that
  \begin{equation} \label{equ:derivation_monotonicity_difficult_term_manipulation}
   \sum_j \partial_j (\partial_s + 1 + y \cdot \nabla_y) \tA_j = \sum_j \partial_j^2 \tA_0 + \sum_j \partial_j \widetilde{F_{0j}} = \sum_j \partial_j^2 \tA_0 + \Im \, \Bigl( \phi \overline{\widetilde{D_0 \phi}} \Bigr),
  \end{equation}
  where in the last equality we linked with the equation for $\tilde{A}$. Inserting \eqref{equ:derivation_monotonicity_difficult_term_manipulation} back into \eqref{equ:derivation_monotonicity_difficult_term_time_derivative} and integrating by parts several times more, we conclude that
  \begin{align} \label{equ:derivation_monotonicity_difficult_term_time_derivative_better}
   \begin{aligned}
    &- \sum_j \int (\partial_s + 1 + y \cdot \nabla_y) \tA_j \, (1 + y \cdot \nabla_y) (\partial_j \tA_0) \rho(y) \, dy \\
    &= \sum_j \int (\partial_j \tA_0)^2 \rho(y) \, dy + \sum_j \int \frac{1}{2} (\partial_j \tA_0)^2 y \cdot \nabla_y \rho(y) \, dy \\
    &\quad - \int (y \cdot \nabla_y \tA_0)^2 \rho(y)^3 \, dy + \int \Im \, \Bigl( \phi \overline{\widetilde{D_0 \phi}} \Bigr) \, (y \cdot \nabla_y \tA_0) \rho(y) \, dy.
   \end{aligned}
  \end{align}
  Finally, inserting \eqref{equ:derivation_monotonicity_difficult_term_time_derivative_better} back into \eqref{equ:derivation_monotonicity_sum_1_2} and combining terms, we may summarize the preceding computations as follows
  \begin{align} \label{equ:derivative_A_part_lyapunov_self_sim}
   \begin{aligned}
    &\frac{d}{ds} \int \biggl( \frac{1}{2} \sum_j (\partial_s \tA_j - \partial_j \tA_0)^2 + \frac{1}{4} \sum_{j,k} (\partial_j \tA_k - \partial_k \tA_j)^2 - \sum_j \frac{1}{2} |(1 + y \cdot \nabla_y) \tA_j|^2  \biggr) \rho(y) \, dy \\
    &= \int \sum_j (\partial_s \tA_j - \partial_j \tA_0)^2 \rho(y)^3 \, dy - \int (y \cdot \nabla_y \tA_0)^2 \rho(y)^3 \, dy \\
    &\quad + \int \Im \, \Bigl( \phi \overline{\widetilde{D_0 \phi}} \Bigr) \, (y \cdot \nabla_y \tA_0) \rho(y) \, dy + \sum_j \int \Im \, \Bigl( \tp \overline{ \widetilde{D_j \phi} } \Bigr) \, (\partial_s \tA_j - \partial_j \tA_0) \rho(y) \, dy.
   \end{aligned}
  \end{align}
  We observe that the last two terms on the right hand side cancel against the last two terms on the right hand side of \eqref{equ:derivative_phi_part_lyapunov_self_sim}. However, it is not yet obvious that the first two terms on the right hand side of the above identity \eqref{equ:derivative_A_part_lyapunov_self_sim} yield the desired monotonicity. To this end we decompose the $4$-vector $(\partial_j \tA_0)_{j=1}^4$ into its radial and angular part. The gauge condition \eqref{equ:cronstrom_type_gauge_in_derivation} then allows to rewrite this as
  \begin{align} \label{equ:derivation_monotonicity_rewrite_rhs}
   \begin{aligned}
    &\int \sum_j (\partial_s \tA_j - \partial_j \tA_0)^2 \rho(y)^3 \, dy - \int (y \cdot \nabla_y \tA_0)^2 \rho(y)^3 \, dy \\
    &= \int \sum_j \biggl( \partial_s \tA_j - \partial_j \tA_0 + \Bigl( \frac{y}{|y|} \cdot \nabla_y \tA_0 \Bigr) \frac{y_j}{|y|} \biggr)^2 \rho(y)^3 \, dy \\
    &\quad + \int \Bigl( \frac{y}{|y|} \cdot \nabla_y \tA_0 \Bigr)^2 \rho(y)^3 \, dy - \int \bigl( y \cdot \nabla_y \tA_0 \bigr)^2 \rho(y)^3 \, dy \\
    &= \int \sum_j \biggl( \partial_s \tA_j - \partial_j \tA_0 + \Bigl( \frac{y}{|y|} \cdot \nabla_y \tA_0 \Bigr) \frac{y_j}{|y|} \biggr)^2 \rho(y)^3 \, dy + \int \frac{1}{|y|^2} \bigl( y \cdot \nabla_y \tA_0 \bigr)^2 \rho(y) \, dy. 
   \end{aligned}
  \end{align}
  Combining \eqref{equ:derivative_phi_part_lyapunov_self_sim}, \eqref{equ:derivative_A_part_lyapunov_self_sim}, and \eqref{equ:derivation_monotonicity_rewrite_rhs} finishes the proof of the monotonicity identity \eqref{equ:monotonicity_lyapunov}.

  \medskip

  It remains to prove \eqref{equ:limit_lyapunov_energy_at_time_infty}. Using the gauge invariant formulation of the Lyapunov functional~$\widetilde{E}$, we proceed exactly as in the proof of Proposition~6.2 (iii) in \cite{KM} to show that for all $\delta > 0$,
  \[
   \lim_{s \to \log(\frac{1}{\delta})} \int_{B_1} \biggl( \frac{1}{2} \sum_j \widetilde{\cF_{j0}}^2 + \frac{1}{4} \widetilde{\cF_{jk}}^2 + \frac{1}{2} \sum_\alpha \bigl|\widetilde{\cD_\alpha \Phi}\bigr|^2 \biggr) \frac{dy}{(1-|y|^2)^{\frac{1}{2}}} \leq E_{crit},
  \]
  while 
  \[
   \lim_{s \to \log(\frac{1}{\delta})} \int_{B_1} \biggl( \sum_{j,k} y_k \widetilde{\cF_{j0}} \widetilde{\cF_{jk}} + \sum_k y_k \Re \big( \widetilde{\cD_k \Phi} \overline{ \widetilde{\cD_0 \Phi} } \big) + \Re \big( \widetilde{\Phi} \overline{ \widetilde{\cD_0 \Phi} } \big) + \frac{1}{2} \frac{|\widetilde{\Phi}|^2}{1-|y|^2} \biggr) \frac{dy}{(1-|y|^2)^{\frac{1}{2}}} = 0.
  \]
 \end{proof}

 Next, we prove upper and lower bounds for the Lyapunov functional $\widetilde{E}(s)$ uniformly in $\delta > 0$ and $0 \leq s < \log(\frac{1}{\delta})$.

 \begin{lem} \label{lem:upper_lower_bound_lyapunov}
  For all $\delta > 0$ and all $0 \leq s < \log(\frac{1}{\delta})$, we have 
  \begin{equation}
   -C E_{crit} \leq \widetilde{E}(s) \leq E_{crit}
  \end{equation}
  for some absolute constant $C > 0$.
 \end{lem}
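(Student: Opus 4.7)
The upper bound is essentially a consequence of the preceding Proposition~\ref{prop:monotonicity_lyapunov}. The monotonicity identity \eqref{equ:monotonicity_lyapunov} combined with the pointwise non-negativity $\widetilde{\Xi}(s) \geq 0$ shows that $s \mapsto \widetilde{E}(s)$ is non-decreasing on $[0, \log(\frac{1}{\delta}))$. Together with the asymptotic bound \eqref{equ:limit_lyapunov_energy_at_time_infty}, this gives $\widetilde{E}(s) \leq \lim_{s' \to \log(1/\delta)} \widetilde{E}(s') \leq E_{crit}$ for every admissible $s$, with no further work required.

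For the lower bound, the plan is to rewrite the pointwise integrand of $\widetilde{E}(s)$ as a sum of non-negative squares minus a single correction term in $|\widetilde{\Phi}|^2$ with a singular weight, and then to absorb the correction using the Hardy-type estimate \eqref{equ:hardy_type_inequality_self_sim}. For the magnetic part, viewing the antisymmetric matrix $\widetilde{\cF} = (\widetilde{\cF_{jk}})_{j,k=1}^4$ and writing $(\widetilde{\cF} y)_j = \sum_k y_k \widetilde{\cF_{jk}}$, the identity
\[
 \tfrac{1}{2} \sum_j \widetilde{\cF_{j0}}^2 + \tfrac{1}{4} \sum_{j,k} \widetilde{\cF_{jk}}^2 - \sum_{j,k} y_k \widetilde{\cF_{j0}} \widetilde{\cF_{jk}} = \tfrac{1}{2} \sum_j \big( \widetilde{\cF_{j0}} - (\widetilde{\cF} y)_j \big)^2 + \Big( \tfrac{1}{4} \sum_{j,k} \widetilde{\cF_{jk}}^2 - \tfrac{1}{2} |\widetilde{\cF} y|^2 \Big)
\]
holds pointwise. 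Since the eigenvalues of an antisymmetric $4 \times 4$ matrix come in pairs $\pm i \lambda$, one has $\|\widetilde{\cF}\|_{op}^2 \leq \tfrac{1}{2} \|\widetilde{\cF}\|_F^2$, which yields $|\widetilde{\cF} y|^2 \leq \tfrac{|y|^2}{2} \sum_{j,k} \widetilde{\cF_{jk}}^2$, and hence the bracketed expression is non-negative whenever $|y| \leq 1$.

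For the scalar part, I would introduce the radial/tangential decomposition of the covariant gradient along $y$: set $u = \sum_j \tfrac{y_j}{|y|} \widetilde{\cD_j \Phi}$ (radial component) and write the orthogonal piece as $(\widetilde{\cD \Phi})^{\rm tan}$. Then $\sum_k y_k \widetilde{\cD_k \Phi} = |y| u$ and $\sum_k |\widetilde{\cD_k \Phi}|^2 = |u|^2 + |(\widetilde{\cD \Phi})^{\rm tan}|^2$. Completing the square twice (first in $\widetilde{\cD_0 \Phi}$ against $\sum_k y_k \widetilde{\cD_k \Phi} + \widetilde{\Phi}$, then in $u$ against $\tfrac{|y|}{1-|y|^2} \widetilde{\Phi}$), the integrand of the scalar part of $\widetilde{E}(s)$ becomes
\[
 \tfrac{1}{2} \big| \widetilde{\cD_0 \Phi} - \textstyle \sum_k y_k \widetilde{\cD_k \Phi} - \widetilde{\Phi} \big|^2 + \tfrac{1-|y|^2}{2} \big| u - \tfrac{|y|}{1-|y|^2} \widetilde{\Phi} \big|^2 + \tfrac{1}{2} |(\widetilde{\cD \Phi})^{\rm tan}|^2 - \tfrac{|\widetilde{\Phi}|^2}{1-|y|^2},
\]
where the three square terms are manifestly non-negative. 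Multiplying by $\rho(y) = (1-|y|^2)^{-1/2}$ and integrating, we obtain
\[
 \widetilde{E}(s) \geq - \int_{B_1} \frac{|\widetilde{\Phi}(s,y,\delta)|^2}{(1-|y|^2)^{3/2}} \, dy \geq - \int_{B_1} \frac{|\widetilde{\Phi}(s,y,\delta)|^2}{(1-|y|^2)^{2}} \, dy \gtrsim - E_{crit},
\]
where the second inequality uses $(1-|y|^2)^{-3/2} \leq (1-|y|^2)^{-2}$ for $|y| \leq 1$, and the last inequality is the Hardy-type bound \eqref{equ:hardy_type_inequality_self_sim}, which was established from the fact that $\widetilde{\Phi}(s,\cdot,\delta) \in H^1_0(B_1)$ combined with the diamagnetic inequality and \eqref{equ:auxiliary_estimate_kinetic_energy_self_sim}.

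The main delicacy is the scalar decomposition: one must verify that after completing the two squares the residual negative contribution is exactly $-\tfrac{|\widetilde{\Phi}|^2}{1-|y|^2}$, so that after multiplication by $\rho(y)$ one gets a singular weight of order $(1-|y|^2)^{-3/2}$ which is one half-power better than what the Hardy-type inequality \eqref{equ:hardy_type_inequality_self_sim} controls. No weaker combination of completions of squares would suffice: a weight $(1-|y|^2)^{-2}$ or worse would saturate the Hardy bound only up to a constant, and this is exactly the margin that delivers a constant $C$ independent of $\delta$ and $s$.
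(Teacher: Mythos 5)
Your proof is correct. The upper bound is argued exactly as in the paper: monotonicity \eqref{equ:monotonicity_lyapunov} together with $\widetilde{\Xi}\geq 0$ and the limit \eqref{equ:limit_lyapunov_energy_at_time_infty}. For the lower bound your strategy is the same in spirit (pointwise positivity of the bulk of the integrand, plus the Hardy-type bound \eqref{equ:hardy_type_inequality_self_sim} for the residual negative term), but the algebra differs in a way worth noting. The paper does not complete squares exactly: it shows that the curvature bracket and the purely kinetic bracket $\frac{1}{2}\sum_\alpha|\widetilde{\cD_\alpha\Phi}|^2-\sum_k y_k\Re(\widetilde{\cD_k\Phi}\overline{\widetilde{\cD_0\Phi}})$ are separately non-negative for $|y|\le 1$ (the curvature part via the antisymmetrization identity $\sum_{j,k}(\omega_k r_j-\omega_j r_k)^2\le 2r^2$ applied to the cross term), and then bounds the leftover $\int_{B_1}\bigl(\Re(\widetilde{\Phi}\overline{\widetilde{\cD_0\Phi}})+\frac{1}{2}\frac{|\widetilde{\Phi}|^2}{1-|y|^2}\bigr)(1-|y|^2)^{-1/2}\,dy$ by H\"older using \emph{both} the kinetic energy bound \eqref{equ:auxiliary_estimate_kinetic_energy_self_sim} and \eqref{equ:hardy_type_inequality_self_sim}. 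Your version instead produces exact sum-of-squares identities (I checked both: the curvature one, with the operator-norm bound $\|\widetilde{\cF}\|_{op}^2\le\frac12\|\widetilde{\cF}\|_F^2$ for antisymmetric $4\times4$ matrices, and the double completion of squares in the scalar part), so the entire negative contribution is exactly $-\int_{B_1}|\widetilde{\Phi}|^2(1-|y|^2)^{-3/2}\,dy$ and only \eqref{equ:hardy_type_inequality_self_sim} is invoked at the end; this is slightly cleaner, and as a bonus your first square is precisely the quantity appearing in $\widetilde{\Xi}$. One small quibble with your closing remark: the half-power gain is not actually needed — even a residual weight $(1-|y|^2)^{-2}$ would be acceptable, since \eqref{equ:hardy_type_inequality_self_sim} is already uniform in $\delta$ and $s$ and the lemma only asks for some absolute constant $C$; this is in fact what the paper's rougher estimate implicitly uses.
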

 \begin{proof}
  The upper bound is immediate from \eqref{equ:limit_lyapunov_energy_at_time_infty} and the monotonicity property \eqref{equ:monotonicity_lyapunov} of the functional $\widetilde{E}$. In order to prove the lower bound, we work with the gauge invariant formulation of the Lyapunov functional $\widetilde{E}$ and first observe that for $|y| \leq 1$, the quantities
  \[
   \frac{1}{2} \sum_\alpha \bigl|\widetilde{\cD_\alpha \Phi}\bigr|^2 - \sum_k y_k \Re \bigl( \widetilde{\cD_k \Phi} \overline{\widetilde{\cD_0 \Phi}} \bigr)
  \]
  and
  \[
   \frac{1}{2} \sum_j \widetilde{\cF_{j0}}^2 + \frac{1}{4} \sum_{j,k} \widetilde{\cF_{jk}}^2 - \sum_{j,k} y_k \widetilde{\cF_{j0}} \widetilde{\cF_{jk}}
  \]
  are non-negative. This is straightforward to see for the first expression, while for the second one we use that
  \[
   \sum_{j,k} y_k \widetilde{\cF_{j0}} \widetilde{\cF_{jk}} = \frac{1}{2} |y| \sum_{j,k} \Bigl( \frac{y_k}{|y|} \widetilde{\cF_{j0}} - \frac{y_j}{|y|} \widetilde{F_{k0}} \Bigr) \widetilde{\cF_{jk}} \leq \frac{1}{4} |y| \sum_{j,k} \Bigl( \frac{y_k}{|y|} \widetilde{\cF_{j0}} - \frac{y_j}{|y|} \widetilde{F_{k0}} \Bigr)^2 + \frac{1}{4} |y| \sum_{j,k} \widetilde{\cF_{j,k}}^2.
  \]
  From the general identity
  \[
   \sum_{j,k} \bigl(\omega_k r_j - \omega_j r_k\bigr)^2 = 2 \bigl(r^2 - (r \cdot \omega)^2 \bigr) \leq 2 r^2
  \]
  for $r, \omega \in \R^4$ with $|\omega|=1$, we then conclude that
  \[
   \sum_{j,k} y_k \widetilde{\cF_{j0}} \widetilde{\cF_{jk}} \leq \frac{1}{2} |y| \sum_j \widetilde{\cF_{j0}}^2 + \frac{1}{4} |y| \sum_{j,k} \widetilde{\cF_{jk}}^2.
  \]
  It therefore suffices to obtain an upper bound on 
  \[
   \int_{B_1} \biggl( \Re \bigl( \widetilde{\Phi} \overline{\widetilde{\cD_0 \Phi}} \bigr) + \frac{1}{2} \frac{|\widetilde{\Phi}|^2}{1-|y|^2} \biggr) \frac{dy}{(1-|y|^2)^{\frac{1}{2}}} 
  \]
  uniformly for all $\delta > 0$ and $0 \leq s < \log(\frac{1}{\delta})$. From H\"older's inequality, \eqref{equ:auxiliary_estimate_kinetic_energy_self_sim} and \eqref{equ:hardy_type_inequality_self_sim} we easily infer that 
  \[
   \int_{B_1} \biggl( \Re \bigl( \widetilde{\Phi} \overline{\widetilde{\cD_0 \Phi}} \bigr) + \frac{1}{2} \frac{|\widetilde{\Phi}|^2}{1-|y|^2} \biggr) \frac{dy}{(1-|y|^2)^{\frac{1}{2}}} \lesssim \int_{B_1} |\widetilde{\cD_0 \Phi}|^2 \, dy + \int_{B_1} \frac{|\widetilde{\Phi}|^2}{(1-|y|^2)^2} \, dy \lesssim E_{crit}.
  \]
 \end{proof}

 As a corollary of Proposition~\ref{prop:monotonicity_lyapunov} and Lemma~\ref{lem:upper_lower_bound_lyapunov}, we obtain the following decay property as $\delta \to 0$.

 \begin{cor} \label{cor:decay_self_sim}
  For each $\delta > 0$, there exists $\bar{s}_\delta \in \bigl( \frac{1}{2} \log( \frac{1}{\delta} ), \log( \frac{1}{\delta}) \bigr)$ such that
  \begin{equation}
   \int_{\bar{s}_\delta}^{\bar{s}_\delta + \log( \frac{1}{\delta})^{\frac{1}{2}}} \widetilde{\Xi}(s) \, ds \leq \frac{C E_{crit}}{\log(\frac{1}{\delta})^{\frac{1}{2}}}.
  \end{equation}
 \end{cor}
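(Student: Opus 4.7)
The plan is to combine the monotonicity identity \eqref{equ:monotonicity_lyapunov} with the uniform two-sided bound on $\widetilde{E}$ from Lemma~\ref{lem:upper_lower_bound_lyapunov} to obtain a global bound on the total $\widetilde{\Xi}$ mass on $(0, \log(1/\delta))$, and then extract the desired short interval by a simple pigeonhole argument.

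First, I would apply Proposition~\ref{prop:monotonicity_lyapunov} on the interval $(s_1, s_2) \subset (0, \log(1/\delta))$ and pass to the limit $s_2 \to \log(1/\delta)$, using \eqref{equ:limit_lyapunov_energy_at_time_infty}, to conclude
\[
 \int_{0}^{\log(1/\delta)} \widetilde{\Xi}(s) \, ds = \lim_{s \to \log(1/\delta)} \widetilde{E}(s) - \widetilde{E}(0) \leq E_{crit} + C E_{crit} \leq C' E_{crit},
\]
where in the last step I use the lower bound $\widetilde{E}(0) \geq -C E_{crit}$ from Lemma~\ref{lem:upper_lower_bound_lyapunov}. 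In particular, restricting to the sub-interval, we have
\[
 \int_{\frac{1}{2}\log(1/\delta)}^{\log(1/\delta)} \widetilde{\Xi}(s) \, ds \leq C' E_{crit}.
\]

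Next, I would partition $\bigl(\frac{1}{2}\log(1/\delta), \log(1/\delta)\bigr)$, which has length $\frac{1}{2}\log(1/\delta)$, into $N := \lfloor \frac{1}{2}\log(1/\delta)^{1/2} \rfloor$ consecutive disjoint sub-intervals $J_1, \ldots, J_N$, each of length at least $\log(1/\delta)^{1/2}$ (trimming if necessary, noting $\log(1/\delta)$ is large for $\delta$ small, since for $\delta \gtrsim 1$ the statement is trivial). Since $\widetilde{\Xi} \geq 0$ pointwise, there must exist some $J_{k_0} = [\bar s_\delta, \bar s_\delta + \log(1/\delta)^{1/2}]$ with
\[
 \int_{J_{k_0}} \widetilde{\Xi}(s) \, ds \leq \frac{1}{N} \int_{\frac{1}{2}\log(1/\delta)}^{\log(1/\delta)} \widetilde{\Xi}(s) \, ds \leq \frac{C' E_{crit}}{\lfloor \frac{1}{2}\log(1/\delta)^{1/2} \rfloor} \leq \frac{C E_{crit}}{\log(1/\delta)^{1/2}},
\]
as claimed. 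No step here is a real obstacle; the only care needed is to ensure that $\log(1/\delta)$ is large enough for the pigeonhole to produce at least one sub-interval of the required length (handled trivially by absorbing small $|\log \delta|$ into the constant~$C$) and that $\bar s_\delta$ indeed lies in the open interval $\bigl(\frac{1}{2}\log(1/\delta), \log(1/\delta)\bigr)$, which follows by choosing the partition to stay strictly inside. All the serious work has already been done in Proposition~\ref{prop:monotonicity_lyapunov} and Lemma~\ref{lem:upper_lower_bound_lyapunov}.
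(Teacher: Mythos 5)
Your proposal is correct and follows exactly the paper's route: the paper likewise deduces $\int_0^{\log(1/\delta)} \widetilde{\Xi}(s)\,ds \leq C E_{crit}$ from the monotonicity identity \eqref{equ:monotonicity_lyapunov} together with Lemma~\ref{lem:upper_lower_bound_lyapunov}, and then treats the extraction of $\bar{s}_\delta$ as immediate, i.e.\ precisely the pigeonhole step you spell out. Your added care about the number and placement of the sub-intervals is a harmless elaboration of what the paper leaves implicit.
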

 \begin{proof}
  From \eqref{equ:monotonicity_lyapunov} and Lemma~\ref{lem:upper_lower_bound_lyapunov} we have that
  \[
   \int_0^{\log(\frac{1}{\delta})} \widetilde{\Xi}(s) \, ds \leq C E_{crit}.
  \]
  Then the claim is immediate.
 \end{proof}

 Our goal is now to extract a limiting solution $\bigl( \cA^\ast, \Phi^\ast \bigr)$ and to eventually show that $\Phi^\ast$ must vanish. This will yield a contradiction to the minimal blowup solution $\bigl( \cA, \Phi \bigr)$ having infinite $S^1$ norm.

 \medskip
 
 Let $\overline{t}_\delta = 1 + \delta - e^{- \overline{s}_\delta}$, where $\overline{s}_\delta$ is as in Corollary~\ref{cor:decay_self_sim}. By Corollary~\ref{cor:reduction_to_exactly_self_similar_case} we can pick a sequence $\delta_l \to 0$ as $l \to \infty$ such that
 \[
  \Bigl( (1-\overline{t}_{\delta_l})^2 \nabla_{t,x} \cA_x \bigl(\overline{t}_{\delta_l}, (1 - \overline{t}_{\delta_l}) x \bigr), (1-\overline{t}_{\delta_l})^2 \nabla_{t,x} \Phi \bigl(\overline{t}_{\delta_l}, (1-\overline{t}_{\delta_l}) x\bigr) \Bigr) \rightarrow \Bigl( \nabla_{t,x} \cA^\ast_x(x), \nabla_{t,x} \Phi^\ast(x) \Bigr)
 \]
 strongly in $\bigl( L^2_x(\R^4) \bigr)^5$ as $\delta_l \to 0$. We may also arrange that 
 \begin{equation} \label{equ:extracting_limiting_solution}
  \begin{aligned}
   &\Bigl( (1 + \delta_l - \overline{t}_{\delta_l})^2 \nabla_{t,x} \cA_x \bigl(\overline{t}_{\delta_l}, (1 + \delta_l - \overline{t}_{\delta_l}) x \bigr), (1 + \delta_l -\overline{t}_{\delta_l})^2 \nabla_{t,x} \Phi \bigl(\overline{t}_{\delta_l}, (1 + \delta_l - \overline{t}_{\delta_l}) x\bigr) \Bigr) \\
   &\qquad \qquad \qquad \qquad \qquad \qquad \qquad \qquad \qquad \qquad \qquad \qquad \rightarrow \Bigl( \nabla_{t,x} \cA^\ast_x(x), \nabla_{t,x} \Phi^\ast(x) \Bigr)
  \end{aligned}
 \end{equation}
 in $\bigl( L^2_x(\R^4) \bigr)^5$ as $\delta_l \to 0$. We now consider the MKG-CG evolutions in the sense of Definition~\ref{defn:energy_class_solution} of the energy class Coulomb data given by the left hand side of \eqref{equ:extracting_limiting_solution}. Denote these evolutions by $\bigl( \cA^{l\ast}, \Phi^{l\ast} \bigr)$. By the perturbative results from Corollary~\ref{cor:lifespancompact}, these evolutions exist on some fixed time interval $[0,T^\ast]$, where we may assume that $0 < T^\ast < 1$. Moreover, we have on $[0,T^\ast]$ that
 \begin{align*}
  \cA^{l\ast} &= (1 + \delta_l - \overline{t}_{\delta_l}) \cA \bigl(\overline{t}_{\delta_l} + (1+\delta_l-\overline{t}_{\delta_l}) t, (1+\delta_l-\overline{t}_{\delta_l})x \bigr), \\
  \Phi^{l\ast}(t,x) &= (1 + \delta_l - \overline{t}_{\delta_l}) \Phi \bigl(\overline{t}_{\delta_l} + (1 + \delta_l - \overline{t}_{\delta_l}) t, (1+\delta_l - \overline{t}_{\delta_l}) x \bigr),
 \end{align*}
 and 
 \[
  \bigl( \nabla_{t,x} \cA^{l\ast}_x(t,\cdot), \nabla_{t,x} \Phi^{l\ast}(t,\cdot) \bigr) \to \bigl( \nabla_{t,x} \cA^\ast_x(t,\cdot), \nabla_{t,x} \Phi^\ast(t,\cdot) \bigr)
 \]
 in $\bigl( L^2_x(\R^4) \bigr)^5$ as $l \to \infty$ uniformly for all $0 \leq t \leq T^\ast$, where $\bigl( \cA^\ast, \Phi^\ast \bigr)$ is a weak solution to MKG-CG on $[0,T^\ast] \times \R^4$. Note that on account of these identities we have 
 \[
  \supp \, \bigl( \Phi^{l\ast}(t,\cdot) \bigr) \subset \biggl\{ x \in \R^4 : |x| \leq \frac{1 - \overline{t}_{\delta_l}}{1+\delta_l-\overline{t}_{\delta_l}} - t < 1 - t \biggr\}
 \]
 and similarly
 \[
  \supp \, \bigl( (\partial_\alpha \cA^{l\ast}_\beta - \partial_\beta \cA_\alpha^{l\ast})(t,\cdot) \bigr) \subset \bigl\{ x \in \R^4 : |x| < 1-t \bigr\}.
 \]
 We now switch to the self-similar variables
 \[
  s = - \log(1-t), \quad y = \frac{x}{1-t}, \quad 0 \leq t \leq T^\ast,
 \]
 and define
 \begin{align*}
  \widetilde{\cA^{l\ast}_\alpha}(s,y) &= e^{-s} \cA^{l\ast}_\alpha(1-e^{-s}, e^{-s}y), \\
  \widetilde{\Phi^{l\ast}}(s,y) &= e^{-s} \Phi^{l\ast}(1-e^{-s}, e^{-s}y),
 \end{align*}
 and similarly for $\bigl( \widetilde{\cA^\ast}, \widetilde{\Phi^\ast} \bigr)$. We conclude exactly as in \cite{KM} after Remark 6.8 there that 
 \begin{align} \label{equ:extracting_limiting_solution_relation_convergent_sequence}
  \begin{aligned}
   \widetilde{\cA^{l\ast}_\alpha}(s,y) &= \widetilde{\cA}_\alpha (\overline{s}_{\delta_l} + s, y, \delta_l), \\
   \widetilde{\Phi^{l\ast}}(s,y) &= \widetilde{\Phi}(\overline{s}_{\delta_l} + s, y, \delta_l),
  \end{aligned}
 \end{align}
 and
 \begin{equation} \label{equ:extracting_limiting_solution_convergence_self_sim}
  \bigl( \nabla_{s,y} \widetilde{\cA^{l\ast}_x}, \nabla_{s,y} \widetilde{\Phi^{l\ast}} \bigr)(s, \cdot) \rightarrow \bigl( \nabla_{s,y} \widetilde{\cA^\ast_x}, \nabla_{s,y} \widetilde{\Phi^\ast} \bigr)(s,\cdot)
 \end{equation}
 in $\bigl( L^2_y(\R^4) \bigr)^5$ as $l \to \infty$ uniformly for all $0 \leq s \leq - \log (1-\frac{1}{2} T^\ast) =: S$. Then $\bigl( \widetilde{\cA^\ast}, \widetilde{\Phi^\ast} \bigr)$ is a weak solution to the Maxwell-Klein-Gordon system in self-similar variables \eqref{equ:mkg_system_self_sim}. Denoting by $\widetilde{\cD_\alpha \Phi^\ast}$ and $\widetilde{\cF_{\alpha \beta}^\ast}$ the covariant derivatives and curvature components in self-similar variables associated with $\bigl( \widetilde{\cA^\ast}, \widetilde{\Phi^\ast} \bigr)$, we conclude that
 \begin{align} \label{equ:limiting_solution_support}
  \begin{aligned}
   \supp \, \bigl\{ \widetilde{\Phi^\ast}(s, \cdot) \bigr\} &\subset \{ y \in \R^4 : |y| \leq 1 \}, \\
   \supp \, \bigl\{ \widetilde{\cF_{\alpha \beta}^\ast}(s,\cdot) \bigr\} &\subset \{ y \in \R^4 : |y| \leq 1 \}.
  \end{aligned}
 \end{align}

 \begin{lem}
  Let $\bigl( \widetilde{\cA^\ast}, \widetilde{\Phi^\ast} \bigr)$ be as above. Then it holds that
  \begin{equation} \label{equ:property_limiting_solution}
   \sum_{j} y_j \widetilde{\cF_{j0}^\ast} \equiv 0, \qquad \widetilde{\cD_0 \Phi^\ast} - \sum_k y_k \widetilde{\cD_k \Phi^\ast} - \widetilde{\Phi^\ast} \equiv 0.
  \end{equation}
 \end{lem}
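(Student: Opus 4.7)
The plan is to read the two asserted pointwise identities off directly from the fact that the non-negative quantity $\widetilde{\Xi}$ must collapse to zero on the limiting self-similar solution. Concretely, I would introduce $\widetilde{\Xi^{l\ast}}(s)$, obtained by evaluating the functional $\widetilde{\Xi}$ on $(\widetilde{\cA^{l\ast}}, \widetilde{\Phi^{l\ast}})$. By the shift identity \eqref{equ:extracting_limiting_solution_relation_convergent_sequence}, one has $\widetilde{\Xi^{l\ast}}(s) = \widetilde{\Xi}(\bar{s}_{\delta_l}+s;\delta_l)$. For $l$ large enough that $S \leq \log(1/\delta_l)^{1/2}$, Corollary~\ref{cor:decay_self_sim} yields
\begin{equation*}
\int_0^S \widetilde{\Xi^{l\ast}}(s)\,ds \;\leq\; \int_{\bar{s}_{\delta_l}}^{\bar{s}_{\delta_l}+\log(1/\delta_l)^{1/2}} \widetilde{\Xi}(s;\delta_l)\,ds \;\leq\; \frac{C\, E_{crit}}{\log(1/\delta_l)^{1/2}} \longrightarrow 0
\end{equation*}
as $l \to \infty$.

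Next, I would pass to the limit in the left-hand side via Fatou's lemma. The uniform strong convergence \eqref{equ:extracting_limiting_solution_convergence_self_sim} yields, upon extracting a subsequence, pointwise almost everywhere convergence of $\nabla_{s,y}\widetilde{\cA^{l\ast}_x}$ and $\nabla_{s,y}\widetilde{\Phi^{l\ast}}$ on $[0,S]\times\R^4$, hence of the curvature components $\widetilde{\cF_{jk}^{l\ast}}$ (which only involve spatial derivatives of $\widetilde{\cA^{l\ast}_x}$), and of the spatial covariant derivatives $\widetilde{\cD_k\Phi^{l\ast}}$. The temporal component $\widetilde{\cA_0^{l\ast}}$ is recovered from the elliptic compatibility relation with source quadratic in $\Phi$ and $\cD_0\Phi$; this equation passes to the limit by standard elliptic regularity and the uniform bound \eqref{equ:auxiliary_estimate_kinetic_energy_self_sim}, giving almost everywhere convergence of $\widetilde{\cA_0^{l\ast}}$, $\widetilde{\cF_{j0}^{l\ast}}$, and $\widetilde{\cD_0\Phi^{l\ast}}$. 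Consequently the three non-negative integrands of $\widetilde{\Xi^{l\ast}}$ converge a.e.\ to those of $\widetilde{\Xi^\ast}$, and Fatou's lemma delivers
\begin{equation*}
\int_0^S \widetilde{\Xi^\ast}(s)\,ds \;\leq\; \liminf_{l\to\infty} \int_0^S \widetilde{\Xi^{l\ast}}(s)\,ds \;=\; 0.
\end{equation*}

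Since $\widetilde{\Xi^\ast}(s)$ is a sum of three non-negative pieces, each must vanish for almost every $s \in [0,S]$. The vanishing of the second piece gives $\sum_j y_j \widetilde{\cF_{j0}^\ast} \equiv 0$ on $B_1$, and the vanishing of the third piece gives $\widetilde{\cD_0 \Phi^\ast} - \sum_k y_k \widetilde{\cD_k \Phi^\ast} - \widetilde{\Phi^\ast} \equiv 0$ on $B_1$; outside $B_1$ both identities hold trivially by the support property \eqref{equ:limiting_solution_support}. The time-independence of these identities then follows from the fact that $(\widetilde{\cA^\ast},\widetilde{\Phi^\ast})$ is a weak solution of the self-similar MKG system \eqref{equ:mkg_system_self_sim}, which propagates the relations in $s$.

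The main obstacle I anticipate is the passage to the limit through the singular weights $(1-|y|^2)^{-3/2}$. Fortunately Fatou requires only pointwise convergence and non-negativity of the integrands, not any uniform integrability, so the singular weight does not obstruct the argument; the uniform Lyapunov bound of Lemma~\ref{lem:upper_lower_bound_lyapunov} together with the Hardy-type inequality \eqref{equ:hardy_type_inequality_self_sim} ensures that the quantities in $\widetilde{\Xi^\ast}$ are well-defined in the first place. A secondary subtlety is that the argument only gives the identities on $[0,S]$ rather than for all $s\geq 0$, but this is precisely what is needed for the subsequent Kenig--Merle type analysis of the limiting solution.
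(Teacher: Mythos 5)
Your argument is essentially the paper's own: the paper likewise combines the shift identity \eqref{equ:extracting_limiting_solution_relation_convergent_sequence}, the convergence \eqref{equ:extracting_limiting_solution_convergence_self_sim}, and Corollary~\ref{cor:decay_self_sim} with lower semicontinuity (Fatou) to conclude that the space-time integrals of the two relevant non-negative pieces of $\widetilde{\Xi}$ vanish for the limiting solution, whence the identities; it simply works directly with those two terms rather than all of $\widetilde{\Xi}$. Your added remarks on recovering $\widetilde{\cA^\ast_0}$ elliptically and on propagation in $s$ are harmless elaborations, not a change of method.
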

 \begin{proof}
  For large $l$ we obtain from \eqref{equ:extracting_limiting_solution_relation_convergent_sequence}, \eqref{equ:extracting_limiting_solution_convergence_self_sim}, and Corollary~\ref{cor:decay_self_sim} that 
  \begin{align*}
   &\int_0^S \int_{B_1} \frac{1}{|y|^2} \bigg( \sum_{j} y_j \widetilde{\cF^\ast_{j0}}(s,y) \bigg)^2 \frac{dy}{(1-|y|^2)^{\frac{1}{2}}} \, ds \\
   &\quad + \int_0^S \int_{B_1} \Bigl| \Bigl( \widetilde{\cD_0 \Phi^\ast} - \sum_k y_k \widetilde{\cD_k \Phi^\ast} - \widetilde{\Phi^\ast} \Bigr)(s,y) \Bigr|^2 \frac{dy}{(1-|y|^2)^{\frac{3}{2}}} \, ds \\
   &\leq \liminf_{l\to\infty} \Bigg\{ \int_0^S \int_{B_1} \frac{1}{|y|^2} \bigg( \sum_{j} y_j \widetilde{\cF_{j0}}(\overline{s}_{\delta_l} + s, y, \delta_l) \bigg)^2 \frac{dy}{(1-|y|^2)^{\frac{1}{2}}} \, ds \\
   &\quad \quad \quad \quad \quad + \int_0^S \int_{B_1} \Bigl| \Bigl( \widetilde{\cD_0 \Phi} - \sum_k y_k \widetilde{\cD_k \Phi} - \widetilde{\Phi} \Bigr)(\overline{s}_{\delta_l} + s, y, \delta_l) \Bigr|^2 \frac{dy}{(1-|y|^2)^{\frac{3}{2}}} \, ds \Bigg\}\\
   &\leq \liminf_{l\to\infty} \Bigg\{ \int_{\overline{s}_{\delta_l}}^{\overline{s}_{\delta_l} + S} \int_{B_1} \frac{1}{|y|^2} \bigg( \sum_{j} y_j \widetilde{\cF_{j0}}(s, y, \delta_l) \bigg)^2 \frac{dy}{(1-|y|^2)^{\frac{1}{2}}} \, ds \\
   &\quad \quad \quad \quad \quad + \int_{\overline{s}_{\delta_l}}^{\overline{s}_{\delta_l} + S} \int_{B_1} \Bigl| \Bigl( \widetilde{\cD_0 \Phi} - \sum_k y_k \widetilde{\cD_k \Phi} - \widetilde{\Phi} \Bigr)(s, y, \delta_l) \Bigr|^2 \frac{dy}{(1-|y|^2)^{\frac{3}{2}}} \, ds \Bigg\} \\  
   &\leq \liminf_{l \to \infty} \frac{ C E_{crit} }{\log(\frac{1}{\delta_l})^{\frac{1}{2}}} \\
   &= 0. 
  \end{align*}
 \end{proof}

 \begin{prop} \label{prop:trivial_limiting_solution}
  Let $\bigl( \widetilde{\cA^\ast}, \widetilde{\Phi^\ast} \bigr)$ be as above. Then we have $\widetilde{\Phi^\ast} \equiv 0$.
 \end{prop}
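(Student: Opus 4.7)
The plan is to combine the two rigidity constraints in \eqref{equ:property_limiting_solution} with the MKG system in self-similar variables \eqref{equ:mkg_system_self_sim} to reduce $(\widetilde{\cA^\ast},\widetilde{\Phi^\ast})$ to a static elliptic problem on $B_1$, and then exploit the Hardy-type inequality \eqref{equ:hardy_type_inequality_self_sim} together with the kinetic energy bound \eqref{equ:auxiliary_estimate_kinetic_energy_self_sim} to rule out any non-trivial $\widetilde{\Phi^\ast}$, mimicking the corresponding rigidity step in \cite{KM}.

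First I would impose the Cronstrom-type gauge \eqref{equ:cronstrom_type_gauge_self_similar} on the limiting solution, which is preserved by the self-similar rescaling and brings the identities \eqref{equ:identities_self_sim} into play. In this gauge \eqref{equ:property_limiting_solution} translates into
\[
 y\cdot\nabla_y\widetilde{\cA^\ast_0}\equiv 0, \qquad (\partial_s+i\widetilde{\cA^\ast_0})\widetilde{\Phi^\ast}\equiv 0,
\]
so that $\widetilde{\cA^\ast_0}$ is constant along radial rays and $|\widetilde{\Phi^\ast}|$ is $s$-independent. Combining the second relation with $y\cdot\nabla_y\widetilde{\Phi^\ast}=\sum_k y_k\widetilde{\cD_k\Phi^\ast}$ gives the scaling-type identity
\[
 \widetilde{\cD_0\Phi^\ast}=(1+y\cdot\nabla_y)\widetilde{\Phi^\ast}.
\]

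Next I would substitute this scaling identity into the third equation of \eqref{equ:mkg_system_self_sim}. A direct computation, using $y\cdot\nabla_y\widetilde{\cA^\ast_0}=0$ and $(\partial_s+i\widetilde{\cA^\ast_0})\widetilde{\Phi^\ast}=0$, shows that all the $s$-derivative contributions cancel, leaving the purely spatial equation
\[
 \sum_k(\partial_k+i\widetilde{\cA^\ast_k})^2\widetilde{\Phi^\ast} = (1+y\cdot\nabla_y)(2+y\cdot\nabla_y)\widetilde{\Phi^\ast}\quad\text{on }B_1,
\]
with $\widetilde{\Phi^\ast}(s,\cdot)\in H^1_0(B_1)$ by virtue of \eqref{equ:limiting_solution_support} and the diamagnetic inequality. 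Testing this equation against $\overline{\widetilde{\Phi^\ast}}\,\rho(y)$ with the weight $\rho(y)=(1-|y|^2)^{-1/2}$, integrating by parts in $y$ and using the Cronstrom condition $\sum_k y_k\widetilde{\cA^\ast_k}=0$ to turn $\sum_k y_k\widetilde{\cD_k\Phi^\ast}$ into $y\cdot\nabla_y\widetilde{\Phi^\ast}$, one derives a weighted integral identity whose left-hand side contains $\int_{B_1}\sum_k|\widetilde{\cD_k\Phi^\ast}|^2\rho\,dy$ plus non-negative boundary-type weights, and whose right-hand side is controlled by $\int_{B_1}|\widetilde{\Phi^\ast}|^2(1-|y|^2)^{-1}\rho\,dy$. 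A refined version of the Hardy inequality from \cite{Brezis_Marcus} combined with the diamagnetic inequality --- exactly as in the corresponding step of Kenig-Merle --- then produces the reverse bound with a strict defect, forcing $\widetilde{\Phi^\ast}\equiv 0$.

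The main technical obstacle will be making the above integration by parts rigorous near the boundary $|y|=1$, where the weight $\rho$ is singular and we only have weak regularity. I would follow the regularization strategy already used in the proof of Proposition~\ref{prop:monotonicity_lyapunov}: introduce a smooth cutoff $\chi((1-|y|)/\varepsilon)$, carry out the computation for fixed $\varepsilon>0$, and pass to the limit $\varepsilon\to 0$, showing that the boundary error terms vanish by virtue of \eqref{equ:limiting_solution_support}, \eqref{equ:auxiliary_estimate_kinetic_energy_self_sim}, and \eqref{equ:hardy_type_inequality_self_sim}. Once this is done, the strict Hardy-type inequality closes the argument and yields the desired contradiction $\widetilde{\Phi^\ast}\equiv 0$, which, through the convergence \eqref{equ:extracting_limiting_solution_convergence_self_sim} and the non-triviality of the minimal blowup solution, completes the exclusion of the self-similar scenario and hence the proof of Proposition~\ref{prop:rigidity}.
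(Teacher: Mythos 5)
Your first half coincides with the paper's argument: impose the Cronstrom-type gauge, translate \eqref{equ:property_limiting_solution} into $y\cdot\nabla_y\widetilde{\cA^\ast_0}\equiv 0$ and $(\partial_s+i\widetilde{\cA^\ast_0})\widetilde{\Phi^\ast}\equiv 0$, and use these (together with the identities \eqref{equ:identities_self_sim}) to reduce the third equation of \eqref{equ:mkg_system_self_sim} to the static equation $(2+y\cdot\nabla_y)(1+y\cdot\nabla_y)\widetilde{\Phi^\ast}=\sum_k(\partial_k+i\widetilde{\cA^\ast_k})^2\widetilde{\Phi^\ast}$. Up to that point your proposal is correct and is exactly what the paper does.

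The concluding step, however, has a genuine gap. You test against $\overline{\widetilde{\Phi^\ast}}\,\rho(y)$ with the singular weight $\rho=(1-|y|^2)^{-1/2}$ and then invoke an unspecified ``refined Hardy inequality with a strict defect'' to close the argument. Two problems: first, the weighted integration by parts produces extra terms from $\partial_k\rho=y_k\rho^3$ and $(1+y\cdot\nabla_y)\rho=\rho^3$ whose signs you never control, so the claimed ``weighted integral identity with non-negative boundary-type weights'' is not established; second, the only Hardy-type estimate available in this setting, \eqref{equ:hardy_type_inequality_self_sim}, carries an implicit constant ($\lesssim E_{crit}$), so it cannot deliver the sharp-constant comparison your ``strict defect'' requires — you would have to prove a sharp weighted Hardy inequality on $B_1$ matched precisely to the coefficients generated by your identity, and nothing in your sketch does this. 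The weight is in fact unnecessary: pairing the static equation with plain $\overline{\widetilde{\Phi^\ast}}$ gives
\begin{align*}
 4\int_{\R^4}\bigl|\widetilde{\Phi^\ast}\bigr|^2\,dy-\int_{\R^4}\bigl|y\cdot\nabla_y\widetilde{\Phi^\ast}\bigr|^2\,dy
 =-\sum_k\int_{\R^4}\bigl|(\partial_k+i\widetilde{\cA^\ast_k})\widetilde{\Phi^\ast}\bigr|^2\,dy,
\end{align*}
and decomposing $(\partial_k+i\widetilde{\cA^\ast_k})\widetilde{\Phi^\ast}$ into radial and angular parts — where the Cronstrom condition $\sum_k y_k\widetilde{\cA^\ast_k}=0$ guarantees the radial part is exactly $\bigl(\tfrac{y}{|y|}\cdot\nabla_y\widetilde{\Phi^\ast}\bigr)\tfrac{y_k}{|y|}$ — bounds the right-hand side above by $-\int|\tfrac{y}{|y|}\cdot\nabla_y\widetilde{\Phi^\ast}|^2\,dy$. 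Since $\widetilde{\Phi^\ast}$ is supported in $\{|y|\le 1\}$, one has $|y\cdot\nabla_y\widetilde{\Phi^\ast}|\le|\tfrac{y}{|y|}\cdot\nabla_y\widetilde{\Phi^\ast}|$ pointwise, so $4\int|\widetilde{\Phi^\ast}|^2\,dy\le 0$ and $\widetilde{\Phi^\ast}\equiv 0$ follows with no Hardy inequality and no boundary regularization at all. You should replace your weighted-testing step by this radial/angular decomposition argument (or else actually prove the sharp weighted inequality you are appealing to).
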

 
 Going back to the $(t,x)$ coordinates, the preceding proposition implies that $\cA^\ast_k$ is a free wave for $k = 1, \ldots, 4$, while $\cA^\ast_0 \equiv 0$. This contradicts Proposition~\ref{prop:minenblowup} and hence completes the proof of Proposition~\ref{prop:rigidity}.

 \begin{proof}[Proof of Proposition~\ref{prop:trivial_limiting_solution}]
  In order to simplify the computations below, we assume that $\bigl( \widetilde{\cA^\ast}, \widetilde{\Phi}^\ast \bigr)$ satisfy the Cronstrom-type gauge condition (in self-similar variables)
  \begin{equation} \label{equ:limiting_solution_in_cronstrom_type_gauge}
   \sum_{k=1}^4 y_k \widetilde{\cA^\ast_k}(s,y) = 0
  \end{equation}
  for all $0 \leq s \leq S$ and $y \in \R^4$. Then the properties \eqref{equ:property_limiting_solution} of the limiting solution $\bigl( \widetilde{\cA^\ast}, \widetilde{\Phi^\ast} \bigr)$ can be written as
  \[
   y \cdot \nabla_y \widetilde{\cA^\ast_0} \equiv 0, \quad \quad \bigl( \partial_s + i \widetilde{\cA^\ast_0} \bigr) \widetilde{\Phi}^\ast \equiv 0
  \]
  and the equation for $\widetilde{\Phi^\ast}$ simplifies to 
  \begin{align*}
   (2 + y \cdot \nabla_y) (1 + y \cdot \nabla_y) \widetilde{\Phi^\ast} = \sum_k \bigl( \partial_k + i \widetilde{\cA^\ast_k} \bigr)^2 \widetilde{\Phi^\ast}.
  \end{align*}
  Integrating this equation against $\widetilde{\Phi^\ast}$, we find
  \begin{equation} \label{equ:limiting_solution_integral}
   \int_{\R^4} \Bigl( (2 + y \cdot \nabla_y ) ( 1 + y \cdot \nabla_y) \widetilde{\Phi^\ast} \Bigr) \overline{\widetilde{\Phi^\ast}} \, dy = - \sum_k \int_{\R^4} \bigl| \bigl( \partial_k + i \widetilde{\cA^\ast_k} \bigr) \widetilde{\Phi^\ast} \bigr|^2 \, dy.
  \end{equation}
  A simple integration by parts shows that the left hand side of \eqref{equ:limiting_solution_integral} is given by
  \[
   \int_{\R^4} \Bigl( (2 + y \cdot \nabla_y ) ( 1 + y \cdot \nabla_y) \widetilde{\Phi^\ast} \Bigr) \overline{\widetilde{\Phi^\ast}} \, dy = 4 \int_{\R^4} \bigl| \widetilde{\Phi^\ast} \bigr|^2 \, dy - \int_{\R^4} \bigl| y \cdot \nabla_y \widetilde{\Phi^\ast} \bigr|^2 \, dy.
  \]
  Decomposing the $4$-vector $\bigl( \partial_k \widetilde{\Phi^\ast} \bigr)_{k=1}^4$ into its radial and angular part, we observe that the gauge condition \eqref{equ:limiting_solution_in_cronstrom_type_gauge} allows to rewrite the right hand side of \eqref{equ:limiting_solution_integral} as 
  \begin{align*}
   &- \sum_k \int \bigl| (\partial_k + i \widetilde{\cA^\ast_k}) \widetilde{\Phi^\ast} \bigr|^2 \, dy \\
   &= - \int_{\R^4} \biggl( \Bigl| \frac{y}{|y|} \cdot \nabla_y \widetilde{\Phi^\ast} \Bigr|^2 + \sum_k \Bigl| \partial_k \widetilde{\Phi^\ast} -  \Bigl( \frac{y}{|y|} \cdot \nabla_y \widetilde{\Phi^\ast} \Bigr) \frac{y_k}{|y|} + i \widetilde{\cA^\ast_k} \widetilde{\Phi^\ast} \Bigr|^2 \biggr) \, dy \\
   &\leq - \int_{\R^4} \Bigl| \frac{y}{|y|} \cdot \nabla_y \widetilde{\Phi^\ast} \Bigr|^2 \, dy.
  \end{align*}
  Thus, we find that
  \[
   4 \int_{\R^4} \bigl| \widetilde{\Phi^\ast} \bigr|^2 \, dy \leq - \, \Biggl( \int_{\R^4} \Bigl| \frac{y}{|y|} \cdot \nabla_y \widetilde{\Phi^\ast} \Bigr|^2 \, dy - \int_{\R^4} \bigl| y \cdot \nabla_y \widetilde{\Phi^\ast} \bigr|^2 \, dy \Biggr),
  \]
  and in view of the support properties \eqref{equ:limiting_solution_support} of $\widetilde{\Phi^\ast}$, we must have $\widetilde{\Phi^\ast} \equiv 0$.
 \end{proof}

 To conclude the rigidity argument, we need to reduce to the additional assumption $\lambda(t) \geq \lambda_0 > 0$ for all $t \in \R$ made in the statement of Proposition~\ref{prop:rigidity}. However, this follows as in Lemma~10.18 in \cite{KS}. 

 \medskip

 Finally, we summarize the proof of the global existence assertion in Theorem~\ref{thm:TheMainTheorem} and address the proof of the scattering assertion.
 \begin{proof}[Proof of Theorem~\ref{thm:TheMainTheorem}]
  From the concentration compactness step in Section~\ref{sec:concentration_compactness_step} and the rigidity argument in this section, we infer the existence of a non-decreasing function $K\colon (0,\infty) \to (0, \infty)$ with the following property: Let $(A_x, \phi)[0]$ be admissible Coulomb class data of energy $E$. Then there exists a unique global admissible solution $(A, \phi)$ to MKG-CG with initial data $(A_x, \phi)[0]$ satisfying the a priori bound
  \[
   \bigl\| (A_x, \phi) \bigr\|_{S^1(\R\times\R^4)} \leq K(E).
  \]
  It remains to prove that the dynamical variables $(A_x, \phi)$ of the global solution $(A,\phi)$ to MKG-CG scatter to finite energy free waves. To this end it suffices to show that
  \[
   \| \Box A_j \|_{N(\R\times\R^4)} < \infty 
  \]
  for $j = 1, \ldots, 4$ and 
  \[
   \| \Box \phi \|_{N(\R\times\R^4)} < \infty.
  \]
  Here the only concern is to bound the low-high interactions in the magnetic interaction term $-2i A_j^{free} \partial^j \phi$ in the equation for $\phi$, where $A_j^{free}$ is the free wave evolution of the initial data $A_j[0]$. In this case, the bound $\|(A_x, \phi)\|_{S^1(\R\times\R^4)} < \infty$ does not suffice and we have to invest our strong assumptions about the spatial decay of the initial data. More precisely, from \cite{KST} we have the following estimate for dyadic frequencies $k_1 \leq k_2 - C$,
  \[  
   \bigl\| P_{k_1} A_j^{free} P_{k_2} \partial^j \phi \bigr\|_N \lesssim \bigl\| P_{k_1} A_x[0] \bigr\|_{\dot{H}^1_x \times L^2_x} \bigl\| P_{k_2} \phi \bigr\|_{S^1}.
  \]
  Thus, we may bound the low-high interactions in the magnetic interaction term $-2i A_j^{free} \partial^j \phi$ by
  \[
   \Bigl\| \sum_{k \in \Z} P_{\leq k - C} A_j^{free} P_k \partial^j \phi \Bigr\|_{N} \lesssim \bigl\| A_x[0] \bigr\|_{\ell^1 (\dot{H}^1_x \times L^2_x)} \| \phi \|_{S^1}.
  \]
  To see that $\bigl\| A_x[0] \bigr\|_{\ell^1 (\dot{H}^1_x \times L^2_x)}$ is finite, we observe that in the Coulomb gauge we have for $j = 1, \ldots, 4$ that
  \[
   A_j = - \Delta^{-1} \partial^l F_{jl}.
  \]
  Hence, we obtain for $j = 1, \ldots, 4$ that
  \[
   \sum_{k\in\Z} \|P_k A_j(0)\|_{\dot{H}^1_x} \lesssim \sum_{k\in\Z} \sum_{l=1}^4 \|P_k \Delta^{-1} \partial^l F_{jl}(0)\|_{\dot{H}^1_x} \lesssim \sum_{k\in\Z} \sum_{l=1}^4 \|P_k F_{jl}(0)\|_{L^2_x} < \infty,
  \]
  since the spatial curvature components $F_{jl}(0)$ are of Schwartz class by assumption. Similarly, we conclude that $\|\partial_t A_x(0)\|_{\ell^1 L^2_x} < \infty$, which finishes the proof.
 \end{proof}

\bibliographystyle{amsplain}
\bibliography{references}

\end{document}